\newtheorem{thm}{Theorem}[section]
\newtheorem{cor}[thm]{Corollary}
\newtheorem{lem}[thm]{Lemma}
\newtheorem{prop}[thm]{Proposition}
\theoremstyle{definition}
\newtheorem{defn}[thm]{Definition}
\newtheorem{ex}[thm]{Example}
\newcommand{\N}{\mathbb N}
\newcommand{\Z}{\mathbb Z}
\newcommand{\Q}{\mathbb Q}
\newcommand{\R}{\mathbb R}
\newcommand{\C}{\mathbb C}
\newcommand{\F}{\mathbb F}
\newcommand{\mf}{\mathfrak}
\newcommand{\mc}{\mathcal}
\newcommand{\mb}{\mathbf}
\newcommand{\mh}{\mathbb}
\def\Irr{{\rm Irr}}
\newcommand{\mr}{\mathrm}
\newcommand{\ind}{\mathrm{ind}}
\newcommand{\enuma}[1]{\begin{enumerate}[\textup{(}a\textup{)}] {#1} \end{enumerate}}
\newcommand{\cusp}{\mathrm{cusp}}
\newcommand{\nr}{\mathrm{nr}}
\newcommand{\cpt}{\mathrm{cpt}}
\newcommand{\Rep}{\mathrm{Rep}}
\newcommand{\Res}{\mathrm{Res}}
\newcommand{\matje}[4]{\left(\begin{smallmatrix} #1 & #2 \\ 
#3 & #4 \end{smallmatrix}\right)}
\newcommand{\Mod}{\mathrm{Mod}}
\newcommand{\Hom}{\mathrm{Hom}}
\newcommand{\End}{\mathrm{End}}
\newcommand{\isom}{\xrightarrow{\sim}}
\newcommand{\CO}{\texttt{CO}}
\newcommand{\lr}[1]{\langle #1 \rangle}
\newcommand{\temp}{\mathrm{temp}}
\newcommand{\disc}{\mathrm{disc}}
\newcommand{\fl}{\mathrm{fl}}
\newcommand{\Tor}{\mathrm{Tor}}
\begin{document}

\title[p-adic groups, representations and noncommutative geometry]
{Group algebras of reductive $p$-adic groups,\\ their representations and 
their noncommutative geometry}
\date{\today}
\subjclass[2010]{20G25, 22E50, 16E40, 19K99, 20C08}
\maketitle

\begin{center}
{\Large Maarten Solleveld} \\[1mm]
IMAPP, Radboud Universiteit Nijmegen\\
Heyendaalseweg 135, 6525AJ Nijmegen, the Netherlands \\
email: m.solleveld@science.ru.nl
\end{center}

\begin{abstract}
This is a survey paper about representation theory and noncommutative geometry
of reductive $p$-adic groups $G$. The main focus points are:

1. The structure of the Hecke algebra $\mc H (G)$, the Harish-Chandra--Schwartz
algebra $\mc S (G)$ and the reduced $C^*$-algebra $C_r^* (G)$.

2. The classification of irreducible $G$-representations in terms of
supercuspidal representations.

3. The Hochschild homology and topological K-theory of these algebras.

In the final part we prove one new result, namely we compute $K_* (C_r^* (G))$
including torsion elements, in terms of equivariant K-theory of compact tori.
\end{abstract}

\tableofcontents

\section*{Introduction}

This survey paper is based on a series of lectures delivered by the author in
February--March 2025, for the thematic trimester "Representation theory and
noncommutative geometry". The lectures focused on the themes of the trimester, 
for reductive $p$-adic groups $G$. These notes serve the same general goals 
as the lectures (with less time pressure):
\begin{itemize}
\item Describe the structure of various (topological) versions of the group
algebra of $G$.
\item Explain the classification of irreducible $G$-representations in terms
of supercuspidal representations of Levi subgroups.
\item Discuss the computation of the Hochschild homology and the topological
K-theory of these group algebras.
\end{itemize}
The Hecke algebra $\mc H (G)$ of a reductive $p$-adic group $G$ has been a
popular object of study. A lot is known about this algebra and its
representations, and the more abstract part of that theory has been 
consolidated in Renard's textbook \cite{Ren}. The Harish-Chandra--Schwartz
algebra $\mc S (G)$ is important in harmonic analysis and for tempered
$G$-representations. Further, the reduced $C^*$-algebra $C_r^* (G)$ is
crucial in the noncommutative geometry of $G$. Although both topological
algebras $\mc S (G)$ and $C_r^* (G)$ have been known for more than half
a century, the results about them are scattered in the literature. We bring
some of those results together in these notes

Roughly speaking, in Section \ref{sec:1} we will discuss the following 
relations between the three group algebras of $G$:\\[1mm]

\begin{tabular}{ccc}
\multicolumn{3}{c}{{\bf Group algebras}}  \\
$\mc H (G)$ & $\mc S (G)$ & $C_r^* (G)$ \\[2mm]
\multicolumn{3}{c}{{\bf Locally constant functions on group}} \\
compactly supported & rapidly decaying & "$C^*$-norm bounded" \\[2mm]
\multicolumn{3}{c}{{\bf Representations}} \\
smooth & tempered & tempered unitary \\[2mm]
\multicolumn{3}{c}{{\bf Fourier transform}} \\
algebraic sections & differentiable sections & continuous sections \\[2mm]
\end{tabular}

The classification of irreducible (smooth, complex) $G$-representations has 
two main parts:
\begin{enumerate}[(i)]
\item supercuspidal $G$-representations,
\item irreducible representations in the parabolic inductions of a given
set of supercuspidal representations of a Levi subgroup of $G$.
\end{enumerate}
Part (i) is rather arithmetic, and outside the scope of these notes. We refer
the reader to the surveys \cite{Afg,Fin}. 
Part (ii) is more geometric and highly relevant for the noncommutative geometry of $G$. 
Moreover there is a beautiful solution to (ii), which was already conjectured around 
2012 by ABPS (Aubert, Baum, Plymen and the author). 
We recall that every Bernstein block $\Rep (G)^{\mf s}$ 
in the category of smooth $G$-representations comes with:
\begin{itemize}
\item a complex torus $T_{\mf s}$ which parametrizes the underlying supercuspidal
representations,
\item a finite group $W_{\mf s}$ such that the centre of $\Rep (G)^{\mf s}$ is
$\mc O (T_{\mf s})^{W_{\mf s}} = \mc O (T_{\mf s}/W_{\mf s})$.
\end{itemize}
A simplified version of the ABPS conjecture says that there is a canonical bijection
between 
\begin{itemize}
\item the set $\Irr (G)^{\mf s}$ of irreducible representations in $\Rep (G)^{\mf s}$,
\item the set of irreducible representations of the crossed product 
$\mc O (T_{\mf s}) \rtimes W_{\mf s}$.
\end{itemize}
Actually, in general one has to extend both $T_{\mf s}$ and $W_{\mf s}$ by a finite
group, and one has to twist the group algebra of $W_{\mf s}$ by a 2-cocycle. For the
sake of presentation, we suppress those details in the introduction.

The proof of the ABPS conjecture \cite{SolEnd} entails several steps:
\begin{itemize}
\item $\Rep (G)^{\mf s}$ is made equivalent to the module category of the 
$G$-endomorphism algebra of a progenerator $\Pi_{\mf s}$.
\item General analysis of the structure of $\End_G (\Pi_{\mf s})$.
\item Relate localized versions of $\End_G (\Pi_{\mf s})$ to graded Hecke algebras.
\item Study the representation theory of graded Hecke algebras.
\item Classify the irreducible representations of a graded Hecke algebra in 
terms of those of a crossed product algebra.
\item Classify the irreducible representations of a crossed product algebra.
\end{itemize}
In Sections \ref{sec:2} and \ref{sec:3} we treat all these topics, in survey style
(and in a different order).\\

After that we come to the noncommutative geometric part of the paper. Section \ref{sec:4}
starts with an introduction to Hochschild homology for algebras, based on \cite{Lod}
and focussing on algebras that are close to commutative. Then we discuss the Hochschild
homology of $\mc H (G)$ and of $\mc H (G)^{\mf s}$, the summand of $\mc H (G)$ 
associated to $\Rep (G)^{\mf s}$. The latter is computed via the Hochschild 
homology of graded Hecke algebras. It turns out that $\mc H (G)^{\mf s}$ has the\ 
same Hochschild homology as the crossed product $\mc O (T_{\mf s}) \rtimes W_{\mf s}$, which is known from \cite{Nis}.

Section \ref{sec:5} is dedicated to the Hochschild homology of topological algebras
related to $\mc S (G)$. There is a lot technique behind this, because it involves a
mix of homological algebra and functional analysis. We briefly discuss the setup and 
some examples of such topological Hochschild homology groups. 
Let $\mc S (G)^{\mf s}$ be the summand of $\mc S (G)$ associated to $\Rep (G)^{\mf s}$
and let $T_{\mf s}^u$ be the compact real subtorus of $T_{\mf s}$ that parametrizes
the unitary supercuspidal representations underlying $\Rep (G)^{\mf s}$. Then
\begin{equation}\label{eq:1}
HH_n (\mc S (G)^{\mf s}) \cong HH_n ( C^\infty (T_{\mf s}^u) \rtimes W_{\mf s}) ,
\end{equation}
and the right hand side had already been computed earlier \cite{Bry}.
We point out that $HH_* (C_r^* (G))$ is less interesting: it recovers the
cocenter $C_r^* (G) / [C_r^* (G),C_r^* (G)]$ but nothing more. 

The isomorphism \eqref{eq:1} and its analogue for $\mc H (G)^{\mf s}$ can be regarded
as a version of the ABPS conjecture in Hochschild homology. From there it is only a
small step to compute the periodic cyclic homology $HP_*$ of $\mc H (G)$ and $\mc S (G)$,
again in terms of crossed product algebras.

Finally we come to topological K-theory in Section \ref{sec:6}, the most truly 
noncommutative geometric part of the paper. We need K-theory both for Banach algebras
and for Fr\'echet algebras, and we recall some general results in the latter setting.
Via Chern characters, K-theory is related to periodic cyclic homology, which leads
to isomorphisms
\begin{equation}\label{eq:2}
K_* (C_r^* (G)) \otimes_\Z \C \cong K_* (\mc S (G))_\Z \C \cong HP_* (\mc S (G)) .
\end{equation}
These results can be compared with the Baum--Connes conjecture, which was proven 
for $G$ in \cite{Laf}. We formulate the comparison with a completely algebraic
counterpart to the Baum--Connes conjecture from \cite{HiNi}.

To determine $K_* (C_r^* (G))$ including torsion elements, we abandon the survey
style and add some new results in the last three paragraphs. Our idea is to transfer
the setup used for $\mc H (G)^{\mf s}$ to the summand $C_r^* (G)^{\mf s}$ of 
$C_r^* (G)$. We construct a progenerator $\Pi_{\mf s}^c$ for the module category
of $C_r^* (G)^{\mf s}$, and an action of $C(T_{\mf s}^u) \rtimes W_{\mf s}$ on
$\Pi_{\mf s}^c$. We deduce isomorphisms
\begin{equation}\label{eq:3}
K_* (C_r^* (G)^{\mf s}) \cong K_* (C(T_{\mf s}^u) \rtimes W_{\mf s}) \cong
K_{W_{\mf s}}^* (T_{\mf s}^u) , 
\end{equation}
where the rightmost term denotes the $W_{\mf s}$-equivariant K-theory of the
topological space $T_{\mf s}^u$. In Theorem \ref{thm:6.22}, the precise version of
\eqref{eq:3}, $T_{\mf s}^u$ and $W_{\mf s}$ are replaced by finite covers, and a
twist by a 2-cocycle of the extension of $W_{\mf s}$ is involved. In the end, 
that computes $K_* (C_r^* (G)^{\mf s})$ in terms of "twisted" 
$W_{\mf s}$-equivariant vector bundles on $T_{\mf s}^u$. That settles the ABPS
conjecture in topological K-theory \cite{ABPS2}.\\

\textbf{Acknowledgements} \

This paper was conceived when the author visited the Institut Henri Poincar\'e
in Paris (France), for the trimester program "Representation theory and noncommutative
geometry". It is a pleasure to thank all the organizers of this program for their
efforts, and the Institut Henri Poincar\'e for the hospitality.

\renewcommand{\theequation}{\arabic{section}.\arabic{equation}}
\counterwithin*{equation}{section}
\renewcommand{\thepart}{\Alph{part}}

\part{Representation theory of reductive $p$-adic groups}

\section{Group algebras} \label{sec:1}

\subsection{Example: $GL_1 (F)$} \

For a nice introduction to $p$-adic numbers we refer to \cite{Gou}. In this paragraph we 
mention several
well-known aspects of $p$-adic numbers without further comments.

Let $F$ be a non-archimedean local field. If $F$ has characteristic zero, then it is a finite
extension of the field of $p$-adic numbers $\Q_p$, for some prime $p$. On the other hand, 
if $F$ has characteristic
$p$, then it is a finite extension of the local function field $\F_p ((T))$.

Let $v_F : F \to \Z \cup \{\infty\}$ be the discrete valuation and fix an element $\varpi_F$ 
with $v_F (\varpi_F) = 1$. Let $\mf o_F = v_F^{-1}(\Z_{\geq 0} \cup \{\infty\})$ be the ring 
of integers of $F$ and let $\varpi_F \mf o_F = v_F^{-1}(\Z_{> 0} \cup \{\infty\})$ be its 
unique maximal ideal. The residue field $k_F = \mf o_F / \varpi_F \mf o_F$ is a finite field 
of characteristic $p$, whose cardinality we denote by $q_F$.

\begin{ex}
In $F = \Q_p$ we have $v_F (p^n a / b) = n$ for $a,b \in \Z$ not divisible by $p$. Further 
$\mf o_F = \Z_p$ and we can take $\varpi_F = p$. Then $k_F = \Z_p / p \Z_p = \Z / p \Z = \F_p$.

In $F = \F_p ((T))$ we have $v_F \big( \sum_{n=N}^\infty a_n T^n \big) = N$ when 
$a_n \in \F_p$ and $a_N \neq 0$. Here $\mf o_F = \F_p [[T]]$ and one takes $\varpi_F = T$. 
Then $k_F = \F_p [[T]] / T \F_p [[T]] = \F_p$.
\end{ex}

On $F$ one defines the absolute value $|x|_F = q_F^{-v_F (x)}$. We note that $v_F (0) = \infty$ 
and that $|0|_F = 0$. The absolute value defines a metric $d_F (x,y) = |x-y|_F$ on $F$, 
with respect to which $F$ is complete. Since the metric takes values in $q_F^\Z \cup \{0\}$, 
the image of $d_F$ does
not contain any interval in $\R$. Hence $F$ with the metric topology is a totally disconnected
Hausdorff space. It is not discrete though, for instance the sequence $(\varpi_F^n)_{n=1}^\infty$
converges to 0 in $F$. By the completeness of $F$ with respect to $d_F$ and the finiteness 
of $k_F$, $F$ is locally compact. The ring $\mf o_F$ is compact.

We shall discuss various group algebras of algebraic groups over $F$. Let us start with the
simplest example of such a group: $F^\times = GL_1 (F)$. Here and below, we denote the group of
invertible elements in a unital ring $R$ by $R^\times$. For $\mf o_F$ that gives
\[
\mf o_F^\times = \mf o_F \setminus \varpi_F \mf o_F = v_F^{-1}(0).
\] 
Notice that this is strictly smaller than $\mf o_F \cap F^\times = \mf o_F \setminus \{0\}$. 
In fact $\mf o_F^\times$ is the unique maximal compact subgroup of $F^\times$. We note that the 
valuation $v_F$ induces a group isomorphism $F^\times / \mf o_F^\times \cong \Z$.

For simplicity we look only at functions on $F^\times$ that are invariant under $\mf o_F^\times$.
As $F^\times$ is abelian, those are the same as $\mf o_F^\times$-biinvariant functions on
$F^\times$. Three interesting algebras of such functions are:
\begin{itemize}
\item The Hecke algebra 
\begin{equation}\label{eq:1.1}
\mc H (F^\times )^{\mf o_F^\times} = \C [F^\times / \mf o_F^\times] 
\cong \C [\Z] \cong \mc O (\C^\times) ,
\end{equation}
where $\mc O$ means regular functions on an affine variety.
\item The Harish-Chandra--Schwartz algebra
\begin{equation}\label{eq:1.2}
\mc S (F^\times)^{\mf o_F^\times} = \mc S (F^\times / \mf o_F^\times) \cong \mc S (\Z) =
\{ \text{Schwartz functions } f : \Z \to \C \} \cong C^\infty (S^1) ,
\end{equation}
where $S^1$ is unit circle in $\C$.
\item The reduced $C^*$-algebra
\begin{equation}\label{eq:1.3}
C_r^* (F^\times)^{\mf o_F^\times} = 
C_r^* (F^\times / \mf o_F^\times) \cong C_r^* (\Z) \cong C(S^1) .
\end{equation}
\end{itemize}
Remarkable here is that, although $F^\times$ is rather complicated and strange as a topological
space (totally disconnected but not discrete), these three algebras of functions on $F^\times$ 
are very nice and well-behaved. They are among the standard examples of algebras from a course 
on (respectively) algebraic geometry, differential geometry and topology.

\subsection{Definitions and first properties} \

From now on $\mc G$ is a connected reductive algebraic group defined over $F$. We call 
$G = \mc G (F)$ a reductive $p$-adic group (even though $F$ is not necessarily a $p$-adic 
field, it may have positive characteristic). Of course $G$ can be endowed with the Zariski
topology, but for the purposes of representation theory it is more useful to consider 
a topology on $F$ which comes from the metric topology on $F$. 

We spell out this topology. Since $\mc G$ is a linear algebraic group, it can be embedded in $GL_n$
for some $n \in \N$. On the matrix ring $M_n (F)$ we define a norm by
\[
\| (a_{ij})_{i,j=1}^n \|_F = \max_{i,j} |a_{ij}|_F .
\]
That in turn yields a metric on $GL_n (F)$ by 
\begin{equation}\label{eq:1.4}
d (A,B) = \max \{ \| A - B \|_F, \|A^{-1} - B^{-1}\|_F \} ,
\end{equation}
where the term with the inverses is needed to make inversion on $GL_n (F)$ continuous.
We restrict this to a metric $d$ on $G \subset GL_n (F)$. While $d$ depends on the choice of an
embedding $\mc G \to GL_n$, the resulting topology on $G$ does not. 
This makes $G$ into a totally disconnected locally compact group. (By convention locally compact
groups are Hausdorff.) 

Since the metric on $G$ only takes values in $q_F^\Z \cup \{0\}$, every open ball in $G$ is also
a closed ball. By the local compactness every closed ball is compact, so $G$ has lots of compact
open subsets. Even better, $G$ has many compact open subgroups. 

Here is one construction of small compact open subgroups. The group
\[
GL_n (\mf o_F) = \{ A \in M_n (\mf o_F) : \det (A) \in \mf o_F^\times \}
\]
is compact and open in $GL_n (F)$. Namely, $M_n (\mf o_F)$ and 
\[
M_n (\mf o_F)^\sharp := \{A \in M_n (\mf o_F) : \det (A) \in \varpi_F \mf o_F \}
\] 
are compact open subsets of $M_n (F)$, so their difference is compact. Further
\[
GL_n (\mf o_F) = M_n (\mf o_F) \setminus M_n (\mf o_F)^\sharp =
(M_n (\mf o_F) \cap GL_n (F)) \setminus (M_n (\mf o_F)^\sharp \cap GL_n (F)) ,
\]
which is open in $GL_n (F)$. For any $m \in \Z_{\geq 0}$ we have the ring homomorphism
$\mf o_F \to \mf o_F / \varpi_F^m \mf o_F$. This induces a group homomorphism
\[
\mr{mod}_{\varpi_F^m} : GL_n (\mf o_F) \to GL_n (\mf o_F / \varpi_F^m \mf o_F) ,
\]
whose image is finite because
\[
| \mf o_F / \varpi_F^m \mf o_F| = q_F^m < \infty .
\]
Then $GL_n (F)_m := \ker (\mr{mod}_{\varpi_F^m})$ is an open subgroup of the compact 
group $GL_n (\mf o_F)$, so automatically closed and compact. In fact $GL_n (F)_m$
equals the closed ball of radius $q_F^{-m}$ around $I_n$ in $GL_n (F)$. It follows that
\begin{equation}\label{eq:1.5}
G_m := G \cap GL_n (F)_m
\end{equation}
is a compact open subgroup of $G$. This is known as a congruence subgroup of $G$, because
it consists of matrices that are equal to the identity modulo $\varpi_F^m \mf o_F$. With
respect to the metric $d$ from \eqref{eq:1.4}, the group $G_m$ is the closed ball of radius 
$q_F^{-m}$ and the open ball of radius $q_F^{1/2-m}$, both around the unit element $I_n$. 
In particular the decreasing sequence of compact open subgroup $\{G_m : m \in \Z_{\geq 0}\}$ 
is a neighborhood basis of $I_n$ in $G$.

We denote the set of compact open subgroups of $G$ by $\CO (G)$. A large supply of such 
subgroups comes from Bruhat--Tits theory, for which we refer to \cite{KaPr,Tit}. For each
$K \in \CO (G)$, the spaces $G/K$ and $K \backslash G$ are discrete and countable. For the
latter, notice that $G$ is a countable union of compact balls, and that each compact 
subset of $G$ is covered by finitely many cosets of the open group $K$.

The above provides, among others, ways to partition $G$ are as a disjoint union of compact open
subsets, just take the left cosets of one of the subgroups $G_m$. The abundance of compact open
subsets means that $G$ admits many locally constant functions. For instance, any function on
the discrete space $G / G_m$ can be inflated to a locally constant function on $G$. We let
$C^\infty (G)$ be the vector space of locally constant functions $f : G \to \C$. It is an
algebra with respect to pointwise multiplication. The notation $C^\infty$ comes from manifolds,
even though on a totally disconnected space like $G$ there is no good notion of differentiability
for general functions. The reasoning is that locally constant functions are the only functions
on $G$ that one can differentiate for sure: all their partial derivatives are 0.

We fix a left Haar measure $\mu$ on $G$. The group $G$ is unimodular (because it is reductive,
see \cite[\S V.5.4]{Ren}), so $\mu$ is a also a right Haar measure. The convolution product 
of two integrable functions $f_1, f_2 : G \to \C$ is defined as
\[
(f_1 * f_2)(x) = \int_G f_1 (x g^{-1}) f_2 (g) \textup{d} g =
\int_G f_1 (g) f_2 (g^{-1} x) \textup{d} g .
\]
Here and later we suppress the Haar measures from the notations of integrals. We note that the
convolution product generalizes the multiplication in the full group algebra $\C[G]$, with
a modification for the measures of sets.
For instance, let $f_i = 1_{g_i K_i}$ be the indicator function of a left coset of 
$K_i \in \CO (G)$. For $x = g_1 k_1 g_2 k_2$ with $k_i \in K_i$ one computes 
\begin{align*}
(1_{g_1 K_1} * 1_{g_2 K_2})(x) & = \int_{g K_1} 1_{g_2 K_2} (g^{-1} x) \textup{d} g =
\int_{K_1} 1_{g_2 K_2}(h^{-1} g_1^{-1} x) \textup{d} h \\
& = \int_{K_1} 1_{g_2 K_2}(h^{-1} k_1 g_2 k_2) \textup{d} h = 
\int_{K_1} 1_{g_2 K_2}(k g_2 k_2) \textup{d}k \\
& = \mu (K_1 g_2 k_2 \cap g_2 K_2) = \mu (K_1 \cap g_2 K_2 g_2^{-1}) .
\end{align*}
The support of $f_1 * f_2$ is always contained in $\mr{supp}(f_1) \cdot \mr{supp}(f_2)$, so
\begin{equation}\label{eq:1.6}
1_{g_1 K_1} * 1_{g_2 K_2} = \mu (K_1 \cap g_2 K_2 g_2^{-1}) \, 1_{g_1 K_1 g_2 K_2} .
\end{equation}
This equation shows that the convolution product generalizes the multiplication
in the full group algebra $\C[G]$, with a modification for the measures of sets.
If we would replace $\mu$ by the counting measure on $G$ and we would only
use functions with finite support, then \eqref{eq:1.6}
would recover the multiplication in $\C [G]$.

\begin{defn}
The Hecke algebra $\mc H (G)$ is the vector space $C_c^\infty (G)$ of locally constant compactly
supported functions $f : G \to \C$, endowed with the convolution product.
\end{defn}

The $\C$-algebra $\mc H (G)$ is associative but not unital. Namely, from \eqref{eq:1.6} we
see that a unit element of $\mc H (G)$ would have to be supported only at the identity of $G$.
But $\{e\}$ is open in $G$ if and only if $G$ is finite, which happens only when $\mc G = \{e\}$.

For any $K \in \CO (G)$, the $K$-biinvariant functions form a subalgebra
\[
\mc H (G,K) = C_c (K \backslash G / K)
\]
of $\mc H (G)$. This subalgebra has
\[
\lr{K} := \mu (K)^{-1} 1_K 
\]
as unit, as one can check like in \eqref{eq:1.6}.

\begin{prop}\label{prop:1.1}
\enuma{
\item $\mc H (G) = \bigcup_{K \in \CO (G)} \mc H (G,K) = 
\bigcup\nolimits_{m=1}^\infty \mc H (G,G_m)$.
\item The algebra $\mc H (G)$ has local units: for every finite subset 
$S \subset \mc H (G)$ there exists an idempotent $e_S \in \mc H (G)$ such that
$e_S \, s = s = s \, e_S$ for all $s \in S$.
\item $\mc H (G)$ has countable dimension.
}
\end{prop}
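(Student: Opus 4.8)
The plan is to prove the three parts in order, since each one builds on a concrete understanding of the filtration of $\mc H(G)$ by the congruence subgroups $G_m$.

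For part (a), the key point is that any $f \in \mc H(G)$ is locally constant and compactly supported. Being locally constant, for each $x \in \mr{supp}(f)$ there is some compact open subgroup fixing $f$ on a neighborhood of $x$ on both sides; by the neighbourhood basis property of $\{G_m\}$ established around \eqref{eq:1.5}, we may take this subgroup to be some $G_{m(x)}$. Covering the compact set $\mr{supp}(f)$ by finitely many double cosets $G_{m(x)} x G_{m(x)}$ and taking $m = \max_x m(x)$ (using that the $G_m$ are decreasing, so $G_m \subseteq G_{m(x)}$ for all the relevant $x$), we get $f \in \mc H(G,G_m)$. This shows $\mc H(G) \subseteq \bigcup_m \mc H(G,G_m)$, and the reverse inclusions $\mc H(G,G_m) \subseteq \bigcup_{K} \mc H(G,K) \subseteq \mc H(G)$ are immediate. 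I expect this to be routine.

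For part (b), given a finite set $S \subset \mc H(G)$, use part (a) to find a single $m$ with $S \subseteq \mc H(G,G_m)$; then take $e_S = \lr{G_m} = \mu(G_m)^{-1} 1_{G_m}$. Since $\lr{G_m}$ is the unit of the subalgebra $\mc H(G,G_m)$ (as noted just before the proposition), we have $e_S s = s = s e_S$ for all $s \in S$, and $e_S$ is idempotent because $\lr{G_m}\lr{G_m} = \lr{G_m}$ in $\mc H(G,G_m)$. The only thing to be careful about is that $S$ finite genuinely forces a common $G_m$, which again follows from the $G_m$ being decreasing.

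For part (c), the idea is that $\mc H(G) = \bigcup_m \mc H(G,G_m)$ is a countable union, so it suffices to show each $\mc H(G,G_m)$ has countable dimension; but $\mc H(G,G_m) = C_c(G_m \backslash G / G_m)$ has as a basis the indicator functions $1_{G_m x G_m}$ of the double cosets, and the double coset space $G_m \backslash G / G_m$ is countable since $G_m \backslash G$ is already discrete and countable (as remarked for any $K \in \CO(G)$). The mild obstacle across all three parts is the same bookkeeping: one must repeatedly invoke that a \emph{finite} collection of compact open subgroups contains a common member of the decreasing chain $\{G_m\}$, and that $\mr{supp}(f)$ being compact lets one pass from a pointwise-local statement to a uniform one — there is no deep difficulty here, only the need to state these reductions cleanly.
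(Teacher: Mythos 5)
Your proposal is correct and follows essentially the same approach as the paper. The only cosmetic difference is in part (a): you cover the compact support directly by double cosets $G_{m(x)}\,x\,G_{m(x)}$, while the paper covers each level set $f^{-1}(z)$ for the finitely many nonzero values $z$; and in part (b) the paper obtains a common $K$ by intersecting the finitely many $K_s$ rather than by invoking the nesting of the $G_m$, but both devices are equivalent here.
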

\begin{proof}
(a) Every $f \in \mc H (G)$ is compactly supported and locally constant, so takes only finitely
many values in $\C$. For each nonzero value $z$, $f^{-1}(z)$ is a compact open subset of $G$.
Since the $G_m$ form a neighorhood basis of $e$ in $G$, there exists $m \in \N$ such that
$f^{-1}(z)$ is a union of $G_m$-double cosets. As $f$ takes only finitely many values,
there exists an $m$ that works for all values $z \neq 0$. Then $f \in \mc H (G,G_m)$. \\
(b) By part (a) there exist $K_s \in \CO (G)$ such that $s \in \mc H (G,K_s)$. Define 
$K = \bigcap_{s \in S} K_s$, this is a compact open subgroup because $S$ is finite.
Now $s \in \mc H (G,K)$ for all $s \in S$, so $e_S = \lr{K}$ has the required property.\\
(c) The space $G_m \backslash G / G_m$ is countable, so $\mc H (G,G_m)$ has countable
dimension. Combine that with part (a).
\end{proof}

On $C_c (G)$ we have the standard norms
\[
\| f \|_r = \big( \int_G |f(g)|^r \textup{d}g \big)^{1/r} \; \text{with } r \in \R_{\geq 1} 
\qquad \text{and} \qquad \| f \|_\infty = \sup_{g \in G} |f(g)| .
\]
\begin{lem}\label{lem:1.3}
$\mc H (G)$ is dense in $C_c (G)$ for the norms $\| \cdot \|_r$ with 
$r \in \R_{\geq 1} \cup \{\infty\}$.
\end{lem}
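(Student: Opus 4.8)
The statement is a standard density result, and the natural route is to approximate an arbitrary $f \in C_c(G)$ by its "sampling" on a congruence subgroup $G_m$, exploiting that $f$ is uniformly continuous and that the $G_m$ form a neighbourhood basis of $e$. Concretely, first I would fix $f \in C_c(G)$ and $\varepsilon > 0$. Since $f$ is continuous with compact support, it is uniformly continuous, so there is a compact open subgroup $K \in \CO(G)$ — which by Proposition~\ref{prop:1.1}(a) we may take to be some $G_m$ — such that $|f(x) - f(y)| < \varepsilon$ whenever $xy^{-1} \in K$. Next I would cover $\mathrm{supp}(f)$ by finitely many right cosets $K g_1, \dots, K g_N$ (possible because $\mathrm{supp}(f)$ is compact and $K$ is open), chosen disjoint, and define $f_K = \sum_{i=1}^N f(g_i)\, 1_{K g_i}$. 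This $f_K$ lies in $\mc H(G)$: it is locally constant (constant on each $K g_i$ and zero off a compact set) and compactly supported.

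**The estimate.** By construction $|f(x) - f_K(x)| < \varepsilon$ for every $x$ in the (compact, open) set $U := \bigcup_i K g_i$, and both $f$ and $f_K$ vanish off $U$, so $\|f - f_K\|_\infty < \varepsilon$. This immediately handles the case $r = \infty$. For finite $r \geq 1$, I would note $\|f - f_K\|_r^r = \int_U |f - f_K|^r \leq \varepsilon^r\, \mu(U)$, and since $U \subseteq \mathrm{supp}(f) \cdot K$ has finite measure bounded independently of the refinement (one can arrange $\mu(U) \leq \mu(\mathrm{supp}(f)) + 1$, or simply fix $K$ small enough at the outset so that $\mu(U)$ is controlled by, say, $\mu(\{x : d(x, \mathrm{supp}(f)) \leq q_F^{-1}\})$), we get $\|f - f_K\|_r \leq \varepsilon\, \mu(U)^{1/r} \to 0$ as $\varepsilon \to 0$. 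Letting $\varepsilon$ run through $1/n$ produces a sequence in $\mc H(G)$ converging to $f$ in $\|\cdot\|_r$.

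**The main subtlety.** The only point requiring a little care is the uniform continuity of $f$: on a totally disconnected group this is not automatic from pointwise continuity, but it follows from compactness of the support together with the fact that $G$ is a topological group with a neighbourhood basis of compact open subgroups — for each $x \in \mathrm{supp}(f)$ pick $K_x \in \CO(G)$ with $f$ constant on $x K_x$, cover $\mathrm{supp}(f)$ by finitely many $x K_x$, and intersect. Equivalently, $f$ factors through $G/K$ for a single sufficiently small $K \in \CO(G)$, which is the sharper statement and the one I would actually record. Once that reduction is in hand, everything else is the elementary measure estimate above; there is no genuine obstacle, so I would keep the write-up brief.
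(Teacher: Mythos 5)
Your first two paragraphs give a correct proof and are, up to the choice of right cosets $Kg_i$ in place of the paper's double cosets $G_m g_{m,i} G_m$, essentially the same argument: sample $f$ on a fine coset decomposition, use uniform continuity for the $\|\cdot\|_\infty$ estimate, and bound the measure of the support (by fixing a $K_0 \supseteq K$ once and for all, as you note) for the $\|\cdot\|_r$ estimates.

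The ``main subtlety'' paragraph, however, contains an error that would trivialize the lemma if it were true. You assert that for each $x \in \mathrm{supp}(f)$ one can pick $K_x \in \CO(G)$ with $f$ \emph{constant} on $x K_x$, and then upgrade this to the ``sharper statement'' that $f$ factors through $G/K$ for a single small $K$. But an element of $C_c (G)$ is merely continuous, not locally constant; if your sharper statement held, $f$ would already lie in $\mc H (G)$ and there would be nothing to prove. On a profinite group the locally constant functions form a \emph{proper} dense subspace of the continuous functions --- for instance, on $\mf o_F^\times$ the function $g \mapsto |g-1|_F$ is continuous but not locally constant near $g = 1$. Continuity of $f$ at $x$ only gives, for each $\varepsilon > 0$, some $K_x$ with $|f(y) - f(x)| < \varepsilon$ for $y \in x K_x$; a finite subcover of $\mathrm{supp}(f)$ and an intersection then yields the genuine $\varepsilon$-form of uniform continuity. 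That is exactly what your main estimate uses and what you should record, not the factorization claim.
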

\begin{proof}
Let $f \in C_c (G)$. For $m \in \Z_{\geq 1}$ we define $f_m \in \mc H (G,G_m)$ as follows:
pick a set of representatives $\{ g_{m,i} \}_i$ for $G_m \backslash G / G_m$ and put
$f_m (g) = f (g_{m,i})$ for $g \in G_m g_{m,i} G_m$. By the continuity of $f$ and the 
compactness of its support, the sequence $(f_m )_{m=1}^\infty$ converges uniformly to $f$.
Hence it also converges to $f$ with respect to the norms $\| \cdot \|_r$.
\end{proof}

The Banach algebra $L^1 (G)$ acts continuously on the Hilbert space $L^2 (G)$, by the 
convolution product. That yields an injective homomorphism from $L^1 (G)$ to
$\mc B (L^2 (G))$, the $C^*$-algebra of bounded linear operators on $L^2 (G)$. We make
$\mc H (G)$ and $L^1 (G)$ into *-algebras by
\[
f^* (g) = \overline{f(g^{-1})} .
\]

\begin{defn}
The reduced $C^*$-algebra $C_r^* (G)$ is the closure of $L^1 (G)$ in $\mc B (L^2 (G))$,
with respect to the operator norm.
\end{defn}

By Lemma \ref{lem:1.3} $\mc H (G)$ is dense in $L^1 (G)$, hence $\mc H (G)$ is also
dense in $C_r^* (G)$. In other words, $C_r^* (G)$ can be regarded as the completion of
$\mc H (G)$ for the operator norm of $\mc B (L^2 (G))$. Lemma \ref{lem:1.3} also says
that $\mc H (G)$ is dense in $L^2 (G)$. Therefore the operator norm of $\mc H (G)$ 
acting on $L^2 (G)$ equals the operator norm of $\mc H (G)$ acting on itself by
left multiplication. Thus $C_r^* (G)$ can be constructed entirely in terms of $\mc H (G)$,
as a $C^*$-completion of that *-algebra.

For $K \in \CO (G)$, we let 
\[
C_r^* (G,K) = \lr{K} C_r^* (G) \lr{K}
\] 
be the sub-$C^*$-algebra of $K$-biinvariant functions in $C_r^* (G)$. In contrast with
$C_r^* (G)$, the algebra $C_r^* (G,K)$ has a unit, namely $\lr{K}$. One may identify
$C_r^* (G,K)$ with the closure of $\mc H (G,K) = \lr{K} \mc H (G) \lr{K}$ in $C_r^* (G)$
or in $\mc B (L^2 (G))$. It follows from Proposition \ref{prop:1.1} that
\begin{equation}\label{eq:1.9}
C_r^* (G) = \lim_{m \to \infty} C_r^* (G,G_m) = \lim_{K \in \CO (G)} C_r^* (G,K) ,
\end{equation}
where the limit is meant in the category of Banach algebras.

The original definition of the Harish-Chandra--Schwartz algebra of $G$ \cite{HC}
is rather complicated, we prefer the simpler construction in \cite{Vig}. 
Using the embedding $G \to GL_n (F)$, we define a length function on $G$ by
\[
g \mapsto \log \big( \max \big\{ \|g\|_F, \|g^{-1}\|_F \big\} \big) .
\]
Then the function 
\[
\sigma : G \to \R_{\geq 1} ,\; 
\sigma (g) = 1 + \log \big( \max \big\{ \|g\|_F, \|g^{-1}\|_F \big\} \big) 
\]
is a scale, which means that it satisfies $\sigma (g^{-1}) = \sigma (g)$ and 
$\sigma (g g') \leq \sigma (g) \sigma (g')$. For $m \in \Z_{>0}$ we define a norm $\nu_m$
on $C_c (G)$ by $\nu_m (f) = \| \sigma^m f \|_2$.

\begin{defn}\label{def:1.SG}
For $K \in \CO (G)$, $\mc S (G,K)$ is the completion of $\mc H (G,K)$ with respect to
the family of norms $\nu_m \; (m \in \Z_{>0})$. The Harish-Chandra--Schwartz algebra 
of $G$ is $\mc S (G) = \bigcup_{K \in \CO (G)} \mc S (G,K)$, endowed with the inductive
limit topology.
\end{defn}

Thus $\mc S (G)$ consists of locally constant functions on $G$ that decay rapidly.
These functions need not have compact support, but every one of them is 
biinvariant under some $K \in \CO (G)$.
Some important properties of the algebras $\mc S (G,K)$ where proven by Vign\'eras:

\begin{thm}\label{thm:1.4} \textup{\cite[Propositions 10, 13, 28]{Vig}}
\enuma{
\item $\mc S (G,K)$ is a nuclear Fr\'echet *-algebra with unit $\lr{K}$.
\item $\mc S (G,K)$ is a dense subalgebra of $C_r^* (G,K)$, with a finer topology.
\item $\mc S (G,K) \cap C_r^* (G,K)^\times = \mc S (G,K)^\times$, and this set is
open in $\mc S (G,K)$.
}
\end{thm}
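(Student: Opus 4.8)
The plan is to reduce everything to two inputs: the submultiplicativity of the norms $\nu_m$ together with Harish-Chandra's fundamental estimates for the elementary spherical function $\Xi$, and the standard spectral-invariance criterion for Fréchet subalgebras of $C^*$-algebras. The formal parts of (a) are cheap, the algebra property in (a) is the real obstacle, and (b)--(c) are then assembled from (a) plus general nonsense.

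\textbf{Part (a).} That $\mc S(G,K)$ is a Fréchet space is immediate: by Definition \ref{def:1.SG} it is the completion of $\mc H(G,K)$ for the countable family $\{\nu_m\}$, and this family is separating because $\sigma\geq 1$ forces $\nu_1(f)=\|\sigma f\|_2\geq\|f\|_2$ and $\|\cdot\|_2$ is a norm. The involution extends $\nu_m$-isometrically: since $G$ is unimodular and $\sigma(g^{-1})=\sigma(g)$, $|f^*(g)|=|f(g^{-1})|$, one gets $\nu_m(f^*)=\nu_m(f)$. Nuclearity follows from the usual mechanism: modelling $K$-biinvariant functions as $\ell^2$ over the double cosets $K\backslash G/K$, the linking map between the local Hilbert spaces of $\nu_{m'}$ and $\nu_m$ is diagonal with entries $\sigma(g)^{m-m'}$, hence Hilbert--Schmidt once $m'-m$ exceeds the rank (the number of double cosets with $\sigma$ bounded grows only polynomially), and two such maps compose to a trace-class map. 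It remains to prove that convolution extends continuously, i.e.\ that for each $m$ there are $m',m''$ and $C_m$ with $\nu_m(f_1*f_2)\leq C_m\,\nu_{m'}(f_1)\,\nu_{m''}(f_2)$ on $\mc H(G,K)$; granting this, $\lr K\in\mc H(G,K)\subset\mc S(G,K)$ stays a two-sided unit by density.

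\textbf{The multiplicativity estimate --- the main obstacle.} From $\sigma(x)=\sigma(g\cdot g^{-1}x)\leq\sigma(g)\,\sigma(g^{-1}x)$ one gets the pointwise bound
\[
\sigma(x)^m\,|(f_1*f_2)(x)| \;\leq\; \big(\,|\sigma^m f_1| * |\sigma^m f_2|\,\big)(x),
\]
so it suffices to bound $\|a*b\|_2$ for $a,b\geq 0$ by $\nu$-norms of $f_1,f_2$. One cannot do this with Young's inequality: reductive $p$-adic groups have exponential volume growth with respect to $\sigma$, so Schwartz functions need not lie in $L^1(G)$ and $\|\sigma^m f_1\|_1$ is typically infinite. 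Genuine cancellation is required, and this is exactly where Harish-Chandra's theory enters. I would proceed by identifying $\mc S(G,K)$, with equivalent topology, with the $K$-biinvariant part of the Harish-Chandra--Schwartz space $\mc C(G)$, using that $\nu_m(f)\asymp\inf\{C:|f(g)|\leq C\,\Xi(g)\,\sigma(g)^{-m}\ \forall g\}$ --- the comparison of the weighted $L^2$-norm with the pointwise bound being precisely the point where the volume growth is cancelled by $\Xi(g)^2\asymp\mu(KgK)^{-1}$. The algebra property of $\mc C(G)^K$ then rests on Harish-Chandra's estimate
\[
\int_G \Xi(xg)\,\Xi(g)\,\sigma(g)^{-d}\,\textup{d}g \;\leq\; C\,\Xi(x)\,\sigma(x)^{-d+d_0}
\]
valid for $d$ large (see Waldspurger's exposition), equivalently the polynomial-loss bound $\|1_{KgK}*1_{KhK}\|_2\lesssim P(\sigma(g),\sigma(h))\,\|1_{KgK}\|_2\,\|1_{KhK}\|_2$. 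I regard this as the crux of the whole theorem; everything else is formal around it. (Alternatively one cites \cite{HC} for $\mc C(G)$ and only checks the identification $\mc S(G,K)=\mc C(G)^K$.)

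\textbf{Parts (b) and (c).} That $\mc H(G,K)$ is dense in $C_r^*(G,K)$ is already recorded (Lemma \ref{lem:1.3} gives $\mc H(G)$ dense in $C_r^*(G)$, and compression by $\lr K$ preserves density), and $\mc H(G,K)\subseteq\mc S(G,K)\subseteq C_r^*(G,K)$, so $\mc S(G,K)$ is dense. The inclusion into $C_r^*(G,K)$ is continuous once $\|f\|_{C_r^*}\leq C\,\nu_m(f)$ for a fixed $m$: writing $\|f\|_{C_r^*}=\sup_\pi\|\pi(f)\|$ over tempered $\pi$ and dominating matrix coefficients of tempered representations by $\Xi$ up to a power of $\sigma$, this again reduces to the $\Xi$-estimate of part (a). The topology is strictly finer because $C_r^*(G,K)$ contains convolution operators with non-rapidly-decaying kernels. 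For (c), $\mc S(G,K)^\times\subseteq\mc S(G,K)\cap C_r^*(G,K)^\times$ is trivial; conversely, given $a\in\mc S(G,K)$ with $a^{-1}=b\in C_r^*(G,K)$, pick $b_0\in\mc H(G,K)\subseteq\mc S(G,K)$ with $\|1-ab_0\|_{C_r^*}<1$, so $ab_0$ is invertible in the unital Banach algebra $C_r^*(G,K)$ with inverse $\sum_{n\geq0}(1-ab_0)^n$; it remains to see this Neumann series already converges in $\mc S(G,K)$. This is the assertion that $\mc S(G,K)$ is a spectrally invariant (holomorphically closed) Fréchet subalgebra, which follows from the standard criterion for ``good'' Fréchet algebras: iterating the submultiplicativity of part (a) bounds $\nu_m\big((1-ab_0)^n\big)$ by $C_m^{\,n}\,\nu_{k(m)}(1-ab_0)$-type quantities, which together with the $C^*$-spectral-radius bound $<1$ forces convergence. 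Finally $\mc S(G,K)^\times=\mc S(G,K)\cap C_r^*(G,K)^\times$ is open because $C_r^*(G,K)^\times$ is open and the inclusion $\mc S(G,K)\hookrightarrow C_r^*(G,K)$ is continuous by (b).
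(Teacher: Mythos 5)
This theorem is cited to Vign\'eras rather than proved in the paper, so there is no in-text argument to compare against; what follows assesses your sketch on its own terms. Your treatment of (a) is sound and correctly isolates the genuine obstacle, namely exponential volume growth defeating Young's inequality. It does, however, invert the economy of the reference: Vign\'eras's point --- the paper calls her construction ``simpler'' precisely for this reason --- is to avoid the $\Xi$-function machinery by working directly with the $L^2$-norms $\nu_m$, whereas you reduce her definition back to Harish-Chandra's $\mc C(G)^K$. That is a valid detour but not the cited proof.

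The gap is in (c). You claim that iterating the submultiplicativity bound yields $\nu_m\big((1-ab_0)^n\big)\leq C_m^n\,\nu_{k(m)}(1-ab_0)$ with $k(m)$ independent of $n$, and that the spectral-radius bound $\|1-ab_0\|_{C_r^*}<1$ then forces convergence of the Neumann series. Neither step is supported. Naive iteration of $\nu_m(fg)\leq C\,\nu_{m'}(f)\,\nu_{m''}(g)$ raises the controlling seminorm index at each step, since in general $m',m''>m$, so the index blows up with $n$. More seriously, even granting a bound of the shape $C_m^n\cdot(\text{const})$, this exhibits no dependence whatsoever on $\|1-ab_0\|_{C_r^*}$: the hypothesis that the $C^*$-norm is $<1$ plays no role in your estimate and therefore cannot make $\sum_n \nu_m((1-ab_0)^n)$ converge unless one happens to have $C_m<1$, which you do not control. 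What spectral invariance actually requires is a mixed estimate of the form $\nu_m(f^n)\leq P(n)\,\|f\|_{C_r^*}^{\,n-1}\,\nu_{m'}(f)$ with $m'$ \emph{fixed} and $P$ polynomial in $n$ (a ``differential norm'' or Hulanicki-type bound, or its variant obtained by repeated squaring), and deriving such an estimate for $\mc S(G,K)$ is precisely the content of the Vign\'eras proposition you are trying to reprove. Your appeal to ``the standard criterion for good Fr\'echet algebras'' names the desired conclusion but hides the step where the criterion's hypotheses have to be verified, which is where the real work is.
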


By Theorem \ref{thm:1.4}.b and \eqref{eq:1.9}, $\mc S (G)$ is contained in $C_r^* (G)$.
Like $\mc H (G)$ and $C_r^* (G)$, the Harish-Chandra--Schwartz algebra of $G$ is not 
unital. The same argument as for Proposition \ref{prop:1.1}.b shows that $\mc S (G)$ 
does have local units, for instance the idempotents $\lr{K}$ with $K \in \CO (G)$.
 
The multiplication in $\mc S (G)$ is separately continuous, that is, for any fixed 
$a \in \mc S (G)$ the maps $f \mapsto f a$ and $f \mapsto a f$ are continuous 
\cite[Lemme III.6.1]{Wal}. However, $\mc S (G)$ is not a Fr\'echet algebra, because
the topological vector space $\mc S (G)$ is not Fr\'echet. It is a strict inductive 
limit of Fr\'echet spaces, but such spaces are not metrizable \cite[Corollaire 4.2]{DiSc}.

\subsection{Classes of representations of reductive $p$-adic groups} \ 

In these notes, all representations will by default be on complex vector spaces. The best
notion of continuity for a representations of a reductive $p$-adic group is smoothness.

\begin{defn}\label{def:smooth}
A $G$-representation $(\pi,V)$ is smooth if for all $v \in V$ the stabilizer group
$G_v = \{g \in G : \pi (g) v = v\}$ is open in $G$. Equivalently, $\pi$ is smooth
if the map $\pi : G \times V \to V$ is continuous with respect to the discrete
topology on $V$.
\end{defn}

This is a rather crude inpretation of smooth, like $C^\infty (G)$. It says that for
any fixed $v \in V$ the map $G \to V : g \mapsto \pi (g) v$ is locally constant.

\begin{ex}
Let $G$ act on $\mc H (G)$ by left translations: 
\[
(\lambda (g) f)(x) = f (g^{-1} x) \qquad \text{for } g,x \in G, f \in \mc H (G) .
\]
If $f \in \mc H (G,K)$, then $\lambda (k) f = f$ for all $k \in K$. Hence
$(\lambda, \mc H (G))$ is a smooth $G$-representation.
\end{ex}

The motivation for considering the class of smooth representations comes from
profinite groups. Consider a projective limit of finite groups $H = \varprojlim H_i$.
Then each finite group $H_i$ is a quotient of $H$ and $\ker (H \to H_i)$ is an open 
subgroup of $H$. It is natural to impose that every irreducible $H$-representation 
factors through $H_i$ for some $i$. Smoothness of $H$-representations enforces that
(at least under the small extra condition that $H$ is its own profinite completion), 
and at the same time is sufficiently flexible to enable direct limits of 
$H$-representations.

Recall that every compact totally disconnected Hausdorff group is a profinite 
group, and conversely. In particular every compact subgroup of a reductive
$p$-adic group is profinite. Thus smoothness of a $G$-representation $(\pi,V)$
means that for every $K \in \CO (G)$, the restriction $\pi |_K$ belongs to the
natural class of $K$-representations.

Every smooth $G$-representation extends to a representation of $\mc H (G)$ on $V$, by
\begin{equation}\label{eq:1.10}
\pi (f) v = \int_G f(g) \pi (g) v \textup{d}g \qquad f \in \mc H (G), v \in V. 
\end{equation}
Since $f \in C_c^\infty (G)$ and $\pi$ is smooth, this integral
boils down to a finite sum and there are no convergence issues. 

It is not quite true every $\mc H (G)$-module gives
rise to a (smooth) $G$-representation, because $\mc H (G)$ is not unital. For 
instance, one could have a vector space $W$ on which $\mc H (G)$ acts by
$f \cdot w = 0$ for all $f \in \mc H (G), w \in W$. That does not correspond to
any $G$-representation, because $G$ would have to act on $W$ by invertible linear
operators.

\begin{defn}
We say that an $\mc H (G)$-module is nondegenerate if for each $v \in V$ there
exists a $K_v \in \CO (G)$ such that $\lr{K_v} v = v$.
Let $\Rep (G)$ be the category of smooth $G$-representations and let $\Mod(\mc H (G))$
be the category of nondegenerate $\mc H (G)$-modules. The morphisms in these 
categories are the $\C$-linear maps that intertwine the action of $G$ or $\mc H (G)$.
\end{defn}

\begin{lem}\label{lem:1.5}
\enuma{
\item The categories $\Rep (G)$ and $\Mod(\mc H (G))$ are naturally equivalent, 
via \eqref{eq:1.10}.
\item Every finitely generated smooth $G$-representation has countable dimension.
}
\end{lem}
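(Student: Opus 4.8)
The plan is to prove the two parts of Lemma~\ref{lem:1.5} in turn, using the local unit structure of $\mc H(G)$ established in Proposition~\ref{prop:1.1}.

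\textbf{Part (a).} First I would make the functor explicit in both directions. Given a smooth $G$-representation $(\pi,V)$, formula \eqref{eq:1.10} defines an action of $\mc H(G)$ on $V$; I would check it is well-defined (the integral is a finite sum because $v$ is fixed by some $G_m$ and $f$ is compactly supported and locally constant) and multiplicative (a short Fubini-type computation matching the convolution product). The module is nondegenerate because if $v$ is fixed by $K\in\CO(G)$ then $\pi(\lr K)v = \mu(K)^{-1}\int_K \pi(g)v\,\textup{d}g = v$. Conversely, given a nondegenerate $\mc H(G)$-module $V$, I would recover the $G$-action as follows: for $g\in G$ and $v\in V$, choose $K\in\CO(G)$ with $\lr K v = v$, and set $\pi(g)v = \lr{gKg^{-1}} \ast \lr K$ acting on $v$ — more cleanly, use that $1_{gK}\ast$ is well-behaved and define $\pi(g)v := \mu(K)^{-1}\,1_{gK}\cdot v$. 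The key points to verify are that this is independent of the choice of $K$ (use that $\lr{K'} v = v$ whenever $K'\subseteq K$ and $\lr K v=v$, together with the compatibility $1_{gK} = \sum_i 1_{g k_i K'}$ for $K'\subseteq K$), that $\pi(g)$ is invertible with inverse $\pi(g^{-1})$, that $\pi(gh)=\pi(g)\pi(h)$, and that the resulting representation is smooth (if $\lr K v = v$ then $\pi(k)v=v$ for all $k\in K$, using \eqref{eq:1.6}). Finally I would check the two constructions are mutually inverse and natural in $V$, i.e.\ that an intertwiner for one structure is an intertwiner for the other; this last check is immediate once both directions are in place.

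\textbf{Part (b).} Suppose $V$ is generated as a smooth $G$-representation by finitely many vectors $v_1,\dots,v_r$. By smoothness each $v_j$ is fixed by some $G_{m_j}$; taking $K=\bigcap_j G_{m_j}\in\CO(G)$, all $v_j$ lie in the $K$-fixed subspace $V^K = \lr K V$. Since the $v_j$ generate $V$, we have $V = \mc H(G)\cdot\{v_1,\dots,v_r\}$, so $V$ is spanned by elements $f\cdot v_j$ with $f\in\mc H(G)$; and since $\lr K v_j = v_j$, we may replace $f$ by $f\ast\lr K$, so in fact $V$ is spanned by $\{h\cdot v_j : h\in\mc H(G)\lr K,\ 1\le j\le r\}$. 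Now $\mc H(G)\lr K = \bigcup_{K'\subseteq K} \mc H(G,K')\lr K$ by Proposition~\ref{prop:1.1}(a), and the index set of compact open subgroups $K'\subseteq K$ is countable (e.g.\ the $G_m$ with $m\ge m_0$ are cofinal, or more generally $K$ is profinite hence has a countable neighbourhood basis of open subgroups, and each $\mc H(G,K')$ has countable dimension by Proposition~\ref{prop:1.1}(c)). Hence $\mc H(G)\lr K$ has countable dimension, and therefore $V$, being spanned by a countable set, has countable dimension.

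\textbf{Main obstacle.} The routine parts (well-definedness and multiplicativity of \eqref{eq:1.10}, and Part~(b)) are straightforward bookkeeping with local units. The one genuinely delicate point is reconstructing the $G$-action from a nondegenerate $\mc H(G)$-module and verifying it is a group homomorphism: one must juggle the normalizations $\lr K = \mu(K)^{-1}1_K$ against the convolution identity \eqref{eq:1.6}, and show carefully that passing to a smaller compact open subgroup does not change the operator $\pi(g)$, so that $\pi(g)$ is unambiguously defined on all of $V$. Organizing this coherence — effectively checking that $\{1_{gK}\}$ satisfies the expected cocycle-like relations modulo the measure factors — is where the real care is needed; everything else follows formally.
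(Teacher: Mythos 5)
Your proof is correct and follows essentially the same route as the paper: for (a) you reconstruct the $G$-action from a nondegenerate $\mc H(G)$-module by the formula $\pi(g)v = \mu(K)^{-1}1_{gK}\cdot v$, which is exactly the paper's \eqref{eq:1.11}, and your independence-of-$K$ checks spell out what the limit notation there encodes. For (b) the paper is slightly more economical (a surjection $\mc H(G)^m \to V$ plus Proposition~\ref{prop:1.1}.c), whereas you first pass to $\mc H(G)\lr{K}$; both arguments reduce to the countable dimension of $\mc H(G)$ and are equivalent.
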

\begin{proof}
(a) Let $V \in \Mod (\mc H (G))$ and let $v \in V$. For every compact open subgroup
$K \subset K_v$ we have
\[
\lr{K} v = \lr{K} \lr{K_v} v = \lr{K_v} v = v .
\] 
This enables us to define an action $\pi$ of $G$ on $V$ by 
\begin{equation}\label{eq:1.11}
\pi (g) v = \lim_{K \in \CO (G)} \mu (K)^{-1} 1_{gK} \cdot v .
\end{equation}
As $\pi (g') v = \pi (g) v$ for all $g' \in g K_v$, $\pi$ is smooth. The functor 
\[
\Mod(\mc H (G)) \to \Rep (G) : V \mapsto \pi
\] 
is inverse to \eqref{eq:1.10}. \\
(b) Let $(\pi,V) \in \Rep (G)$ be generated by $m$ elements. By part (a) there exists a
surjection of $\mc H (G)$-modules $\mc H (G)^m \to V$. Combine that with Proposition
\ref{prop:1.1}.c.
\end{proof}

A deeper result in the direction of Lemma \ref{lem:1.5}.b is known as ``uniform 
admissibility" \cite{Ber}. It is usually stated for irreducible representations, and it
extends to finite length representations because the restriction of a smooth 
$G$-representation to a compact subgroup is always completely reducible.

\begin{thm}\label{thm:1.21} \textup{\cite{Ber}} \\
Let $K \in \CO (G)$. There exists $N(G,K) \in \N$ such that, for all $(\pi,V) \in \Rep (G)$
of finite length, $\dim V^K$ is at most $N(G,K)$ times the length of $\pi$. 
\end{thm}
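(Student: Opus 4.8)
The goal is a bound on $\dim V^K$ that is linear in the length of $\pi$, uniform over all finite-length $(\pi, V) \in \Rep(G)$. The key reduction is to irreducible $\pi$: if the theorem holds for irreducibles with some constant $N(G,K)$, then for a general finite-length $\pi$ with composition series $0 = V_0 \subset V_1 \subset \cdots \subset V_\ell = V$, one uses that the functor $V \mapsto V^K = \langle K \rangle V$ is exact (it is the functor of multiplying by the idempotent $\langle K \rangle \in \mc H(G)$, and $\mc H(G,K)$-modules are a summand of $\mc H(G)$-modules), so $\dim V^K = \sum_i \dim (V_i/V_{i-1})^K \leq \ell \cdot N(G,K)$. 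Equivalently, as the statement notes, one can pass to compact subgroups: restricting to $K$ is completely reducible, so the multiplicity of the trivial $K$-type is additive in composition factors. Thus it suffices to produce, for each $K$, a uniform bound $N(G,K)$ on $\dim V^K$ over all \emph{irreducible} $(\pi,V) \in \Rep(G)$.

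For the irreducible case the plan is to transfer the question to the Hecke algebra $\mc H(G,K)$ via Lemma \ref{lem:1.5}: the functor $V \mapsto V^K$ sends irreducible smooth $G$-representations with $V^K \neq 0$ bijectively to irreducible $\mc H(G,K)$-modules, and $\dim V^K$ is then the dimension of the corresponding simple $\mc H(G,K)$-module. So one must show that the finite-dimensional algebra-less(!) algebra $\mc H(G,K)$ — which is not finite-dimensional — nevertheless has all its simple modules of bounded dimension. Bernstein's argument does this by exhibiting, for each $K$, a \emph{finitely generated} commutative subalgebra $Z \subset \mc H(G,K)$ (a large commutative subalgebra, e.g. built from Bernstein's centre or from a suitable Iwahori-type presentation) over which $\mc H(G,K)$ is a finitely generated module, say generated by $r = r(G,K)$ elements. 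Then any simple $\mc H(G,K)$-module $M$ is, by Schur's lemma plus the Nullstellensatz, a module on which $Z$ acts through a character $\chi: Z \to \C$; hence $M$ is a simple module over the finite-dimensional algebra $\mc H(G,K) \otimes_Z \C_\chi$, whose dimension is at most $r(G,K)$. Taking $N(G,K) = r(G,K)$ finishes it.

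The main obstacle is the structural input: producing the commutative subalgebra $Z \subset \mc H(G,K)$ and proving that $\mc H(G,K)$ is finitely generated as a $Z$-module, with the number of generators controlled independently of the representation. This is exactly the substantive content of \cite{Ber}; it rests either on the Bernstein decomposition of $\Rep(G)$ into blocks $\Rep(G)^{\mf s}$ together with finiteness properties of each $\mc H(G)^{\mf s}$ over its centre $\mc O(T_{\mf s})^{W_{\mf s}}$, or on a direct analysis using the Bruhat--Tits building and the fact that $K\backslash G/K$, while infinite, is controlled by a finitely generated monoid of "distances." A subtlety is that $\mc H(G,K)$ meets infinitely many Bernstein blocks, so one needs the module-finiteness to hold with a \emph{uniform} generator count across blocks; this is where one uses that $G_m$-fixed vectors can only occur in blocks $\mf s$ of bounded "depth," of which there are finitely many. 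Once that finiteness is in hand, the Schur/Nullstellensatz step and the exactness/additivity reduction to irreducibles are routine.
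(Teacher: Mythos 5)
The paper gives no proof of this theorem; it cites Bernstein and records only the reduction from irreducible to finite-length representations via complete reducibility of restrictions to compact open subgroups. Your reduction to irreducibles is correct (you state it in two equivalent ways, the second matching the paper's remark), as is the passage to bounding dimensions of simple $\mc H (G,K)$-modules.

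For the irreducible case there is a genuine gap. The step ``by Schur's lemma plus the Nullstellensatz, $Z$ acts through a character'' requires $Z$ to be \emph{central} in $\mc H (G,K)$; for a merely commutative subalgebra this can fail --- the diagonal matrices inside $M_2 (\C)$ do not act by a single scalar character on the simple module $\C^2$. If you take $Z = Z(\mc H (G,K))$ then Schur does apply, but then you need $\mc H (G,K)$ to be a finitely generated module over its centre with a controlled number of generators, and that finiteness is a consequence of the Bernstein decomposition and the finite-type structure of $\mc H (G)^{\mf s}$ over the Bernstein centre (Theorems \ref{thm:1.18} and \ref{thm:1.19}), results that historically postdate Bernstein's 1974 theorem and whose proofs themselves rely on finiteness facts of the same ilk. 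So as a reconstruction of \cite{Ber} your plan is anachronistic and at risk of circularity, and your appeal to ``Bernstein's centre'' as the source of $Z$ is not what Bernstein's argument does. (A smaller slip: $\mc H (G,K)$ meets only finitely many Bernstein blocks; you correct this yourself in the next sentence.) To repair the argument you must either produce a central subalgebra and a module-finiteness bound by elementary, self-contained means --- Bernstein works directly with a commutative subalgebra of $\mc H (G,K)$ coming from a maximal $F$-split torus via the Cartan decomposition --- or else replace the Schur/Nullstellensatz step with a dimension bound for simple modules over algebras that are only module-finite over a \emph{non}-central commutative subring, which is a different and more delicate lemma.
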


For an arbitrary $G$-representation $(\rho,W)$, the space of smooth vectors is
\[
W^\infty = \{ w \in W : \lr{K} w = w \text{ for some } K \in \CO (G) \} .
\]
Then $\rho$ restricts to a smooth $G$-representation on $W^\infty$, 
called the smoothening of $(\rho,W)$. The group $G$ acts on the space $W^*$ of
linear functions $\lambda : W \to \C$ by 
\[
(\rho^\vee (g) \lambda)(w) = \lambda (\rho (g^{-1}) w) \qquad w \in W.
\]
We define $W^\vee = W^{*,\infty}$ to be the smooth part of the algebraic linear 
dual $W^*$, so the set of $\lambda : W \to \C$ that factor via
$\rho (\lr{K}) : W \to W^K$ for some $K \in \CO (G)$.

\begin{defn}
Let $(\pi,V)$ be a smooth $G$-representation. We call $(\pi^\vee,V^\vee)$ the
(smooth) contragredient of $\pi$. A matrix coefficient of $\pi$ is a function
of the form
\[
c_{\lambda,v} : G \to \C,\; c_{\lambda,v}(g) = \lambda (\pi (g)v) \qquad
\text{for some } v \in V, \lambda \in V^\vee . 
\]
\end{defn}






\begin{defn}\label{def:adm}
A finite length smooth $G$-representation $(\pi,V)$ is tempered if it extends to
a $\mc S (G)$-module by the formula
\[
\pi (f) v = \int_G f(g) \pi (g) v \textup{d} g \qquad f \in \mc S (G), v \in V.
\]
\end{defn}

By \cite[Appendix, Proposition 1]{SSZ}, Definition \ref{def:adm} is equivalent 
with the more common definition of temperedness in terms of growth 
of matrix coefficients \cite[\S III.2]{Wal}.
For representations of infinite length the matrix coefficients
do not say enough,  we need a more subtle version of temperedness.
Consider the category $\Mod (\mc S (G))$ of nondegenerate $\mc S (G)$-modules.
Since $\mc H (G) \subset \mc S (G)$, every nondegenerate $\mc S (G)$-module 
restricts to a nondegenerate $\mc H (G)$-module, which by Lemma \ref{lem:1.5}
can be regarded as a smooth $G$-representation.

\begin{defn}\label{def:tempsmooth}
The category of tempered smooth $G$-representations is the category 
$\Mod(\mc S (G))$ of nondegenerate $\mc S (G)$-modules.
\end{defn}

Thus any tempered smooth $G$-representation is by definition endowed with
an extension to a $\mc S (G)$-module. Notice that we do not put any continuity
condition on the action of $\mc S (G)$ on the module.

For a description of $C_r^* (G)$-modules, we need to look at unitary 
$G$-representations.

\begin{defn}
Let $V$ be a complex vector space with an inner product, and let $\pi$ be
a $G$-representation on $V$. We say that $\pi$ is pre-unitary if
$\lr{\pi (g)v,v'} = \lr{v, \pi (g^{-1})v'}$ for all $v,v' \in V$.
We say that $(\pi,V)$ is unitary if in addition $V$ is a Hilbert space
(so complete with respect to the norm from the inner product).
\end{defn}

For any (pre-)unitary $G$-representation $(\pi,V)$, the smoothening 
$(\pi,V^\infty)$ is a smooth pre-unitary $G$-representation. The pre-unitarity
on $V^\infty$ is equivalent to requiring that
\[
\lr{\pi (f) v, v'} = \lr{v,\pi (f^*) v'} \quad \text{for all } v,v' \in V^\infty.
\]
In other words, $\pi (f^*) = \int_G \overline{f(g^{-1})} \pi (g) \textup{d}g$
is the adjoint of $\pi (f) : V^\infty \to V^\infty$. We warn that a unitary
$G$-representation on an infinite dimensional Hilbert space is typically not smooth,
because smoothness and completeness of the Hilbert space fit badly together.

Every unitary $G$-representation $(\pi,V)$ extends to a representation of
the Banach *-algebra $L^1 (G)$. More precisely, the category of unitary
$L^1 (G)$-modules is naturally equivalent to the category of unitary 
$G$-representations. We say that $V$ is (topologically) irreducible if
$\{0\}$ and $V$ are the only $G$-invariant closed linear subspaces of $V$.
There are functors
\begin{equation}\label{eq:1.13}
\xymatrix{
\{ \text{smooth pre-unitary } G\text{-reps} \} 
\ar@/^/[rr]^{\hspace{8mm}\text{completion}} & &
\{ \text{unitary } G\text{-reps} \} \ar@/^/[ll]^{\hspace{8mm}\text{smoothening}}
} .
\end{equation}
These functors are bijective on irreducible representations, see 
\cite[\S 4.2]{SolQ} which is based on \cite{Ber}. In view of the complete 
reducibility of finite length unitary representations, it follows that \eqref{eq:1.13}
restricts to an equivalence between the subcategories of finite length objects
on both sides.

\begin{defn}\label{def:1.H} 
$\Mod_{C*}(C_r^* (G))$ is the category of those $C_r^* (G)$-modules that
are Hilbert spaces on which $C_r^* (G)$ acts unitarily.
\end{defn}
Via the natural homomorphism $L^1 (G) \to  C_r^* (G)$, $C_r^* (G)$-modules
can also be regarded as unitary $G$-representations. However, not all
unitary $G$-representations give rise to $C_r^* (G)$-modules, only those that
are weakly contained in $L^2 (G)$.

We call unitary $G$-representations that extend to $C_r^* (G)$-modules tempered.
In view of the continuity of the involved unitary operators, such an extension
is always given by the formula \eqref{eq:1.10}. 
Thus we have a natural identification
\[
\{ \text{tempered unitary } G\text{-representations} \} 
\longleftrightarrow \Mod_{C*} (C_r^* (G)) .
\]
By the next result this is compatible with our notion of temperedness for
smooth $G$-representations.

\begin{lem}\label{lem:1.6}
$\Mod_{C*} (C_r^* (G))$ consists precisely of the unitary $G$-representations 
that extend to $\mc S(G)$-modules.
\end{lem}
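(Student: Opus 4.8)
\emph{Proof proposal.} The plan is to prove the two inclusions separately; only the second requires real work.

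For the inclusion of $\Mod_{C*}(C_r^*(G))$ into the class of unitary $G$-representations that extend to $\mc S(G)$-modules, I would simply restrict. By Theorem~\ref{thm:1.4}.b and \eqref{eq:1.9}, $\mc S(G)$ is a subalgebra of $C_r^*(G)$ containing $\mc H(G)$, so any $*$-representation $C_r^*(G)\to\mc B(V)$ restricts along $\mc H(G)\subseteq\mc S(G)\subseteq C_r^*(G)$ to an $\mc S(G)$-module structure on $V$ whose restriction to $\mc H(G)$ is exactly the action underlying the given unitary $G$-representation.

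For the converse, start from a unitary $G$-representation $(\pi,V)$ on a Hilbert space whose $\mc H(G)$-action extends to an $\mc S(G)$-module structure. I would aim to show that the $*$-homomorphism $\pi\colon\mc H(G)\to\mc B(V)$ is bounded for the $C_r^*(G)$-norm; granting that, it extends by density of $\mc H(G)$ in $C_r^*(G)$ (Lemma~\ref{lem:1.3}) to a $*$-homomorphism $C_r^*(G)\to\mc B(V)$, which realizes $V$ as an object of $\Mod_{C*}(C_r^*(G))$ inducing the original $G$-action. To prove the bound, fix $f\in\mc H(G)$ and $K\in\CO(G)$ with $f\in\mc H(G,K)$ (Proposition~\ref{prop:1.1}); replacing $f$ by $f^*f\in\mc H(G,K)$ and using the $C^*$-identity on both sides, I may assume $f=f^*$. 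Let $P=\pi(\lr{K})$, an orthogonal projection since $\lr{K}$ is a self-adjoint idempotent, and put $V^K=PV$. Since $f=\lr{K}f\lr{K}$, the operator $\pi(f)$ kills $(V^K)^\perp$ and takes values in $V^K$, and its restriction $T\in\mc B(V^K)$ is self-adjoint, so $\|\pi(f)\|_{\mc B(V)}=\|T\|_{\mc B(V^K)}$ equals the spectral radius of $T$.

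The heart of the matter is the inclusion $\mathrm{spec}_{\mc B(V^K)}(T)\subseteq\mathrm{spec}_{C_r^*(G,K)}(f)$, the right-hand spectrum being computed in the unital $C^*$-algebra $C_r^*(G,K)$ with unit $\lr{K}$. If $\lambda\notin\mathrm{spec}_{C_r^*(G,K)}(f)$, then $f-\lambda\lr{K}$ is invertible in $C_r^*(G,K)$; lying in $\mc S(G,K)$, it is invertible already in $\mc S(G,K)$ by Theorem~\ref{thm:1.4}.c, say with inverse $b$. Since the elements of $\mc S(G,K)$ are left $K$-invariant, $\mc S(G,K)$ maps $V^K$ into itself with $\lr{K}$ acting there as the identity, so the module action gives $\pi(b)(T-\lambda)=(T-\lambda)\pi(b)=\mathrm{id}_{V^K}$ as linear maps; hence $T-\lambda$ is a bounded linear bijection of the Hilbert space $V^K$, its inverse $\pi(b)$ is automatically bounded, and $\lambda\notin\mathrm{spec}_{\mc B(V^K)}(T)$. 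As $f$ is self-adjoint, $\sup|\mathrm{spec}_{C_r^*(G,K)}(f)|=\|f\|_{C_r^*(G,K)}=\|f\|_{C_r^*(G)}$, and combining the two observations yields $\|\pi(f)\|_{\mc B(V)}\le\|f\|_{C_r^*(G)}$. I expect the hard part to be precisely this transfer step: the $\mc S(G)$-module carries no assumed continuity, so one cannot topologize $\pi(b)$ directly; what rescues the argument is the spectral-permanence statement of Theorem~\ref{thm:1.4}.c together with the automatic continuity of inverses of bounded operators on Banach spaces.
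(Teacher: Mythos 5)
Your proof is correct, and it takes a genuinely different route from the paper's. The paper's argument for the nontrivial inclusion is short but indirect: it supposes $\rho$ is a unitary $G$-representation that is \emph{not} a $C_r^*(G)$-module, passes (via Zorn's lemma) to a non-tempered irreducible subquotient $\rho'$, and then invokes the forward reference Lemma~\ref{lem:1.7} --- whose proof rests on the Plancherel formula for $G$ --- to conclude that $\rho'$, and hence $\rho$, does not extend to $\mc S(G)$. Your approach instead runs directly and uses only Theorem~\ref{thm:1.4}.c (Vign\'eras' spectral permanence, $\mc S(G,K)\cap C_r^*(G,K)^\times=\mc S(G,K)^\times$) together with the bounded inverse theorem: from the hypothesized $\mc S(G)$-module structure one deduces, for self-adjoint $f\in\mc H(G,K)$, the spectrum inclusion $\mathrm{spec}_{\mc B(V^K)}(\pi(f)|_{V^K})\subseteq\mathrm{spec}_{C_r^*(G,K)}(f)$, hence $\|\pi(f)\|\le\|f\|_{C_r^*(G)}$, and then density of $\mc H(G)$ gives the extension to $C_r^*(G)$. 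The trade-off is clear: the paper's argument piggybacks on a deeper analytic input (harmonic analysis of Harish-Chandra and Waldspurger) that is needed elsewhere anyway, and is very short; your argument is more elementary and logically self-contained within Section 1 --- it avoids the forward reference, avoids the Plancherel formula entirely, and also sidesteps the small question of why closed $G$-invariant subspaces of a Hilbert $\mc S(G)$-module are again $\mc S(G)$-submodules, which the paper's subquotient step tacitly uses. The one point worth spelling out explicitly in your write-up is the reduction $\|f\|_{C_r^*(G,K)}=\|f\|_{C_r^*(G)}$, which holds because $C^*$-norms on a $*$-algebra are unique and $C_r^*(G,K)$ is a sub-$C^*$-algebra; you use it silently at the end.
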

\begin{proof}
Since $\mc S (G)$ embeds in $C_r^* (G)$ (by Theorem \ref{thm:1.4}), every
$C_r^* (G)$-module $(\pi,V)$ is also a $\mc S (G)$-module. Hence $(\pi,V)$
is tempered in the sense of Definition \ref{def:tempsmooth}, except that
it need not be smooth as $G$-representation or nondegenerate as
$\mc S (G)$-module.

Consider a non-tempered unitary $G$-representation $\rho$. By Zorn's lemma it 
has at least one irreducible non-tempered subquotient $\rho'$. We will see
in Lemma \ref{lem:1.7} that $\rho'$ does not extend to a $\mc S (G)$-module.
Hence $\rho$ cannot extend to $\mc S (G)$ either.
\end{proof}

\subsection{Normalized parabolic induction and Jacquet restriction} \

Let $P$ be a parabolic subgroup of $G$, that is, the $F$-points of a parabolic
$F$-subgroup of $\mc G$. Let $U_P$ be its unipotent radical and let $L$ be a Levi 
factor of $P$. We briefly call $L$ a Levi subgroup of $G$. Any $(\pi,V) \in \Rep (L)$ 
may be regarded as a smooth $P$-representation via the quotient map $P \to L$. The 
smooth parabolic induction $\ind_P^G (\pi)$ is the vector space 
\[
\ind_P^G (V) = \big\{ f : G \to V \mid f(pg) = \pi (p) f(g) \; 
\text{for all } p \in p, g \in G, f \text{ is locally constant} \big\}
\]
endowed with the $G$-action by right translations. Then $\ind_P^G (\pi)$ is
a smooth $G$-representation. The functor $\ind_P^G : \Rep (L) \to \Rep (G)$
is exact, but does not preserve temperedness or pre-unitarity. To improve on
that, one involves the modular function $\delta_P$ of $P$. By \cite[\S 1.2.1]{Sil},
it can be computed as
\begin{equation}\label{eq:1.12}
\delta_P (lu) = \big| \det \big(\mr{Ad}(l) : \mr{Lie}(U_P) \to \mr{Lie}(U_P) 
\big) \big|_F \qquad  l \in L, u \in U_P . 
\end{equation}

\begin{defn}
The normalized parabolic induction of $(\pi,V) \in \Rep (L)$ is 
$I_P^G (\pi) = \ind_P^G \big( \pi \otimes \delta_P^{1/2} \big)$, on the vector space
\begin{multline*}
I_P^G (V) = \{ f : G \to V \mid f(u l g) = \delta_P^{1/2}(l) \pi (l) f(g) \; 
\text{for all } u \in U_P, l \in L, g \in G, \\
f \text{ is locally constant} \} .
\end{multline*}
\end{defn}

Let $X_\nr (L)$ be the group of unramified characters of $L$, ie. characters
whose kernel contains every compact subgroup of $L$. For instance, $\delta_P$
and $\delta_P^{1/2}$ are unramified. All the representations
$I_P^G (\pi \otimes \chi)$ with $\chi \in X_\nr (L)$ can be realized on the same
vector space, as follows. We pick a good maximal compact subgroup $K_0$ of $G$.
The Iwasawa decomposition \cite[\S 3.3.2]{Tit} says that
\begin{equation}\label{eq:1.18}
G = P K_0 = K_0 P .
\end{equation}
This implies that restriction of functions to $K_0$ defines a linear bijection
\begin{equation}\label{eq:1.19}
\ind_P^G (V_{\pi \otimes \chi}) \to \ind_{K_0 \cap P}^{K_0} (V_{\pi \otimes \chi}) .
\end{equation}
As $K_0$ is compact, its action on $\ind_{K_0 \cap P}^{K_0} (V_{\pi \otimes \chi})$
does not depend on $\chi$, so that we can identify it with $\ind_{K_0 \cap P}^{K_0} (V_\pi)$
as $K_0$-representations. We call this vector space $I_{P_0 \cap P}^{K_0}(V_\pi)$, and
we will always think of $I_P^G (\pi \otimes \chi)$ as defined on $I_{P_0 \cap P}^{K_0}(V_\pi)$.

The following properties can be found in \cite[\S IV.2.3, \S VI.1.1, \S VI.6.2, \S VII.5]{Ren}
and \cite[Lemme III.2.3]{Wal}.

\begin{thm}\label{thm:1.10}
The functor $I_P^G : \Rep (L) \to \Rep (G)$ is exact and preserves pre-unitarity,
finite length, finite generation and temperedness. 
\end{thm}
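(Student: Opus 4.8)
The plan is to verify each of the listed properties of $I_P^G$ separately, reducing everything to already-established facts about $\ind_P^G$ and about the analytic structure recalled in Section \ref{sec:1}. Since $I_P^G(\pi) = \ind_P^G(\pi \otimes \delta_P^{1/2})$ and twisting by the fixed character $\delta_P^{1/2}$ is an exact, length-preserving, finite-generation-preserving operation on $\Rep(L)$, exactness, finite length and finite generation for $I_P^G$ follow immediately once they are known for $\ind_P^G$. Exactness of $\ind_P^G$ is classical (the induction functor for the open inclusion $P \backslash G$ with the locally constant topology is exact; see \cite{Ren}). For finite generation, I would use the second-adjointness / Bernstein theory input: $\ind_P^G$ sends finitely generated modules to finitely generated modules because $\mc H(G)$ is a finitely generated module over the image of $\mc H(L)$ under the relevant embedding — alternatively one invokes that $I_P^G(\pi)$ restricted to the good maximal compact $K_0$ is $\ind_{K_0 \cap P}^{K_0}(\pi|_{K_0 \cap P})$ by \eqref{eq:1.19}, which is finitely generated over $\mc H(K_0)$ as soon as $\pi$ is admissible, and finite generation over $\mc H(G)$ then follows. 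Finite length is deduced from finite generation together with the fact that subquotients of $I_P^G(\pi)$ for $\pi$ of finite length are again finitely generated and admissible, so one can apply a Noetherian/length argument inside $\Rep(G)^{\mf s}$.

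The two properties that genuinely use the normalization by $\delta_P^{1/2}$ are pre-unitarity and temperedness. For pre-unitarity, I would argue as follows: given an inner product on $V_\pi$ making $\pi$ pre-unitary, define on $I_P^G(V_\pi)$ — realized on $I_{P_0 \cap P}^{K_0}(V_\pi)$ via \eqref{eq:1.19} — the inner product $\langle f, f' \rangle = \int_{K_0} \langle f(k), f'(k) \rangle_{V_\pi}\, \mathrm{d}k$. The factor $\delta_P^{1/2}$ is exactly what is needed so that this pairing is independent of the choice of $K_0$ and is $G$-invariant: one computes the effect of right translation by $g \in G$ using the Iwasawa decomposition \eqref{eq:1.18} and the change of variables on $K_0$, and the Jacobian that appears is cancelled by the transformation law $f(ulg) = \delta_P^{1/2}(l)\pi(l)f(g)$. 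This is the standard unitarity-of-normalized-induction computation; I expect it to be routine modulo bookkeeping with the modular function \eqref{eq:1.12}.

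For temperedness, the cleanest route is via the $C^*$-algebraic / Schwartz-algebraic characterization rather than via growth of matrix coefficients directly. By Definition \ref{def:tempsmooth} and Definition \ref{def:adm}, it suffices to show that if $\pi \in \Rep(L)$ is tempered then $I_P^G(\pi)$ extends to an $\mc S(G)$-module. One shows that parabolic induction is implemented on the level of the Harish-Chandra--Schwartz algebras: there is a $*$-homomorphism $\mc S(L) \to$ (a matrix-like algebra over) $\mc S(G)$ — concretely, $I_P^G$ corresponds to a functor on $\mc S$-modules — so a tempered $\mc S(L)$-module structure on $V_\pi$ propagates to a tempered $\mc S(G)$-module structure on $I_P^G(V_\pi)$. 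For a unitary tempered $\pi$ this is the statement that $\mathrm{Ind}_P^G$ of a $C_r^*(L)$-module is a $C_r^*(G)$-module, which is Rieffel induction along the $C_r^*(G)$-$C_r^*(L)$ bimodule built from $\mc H(G)$; weak containment in $L^2(L)$ is sent to weak containment in $L^2(G)$ because $L^2(G) \cong \mathrm{Ind}_P^G L^2(L)$ (using unimodularity of $G$ and $L$ and the normalization $\delta_P^{1/2}$, which makes the induced inner product the right one). The finite-length smooth case is then recovered from the unitary case via the equivalence \eqref{eq:1.13} and the density of $\mc H(G)$ in $\mc S(G)$. The main obstacle is precisely this step: making the passage "$\pi$ tempered $\Rightarrow$ $I_P^G(\pi)$ tempered" rigorous requires either the theory of parabolic induction for $\mc S(G)$ (Harish-Chandra's Plancherel formalism, as in \cite{Wal}) or a careful matrix-coefficient estimate showing that the exponents of $I_P^G(\pi)$ lie in the tempered cone — the weak inequality on Harish-Chandra's $\Xi$-function — which is where all the analytic work of \cite[\S III]{Wal} is concentrated. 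I would cite that theory rather than reprove it, so that in the survey the proof reduces to assembling the four cited references \cite{Ren,Wal,Sil} with the normalization identities for $\delta_P$.
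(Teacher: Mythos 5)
The paper does not actually prove Theorem \ref{thm:1.10}: it is stated as a citation to \cite[\S IV.2.3, \S VI.1.1, \S VI.6.2, \S VII.5]{Ren} and \cite[Lemme III.2.3]{Wal}, and the only proof-adjacent content nearby is the formula \eqref{eq:1.14} for the invariant inner product. So you are not competing against a proof in the paper; your sketch is a summary of the standard arguments in those references, which is the appropriate thing to do. Your treatment of exactness, pre-unitarity (your $\int_{K_0}$ inner product is the same as \eqref{eq:1.14} via Iwasawa and unimodularity of $K_0$), and finite length is fine as a plan.

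There is, however, a concrete false step in your discussion of temperedness. You assert that $L^2(G) \cong \ind_P^G L^2(L)$ ``using unimodularity of $G$ and $L$ and the normalization $\delta_P^{1/2}$''. This is not true. Induction in stages gives $\ind_P^G L^2(P) \cong L^2(G)$, but $L^2(L)$ regarded as a $P$-representation with $U_P$ acting trivially is \emph{not} the regular representation $L^2(P)$: the latter is roughly $L^2(U_P) \otimes L^2(L)$, and $U_P$ is noncompact whenever $P \neq G$. Concretely, $\ind_P^G L^2(L)$ lives on $L^2\big(K_0/(K_0\cap P)\big)\otimes L^2(L)$, which is strictly smaller than $L^2(G)$. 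Moreover, even after correcting the identification, the statement that $I_P^G$ of something weakly contained in $L^2(L)$ is weakly contained in $L^2(G)$ is not a formal consequence of Rieffel induction; rather, it \emph{is} (a consequence of) the Plancherel theorem for $G$, which is exactly the content of \cite[\S III]{Wal} that one ends up citing. So your ``cleaner route'' via $C_r^*$-bimodules does not avoid the analytic input, and the motivating identification you gave for why it should work is incorrect. As long as the proof is ultimately by citation to Waldspurger this does not sink the argument, but the reasoning as written is misleading. A secondary remark: your ``alternatively'' argument for finite generation via restriction to $K_0$ requires $\pi$ to be admissible and does not cover arbitrary finitely generated $\pi \in \Rep(L)$; your primary appeal to Bernstein's second-adjointness theory is the one that actually works in the generality stated.
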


Suppose that $\pi$ is pre-unitary with respect to an inner product $\lr{\,,\,}_V$.
In the proof of Theorem \ref{thm:1.10} it is shown that $I_P^G (\pi)$ is pre-unitary
with respect to the inner product
\begin{equation}\label{eq:1.14}
\lr{f_1,f_2} = \int_{P \backslash G} \lr{f_1 (g),f_2 (g)}_V \textup{d} g 
\qquad f_1,f_2 \in I_P^G (V).
\end{equation}
For any $g \in G$ there are canonical isomorphisms of $G$-representations
\begin{equation}\label{eq:1.26}
\begin{array}{ccc}
\ind_P^G (\pi) & \to & \ind_{g P g^{-1}}^G (g \cdot \pi) \\
I_P^G (\pi) & \to & I_{g P g^{-1}}^G (g \cdot \pi) \\
 f & \mapsto & [x \mapsto f (g^{-1}x)] 
\end{array}.
\end{equation}
The version of \eqref{eq:1.26} for $I_P^G (\pi)$ follows from that for $\ind_P^G (\pi)$ 
and the equality of modular functions 
$\delta_{g P g^{-1}} = g \cdot \delta_P : x \mapsto \delta_P (g^{-1} x g)$.

The parabolic induction of an $L$-representation depends only a little on the choice of
a parabolic subgroup with Levi factor $L$:

\begin{lem}\label{lem:1.20} \textup{\cite[Lemma 1.1]{ABPS1}} \\
Let $P$ and $P'$ be parabolic subgroups with Levi factor $L$ and let $\pi \in \Rep (L)$
have finite length. Then the finite length $G$-representations $I_P^G (\pi)$ and
$I_{P'}^G (\pi)$ have the same irreducible subquotients, counted with multiplicity.
\end{lem}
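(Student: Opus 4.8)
The plan is to prove Lemma \ref{lem:1.20} by a deformation argument in the unramified character variety, reducing the general case to the situation where the inducing data is "generic."

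\textbf{Strategy.} First I would recall that any two parabolic subgroups $P, P'$ with the same Levi factor $L$ are linked by a chain of \emph{adjacent} parabolics (differing by a single simple reflection relative to $L$), so by induction it suffices to treat the case where $P$ and $P'$ are adjacent; equivalently, one may assume $L$ is a maximal Levi subgroup of the Levi subgroup $M$ generated by $L$ together with the two relevant root groups, and work inside $M$ (parabolic induction is transitive, so inducing up to $G$ afterwards preserves "same subquotients with multiplicity"). So the core case is: $L$ maximal in $M$, with the two opposite-type parabolics $P \cap M$ and $P' \cap M = \overline{P \cap M}$ of $M$ with Levi $L$.

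\textbf{Key steps.} (1) Reduce, as above, to $G = M$ with $L$ a maximal Levi and $P, P'$ the two parabolics of $M$ with Levi factor $L$, necessarily opposite to each other. (2) Twist $\pi$ by an unramified character: for $\chi \in X_\nr (L)$ consider $I_P^M(\pi \otimes \chi)$ and $I_{P'}^M(\pi \otimes \chi)$. Using the identification of all these representations on the fixed $K_0$-space $I_{K_0 \cap P}^{K_0}(V_\pi)$, the intertwining operator $J_{P'|P}(\pi \otimes \chi)$ (defined by an integral over $U_{P'} \cap P$ that converges for $\chi$ in a suitable cone and extends meromorphically in $\chi$) gives a rational family of $G$-maps $I_P^M(\pi \otimes \chi) \to I_{P'}^M(\pi \otimes \chi)$. (3) On the Zariski-dense open subset of $X_\nr(L)$ where $J_{P'|P}$ is regular and invertible, the two induced representations are \emph{isomorphic}, hence a fortiori have the same irreducible subquotients with multiplicity. (4) Conclude by a semicontinuity/specialization argument: the composition factors of $I_P^M(\pi\otimes\chi)$, counted with multiplicity, are constant on the connected variety $X_\nr(L)$ — because the character of the finite-length representation (as a virtual character, or as a class in the Grothendieck group) varies "algebraically" and parabolic induction is exact and additive in short exact sequences, so $[I_P^M(\pi\otimes\chi)] = [I_P^M(\pi)]$ in the appropriate Grothendieck group for all $\chi$, and likewise for $P'$. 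Since $[I_P^M(\pi\otimes\chi)] = [I_{P'}^M(\pi\otimes\chi)]$ for generic $\chi$, equality holds at $\chi = 1$.

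\textbf{Main obstacle.} The delicate point is step (4): making precise the sense in which "the multiset of composition factors is constant along the family $\{\pi\otimes\chi\}$." The cleanest route is to use the theory of Bernstein components — all the $I_P^M(\pi\otimes\chi)$ lie in a single Bernstein block whose irreducible objects are parametrized by an algebraic variety, and the map $\chi \mapsto [I_P^M(\pi\otimes\chi)]$ into the (free abelian) Grothendieck group of finite-length objects in that block is locally constant on $X_\nr(L)$ by a standard argument using that characters of admissible representations separate points and depend algebraically on $\chi$ (uniform admissibility, Theorem \ref{thm:1.21}, bounds the relevant dimensions uniformly, so everything happens in a fixed finite-dimensional space of $K$-fixed vectors for suitable $K$). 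An alternative, if one wants to avoid invoking the full Bernstein machinery here, is to appeal directly to Harish-Chandra's results on the analyticity of the intertwining operators and the fact that the Plancherel/Jacquet-module data vary continuously, but the Grothendieck-group formulation is the shortest. Everything else — the chain of adjacent parabolics, transitivity of induction, meromorphic continuation and generic invertibility of $J_{P'|P}$ — is standard and cited in \cite{Ren} and \cite{Wal}.
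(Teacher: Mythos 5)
Your overall strategy (reduce to adjacent parabolics with a common Levi $M$, use the generic invertibility of intertwining operators along the unramified twist family, then specialize back to $\chi = 1$) is sound and is, in essence, the standard route. Steps (1)--(3) are fine. But step (4) contains a genuine error: it is \emph{not} true that $[I_P^M(\pi\otimes\chi)] = [I_P^M(\pi)]$ in the Grothendieck group for all $\chi$, nor that the composition factors of $I_P^M(\pi\otimes\chi)$ are constant in $\chi$. Already for $G = M = GL_2(F)$, $L = T$, $\pi = \mathbf{1}$, $P = B$: at $\chi = 1$ the class $[I_B^G(\mathbf 1)]$ is $[\mathrm{triv}] + [\mathrm{St}]$, while for generic $\chi$ the class $[I_B^G(\chi)]$ is a single irreducible principal series with a different cuspidal support; these are distinct elements of $R(G)$, and nothing resembling local constancy holds. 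Exactness of parabolic induction only says that $[I_P(\pi_1)] = [I_P(\pi_2)]$ when $[\pi_1] = [\pi_2]$ in $R(L)$, which is not the case for $\pi$ versus $\pi\otimes\chi$. So as written, your argument derives the conclusion at $\chi = 1$ from false premises.

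The fix is to compare $P$ against $P'$ at \emph{each} $\chi$, not $I_P$ against itself along the family. Concretely: fix $K \in \CO(G)$ and $f \in \mc H(G,K)$. Because all the $I_P^G(\pi\otimes\chi)$ are realized on the fixed space $I_{K_0\cap P}^{K_0}(V_\pi)$, with the $\mc H(G,K)$-action on the finite-dimensional subspace of $K$-invariants (bounded uniformly by Theorem \ref{thm:1.21}) depending polynomially on $\chi$, the function $\chi \mapsto \mathrm{tr}\, I_P^G(\pi\otimes\chi)(f)$ is a regular (Laurent polynomial) function on $X_\nr(L)$, and likewise for $P'$. On the dense Zariski-open set where the intertwining operator $J_{P'|P}(\pi\otimes\chi)$ is regular and invertible, the two representations are isomorphic, hence these two regular functions agree. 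A regular function on an irreducible variety that vanishes on a dense open set vanishes identically, so the traces agree for \emph{all} $\chi$, in particular at $\chi = 1$. Since irreducible characters of finite-length admissible representations are linearly independent, equality of the characters of $I_P^G(\pi)$ and $I_{P'}^G(\pi)$ forces equality of their Jordan--H\"older multisets. This replaces your step (4); the rest of your outline then goes through.
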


Let $(\rho,W) \in \Rep (G)$. The Jacquet module of $\rho$ with respect to $P$ is
\[
W_{U_P} = W / W(U_P) = W / \mr{span} \{ \rho (u) w - w : u \in U_P, w \in W \} .
\]
This is the largest quotient of $W$ on which $U_P$ acts trivially. It is 
naturally a smooth $L$-representation, denoted $\rho_{U_P}$ and called the Jacquet
restriction (or parabolic restriction) of $\rho$. Like $\ind_P^G$, it can be
improved by a normalization.

\begin{defn}
The normalized Jacquet (or parabolic) restriction $J_P^G (\rho)$ is the vector space
$W_{U_P}$ with the $L$-action 
\[
l \cdot (w + W(U_P)) = \delta_P^{-1/2}(l) \rho (l) w + W(U_P) .
\]
\end{defn}

The functor $J_P^G$ has nice many properties, for instance exactness and preservation 
of finite length \cite[\S VI.6.4]{Ren}. However, it does not preserve temperedness
or pre-unitarity. Frobenius reciprocity provides adjointness relations
\begin{equation}\label{eq:1.15}
\begin{array}{lll}
\Hom_G (\rho, \ind_P^G (\pi)) & \cong & \Hom_M (\rho_{U_P},\pi), \\
\Hom_G (\rho, I_P^G (\pi)) & \cong & \Hom_M (J_P^G (\rho), \pi ).
\end{array}
\end{equation}
There is a much deeper second adjointness relation, due to Bernstein: 

\begin{thm}\label{thm:1.8} \textup{\cite[\S VI.9.6]{Ren}} \\
Let $\bar P$ be the parabolic subgroup with Levi factor $L$ that is opposite to
$P$, ie. $P \cap \bar P = L$. There are natural bijections
\[
\begin{array}{lll}
\Hom_G (\ind_P^G (\pi),\rho) & \cong & \Hom_M (\pi, \delta_P \otimes \rho_{U_{\bar P}}) ,\\
\Hom_G (I_P^G (\pi),\rho) & \cong & \Hom_M (\pi, J^G_{\bar P} (\rho)) .
\end{array}
\]
\end{thm}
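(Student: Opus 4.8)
The plan is to recognize both bijections as the statement that $\ind_P^G$ admits a right adjoint, to prove this cleanly for admissible representations via the smooth contragredient, and then to remove the admissibility assumption following Bernstein. First one reduces to the unnormalized form $\Hom_G(\ind_P^G(\pi),\rho) \cong \Hom_L(\pi, \delta_P \otimes \rho_{U_{\bar P}})$: since $\ind_P^G(\pi) = I_P^G(\pi \otimes \delta_P^{-1/2})$ and $J_{\bar P}^G(\rho) = \delta_{\bar P}^{-1/2} \otimes \rho_{U_{\bar P}} = \delta_P^{1/2} \otimes \rho_{U_{\bar P}}$ (because $\delta_{\bar P} = \delta_P^{-1}$), substituting $\pi \otimes \delta_P^{-1/2}$ for $\pi$ and absorbing the modular characters converts the two displayed lines into each other. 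The first adjointness \eqref{eq:1.15} already exhibits the Jacquet functor $(\cdot)_{U_P}$ as a \emph{left} adjoint of $\ind_P^G$, so the content of the theorem is that $\ind_P^G$ also has a \emph{right} adjoint, and that this right adjoint is $\delta_P \otimes (\cdot)_{U_{\bar P}}$.

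For admissible $\pi$ and $\rho$ I would argue by duality, using two ingredients: normalized parabolic induction commutes with the smooth contragredient, which in unnormalized terms reads $(\ind_P^G \pi)^\vee \cong \ind_P^G(\delta_P \otimes \pi^\vee)$ and rests on the compactness of $P \backslash G$ and the unimodularity of $G$; and Casselman's duality theorem for Jacquet modules, which yields a natural isomorphism $(\rho^\vee)_{U_P} \cong (\rho_{U_{\bar P}})^\vee$ (with no modular twist intervening). Plugging these into the first adjointness \eqref{eq:1.15} applied to the pair $\big(\rho^\vee, \ind_P^G(\delta_P \otimes \pi^\vee)\big)$ and then dualizing back gives, naturally in $\pi$ and $\rho$,
\[
\Hom_G(\ind_P^G \pi, \rho) \cong \Hom_L\big((\rho^\vee)_{U_P}, \delta_P \otimes \pi^\vee\big) \cong \Hom_L\big(\delta_P^{-1}\otimes\pi, \rho_{U_{\bar P}}\big) \cong \Hom_L\big(\pi, \delta_P\otimes\rho_{U_{\bar P}}\big),
\]
which is the asserted bijection for admissible representations; the normalized statement for $I_P^G$ and $J_{\bar P}^G$ follows by the substitution of the first paragraph.

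To drop admissibility — the genuinely hard part — I would follow Bernstein and construct directly, for \emph{arbitrary} smooth $\rho$ and $\pi$, a counit $\ind_P^G\big(\delta_P \otimes \rho_{U_{\bar P}}\big) \to \rho$ and a unit $\pi \to \delta_P \otimes \big(\ind_P^G \pi\big)_{U_{\bar P}}$, by averaging a local lift over $U_{\bar P}$, and then check the triangle identities. Since both $\ind_P^G$ and $\delta_P \otimes (\cdot)_{U_{\bar P}}$ are exact and commute with filtered colimits and direct sums, and $\ind_P^G$ preserves finite generation by Theorem \ref{thm:1.10}, once the unit and counit are in hand their invertibility can be reduced to the admissible case already settled. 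The obstacle is to make sense of the averaging: one needs a plentiful supply of arbitrarily small compact open subgroups $K \subset G$ admitting an Iwahori factorization $K = (K\cap U_{\bar P})(K\cap L)(K\cap U_P)$ together with an element of $L$ that strictly contracts $K\cap U_P$ while expanding $K\cap U_{\bar P}$, and a finiteness lemma guaranteeing that the sum defining the counit on $K$-fixed vectors has only finitely many nonzero terms. I expect this geometric finiteness — resting on Bruhat--Tits theory together with the Iwasawa and Cartan decompositions — to be the main difficulty; granting it, the remainder is careful bookkeeping with modular characters and the triangle identities.
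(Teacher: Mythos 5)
The paper itself gives no proof of Theorem~\ref{thm:1.8}; it simply cites \cite[\S VI.9.6]{Ren}, so there is no in-paper argument to compare against. Evaluating your sketch on its own terms: the bookkeeping in the first paragraph is correct (including $\delta_{\bar P}=\delta_P^{-1}$ and the passage between the normalized and unnormalized forms), and the admissible case really does follow, exactly as you compute, from Casselman's pairing $(\rho^\vee)_{U_P}\cong(\rho_{U_{\bar P}})^\vee$ together with $(\ind_P^G\pi)^\vee\cong\ind_P^G(\delta_P\otimes\pi^\vee)$. You also correctly identify Iwahori factorization and strictly $U_P$-contracting elements of $L$ as the geometric core of the general statement.

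The logic of your last paragraph, however, has a genuine gap. If one succeeds in constructing a natural unit $\pi\to\delta_P\otimes(\ind_P^G\pi)_{U_{\bar P}}$ and counit $\ind_P^G(\delta_P\otimes\rho_{U_{\bar P}})\to\rho$ for \emph{all} smooth $\pi,\rho$ and verifying the triangle identities, then the adjunction holds in full generality with no remaining step; there is nothing left to ``reduce to the admissible case''. Conversely, the reduction you gesture at does not reach the admissible case: exactness and commutation with filtered colimits let you reduce to finitely generated $\pi$ and $\rho$, but a finitely generated smooth representation of a reductive $p$-adic group need not be admissible, so your Step~1 does not then apply. In fact the usual proof (Bernstein's, and the one given in Renard) makes no detour through the admissible case at all: it constructs the adjunction directly for all smooth representations via the Jacquet--Casselman stabilization lemmas you mention, and Casselman's duality for admissible representations is then recovered as a \emph{corollary} of second adjointness rather than serving as an ingredient. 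So the real work is precisely the direct construction you relegate to the last paragraph, and your Step~1 is logically a consequence of it rather than a scaffold under it.
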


\subsection{Description of the tempered dual} \

\noindent For any algebra $A$, we denote by $\Irr (A)$ the set of irreducible 
$A$-representations up to isomorphism. We endow it with the Jacobson topology,
whose closed subsets are
\[
V(S) =  \{ \pi \in \Irr (A) : S \subset \ker (\pi) \} 
\qquad \text{for } S \subset A.
\]
In particular we have the space $\Irr (\mc S (G))$ of irreducible nondegenerate 
$\mc S (G)$-modules. We call that the tempered dual $\Irr_\temp (G)$.

More analytically, $\Irr (L^1 (G))$ is the unitary dual $\Irr_{\mr{unit}}(G)$ 
of $G$. It contains $\Irr (C_r^* (G))$, which is sometimes called the reduced
(unitary) dual of $G$.

\begin{lem}\label{lem:1.7}
Smoothening of $G$-representations defines a bijection 
\[
\Irr (C_r^* (G)) \longrightarrow \Irr (\mc S (G)).
\]
In particular every element of $\Irr_\temp (G)$ is a pre-unitary smooth 
$G$-representation.
\end{lem}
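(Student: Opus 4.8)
The strategy is to go through the chain of inclusions $\mc H(G) \subset \mc S(G) \subset C_r^*(G)$ and show that the passage between $\mc S(G)$-modules and $C_r^*(G)$-modules on the level of irreducibles is controlled by smoothening and completion. First I would set up the two candidate maps. Given $(\pi,V) \in \Irr(C_r^*(G))$, the underlying unitary $G$-representation has a smoothening $(\pi,V^\infty)$, and since $\mc S(G) \subset C_r^*(G)$ acts continuously, $V^\infty$ is stable under $\mc S(G)$ (the smooth vectors are exactly those fixed by some $\lr{K}$, and $\lr{K}\mc S(G)\lr{K} = \mc S(G,K)$ preserves $V^K$); this makes $V^\infty$ a nondegenerate $\mc S(G)$-module. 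In the other direction, a nondegenerate $\mc S(G)$-module $V$ is in particular an $\mc H(G)$-module, hence by Lemma \ref{lem:1.5} a smooth $G$-representation, and one wants to complete it to a Hilbert space carrying a $C_r^*(G)$-action; the inner product should come from restricting along $\mc S(G)$ and using Theorem \ref{thm:1.4}.c (the spectral radius / functional calculus in $\mc S(G,K)$ agrees with that in $C_r^*(G,K)$) to produce the requisite $C^*$-norm estimate $\|\pi(f)v\| \le \|f\|_{C_r^*(G)}\|v\|$ on each $V^K$.

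The key technical point — and the place where irreducibility is essential — is showing these two constructions are mutually inverse bijections on irreducible objects. For $V \in \Irr(C_r^*(G))$, one must check $V^\infty \neq 0$ (so that smoothening does not kill an irreducible), that $V^\infty$ is \emph{irreducible} as an $\mc S(G)$-module, and that completing $V^\infty$ along $C_r^*(G)$ recovers $V$. The non-vanishing and the recovery follow because $\bigcup_K \lr{K} V$ is dense in $V$ (this is general for $C^*$-algebras with an approximate identity of the $\lr{K}$) and $\overline{V^\infty} = V$. Irreducibility of $V^\infty$ over $\mc S(G)$ I would get by a density argument: a nonzero closed invariant subspace $W \subset V$ is $C_r^*(G)$-stable, hence all of $V$ by irreducibility, and conversely an $\mc S(G)$-submodule of $V^\infty$ has closure a $C_r^*(G)$-submodule of $V$, since $\mc S(G)$ is dense in $C_r^*(G)$ and the action is continuous. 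For the reverse composite, given $U \in \Irr(\mc S(G))$, one must know $U$ is pre-unitary so that the completion makes sense — and this is exactly the "in particular" clause, which therefore cannot be assumed but must be proved along the way.

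I would prove pre-unitarity of $U \in \Irr(\mc S(G))$ as follows. Each $\lr{K}U = U^K$ is a finite-dimensional module over $\mc S(G,K)$ (using Theorem \ref{thm:1.21}, since an irreducible $\mc S(G)$-module, being nondegenerate and generated by any nonzero $U^K$, restricts to a finite-length smooth $G$-representation — here one invokes that irreducible $\mc S(G)$-modules have finite length, itself a consequence of uniform admissibility applied to the smooth representation underlying $U$). On the finite-dimensional, unital $C^*$-algebra $C_r^*(G,K)$ a simple module carries an (essentially unique) invariant inner product; by Theorem \ref{thm:1.4}.c the involution and the relevant positivity are already visible inside $\mc S(G,K)$, so $U^K$ inherits a Hilbert-space inner product compatible with $f \mapsto f^*$. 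Assembling these over all $K \in \CO(G)$ in a compatible way (using that $\lr{K'}$ for $K' \subset K$ restricts $U^{K}$ into $U^{K'}$ orthogonally) yields a $G$-invariant inner product on $U$, i.e. $U$ is smooth pre-unitary; its Hilbert-space completion is then an irreducible unitary $G$-representation whose smoothening is $U$, and it extends to $C_r^*(G)$ precisely because the $\mc S(G)$-action was already $C_r^*$-norm bounded on each $U^K$.

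The main obstacle I anticipate is the bookkeeping of inner products: proving that the inner products on the spaces $U^K$ (for varying $K \in \CO(G)$), which a priori are only determined up to scalar on each piece, can be normalized simultaneously to a single $G$-invariant (equivalently $\mc S(G)$-star-compatible) inner product on all of $U$. This is where one really uses that $U$ is a single irreducible module rather than a sum of pieces, pinning down the scalars via the compatibility $\lr{K}\lr{K'} = \lr{K}$ for $K \subset K'$ and the nondegeneracy of the $\mc S(G)$-action. The continuity-free setup (Definition \ref{def:tempsmooth} imposes no topology on $\mc S(G)$-modules) is actually a convenience here, since one never needs to worry about whether the inner product is continuous for any topology on $U$.
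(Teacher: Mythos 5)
The paper's proof is a short application of the Plancherel formula \cite{Wal}: every irreducible tempered $G$-representation lies in the support of the Plancherel measure, hence is weakly contained in $L^2(G)$, hence pre-unitary, and its completion lies in the reduced dual; smoothening then inverts this by \eqref{eq:1.13}. Your proposal is a genuinely different route: you try to avoid the Plancherel formula and instead construct the inner product on an irreducible $\mc S(G)$-module $U$ by hand on each finite-dimensional piece $U^K$, relying only on spectral invariance (Theorem \ref{thm:1.4}.c). Your architecture (two maps, mutual inverses, pre-unitarity as the crux) is correct, and if the local argument could be completed it would be a more elementary proof.

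However, the local argument has gaps. The claim $\dim U^K < \infty$ is circular as stated: Theorem \ref{thm:1.21} takes finite length as a \emph{hypothesis}, it does not produce it. What is actually needed is that an irreducible $\mc S(G)$-module is already irreducible as an $\mc H(G)$-module; this is the non-trivial input the paper cites (just before Definition \ref{def:tempsmooth}) as \cite[Appendix, Proposition 3]{SSZ}, not a formal consequence of uniform admissibility. Also, ``the finite-dimensional, unital $C^*$-algebra $C_r^*(G,K)$'' is a slip: that algebra is infinite-dimensional, and what you must mean is the image of $\mc S(G,K)$ in $\End_\C (U^K)$, which by Burnside is all of $\End_\C (U^K)$. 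The real work is then to show that the involution transported to $\End_\C (U^K)$ from $\mc S(G,K)$ is the positive one rather than some indefinite $X \mapsto h X^\dagger h^{-1}$; that does follow from spectral invariance via a spectral positivity argument, but it is precisely where the work happens, and it cannot be waved off with ``the relevant positivity is already visible''. The paper's invocation of \cite{Wal} supplies this positivity globally in one stroke, so it is load-bearing rather than a convenience; a self-contained replacement along your lines would need to close both of these holes explicitly.
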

\begin{proof}
The Plancherel formula for $G$ \cite{Wal} implies that every irreducible 
tempered $G$-representation $\pi$ belongs to the support of the Plancherel
measure for $G$. That is another way of saying that $\pi$ is weakly contained
in $L^2 (G)$ \cite[\S 18.8]{Dix}. Hence $\pi$ is pre-unitary and its Hilbert
space completion is an element $\tilde \pi$ of the reduced dual. By
\eqref{eq:1.13}, $\pi$ is the smoothening of $\tilde \pi$.
\end{proof}

We warn that not every pre-unitary irreducible smooth $G$-representation
belongs to $\Irr_\temp (G)$. For instance, the trivial $G$-representation
is unitary but not tempered (unless $\mc G = \{e\}$). 
From now on we will be a little sloppy and call a pre-unitary smooth 
$G$-representation unitary, as is customary. 

By Schur's lemma, the center $Z(G)$ acts by a character on any irreducible 
$G$-representation. If the representation is unitary, then its central
character is unitary as well. In that case $|c_{\lambda,v}| : G \to \R_{\geq 0}$,
the absolute value of the matrix coefficient $c_{\lambda,v}$, descends to a
map $G/Z(G) \to \R_{\geq 0}$. This applies more generally to any $G$-representation
on which $Z(G)$ acts by a unitary character.

\begin{defn}
Let $\pi$ be a smooth $G$-representation that admits a unitary $Z(G)$-character.
We say that $\pi$ is square-integrable modulo center if $|c_{\lambda,v}| \in 
L^2 (G/Z(G))$ for every matrix coefficient $c_{\lambda,v}$ of $\pi$. 

If $\pi$ is moreover irreducible, then it is called a discrete series representation. 
We denote the set of discrete series $G$-representations (up to isomorphism)
by $\Irr_\disc (G)$.
\end{defn}

Every discrete series $G$-representation $\pi$ is weakly contained in $L^2 (G)$, so
its completion extends to a $C_r^* (G)$-module. By Lemma \ref{lem:1.7} $\pi$ is
tempered and unitary. If $Z(G)$ is compact, then every discrete series $G$-representation
can be embedded in $L^2 (G)$, which implies that it is an isolated point in
$\Irr (C_r^* (G)) \cong \Irr_\temp (G)$ \cite[\S 18.4]{Dix}. Sometimes
discrete series are defined such that they can exist only when $Z(G)$ is compact.

\begin{ex}
The most important example of a discrete series representation is the Steinberg
representation. It exists for any $G$ and is defined as
\[
\mr{St}_G = \ind_B^G (\mr{triv}) \big/ 
\sum\nolimits_{B \subsetneq P \subset G} \ind_P^G (\mr{triv}),
\]
where $B$ is a minimal parabolic subgroup of $G$. This representation is unitary
because $\mr{triv} \in \Rep (B)$ and $\ind_B^G (\mr{triv})$ are so. See \cite{Cas} 
for a proof that $\mr{St}_G$ is irreducible and discrete series. 
\end{ex}

Recall that a character $\chi : G \to \C^\times$ is unramified if it is trivial
on every compact subgroup of $G$. We will use the notations
\begin{align*}
& X_\nr (G) = \{ \text{unramified characters of } G\}, \\
& X_\nr^u (G) = \{ \text{unitary unramified characters of } G\}.
\end{align*}
Let $G^1 \subset G$ be the subgroup generated by all compact subgroups of $G$.
It is an open normal subgroup that contains the derived group of $G$. The
quotient $G / G^1$ is a free abelian group of the same rank as the $F$-split
part of $Z(\mc G)^\circ$, say rank $d$. Although the natural map $Z(G) \to G/G^1$
need not be surjective, its image is sublattice of the same rank $d$, so that
image has finite index in $G/G^1$.

There are isomorphisms of topological groups
\begin{equation}\label{eq:1.16}
\begin{array}{lllllll}
\Hom (G/G^1, \C^\times) & = & X_\nr (G) & \cong & \Hom_\Z (\Z^d ,\C^\times) & 
\cong & (\C^\times)^d ,\\
\Hom (G/G^1, S^1) & = & X_\nr^u (G) & \cong & \Hom_\Z (\Z^d ,S^1) & \cong & (S^1)^d .
\end{array}
\end{equation}
In this way $X_\nr (G)$ acquires the structure of a complex algebraic torus 
and $X_\nr^u (G)$ is the maximal compact real subtorus of $X_\nr (G)$.

For every discrete series $G$-representation $\pi$ and every $\chi \in X_\nr^u (G)$,
$\pi \otimes \chi$ is again discrete series. The group
\[
X_\nr (G,\pi) := \{ \chi \in X_\nr (G) : \pi \otimes \chi \cong \pi \}
\]
consists of characters that are trivial on $Z(G)$, because $\pi \otimes \chi$
needs to have the same central character as $\pi$. Since $G/G^1 Z(G)$ is finite,
the group $X_\nr (G,\pi)$ is finite and contained in $X_\nr^u (G)$. The bijection
\begin{equation}\label{eq:1.17}
X_\nr^u (G) / X_\nr (G,\pi) \to X_\nr^u (G) \pi = 
\{ \pi \otimes \chi \in \Irr_\temp (G) : \chi \in X_\nr^u (G) \} 
\end{equation}
endows $X_\nr^u (G)$ we the topological structure of a compact real torus (but
$X_\nr^u (G) \pi$ does not have a multiplication). We will soon see that every set of 
the form $X_\nr^u (G) \pi$ with $\pi \in \Irr_\disc (G)$ is a connected component of 
$\Irr_\temp (G)$. These are the components of $\Irr_\temp (G)$ of minimal dimension, and 
when $Z(G)$ is compact, they constitute precisely the discrete part of $\Irr_\temp (G)$.

To describe the connected components of $\Irr_\temp (G)$ outside the discrete series,
we need the following result of Harish-Chandra.

\begin{thm}\label{thm:1.9} \textup{\cite[Proposition III.4.1]{Wal}} \\
Let $\pi \in \Irr_\temp (G)$.
\enuma{
\item There exists a parabolic subgroup $P$ with Levi factor $L$ and
a discrete series $L$-representation $\delta$, such that $\pi$ is a direct summand of 
$I_P^G (\delta)$. 
\item The pair $(L,\delta)$ is uniquely determined by $\pi$, up to
$G$-conjugation and isomorphism of $L$-representations.
}
\end{thm}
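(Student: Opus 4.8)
The plan is to deduce the statement from the Plancherel decomposition and the behavior of parabolic induction and Jacquet restriction. First I would establish the existence part (a). Since $\pi \in \Irr_\temp(G)$, it is weakly contained in $L^2(G)$, hence it lies in the support of the Plancherel measure. The Plancherel formula decomposes $L^2(G)$ as a direct integral over the ``cuspidal data'' $(L,\delta)$ with $L$ a Levi subgroup, $\delta \in \Irr_\disc(L)$, integrated against $X_\nr^u(L)$; the associated representations are the $I_P^G(\delta \otimes \chi)$, which are tempered and unitary by Theorem \ref{thm:1.10}. Since $\pi$ occurs in this decomposition, there must exist such a datum with $\pi$ weakly contained in $I_P^G(\delta\otimes\chi)$ for $\chi$ in a positive-measure set; absorbing $\chi$ into $\delta$ (possible since $\delta\otimes\chi$ is again discrete series) we may take $\chi = 1$. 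To upgrade ``weakly contained in'' to ``direct summand of'', I would use that $I_P^G(\delta)$ has finite length (Theorem \ref{thm:1.10}) together with the fact that a finite-length tempered unitary representation is completely reducible (this follows from the complete reducibility of finite-length unitary representations noted after \eqref{eq:1.13}). Hence $\pi$, being weakly contained in the finite direct sum $I_P^G(\delta) = \bigoplus_i \pi_i$, must actually be isomorphic to one of the $\pi_i$, i.e.\ a direct summand.

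For the uniqueness part (b), I would argue via Jacquet restriction and the second adjointness theorem. Suppose $(L,\delta)$ and $(L',\delta')$ both work, so $\pi$ is a summand of both $I_P^G(\delta)$ and $I_{P'}^G(\delta')$. Applying $J_{\bar P}^G$ and using the second adjointness relation of Theorem \ref{thm:1.8},
\[
\Hom_G(I_P^G(\delta), I_{P'}^G(\delta')) \cong \Hom_L(\delta, J_{\bar P}^G(I_{P'}^G(\delta'))) ,
\]
one reduces to understanding the Jacquet restriction of a parabolic induction. The geometric lemma (Bernstein--Zelevinsky filtration of $J_{\bar P}^G \circ I_{P'}^G$ indexed by double cosets $L\backslash G / L'$) expresses $J_{\bar P}^G(I_{P'}^G(\delta'))$ as built from representations $I_{\text{(smaller)}}^L(w \cdot \delta')$, restricted to subgroups. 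Since $\delta$ is discrete series on $L$, by cuspidal support considerations (a discrete series is never a subquotient of a properly induced representation from a smaller Levi) the only terms of the filtration that can contribute a copy of $\delta$ are those coming from a double coset $LwL'$ with $w L' w^{-1} = L$; this forces $L$ and $L'$ to be $G$-conjugate, say $L' = w^{-1}Lw$, and then $\delta \cong w\cdot\delta'$ as $L$-representations. This is exactly the claimed uniqueness up to $G$-conjugation and isomorphism.

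The main obstacle is the second step in part (a): legitimately passing from the Plancherel/weak-containment statement to an honest direct summand, and in particular justifying that only finitely many cuspidal data $(L,\delta)$ (up to the $X_\nr^u$ action) contribute to any given $\pi$, so that the ``absorb $\chi$'' move is clean. A careful treatment would invoke that the Plancherel measure is supported on a locally finite union of the compact tori $X_\nr^u(L)\delta$, so that the fiber over $\pi$ in the Plancherel decomposition is discrete; combined with finite length of $I_P^G(\delta)$ and complete reducibility, this pins $\pi$ down to a summand. For part (b), the technical heart is the geometric lemma bookkeeping together with the fact that discrete series have ``supercuspidal support a genuine Levi'' — both are standard but need the right references (e.g.\ \cite[\S VII.5]{Ren}, \cite[\S III.4]{Wal}), and in a survey one would simply cite Harish-Chandra's original subrepresentation theorem rather than reprove it.
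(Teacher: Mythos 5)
The paper does not prove this result; it cites Waldspurger. Comparing your sketch against the standard Harish-Chandra argument in \cite{Wal}, there are two substantive gaps.

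For part (a), your route via the Plancherel decomposition is circular. The Plancherel isomorphism (Theorem \ref{thm:1.16} here, Chapter VIII of \cite{Wal}) is established \emph{using} the subrepresentation theorem as an input, so deducing the latter from the former is not a valid order of dependencies even in a survey. The standard argument runs the other way: choose $P = L U_P$ minimal with $J_{\bar P}^G(\pi) \neq 0$, take an irreducible $L$-subrepresentation $\delta$ of $J_{\bar P}^G(\pi)$, and use second adjointness (Theorem \ref{thm:1.8}) to realize $\pi$ as a quotient of $I_P^G(\delta)$. The real work is then to show $\delta$ is discrete series, which rests on Casselman's exponent criteria: temperedness of $\pi$ keeps the exponents of every $J_Q^G(\pi)$ in the closed negative cone, and the minimality of $P$ pushes the exponents of $\delta$ along proper parabolics of $L$ into the open cone. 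Complete reducibility of the finite-length unitary $I_P^G(\delta)$ then upgrades the quotient to a summand. Your plan omits this exponent analysis entirely; also the phrase ``weakly contained in $I_P^G(\delta\otimes\chi)$ for $\chi$ in a positive-measure set'' is not right, since a fixed irreducible $\pi$ is a subquotient of $I_P^G(\delta\otimes\chi)$ for at most finitely many $\chi \in X_\nr^u(L)$.

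For part (b), the key lemma you invoke --- ``a discrete series is never a subquotient of a properly induced representation from a smaller Levi'' --- is false as stated: for $G = SL_2(F)$ the Steinberg representation $\mr{St}_G$ is a discrete series subquotient of $I_B^G(\chi)$ for the unramified $\chi$ sending a generator of $T/T^1$ to $q_F^{\pm 1}$. What is actually true, and what the uniqueness argument needs, is again an exponent comparison: in the geometric-lemma filtration of $J_{\bar P}^G I_{P'}^G(\delta')$, the subquotients coming from double cosets $w$ with $w L' w^{-1} \neq L$ involve nontrivial Jacquet restrictions of the discrete series $\delta'$ and therefore have $Z(L)$-exponents strictly positive in some direction, so by Casselman's square-integrability criterion a discrete series $\delta$ cannot appear there. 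Your architecture (second adjointness plus geometric lemma) has the right shape, but the decisive step is precisely this exponent bookkeeping, and the shortcut you propose does not close the argument.
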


By Theorem \ref{thm:1.10} the $G$-representation $I_P^G (\delta)$ is tempered, unitary and
of finite length, so completely reducible. The following result is known as Harish-Chandra's
disjointness theorem. It says in particular that $I_P^G (\delta)$ does not depend on the choice
of a parabolic subgroup $P \subset G$ with given Levi factor $L$.

\begin{thm}\label{thm:1.2}
Let $P = L U_P$ and $Q = M U_P$ be parabolic subgroups of $G$ and let $\delta \in \Irr_\disc (L)$
and $\sigma \in \Irr_\disc (M)$. The following are equivalent:
\begin{itemize}
\item[(i)] There exists a $g \in G$ such that $M = g L g^{-1}$ and $\sigma \cong g \cdot \delta$.
\item[(ii)] $I_P^G (\delta)$ and $I_Q^G (\sigma)$ are isomorphic.
\item[(iii)] $I_P^G (\delta)$ and $I_Q^G (\sigma)$ have an irreducible subquotient in common.
\end{itemize}
\end{thm}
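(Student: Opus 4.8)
The plan is to prove the cycle of implications (i) $\Rightarrow$ (ii) $\Rightarrow$ (iii) $\Rightarrow$ (i), since (ii) $\Rightarrow$ (iii) is trivial and (i) $\Rightarrow$ (ii) is essentially formal. For (i) $\Rightarrow$ (ii): given $g \in G$ with $M = gLg^{-1}$ and $\sigma \cong g \cdot \delta$, the canonical isomorphism \eqref{eq:1.26} (in its normalized form, using $\delta_{gPg^{-1}} = g \cdot \delta_P$) gives $I_P^G(\delta) \cong I_{gPg^{-1}}^G(g \cdot \delta) = I_{gPg^{-1}}^G(\sigma)$. Now $gPg^{-1}$ and $Q$ are two parabolic subgroups both having $M$ as Levi factor, so by Lemma \ref{lem:1.20} the finite-length representations $I_{gPg^{-1}}^G(\sigma)$ and $I_Q^G(\sigma)$ have the same irreducible constituents with multiplicity; since both are tempered hence completely reducible (Theorem \ref{thm:1.10}), they are actually isomorphic. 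Composing gives (ii).

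The substance is in (iii) $\Rightarrow$ (i). First I would reduce to the case where $P$ and $Q$ are both standard with respect to a fixed minimal parabolic $B$ and a good maximal compact $K_0$, conjugating if necessary. Suppose $\rho$ is a common irreducible subquotient of $I_P^G(\delta)$ and $I_Q^G(\sigma)$. By complete reducibility $\rho$ is in fact a direct summand of each. Apply Frobenius reciprocity and the second adjointness theorem (Theorem \ref{thm:1.8}): from $\Hom_G(\rho, I_Q^G(\sigma)) \neq 0$ and $\Hom_G(I_P^G(\delta), \rho) \neq 0$ we obtain, via the geometric lemma of Bernstein--Zelevinsky computing the Jacquet module $J_{\bar P}^G I_Q^G(\sigma)$ as a filtered sum over double cosets $W_L \backslash W / W_M$ of induced-from-intertwined pieces, that some Weyl group element $w$ carries $M$ into $L$ with $w \cdot \sigma$ appearing as a subquotient of a parabolically induced representation of $L$. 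Because $\delta$ and $\sigma$ are \emph{discrete series}, a cuspidal-support argument (discrete series have cuspidal support a genuine cusp form on the Levi, and discrete series cannot appear as constituents of representations properly parabolically induced within $L$) forces $w M w^{-1} = L$ and $w \cdot \sigma \cong \delta$ on the nose; taking $g = w^{-1}$ (a representative in $K_0 \cap N_G(\text{torus})$) gives (i).

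The main obstacle I expect is the rigidity step inside (iii) $\Rightarrow$ (i): showing that the Weyl element $w$ produced by the geometric lemma is an honest conjugation matching the Levi subgroups and the discrete series data, rather than merely an inclusion-up-to-further-induction. This is where the hypothesis that $\delta, \sigma$ are square-integrable modulo center is indispensable — for general tempered $\delta$ one only gets the weaker statement of Theorem \ref{thm:1.9}(b). Concretely one needs: (1) discrete series representations of a reductive $p$-adic group are, up to unramified twist, determined by their cuspidal support, and (2) if $\delta \in \Irr_\disc(L)$ occurs in $I_R^L(\tau)$ for some proper parabolic $R \subsetneq L$ and $\tau \in \Rep(\text{Levi of } R)$, one derives a contradiction with square-integrability of matrix coefficients (a $\delta_R^{1/2}$-growth estimate versus the $L^2$ bound). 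Assembling (1) and (2) with the combinatorics of the geometric lemma is the heart of the proof; everything else is bookkeeping with \eqref{eq:1.26}, Lemma \ref{lem:1.20}, and the adjunctions \eqref{eq:1.15} and Theorem \ref{thm:1.8}.
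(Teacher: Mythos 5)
Your proof of (i) $\Rightarrow$ (ii) via \eqref{eq:1.26}, Lemma \ref{lem:1.20}, and complete reducibility is exactly the paper's argument, and (ii) $\Rightarrow$ (iii) is indeed trivial. The divergence is entirely in (iii) $\Rightarrow$ (i).

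Here you have overlooked that Theorem \ref{thm:1.9}.b already does the whole job in one line, and this is precisely what the paper uses. If $\rho$ is a common irreducible subquotient of $I_P^G(\delta)$ and $I_Q^G(\sigma)$, then by complete reducibility (Theorem \ref{thm:1.10}) $\rho$ is a direct summand of each, and Theorem \ref{thm:1.9}.b states that for a tempered irreducible $\rho$ expressed as a summand of $I_P^G(\delta)$ with $\delta$ discrete series, the pair $(L,\delta)$ is unique up to $G$-conjugation and isomorphism. Applied to the two presentations of $\rho$, that is literally statement (i). You appear to have misread Theorem \ref{thm:1.9}.b as a ``weaker statement for general tempered $\delta$'' --- in fact the $\delta$ in Theorem \ref{thm:1.9} is already assumed discrete series, and the uniqueness there is exactly what is needed, with no gap.

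Your longer sketch (reduction to standard parabolics, Bernstein--Zelevinsky geometric lemma for $J_{\bar P}^G I_Q^G(\sigma)$, cuspidal-support rigidity, square-integrability estimates to rule out proper induction) is, in spirit, the shape of Waldspurger's proof of Theorem \ref{thm:1.9}.b itself, so it is not wrong so much as a re-derivation of a black box the paper is entitled to invoke. As written it is also not airtight: ``discrete series are determined by their cuspidal support up to unramified twist'' is not quite enough on its own (several discrete series can share a cuspidal support), and the double-coset bookkeeping in the geometric lemma has to be combined with a disjointness statement for the Jacquet modules of discrete series to force $wMw^{-1}=L$ rather than merely a containment. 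None of this is needed here; the efficient move is simply to cite Theorem \ref{thm:1.9}.b.
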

\begin{proof}
Clearly (ii) is stronger than (iii), which by Theorem \ref{thm:1.9}.b implies (i). By Lemma 
\ref{lem:1.20} and the complete reducibility of $I_P^G (\delta)$ and $I_Q^G (\sigma)$, 
(i) implies (ii).
\end{proof}

On the set $\{ (L,\delta) : L \text{ Levi subgroup of } G, \delta \in \Irr_\disc (L) \}$
we put the equivalence relation $\sim$ generated by $G$-conjugation, isomorphism of 
$L$-representations and tensoring $L$-representations by elements of $X_\nr^u (L)$. We write
\[
\Delta (G) = \{ (L,\delta) : L \text{ Levi subgroup of } G, \delta \in \Irr_\disc (L) \} / \sim
\]
and we denote its elements by $[L,\delta]_G$. To any $\mf d \in \Delta (G)$ we associate the set
\[
\Irr_\temp (G)_{\mf d} = \{ \pi \in \Irr_\temp (G) : \pi \text{ is a summand of }
I_P^G (\delta) \text{ for some } (L,\delta) \in \mf d \} .
\]
From Theorems \ref{thm:1.9} and \ref{thm:1.2} one deduces:

\begin{cor}\label{cor:1.11}
The sets $\Irr_\temp (G)_{\mf d}$ with $\mf d \in \Delta (G)$ are precisely the connected 
components of $\Irr_\temp (G)$, so 
\[
\Irr_\temp (G) = \bigsqcup\nolimits_{\mf d \in \Delta (G)} \Irr_\temp (G)_{\mf d} .
\]
\end{cor}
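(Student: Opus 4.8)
The plan is to deduce Corollary \ref{cor:1.11} directly from the two preceding theorems, with the only real content being that the decomposition into the sets $\Irr_\temp(G)_{\mf d}$ is not merely a set-theoretic partition but a decomposition into connected components in the Jacobson topology. First I would observe that the sets $\Irr_\temp(G)_{\mf d}$ cover $\Irr_\temp(G)$: this is immediate from Theorem \ref{thm:1.9}.a, which produces for every $\pi \in \Irr_\temp(G)$ a pair $(L,\delta)$ with $\delta \in \Irr_\disc(L)$ and $\pi$ a summand of $I_P^G(\delta)$; the class $[L,\delta]_G \in \Delta(G)$ then satisfies $\pi \in \Irr_\temp(G)_{[L,\delta]_G}$. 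Next I would check disjointness: if $\pi \in \Irr_\temp(G)_{\mf d} \cap \Irr_\temp(G)_{\mf d'}$, then $\pi$ is a summand of both $I_P^G(\delta)$ and $I_Q^G(\sigma)$ for suitable representatives, so these two induced representations share an irreducible subquotient, and Theorem \ref{thm:1.2} (iii)$\Rightarrow$(i) forces $(L,\delta)$ and $(M,\sigma)$ to be $G$-conjugate up to isomorphism; in particular $\mf d = \mf d'$ as classes in $\Delta(G)$ — note here that tensoring by $X_\nr^u(L)$ is already built into $\sim$, so we only need conjugacy, which (iii)$\Rightarrow$(i) supplies.

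For the topological part, I would first show each $\Irr_\temp(G)_{\mf d}$ is connected. Fix a representative $(L,\delta) \in \mf d$ and a parabolic $P$ with Levi factor $L$. By \eqref{eq:1.17} the orbit map $\chi \mapsto \delta \otimes \chi$ realizes $X_\nr^u(L)\delta$ as a quotient of the connected compact torus $X_\nr^u(L)$, hence as a connected space, and by Theorem \ref{thm:1.10} each $I_P^G(\delta \otimes \chi)$ is tempered and completely reducible. The assignment $\chi \mapsto \{$irreducible summands of $I_P^G(\delta\otimes\chi)\}$ sweeps out all of $\Irr_\temp(G)_{\mf d}$ as $\chi$ ranges over $X_\nr^u(L)$, and because the family $I_P^G(\delta\otimes\chi)$ is realized on a single vector space $I_{K_0\cap P}^{K_0}(V_\delta)$ (see the discussion around \eqref{eq:1.19}) with the operators depending continuously on $\chi$, the resulting map from $X_\nr^u(L)$ to the power set of $\Irr_\temp(G)$, or more precisely the corresponding multisection, is continuous; one concludes that $\Irr_\temp(G)_{\mf d}$ is the continuous image of the connected space $X_\nr^u(L)$ and therefore connected.

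It remains to see that each $\Irr_\temp(G)_{\mf d}$ is open (equivalently, closed) in $\Irr_\temp(G)$, for then the disjoint cover by connected sets is exactly the partition into connected components. This is the step I expect to be the main obstacle, since it genuinely uses harmonic analysis rather than formal manipulation: the cleanest route is to invoke the Plancherel decomposition of $\mc S(G)$ (Waldspurger \cite{Wal}), under which $\mc S(G) \cong \bigoplus_{\mf d} \mc S(G)_{\mf d}$ as a topological direct sum indexed by $\Delta(G)$, with $\Irr$ of the $\mf d$-summand equal to $\Irr_\temp(G)_{\mf d}$. A direct sum decomposition of the algebra forces the corresponding decomposition of $\Irr(\mc S(G)) = \Irr_\temp(G)$ into clopen pieces, since a module over a direct sum of algebras is supported on one summand. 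Alternatively, and more in keeping with the representation-theoretic flavour of these notes, one shows that $\pi \mapsto$ (its cuspidal support datum in $\Delta(G)$) is locally constant on $\Irr_\temp(G)$ using that parabolic induction $I_P^G$ is continuous and that two inequivalent data $\mf d \neq \mf d'$ yield induced representations with no common subquotient by Theorem \ref{thm:1.2}; a point of $\Irr_\temp(G)_{\mf d}$ then has a neighbourhood — a neighbourhood in the induced family, which is open by continuity of induction — entirely contained in $\Irr_\temp(G)_{\mf d}$. Putting the three facts together (cover, disjoint, each piece clopen and connected) yields that the $\Irr_\temp(G)_{\mf d}$ are precisely the connected components and gives the displayed decomposition.
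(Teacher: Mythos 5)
The paper itself gives no explicit proof here: the phrase ``from Theorems \ref{thm:1.9} and \ref{thm:1.2} one deduces'' covers only the set-theoretic partition, and your first two steps make that deduction exactly as intended. Theorem \ref{thm:1.9}.a yields the cover; Theorem \ref{thm:1.2} (iii)$\Rightarrow$(i) yields disjointness, with the $X_\nr^u(L)$-twists already absorbed into the equivalence relation defining $\Delta(G)$, as you correctly note. The topological upgrade --- that these pieces are clopen and connected in the Jacobson topology from $\mc S(G)$ --- genuinely needs more, and you are right to reach for the Plancherel theorem for that.

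Your step three, however, has a real gap. ``Continuous image of a connected space is connected'' is a theorem about single-valued maps; for a finitely multivalued correspondence it fails even in the best of circumstances (the constant correspondence from $[0,1]$ to a two-point discrete space is as ``continuous'' as one could wish, yet its image is disconnected). What actually rescues the situation is generic irreducibility together with control of how non-generic constituents sit in the closure of the generic ones --- and formalizing that is, in effect, the Plancherel isomorphism. Since you already invoke Theorem \ref{thm:1.16} for the clopenness step, the cleaner move is to extract connectedness from the same source: under the Fourier transform the centre of $\mc S(G)_{[L,\delta]}$ is $C^\infty\big(X_\nr^u(L)/W^e_{[L,\delta]}\big)$, which has no nontrivial idempotents because $X_\nr^u(L)/W^e_{[L,\delta]}$ is connected; thus $\mc S(G)_{[L,\delta]}$ is an indecomposable direct summand of $\mc S(G)$, making $\Irr_\temp(G)_{\mf d}$ both clopen and connected in one stroke. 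This merges your steps three and four, removes the gap, and also renders moot your alternative ``cuspidal support is locally constant'' sketch, which as written similarly leaves the key continuity claim unjustified. One caveat on exposition: Theorem \ref{thm:1.16} appears after Corollary \ref{cor:1.11} in the paper, so within the text's own ordering the topological half of the corollary is really being asserted as part of the Harish-Chandra/Waldspurger theory under survey rather than deduced from Theorems \ref{thm:1.9} and \ref{thm:1.2} alone; your proof does not create a circularity (Waldspurger's results are external), but it is worth flagging the overshoot.
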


We call this the Harish-Chandra decomposition of $\Irr_\temp (G)$ and we call the sets
$\Irr_\temp (G)^{\mf d}$ Harish-Chandra components for $G$.

By the generic irreducibility of parabolically induced representations 
\cite[Th\'eo\-r\`eme 3.2]{Sau},
in the family $\{ I_P^G (\delta \otimes \chi) : \chi \in X_\nr^u (L) \}$ the irreducible
representations form an open dense subset. Hence $\Irr_\temp (G)_{[L,\delta]}$ looks like
the compact torus $X_\nr^u (L) \delta$, with some points on submanifolds of smaller 
dimensions replaced by finite packets of inseparable points.

\begin{ex}\label{ex:1.A} $G = SL_2 (F)$ \\
The set $\Irr_\disc (G)$ is countable, apart from $\mr{St}_G$ are its elements are 
supercuspidal representations.

Up to conjugation, the only proper Levi subgroup of $G$ is the diagonal torus
$T \cong F^\times$. The set $\Irr_\disc (T) / X_\nr^u (T)$ is naturally in bijection with
$\Irr (\mf o_F^\times)$, which is countable because $\mf o_F^\times$ is profinite.
The only further equivalences in $\Delta (G)$ come from conjugation by $N_G (T) =
T \cup s_\alpha T$, where $s_\alpha$ acts on $T$ by inversion. Hence the 
non-discrete part of $\Delta (G)$ is $\Irr (\mf o_F^\times)$ modulo inversion.

Let $B$ be the Borel subgroup of upper triangular matrices in $G$. 
A representation $I_B^G (\chi)$ with $\chi \in \Irr_\disc (T) = \Irr_{\mr{unit}}(T)$
is reducible if and only if $\chi$ has order two. There are three such characters (if
$p>2$): the quadratic unramified character $\chi_-$ and two ramified characters 
$\chi_r, \chi_r \chi_-$ (these fail when $p=2$). Two representations $I_B^G (\chi)$ and
$I_B^G (\chi')$ have common irreducible subquotients if and only if $\chi' \in 
\{\chi,s_\alpha (\chi) = \chi^{-1}\}$, and in that case $I_B^G (\chi) \cong I_B^G (\chi')$.

A class $[T,\chi]_G \in \Delta (G)$ with $\chi_{\mf o} := \chi |_{\mf o_F^\times}$ 
of order bigger than two gives rise to a circle of irreducible $G$-representations, 
which is equivalent to 
the analogous circle $\Irr_\temp (G)_{[T,\chi^{-1}]}$. When $\chi_{\mf o}$ has
order at most two, the isomorphisms from conjugation with $s_\alpha$ mean that 
$\Irr (G)_{[T,\chi]}$ looks a half-circle, with one or two double points. 

Altogether, the topological space $\Irr_\temp (G)$ looks like

\includegraphics[width=12cm]{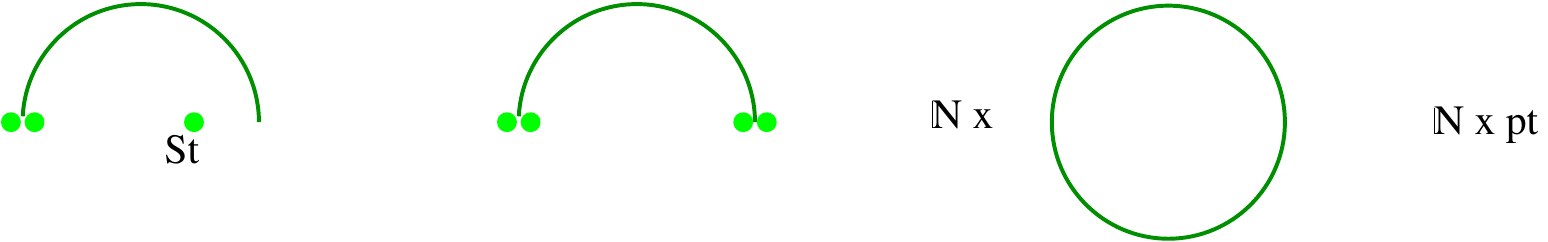}

Here "$\N$ x" just means countably many copies of something. 
We draw $\mr{St}_G$ close to $\Irr_\temp (G)_{[T,1]}$ because it is a subquotient of
$I_B^G (\chi)$ for some $\chi \in X_\nr (T)$.
\end{ex}

\subsection{Structure of $\mc S (G)$ and $C_r^* (G)$} \
\label{par:1.6}

The algebra $\mc S (G)$ can be described by its Fourier transform, which relates closely
to the Plancherel formula for $G$ \cite{Wal}. By Theorem \ref{thm:1.9}.a that means: 
$\mc S(G)$ is determined by how it acts on the representations $I_P^G (\delta)$ with 
$P = L U_P$ a parabolic subgroup of $G$ and $\delta \in \Irr_\disc (L)$. 

In view of Theorem \ref{thm:1.2}, we need only one $L$ and one $P$ for each 
conjugacy class of Levi subgroups of $G$. We fix a set of Levi subgroups $\mf{Lev}(G)$
representing the conjugacy classes of Levi subgroups, and for each $L \in \mf{Lev}(G)$
we fix one parabolic subgroup $P$ with Levi factor $L$. We have to analyse how the images
of $\mc S (G)$ in $\End_\C (I_P^G (\delta))$ and in $\End_\C (I_P^G (\delta'))$ are 
related when $I_P^G (\delta) \cong I_P^G (\delta')$.

First we look at $\chi_1 \in X_\nr^u (L,\delta)$. By definition there exists an
isomorphism of $L$-representations $\phi_{\chi_1} : \delta \to \delta \otimes \chi_1$,
unique up to scalars. Since $\delta$ and $\delta \otimes \chi_1$ are unitary representations
(on the same inner product space), we may assume that $\phi_{\chi_1}$ is unitary.
Then $\phi_{\chi_1}$ defines a unitary $L$-isomorphism $\delta \otimes \chi \to
\delta \otimes \chi_1 \otimes \chi$ for all $\chi \in X_\nr^u (L)$. By the functoriality
of $I_P^G$, we obtain a family of $G$-isomorphisms
\begin{equation}\label{eq:1.20} 
I(\chi_1,P,\delta,\chi) : I_P^G (\delta \otimes \chi) \to 
I_P^G (\delta \otimes \chi_1 \otimes \chi) \qquad \chi \in X_\nr^u (L).
\end{equation}
The formula \eqref{eq:1.14} shows that $I(\chi_1,P,\delta,\chi)$ is unitary. Via \eqref{eq:1.19},
we can consider $I(\chi_1,P,\delta,\chi)$ as an operator on the vector space 
$I_{K_0 \cap P}^P (V_\delta)$, then it does not depend on $\chi$.

Next we look at the other source of equivalences between parabolically induced representations in 
Theorem \ref{thm:1.9}: conjugation by elements $w \in N_G (L)$. Harish-Chandra initiated the study
of the integral operators
\begin{equation}\label{eq:1.21}
\begin{array}{cccc}
J(w,P,\delta,\chi) : & I_P^G (\delta \otimes \chi) & \to & 
I_P^G (w \cdot \delta \otimes w \cdot \chi) \\
 & f & \mapsto & [g \mapsto \int_{w^{-1} U_P w \cap U_{\bar P}} f(u w^{-1} g) \textup{d}u]
\end{array}.
\end{equation}
As a map from $I_{K_0 \cap P}^{K_0}(V_\delta)$ to itself, this integral depends rationally on
$\chi \in X_\nr (L)$ and converges when $|\chi|$ is large enough in a certain direction 
\cite[Th\'eor\`eme IV.1.1]{Wal}. Using \cite[Lemma 1.8]{Hei2} one can normalize 
$J(w,P,\delta,\chi)$ with $\chi \in X_\nr^u (L)$ to a unitary $G$-isomorphism
\begin{equation}\label{eq:1.22}
J' (w,P,\delta,\chi) : I_P^G (\delta \otimes \chi) \to 
I_P^G (w \cdot \delta \otimes w \cdot \chi) .
\end{equation}
(This can be done so that $J' (w,P,\delta,\chi)$ depends continuously on $\chi$, but it is only
canonical up to functions from $X_\nr^u (L)$ to $S^1$.) The definition of $\Delta (G)$ entails
that it suffices to look at $\delta$ in set of representatives for $\Irr_\disc (L) / (X_\nr^u (L) 
\rtimes N_G (L))$. In combination with the union over $L \in \mf{Lev}(L)$, that yields a set of
representatives for $\Delta (G)$. In this setting we only need the isomorphisms \eqref{eq:1.22} 
for $w \in N_G (L)$ such that $w \cdot \delta \in X_\nr^u (L) \delta$ (as subsets of $\Irr (L)$,
so up to isomorphism of representations). Moreover $w$ and $wl$ with $l \in L$ give essentially
the same operators \eqref{eq:1.21} and \eqref{eq:1.22}, so we may let $w$ run over a set of 
representatives for the finite group 
\[
W_{\mf d} = \{w \in N_G (L)/L : w \cdot \delta \in X_\nr^u (L) \delta \}.
\]
For each $w$ in this set we choose a unitary $L$-isomorphism 
$\phi_{w,\delta} : w \cdot \delta \to \delta \otimes \chi_{w,\delta}$, for some 
$\chi_{w,\delta} \in X_\nr^u (L)$. Then we can compose \eqref{eq:1.22} with 
$I_P^G (\phi_{w,\delta})$ to obtain a family of unitary $G$-isomorphisms
\begin{equation}\label{eq:1.23}
I (w,P,\delta,\chi) : I_P^G (\delta \otimes \chi) \to 
I_P^G (\delta \otimes \chi_{w,\delta} \otimes w \cdot \chi) \qquad \chi \in X_\nr^u (L) .
\end{equation}

\begin{defn}\label{def:1.E}
Write $\mf d = [L,\delta]_G$. We define $W^e_{\mf d} = W^e_{[L,\delta]}$ as the finite group of 
diffeomorphisms of $\{\delta\} \times X_\nr^u (L) \cong X_\nr^u (L)$ generated by 
$\chi \mapsto \chi_1 \otimes \chi$ with $\chi_1 \in X_\nr (L,\delta)$ and 
$\chi \mapsto \chi_{w,\delta} \otimes w\cdot \chi$ with $w \in W_{\mf d}$.
\end{defn}
Thus $W^e_{\mf d}$ is an extension of $W_{\mf d}$ by $X_\nr (L,\delta)$. 
We note that $X_\nr (L,\delta)$ is really needed here, in general the action of $W_{\mf d}$ on
$X_\nr^u (L) \delta$ does not lift to a group action on $X_\nr^u (L)$.
Combining \eqref{eq:1.20} and \eqref{eq:1.23}, we obtain operators as in \eqref{eq:1.23} for all 
$w \in W^e_{\mf d}$, canonical up to functions $X_\nr^u (L) \to S^1$. It follows that, for
all $w,w' \in W^e_{\mf d}$, the operators
\begin{equation}\label{eq:1.24}
I (w,P,\delta, w'(\chi)) \circ I(w',P,\delta,\chi) \quad \text{and} \quad
I (w w', P, \delta, \chi)
\end{equation}
differ only by a function $X_\nr^u (L) \to S^1$. This enables us to define a group action of
$W^e_{\mf d}$ on the algebra of functions from $X_\nr^u (L)$ to 
$\End_\C (I_{K_0 \cap P}^P (V_\delta))$ by
\begin{multline}\label{eq:1.25}
(w \cdot \mc F)(w(\chi)) = I(w,P,\delta,\chi) \circ \mc F (\chi) \circ I(w,P,\delta,\chi)^{-1} \\
\chi \in X_\nr^u (L), \mc F : X_\nr^u (L) \to \End_\C \big( I_{K_0 \cap P}^{K_0} (V_\delta) \big).
\end{multline}
This action stabilizes various subalgebras, for instance $C(X_\nr^u (L)) \otimes 
\End_\C^\infty (I_{K_0 \cap P}^{K_0} V_\delta)$. Here $\End_\C^\infty (W)$, for a 
$G$-representation $W$, means the smooth vectors in $\End_\C (W)$ as $G \times G$-representation. The algebra $\End_\C^\infty (W)$ has $W$ as unique irreducible
module, and it is Morita equivalent to $\C$ via the bimodules $W$ and $W^\vee$.

For each $f \in \mc S (G)$, the intertwining property of $I(w,P,\delta,\chi)$ says that
\begin{equation}\label{eq:1.27}
I(w,P,\delta,\chi) \circ I_P^G (\delta \otimes \chi)(f) = 
I_P^G (\delta \otimes w(\chi))(f) \circ I (w,P,\delta,\chi) .
\end{equation}
Hence the operators $I_P^G (\delta \otimes \chi)(f)$ are invariant for the $W^e_{\mf d}$-action
from \eqref{eq:1.25}. Further, since $\mc S (G)$ is a smooth $G \times G$-representation,
$I_P^G (\delta \otimes \chi)(f) \in \End_\C^\infty (I_{K_0 \cap P}^{K_0} (V_\delta))$. 
The example \eqref{eq:1.2} shows that we can expect that $I_P^G (\delta \otimes \chi)(f)$ 
is a smooth function of $\chi \in X_\nr^u (L)$.
We are ready to state the Plancherel isomorphism for $G$, which is due to Harish-Chandra.

\begin{thm}\label{thm:1.16} \textup{\cite{Wal}} 
\enuma{
\item The Fourier transform defines an isomorphism of topological *-algebras
\[
\mc S (G) \cong \bigoplus\nolimits_{[L,\delta]_G \in \Delta (G)} \Big( C^\infty (X_\nr^u (L)) 
\otimes \End_\C^\infty \big( I_{K_0 \cap P}^{K_0} (V_\delta) \big) \Big)^{W^e_{[L,\delta]}} . 
\]
\item For any $K \in \CO (G)$ with $K \subset K_0$, part (a) restricts to an isomorphism of
unital Fr\'echet *-algebras
\[
\mc S (G,K) \cong \bigoplus\nolimits_{[L,\delta]_G \in \Delta (G)} \Big( C^\infty (X_\nr^u (L)) 
\otimes \End_\C \big( I_{K_0 \cap P}^{K_0} (V_\delta)^K \big) \Big)^{W^e_{[L,\delta]}} . 
\]
Here each space $I_{K_0 \cap P}^{K_0} (V_\delta)^K$ has finite dimension, 
and it is nonzero for only finitely many $[L,\delta]_G \in \Delta (G)$.
}
\end{thm}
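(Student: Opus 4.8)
The plan is to derive the Plancherel isomorphism of Theorem \ref{thm:1.16} from the analytic description of the tempered spectrum assembled in this section, following Harish-Chandra's theory as presented in \cite{Wal}. The starting point is the Harish-Chandra decomposition of Corollary \ref{cor:1.11}: the tempered dual $\Irr_\temp(G)$ is the disjoint union over $\mf d = [L,\delta]_G \in \Delta(G)$ of the components $\Irr_\temp(G)_{\mf d}$, each of which is a quotient of the compact torus $X_\nr^u(L)$ by the finite group $W^e_{\mf d}$ acting through its image in the diffeomorphisms of $\{\delta\}\times X_\nr^u(L)$. Since $\mc S(G)$ is recovered from how it acts on the family of tempered representations $I_P^G(\delta\otimes\chi)$, the first step is to define the Fourier transform
\[
\mc F : \mc S(G) \to \prod_{[L,\delta]_G} \mr{Maps}\big(X_\nr^u(L), \End_\C^\infty(I_{K_0\cap P}^{K_0}(V_\delta))\big), \qquad
\mc F(f)_{[L,\delta]}(\chi) = I_P^G(\delta\otimes\chi)(f),
\]
using \eqref{eq:1.19} to put all the operators on the $\chi$-independent vector space $I_{K_0\cap P}^{K_0}(V_\delta)$. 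One then has to check three things about the image: (1) each component function is smooth in $\chi$ (this is where the analytic input on the rational/holomorphic dependence of the operators $J(w,P,\delta,\chi)$ from \cite[Th\'eor\`eme IV.1.1]{Wal} enters, combined with \eqref{eq:1.2} as the rank-one model); (2) it is $W^e_{\mf d}$-invariant for the action \eqref{eq:1.25}, which is exactly the content of the intertwining identity \eqref{eq:1.27}; (3) it lands in $\End_\C^\infty$ rather than all of $\End_\C$, because $\mc S(G)$ is a smooth $G\times G$-representation and $I_P^G$ preserves this smoothness.

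Next I would show $\mc F$ is a *-homomorphism and injective. Multiplicativity is immediate since each $I_P^G(\delta\otimes\chi)$ is an algebra homomorphism on $\mc S(G)$ by Theorem \ref{thm:1.10} (temperedness is what makes $I_P^G(\delta\otimes\chi)(f)$ defined for $f\in\mc S(G)$), and compatibility with $*$ follows from unitarity of the representations and of the normalized intertwiners $I(w,P,\delta,\chi)$, established via \eqref{eq:1.14}. Injectivity is the separation statement: if $f\in\mc S(G)$ acts by zero on every $I_P^G(\delta\otimes\chi)$ with $\delta$ discrete series, then $f=0$. This is precisely the injectivity half of the Plancherel theorem — the Plancherel measure is supported on these representations — and I would invoke \cite{Wal} for it rather than reprove it.

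The substantive step, and the main obstacle, is surjectivity together with the topological statement that $\mc F$ is a homeomorphism onto the stated Fr\'echet-algebra direct sum. Surjectivity is Harish-Chandra's Plancherel theorem in its sharp form: every $W^e_{\mf d}$-invariant smooth $\End_\C^\infty$-valued section arises from some $f\in\mc S(G)$, and the Schwartz seminorms $\nu_m$ on $\mc S(G)$ match the natural Fr\'echet seminorms on the target. I would reduce part (b) to part (a) by taking $K$-invariants: the idempotent $\lr K$ maps under $\mc F$ to projection onto $I_{K_0\cap P}^{K_0}(V_\delta)^K$, so $\mc S(G,K) = \lr K\,\mc S(G)\,\lr K$ corresponds to the subalgebra where $\End_\C^\infty$ is replaced by $\End_\C$ of the finite-dimensional space $I_{K_0\cap P}^{K_0}(V_\delta)^K$; finite-dimensionality is Bernstein's uniform admissibility (Theorem \ref{thm:1.21}) applied to $I_P^G(\delta)$, and finiteness of the set of contributing $[L,\delta]_G$ follows because a fixed $K$ can have nonzero invariants in only finitely many Harish-Chandra components. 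The hard analytic core — the rapid-decay estimates on matrix coefficients and wave packets that make the inverse Fourier transform land back in $\mc S(G)$ with control of all seminorms — is genuinely the work of \cite{Wal} and I would cite it; the contribution of the argument here is to repackage that result in the crossed-product-flavored language of $W^e_{\mf d}$-invariants, which makes the later Hochschild-homology and K-theory computations transparent.
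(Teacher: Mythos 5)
The paper offers no proof of Theorem \ref{thm:1.16}: it is cited wholesale to Waldspurger \cite{Wal}, and the preceding discussion in Paragraph \ref{par:1.6} only assembles the apparatus (the normalized intertwiners $I(\chi_1,P,\delta,\chi)$ and $I(w,P,\delta,\chi)$, the group $W^e_{[L,\delta]}$, the action \eqref{eq:1.25}, and the invariance property \eqref{eq:1.27}) needed to state it. Your sketch is a faithful elaboration of that setup — defining the Fourier transform, checking multiplicativity and $*$-compatibility from unitarity, reading off $W^e_{\mf d}$-invariance from \eqref{eq:1.27}, and deducing part (b) from part (a) via $\langle K\rangle(\cdot)\langle K\rangle$ together with uniform admissibility (Theorem \ref{thm:1.21}) and the finiteness coming from Corollary \ref{cor:1.21} — and it correctly isolates injectivity, surjectivity, and the seminorm estimates as the genuine analytic content belonging to \cite{Wal}. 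So the proposal matches what the paper implicitly does.

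One wiring correction: the smoothness in $\chi$ of $\chi\mapsto I_P^G(\delta\otimes\chi)(f)$ for $f\in\mc S(G)$ is a consequence of the rapid decay of $f$, not of the rational dependence of the intertwining operators $J(w,P,\delta,\chi)$. The rank-one picture \eqref{eq:1.2} is the right intuition — rapidly decaying Fourier coefficients on $\Z$ give a smooth function on $S^1$ — and that argument generalizes directly. The rational/holomorphic dependence of $J(w,P,\delta,\chi)$ from \cite[Th\'eor\`eme IV.1.1]{Wal} is instead what underlies the construction of the normalized operators $I(w,P,\delta,\chi)$ in \eqref{eq:1.22}--\eqref{eq:1.23} and hence the $W^e_{\mf d}$-invariance of the image, not the regularity of the individual component functions $\mc F(f)_{[L,\delta]}$. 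This does not affect the validity of the overall outline.
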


Theorem \ref{thm:1.16}.b applies arbitarily small $K \in \CO (G)$, but not to all 
$K \in \CO (G)$. We may also replace $K_0$ by another good maximal compact subgroup
of $G$, so that Theorem \ref{thm:1.16}.b applies to other $K \in \CO (G)$. We call
\[
\mc S (G)_{[L,\delta]} \cong \Big( C^\infty (X_\nr^u (L)) \otimes
\End_\C^\infty \big( I_{K_0 \cap P}^{K_0} (V_\delta) \big) \Big)^{W^e_{[L,\delta]}}
\]
a Harish-Chandra block of $\mc S (G)$. Then we can think of Theorem \ref{thm:1.16}.a as an
explicit ``Harish-Chandra decomposition"
\begin{equation}\label{eq:1.28}
\mc S (G) = \bigoplus\nolimits_{[L,\delta]_G \in \Delta (G)} \mc S (G)_{[L,\delta]} .
\end{equation}

\begin{ex}\label{ex:1.B} $G = SL_2 (G)$.\\
We extend the description of $\Irr_\temp (G)$ in Example \ref{ex:1.B} to $\mc S (G)$.
We take $K_0 = SL_2 (\mf o_F)$ and we represent $W(G,T) \cong S_2$ in $K_0$.
For $[G,\delta]_G \in \Delta (G)$ we have $W_{G,\delta} = \{1\}$ because $X_\nr (G) = \{1\}$.
Hence $\mc S (G)_{[G,\delta]} \cong \End_\C^\infty (V_\delta)$.

For every $[T,\chi] \in \Delta (G)$, the group $X_\nr (T,\chi)$ is trivial because $T = Z(T)$.
When the order of $\chi_{\mf o} = \chi |_{\mf o_F^\times}$ is larger than two, the group 
$W^e_{[T,\chi_0]}$ is trivial and
\[
\mc S (G)_{[T,\chi_0]} \cong 
C^\infty (S^1) \otimes \End_\C^\infty \big( I_{K_0 \cap B}^{K_0} \C \big) .
\]
For $\chi_{\mf o}$ of order at most two we get a summand 
\[
\mc S (G)_{[T,\chi]} \cong \big( C^\infty (S^1) \otimes 
\End_\C^\infty \big( I_{K_0 \cap B}^{K_0} \C \big) \big)^{S_2} ,
\] 
but the $S_2$-actions differ for $\chi_{\mf o} = 1$ and $\chi_{\mf o}$ ramified quadratic (this case 
does not occur when $p=2$). It becomes easier if we are satisfied with a description up to Morita 
equivalence. Namely, $\mc S (G)$ is Morita equivalent to
\[
\big( C^\infty (S^1) \otimes M_2 (\C) \big)^{S_2} \oplus \C_{St} \, \oplus \, C^\infty (S^1) 
\rtimes S_2 \, \oplus \, \bigoplus\nolimits_{j=1}^\infty C^\infty (S^1) \, \oplus \,
\bigoplus\nolimits_{i=1}^\infty \C ,
\]
where $\C_{St}$ comes from the Steinberg representation. 
This compares well with the picture of $\Irr_\temp (G)$ in Example \ref{ex:1.A}.
See \eqref{eq:2.23} for the definition of crossed products like $C^\infty (S^1) 
\rtimes S_2$.
\end{ex}

The reduced $C^*$-algebra $C_r^* (G)$ admits a description similar to Theorem \ref{thm:1.16}. By 
Lemma \ref{lem:1.6} and \eqref{eq:1.13} we have to replace all pre-unitary tempered representations
by their Hilbert space completions, then they become $C_r^* (G)$-modules. We note that, unlike
Theorem \ref{thm:1.16}.b, $C_r^* (G)$ does not have to act on the Hilbert space completion of 
$I_P^G (V_\delta)$ via one of the finite dimensional subspaces $I_P^G (V_\delta)^K$. Instead it acts
by compact operators, because the compact operators form the closure of the algebra of finite rank
operators on a Hilbert space (with respect to the operator norm). We denote the $C^*$-algebra of 
compact operators on the Hilbert space completion of an inner product space $V$ by $\mf K (V)$.
A study of the $C^*$-norm on both sides of Theorem \ref{thm:1.16} leads to:

\begin{thm}\label{thm:1.17} \textup{\cite{Ply}} \\
The Fourier transform (or equivalently Theorem \ref{thm:1.16}) induces an isomorphism of $C^*$-algebras
\[
C_r^* (G) \cong \bigoplus\nolimits_{[L,\delta]_G \in \Delta (G)} \Big( C (X_\nr^u (L)) 
\otimes \mf K \big( I_{K_0 \cap P}^{K_0} (V_\delta) \big) \Big)^{W^e_{[L,\delta]}} . 
\]
Here the tensor products and the direct sum are taken in the sense of $C^*$-algebras. For any
open subgroup $K \subset K_0$, this restricts to an isomorphism of unital $C^*$-algebras
\[
C_r^* (G,K) \cong \bigoplus\nolimits_{[L,\delta]_G \in \Delta (G)} \Big( C (X_\nr^u (L)) 
\otimes \End_\C \big( I_{K_0 \cap P}^{K_0} (V_\delta)^K \big) \Big)^{W^e_{[L,\delta]}} . 
\]
\end{thm}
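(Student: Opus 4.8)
The plan is to obtain the $C^*$-algebra statement by completing the Fr\'echet algebra statement of Theorem~\ref{thm:1.16} block by block. First I would recall that $\mc S(G)$ is a dense $*$-subalgebra of $C_r^*(G)$ (Theorem~\ref{thm:1.4}.b together with \eqref{eq:1.9}), and that, by Lemma~\ref{lem:1.6} and the functors \eqref{eq:1.13}, the tempered unitary $G$-representations — hence the $C_r^*(G)$-modules — are exactly the Hilbert space completions of the pre-unitary tempered smooth representations. By Theorem~\ref{thm:1.9} every irreducible such representation is a direct summand of some $I_P^G(\delta\otimes\chi)$ with $\delta\in\Irr_\disc(L)$ and $\chi\in X_\nr^u(L)$, and by the Plancherel theorem \cite{Wal} these exhaust the support of the Plancherel measure, i.e.\ every irreducible tempered representation is weakly contained in $L^2(G)$. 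Consequently, for $f\in\mc S(G)$ the $C^*$-norm $\|f\|_{C_r^*(G)}$ equals the supremum over all $[L,\delta]_G\in\Delta(G)$ and all $\chi\in X_\nr^u(L)$ of the operator norm of $I_P^G(\delta\otimes\chi)(f)$ on the Hilbert completion of $I_{K_0\cap P}^{K_0}(V_\delta)$. In other words $C_r^*(G)$ is the $C^*$-completion of $\mc S(G)$ for exactly the norm dual to the decomposition of Theorem~\ref{thm:1.16}.a.

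Next I would fix $K\in\CO(G)$ with $K\subset K_0$ and complete each summand in Theorem~\ref{thm:1.16}.b. Each summand has the form $\big(C^\infty(X_\nr^u(L))\otimes\End_\C(V_\delta^K)\big)^{W^e_{[L,\delta]}}$ with $V_\delta^K:=I_{K_0\cap P}^{K_0}(V_\delta)^K$ finite dimensional. The $C^*$-norm restricted to this summand is, by the previous paragraph, the sup-norm $\|\mc F\|=\sup_\chi\|\mc F(\chi)\|$ on $C^\infty(X_\nr^u(L))\otimes\End_\C(V_\delta^K)$, whose completion is $C(X_\nr^u(L))\otimes\End_\C(V_\delta^K)$ — the matrix-valued analogue of the elementary fact \eqref{eq:1.3} that the $C^*$-completion of the Schwartz algebra of a lattice is $C$ of the dual torus. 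To pass to $W^e_{[L,\delta]}$-invariants I would use that $W^e_{[L,\delta]}$ is finite, so the conditional expectation $E(\mc F)=|W^e_{[L,\delta]}|^{-1}\sum_w w\cdot\mc F$ is a norm-decreasing projection carrying the $\mc S$-level algebra onto its invariants and likewise the $C^*$-level algebra onto its invariants; hence $\big(C(X_\nr^u(L))\otimes\End_\C(V_\delta^K)\big)^{W^e_{[L,\delta]}}=\overline{\big(C^\infty(X_\nr^u(L))\otimes\End_\C(V_\delta^K)\big)^{W^e_{[L,\delta]}}}$. Since the direct sum in Theorem~\ref{thm:1.16}.b is finite, completing term by term gives the second displayed isomorphism; that it is a $*$-isomorphism of $C^*$-algebras is automatic, the map of Theorem~\ref{thm:1.16}.b being already a $*$-homomorphism and isometric for the two $C^*$-norms by construction.

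Finally I would obtain the first formula by letting $K$ shrink. By \eqref{eq:1.9} we have $C_r^*(G)=\overline{\bigcup_m C_r^*(G,G_m)}$, and under the isomorphism just established $C_r^*(G,G_m)$ corresponds to $\bigoplus_{[L,\delta]_G}\big(C(X_\nr^u(L))\otimes\End_\C(V_\delta^{G_m})\big)^{W^e_{[L,\delta]}}$. As $m\to\infty$ the subspaces $V_\delta^{G_m}=I_{K_0\cap P}^{K_0}(V_\delta)^{G_m}$ increase, by smoothness, to a dense subspace of the Hilbert completion of $I_{K_0\cap P}^{K_0}(V_\delta)$, and the increasing union of the matrix algebras $\End_\C(V_\delta^{G_m})$ is norm-dense in $\mf K\big(I_{K_0\cap P}^{K_0}(V_\delta)\big)$; for each fixed $m$ only finitely many blocks are nonzero, so the closure of $\bigcup_m C_r^*(G,G_m)$ is precisely the $C^*$-direct sum $\bigoplus_{[L,\delta]_G}\big(C(X_\nr^u(L))\otimes\mf K(I_{K_0\cap P}^{K_0}(V_\delta))\big)^{W^e_{[L,\delta]}}$. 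Once more the finite-group conditional expectation commutes with all the completions involved, so invariants may be taken before or after completion, and this yields the first isomorphism.

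\textbf{Main obstacle.} The genuinely nontrivial input, to be imported from \cite{Wal,Ply}, is the identification of the $C^*$-norm on $\mc S(G)$ with the sup of operator norms over the family $\{I_P^G(\delta\otimes\chi)\}$: this is where the Plancherel theorem and the weak containment of tempered representations in $L^2(G)$ are used, and it is the only point that requires harmonic analysis rather than soft functional analysis. Everything afterwards — completion of $C^\infty$ of a compact torus to $C$ of that torus, passage from finite-rank to compact operators in the inductive limit over $K$, and the commutation of $C^*$-completion with finite-group invariants through the conditional expectation — is routine, though the bookkeeping of the two cases ($\mf K$ versus $\End_\C$) and of the $C^*$-sense of the direct sum should be carried out carefully.
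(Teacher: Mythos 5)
Your proposal is correct and follows exactly the route the paper itself indicates: the text before Theorem~\ref{thm:1.17} says only ``A study of the $C^*$-norm on both sides of Theorem~\ref{thm:1.16} leads to'' and then cites \cite{Ply}, and your three steps (identify the $C^*$-norm on $\mc S(G)$ via the Plancherel theorem, complete each block using the finite-group conditional expectation, and pass to the inductive limit over $K$ to turn finite-rank into compact operators and a finite direct sum into a $c_0$-sum) are a careful fleshing-out of precisely that sketch. The one place you flag as the genuine input — that $\|f\|_{C_r^*(G)}$ is the supremum of $\|I_P^G(\delta\otimes\chi)(f)\|$ over all $[L,\delta]_G$ and $\chi\in X_\nr^u(L)$ — is indeed the nontrivial content of \cite{Wal,Ply}; the averaging/conditional-expectation argument for commuting invariants with completion and the density of $\bigcup_m\End_\C(V_\delta^{G_m})$ in $\mf K(I_{K_0\cap P}^{K_0}(V_\delta))$ are routine and correctly handled.
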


Notice that $C (X_\nr^u (L))$ is the $C^*$-completion of $C^\infty (X_\nr^u (L))$, and that it
genera\-lizes the example \eqref{eq:1.13} for $GL_1 (F)$. 
For $[L,\delta ]_G \in \Delta (G)$ we write
\[
C_r^* (G)_{[L,\delta]} = \Big( C (X_\nr^u (L)) 
\otimes \mf K \big( I_{K_0 \cap P}^{K_0} (V_\delta) \big) \Big)^{W^e_{[L,\delta]}} .
\]
Like \eqref{eq:1.28}, Theorem \ref{thm:1.17} gives a Harish-Chandra decomposition
\begin{equation}\label{eq:1.33}
C_r^* (G) \cong \bigoplus\nolimits_{[L,\delta]_G \in \Delta (G)} C_r^* (G)_{[L,\delta]} .
\end{equation}
In many cases $C_r^* (G)_{[L,\delta]}$ is Morita equivalent to the crossed product
$C(X_\nr^u (L)) \rtimes W^e_{[L,\delta]}$, see \cite[Theorem 1.4]{AfAu}.

\subsection{Description of the smooth dual} \

By Lemma \ref{lem:1.5}, $\Irr (\mc H (G))$ is the space of irreducible smooth
$G$-representations, which we write simply as $\Irr (G)$. We endow it with the Jacobson
topology from $\mc H (G)$. The space $\Irr (\mc S(G)) = \Irr_\temp (G)$ of irreducible 
nondegenerate $\mc S (G)$-modules injects in $\Irr (\mc H (G))$ \cite[Appendix, Proposition 
3]{SSZ}. However, the Jacobson topology from $\mc S (G)$ on $\Irr_\temp (G)$ is finer than
the subspace topology from $\Irr (\mc H (G))$. The typical example is $S^1 \subset \C^\times$,
where $S^1$ carries the Euclidean topology and $\C^\times$ is endowed with the Jacobson
topology. 

\begin{defn}\label{def:cusp}
Let $(\pi,V) \in \Rep (G)$. We say that $\pi$ is
\begin{itemize}
\item compact modulo center if all its matrix coefficients are compactly supported 
modulo $Z(G)$,
\item cuspidal if $J_P^G (\pi) = 0$ for all proper parabolic subgroups $P$ of $G$,
\item supercuspidal if $\pi$ is irreducible and not isomorphic to any subquotient of
$I_P^G (\rho)$, for any proper parabolic subgroup $P = L U_P \subset G$ and any 
$\rho \in \Rep (L)$.
\end{itemize}
\end{defn}

From Frobenius reciprocity \eqref{eq:1.15} one sees that every super\-cus\-pidal 
$G$-repre\-sen\-tation $\pi$ is cuspidal:
\[
\Hom_L \big( J_P^G (\pi),J_P^G (\pi) \big) \cong 
\Hom_G \big( \pi, I_P^G (J_P^G (\pi)) \big) = 0 ,
\]
so $J_P^G (\pi) =  0$. Jacquet and Bernstein proved that the three notions in 
Definition \ref{def:cusp} coincide for irreducible representations: 

\begin{thm}\label{thm:1.12} \textup{\cite{Jac}, \cite[Th\'eor\`eme VI.2.1]{Ren}} \\
A smooth $G$-representation is compact modulo center if and only if it is cuspidal. 

Moreover every irreducible cuspidal $G$-representation is supercuspidal. 
\end{thm}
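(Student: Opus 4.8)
The plan is to prove the two assertions of Theorem \ref{thm:1.12} in turn, starting from the characterization of cuspidality in terms of Jacquet modules and exploiting Frobenius reciprocity \eqref{eq:1.15} together with the second adjointness \eqref{thm:1.8} (Bernstein).

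First I would establish the equivalence ``compact modulo center $\iff$ cuspidal.'' The easy direction is that compact-mod-center implies cuspidal: if $(\pi,V)$ has all matrix coefficients supported compactly modulo $Z(G)$, then for a proper parabolic $P = LU_P$ one shows directly that the image of $V$ in the Jacquet module $W_{U_P}$ is forced to vanish, because the asymptotic behaviour of matrix coefficients along $U_P$ (their being compactly supported mod center) is incompatible with a nonzero $L$-quotient on which $U_P$ acts trivially — concretely, one uses that a nonzero vector in $J_P^G(\pi)$ would produce a matrix coefficient that is bounded away from zero along the noncompact split part of the center of $L$. Conversely, if $\pi$ is cuspidal, I would argue that every matrix coefficient $c_{\lambda,v}$ is compactly supported modulo $Z(G)$ by combining the Cartan/Iwasawa-type decomposition $G = \bigcup K_0 A^- K_0$ (with $K_0$ a good maximal compact and $A^-$ a dominant cone in a maximal split torus) with a ``Jacquet's lemma''-type estimate: the vanishing of all $J_P^G(\pi)$ for proper $P$ means that $\pi(a)v$ returns, for $a$ far enough into any facet of the cone away from the walls corresponding to $Z(G)$, to a behaviour controlled by the finitely many $K_0$-isotypic pieces, forcing $c_{\lambda,v}(K_0 a K_0) = 0$ for all but finitely many such $a$ modulo $Z(G)$. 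This is the harmonic-analysis heart of the argument and is the main obstacle: it requires the interplay between the smoothness of $\pi$ (so $v$ is fixed by some $K \in \CO(G)$), uniform admissibility (Theorem \ref{thm:1.21}), and the precise structure of the Jacquet module as the ``leading term'' of $\pi|_{A^-}$.

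Next I would deduce that an irreducible cuspidal $\pi$ is supercuspidal. Suppose, for contradiction, that $\pi$ is a subquotient of $I_P^G(\rho)$ for some proper parabolic $P = LU_P$ and some $\rho \in \Rep(L)$; replacing $\rho$ by a suitable subquotient we may assume $\pi$ embeds in or surjects onto a subquotient, and by passing to an irreducible subquotient of $\rho$ (using exactness of $I_P^G$, Theorem \ref{thm:1.10}) we may take $\rho$ irreducible. If $\pi \hookrightarrow I_P^G(\sigma)$ for some irreducible subquotient $\sigma$ of $\rho$, then Frobenius reciprocity \eqref{eq:1.15} gives $\Hom_L(J_P^G(\pi),\sigma) \neq 0$, contradicting $J_P^G(\pi) = 0$. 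If instead $\pi$ is a quotient of $I_P^G(\sigma)$, then the second adjointness theorem \ref{thm:1.8} applied to the opposite parabolic $\bar P$ gives $\Hom_L(\sigma, J_{\bar P}^G(\pi)) \neq 0$, and $J_{\bar P}^G(\pi) = 0$ by cuspidality (cuspidality is a statement about all proper parabolics, in particular $\bar P$), again a contradiction. The only subtlety is the reduction to the case where $\pi$ actually sits as a sub or a quotient of $I_P^G(\sigma)$ rather than merely a subquotient; here one invokes that any irreducible subquotient of a finite length representation $I_P^G(\sigma)$ embeds in the socle of some $I_{P'}^G(\sigma')$ or is a quotient of some $I_{P''}^G(\sigma'')$ for conjugate data — or, more cleanly, one uses Lemma \ref{lem:1.20} to move between parabolics with the same Levi and the two adjunctions above to handle sub and quotient simultaneously, so that no irreducible subquotient of $I_P^G(\sigma)$ can have vanishing Jacquet modules for all proper parabolics unless it fails to be such a subquotient at all. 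I expect the bookkeeping of sub-versus-quotient-versus-subquotient, combined with keeping track of which parabolic and which twist of $\sigma$ is relevant, to be the only place where care is needed in this second half; the conceptual content is entirely in the two Frobenius reciprocities already available.
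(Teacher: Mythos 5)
The paper's own proof establishes only the second assertion (irreducible cuspidal $\Rightarrow$ supercuspidal), referring to \cite{Jac} and \cite[Th\'eor\`eme VI.2.1]{Ren} for the first. Your sketch of ``compact modulo center $\Leftrightarrow$ cuspidal'' is a sensible outline of the harmonic-analysis argument, but as you acknowledge yourself, the analytic core (the Cartan-decomposition/Jacquet-lemma estimate) is left open, so this part is not a proof; the paper does not attempt one either, and that is fine.

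For the second assertion there is a genuine gap. The paper's proof has a single key step: it cites \cite[Lemme VI.3.6]{Ren} to upgrade ``$\pi$ is a subquotient of $I_P^G(\rho)$'' to ``$\pi$ is a subrepresentation of $I_P^G(\rho)$,'' and then one application of Frobenius reciprocity yields $J_P^G(\pi)\neq 0$. The underlying mechanism of that lemma is that a cuspidal (equivalently, compact-mod-center) representation is projective (and injective) in the relevant category of smooth representations, so a cuspidal subquotient is automatically a direct summand. You try instead to handle subs (via Frobenius reciprocity) and quotients (via second adjointness with $\bar P$) as two cases, which is a fine alternative once you are in the sub-or-quotient situation. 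But you explicitly flag and do not close the reduction from ``subquotient of $I_P^G(\rho)$'' to ``sub or quotient of some $I_{P'}^G(\sigma')$.'' Your appeal to socles/heads of $I_{P'}^G(\sigma')$ for conjugate data, and to Lemma~\ref{lem:1.20}, does not accomplish this: Lemma~\ref{lem:1.20} only says that changing the parabolic with fixed Levi does not change the multiset of irreducible subquotients, which gives no control on whether a given subquotient sits as a sub or a quotient. What you need is precisely the projectivity/injectivity of $\pi$, i.e.\ the content of \cite[Lemme VI.3.6]{Ren}, at which point a subquotient is a direct summand and only one of your two adjunction cases is required. Without that input, the key step is missing.
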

\begin{proof}
We provide an argument for the second claim. 
Suppose that $\pi \in \Irr (G)$ is cuspidal but not supercuspidal.
Then it is isomorphic to a subquotient of $I_P^G (\rho)$, for some smooth representation
$\rho$ of a proper Levi subgroup $L \subset G$. By \cite[Lemme VI.3.6]{Ren},
$\pi$ is also a subrepresentation of $I_P^G (\rho)$.\!
\footnote{``Supercuspidale" in \cite{Ren} is cuspidal in our terminology.} 
 Frobenius reciprocity shows that
\[
0 \neq \Hom_L (\pi, I_P^G (\rho)) \cong \Hom_G ( J_P^G (\pi), \rho) ,
\]
so $J_P^G (\pi) \neq 0$, contradicting the cuspidality of $\pi$.
\end{proof}

Motivated by Theorem \ref{thm:1.12}, we denote the set of supercuspidal $G$-representations
(up to isomorphism) by $\Irr_\cusp (G)$. 

If $Z(G)$ is compact, then every $\pi \in \Irr_\cusp (G)$ is an isolated point of
$\Irr (G)$. In general, if $\pi \in \Irr_\cusp (G)$ and $\chi \in X_\nr (G)$, then 
$\pi \otimes \chi$ is again supercuspidal. Let $X_\nr (G,\pi)$ be the stabilizer of $\pi$ 
for the action of $X_\nr (G)$ by tensoring. Like in \eqref{eq:1.17}, we have a bijection
\begin{equation}
X_\nr (G) / X_\nr (G,\pi) \to X_\nr (G) \pi = 
\{ \pi \otimes \chi \in \Irr (G) : \chi \in X_\nr (G)\}.
\end{equation}
This endows $X_\nr (G) \pi$ with the structure of an algebraic variety, namely a complex 
algebraic torus. It will turn out that $X_\nr (G) \pi$ is a connected component of 
$\Irr (G)$, of minimal dimension.

A supercuspidal representation need not be tempered or unitary, but in a sense it is not 
far off. By Lemma \ref{lem:1.7} every tempered supercuspidal representation is unitary, 
and conversely every unitary supercuspidal representation is discrete series so in particular 
tempered. In particular
\[
\Irr_{\cusp,\temp}(G) := \Irr_\cusp (G) \cap \Irr_\temp (G) \quad 
\text{is a subset of } \Irr_\disc (G).
\]
The group of smooth characters $\Hom (G, \R_{>0}^\times)$ consists of unramified characters, 
because $\R_{>0}^\times$ has no compact subgroups apart from $\{1\}$. We write
\[
X_\nr^+ (G) = \Hom (G, \R_{>0}^\times) ,
\]
a group isomorphic to $\Hom (\Z^d, \R_{>0}^\times) \cong (\R_{>0}^\times)^d$. Then $X_\nr (G)$
admits the polar decomposition
\begin{equation}\label{eq:1.29}
X_\nr (G) = X_\nr^u (G) \times X_\nr^+ (G) .
\end{equation}

\begin{lem}\label{lem:1.13} \textup{\cite[Lemma 4.2]{FlSo}}\\
Tensoring provides a bijection
\[
\Irr_{\cusp,\temp} (G) \times X_\nr^+ (G) \to \Irr_\cusp (G) .
\]
\end{lem}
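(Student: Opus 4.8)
The plan is to show that tensoring by $X_\nr^+(G)$ gives a well-defined map from $\Irr_{\cusp,\temp}(G) \times X_\nr^+(G)$ to $\Irr_\cusp(G)$, and then to exhibit an inverse. The well-definedness is essentially already noted in the text: supercuspidality is preserved under twisting by unramified characters (a subquotient of $I_P^G(\rho)$ for $\pi \otimes \chi^{-1}$ would yield one for $\pi$ after twisting by $\chi$), and $X_\nr^+(G) \subset X_\nr(G)$. So the content is in constructing the inverse, i.e. showing that every $\pi \in \Irr_\cusp(G)$ can be written uniquely as $\sigma \otimes \mu$ with $\sigma \in \Irr_{\cusp,\temp}(G)$ and $\mu \in X_\nr^+(G)$.

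First I would analyze the central character. By Schur's lemma, $Z(G)$ acts on $\pi$ by a character $\omega_\pi : Z(G) \to \C^\times$. Restricting to $X_\nr^+(G)|_{Z(G)}$-type directions, one can decompose $\omega_\pi$ according to the polar decomposition of characters of the abelian group $Z(G)$: write $\omega_\pi = \omega_\pi^u \cdot |\omega_\pi|$ where $|\omega_\pi|$ takes values in $\R_{>0}$. The key point is that $|\omega_\pi|$, being a positive real character of $Z(G)$ trivial on the (compact) torsion, extends — using that $Z(G)$ has finite index image in $G/G^1$ and $X_\nr^+(G) \cong (\R_{>0})^d$ is divisible and torsion-free — to an element $\mu^2 \in X_\nr^+(G)$, so there is a canonical square root $\mu \in X_\nr^+(G)$ with $\mu|_{Z(G)}$ agreeing with $|\omega_\pi|^{1/2}$ on the relevant lattice. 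Actually the cleaner route: set $\mu \in X_\nr^+(G)$ to be the unique positive-real unramified character whose restriction to $Z(G)$ equals $|\omega_\pi|$, after checking this restriction lies in the image of $X_\nr^+(G) \to \Hom(Z(G),\R_{>0})$; divisibility of $(\R_{>0})^d$ handles the finite-index obstruction. Then $\sigma := \pi \otimes \mu^{-1}$ has unitary central character.

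Next I would show $\sigma$ is tempered, equivalently (by Theorem \ref{thm:1.12} and the discussion after it) that $\sigma$ is square-integrable modulo center. Since $\sigma \in \Irr_\cusp(G)$ has unitary central character, Theorem \ref{thm:1.12} gives that all its matrix coefficients are compactly supported modulo $Z(G)$; a compactly-supported-mod-center function with $|\cdot|$ descending to $G/Z(G)$ is automatically in $L^2(G/Z(G))$. Hence $\sigma$ is discrete series, so $\sigma \in \Irr_{\cusp,\temp}(G)$, and $\pi = \sigma \otimes \mu$ is the desired decomposition. For injectivity/uniqueness: if $\sigma_1 \otimes \mu_1 \cong \sigma_2 \otimes \mu_2$ with $\sigma_i$ having unitary central character and $\mu_i \in X_\nr^+(G)$, comparing central characters forces $\mu_1|_{Z(G)} = \mu_2|_{Z(G)}$ (matching the positive-real parts), and since $X_\nr^+(G) \to \Hom(Z(G),\R_{>0})$ is injective — this is where one uses that $G/G^1 Z(G)$ is finite, so no nontrivial positive-real unramified character is trivial on $Z(G)$ — we get $\mu_1 = \mu_2$, whence $\sigma_1 \cong \sigma_2$.

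The main obstacle, I expect, is the bookkeeping around the map $X_\nr^+(G) \to \Hom(Z(G),\R_{>0})$: showing it is an isomorphism (injectivity from $[G/G^1:\text{im}(Z(G))] < \infty$ together with $\R_{>0}$ torsion-free; surjectivity from divisibility of $\R_{>0}$), so that the positive part of any central character of a supercuspidal representation extends uniquely to a positive unramified character of $G$. Once that lemma is in place, the rest is the formal argument above: the forward map is well-defined, and $\pi \mapsto (\pi \otimes \mu^{-1}, \mu)$ is a two-sided inverse. I would likely cite \cite[Lemma 4.2]{FlSo} for the precise statement and give this argument in compressed form.
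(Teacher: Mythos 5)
Your proposal is correct and follows what is essentially the only natural route: polar decomposition of the central character $\omega_\pi = \omega_\pi^u \cdot |\omega_\pi|$, extension of $|\omega_\pi|$ to a positive unramified character $\mu \in X_\nr^+(G)$, and then the chain ``cuspidal with unitary central character $\Rightarrow$ compact mod center (Theorem~\ref{thm:1.12}) $\Rightarrow$ square-integrable mod center $\Rightarrow$ discrete series $\Rightarrow$ tempered'' to place $\sigma = \pi \otimes \mu^{-1}$ in $\Irr_{\cusp,\temp}(G)$. The paper does not prove the lemma itself but cites \cite[Lemma 4.2]{FlSo}, and the argument there is the same one you give (in the generality of covering groups).

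Two small points of hygiene. First, drop the initial ``$\mu^2$ / square root'' detour: as you then say, you want $\mu|_{Z(G)} = |\omega_\pi|$ directly, not $|\omega_\pi|^{1/2}$, and the half-page of retracted confusion would only mislead a reader. Second, you identify the right technical crux (``$X_\nr^+(G) \to \Hom(Z(G),\R_{>0})$ is an isomorphism''), but the ingredient that actually makes the restriction map \emph{surjective} onto smooth positive characters is that $Z(G) \cap G^1$ is compact, so that any smooth $\R_{>0}$-valued character of $Z(G)$ factors through the finite-rank lattice $Z(G)/(Z(G) \cap G^1) \hookrightarrow G/G^1$; divisibility and torsion-freeness of $\R_{>0}$ then extend it across the finite index and give uniqueness. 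You gesture at this but never state the compactness of $Z(G) \cap G^1$, and it is the one structural input that deserves to be named explicitly rather than absorbed into ``the bookkeeping.''
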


By Lemma \ref{lem:1.13} and \eqref{eq:1.29}, we may identify
\begin{equation}\label{eq:1.30}
\Irr_\cusp (G) / X_\nr (G) = \Irr_{\cusp,\temp}(G) / X_\nr^u (G) .
\end{equation}
Like all irreducible tempered $G$-representations arise from discrete series representations via
parabolic induction, all irreducible smooth $G$-representations arise from supercuspidal 
representations via parabolic induction. That was shown by Jacquet \cite{Jac}, while Bernstein
proved the uniqueness in the next result.

\begin{thm}\label{thm:1.14} \textup{\cite[Th\'eor\`eme VI.5.4]{Ren}} \\
Let $\pi \in \Irr (G)$. There exists a parabolic subgroup $P = L U_P$ of $G$ and a $\sigma \in 
\Irr_\cusp (L)$ such that $\pi$ is isomorphic to a subquotient of $I_P^G (\sigma)$. Moreover
the pair $(L,\sigma)$ is uniquely determined up to $G$-conjugation. 
\end{thm}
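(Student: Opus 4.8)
The plan is to establish existence first and uniqueness second, mirroring the two halves of the statement. For existence, I would argue by induction on the semisimple rank of $G$ (or equivalently on the dimension of a minimal parabolic). If $\pi$ is supercuspidal there is nothing to prove: take $L = G$, $\sigma = \pi$, $P = G$. Otherwise, by Definition \ref{def:cusp}, $\pi$ is a subquotient of $I_Q^G(\rho)$ for some proper parabolic $Q = M U_Q$ and some $\rho \in \Rep(M)$. Since $I_Q^G$ is exact and $\pi$ is irreducible, I may assume $\rho$ has finite length, and then (replacing $\rho$ by one of its Jordan--Hölder constituents) that $\rho$ is irreducible: indeed if $\pi$ is a subquotient of $I_Q^G(\rho)$ and $0 \subset \rho' \subset \rho'' \subset \cdots$ is a composition series of $\rho$, then by exactness $\pi$ is a subquotient of $I_Q^G(\rho_i / \rho_{i-1})$ for some $i$. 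Now $M$ is a proper Levi subgroup, so it has strictly smaller semisimple rank; by the induction hypothesis there is a parabolic $P_M = L U_{P_M}$ of $M$ and $\sigma \in \Irr_\cusp(L)$ with $\rho$ a subquotient of $I_{P_M}^M(\sigma)$. Using exactness of $I_Q^G$ again, $\pi$ is a subquotient of $I_Q^G(I_{P_M}^M(\sigma))$, and by transitivity of normalized parabolic induction this equals $I_P^G(\sigma)$ where $P$ is the parabolic of $G$ with Levi $L$ and $P \subset Q$, $P \cap M = P_M$. (Transitivity of $I_P^G$ is standard and compatible with the modular-function normalization, since $\delta_P = \delta_Q|_{M} \cdot \delta_{P_M}$ appropriately along the tower.) This completes the existence half, and along the way shows $L$ can be taken to be a Levi subgroup of $G$ and $\sigma$ genuinely supercuspidal on $L$ (not merely cuspidal — invoking Theorem \ref{thm:1.12} if one started only with cuspidality).

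For uniqueness, I would follow Bernstein's approach via adjunction. Suppose $\pi$ is a subquotient of both $I_P^G(\sigma)$ and $I_{P'}^G(\sigma')$, with $(L,\sigma)$ and $(L',\sigma')$ as in the statement. First I would reduce to subrepresentations: a supercuspidal representation is ``cuspidal'' in the strong sense that any irreducible subquotient of $I_P^G(\sigma)$ is in fact a subrepresentation of $I_P^G(\sigma)$ for a suitable choice of parabolic with Levi $L$ — this is exactly the kind of statement recorded in \cite[Lemme VI.3.6]{Ren} (used already in the proof of Theorem \ref{thm:1.12}), and it holds because matrix coefficients of $\sigma$ are compactly supported modulo centre (Theorem \ref{thm:1.12}), which forces the relevant Jacquet modules to be semisimple with $\sigma$-isotypic pieces splitting off. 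Granting $\pi \hookrightarrow I_P^G(\sigma)$, Frobenius reciprocity \eqref{eq:1.15} gives $0 \neq \Hom_G(\pi, I_P^G(\sigma)) \cong \Hom_L(J_P^G(\pi), \sigma)$, so $\sigma$ is a quotient of the Jacquet module $J_P^G(\pi)$; dually, using the second adjointness (Theorem \ref{thm:1.8}) or by applying the same argument to $\pi \hookrightarrow I_P^G(\sigma)$ read the other way, $\sigma$ also appears as a submodule of $J_{\bar P}^G(\pi)$ up to a twist by $\delta_P$. The point is that the pairs $(L,\sigma)$ arising this way are governed by the cuspidal support, which is a well-defined invariant.

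Concretely, for uniqueness I would argue: from $\pi \hookrightarrow I_P^G(\sigma)$ and $\pi \hookrightarrow I_{P'}^G(\sigma')$ we get, by the geometric lemma / Bernstein--Zelevinsky filtration of $J_{P'}^G \circ I_P^G$, that $\sigma'$ is a subquotient of $w \cdot \sigma$ twisted by a modular character, for some $w$ in a set of double-coset representatives $W(L,L') = \{ w \in G : w L w^{-1} = L' \}/\!\sim$; since $\sigma$ and $\sigma'$ are supercuspidal (hence every such subquotient is the whole thing), this forces $L' = wLw^{-1}$ and $\sigma' \cong w \cdot (\sigma \otimes \chi)$ for some unramified $\chi$ coming from the modular normalization — but in fact the normalizations are set up precisely so that $\chi$ is trivial, giving $(L', \sigma') \sim_G (L,\sigma)$. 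The main obstacle, and the step I would expect to be most delicate, is this last comparison: controlling the geometric lemma for $J_{P'}^G \circ I_P^G(\sigma)$ — i.e. verifying that the only constituent of the relevant double-coset filtration that can contribute a supercuspidal subquotient to a second parabolic induction is the one indexed by a $w$ conjugating $L$ to $L'$, and pinning down the modular twist exactly. Everything else (exactness, transitivity, Frobenius reciprocity, the submodule-versus-subquotient reduction) is bookkeeping built on results already available in the excerpt, but the geometric lemma is where the real content of Bernstein's uniqueness theorem sits, so in a survey I would cite \cite[Théorème VI.5.4]{Ren} for the precise argument rather than reproduce it.
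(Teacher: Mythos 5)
The paper does not prove this theorem: it is stated with the citation \cite[Th\'eor\`eme VI.5.4]{Ren} and left to Renard's book, with the surrounding text only noting that existence is due to Jacquet and uniqueness to Bernstein. So there is no argument in the paper to compare yours against; I will assess the sketch on its own.

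Your overall plan (existence by induction on semisimple rank plus transitivity of normalized induction; uniqueness via the geometric lemma for $J_{P'}^G \circ I_P^G$) is the standard route, and your decision to defer the geometric-lemma bookkeeping to the citation is appropriate for a survey. The uniqueness half is essentially right: reduce subquotient to subrepresentation using the cuspidal analogue of \cite[Lemme VI.3.6]{Ren}, then use that for supercuspidal $\sigma$ the double-coset filtration of $J_{P'}^G I_P^G(\sigma)$ has only the terms with $w L w^{-1} \supset L'$ nonzero, which for cuspidal $\sigma'$ forces $w L w^{-1} = L'$ and $\sigma' \cong w\cdot\sigma$, the modular twists cancelling because both functors are normalized.

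There is a genuine gap in the existence step, however. After writing $\pi$ as a subquotient of $I_Q^G(\rho)$ with $\rho \in \Rep(M)$ arbitrary, you assert that one ``may assume $\rho$ has finite length'' and then pass to a composition factor. The reduction to $\rho$ finitely generated is harmless (take $v \in W_2$ hitting a generator of $\pi$, intersect $W_1\subset W_2$ with $I_Q^G(\rho')$ for $\rho'$ the finitely generated subrepresentation of $\rho$ whose induction contains $v$), but finitely generated smooth representations of $M$ need not have finite length, so the Jordan--H\"older step you invoke is unavailable at that point. The repair is to bypass $\rho$ entirely: since $\pi$ is irreducible and not supercuspidal, Theorem~\ref{thm:1.12} shows it is not cuspidal, so $J_Q^G(\pi) \neq 0$ for some proper parabolic $Q = M U_Q$. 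The Jacquet module is finitely generated and nonzero, hence has an irreducible quotient $\tau \in \Irr(M)$, and Frobenius reciprocity \eqref{eq:1.15} gives $\Hom_G(\pi, I_Q^G(\tau)) \cong \Hom_M(J_Q^G(\pi),\tau) \neq 0$, so $\pi \hookrightarrow I_Q^G(\tau)$. Now your induction on semisimple rank applies to $\tau$, and transitivity of $I_P^G$ (with the $\delta$-normalizations matching along the tower, as you note) finishes existence. This is the step where the argument genuinely needs the Jacquet functor rather than a composition-series trick on $\rho$.
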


By \eqref{eq:1.26} and Lemma \ref{lem:1.20}, every pair $(M,\tau)$ which is $G$-conjugate to 
$(L,\sigma)$ yields a  parabolically induced representation $I_Q^G (\tau)$ with exactly the
same irreducible constituents as $I_P^G (\sigma)$. Nevertheless $I_Q^G (\tau)$ need not be
isomorphic to $I_P^G (\sigma)$.

The $G$-conjugacy class of $(L,\sigma)$ in Theorem \ref{thm:1.14} is called the supercuspidal
support of $\pi$, denoted Sc$(\pi)$. This can be regarded as a map
\[
\mr{Sc} : \Irr (G) \longrightarrow 
\{ (L,\sigma) : L \subset G \text{ Levi subgroup}, \sigma \in \Irr_\cusp (L) \} / G .
\]
On the set of pairs $(L,\sigma)$ as above we put the equivalence relation generated by 
$G$-conjugation, by isomorphism of $L$-representations by and 
$(L,\sigma \otimes \chi) \sim (L,\sigma)$ for $\chi \in X_\nr (G)$. We write
\[
\mf B (G) = \{ (L,\sigma) : L \subset G \text{ Levi subgroup}, 
\sigma \in \Irr_\cusp (L) \} / \sim .
\]
By \eqref{eq:1.30}, $\mf B (G)$ is a subset of $\Delta (G)$. The elements of $\mf B (G)$, 
denoted $[L,\sigma]_G$, are called inertial equivalence classes for $G$. To any 
$\mf s = [L,\sigma]_G$ one associates the set 
\begin{align*}
\Irr (G)^{\mf s} & = \{ \pi \in \Irr (G) : \mr{Sc}(\pi) / \sim \; \in \mf s \} \\
& = \{ \pi \in \Irr (G) : \pi \text{ is a subquotient of } I_P^G (\sigma \otimes \chi)
\text{ for some } \chi \in X_\nr (L) \} .
\end{align*}
Theorem \ref{thm:1.14} is the main step towards the Bernstein decomposition of the smooth dual:

\begin{thm}\label{thm:1.15} \textup{\cite[Th\'eor\`eme VI.7.1]{Ren}} \\
The sets $\Irr (G)^{\mf s}$ with $\mf s \in \mf B (G)$ are precisely the connected components of
$\Irr (G)$, so $\Irr (G) = \bigsqcup_{\mf s \in \mf B (G)} \Irr (G)^{\mf s}$.
\end{thm}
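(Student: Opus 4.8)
The plan is to derive the statement from Theorem \ref{thm:1.14} (existence and uniqueness of supercuspidal support) together with the Bernstein decomposition of $\Rep (G)$, equivalently of the Hecke algebra. I would check three things: that the sets $\Irr (G)^{\mf s}$ partition $\Irr (G)$, that each is open, and that each is connected. Once these hold, the usual fact that a partition of a space into connected \emph{open} subsets is exactly its decomposition into connected components (each piece is then clopen, a connected subset meeting two pieces would be disconnected, so every component lies in one piece, and each piece is connected) finishes the proof. The partition is immediate from Theorem \ref{thm:1.14}: for $\pi \in \Irr (G)$ the support $\mr{Sc}(\pi)$ is a single $G$-conjugacy class of pairs $(L,\sigma)$ with $\sigma$ supercuspidal, hence determines a single inertial class $\mf s = [L,\sigma]_G \in \mf B (G)$, so $\pi$ lies in exactly one $\Irr (G)^{\mf s}$ and $\Irr (G) = \bigsqcup_{\mf s \in \mf B (G)} \Irr (G)^{\mf s}$ as a set.

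For openness I would use the block decomposition $\mc H (G) = \bigoplus_{\mf s \in \mf B (G)} \mc H (G)^{\mf s}$ into two-sided ideals with local units, built from Theorem \ref{thm:1.14} via Bernstein's theory of the centre, and characterised by the property that $\pi \in \Irr (G)^{\mf s}$ if and only if $\mc H (G)^{\mf t}$ annihilates $\pi$ for every $\mf t \neq \mf s$ (for a nondegenerate irreducible $\mc H (G)$-module $V$ one has $V = \sum_{\mf t}\mc H (G)^{\mf t} V$ with the summands submodules, so exactly one of them equals $V$). Given this, $\bigsqcup_{\mf t \neq \mf s} \Irr (G)^{\mf t} = V (\mc H (G)^{\mf s})$ and $\Irr (G)^{\mf s} = V\big(\bigoplus_{\mf t \neq \mf s}\mc H (G)^{\mf t}\big)$ are both Jacobson-closed, so $\Irr (G)^{\mf s}$ is open (and closed).

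For connectedness, fix $\mf s = [L,\sigma]_G$ with $P = L U_P$. By generic irreducibility of parabolic induction \cite{Sau} there is a nonempty Zariski-open $X_\nr (L)^{\circ} \subseteq X_\nr (L)$ on which $I_P^G (\sigma \otimes \chi)$ is irreducible; put $U_{\mf s} = \{ [I_P^G (\sigma\otimes\chi)] : \chi \in X_\nr (L)^\circ \} \subseteq \Irr (G)^{\mf s}$. The map $\chi \mapsto [I_P^G (\sigma\otimes\chi)]$ is continuous for the Jacobson topology: for $f \in \mc H (G)$, choosing $K \in \CO (G)$ with $f \in \mc H (G,K)$, the operator $I_P^G (\sigma\otimes\chi)(f)$ acts on the $\chi$-independent finite-dimensional space $I_{K_0 \cap P}^{K_0}(V_\sigma)^K$ and depends regularly on $\chi$, so $\{\chi : I_P^G (\sigma\otimes\chi)(f) = 0\}$ is Zariski- hence classically closed, and the preimage of a basic closed set $V (S)$ is the intersection of such sets over $f \in S$. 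As $X_\nr (L)$ is an irreducible complex torus, $X_\nr (L)^\circ$ is connected, hence so is $U_{\mf s}$. Moreover every $\pi \in \Irr (G)^{\mf s}$ lies in $\overline{U_{\mf s}}$: with $J = \bigcap_{\chi \in X_\nr (L)^\circ} \ker I_P^G (\sigma\otimes\chi)$, any $f \in J$ has $I_P^G (\sigma\otimes\chi)(f)$ vanishing on the dense $X_\nr (L)^\circ$, hence, by regularity, identically in $\chi$; as $\pi$ is a subquotient of some $I_P^G (\sigma\otimes\chi_0)$ this gives $f \in \ker\pi$, so $J \subseteq \ker\pi$ and $\pi \in V (J) = \overline{U_{\mf s}}$. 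Thus $U_{\mf s} \subseteq \Irr (G)^{\mf s} \subseteq \overline{U_{\mf s}}$, and a subset wedged between a connected set and its closure is connected.

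The real obstacle is the openness step: producing the block decomposition $\mc H (G) = \bigoplus_{\mf s} \mc H (G)^{\mf s}$, equivalently the idempotents of the Bernstein centre that cut out individual blocks. This is the substantive content of Bernstein's theory of the centre and rests on the uniform admissibility bounds of Theorem \ref{thm:1.21} together with finiteness properties of the Bernstein varieties; in the survey one would cite \cite{Ren} for it rather than reprove it. By contrast the partition step is purely formal, and the connectedness step, though it needs a short point-set argument, is routine once generic irreducibility is available.
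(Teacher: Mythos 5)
Your proof is correct. The paper records Theorem \ref{thm:1.15} only with a citation to \cite{Ren} and gives no internal argument, so there is no paper proof to compare against; your write-up is a reasonable unwinding of that reference. All three ingredients work: the partition is immediate from Theorem \ref{thm:1.14}; clopenness of each $\Irr(G)^{\mf s}$ via $\Irr(G)^{\mf s} = V\big( \bigoplus_{\mf t \neq \mf s} \mc H(G)^{\mf t} \big)$ and $\bigsqcup_{\mf t \neq \mf s} \Irr(G)^{\mf t} = V(\mc H(G)^{\mf s})$ is a correct consequence of the block decomposition, and your observation that a nondegenerate irreducible module is concentrated in exactly one summand of $\bigoplus_{\mf t} \mc H(G)^{\mf t}$ is the right justification for the characterization; and the connectedness argument (Zariski-to-Jacobson continuity of $\chi \mapsto [I_P^G(\sigma \otimes \chi)]$ via regular dependence on the fixed finite-dimensional model, the closure computation $\overline{U_{\mf s}} = V(J)$, and the wedging $U_{\mf s} \subseteq \Irr(G)^{\mf s} \subseteq \overline{U_{\mf s}}$) is sound. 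The one thing to be clear-eyed about is that your openness step imports the Bernstein decomposition of $\mc H(G)$, which the paper only formalizes afterwards as Theorem \ref{thm:1.18}. This is not a circularity---in Bernstein--Deligne and Renard the block decomposition is established as the primary result and the topological decomposition of $\Irr(G)$ is read off from it, exactly as you do---but it does mean Theorems \ref{thm:1.15} and \ref{thm:1.18} are not independent of one another, and the survey's ordering of them is presentational rather than logical. You correctly identify the block decomposition as the substantive input that cannot be replaced by a short argument.
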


The set $\Irr (G)^{\mf s}$ endowed with the Jacobson topology from $\Irr (\mc H (G))$ is called 
a Bernstein component of $\Irr (G)$. By \cite[Th\'eor\`eme 3.2]{Sau}, the representations 
$I_P^G (\sigma \otimes \chi)$ are irreducible for $\chi$ in a Zariski-open dense subset of
$X_\nr (G)$. Further, by Theorem \ref{thm:1.14} two representations $I_P^G (\sigma \otimes \chi)$
and $I_{P'}^G (\sigma \otimes \chi')$ have common irreducible subquotients if and only if
$\sigma \otimes \chi' \cong n \cdot (\sigma \otimes \chi)$ for some $n \in N_G (L)$. Hence
$\Irr (G)^{[L,\sigma]}$ is a possibly nonseparated algebraic variety with maximal separable
quotient $X_\nr (L)\sigma / N_G (L)$, and such that the inseparable points live only over some
lower dimensional subvarieties of $X_\nr (L)\sigma / N_G (L)$.

\begin{ex}\label{ex:1.C} $G = SL_2 (F)$\\
Every supercuspidal representation gives an isolated point in $\Irr (G)$.

Every inertial equivalence class $[T,\chi]_G$ is determined by 
$\chi_{\mf o} = \chi |_{\mf o_F^\times}$. If ord$(\chi_{\mf o}) > 2$, then all the 
representations $I_B^G (\chi')$ with $\chi' \in X_\nr (T) \chi$
are irreducible and mutually inequivalent. There are still isomorphisms 
\begin{equation}\label{eq:1.31}
I_B^G (\chi') \cong I_B^G (s \cdot \chi') = I_B^G ({\chi'}^{-1}) 
\text{ for } s \in N_G (T) \setminus T.
\end{equation}
This gives countably many Bernstein components homeomorphic to $\C^\times$, indexed by
\[
\{ \chi_{\mf o} \in \Irr (\mf o_F^\times) : \mr{ord}(\chi_{\mf o}) > 2 \} / W(G,T).
\]
When $\chi_{\mf o}$ has order two, the representations $I_B^G (\chi')$ with 
$\chi' \in X_\nr (T) \chi$ are reducible if $\chi'$ is quadratic and irreducible otherwise, 
while the only equivalences among them are \eqref{eq:1.31}. When $\chi_{\mf 0} = 1$, 
characters $\chi'$ in $X_\nr (T) \chi$ are precisely the unramified characters of $T$. 
The representation $I_B^G (\chi_-)$ with ord$(\chi_-) = 2$ is a direct sum of two 
irreducibles. If $\chi \in X_\nr (T)$ sends a generator of $T / T^1 \cong \Z$ to
$q_F$ or to $q_F^{-1}$, then $I_B^G (\chi)$ has two irreducible constituents: $\mr{St}_G$ 
and the trivial $G$-representation. All other representations $I_B^G (\chi)$ with 
$\chi \in X_\nr (T)$ are irreducible
and satisfy \eqref{eq:1.31}. Altogether, we find that $\Irr (SL_2 (F))$ is homeomorphic to

\includegraphics[width=12cm]{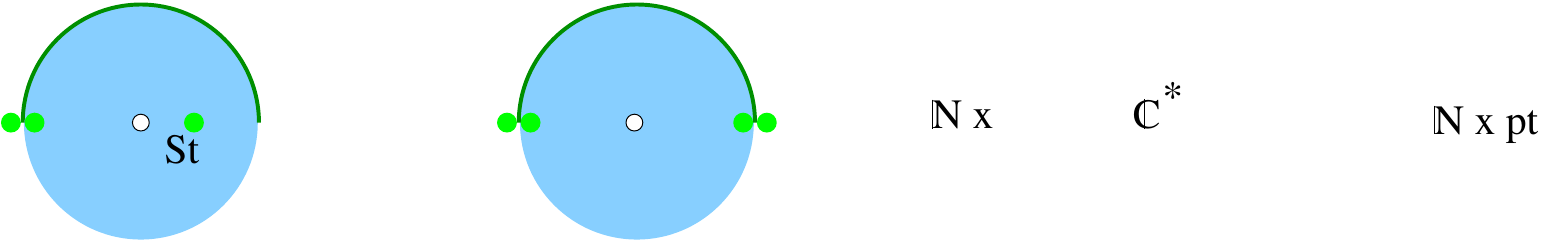}

The two discs with a hole in the 
middle represent $\C^\times / S^2$ with $S^2$ acting by inversion. It is interesting to 
compare this with the picture of $\Irr_\temp (SL_2 (F))$ from Example \ref{ex:1.A}: 

\includegraphics[width=12cm]{IrrtGa.pdf}

We see that $\Irr (SL_2 (F))$ is some sort of complexification of $\Irr_\temp (SL_2 (F))$. 
More precisely $\Irr_\temp (SL_2 (F))$ is built from circles and points, and if we replace 
each circle by $\C^\times$, we obtain a space $\Irr_\temp (SL_2 (F))_\C$ with a natural 
bijection to $\Irr (SL_2 (F))$. However, the topology of $\Irr_\temp (SL_2 (F))_\C$ is finer 
than that of $\Irr (SL_2 (F))$. For instance, $I_B^G (\mr{triv})$ and $\mr{St}_G$ are in 
different connected components of $\Irr_\temp (SL_2 (F))_\C$ but in the same Bernstein component.
\end{ex}

The relation between Examples \ref{ex:1.A} and \ref{ex:1.C} generalizes to arbitrary reductive $p$-adic
groups \cite{ABPS1}. Namely, $\Irr_\temp (G)$ is built from copies of $X_\nr^u (L)$ for Levi
subgroups $L \subset G$. If we replace each occurrence of $X_\nr^u (L)$ by its complexification
$X_\nr (L)$, then we obtain a space $\Irr_\temp (G)_\C$. By an extension of the Langlands 
classification, $\Irr_\temp (G)_\C$ maps bijectively to $\Irr (G)$. This can be stated (rather
imprecisely) as
\begin{equation}\label{eq:1.32}
\Irr (G) \text{ is canonically in bijection with a complexification of } \Irr_\temp (G).
\end{equation}
Hence, for any reductive $p$-adic group the smooth dual and the tempered dual are equally
difficult to determine.

\subsection{Rough structure of $\mc H (G)$} \
\label{par:1.8}

There are two stronger versions of Theorem \ref{thm:1.15}, which are very useful to understand
$\Rep (G)$ and $\mc H (G)$. For $\mf s \in \mf B (G)$, we define 
\[
\Rep (G)^{\mf s} = \{ \pi \in \Rep (G) : \text{ every irreducible subquotient of } \pi 
\text{ lies in } \Irr (G)^{\mf s} \} . 
\]
In other words, $\Rep (G)^{\mf s}$ is the full subcategory of $\Rep (G)$ generated by 
$\Irr (G)^{\mf s}$. Further, we define a two-sided ideal of $\mc H (G)$ by
\[
\mc H (G)^{\mf s} = \{ f \in \mc H (G) : 
\pi (h) = 0 \;\text{for all}\;´ \pi \in \Irr (G) \setminus \Irr (G)^{\mf s} \} .
\]
The next theorem is known as the Bernstein decomposition.

\begin{thm}\label{thm:1.18} \textup{\cite{BeDe,Ren}} \\
Each $\Rep (G)^{\mf s}$ is a block of $\Rep (G)$, that is, an indecomposable direct
summand of $\Rep (G)$. The category $\Rep (G)$ admits an orthogonal decomposition
\[
\Rep (G) = \prod\nolimits_{\mf s \in \mf B (G)} \Rep (G)^{\mf s}.
\]
Each two-sided ideal $\mc H (G)^{\mf s}$ of $\mc H (G)$ is indecomposable and 
$\mc H (G) = \bigoplus_{\mf s \in \mf B (G)} \mc H (G)^{\mf s}$.
\end{thm}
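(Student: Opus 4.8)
The plan is to deduce Theorem~\ref{thm:1.18} from the topological decomposition of $\Irr(G)$ in Theorem~\ref{thm:1.15} together with the equivalence $\Rep(G) \simeq \Mod(\mc H(G))$ from Lemma~\ref{lem:1.5}. First I would establish the decomposition of the category $\Rep(G)$. Given $(\pi,V) \in \Rep(G)$, one wants a canonical splitting $V = \bigoplus_{\mf s} V^{\mf s}$ where $V^{\mf s}$ is the largest subrepresentation all of whose irreducible subquotients lie in $\Irr(G)^{\mf s}$. The key input is that the supercuspidal support map $\mr{Sc}$ is ``locally constant'' in a suitable sense: since by Theorem~\ref{thm:1.14} every irreducible subquotient of a finitely generated representation has supercuspidal support among finitely many inertial classes, and since (by Bernstein's theory, via the finiteness of $\mc H(G,K)$-module lengths from Theorem~\ref{thm:1.21}) a finitely generated smooth representation has finite length over each $\mc H(G,K)$, one can show that for each $\mf s$ the subspace $V^{\mf s} := \sum \{W \subset V : W \in \Rep(G)^{\mf s}\}$ together with a complement gives a direct sum decomposition, and that $\Hom_G(V^{\mf s}, V^{\mf t}) = 0$ for $\mf s \neq \mf t$ because the sets $\Irr(G)^{\mf s}$ are disjoint. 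Indecomposability of each block amounts to the statement that $\Irr(G)^{\mf s}$ is connected, which is exactly Theorem~\ref{thm:1.15}; more precisely, a decomposition $\Rep(G)^{\mf s} = \mc A \times \mc B$ would partition $\Irr(G)^{\mf s}$ into two clopen pieces, contradicting connectedness.

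Next I would transfer this to $\mc H(G)$. Under Lemma~\ref{lem:1.5} the decomposition $\Rep(G) = \prod_{\mf s} \Rep(G)^{\mf s}$ corresponds to a decomposition of the module category of $\mc H(G)$. The central idempotents $e_{\mf s}$ effecting the projections onto the blocks are a priori only multipliers of the non-unital algebra $\mc H(G)$, so the main point is to check they actually lie in $\mc H(G)$ itself. For this I would use that $\mc H(G)$ has local units (Proposition~\ref{prop:1.1}.b): for any $K \in \CO(G)$, the finite-dimensional algebra $\mc H(G,K)$ decomposes as a finite direct sum of its own blocks, and these are compatible with the $\mf s$-decomposition because $\Irr(\mc H(G,K))$ (the $K$-spherical representations) meets only finitely many $\Irr(G)^{\mf s}$. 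Hence $e_{\mf s} \lr{K} \in \mc H(G,K) \subset \mc H(G)$, and since $\bigcup_K \mc H(G,K) = \mc H(G)$ by Proposition~\ref{prop:1.1}.a, the identity $f = \sum_{\mf s} e_{\mf s} f$ (a finite sum for each fixed $f$, since $f \in \mc H(G,K)$ for some $K$ and only finitely many $e_{\mf s}\lr{K}$ are nonzero) exhibits $\mc H(G)^{\mf s} = e_{\mf s} \mc H(G) = e_{\mf s}\mc H(G) e_{\mf s}$ as two-sided ideals giving the algebra direct sum $\mc H(G) = \bigoplus_{\mf s} \mc H(G)^{\mf s}$. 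Indecomposability of $\mc H(G)^{\mf s}$ as an algebra is equivalent to indecomposability of $\Rep(G)^{\mf s}$ as a category, already handled.

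The main obstacle, and the part that genuinely requires Bernstein's deeper work rather than formal category theory, is the finiteness statement underpinning everything: that for a fixed $K \in \CO(G)$ only finitely many Bernstein components $\mf s$ satisfy $V^K \neq 0$ for some $V \in \Irr(G)^{\mf s}$, and more fundamentally that the ``geometric'' decomposition of $\Irr(G)$ by supercuspidal support actually induces a \emph{categorical} direct sum decomposition of $\Rep(G)$ (i.e.\ that there are no extensions between representations supported on different inertial classes). In a survey one would cite \cite{BeDe} for precisely this; the cleanest route is via the Bernstein center $Z(\Rep(G))$, identifying the idempotents $e_{\mf s}$ as the minimal idempotents of $Z(\mc H(G))$ and invoking Bernstein's description of that center as a product $\prod_{\mf s} Z(\Rep(G)^{\mf s})$ of rings of regular functions. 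Given that, the reduction to $\mc H(G,K)$ and the assembly above is routine.
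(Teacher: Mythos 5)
The paper offers no proof of Theorem~\ref{thm:1.18}; it simply cites \cite{BeDe,Ren}, so there is no internal argument to compare against. Evaluating your sketch on its own merits: the overall skeleton is sensible, and you correctly isolate the two ingredients that require work (the existence of a direct-sum decomposition of each smooth $V$ into its $\mf s$-isotypic pieces, and the finiteness of the set of $\mf s$ with $\mc H(G,K)^{\mf s} \neq 0$ for a fixed $K$). The step from connectedness of $\Irr(G)^{\mf s}$ (Theorem~\ref{thm:1.15}) to indecomposability of the block is also valid: a nontrivial decomposition $\mc H(G)^{\mf s} = I_A \oplus I_B$ into two-sided ideals would produce a clopen partition $V(I_B) \sqcup V(I_A)$ of $\Irr(G)^{\mf s}$, contradicting connectedness. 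And the reduction of the multiplier idempotents $e_{\mf s}$ to honest elements $e_{\mf s}\lr{K} \in \mc H(G,K)$ via local units is the right mechanism for the last claim.

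However, the central step --- that $\Hom$ and $\Ext^1$ vanish between objects supported on distinct inertial classes, hence that every $V$ splits as $\bigoplus_{\mf s} V^{\mf s}$ --- is exactly what makes this a theorem rather than a formal remark, and your proposed shortcut via Bernstein's description of $Z(\Rep(G))$ as $\prod_{\mf s} Z(\Rep(G)^{\mf s})$ is circular. That product decomposition of the center is, in \cite{BeDe} and \cite{Ren} alike, a \emph{consequence} of (or at best simultaneous with) the block decomposition, not a logically prior input you can invoke to derive it. The actual route in both references goes through the machinery you bypass: cuspidal representations are compact modulo center (Theorem~\ref{thm:1.12}) and hence the cuspidal progenerators $\Pi_{\mf s_L}$ of Proposition~\ref{prop:3.2} are projective; the second adjointness relation (Theorem~\ref{thm:1.8}) then shows that the parabolically induced progenerators $\Pi_{\mf s}$ of Proposition~\ref{prop:3.4} are projective as well, and this projectivity is what supplies the splitting of arbitrary smooth representations into their $\mf s$-parts, rather than merely their filtration. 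You should also note that citing Theorem~\ref{thm:1.15} to derive indecomposability quietly reverses the order of development in the sources (there, connectedness of the components of $\Irr(G)$ is read off after the decomposition is in hand); since both are stated as citations in this paper your argument is not circular within the survey, but a reader trying to reconstruct proofs from the references would find the dependency inverted.

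Finally, the finiteness assertion you attribute to Theorem~\ref{thm:1.21} does not follow from it: uniform admissibility bounds $\dim V^K$ for $\pi \in \Irr(G)$ of bounded length but says nothing about how many inertial classes $\mf s$ admit a $\pi \in \Irr(G)^{\mf s}$ with $V^K \neq 0$. That finiteness is a separate pillar of \cite{BeDe} (depth bounds, or the ``d\'ecomposition'' finiteness lemma) and needs to be cited as such rather than derived from the admissibility bound.
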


By Lemma \ref{lem:1.5}.a, $\Rep (G)^{\mf s}$ can be identified with $\Mod (\mc H (G)^{\mf s})$.
These subcategories are called Bernstein blocks of $\Rep (G)$ or for $G$.

To understand $\mc H (G)$, it suffices to classify $\mf B (G)$ and to understand each 
$\mc H (G)^{\mf s}$. However,
in spite of Theorem \ref{thm:1.18} and \eqref{eq:1.32}, the structure of $\Rep (G)$ is
substantially more complicated than that of $\Mod (\mc S (G))$. The main reason is that
the building blocks $I_P^G (\delta)$ for $\Mod(\mc S (G))$ are unitary, while for $\mc H (G)$
the building blocks $I_P^G (\sigma)$ with $\sigma \in \Irr_\cusp (L)$ need not be 
completely reducible.

Like the Plancherel isomorphism for $\mc S (G)$, one would like to understand $\mc H (G)$
by the Fourier transform, which in this case means its action on the representations
$I_P^G (\sigma)$ with $\sigma \in \Irr_\cusp (L)$. We proceed as in Paragraph \ref{par:1.6}.
By Lemma \ref{lem:1.13} we may assume that $\sigma \in \Irr_{\cusp,\temp}(L)$, so that it is
a discrete series representation. 

For $\chi_1 \in X_\nr (L,\sigma)$ we have the operator
\[
I(\chi_1,P,\sigma,\chi) : I_P^G (\sigma \otimes \chi) \to I_P^G (\sigma \otimes \chi_1 \otimes \chi) .
\]
For $w \in N_G (L)$ there is the intertwining operator
\[
J (w,P,\sigma,\chi) : I_P^G (\sigma \otimes \chi) \to I_P^G (w \cdot \sigma \otimes w \cdot \chi)
\]
from \eqref{eq:1.21}, which depends rationally $\chi \in X_\nr (L)$ once we identify the
underlying vector spaces with $I_{K_0 \cap P}^{K_0} (V_\sigma)$  as in \eqref{eq:1.19}. 
The operator $J(w,P,\sigma,\chi)$ may have poles and zeros at some nonunitary 
$\chi \in X_\nr (L)$, and that complicates things. Even when we 
normalize it to $J'(w,P,\sigma,\chi)$ as in \eqref{eq:1.22}, it need not be injective 
for some $\chi \in X_\nr (L) \sigma \setminus X_\nr^u (L) \sigma$. This means that the 
action \eqref{eq:1.25} of $W_{[L,\sigma]}$ on $C(X_\nr^u (L)) \otimes \End_\C^\infty 
\big( I_{K_0 \cap P}^{K_0} V_\sigma \big)$ does not stabilize the subalgebra 
$\mc O (X_\nr (L)) \otimes \End_\C^\infty \big( I_{K_0 \cap P}^{K_0} V_\sigma \big)$.
The (weaker) analogue of Theorem \ref{thm:1.16} for $\mc H (G)$ is:

\begin{thm}\label{thm:1.19} \textup{\cite{Hei1}} 
\enuma{
\item The Fourier transform (the action of $\mc H (G)$ on the representations
$I_P^G (\sigma')$ with $\sigma' \in \Irr_\cusp (L)$) determines an isomorphism of
*-algebras $\mc H (G) \cong$
\[
\bigoplus_{[L,\sigma]_G \in \mf B (G)} \!\!\! \Big( C^\infty (X_\nr^u (L)) 
\otimes \End_\C^\infty \big( I_{K_0 \cap P}^{K_0} (V_\sigma) \big) \Big)^{W^e_{[L,\sigma]}}
\cap \mc O (X_\nr (L)) \otimes \End_\C^\infty \big( I_{K_0 \cap P}^{K_0} (V_\sigma) \big) .
\]
\item For every $K \in \CO (G)$ with $K \subset K_0$, part (a) restricts to an
isomorphism
\begin{align*}
\mc H (G,K) \cong \bigoplus\nolimits_{[L,\sigma]_G \in \mf B (G)} & \Big( C^\infty 
(X_\nr^u (L)) \otimes \End_\C \big( I_{K_0 \cap P}^{K_0} (V_\sigma)^K \big) 
\Big)^{W^e_{[L,\sigma]}} \\ & \cap \; \mc O (X_\nr (L)) \otimes \End_\C 
\big( I_{K_0 \cap P}^{K_0} (V_\sigma)^K \big) .
\end{align*}
Here $I_{K_0 \cap P}^{K_0} (V_\sigma)^K$ has finite dimension, and it is nonzero
for only finitely many $[L,\sigma]_G \in \mf B (G)$.
}
\end{thm}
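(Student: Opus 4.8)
The plan is to prove part (a) one Bernstein block at a time, using the decomposition $\mc H (G) = \bigoplus_{\mf s} \mc H (G)^{\mf s}$ of Theorem~\ref{thm:1.18}, and then to deduce part (b) by passing to $K$-biinvariant parts. Fix $\mf s = [L,\sigma]_G \in \mf B (G)$; by Lemma~\ref{lem:1.13} we may take $\sigma \in \Irr_{\cusp,\temp}(L)$, so that $\sigma$ is a discrete series representation and the intertwining operators of Paragraph~\ref{par:1.8} are available. The first step is to verify that the Fourier transform $f \mapsto \widehat f$, $\widehat f (\chi) = I_P^G (\sigma \otimes \chi)(f)$, maps $\mc H (G)^{\mf s}$ into the right-hand side. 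Via \eqref{eq:1.19} each $\widehat f (\chi)$ is an endomorphism of the single space $I_{K_0 \cap P}^{K_0}(V_\sigma)$; since $f$ is compactly supported and locally constant, $\widehat f (\chi)$ is computed by a finite sum in which $\chi$ enters only through its values on finitely many elements of $L$, so $\chi \mapsto \widehat f (\chi)$ is a regular function on $X_\nr (L)$, and as $f$ is $K$-biinvariant for some $K \in \CO (G)$ its values lie in $\End_\C^\infty$ (in fact in $\End_\C$ of the finite-dimensional $K$-fixed subspace). The intertwining identity \eqref{eq:1.27} for the unitary operators $I (w,P,\sigma,\chi)$ with $w \in W^e_{[L,\sigma]}$ shows that $\widehat f|_{X_\nr^u (L)}$ is invariant for the action \eqref{eq:1.25}, and the unitarity of $I_P^G (\sigma \otimes \chi)$ for $\chi \in X_\nr^u (L)$ gives $\widehat{f^*}(\chi) = \widehat f (\chi)^*$. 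Hence $f \mapsto \widehat f$ is a $*$-homomorphism into the stated algebra.

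Injectivity is the easy direction. If $\widehat f = 0$ then $f$ acts by zero on every $I_P^G (\sigma \otimes \chi)$, hence by Theorem~\ref{thm:1.14} (together with \eqref{eq:1.26} and Lemma~\ref{lem:1.20}) on every subquotient of such a representation, i.e. on all of $\Irr (G)^{\mf s}$; since $f$ also lies in the ideal $\mc H (G)^{\mf s}$ it acts by zero on $\Irr (G) \setminus \Irr (G)^{\mf s}$ as well, hence on every irreducible smooth $G$-representation, in particular on every irreducible $C_r^* (G)$-module. As $\mc H (G)$ embeds in $C_r^* (G)$, this forces $f = 0$.

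The content of the theorem is surjectivity, where Heiermann's analysis \cite{Hei1} enters. Both the target algebra and the (injective) image of $\mc H (G)^{\mf s}$ are finitely generated modules over the Bernstein centre $\mf Z^{\mf s}$, which by \cite{BeDe} is $\mc O$ of the Bernstein variety $X_\nr (L)\sigma / W_{[L,\sigma]}$, and whose Fourier transform is precisely the centre of the target; so it suffices to establish surjectivity after completing at each maximal ideal of $\mf Z^{\mf s}$, i.e. on a formal neighbourhood of each point $W_{[L,\sigma]}\chi_0$ of the Bernstein variety. The key input is that near $\chi_0$ the block $\mc H (G)^{\mf s}$ is generated over $\mf Z^{\mf s}$ by the intertwining operators $J (w,P,\sigma,\chi)$ of \eqref{eq:1.21} (which depend rationally on $\chi \in X_\nr (L)$), and that the Bernstein-type relations between them — controlled by the Harish-Chandra $c$-functions and Plancherel densities, rational on $X_\nr (L)$ — cut out exactly the completion of the target; concretely one identifies both completed algebras with matrix algebras over completions of extended graded affine Hecke algebras attached to the root datum that $W_{[L,\sigma]}$ defines near $\chi_0$, for which the analogous statement ``the Fourier transform hits the full invariant algebra'' is known. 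The \textbf{main obstacle} is this local comparison at the \emph{non-unitary} parameters $\chi_0 \notin X_\nr^u (L)$: there the invariance condition imposes nothing on the target, the normalized intertwining operators may acquire zeros or poles, and one must check that this degeneration is compensated exactly, so that the image still fills up $\mc O (X_\nr (L)) \otimes \End_\C^\infty$ near $\chi_0$. Harish-Chandra's Plancherel theorem \cite{Wal} (equivalently Theorem~\ref{thm:1.16}) pins down the behaviour on $X_\nr^u (L)$, and the precise meromorphic continuation and normalization of the $J (w,P,\sigma,\chi)$ propagate it to all of $X_\nr (L)$.

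Finally, part (b) follows from (a) by applying $\lr{K}(-)\lr{K}$. Because $K \subset K_0$, the $K_0$-action on $I_{K_0 \cap P}^{K_0}(V_\sigma)$ is independent of $\chi$, so this operation carries $\mc H (G)^{\mf s}$ to $\mc H (G,K)^{\mf s}$ and $\End_\C^\infty \big( I_{K_0 \cap P}^{K_0}(V_\sigma) \big)$ to $\End_\C \big( I_{K_0 \cap P}^{K_0}(V_\sigma)^K \big)$. The latter space is finite-dimensional by admissibility (Theorem~\ref{thm:1.21}), and it vanishes for all but finitely many $[L,\sigma]_G$ because $\lr{K}$, being a single element of $\mc H (G)$, has only finitely many nonzero components in $\mc H (G) = \bigoplus_{\mf s} \mc H (G)^{\mf s}$.
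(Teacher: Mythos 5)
The paper states this as a theorem of Heiermann and offers no proof of its own, so there is no in-paper argument to compare against. Your sketch correctly disposes of the easier parts: the Fourier transform lands in the stated algebra (regularity of $\chi \mapsto \widehat f(\chi)$ from compact support, $W^e_{[L,\sigma]}$-invariance on $X_\nr^u(L)$ from \eqref{eq:1.27}, $*$-compatibility from unitarity on the unitary torus), injectivity follows from Theorem \ref{thm:1.14} together with the faithfulness of $\mc H(G)$ acting on $L^2(G)$, and part (b) follows from (a) by cutting with $\lr K$, where the finitely-many-blocks claim comes out nicely from $\lr K$ being a single element of $\mc H(G)$.

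The substance of the theorem is surjectivity, and there your sketch has genuine gaps. The assertion that $\mc H(G)^{\mf s}$ is ``generated over $\mf Z^{\mf s}$ by the intertwining operators $J(w,P,\sigma,\chi)$'' is not correct: those operators are only rational in $\chi$ and do not lie in $\mc H(G)^{\mf s}$; compare Theorem \ref{thm:3.6}, where the crossed-product description of $\End_G(\Pi_{\mf s})$ is available only after tensoring with the quotient field $\C(X_\nr(L)/W_{\mf s}^e)$, precisely because of this. Your appeal to ``completions of extended graded affine Hecke algebras'' runs in the wrong direction relative to the paper's logic: the graded Hecke algebra comparison (Theorems \ref{thm:3.3}, \ref{thm:3.6}, \ref{thm:3.10}) is built downstream of this theorem via the Bernstein progenerator, and Heiermann's original argument is a direct harmonic-analysis proof (rationality and pole/zero structure of the standard intertwining operators, Harish-Chandra $\mu$-functions and the Plancherel density, together with Bernstein's description of the centre), not a reduction to graded Hecke algebras. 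Finally, the ``main obstacle'' you identify is misdiagnosed: because $X_\nr^u(L)$ is Zariski-dense in $X_\nr(L)$, the $W^e$-invariance on the unitary torus does constrain regular sections everywhere, so it is not true that the condition ``imposes nothing'' near a nonunitary $\chi_0$. The real difficulty is to show that every regular section satisfying the unitary-torus invariance actually lies in the image of the Fourier transform, and that requires the detailed analysis of the intertwining operators that your sketch gestures at but does not supply.
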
 

Notice that in Theorem \ref{thm:1.19} the regular functions on $X_\nr (L)$ appear
in the same way as $\mc H (F^\times)^{\mf o_F^\times} \cong \mc O (\C^\times)$ 
in \eqref{eq:1.1}.

\begin{ex}\label{ex:1.D}
Every $\sigma \in \Irr_\cusp (G)$ gives a summand 
\[
\mc H (G)^{[G,\sigma]} = \End_\C^\infty (V_\sigma) 
\cong \bigcup\nolimits_{n \in \N} M_n (\C)
\]
of $\mc H (G)$. For an inertial equivalence class $\mf s = [T,\chi]_G$ with $\chi_{\mf o} =
\chi |_{\mf o_F^\times}$ of order bigger than two, $W_{[T,\chi]}^e$ is trivial and 
\[
\mc H (G)^{\mf s} \cong \mc O (X_\nr (T)) \otimes \End_\C^\infty 
\big( I_{B \cap K_0}^{K_0} (\C_\chi) \big) ,
\]
where $K_0 = SL_2 (\mf o_F)$. Let $\chi_r \in \Irr (T)$ be ramified quadratic,
and represent $S_2 \cong W(G,T)$ in $K_0$, then
\[
\mc H (G)^{[T,\chi_r]} \cong \big( \mc O (X_\nr (T) \otimes \End_\C \big( \C[S_2] 
\big) \big)^{S_2} \otimes \End_\C^\infty \big( I_{B \cap K_0}^{K_0} (\C_{\chi_r})^{S_2} \big) .
\]
Finally, for the trivial character of $T$ there is an affine Hecke algebra 
$\mc H_{\mr{aff}}$ such that
\[
\mc H (G)^{[T,1]} \cong \mc H_{\mr{aff}} \otimes  
\End_\C^\infty \big( I_{B \cap K_0}^{K_0} (\mr{triv}_T)^{S_2} \big) .
\]
We find that $\mc H (G)$ is Morita equivalent to
\[
\hspace{4cm} \mc H_{\mr{aff}} \, \oplus \, \mc O (X_\nr (T)) \rtimes S_2 \, \oplus \, 
\bigoplus\nolimits_{j=1}^\infty \mc O (\C^\times) \, \oplus \, 
\bigoplus\nolimits_{i=1}^\infty \C .
\]
This can be compared with the description of $\mc S (G)$ up to Morita 
equivalence, from Example \ref{ex:1.B}:
\[
\Big( \big( C^\infty (S^1) \otimes M_2 (\C) \big)^{S_2} \oplus \C_{St} \Big) \oplus \, 
C^\infty (S^1) \rtimes S_2 \, \oplus \, \bigoplus\nolimits_{j=1}^\infty C^\infty (S^1) 
\, \oplus \, \bigoplus\nolimits_{i=1}^\infty \C .
\]
\end{ex}

\subsection{Decompositions of $\mc S (G)$ and $C_r^* (G)$} \

The Bernstein decomposition of $\mc H (G)$ induces decompositions of $\mc S (G)$ and 
$C_r^* (G)$. Namely, let $\mc S (G)^{\mf s}$ be the closure of $\mc H (G)^{\mf s}$ in 
$\mc S (G)$ and let $C_r^* (G)^{\mf s}$ be the closure of $\mc H (G)^{\mf s}$ in
$C_r^* (G)$. Theorem \ref{thm:1.18} and the density of $\mc H (G)$ in $\mc S (G)$ and
in $C_r^* (G)$ imply the Bernstein decompositions
\begin{equation}\label{eq:1.34}
\begin{array}{lll}
\mc S (G) & = & \bigoplus\nolimits_{\mf s \in \mf B (G)} \mc S (G)^{\mf s}, \\
C_r^* (G) & = & \bigoplus\nolimits_{\mf s \in \mf B (G)} C_r^* (G)^{\mf s},
\end{array}
\end{equation}
where for $C_r^* (G)$ the direct sum is meant in the sense of Banach algebras.
The two-sided ideals $\mc S (G)^{\mf s} \subset \mc S (G)$ and 
$C_r^* (G)^{\mf s} \subset C_r^* (G)$ are often decomposable. Hence the decompositions
\eqref{eq:1.34} are coarser than the Harish-Chandra decompositions in \eqref{eq:1.28}
and \eqref{eq:1.33}, which are block decompositions. For $\mf d \in \Delta (G)$ and 
$\mf s \in \mf B (G)$, we write 
\[
\mf d \in \Delta (G,\mf s) \quad \text{when} \quad 
\Irr_\temp (G)_{\mf d} \subset \Irr (G)^{\mf s}. 
\]
We note that $\mf s$ itself is an element of $\Delta (G,\mf s)$, in fact the only
element that can be represented by a supercuspidal representation of a Levi subgroup
of $G$. From \eqref{eq:1.34} and Theorems \ref{thm:1.16} and \ref{thm:1.17} we obtain
\begin{equation}\label{eq:1.35}
\begin{array}{lll}
\mc S (G)^{\mf s} & = & \bigoplus\nolimits_{\mf d \in \Delta (G,\mf s)} 
\mc S (G)_{\mf d}, \\
C_r^* (G)^{\mf s} & = & \bigoplus\nolimits_{\mf d \in \Delta (G,\mf s)} C_r^* (G)_{\mf d}.
\end{array}
\end{equation}
The finite groups indexing the intertwining operators in Theorems \ref{thm:1.16} and
\ref{thm:1.19} for $\Rep (G)^{\mf s}$ are related:

\begin{lem}\label{lem:1.23}
For $\mf d \in \Delta (G,\mf s)$, the group $W_{\mf d}^e$ is a subquotient of $W_{\mf s}^e$.
\end{lem}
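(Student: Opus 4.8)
The plan is to trace both groups back to the same source: the discrete series representation $\delta$ of the Levi subgroup $L$ representing $\mf d = [L,\delta]_G$. Recall that $W_{\mf d}$ was defined as $\{w \in N_G(L)/L : w\cdot\delta \in X_\nr^u(L)\delta\}$, and $W_{\mf d}^e$ is the extension of $W_{\mf d}$ by $X_\nr(L,\delta)$ from Definition \ref{def:1.E}. On the other hand, $\mf s \in \mf B(G)$ is represented by some $(L',\sigma)$ with $\sigma \in \Irr_{\cusp}(L')$, and $W_{\mf s}^e$ is the analogous group built from $\sigma$. The condition $\mf d \in \Delta(G,\mf s)$ means $\Irr_\temp(G)_{\mf d} \subset \Irr(G)^{\mf s}$, i.e. every tempered constituent of $I_P^G(\delta)$ has supercuspidal support in the class of $(L',\sigma)$. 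By transitivity of Jacquet restriction and Theorem \ref{thm:1.14}, this forces $L' \subseteq L$ (up to conjugation) and $\delta$ to be a constituent of $I_{P'\cap L}^L(\sigma\otimes\chi_0)$ for some $\chi_0 \in X_\nr(L')$; after twisting the representative $\sigma$ we may assume $\delta$ occurs in $I_{P'\cap L}^L(\sigma)$.

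First I would set up the relevant chain of Weyl-type groups. Inside $N_G(L')/L'$ (the group underlying $W_{\mf s}$ before passing to the $X_\nr^u$-condition), consider the subgroup $N_{N_G(L')}(L)/L'$ of elements normalizing $L$; this surjects onto a subgroup of $N_G(L)/L$ with kernel $N_L(L')/L'$. The key structural input is that $\delta$, being a discrete series constituent of $I_{P'\cap L}^L(\sigma)$, has a well-defined stabilizer in $N_L(L')/L'$ — call it $\Gamma_\delta$ — acting through the finite group $W_{[L',\sigma]_L}$ that governs the block of $\Rep(L)$ containing $\delta$. Then I would argue that $W_{\mf d}$ (built from $\delta$ in $G$) is precisely the image in $N_G(L)/L$ of $\{w \in N_G(L')/L' : w\cdot\sigma \in X_\nr^u(L')\sigma,\ w \text{ normalizes } L,\ w \text{ stabilizes } \delta \text{ up to } X_\nr^u(L)\}$ — a subgroup of a subquotient of $W_{\mf s}$. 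This uses that $w\cdot\delta \in X_\nr^u(L)\delta$ is equivalent, via the compatibility of parabolic induction with twisting \eqref{eq:1.26} and Frobenius reciprocity, to $w$ preserving the $X_\nr^u(L')$-orbit of $\sigma$ and the isotypic component of $\delta$ in the induced representation.

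For the extensions by the $X_\nr$-stabilizer groups, I would note that $X_\nr(L,\delta) = \{\chi \in X_\nr(L) : \delta\otimes\chi \cong \delta\}$ sits naturally inside (a subquotient of) $X_\nr(L',\sigma)$: restriction $X_\nr(L) \hookrightarrow X_\nr(L')$ (since $L' \subseteq L$, an unramified character of $L$ restricts to one of $L'$, and this restriction is injective when $L'$ contains a maximal split torus, which it does) sends $X_\nr(L,\delta)$ into $X_\nr(L',\sigma)$, because if $\delta\otimes\chi\cong\delta$ then $\chi|_{L'}$ must preserve the supercuspidal support of $\delta$, hence lies in the finite group $X_\nr(L',\sigma)$ up to the ambiguity measured by $N_L(L')/L'$. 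Assembling these maps compatibly — the subquotient on the level of finite Weyl groups and the subquotient on the level of $X_\nr$-stabilizers — and checking they splice into a map of extensions gives the desired subquotient relation $W_{\mf d}^e \leq W_{\mf s}^e$ (subquotient).

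The main obstacle I anticipate is precisely this last compatibility: showing that the subgroup–and–quotient operations on the "finite Weyl" part and on the "$X_\nr$-stabilizer" part can be performed simultaneously and coherently, so that one genuinely obtains $W_{\mf d}^e$ as a subquotient of $W_{\mf s}^e$ as extensions (not merely that $W_{\mf d}$ is a subquotient of $W_{\mf s}$ and $X_\nr(L,\delta)$ of $X_\nr(L',\sigma)$ separately). This requires carefully choosing the intermediate subgroup of $W_{\mf s}^e$ — roughly, the stabilizer in $W_{\mf s}^e$ of the point of $X_\nr(L')\sigma / (N_G(L')/L')$ that corresponds to $\delta$, intersected with the normalizer of $L$ — and verifying that modding out its kernel yields exactly $W_{\mf d}^e$. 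A clean way to organize this is to work geometrically: $W_{\mf s}^e$ acts on $X_\nr^u(L')$, the constituents of $I_{P'\cap L}^L(\sigma\otimes\chi')$ for varying $\chi'$ cut out a sub-locus, and $W_{\mf d}^e$ is identified with the stabilizer of (a component of) that locus modulo its pointwise stabilizer — a standard subgroup/quotient pattern. I would expect the verification to be somewhat technical but not to require any new ideas beyond Theorems \ref{thm:1.14}, \ref{thm:1.9}, the identification \eqref{eq:1.30}, and the transitivity properties of $I_P^G$ and $J_P^G$.
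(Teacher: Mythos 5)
Your overall plan matches the paper's proof: after reducing to $L' \subseteq L$ (in your notation; the paper writes $L \subset M$), show the ``finite Weyl'' part of $W_{\mf d}^e$ sits inside that of $W_{\mf s}^e$ via normalizers and the action on supercuspidal supports, handle the $X_\nr$--extension via the same support map, and conclude that $W_{\mf d}^e$ is the stabilizer in $W_{\mf s}^e$ of (the orbit through) $\delta$ modulo the pointwise stabilizer. Your last paragraph --- ``$W_{\mf d}^e$ is the stabilizer of that locus modulo its pointwise stabilizer'' --- is precisely the paper's final identification: $W_{\mf d}^e \cong \mr{Stab}_{W_{\mf s}^e}(X_\nr^u(M)\delta)$ divided by the elements acting trivially on $X_\nr(M)\delta$.

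The one place where your sketch is not quite right is the intermediate attempt to get a direct containment $X_\nr(L,\delta) \hookrightarrow X_\nr(L',\sigma)$ (your notation). If $\chi \in X_\nr(M,\delta)$ then $\delta\otimes\chi\cong\delta$ forces $\mathrm{Sc}(\delta)$ to be sent to itself by $\chi|_L$, but only up to an element of $N_G(L)/L$ stabilizing $X_\nr(L)\sigma$ --- which is the ambiguity you yourself flag. In general this element need not lie in $L$, so $\chi|_L$ need not land in $X_\nr(L,\sigma)$; it only gives an element of the full extended group $W_{\mf s}^e$. Trying to subquotient the $X_\nr$--part and the Weyl--part of the extensions separately is therefore the wrong shape of argument, and the ``ambiguity'' caveat cannot be dispensed with. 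The paper sidesteps this entirely by never trying to split the extension: it maps the action of an arbitrary element of $W_{\mf d}^e$ on $X_\nr(M)\delta$ directly to the action of an element of $W_{\mf s}^e$ on $X_\nr(L)$ (using uniqueness of supercuspidal supports), and then reads off the subquotient from the stabilizer/pointwise-stabilizer pattern that you also identified. So your fallback plan is the right one; the more elementary ``two separate subquotients spliced together'' plan would not close the gap.
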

\begin{proof}
We write $\mf d = [M,\delta]_G$ and $\mf s = [L,\sigma]_G$, where $M \supset L$. Recall
from Definition \ref{def:1.E} that 
\[
W_{[M,\delta}^e / X_\nr (M,\delta) \cong W_{[M,\delta]} \quad \text{and} \quad
W_{[L,\sigma]}^e / X_\nr (L,\sigma) \cong W_{[L,\sigma]} . 
\] 
The group $W_{[M,\delta]} \subset N_G (M) / M$ stabilizes $\Irr_\temp (G)_{[M,\delta]} 
\subset \Irr (G)^{[L,\sigma]}$, so stabilizes $\Rep (G)^{[L,\sigma]}$. Hence $W_{[M,\delta]}$
can be represented in $\mr{Stab}_{N_G (M,L)/L} (X_\nr (L) \sigma)$, which is a 
subgroup of $W_{[L,\sigma]}$. 

Suppose that $(L,\sigma \otimes \chi_L)$ represents Sc$(\delta)$ and that $\chi_M \in 
X_\nr (M,\delta)$. Then $(L,\sigma \otimes \chi_L (\chi_M \chi)|_L)$ represents
Sc$(\delta \otimes \chi)$, for all $\chi \in X_\nr^u (M)$. By the uniqueness of
cuspidal supports up to $G$-conjugation, there exists $w \in W_{[L,\sigma]}^e$ such that
$w (\sigma \otimes \chi_L \chi|_L) = \sigma \otimes \chi_L (\chi_M \chi) |_L$  for all 
$\chi_M \in X_\nr^u (M)$. 

Hence the action of any element of $W_{[M,\delta]}^e$ on
$X_\nr (M) \delta$ arises from an element of $W_{[L,\sigma]}^e$. It follows that
$W_{[M,\delta]}^e$ is isomorphic to the quotient of $\mr{Stab}_{W_{[L,\sigma]}^e}
(X_\nr^u (M) \delta)$ by the elements that act trivially on $X_\nr (M) \delta$.
\end{proof}

For $K \in \CO (G)$ we put
\begin{equation}\label{eq:1.50}
\begin{array}{lllll}
\mc H (G,K)^{\mf s} & = & \langle K \rangle \mc H (G)^{\mf s} \langle K \rangle & = &
\mc H (G,K) \cap \mc H (G)^{\mf s},\\
\mc S (G,K)^{\mf s} & = & \langle K \rangle \mc S (G)^{\mf s} \langle K \rangle & = &
\mc S (G,K) \cap \mc S (G)^{\mf s},\\
C_r^* (G,K)^{\mf s} & = & \langle K \rangle C_r^* (G)^{\mf s} \langle K \rangle & = &
C_r^* (G,K) \cap C_r^* (G)^{\mf s}.
\end{array}
\end{equation}
Then $\mc S (G,K)^{\mf s}$ and $C_r^* (G)^{\mf s}$ are closures of $\mc H (G,K)^{\mf s}$,
and they are generated by  $\mc H (G,K)^{\mf s}$ as two-sided ideals in, respectively,
$\mc S (G,K)$ and $C_r^* (G,K)$.
To understand $\mc H (G)^{\mf s}$, it suffices to consider $\mc H (G,K)^{\mf s}$ for
$K \in \CO (G)$ in a countably family:

\begin{thm}\label{thm:1.20}
\textup{\cite[\S 2.2 and Corollaire 3.9]{BeDe}} \\
There exists a decreasing sequence $(K_n )_{n=1}^\infty$ of compact open subgroups of $G$
with the following properties:
\begin{itemize}
\item $\bigcap_{n=1}^\infty K_n = \{1\}$
\item Each $K_n$ is a normal subgroup of the good maximal compact subgroup $K_0$.
\item For each $\mf s \in \mf B (G)$ and each $n \in \Z_{>0}$, $\mc H (G,K_n)^{\mf s}$ 
is either 0 or Morita equivalent to $\mc H (G)^{\mf s}$. 
\item The bimodules for such a Morita equivalence are 
$\langle K_n \rangle \mc H (G)^{\mf s}$ and $\mc H (G)^{\mf s} \langle K_n \rangle$.
\end{itemize}
\end{thm}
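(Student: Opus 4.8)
The plan is to pick the sequence $(K_n)$ explicitly and then reduce the last two bullet points to a single dichotomy. Since $K_0$ is compact and totally disconnected it is profinite, and since $G\subseteq GL_n(F)$ is second countable, $K_0$ admits a decreasing sequence $(K_n)_{n\geq 1}$ of open \emph{normal} subgroups with $\bigcap_n K_n=\{1\}$; by Bruhat--Tits theory \cite{KaPr,Tit} one may moreover arrange that each $K_n$ has an Iwahori factorisation with respect to every parabolic subgroup of $G$. This gives the first two bullets. For the rest I would use the elementary fact that, for a ring $A$ with local units and an idempotent $e\in A$, the corner $eAe$ is Morita equivalent to $A$ --- with $(A,eAe)$- and $(eAe,A)$-bimodules $Ae$ and $eA$ --- exactly when the trace ideal $AeA$ equals $A$, equivalently when the cyclic projective module $Ae$ is a progenerator of $\Mod(A)$. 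Taking $A=\mc H(G)^{\mf s}$ (which has local units by Proposition \ref{prop:1.1}(b), being a direct summand of $\mc H(G)$) and $e=\langle K_n\rangle$, and using \eqref{eq:1.50} to identify $\langle K_n\rangle\mc H(G)^{\mf s}\langle K_n\rangle=\mc H(G,K_n)^{\mf s}$, the last two bullets reduce to the following: for each $\mf s=[L,\sigma]_G$ and each $n$, \emph{either} $\mc H(G,K_n)^{\mf s}=0$, \emph{or} $\mc H(G)^{\mf s}\langle K_n\rangle$ is a progenerator of $\Rep(G)^{\mf s}\simeq\Mod(\mc H(G)^{\mf s})$ (Lemma \ref{lem:1.5}), in which case the required bimodules are $\mc H(G)^{\mf s}\langle K_n\rangle$ and $\langle K_n\rangle\mc H(G)^{\mf s}$.

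\textbf{The dividing condition and the first horn.}
I would show that the dichotomy is governed by whether $V_\sigma^{L\cap K_n}\neq 0$; note that $L\cap K_n$ is open and normal in the maximal compact subgroup $L\cap K_0$. By the Iwasawa decomposition \eqref{eq:1.18} and the identification \eqref{eq:1.19}, every $I_P^G(\sigma\otimes\chi)$ with $\chi\in X_\nr(L)$ restricts to $K_0$ as the fixed representation $W:=I_{K_0\cap P}^{K_0}(V_\sigma)$; since $K_n\triangleleft K_0$, a Mackey double-coset computation over the finite quotient $K_0/K_n$ expresses $W^{K_n}$ in terms of $V_\sigma^{L\cap K_n}$, so that $W^{K_n}\neq 0$ if and only if $V_\sigma^{L\cap K_n}\neq 0$. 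By Theorems \ref{thm:1.14}--\ref{thm:1.15} (using Lemma \ref{lem:1.20} to keep the parabolic $P$ fixed) every $\tau\in\Irr(G)^{\mf s}$ is a $K_0$-subquotient of such a $W$, and the functor $(-)^{K_n}=\langle K_n\rangle\cdot(-)$ is exact on $\Rep(G)$ (a direct summand of the identity). Hence, if $V_\sigma^{L\cap K_n}=0$ then $\tau^{K_n}=0$ for every $\tau\in\Irr(G)^{\mf s}$, and therefore $\mc H(G,K_n)^{\mf s}=0$: this summand of $\mc H(G,K_n)$ is non-zero only when some $\tau\in\Irr(G)^{\mf s}$ has non-zero $K_n$-invariants.

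\textbf{The second horn: propagating the level to the whole block.}
Now suppose $V_\sigma^{L\cap K_n}\neq 0$. The crux is the claim that then \emph{every} $\tau\in\Irr(G)^{\mf s}$ has $\tau^{K_n}\neq 0$. If $L=G$ this is immediate ($\tau\cong\sigma\otimes\chi$, so $\tau^{K_n}=V_\sigma^{K_n}\neq 0$), so assume $L$ proper; then $\tau$ is not cuspidal (Theorem \ref{thm:1.12}), so the finite-length Jacquet module $J_P^G(\tau)$ is non-zero. Since the supercuspidal support of $\tau$ has Levi $L$, transitivity of cuspidal support forces every irreducible constituent of $J_P^G(\tau)$ to be cuspidal in $L$ and $G$-conjugate to an unramified twist of $\sigma$; choosing a representative of the relevant element of $N_G(L)/L$ inside the good maximal compact $K_0$ (which then preserves both $L$ and $K_n$), each such constituent has the same $(L\cap K_n)$-invariants as $\sigma$, hence non-zero. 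By exactness of $(-)^{L\cap K_n}$ it follows that $J_P^G(\tau)^{L\cap K_n}\neq 0$; and since $K_n$ has an Iwahori factorisation with respect to $P$, Casselman's theorem (see \cite{Ren}) yields a surjection $\tau^{K_n}\twoheadrightarrow J_P^G(\tau)^{L\cap K_n}$ (the normalising twist is unramified, so irrelevant to $(L\cap K_n)$-invariants), whence $\tau^{K_n}\neq 0$. Consequently every non-zero $V\in\Rep(G)^{\mf s}$ has $V^{K_n}\neq 0$: pass to a non-zero finitely generated subrepresentation, then to an irreducible quotient, and use exactness of $(-)^{K_n}$. Finally $\mc H(G)^{\mf s}\langle K_n\rangle$, which under Lemma \ref{lem:1.5} represents the functor $V\mapsto V^{K_n}$, is projective and is a generator of $\Rep(G)^{\mf s}$: if some $V$ were not generated by $V^{K_n}$, the quotient $V/\langle V^{K_n}\rangle$ would be a non-zero object of $\Rep(G)^{\mf s}$ with vanishing $K_n$-invariants, contradicting the previous sentence. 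Being a cyclic projective generator it is a progenerator, so $\mc H(G)^{\mf s}\langle K_n\rangle\,\mc H(G)^{\mf s}=\mc H(G)^{\mf s}$, and the Morita equivalence with the asserted bimodules follows.

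\textbf{Where the difficulty sits.}
The Morita bookkeeping and the finite-group Mackey computation are routine, and the argument above is self-contained modulo second adjointness / transitivity of cuspidal support (Theorems \ref{thm:1.8}, \ref{thm:1.12}, \ref{thm:1.14}, and \cite[Lemme VI.3.6]{Ren}) together with Casselman's surjectivity theorem for the $K_n$-invariants of a Jacquet module. The one genuinely delicate point --- and the reason the $K_n$ must be chosen simultaneously normal in $K_0$ \emph{and} Iwahori-factorisable --- is exactly this propagation step: a single congruence level works for the whole Bernstein block $\Irr(G)^{\mf s}$ as soon as it works for the supercuspidal datum $\sigma$ alone, which forces one to control how the $K_0$-level filtration interacts with $G$-conjugacy of the cuspidal datum. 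Making sure one has, in an arbitrary reductive $p$-adic group, the form of Casselman's theorem applicable to these $K_n$ (rather than only to principal congruence subgroups of a split group) is the part I would be most careful about; this, together with the existence of the sequence $(K_n)$ from Bruhat--Tits theory, is essentially the content one draws from \cite{BeDe}.
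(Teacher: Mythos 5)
The paper does not supply its own proof of this statement; it is quoted verbatim from Bernstein--Deligne, with the citation \cite[\S 2.2 and Corollaire 3.9]{BeDe} serving as the proof. So there is nothing in the text to compare your argument against line by line. That said, your reconstruction follows the same route that Bernstein--Deligne take, and it is essentially correct: reduce the last two bullets to the idempotent condition $\mc H(G)^{\mf s}\lr{K_n}\mc H(G)^{\mf s}=\mc H(G)^{\mf s}$, show via the Iwahori factorisation of $K_n$ and Frobenius/Mackey that $I_{K_0\cap P}^{K_0}(V_\sigma)^{K_n}\neq 0$ if and only if $V_\sigma^{L\cap K_n}\neq 0$, and then propagate the non-vanishing of $K_n$-invariants from $\sigma$ across the whole Bernstein block using Jacquet's/Casselman's surjectivity of $\tau^{K_n}\twoheadrightarrow J_P^G(\tau)^{L\cap K_n}$.

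Two places deserve a bit more justification, though both are standard. First, in your propagation step you need, for each constituent $w\sigma\otimes\chi$ of the Jacquet module, that $(w\sigma)^{L\cap K_n}\neq 0$; your argument works only if the coset $wL\in N_G(L)/L$ has a representative in $K_0$, so that conjugation by $w$ fixes $K_n$ and $L\cap K_n$. This is indeed the case for a good (special) maximal compact $K_0$ and a standard Levi $L$, because $N_G(S)\cap K_0$ surjects onto the relative Weyl group and $N_G(L)/L$ is represented there; but you should say so, since without it the step breaks. Second, you appeal to "$J_P^G(\tau)\neq 0$" for the specific parabolic $P$ with Levi factor $L$, not merely for some proper parabolic; this follows because $\tau$ is a subquotient of $I_P^G(\sigma\otimes\chi)$ (Lemma \ref{lem:1.20} lets one fix the parabolic), together with second adjointness or the geometric lemma, but it is not an immediate consequence of "$\tau$ is not cuspidal" alone. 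Finally, your very first horn needs that $\mc H(G,K_n)^{\mf s}$ is semiprimitive so that the vanishing of all irreducible representations forces the algebra to be zero; this holds because $\mc H(G,K_n)^{\mf s}$ is a finite type algebra, and is worth a one-line remark. With those fillings-in, your argument gives the right proof.
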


From Theorems \ref{thm:1.19} and \ref{thm:1.20} one concludes:

\begin{cor}\label{cor:1.21}
There exists a unique finite subset $\mf B (G,K_n) \subset \mf B (G)$ such that
\[
\mc H (G,K_n) \quad \text{equals} \quad 
\bigoplus\nolimits_{\mf s \in \mf B (G,K_n)} \mc H (G,K_n )^{\mf s}
\]
and is Morita equivalent to $\bigoplus_{\mf s \in \mf B (G,K_n)} \mc H (G)^{\mf s}$.

The sequence of sets $(\mf B (G,K_n) )_{n=1}^\infty$ increases
and its union is $\mf B (G)$.
\end{cor}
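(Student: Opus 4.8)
The plan is to take Theorem~\ref{thm:1.20} as the essential input and obtain Corollary~\ref{cor:1.21} by bookkeeping, using Theorem~\ref{thm:1.19}.b only for the finiteness assertion. First I would simply \emph{define}
$\mf B (G,K_n) = \{ \mf s \in \mf B (G) : \mc H (G,K_n)^{\mf s} \neq 0 \}$.
Since $\langle K_n \rangle$ is the unit of $\mc H (G,K_n)$ and $\mc H (G,K_n)^{\mf s} = \langle K_n \rangle \mc H (G)^{\mf s} \langle K_n \rangle$ by \eqref{eq:1.50}, truncating the Bernstein decomposition of Theorem~\ref{thm:1.18} by $\langle K_n \rangle$ on both sides gives $\mc H (G,K_n) = \bigoplus_{\mf s \in \mf B (G)} \mc H (G,K_n)^{\mf s}$, in which only the terms indexed by $\mf B (G,K_n)$ are nonzero; this already yields the displayed equality. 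By Theorem~\ref{thm:1.19}.b the $\mf s$-component of $\mc H (G,K_n)$, with $\mf s = [L,\sigma]_G$, is nonzero only when $I_{K_0 \cap P}^{K_0}(V_\sigma)^{K_n} \neq 0$, and that happens for only finitely many $\mf s$; hence $\mf B (G,K_n)$ is finite. For each $\mf s \in \mf B (G,K_n)$ the ideal $\mc H (G,K_n)^{\mf s}$ is nonzero, so Theorem~\ref{thm:1.20} forces it to be Morita equivalent to $\mc H (G)^{\mf s}$ via the bimodules $\langle K_n \rangle \mc H (G)^{\mf s}$ and $\mc H (G)^{\mf s} \langle K_n \rangle$; forming the (finite) direct sums of these bimodules produces the Morita equivalence $\mc H (G,K_n) \sim \bigoplus_{\mf s \in \mf B (G,K_n)} \mc H (G)^{\mf s}$.

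For monotonicity, note that $K_{n+1} \subseteq K_n$ gives $\langle K_{n+1} \rangle \langle K_n \rangle = \langle K_n \rangle = \langle K_n \rangle \langle K_{n+1} \rangle$ by the computation behind \eqref{eq:1.6}, so $\langle K_n \rangle \in \mc H (G,K_{n+1})$ and therefore $\mc H (G,K_n)^{\mf s} = \langle K_n \rangle \mc H (G)^{\mf s} \langle K_n \rangle \subseteq \langle K_{n+1} \rangle \mc H (G)^{\mf s} \langle K_{n+1} \rangle = \mc H (G,K_{n+1})^{\mf s}$. Hence nonvanishing of the $\mf s$-component propagates and $\mf B (G,K_n) \subseteq \mf B (G,K_{n+1})$. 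For the union I would first observe that the decreasing sequence of compact open subgroups $(K_n)$ with $\bigcap_n K_n = \{1\}$ is automatically a neighbourhood basis of $1$ in $G$: for any open $U \ni 1$ the sets $K_1 \setminus U$ are compact, decreasing and have empty intersection, so $K_n \subseteq U$ for $n$ large. Given $\mf s \in \mf B (G)$, choose $\pi \in \Irr (G)^{\mf s}$, nonempty by Theorem~\ref{thm:1.15}; smoothness gives $V_\pi^{K} \neq 0$ for some $K \in \CO (G)$, and then $V_\pi^{K_n} \supseteq V_\pi^{K} \neq 0$ once $K_n \subseteq K$. Since $\pi$ is annihilated by $\mc H (G)^{\mf t}$ for all $\mf t \neq \mf s$, the $\mf s$-component of $\langle K_n \rangle$ already acts as the identity on $V_\pi^{K_n}$, so $\mc H (G,K_n)^{\mf s} \neq 0$ and $\mf s \in \mf B (G,K_n)$; thus $\bigcup_n \mf B (G,K_n) = \mf B (G)$.

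The only point requiring a little care — and the place where I expect the modest residual work to sit — is the uniqueness assertion: one must rule out superfluous indices, i.e.\ show that any finite $S \subseteq \mf B (G)$ with $\mc H (G,K_n) = \bigoplus_{\mf s \in S} \mc H (G,K_n)^{\mf s}$ and $\mc H (G,K_n) \sim \bigoplus_{\mf s \in S} \mc H (G)^{\mf s}$ must equal $\mf B (G,K_n)$. The equality condition forces $S \supseteq \mf B (G,K_n)$, and an extra index $\mf t$ with $\mc H (G,K_n)^{\mf t}=0$ would add a genuinely nonzero block $\mc H (G)^{\mf t}$ to the right hand side; I would exclude this by comparing centres (a Morita invariant): $Z(\mc H (G,K_n)) \cong \bigoplus_{\mf s \in \mf B (G,K_n)} Z(\mc H (G)^{\mf s})$ by Theorem~\ref{thm:1.20}, whereas $\bigoplus_{\mf s \in S} Z(\mc H (G)^{\mf s})$ would be strictly larger since each $Z(\mc H (G)^{\mf t}) \neq 0$. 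Equivalently one may invoke that a Morita equivalence of unital rings induces a bijection of primitive central idempotents. Everything else in the corollary is an immediate packaging of Theorems~\ref{thm:1.18}, \ref{thm:1.19} and \ref{thm:1.20}.
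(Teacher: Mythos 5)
The proposal is correct, and it fills in exactly the bookkeeping the paper leaves implicit (the paper offers no proof beyond ``From Theorems \ref{thm:1.19} and \ref{thm:1.20} one concludes:''). Defining $\mf B(G,K_n)$ by nonvanishing of $\mc H(G,K_n)^{\mf s}$, getting the equality from the truncated Bernstein decomposition, and getting the Morita equivalence and monotonicity from Theorem~\ref{thm:1.20} is the intended route; the neighbourhood-basis argument for the union is also the right observation.

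Two small remarks. For uniqueness you write ``a Morita equivalence of unital rings induces a bijection of primitive central idempotents,'' but $\mc H (G)^{\mf s}$ is not unital, so this needs to be phrased either in terms of idempotented algebras / the centre of the module category (which is where Theorem~\ref{thm:3.5}'s identification $Z(\Rep(G)^{\mf s}) \cong \mc O (\Irr (L)^{\mf s_L}/W_{\mf s})$ makes the nonvanishing clear), or more simply by counting blocks: each $\mc H (G)^{\mf s}$ is indecomposable by Theorem~\ref{thm:1.18}, the number of blocks of the module category is a Morita invariant, and together with $S \supseteq \mf B(G,K_n)$ this forces equality. Also, the finiteness of $\mf B (G,K_n)$ has an even more elementary proof than the one via Theorem~\ref{thm:1.19}.b: the unit $\langle K_n\rangle$ of $\mc H(G,K_n)$ lies in $\mc H (G) = \bigoplus_{\mf s} \mc H (G)^{\mf s}$ and therefore has only finitely many nonzero components $\langle K_n\rangle^{\mf s}$, and $\mc H (G,K_n)^{\mf s} = \langle K_n\rangle^{\mf s}\mc H (G)^{\mf s}\langle K_n\rangle^{\mf s}$ vanishes exactly when $\langle K_n\rangle^{\mf s}$ does. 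Your use of Theorem~\ref{thm:1.19}.b is nevertheless perfectly correct and is presumably what the author had in mind, since the paper cites that theorem.
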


There are versions of Theorem \ref{thm:1.20} and Corollary \ref{cor:1.21} for 
$\mc S (G)$ and $C_r^* (G)$:

\begin{prop}\label{prop:1.22}
Let $K_n$ be as in Theorem \ref{thm:1.20}.
\enuma{
\item For any $\mf s \in \mf B (G)$, $\mc S (G,K_n)^{\mf s}$ is either 0 or 
Morita equivalent to $\mc S (G)^{\mf s}$. In the latter case, the Morita
bimodules are $\langle K_n \rangle \mc S (G)^{\mf s}$ and $\mc S (G)^{\mf s} 
\langle K_n \rangle$.
\item The algebra $\mc S (G,K_n )$ equals $\bigoplus\nolimits_{\mf s \in \mf B (G,K_n)} 
\mc S (G,K_n )^{\mf s}$ and is Morita equivalent to 
$\bigoplus_{\mf s \in \mf B (G, K_n)} \mc S (G)^{\mf s}$.
\item Parts (a) and (b) also hold for $C_r^* (G,K_n)$.
}
\end{prop}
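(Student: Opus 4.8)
\medskip

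The plan is to bootstrap everything from Theorem \ref{thm:1.20}, whose Morita equivalences are implemented by the bimodules $\langle K_n\rangle \mc H(G)^{\mf s}$ and $\mc H(G)^{\mf s}\langle K_n\rangle$, together with the density results already available: $\mc H(G)^{\mf s}$ is dense in $\mc S(G)^{\mf s}$ and in $C_r^*(G)^{\mf s}$ (by the definition of the latter as closures, and the Bernstein decompositions \eqref{eq:1.34}), and similarly $\mc H(G,K_n)^{\mf s}$ is dense in $\mc S(G,K_n)^{\mf s}$ and in $C_r^*(G,K_n)^{\mf s}$ by \eqref{eq:1.50}. First I would treat part (a). Fix $\mf s$ with $\mc H(G,K_n)^{\mf s}\neq 0$; then by Theorem \ref{thm:1.20} the idempotent $\langle K_n\rangle$ is a full idempotent in the multiplier algebra sense for $\mc H(G)^{\mf s}$, i.e.\ $\mc H(G)^{\mf s}\langle K_n\rangle\mc H(G)^{\mf s}=\mc H(G)^{\mf s}$. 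I would show this identity persists after completion: taking closures in $\mc S(G)^{\mf s}$ (resp.\ $C_r^*(G)^{\mf s}$), and using that multiplication in $\mc S(G)$ is separately continuous (cited after Definition \ref{def:1.SG}) — resp.\ jointly continuous in the $C^*$-case — one gets $\mc S(G)^{\mf s}\langle K_n\rangle\mc S(G)^{\mf s}=\mc S(G)^{\mf s}$, so $\langle K_n\rangle$ is a full idempotent in $\mc S(G)^{\mf s}$ as well. A full idempotent $e$ in a ring (or topological algebra) $A$ with local units always induces a Morita equivalence between $A$ and $eAe=\mc S(G,K_n)^{\mf s}$, with bimodules $eA$ and $Ae$; this is the standard idempotent-corner Morita theorem, which I would invoke rather than reprove. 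The bimodules are then exactly $\langle K_n\rangle\mc S(G)^{\mf s}$ and $\mc S(G)^{\mf s}\langle K_n\rangle$, as claimed.

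\medskip

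For part (b), I would combine part (a) with Corollary \ref{cor:1.21}. Apply $\langle K_n\rangle\,\cdot\,\langle K_n\rangle$ to both sides of $\mc S(G)=\bigoplus_{\mf s}\mc S(G)^{\mf s}$ (a direct sum of Banach/Fréchet algebras): this gives $\mc S(G,K_n)=\bigoplus_{\mf s}\mc S(G,K_n)^{\mf s}$. By part (a) the summand $\mc S(G,K_n)^{\mf s}$ is nonzero exactly when $\mc H(G,K_n)^{\mf s}\neq 0$, which by Corollary \ref{cor:1.21} happens precisely for $\mf s\in\mf B(G,K_n)$ — a \emph{finite} set. Hence $\mc S(G,K_n)=\bigoplus_{\mf s\in\mf B(G,K_n)}\mc S(G,K_n)^{\mf s}$, a finite direct sum, and a finite direct sum of Morita equivalences (one per $\mf s$, from part (a)) is a Morita equivalence, so $\mc S(G,K_n)$ is Morita equivalent to $\bigoplus_{\mf s\in\mf B(G,K_n)}\mc S(G)^{\mf s}$. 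Part (c) runs verbatim in the $C^*$-setting: the full-idempotent argument is in fact cleaner there (joint continuity of multiplication, no inductive-limit subtleties), and the corner $\langle K_n\rangle C_r^*(G)^{\mf s}\langle K_n\rangle = C_r^*(G,K_n)^{\mf s}$ is again a $C^*$-algebra, so the same corner-Morita argument applies.

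\medskip

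The main obstacle I anticipate is the topological bookkeeping in part (a): checking that the purely algebraic identity $\mc H(G)^{\mf s}\langle K_n\rangle\mc H(G)^{\mf s}=\mc H(G)^{\mf s}$ upgrades to the corresponding identity of closed subspaces in $\mc S(G)^{\mf s}$. The subtlety is that $\mc S(G)$ is only a strict inductive limit of Fréchet algebras with \emph{separately} continuous multiplication, not a Fréchet algebra (noted after Definition \ref{def:1.SG}), so one must argue level-by-level on the Fréchet pieces $\mc S(G,K)^{\mf s}$ — where $\langle K_n\rangle$ sits as a genuine full idempotent — and then pass to the inductive limit, using that $\langle K_n\rangle\mc S(G,K)^{\mf s}$ exhausts $\langle K_n\rangle\mc S(G)^{\mf s}$ as $K$ shrinks. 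A clean alternative that sidesteps this is to fix one small $K\subset K_n$ as in Theorem \ref{thm:1.16}.b / \ref{thm:1.17} and transport the Plancherel description: on a Harish-Chandra block, $\langle K_n\rangle$ acts as a locally constant projection-valued function on the compact torus, the corner is just the sub-bundle cut out by that projection, and fullness amounts to the image of $\langle K_n\rangle$ on $I_{K_0\cap P}^{K_0}(V_\delta)$ being a faithful module over $\End_\C^\infty$ of that space — which is automatic since $\End_\C^\infty$ is Morita equivalent to $\C$. I would present the Plancherel-side argument as the primary route, since it makes the "either $0$ or Morita equivalent" dichotomy transparent, and only gesture at the abstract idempotent argument for the remaining algebras.
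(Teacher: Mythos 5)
Your overall strategy matches the paper's: show $\langle K_n\rangle$ is a full idempotent in $\mc S(G)^{\mf s}$ and invoke the corner Morita theorem, then combine with Corollary \ref{cor:1.21} for (b) and repeat for (c). The gap is in how you propose to obtain fullness. You want to pass from $\mc H(G)^{\mf s}\langle K_n\rangle\mc H(G)^{\mf s}=\mc H(G)^{\mf s}$ to $\mc S(G)^{\mf s}\langle K_n\rangle\mc S(G)^{\mf s}=\mc S(G)^{\mf s}$ by taking closures and using (separate) continuity of multiplication. That only yields that the two-sided ideal $\mc S(G)^{\mf s}\langle K_n\rangle\mc S(G)^{\mf s}$ is \emph{dense} in $\mc S(G)^{\mf s}$, which is strictly weaker than the algebraic equality the corner Morita theorem needs; a topological algebra can well have proper dense two-sided ideals. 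The ``obstacle'' you flag --- upgrading to an ``identity of closed subspaces'' --- is therefore aimed at the wrong target, and the level-by-level inductive-limit bookkeeping you sketch does not repair it, because it is still a statement about closures rather than about the bare ideal.

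The fix, and what the paper actually does, is purely algebraic and needs no topology at all. The ideal $\mc S(G)^{\mf s}\langle K_n\rangle\mc S(G)^{\mf s}$ contains $\mc H(G)^{\mf s}\langle K_n\rangle\mc H(G)^{\mf s}=\mc H(G)^{\mf s}$ by Theorem \ref{thm:1.20}. Now observe that $\mc H(G)^{\mf s}$ already contains a system of local units for $\mc S(G)^{\mf s}$: for each small $K\in\CO(G)$, $\mc H(G,K)$ decomposes into finitely many summands $\mc H(G,K)^{\mf s'}$, and the central idempotent $e_{\mf s}^{(K)}\in\mc H(G,K)^{\mf s}$ cutting out the $\mf s$-summand is the unit of $\mc S(G,K)^{\mf s}$. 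So every $s\in\mc S(G)^{\mf s}$ satisfies $s=e_{\mf s}^{(K)}s\in\mc H(G)^{\mf s}\,\mc S(G)^{\mf s}$ for suitable $K$, i.e.\ $\mc H(G)^{\mf s}$ generates $\mc S(G)^{\mf s}$ as a two-sided ideal --- this is exactly the fact the paper records right after \eqref{eq:1.50}. Combined with the containment above, this gives \eqref{eq:1.38} on the nose, and the corner Morita theorem then applies as you intended. The $C^*$-case is identical; joint continuity is not needed. Your Plancherel-based alternative is a genuinely different route, but as sketched it is heavier machinery and still must reproduce the content of Theorem \ref{thm:1.20} (fullness of $\langle K_n\rangle$ on every Harish-Chandra component of $\mc S(G)^{\mf s}$) in Fourier-transformed terms, so it buys nothing here. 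Parts (b) and (c) are fine and match the paper.
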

\begin{proof}
(a) If $\mc H (G,K_n )^{\mf s} = 0$, then also $\mc S (G,K_n)^{\mf s} = 0$. 
Therefore we may assume that $\mc S (G,K_n)^{\mf s}$ is nonzero. 
We consider $\langle K_n \rangle \mc S (G)^{\mf s}$ and $\mc S (G)^{\mf s} 
\langle K_n \rangle$, which are bimodules for $\mc S (G)^{\mf s}$ and
$\mc S (G,K_n )^{\mf s}$. Multiplication provides an isomorphism of 
$\mc S (G,K_n )^{\mf s}$-bimodules
\begin{equation}\label{eq:1.36}
\langle K_n \rangle \mc S (G)^{\mf s} \otimes_{\mc S (G)^{\mf s}} \mc S (G)^{\mf s} 
\langle K_n \rangle \to \mc S (G,K_n )^{\mf s} ,
\end{equation}
and an isomorphism of $\mc S (G)^{\mf s}$-bimodules
\begin{equation}\label{eq:1.37}
\mc S (G)^{\mf s} \langle K_n \rangle \otimes_{\mc S (G,K_n )^{\mf s}} \langle K_n 
\rangle \mc S (G)^{\mf s} \to \mc S (G)^{\mf s} \langle K_n \rangle \mc S (G)^{\mf s}.
\end{equation}
The right hand side of \eqref{eq:1.37} is an $\mc S (G)^{\mf s}$-sub-bimodule
of $\mc S (G)^{\mf s}$, and by Theorem \ref{thm:1.20} it contains $\mc H (G)^{\mf s}$.
Since $\mc H (G)^{\mf s}$ generates $\mc S (G)^{\mf s}$ as an ideal, we deduce that
\begin{equation}\label{eq:1.38}
\mc S (G)^{\mf s} \langle K_n \rangle \mc S (G)^{\mf s} = \mc S (G)^{\mf s} .
\end{equation} 
From \eqref{eq:1.36}, \eqref{eq:1.37} and \eqref{eq:1.38} we see that $\langle K_n 
\rangle \mc S (G)^{\mf s}$ and $\mc S (G)^{\mf s} \langle K_n \rangle$ implement
a Morita equivalence between $\mc S (G)^{\mf s}$ and $\mc S (G,K_n )^{\mf s}$.\\
(b) This follows from Corollary \ref{cor:1.21} and part (a).\\
(c) This can be shown in the same way as parts (a) and (b).
\end{proof}

\section{Twisted graded Hecke algebras} \label{sec:2}

In this section we survey some algebras which will play an important role
in the analysis of the Hecke algebra of a reductive $p$-adic group.

\subsection{Twisted crossed products}\label{par:tcp}  \

Let $\Gamma$ be a finite group. A 2-cocycle for $\Gamma$ is a map
$\natural : \Gamma \times \Gamma \to \C^\times$ such that
\begin{equation}\label{eq:2.1}
\natural (\gamma_1 \gamma_2, \gamma_3) \natural (\gamma_1, \gamma_2) =
\natural (\gamma_1, \gamma_2 \gamma_3) \natural (\gamma_2, \gamma_3)
\qquad \text{for all}\; \gamma_1, \gamma_2, \gamma_3 \in \Gamma.
\end{equation}
To these data one associates the twisted group algebra $\C[\Gamma,\natural]$,
which has a $\C$-basis $\{ T_\gamma : \gamma \in \Gamma \}$ and multiplication rules
\[
T_{\gamma_1} T_{\gamma_2} = \natural (\gamma_1, \gamma_2) T_{\gamma_1 \gamma_2} 
\qquad \text{for all}\; \gamma_1, \gamma_2 \in \Gamma.
\]
The condition \eqref{eq:2.1} means precisely that $\C[\Gamma,\natural]$ is an
associative algebra. Repla\-cing $T_e$ by $\natural (e,e)^{-1} T_e$, we can achieve
that $T_e \cdot T_e = T_e$ (at the cost of modifying $\natural$). Therefore we 
may and will always assume that $\natural (e,e) = 1$. Then \eqref{eq:2.1} for
$(\gamma_1,\gamma_2,\gamma_3) = (e,e,\gamma)$ shows that $\natural (e,\gamma) = 1$
while \eqref{eq:2.1} for $(\gamma_1,\gamma_2,\gamma_3) = (\gamma,e,e)$ shows that 
$\natural (\gamma,e) = 1$. In other words, the condition $\natural (e,e)= 1$ implies
that $T_e$ is the unit element of $\C [\Gamma, \natural]$. 

For any function $f : \Gamma \to \C^\times$, one may pass to new basis elements
$T'_\gamma = f(\gamma) T_\gamma$. In those terms, the multiplication rules read
\begin{equation}\label{eq:2.14}
T'_{\gamma_1} T'_{\gamma_2} = f(\gamma_1) f(\gamma_2) f(\gamma_1 \gamma_2)^{-1}
\natural (\gamma_1, \gamma_2) T'_{\gamma_1 \gamma_2} .
\end{equation}
The map 
\[
b(f) : (\gamma_1, \gamma_2) \mapsto f(\gamma_1) f(\gamma_2) f(\gamma_1 \gamma_2)^{-1}
\]
is called the coboundary of $f$. Hence the algebra $\C [\Gamma, \natural]$ depends, 
up to rescaling, only on $\natural$ modulo coboundaries, that is, on the image of
$\natural$ in $H^2 (\Gamma, \C^\times)$. This construction yields a map from the
second group cohomology $H^2 (\Gamma, \C^\times)$ to twisted versions of $\C [\Gamma]$
up to isomorphism.

There exists a finite central extension $\Gamma^*$ of $\Gamma$, such that the 
inflation to $\Gamma^* \times \Gamma^*$ of any 2-cocycle $\natural$ for $\Gamma$
represents the trivial class in $H^2 (\Gamma^* ,\C^\times)$. It is known as the
Schur extension or multiplier of $\Gamma$ \cite[\S 53]{CuRe}. Let $Z^*$ be the kernel 
of $\Gamma^* \to \Gamma$. Then $\natural$ determines a character $c_\natural$ of
$Z^*$, with central idempotent $e_\natural \in \C [Z^*] \subset \C [\Gamma^*]$,
such that
\begin{equation}\label{eq:2.2}
\C [\Gamma, \natural] \cong e_\natural \C [\Gamma^*] .
\end{equation}
From \eqref{eq:2.2} one can recover $\natural$ (up to some coboundary), as 
follows. Pick representatives $\{ \gamma^* : \gamma \in \Gamma \}$ for $\Gamma$
in $\Gamma^*$. Then $\{e_\natural \gamma^* : \gamma \in \Gamma \}$ is a $\C$-basis
of $e_\natural \C [\Gamma^*]$, and $\natural$ can be defined by the formula
\[
e_\natural \gamma_1^* \cdot e_\natural \gamma_2^* = \natural (\gamma_1, \gamma_2)
e_\natural (\gamma_1 \gamma_2)^* .
\]
Moreover $\C [\Gamma^*] = \bigoplus_{c_\natural \in \Irr (Z^*)} 
e_\natural \C [\Gamma^*]$, so each $\C [\Gamma, \natural]$ is isomorphic to
a direct summand of $\C [\Gamma^*]$. As $\C [\Gamma^*]$ is semisimple, so is
$\C [\Gamma, \natural]$.

\begin{ex}
For $\Gamma = S_2 \times S_2$, the Schur multiplier is $Q_8$, the quaternion
group of order eight. The group $Q_8$ has four irreducible representations of
dimension one, and one of dimension two, so $\C [Q_8] \cong \C^4 \oplus M_2 (\C)$.

We have $Z^* = Z(Q_8) = \{ \pm 1\}$, so there are precisely two inequivalent
twisted group algebras of $\Gamma$. The first comes from $c_\natural = 1
\in \Irr (Z^*)$, it is just $\C [\Gamma]$. As $\C[\Gamma] \cong \C^4$, this
corresponds to the direct summand $\C^4$ of $\C[Q_8]$.  

The second comes from $c_\natural = \mr{sign} \in \Irr (Z^*)$. It corresponds to
the remaining direct summand of $\C [Q_8]$, so $\C [S_2 \times S_2, \natural]
\cong M_2 (\C)$ for any 2-cocycle $\natural$ whose image in $H^2 (S_2 \times S_2,
\C^\times)$ is nontrivial.
\end{ex}

For $g \in \Gamma$ we introduce the map 
\begin{equation}\label{eq:2.C}
\natural^g : \Gamma \to \C^\times, \qquad \natural^g (\gamma) =
T_\gamma T_g T_\gamma^{-1} T_{\gamma g^{-1} \gamma^{-1}}^{-1} .
\end{equation}
One checks that the restriction $\natural^g |_{Z_\Gamma (g)}$ is a character of
$Z_\Gamma (g)$. These characters measure the difference between $\C [\Gamma,\natural]$
and $\C [\Gamma]$. For instance, they can be used to count the number of
irreducible representations of $\C [\Gamma,\natural]$:

\begin{lem}\label{lem:2.1} \textup{\cite[Lemma 1.1]{SolTwist}} \\
The cardinality of $\{ g \in \Gamma : \natural^g |_{Z_\Gamma (g)} = 1 \} / 
\Gamma \text{-conjugacy} \}$ equals $|\Irr (\C [\Gamma,\natural])|$.

More precisely, for every conjugacy class $C$ with the above property, define 
a trace $\nu_C$ on $\C [\Gamma,\natural]$ by $\nu_C (T_g) = 1$ if $g \in C$ and
$\nu_C (T_g) = 0$ otherwise. Then the $\nu_C$ form a basis of the space of all
traces on the semisimple algebra $\C [\Gamma,\natural]$.
\end{lem}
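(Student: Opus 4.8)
The plan is to pin down the dimension of the space of traces on $A:=\mathbb C[\Gamma,\natural]$ in two ways and match them, producing the claimed basis along the way.

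On the one hand, by \eqref{eq:2.2} together with the semisimplicity of $\mathbb C[\Gamma^*]$ recorded after it, $A$ is a finite-dimensional semisimple $\mathbb C$-algebra, hence $A\cong\bigoplus_{\rho\in\Irr(A)}\End_{\mathbb C}(V_\rho)$. Every trace on a matrix algebra is a scalar multiple of the matrix trace, and the characters $\chi_\rho$ $(\rho\in\Irr(A))$ are linearly independent, so the space of traces on $A$ has dimension exactly $|\Irr(A)|$. On the other hand, I would describe traces explicitly in the basis $\{T_g\}_{g\in\Gamma}$: a linear functional $\tau$ is a trace iff $\tau(xy)=\tau(yx)$ for $x,y$ in this basis, and substituting $x=T_\gamma$, $y=T_gT_\gamma^{-1}$ and using \eqref{eq:2.C} in the form $T_\gamma T_gT_\gamma^{-1}=\natural^g(\gamma)\,T_{\gamma g\gamma^{-1}}$ converts this into the system $\natural^g(\gamma)\,\tau(T_{\gamma g\gamma^{-1}})=\tau(T_g)$ for all $g,\gamma\in\Gamma$. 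Two consequences: the support of a trace (in the $T_g$-basis) is a union of $\Gamma$-conjugacy classes, so the trace space is the direct sum of the subspaces $\mathcal T_C$ of traces supported on a single class $C$; and within $C$ a trace is determined by its value at one base point $g_0$, namely $\tau(T_{\gamma g_0\gamma^{-1}})=\natural^{g_0}(\gamma)^{-1}\tau(T_{g_0})$.

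It then suffices to show that $\dim\mathcal T_C=1$ exactly when $\natural^g|_{Z_\Gamma(g)}=1$ for $g\in C$, and $\dim\mathcal T_C=0$ otherwise. The vanishing is immediate: taking $\gamma\in Z_\Gamma(g_0)$ in the system gives $(\natural^{g_0}(\gamma)-1)\tau(T_{g_0})=0$. For the converse — the hard part — I would first establish the identity $\natural^{g_0}(\gamma\delta)=\natural^{g_0}(\delta)\,\natural^{\delta g_0\delta^{-1}}(\gamma)$, a direct consequence of associativity in $A$ and the cocycle relation \eqref{eq:2.1}. Specializing $\delta\in Z_\Gamma(g_0)$ shows that, when $\natural^{g_0}|_{Z_\Gamma(g_0)}=1$, the function $\natural^{g_0}$ is constant on each coset $\gamma Z_\Gamma(g_0)$, so that the prescription above gives a well-defined functional on $C\cong\Gamma/Z_\Gamma(g_0)$; it also shows that the property ``$\natural^g|_{Z_\Gamma(g)}=1$'' depends only on the conjugacy class of $g$. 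One more application of the same identity (with the base point moved inside $C$) checks that this functional satisfies the full system, hence is a trace; and since replacing the $T_h$ $(h\in C)$ by suitable scalar multiples changes $\natural$ only within its cohomology class and alters $\natural^{g_0}$ but not $\natural^{g_0}|_{Z_\Gamma(g_0)}$, one may pass to a cohomologous cocycle in which $\natural^{g_0}\equiv1$ on all of $\Gamma$; there this generator of $\mathcal T_C$ is precisely $\nu_C$. Summing over conjugacy classes yields the basis $\{\nu_C\}$, and comparison with the first count gives the asserted equality of cardinalities. The main obstacle is exactly this converse: extracting the correct identity for $\natural^g$ and using it both for the well-definedness of $\nu_C$ and — less obviously — for the fact that $\nu_C$ is genuinely central, not merely compatible with the relations coming from centralizer elements.
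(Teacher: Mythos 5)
Your two-count strategy is the natural one for this lemma, and all the key steps check out: the reduction of the trace condition to $\natural^g(\gamma)\,\tau(T_{\gamma g\gamma^{-1}}) = \tau(T_g)$, the vanishing of $\mathcal T_C$ when $\natural^{g_0}|_{Z_\Gamma(g_0)}\neq 1$, and the cocycle identity $\natural^{g_0}(\gamma\delta)=\natural^{g_0}(\delta)\,\natural^{\delta g_0\delta^{-1}}(\gamma)$ that drives both the well-definedness of the candidate trace and the conjugacy-invariance of the ``compactness'' condition on $C$. The survey only cites \cite{SolTwist} for this statement without reproducing a proof, so there is no in-text argument to compare against.

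One point deserves to be more than a closing remark. The functional $\nu_C$ as literally defined in the lemma (constant equal to $1$ on $C$) is a trace if and only if $\natural^g\equiv 1$ on \emph{all} of $\Gamma$ for $g\in C$, not merely on the centralizer: taking $x,y\in\Gamma$ with $xy=g\in C$, the condition $\nu_C(T_xT_y)=\nu_C(T_yT_x)$ forces $\natural(x,y)=\natural(y,x)$, and computing $T_yT_{xy}T_y^{-1}$ identifies this with $\natural^{xy}(y)=1$; since $y$ is arbitrary this is genuinely stronger than $\natural^{g}|_{Z_\Gamma(g)}=1$. Your construction correctly produces a one-dimensional $\mathcal T_C$ spanned by $\tau_C(T_{\gamma g_0\gamma^{-1}})=\natural^{g_0}(\gamma)^{-1}$, and the rescaling remark is exactly the right fix, but it silently replaces the given basis $\{T_g\}$ by a rescaled one $\{T'_g\}$ along $C$. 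So what you have proved is the substantive content — the trace space has a natural basis indexed by the compact conjugacy classes, one trace supported on each — together with the observation that this basis coincides with $\{\nu_C\}$ after a normalization of the $T_g$'s that the lemma tacitly presupposes. It would be cleaner to impose that normalization (choose $T_{\gamma g_0\gamma^{-1}}$ so that $\natural^{g_0}\equiv 1$, which is possible exactly because $\natural^{g_0}|_{Z_\Gamma(g_0)}=1$) at the outset, rather than build $\tau_C$ with the unnormalized $\natural^{g_0}$ and reconcile it with $\nu_C$ only at the end. This is a phrasing issue with the survey's version of the lemma rather than a defect in your argument.
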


Notice that Lemma \ref{lem:2.1} generalizes the well-known equality between the 
number of conjugacy classes and the number irreducible representations (over $\C$)
of a finite group.

Suppose now that $\Gamma$ acts on a $\C$-algebra $A$, by automorphisms. Then we can 
form the crossed product algebra $A \rtimes \Gamma$, which is
$A \otimes_\C \C [\Gamma]$ which multiplication rules
\begin{equation}\label{eq:2.23}
(a_1 \otimes \gamma_1) (a_2 \otimes \gamma_2) = a_1 \gamma_1 (a_2) 
\otimes \gamma_1 \gamma_2 \qquad \text{for all}\; a_i \in A, \gamma_i \in \Gamma .
\end{equation}
More generally, for any 2-cocycle $\natural$ of $\Gamma$ there is a twisted
crossed product $A \rtimes \C [\Gamma,\natural]$. It it the same vector space
$A \otimes_\C \C [\Gamma]$, but now the multiplication is given by
\[
(a_1 \otimes T_{\gamma_1}) (a_2 \otimes T_{\gamma_2}) = a_1 \gamma_1 (a_2) 
\otimes \natural ( \gamma_1, \gamma_2) T_{\gamma_1 \gamma_2} 
\qquad \text{for all}\; a_i \in A, \gamma_i \in \Gamma .
\]
This is an associative algebra that contains $A$ and $\C [\Gamma,\natural]$
as subalgebras (for the latter we need to assume that $A$ is unital). 
There is a close relation between $\Irr (A)$ and $\Irr (A \rtimes \C [\Gamma,
\natural])$, known as Clifford theory. We refer to \cite[p. 24]{RaRa} for
the cases with $\natural = 1$, and to \cite[\S 1]{AMS1} for how to handle
nontrivial $\natural$.\\

In the remainder of this paragraph we focus on the cases where $A$ is commutative 
and unital. More concretely, consider the algebra $A = \mc O (X)$ of regular 
functions on some complex affine variety $X$. The upcoming arguments also work 
for smooth functions on a closed manifold and for continuous functions on a 
compact Hausdorff space, the main point is that $A$ can be regarded as an algebra
of $\C$-valued functions on $\Irr (A)$. 

So, we assume that $\Gamma$ acts on $X$ by homeomorphisms, and we form the
twisted crossed product
\begin{equation}\label{eq:2.B}
B = A \rtimes \C[\Gamma,\natural] = \mc O (X) \rtimes \C [\Gamma,\natural] .
\end{equation}
The crossed product $A \rtimes \Gamma$ is self-opposite, so its left and right
module categories are equivalent. The analogue for the twisted crossed product $B$ is
more subtle. The opposite algebra $B^{op}$ can be studied via the isomorphism
\begin{equation}\label{eq:2.17}
\begin{array}{cccc}
B^{op} = (A \rtimes \C [\Gamma,\natural])^{op} & \isom & 
A \rtimes \C [\Gamma,\natural^{-1}] \\
a T_\gamma & \mapsto & T_\gamma^{-1} a & a \in A, \gamma \in \Gamma .
\end{array}
\end{equation}
This enables us to identify right $B$-modules with left modules over 
$A \rtimes \C [\Gamma,\natural^{-1}]$.
We work out the classification of the irreducible left modules of the
algebra $B$, a simple case of Clifford theory. 

\begin{lem}\label{lem:2.2}
\enuma{
\item $B$ has finite rank as a module over its centre $Z(B)$. 
\item All irreducible $B$-representations have finite dimension.
}
\end{lem}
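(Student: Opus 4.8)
The plan is to find a large central subalgebra over which $B$ is visibly module-finite, and then to deduce finite-dimensionality of simple modules from a central-character argument. For part (a), I would first check that $A^\Gamma = \mc O(X)^\Gamma$ lies in $Z(B)$: an invariant function commutes with all of $A$ because $A$ is commutative, and it commutes with each $T_\gamma$ because $T_\gamma a T_\gamma^{-1} = \gamma(a) = a$ — note this uses nothing about $\natural$, which only affects products of the $T_\gamma$. Next I would invoke the classical fact (Noether) that $A = \mc O(X)$ is module-finite over $A^\Gamma$: every $a \in A$ is a root of the monic polynomial $\prod_{\gamma \in \Gamma}(t - \gamma(a)) \in A^\Gamma[t]$, so $A$ is integral over $A^\Gamma$, and being a finitely generated $\C$-algebra it is then finitely generated as an $A^\Gamma$-module. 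Since $B = \bigoplus_{\gamma \in \Gamma} A T_\gamma$ is free of rank $|\Gamma|$ as a left $A$-module, $B$ is finitely generated as an $A^\Gamma$-module, hence a fortiori as a $Z(B)$-module (an $A^\Gamma$-generating set of $B$ is also a $Z(B)$-generating set, as $A^\Gamma \subseteq Z(B)$). This proves (a).

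For part (b), let $(\pi,V)$ be an irreducible $B$-module and fix $0 \neq v \in V$, so $V = Bv$. Since $X$ is a complex affine variety, $A = \mc O(X)$ is a finitely generated $\C$-algebra, hence has countable dimension over $\C$; therefore so do $B$ and $V = Bv$. By the refinement of Schur's lemma valid for algebras of countable dimension over an uncountable algebraically closed field, $\mathrm{End}_B(V) = \C$. As $Z(B)$ maps into $\mathrm{End}_B(V)$, it acts on $V$ through a character $\chi : Z(B) \to \C$, so $V$ is a module over $B_\chi := B \otimes_{Z(B),\chi} \C$. By part (a) the algebra $B$ is generated by finitely many elements over $Z(B)$, so $B_\chi$ is finite-dimensional over $\C$; and a simple module over a finite-dimensional $\C$-algebra is cyclic, hence finite-dimensional. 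Therefore $\dim_\C V < \infty$.

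The only step that needs genuine care is this last passage from ``$B$ is finite over its centre'' to ``every simple $B$-module is finite-dimensional'': the naive Schur lemma does not suffice, since one does not yet know $V$ is finite-dimensional. One must use either the countable-dimension version of Schur's lemma as above (which is exactly where the finite generation of $\mc O(X)$ over $\C$ enters), or else the theory of polynomial identity rings: $B$ is a PI ring, being module-finite over its centre, so $B/\mathrm{Ann}_B(V)$ is a primitive PI ring, hence by Kaplansky's theorem a matrix algebra over a division ring finite over its centre, and that centre — a field that is module-finite over the finitely generated $\C$-algebra $Z(B)$ — is $\C$ by the Nullstellensatz, forcing $B/\mathrm{Ann}_B(V) \cong M_k(\C)$ and $V \cong \C^k$. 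Either route completes the argument; the hypothesis that $X$ is an affine variety (equivalently, that $\mc O(X)$ is finitely generated over $\C$) is precisely what makes both work.
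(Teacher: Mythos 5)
Your proof is correct and follows essentially the same route as the paper's: identify $\mc O(X)^\Gamma$ as a central subalgebra over which $B$ is module-finite via Noether's integrality argument, then apply the countable-dimension version of Schur's lemma to obtain a central character and deduce finite-dimensionality. The paper phrases (b) through the image of $B$ in $\End_\C(V)$ rather than the fibre $B \otimes_{Z(B),\chi} \C$, which is the same idea; your parenthetical PI-theory alternative (Kaplansky's theorem plus the Nullstellensatz) is a genuine variant but not what the paper uses.
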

\begin{proof}
(a) Firstly, $\mc O (X)^\Gamma = \mc O (X/\Gamma)$ is a central subalgebra of $B$.
Since $\mc O (X)$ is finitely generated and integral over $\mc O (X)^\Gamma$,
$\mc O (X)$ has finite rank as $\mc O (X)^\Gamma$-module. As $Z(B) \supset 
\mc O(X)^\Gamma$, $B$ also has finite rank over $Z(B)$.\\
(b) Let $V$ be an irreducible $B$-module. As $B$ has countable dimension, so
has $V$. Hence Schur's lemma applies, and says that $Z(B)$ acts by scalars on $V$.
With part (a) it follows that the image of $B$ in $\End_\C (V)$ has finite dimension.
Therefore $B \cdot v$ has finite dimension for any $v \in V$. But by the 
irreducibility $B \cdot v = V$ whenever $v \neq 0$. 
\end{proof}

Lemma \ref{lem:2.2} implies that the restriction of any irreducible $B$-module
$(\pi,V)$ to $\mc O (X)$ has an $\mc O (X)$-eigenvector, say $v_x$ with weight
$x \in X$. By Frobenius reciprocity
\begin{equation}\label{eq:2.3}
\Hom_B (\ind_{\mc O (X)}^B (\C_x), \pi) \cong \Hom_{\mc O (X)}(\C_x,\pi) \neq 0,
\end{equation}
so $\pi$ is a quotient of $\ind_{\mc O (X)}^B (\C_x)$.
For any $\gamma \in \Gamma$ and $f \in \mc O (X)$, $\pi (T_\gamma f) v_x \in V$ 
is an $\mc O (X)$-eigenvector with weight $\gamma x$. If $\pi$ is irreducible,
then it cannot have more weights than these $\gamma x$. It follows that the
irreducible quotients of $\ind_{\mc O (X)}^B (\C_x)$ are precisely the 
irreducible $B$-modules with $\mc O (X)^\Gamma$-character $\Gamma x$.

Let $\Gamma_x$ be the stabilizer of $x$ in $\Gamma$. For $\rho \in 
\Irr (\C [\Gamma_x,\natural])$, we form the irreducible $\mc O (X) \rtimes
\C [\Gamma_x,\natural]$-module $x \otimes \rho$, on which $\mc O (X)$ acts by
evaluation at $x$ and $\C [\Gamma_x,\natural]$ acts by $\rho$. We write
\[
\pi (x,\rho) = \ind_{\mc O (X) \rtimes \C [\Gamma_x,\natural]}^{
\mc O (X) \rtimes \C [\Gamma,\natural]} (x \otimes \rho) .
\]
We let $\Gamma$ act on 
\[
\big\{ (x,\rho) : x \in X, \rho \in \Irr (\C[\Gamma_x,\natural]) \big\}
\]
by $\gamma (x,\rho) = (\gamma x, \gamma \rho)$, where $\gamma \rho (h) = 
\rho (T_\gamma^{-1} h T_\gamma)$ for $h \in \C [\Gamma_{\gamma x},\natural]$.
The space
\begin{equation}\label{eq:2.16}
(X /\!/ \Gamma )_\natural = \big\{ (x,\rho) : x \in X, \rho \in 
\Irr (\C[\Gamma_x,\natural]) \big\} \, / \Gamma
\end{equation}
is called a twisted extended quotient.

\begin{thm}\label{thm:2.3}
Recall that $B =  \mc O (X) \rtimes \C [\Gamma,\natural]$.
\enuma{
\item The $B$-module $\pi (x,\rho)$ is irreducible.
\item There is canonical a bijection
\[
\begin{array}{ccc}
(X /\!/ \Gamma )_\natural & \to & \Irr (B) \\
(x,\rho) / \sim & \mapsto & \pi (x,\rho) 
\end{array}.
\]
}
\end{thm}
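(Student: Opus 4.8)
The plan is to establish both statements by analyzing $B$-modules through their restriction to $\mc O(X)$, exactly as sketched in the discussion preceding the theorem. By Lemma \ref{lem:2.2}, every irreducible $B$-module is finite dimensional and its restriction to $\mc O(X)$ decomposes into weight spaces for points of $X$; moreover the set of weights occurring is a single $\Gamma$-orbit $\Gamma x$. So the first task is to understand, for a fixed $x \in X$, all irreducible $B$-modules whose $\mc O(X)$-weights lie in $\Gamma x$. Writing $B_x = \mc O(X) \rtimes \C[\Gamma_x,\natural]$ for the "isotropy" subalgebra, one checks that $B$ is free as a right $B_x$-module with basis indexed by a transversal of $\Gamma_x$ in $\Gamma$, so $\ind_{B_x}^B$ is exact and $\dim \pi(x,\rho) = [\Gamma:\Gamma_x]\dim\rho$.

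For part (a), I would argue that $\pi(x,\rho)$ is irreducible by a Mackey/Clifford-theory computation. The $\mc O(X)$-weights of $\pi(x,\rho)$ are exactly the points in $\Gamma x$, each occurring with multiplicity $\dim\rho$, and the weight-$x$ subspace is canonically $x \otimes \rho$ as a $\C[\Gamma_x,\natural]$-module. Given a nonzero submodule $U \subseteq \pi(x,\rho)$, it must contain a nonzero $\mc O(X)$-eigenvector, which we can move by a group element to lie in the weight-$x$ space; hence $U \cap (x\otimes\rho) \neq 0$. Since $x\otimes\rho$ is irreducible over $\C[\Gamma_x,\natural]$ and $B_x$ acts on it through that quotient (at the point $x$), we get $x\otimes\rho \subseteq U$, and then $U = B \cdot (x\otimes\rho) = \pi(x,\rho)$. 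The only subtlety is checking that the weight-$x$ space of $\pi(x,\rho)$ is precisely $x\otimes\rho$ and not larger, which follows from the freeness of $B$ over $B_x$ and the fact that for $\gamma\notin\Gamma_x$ the element $T_\gamma\cdot(x\otimes\rho)$ sits in the weight-$\gamma x$ space with $\gamma x \neq x$.

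For part (b), surjectivity of the map $(x,\rho)/\!\sim\ \mapsto \pi(x,\rho)$ follows from \eqref{eq:2.3}: any irreducible $B$-module $\pi$ has some weight $x$, hence is a quotient of $\ind_{\mc O(X)}^B(\C_x)$; decomposing $\C_x$ as an $\mc O(X)$-module induced up in stages through $B_x$ and using that $\pi$ restricted to $\C[\Gamma_x,\natural]$ on its weight-$x$ space contains some irreducible $\rho$, one gets a nonzero map $\pi(x,\rho)\to\pi$, which is an isomorphism by part (a). For injectivity and well-definedness: $\pi(x,\rho)\cong\pi(\gamma x,\gamma\rho)$ because conjugation by $T_\gamma$ gives an isomorphism $B_x \cong B_{\gamma x}$ carrying $x\otimes\rho$ to $\gamma x\otimes\gamma\rho$, and induction is unchanged; conversely, if $\pi(x,\rho)\cong\pi(x',\rho')$ then the sets of $\mc O(X)$-weights agree, forcing $\Gamma x = \Gamma x'$, so after replacing $(x',\rho')$ by a $\Gamma$-translate we may assume $x=x'$, and then comparing the weight-$x$ spaces as $\C[\Gamma_x,\natural]$-modules — here one must identify the weight-$x$ space of $\pi(x,\rho)$ with $\rho$ itself, not merely up to the $\Gamma_x$-action — yields $\rho\cong\rho'$.

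The main obstacle is the bookkeeping in the last step: one needs the weight-$x$ space of $\pi(x,\rho)$ to recover $\rho$ as a $\C[\Gamma_x,\natural]$-module \emph{on the nose}, and one must be careful that the $2$-cocycle $\natural$ restricted to $\Gamma_x$ is the correct one and that conjugation $\gamma\rho(h)=\rho(T_\gamma^{-1}hT_\gamma)$ is genuinely a $\C[\Gamma_{\gamma x},\natural]$-module (this uses the cocycle identity \eqref{eq:2.1}, cf.\ \eqref{eq:2.C}). Once these identifications are pinned down, everything else is the standard Clifford-theory argument for a commutative algebra with a finite group action, and the reference \cite[\S 1]{AMS1} can be invoked for the cocycle-twisted details.
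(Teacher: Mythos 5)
Your proposal is correct and takes a genuinely different route from the paper. The paper factors everything through an explicit Morita equivalence: writing $I_{\Gamma x} = \ker(\mc O(X)^\Gamma \to \C_{\Gamma x})$, it observes (via Schur's lemma applied to $\mc O(X)^\Gamma \subseteq Z(B)$) that any irreducible with that central character factors through $B/I_{\Gamma x}B$, applies the Chinese remainder theorem to get $\mc O(X)/I_{\Gamma x}\mc O(X) \cong \bigoplus_{\gamma\in\Gamma/\Gamma_x}\C_{\gamma x}$, and then checks the four bimodule identities in \eqref{eq:2.4} for the idempotent $p$ supported at $\C_x$, yielding a Morita equivalence $B/I_{\Gamma x}B \sim \C[\Gamma_x,\natural]$. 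Irreducibility of $\pi(x,\rho)$ and the bijection in (b) then come out simultaneously by Morita invariance. You instead argue irreducibility directly by weight-space analysis (find a nonzero eigenvector in a submodule $U$, translate it to the weight-$x$ space, use irreducibility of $\rho$ to conclude $x\otimes\rho\subseteq U$, hence $U=B\cdot(x\otimes\rho)=\pi(x,\rho)$), and prove (b) by Frobenius reciprocity from $B_x$ to $B$ plus a comparison of weight sets. Both arguments work. The paper's route is more compact and automatically packages (a) and (b) together; yours is more elementary, avoids the CRT/idempotent machinery, and makes the weight-space picture explicit — at the cost of bookkeeping that you correctly flag but leave partly unverified (notably that the weight-$x$ space of $\pi(x,\rho)$ recovers $\rho$ on the nose as a $\C[\Gamma_x,\natural]$-module, which follows from $B$ being free as a right $B_x$-module with basis a transversal of $\Gamma_x$ in $\Gamma$, so the distinguished weight-$x$ subspace is exactly $T_e \otimes (x\otimes\rho)$ with $\Gamma_x$ acting by $\rho$). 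One small point worth making explicit in your surjectivity argument: since $\mc O(X)^\Gamma\subseteq Z(B)$ acts by a scalar on an irreducible $B$-module, $\mc O(X)$ acts through the semisimple quotient $\bigoplus_\gamma\C_{\gamma x}$, so generalized weight spaces of $\pi$ are genuine eigenspaces and the map $x\otimes\rho\to\Res_{B_x}\pi$ you invoke really is a $B_x$-module map.
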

\begin{proof}
(a) Consider the ideal $I_{\Gamma x} = \{ f \in \mc O (X)^\Gamma : f(\Gamma x) = 0\}$
of $\mc O (X)^\Gamma$. By the Chinese remainder theorem 
\[
\mc O (X) / I_{\Gamma x} \mc O (X) = \mc O(X) / \{ f \in \mc O (X) : f |_{\Gamma x} = 0 \}   
\cong \bigoplus\nolimits_{\gamma \in \Gamma / \Gamma_x} \C_{\gamma x} .
\]
The irreducible $B$-modules with $\mc O (X)^\Gamma$-character $\Gamma x$ descend
to mo\-dules of 
\[
B / I_{\Gamma x} B \cong \Big( \bigoplus\nolimits_{\gamma \in \Gamma / \Gamma_x} 
\C_{\gamma x} \Big) \rtimes \C [\Gamma,\natural] .
\]
Let $p \in \bigoplus\nolimits_{\gamma \in \Gamma / \Gamma_x} \C_{\gamma x}$ be the
idempotent which is 1 in $\C_x$ and 0 in all other summands, then as bimodules:
\begin{equation}\label{eq:2.4}
\begin{array}{lll}
p (B / I_{\Gamma x} B ) & = & \C_x \otimes_\C \C [\Gamma, \natural] ,\\
(B / I_{\Gamma x} B ) p & = & \C [\Gamma, \natural] \otimes_\C \C_x ,\\
p (B / I_{\Gamma x} B ) p & = & \C_x \otimes_\C \C [\Gamma_x, \natural] 
\cong \C [\Gamma_x, \natural], \\
(B / I_{\Gamma x} B ) p (B / I_{\Gamma x} B ) & = & B / I_{\Gamma x} B . 
\end{array}
\end{equation}
The first three of these equalities are straightforward, the last follows because
$(B / I_{\Gamma x} B ) p (B / I_{\Gamma x} B )$ contains $T_\gamma p T_\gamma^{-1} = \gamma (p)$
and $\sum_{\gamma \in \Gamma / \Gamma_x} \gamma (p) = 1$. From \eqref{eq:2.4} we 
see that the bimodules $p (B / I_{\Gamma x} B )$ and $(B / I_{\Gamma x} B ) p$ provide a Morita
equivalence between $B / I_{\Gamma x} B$ and $\C [\Gamma_x,\natural]$. This
equivalence sends $x \otimes \rho \in \Irr (\C_x \otimes_\C \C [\Gamma_x, \natural])$ to 
\[
(B / I_{\Gamma x} B) p \otimes_{\C_x \otimes_\C \C [\Gamma_x, \natural]} (x \otimes \rho) 
= \ind^{B / I_{\Gamma x}B}_{\bigoplus \nolimits_{\gamma \in \Gamma / \Gamma_x} 
\C_{\gamma x} \rtimes \C [\Gamma_x,\natural]} (x \otimes \rho) , 
\]
which is therefore an irreducible $B / I_{\Gamma x} B$-module. Via inflation to $B$,
we find that $\ind_{\mc O (X) \rtimes \C [\Gamma_x,\natural]}^{\mc O (X) \rtimes 
\C [\Gamma,\natural]} (x \otimes \rho) = \pi (x,\rho)$ is irreducible.\\
(b) By the remarks before the theorem, every irreducible $B$-module has a unique
$\mc O (X)^\Gamma$-character $\Gamma x$. The set of irreducible $B$-modules with that
$\mc O (X)^\Gamma$-character is naturally in bijection with $\Irr (B / I_{\Gamma x}B)$. 
The proof of part (a) yields a bijection
\[
\begin{array}{ccc}
\Irr ( \C_x \otimes \C [\Gamma_x,\natural]) & \to & \Irr ( B / I_{\Gamma x} B) \\
x \otimes \rho & \mapsto & \pi (x,\rho) 
\end{array}.
\]
Hence every element of $\Irr (B)$ has the form $\pi (x,\rho)$. The only freedom in the
above construction is the choice of $x$ in $\Gamma x$. Suppose that instead we pick
$\gamma x \in \Gamma x$. Then conjugation by $T_\gamma$ can be pulled through the
entire construction, and we end up with $\gamma \rho$ instead of $\rho$. Thus
$\pi (x,\rho) \cong \pi (\gamma x,\gamma \rho)$ and these are the only possible
equivalences between modules of this form. 
\end{proof}

Twisted crossed products for free group actions are considerably easier.

\begin{prop}\label{prop:2.10}
Suppose that the action of $\Gamma$ on $X$ is free.
\enuma{
\item The categories of finite length modules of $\mc O (X/\Gamma)$ and of $B = \mc O (X) \rtimes 
\C[\Gamma ,\natural]$ are naturally equivalent.
\item If $\natural$ is trivial in $H^2 (\Gamma,\C^\times)$, then
$\mc O (X) \rtimes \C[\Gamma ,\natural]$ is Morita equivalent to $\mc O (X/\Gamma)$.
}
\end{prop}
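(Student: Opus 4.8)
\textbf{Proof strategy for Proposition \ref{prop:2.10}.}

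\emph{Part (b).} The plan is to reduce to an untwisted crossed product and then invoke Galois descent. Since $\natural$ represents the trivial class in $H^2(\Gamma,\C^\times)$, rescaling the basis elements $T_\gamma$ as in \eqref{eq:2.14} produces an algebra isomorphism $\C[\Gamma,\natural]\cong\C[\Gamma]$, hence $B\cong\mc O(X)\rtimes\Gamma$. Because the $\Gamma$-action on $X$ is free, the quotient map $\pi\colon X\to X/\Gamma$ is a finite \'etale $\Gamma$-torsor; in particular $R:=\mc O(X/\Gamma)=\mc O(X)^\Gamma$, the $R$-module $\mc O(X)$ is finitely generated, faithful and projective (so a progenerator, and faithfully flat since $\pi$ is surjective), and the Galois condition $\mc O(X)\otimes_R\mc O(X)\cong\prod_{\gamma\in\Gamma}\mc O(X)$ holds. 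I would then consider the $R$-algebra homomorphism
\[
\mc O(X)\rtimes\Gamma\longrightarrow\End_R(\mc O(X)),\qquad a\gamma\longmapsto\bigl(b\mapsto a\,\gamma(b)\bigr),
\]
and verify it is an isomorphism by the faithfully flat base change $-\otimes_R\mc O(X)$: on the split torsor the left side becomes $\bigl(\prod_{\gamma}\mc O(X)\bigr)\rtimes\Gamma\cong M_{|\Gamma|}(\mc O(X))$ (a free-orbit crossed product, exactly as in the proof of Theorem \ref{thm:2.3}) and the right side becomes $\End_{\mc O(X)}\bigl(\prod_\gamma\mc O(X)\bigr)=M_{|\Gamma|}(\mc O(X))$, with the map visibly bijective. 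Finally $\End_R(\mc O(X))$ is Morita equivalent to $R$, with bimodules $\mc O(X)$ and $\Hom_R(\mc O(X),R)$; transporting along the isomorphism yields the Morita equivalence $B\sim\mc O(X/\Gamma)$. (The same argument applies with ``finite covering map'' in place of ``\'etale $\Gamma$-torsor'' when $\mc O$ denotes smooth functions on a closed manifold or continuous functions on a compact Hausdorff space.)

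\emph{Part (a).} When $\natural$ is trivial in $H^2(\Gamma,\C^\times)$ this follows from (b), so the content is the general twisted case, and I would argue locally on $X/\Gamma$. Freeness of the action forces $Z(B)=\mc O(X)^\Gamma=\mc O(X/\Gamma)=:R$: any $\sum_\gamma a_\gamma T_\gamma$ commuting with $\mc O(X)$ must have $a_\gamma$ supported on $\{x:\gamma x=x\}$, which is empty for $\gamma\neq e$, and commuting with the $T_\gamma$ then forces $\Gamma$-invariance. So $B$ is a finite $R$-algebra. Fix a closed point of $X/\Gamma$, i.e.\ a $\Gamma$-orbit $\Gamma x$ with maximal ideal $\mf m=I_{\Gamma x}\subset R$; by the free-orbit case of the computation in the proof of Theorem \ref{thm:2.3} one has $B/\mf mB=B/I_{\Gamma x}B\cong M_{|\Gamma|}(\C)$. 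Passing to the $\mf m$-adic completions $\hat R$ and $\hat B=\varprojlim_n B/\mf m^nB$ (legitimate as $B$ is Noetherian) and lifting a full set of matrix units from $\hat B/\mf m\hat B\cong M_{|\Gamma|}(\C)$ through the complete filtration, one gets $\hat B\cong M_{|\Gamma|}(\hat p\hat B\hat p)$ for a rank-one idempotent $\hat p$; since $\hat p\hat B\hat p$ is a finite $\hat R$-algebra with residue ring $\C$ and centre $\hat R$, Nakayama gives $\hat p\hat B\hat p=\hat R$, so $\hat B\cong M_{|\Gamma|}(\hat R)$.

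It remains to patch the local equivalences. By Lemma \ref{lem:2.2} a finite-length $B$-module $M$ is finite-dimensional over $\C$, hence decomposes canonically over the finitely many points of its support as $\bigoplus_{\mf m}M_{\mf m}$, with each $M_\mf m$ annihilated by a power of $\mf m$ and therefore a module over $B/\mf m^nB=\hat B/\mf m^n\hat B$, hence over $\hat B$. Applying the local Morita equivalences $\hat B\simeq M_{|\Gamma|}(\hat R)\simeq\hat R$ to each summand and reassembling defines a functor from finite-length $B$-modules to finite-length $R$-modules; running the construction backwards gives a quasi-inverse, and naturality holds because both the decomposition by support and the local Morita bimodules are natural. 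I expect the main obstacle to lie in the input to (b) — namely that a free finite-group action on an affine variety yields an \'etale $\Gamma$-Galois quotient, together with the descent isomorphism $\mc O(X)\rtimes\Gamma\cong\End_R(\mc O(X))$; everything downstream is formal Morita theory, and part (a) adds only a standard matrix-unit-lifting argument over the completed local rings.
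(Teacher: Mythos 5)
Your proof is correct, and in part (b) it takes a genuinely different route from the paper. The paper works with the averaging idempotent $e_\Gamma = |\Gamma|^{-1}\sum_\gamma \gamma \in \mc O(X)\rtimes\Gamma$ and verifies the Morita condition $Be_\Gamma B = B$ pointwise on $X/\Gamma$ by passing to formal completions at each orbit; the Morita bimodules are then $e_\Gamma B \cong \mc O(X)$ and $B e_\Gamma \cong \mc O(X)$. You instead recognize that freeness makes $\mc O(X)/\mc O(X/\Gamma)$ a $\Gamma$-Galois extension of commutative rings (equivalently, $X \to X/\Gamma$ is a finite \'etale $\Gamma$-torsor), so that $\mc O(X)$ is an $R$-progenerator and the natural map $\mc O(X)\rtimes\Gamma \to \End_R(\mc O(X))$ is an isomorphism, proved globally by faithfully flat base change along $R \to \mc O(X)$ which splits the torsor. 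Both routes produce literally the same Morita bimodule $\mc O(X)$, but yours imports the Chase--Harrison--Rosenberg Galois machinery, which the paper deliberately avoids in favour of a self-contained pointwise check; the trade-off is that your argument gives the cleaner global identification $B \cong \End_R(\mc O(X))$ rather than just a Morita equivalence. For part (a) the two proofs are essentially the same approach: decompose finite-length modules by support, complete $B$ at each orbit, and establish a local Morita equivalence with the completed base. The paper does the local step by exhibiting the bimodules $p\hat B_{\Gamma x}$, $\hat B_{\Gamma x}p$ directly (using freeness to show $\hat B_{\Gamma x}p\hat B_{\Gamma x}=\hat B_{\Gamma x}$ and to identify $p\hat B_{\Gamma x}p$ with $\widehat{\mc O(X/\Gamma)}_{\Gamma x}$), whereas you lift a full system of matrix units from $\hat B/\mf m\hat B \cong M_{|\Gamma|}(\C)$ and apply Nakayama to pin down the corner ring; these are minor technical variations on the same idea.
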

\begin{proof}
(a) The $\mc O (X/\Gamma)$-weights provide a decomposition of the category of finite length modules
\[
\Mod_\fl (\mc O (X) \rtimes \C[\Gamma ,\natural] ) = \bigoplus\nolimits_{\Gamma x \in X / \Gamma}
\Mod_\fl (\mc O (X) \rtimes \C[\Gamma ,\natural] )_{\Gamma x} ,
\]
and similarly for $\mc O (X/\Gamma)$. Therefore it suffices to fix a $\Gamma$-orbit 
$\Gamma x$ in $X$ and to consider finite length $\mc O (X) \rtimes \C[\Gamma ,\natural]$-modules 
$V$ such that all irreducible
subquotients of $V |_{\mc O (X/\Gamma)}$ are isomorphic to $\C_{\Gamma x}$. 

With the notations from the proof of Theorem \ref{thm:2.3}, let
\begin{align*}
& \widehat{\mc O (X/\Gamma )}_{\Gamma x} = \varprojlim_n \mc O (X/\Gamma) / I_{\Gamma x}^n \\
& \widehat{\mc O (X )}_{\Gamma x} = \varprojlim_n \mc O (X) / I_{\Gamma x}^n \mc O (X) \cong
\bigoplus\nolimits_{x' \in \Gamma x} \widehat{\mc O (X)}_{x'} \\
& \hat B_{\Gamma x} = B \otimes_{\mc O (X/\Gamma)} \widehat{\mc O (X/\Gamma)}_{\Gamma x} =
\varprojlim_n B / I_{\Gamma x}^n B
\end{align*}
be the formal completions of $\mc O (X/\Gamma), \mc O (X)$ and $B$ with respect to the ideal 
ge\-ne\-rated by $I_{\Gamma x}$. This completion operation does not change the category of 
finite length modules whose only $\mc O (X/\Gamma)$-weight is $\Gamma x$. As in \eqref{eq:2.4}, 
one checks that the bimodules
$p \hat B_{\Gamma x}$ and $\hat B_{\Gamma x} p$ provide a Morita equivalence between 
\[
p \hat B_{\Gamma x} p \cong \widehat{\mc O (X/\Gamma)}_{\Gamma x} \quad \text{and} \quad
\hat B_{\Gamma x} p \hat B_{\Gamma x} = \hat B_{\Gamma x} . 
\]
(The freedom of the action is used in the last equality.) That yields equivalences of categories
\[
\Mod_\fl (B)_{\Gamma x} \cong \Mod_\fl \big( \hat B_{\Gamma x} \big) \cong 
\Mod_\fl \big( \widehat{\mc O (X/\Gamma )}_{\Gamma x} \big) \cong 
\Mod_\fl (\mc O (X/\Gamma))_{\Gamma x} .
\]
(b) Suppose that $\natural \in H^2 (\Gamma, \C^\times)$ is trivial. As explained after 
\eqref{eq:2.14}, $B = \mc O (X) \rtimes \C [\Gamma, \natural]$
is isomorphic to $\mc O (X) \rtimes \Gamma$. This enables us to construct the idempotent 
\[
e_\Gamma := |\Gamma|^{-1} \sum\nolimits_{\gamma \in \Gamma} \gamma \in \C[\Gamma] \subset B .
\]
We claim that $B e B = B$. For this, it suffices to check that 
$\widehat{B e B}_{\Gamma x} = \hat B_{\Gamma x}$ for all $x \in X$. Since $\Gamma$ acts freely 
on $X$, $1_x \, |\Gamma| \, e_\Gamma 1_x \in \widehat{B e B}_{\Gamma x}$
equals $1_x \in \widehat{\mc O (X)}_x$. As in the proof of Theorem \ref{thm:2.3}, it follows that
$\widehat{B e B}_{\Gamma x} = \hat B_{\Gamma x}$, proving our claim.

Now the isomorphisms of bimodules
\[
e B \cong \mc O (X) ,\; B e \cong \mc O (X) ,\; e B e \cong \mc O (X/\Gamma) ,\; B e B = B 
\]
show that $eB$ and $Be$ provide a Morita equivalence between $B$ and $\mc O (X / \Gamma)$.
\end{proof}

For nontrivial $\natural \in H^2 (\Gamma,\C^\times)$ one cannot say directly that Proposition 
\ref{prop:2.10}.b fails, it depends on $X$. In the cases where $X$ is irreducible as an
algebraic variety, one can improve on Proposition \ref{prop:2.10}, as follows. 
 
Suppose that there is a Morita equivalence between $B$ and $\mc O (X/\Gamma)$. Let 
$\C (X)$ be the quotient field of $\mc O (X)$ and tensor the Morita bimodules with $\C (X/\Gamma)$
over $\mc O (X/\Gamma)$. That yields a Morita equivalence between
$\C (X)^\Gamma = \C (X/\Gamma)$ and
\begin{equation}\label{eq:2.15}
\C (X/\Gamma) \otimes_{\mc O (X/\Gamma)} B \cong  \C (X/\Gamma) \otimes_{\mc O (X/\Gamma)}  
\mc O (X) \rtimes \C [\Gamma,\natural] = \C (X) \rtimes \C[\Gamma,\natural] .
\end{equation}
By the next result and Proposition \ref{prop:2.10}.b, \eqref{eq:2.15} is only possible 
if $\natural \in H^2 (\Gamma,\C^\times)$ is trivial.

\begin{thm}\label{thm:2.11}
Let $X$ be an irreducible algebraic variety, endowed with a faithful action of a finite
group $\Gamma$. Then $\C (X) \rtimes \C [\Gamma, \natural]$ is a central simple 
$\C (X)^\Gamma$-algebra. It is Morita equivalent to $\C (X) \rtimes \Gamma$ if and only
if $\natural$ is trivial in $H^2 (\Gamma, \C^\times)$.
\end{thm}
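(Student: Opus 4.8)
The plan is to establish two things: that $\C(X) \rtimes \C[\Gamma,\natural]$ is central simple over $\C(X)^\Gamma$, and then the Morita-equivalence criterion. For the first part, I would argue that $Z\big(\C(X) \rtimes \C[\Gamma,\natural]\big) = \C(X)^\Gamma$: any central element, written as $\sum_{\gamma} f_\gamma T_\gamma$ with $f_\gamma \in \C(X)$, must commute with all $h \in \C(X)$, which forces $f_\gamma (\gamma h - h) = 0$ for all $h$; since the action of $\Gamma$ on $\C(X)$ is faithful, for $\gamma \neq e$ there is $h$ with $\gamma h \neq h$, hence $f_\gamma = 0$. So a central element lies in $\C(X)$, and commuting with the $T_\gamma$ then forces $\Gamma$-invariance. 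For simplicity, I would show any nonzero two-sided ideal $I$ is everything: pick $0 \neq a = \sum_{\gamma \in S} f_\gamma T_\gamma \in I$ with $|S|$ minimal; if $|S| \geq 2$, pick $\gamma_0 \neq \gamma_1$ in $S$ and $h \in \C(X)$ with $\gamma_0 h \neq \gamma_1 h$, then $h a - a (\gamma_1 h)$ — suitably normalized — lies in $I$, is nonzero (the $\gamma_0$-coefficient survives since $\C(X)$ is a field and $\gamma_0 h - \gamma_1 h \neq 0$), and has strictly smaller support, contradiction. Hence $|S| = 1$, so $I$ contains an invertible element $f_{\gamma} T_{\gamma}$, whence $I = \C(X) \rtimes \C[\Gamma,\natural]$. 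Being finite-dimensional over its centre $\C(X)^\Gamma$ (rank $|\Gamma|$) and simple, it is central simple.

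For the Morita-equivalence statement, one direction is already covered: if $\natural$ is trivial in $H^2(\Gamma,\C^\times)$, then as noted after \eqref{eq:2.14} the twisted crossed product is isomorphic to $\C(X) \rtimes \Gamma$, hence trivially Morita equivalent to it. For the converse, I would work in the Brauer group $\mathrm{Br}(\C(X)^\Gamma)$. Two central simple $\C(X)^\Gamma$-algebras are Morita equivalent over $\C(X)^\Gamma$ precisely when they represent the same Brauer class, so it suffices to show that the class of $\C(X) \rtimes \C[\Gamma,\natural]$ in $\mathrm{Br}(\C(X)^\Gamma)$ is trivial only when $\natural$ is a coboundary. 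The key computation is that $\C(X)/\C(X)^\Gamma$ is a Galois extension with group $\Gamma$ (this uses faithfulness of the action together with irreducibility of $X$, so that $\C(X)$ is a field on which $\Gamma$ acts by distinct automorphisms), and under the crossed-product description of relative Brauer groups, $\C(X) \rtimes \C[\Gamma,\natural]$ is exactly the crossed-product algebra attached to the class of $\natural$ under the isomorphism $H^2(\Gamma, \C(X)^\times) \cong \mathrm{Br}(\C(X)/\C(X)^\Gamma)$. So its Brauer class is the image of $[\natural] \in H^2(\Gamma,\C^\times)$ under the natural map $H^2(\Gamma,\C^\times) \to H^2(\Gamma,\C(X)^\times)$ induced by the inclusion of constants.

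It therefore remains to check that $H^2(\Gamma,\C^\times) \to H^2(\Gamma,\C(X)^\times)$ is injective. I would deduce this from the short exact sequence $1 \to \C^\times \to \C(X)^\times \to \C(X)^\times/\C^\times \to 1$ of $\Gamma$-modules: the relevant piece of the long exact sequence is $H^1(\Gamma, \C(X)^\times/\C^\times) \to H^2(\Gamma,\C^\times) \to H^2(\Gamma,\C(X)^\times)$, so injectivity follows once $H^1(\Gamma, \C(X)^\times/\C^\times)$ surjects onto the kernel — equivalently, it suffices that the connecting map $H^1(\Gamma,\C(X)^\times/\C^\times) \to H^2(\Gamma,\C^\times)$ is zero, or more simply that $H^1(\Gamma,\C(X)^\times/\C^\times)$ is trivial, or that the map $H^1(\Gamma,\C(X)^\times) \to H^1(\Gamma,\C(X)^\times/\C^\times)$ is surjective. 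Here Hilbert 90 gives $H^1(\Gamma,\C(X)^\times) = 0$ for the Galois extension $\C(X)/\C(X)^\Gamma$, so surjectivity would force $H^1(\Gamma,\C(X)^\times/\C^\times) = 0$; alternatively one argues directly that the units of $\C(X)$ modulo constants form a $\Gamma$-permutation-type module (a free abelian group on the $\Gamma$-orbits of irreducible hypersurfaces in $X$, which is cohomologically trivial enough in degree one after tensoring considerations). The main obstacle is precisely this cohomological bookkeeping: pinning down $H^1(\Gamma,\C(X)^\times/\C^\times)$ and making the comparison maps clean. Once injectivity of $H^2(\Gamma,\C^\times) \to \mathrm{Br}(\C(X)/\C(X)^\Gamma)$ is in hand, a nontrivial $\natural$ yields a nontrivial Brauer class, hence an algebra not Morita equivalent to the split one $\C(X) \rtimes \Gamma$, completing the proof.
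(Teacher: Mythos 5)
Your direct verification that $\C(X) \rtimes \C[\Gamma,\natural]$ is a central simple $\C(X)^\Gamma$-algebra --- computing the centre and running a minimal-support argument to show simplicity --- is correct and is a nice self-contained alternative to the paper, which simply cites Kersten for this fact. The crossed-product description of the relative Brauer group $\mathrm{Br}(\C(X)/\C(X)^\Gamma)$ in your second step is essentially what the paper's citation of Noether's theorem amounts to, so the two routes agree up to the point where one must show that the natural map from $H^2(\Gamma,\C^\times)$ to the relevant $H^2$ of units is injective.

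Here your argument has a genuine gap. The long exact sequence plus Hilbert 90 correctly reduces the injectivity of $H^2(\Gamma,\C^\times) \to H^2(\Gamma,\C(X)^\times)$ to the vanishing $H^1(\Gamma,\C(X)^\times/\C^\times)=0$, but the justification you sketch --- that $\C(X)^\times/\C^\times$ is a permutation-type module --- does not hold. For $X$ smooth projective, $\C(X)^\times/\C^\times$ is the group of principal Weil divisors, a \emph{submodule} of the permutation module $\mathrm{Div}(X)$ rather than a permutation module in its own right, and submodules of permutation modules can have nonvanishing $H^1$ (for instance the augmentation ideal $I_\Gamma \subset \Z[\Gamma]$ has $H^1(\Gamma,I_\Gamma)\cong \Z/|\Gamma|\Z$). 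So the crucial vanishing remains unproven. The paper sidesteps this cohomological machinery entirely: after invoking Noether's result in the form that Morita triviality holds iff $\natural$ is a coboundary of some $f:\Gamma\to \C(X/\Gamma)^\times$, it writes each $f(\gamma)=f_1(\gamma)/f_2(\gamma)$ with $f_i(\gamma)\in\mc O(X/\Gamma)\setminus\{0\}$, picks (using irreducibility of $X$ and finiteness of $\Gamma$) a point $\Gamma x$ where all the $f_i(\gamma)$ are nonzero, and evaluates; the values $f(\gamma)(\Gamma x)$ then give a map $\Gamma\to\C^\times$ whose coboundary is $\natural$. The trivial $\Gamma$-action on the paper's coefficient module $\C(X/\Gamma)^\times$ is exactly what makes pointwise evaluation land honestly in $Z^2(\Gamma,\C^\times)$; evaluating a $\Gamma$-\emph{twisted} coboundary valued in $\C(X)^\times$ at a point does not yield a constant coboundary because of the terms $f(\gamma_2)(\gamma_1^{-1}x)$. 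So either follow the paper and specialize, or you would need an actual proof of $H^1(\Gamma,\C(X)^\times/\C^\times)=0$.
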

\begin{proof}
By the general theory of central simple algebras, $\C (X) \rtimes \C [\Gamma,\natural]$
is one, see for instance \cite[\S 7.5]{Ker}. By a result of Noether \cite[\S 7.7]{Ker},
$\C (X) \rtimes \C [\Gamma,\natural]$ is Morita equivalent to $\C (X) \rtimes \Gamma$
if and only if $\natural$ is trivial in $H^2 (\Gamma, \C(X / \Gamma)^\times)$.

We claim that the natural map 
\[
H^2 (\Gamma, \C^\times) \to H^2 (\Gamma, \C(X/\Gamma)^\times) \quad \text{is injective.}
\]
Suppose that $c \in Z^2 (\Gamma, \C^\times)$ is a 2-cocycle, which equals a 2-coboundary
$b(f)$ for some $f : \Gamma \to \C (X/\Gamma)^\times$. Write $f = f_1 / f_2$ for some
$f_i \in \mc O (X/\Gamma) \setminus \{0\}$, and pick $x \in X$ such that $f_1 (\Gamma x) 
f_2 (\Gamma x) \neq 0$. Then $f$ and $b(f)$ can be evaluated at $\Gamma x$, which
gives $b(f(x)) = b(f)(x) = c(x) = c$. Hence $c$ is trivial in $H^2 (\Gamma,\C^\times)$.
\end{proof}

\subsection{Definitions of graded Hecke algebras} \

We will analyse the modules over the Hecke algebra of a reductive $p$-adic group
in terms of modules of graded Hecke algebras. Here we provide a short introduction
to those algebras, which were discovered by Lusztig \cite{Lus-Gr}. 

We need the following data:
\begin{itemize}
\item a finite dimensional real vector space $\mf t_\R$,
\item the linear dual space $\mf t_\R^\vee$,
\item a reduced integral root system $\Phi$ in $\mf t_\R^\vee$, with a basis $\Delta$,
\item the Weyl group $W = W (\Phi)$, which acts on $\mf t_\R$ and on $\mf t_\R^\vee$,
\item a $W$-invariant parameter function $k : \Phi \to \C$,
\item a formal variable $\mb r$,
\item the complexifcations $\mf t$ of $\mf t_\R$ and $\mf t^\vee$ of $\mf t_\R^\vee$.
\end{itemize}

\begin{defn}
The graded Hecke algebra $\mh H (\mf t, W, k, \mb r)$ is the vector space
$\mc O (\mf t) \otimes \C [\mb r] \otimes \C [W]$ with multiplication rules
\begin{itemize}
\item $\C [W]$ and $\mc O (\mf t) \otimes \C[\mb r] = \mc O (\mf t \oplus \C)$ 
are embedded as subalgebras,
\item $\C[\mb r]$ is central,
\item for $\alpha \in \Delta$ and $f \in \mc O (\mf t)$:
\begin{equation}\label{eq:2.5}
f \cdot s_\alpha - s_\alpha \cdot s_\alpha (f) = k(\alpha) \mb r (f - s_\alpha (f)) / \alpha .
\end{equation}
\end{itemize}
The grading on $\mh H (\mf t, W, k, \mb r)$ is twice the usual grading on the polynomial
algebra $\mc O (\mf t \oplus \C)$, while all nonzero elements of $\C[W]$ have degree 0.
\end{defn}

It is easy to check that $f - s_\alpha (f)$ is divisible by $\alpha$ in $\mc O (\mf t)$, so
that \eqref{eq:2.5} is really a relation in $\mh H (\mf t,W,k,\mb r)$. For 
$f = x \in \mf t^\vee$, \eqref{eq:2.5} simplifies to
\begin{equation}\label{eq:2.6}
x \cdot s_\alpha - s_\alpha \cdot s_\alpha (x) = k(\alpha) \mb r  \, x(\alpha^\vee) ,
\end{equation}
where $\alpha^\vee \in \mf t_\R$ denotes the coroot of $\alpha$. The elements 
$\alpha \in \mf t^\vee$ and $\mb r$ have degree two, so the relation \eqref{eq:2.5} 
is homogeneous. It follows that $\mh H (\mf t,W,k,\mb r)$ is a graded algebra:
\[
\mr{deg}(x y) = \mr{deg}(x) + \mr{deg}(y) \text{ when } x, y \in \mh H (\mf t,W,k,\mb r)
\text{ are homogeneous.}
\]
\begin{ex}
If $\Phi$ is empty, then $\mh H (\mf t,W,k,\mb r)$ reduces to $\mc O (\mf t) \otimes \C[\mb r]$.
For $k = 0$ we have
\[
\mh H (\mf t,W,0,\mb r) = (\mc O (\mf t) \rtimes W) \otimes \C [\mb r] .
\]
\end{ex}

In practice the central element $\mb r$ is often specialized to a complex number. In view of the identity $\mh H (\mf t ,W, k, z\mb r) = \mh H (\mf t, W, z k ,\mb r)$ 
for $z \in \C$, it suffices to consider the specialization of $\mb r$ to 1. We define
\begin{equation}\label{eq:2.D}
\mh H (\mf t, W,k) = \mh H (\mf t,W, k,\mb r) / (\mb r - 1) .
\end{equation}
The vector space $\mh H (\mf t,W,k) = \mc O (\mf t) \otimes \C [W]$ is graded in the same way
as $\mh H (\mb t,W,k,\mb r)$. However, the multiplication relation \eqref{eq:2.5} becomes
\[
f \cdot s_\alpha - s_\alpha \cdot s_\alpha (f) = k(\alpha) (f - s_\alpha (f)) / \alpha ,
\]
in $\mh H (\mf t,W,k)$, which is usually not homogeneous. Therefore $\mh H (\mf t,W,k)$ is not
graded as an algebra. Instead it is a filtered algebra, which means that for homogeneous 
elements $x, y \in \mh H (\mf t,W,k)$, the product $xy$ is a sum of terms of degrees at most
$\mr{deg}(x) + \mr{deg}(y)$.

Multiplication by $z \in \C^\times$ on $\mf t$ induces an algebra isomorphism
\begin{equation}\label{eq:2.19}
\begin{array}{ccc}
\mh H (\mf t, W, k) & \to & \mh H (\mf t, W, zk) \\
f w & \mapsto & (f \circ z) w 
\end{array}.
\end{equation}
For $z = 0$ this becomes a projection
\[
\mh H (\mf t, W, k) \to \C [W] \subset \mh H (\mf t, W, 0) .
\]
In addition to the data at the start of this paragraph, let $\Gamma$ be a finite group acting
linearly on $\mf t_\R^\vee$, such that it stabilizes $\Phi, \Delta$ and $k$. Then $\Gamma$
acts on $W$ by conjugation in $GL (\mf t_\R^\vee)$ and on $\mh H (\mf t,W,k)$ by 
\[
\gamma (fw) = (f \circ \gamma^{-1}) (\gamma w \gamma^{-1}) \qquad f \in \mc O (\mf t), w \in W,
\gamma \in \Gamma .
\]
\begin{defn}\label{def:2.B}
Let $\natural : \Gamma \times \Gamma \to \C^\times$ be a 2-cocycle. We call
\[
\mh H (\mf t,W\Gamma,k,\natural) = \mh H (\mf t,W,k) \rtimes \C [\Gamma,\natural]
\]
a twisted graded Hecke algebra. 
\end{defn}

Notice that we allow $\Gamma = \{e\}$, in which case $\mh H (t,W\Gamma,k,\natural)$ reduces to
$\mh H (\mf t,W,k)$. By \eqref{eq:2.2}, $\mh H (\mf t,W,k,\natural)$ is a direct summand of 
the algebra $\mh H (\mf t,W,k) \rtimes \Gamma^*$. Therefore most results that have been proven
for $\mh H (\mf t,W,k) \rtimes \Gamma^*$ apply automatically to $\mh H (\mf t,W,k,\natural)$.
We will tacitly use that several times.

We shall also want to consider the opposite algebras of twisted graded Hecke algebras.
These are of the same kind, because there is an algebra isomorphism
\begin{equation}\label{eq:2.E}
\begin{array}{cccc}
\mh H (\mf t, W\Gamma, k, \natural )^{op} & \isom & \mh H (\mf t, W\Gamma, k, \natural^{-1} ) \\
f w N_\gamma & \mapsto & N_\gamma^{-1} w^{-1} f & f \in \mc O (\mf t), w \in W, \gamma \in \Gamma .
\end{array}
\end{equation}
The centre of $\mh H (\mf t,W,k)$ is known from \cite[Proposition 3.11]{Lus-Gr}:
\[
Z(\mh H (\mf t,W,k)) = \mc O (\mf t)^W = \mc O (\mf t / W) .
\]  
We have $Z(\mh H (\mf t,W\Gamma,k,\natural)) \supset \mc O (\mf t)^{W\Gamma}$, with equality
if $\Gamma$ acts faithfully on $\mf t_\R^\vee$. By the same arguments as in Lemma \ref{lem:2.2}.a,
\begin{equation}\label{eq:2.7}
\mh H (\mf t,W\Gamma,k,\natural) \text{ has finite rank as module over its centre and over }
\mc O (\mf t)^{W\Gamma} .
\end{equation}
For another parameter function $k'$, the algebras $\mh H (\mf t,W\Gamma,k',\natural)$ and
$\mh H (\mf t,W\Gamma,k,\natural)$ are usually not isomorphic. Nevertheless they are always
very similar, in several ways. The clearest relation between these algebras can be seen when
we include the quotient field $\C (\mf t / W\Gamma) = \C (\mf t)^{W \Gamma}$ of 
$\mc O (\mf t / W\Gamma)$. There are field isomorphisms
\[
\begin{array}{ccc}
\mc O (\mf t) \otimes_{\mc O (\mf t / W \Gamma)} \C (\mf t / W\Gamma) & \cong & \C (\mf t) \\
f_1 \otimes f_2 / f_3 & \mapsto & f_1 f_2 / f_3 \\
g_1 \prod_{w \in W \Gamma \setminus \{e\}} w (g_2) \otimes \prod_{w \in W \Gamma} w (g_2)^{-1} &
\text{\reflectbox{$\mapsto$}} & g_1 / g_2 
\end{array}.
\]
In particular $\C (\mf t)$ is naturally a subalgebra of 
$\C (\mf t / W\Gamma) \otimes_{\mc O (\mf t / W \Gamma)} \mh H (\mf t,W\Gamma,k,\natural)$.
For $\alpha \in \Delta$ we define
\begin{equation}\label{eq:2.20}
\tau_{s_\alpha} = (1 + s_\alpha) \frac{\alpha}{\alpha + k (\alpha)} - 1 \in 
\C (\mf t / W\Gamma) \otimes_{\mc O (\mf t / W \Gamma)} \mh H (\mf t,W\Gamma,k,\natural) .
\end{equation}
A direct calculation shows that $\tau_{s_\alpha}^2 = 1$. For $\gamma \in \Gamma$ we put
$\tau_\gamma = T_\gamma \in \C [\Gamma,\natural]$.

\begin{thm}\label{thm:2.4} 
\textup{ \cite[\S 5]{Lus-Gr} and \cite[Proposition 1.5.1]{SolAHA}} 
\enuma{
\item The map $s_\alpha \mapsto \tau_{s_\alpha}$ extends to a group homomorphism
\[
\tau : W \to \big( \C (\mf t / W\Gamma) \otimes_{\mc O (\mf t / W \Gamma)} 
\mh H (\mf t,W\Gamma,k,\natural) \big)^\times .
\]
\item There is an isomorphism of $\C (\mf t / W\Gamma)$-algebras
\[
\begin{array}{ccc}
\C (\mf t) \rtimes \C [W\Gamma,\natural] & \to & 
\C (\mf t / W\Gamma) \otimes_{\mc O (\mf t / W \Gamma)} \mh H (\mf t,W\Gamma,k,\natural)  \\
f \otimes w & \mapsto & f \tau_w 
\end{array}.
\]
}
\end{thm}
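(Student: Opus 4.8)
The plan is to verify the two claims by direct computation, leveraging the localization that makes the ``bad'' terms in \eqref{eq:2.5} invertible. Throughout I would work inside the localized algebra $\widetilde{\mh H} := \C (\mf t / W\Gamma) \otimes_{\mc O (\mf t / W\Gamma)} \mh H (\mf t,W\Gamma,k,\natural)$, in which $\C(\mf t)$, $\C[W\Gamma,\natural]$ and hence all the $\tau_{s_\alpha}$ of \eqref{eq:2.20} and the $\tau_\gamma=T_\gamma$ live. First I would record the basic commutation relation that falls out of \eqref{eq:2.5}: for $f\in\C(\mf t)$ and $\alpha\in\Delta$ one has $f\tau_{s_\alpha}=\tau_{s_\alpha} s_\alpha(f)$, equivalently $\tau_{s_\alpha}$ intertwines the $\C(\mf t)$-action by $s_\alpha$. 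This is the computation that shows $f\mapsto f\tau_w$ is even a well-defined $\C(\mf t)$-module map, and it is an easy rearrangement of \eqref{eq:2.5} once one knows $\tau_{s_\alpha}$ is invertible; invertibility follows from the stated identity $\tau_{s_\alpha}^2=1$, which I would either cite or verify by plugging \eqref{eq:2.20} into itself using $s_\alpha^2=1$ and $s_\alpha(\alpha)=-\alpha$.

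For part (a), the content is that the assignment $s_\alpha\mapsto\tau_{s_\alpha}$ respects the braid relations of $W$, i.e.\ for $\alpha,\beta\in\Delta$ with $s_\alpha s_\beta$ of order $m_{\alpha\beta}$ one has the equality of length-$m_{\alpha\beta}$ alternating products $\tau_{s_\alpha}\tau_{s_\beta}\cdots = \tau_{s_\beta}\tau_{s_\alpha}\cdots$. Together with $\tau_{s_\alpha}^2=1$ this gives a homomorphism from $W$ (presented by generators $s_\alpha$ with Coxeter relations) into $\widetilde{\mh H}^\times$. The cleanest route is to reduce to the rank-two case: restrict attention to the rank-two root subsystem generated by $\alpha,\beta$ and its parabolic Weyl subgroup, and there the verification is a finite—if somewhat tedious—calculation of rational functions on $\mf t$, checked type by type ($A_1\times A_1$, $A_2$, $B_2$, $G_2$). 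I would not grind through this but cite \cite[\S5]{Lus-Gr} / \cite[Proposition 1.5.1]{SolAHA}, noting only that the key input is that $\tau_{s_\alpha}$ acts on $\C(\mf t)$ exactly as $s_\alpha$ does, so that the two sides of the braid relation intertwine the same automorphism of $\C(\mf t)$ and differ by an element of $\C(\mf t)$ which one then computes to be $1$.

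For part (b), once we have $\tau:W\to\widetilde{\mh H}^\times$ from (a) and the $\tau_\gamma=T_\gamma$ for $\gamma\in\Gamma$, I would first check that these together give a homomorphism $\tau:W\Gamma\to\widetilde{\mh H}^\times$ twisted by $\natural$ on the $\Gamma$-part: that is, $\tau_{\gamma_1}\tau_{\gamma_2}=\natural(\gamma_1,\gamma_2)\tau_{\gamma_1\gamma_2}$ (immediate from the definition of $\mh H(\mf t,W,k)\rtimes\C[\Gamma,\natural]$) and $\tau_\gamma\tau_{s_\alpha}\tau_\gamma^{-1}=\tau_{s_{\gamma\alpha}}$ (which follows because conjugation by $T_\gamma$ sends \eqref{eq:2.20} for $\alpha$ to the same expression for $\gamma\alpha$, using $\Gamma$-invariance of $k$ and $\Phi$). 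Then define $\Theta:\C(\mf t)\rtimes\C[W\Gamma,\natural]\to\widetilde{\mh H}$ by $f\otimes w\mapsto f\tau_w$; that it is an algebra homomorphism is exactly the intertwining relation $f\tau_w=\tau_w w^{-1}(f)$ extended multiplicatively, which in turn reduces to the $\alpha\in\Delta$ case already handled. Surjectivity is clear since the image contains $\C(\mf t)$, all $\tau_{s_\alpha}$, and all $T_\gamma$, which generate $\widetilde{\mh H}$ over $\C(\mf t/W\Gamma)$; for injectivity I would compare dimensions over $\C(\mf t/W\Gamma)$: both sides are free of rank $|W\Gamma|\cdot[\C(\mf t):\C(\mf t/W\Gamma)]/|W\Gamma| = \dim_{\C(\mf t)}$ copies, or more directly both are $\C(\mf t)$-modules free of rank $|W\Gamma|$ (using \eqref{eq:2.7} and the PBW-type basis of $\mh H$), and a surjection between free modules of equal finite rank over a field-extension is an isomorphism. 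The main obstacle is really the braid-relation check underlying part (a); everything in part (b) is formal bookkeeping once (a) and the $\Gamma$-equivariance of \eqref{eq:2.20} are in hand, so I would spend the bulk of the written proof either citing or carefully setting up that rank-two verification and keep the rest brief.
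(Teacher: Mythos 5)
Your proposal is essentially the standard argument, and as far as I can tell it matches what the cited references do: the theorem is quoted here without proof precisely because Lusztig's~\cite[\S 5]{Lus-Gr} (and its reformulation in~\cite[Proposition 1.5.1]{SolAHA}) carry out the computation you are citing. The intertwining relation $f\tau_{s_\alpha}=\tau_{s_\alpha}s_\alpha(f)$, the $\Gamma$-equivariance $T_\gamma\tau_{s_\alpha}T_\gamma^{-1}=\tau_{s_{\gamma\alpha}}$ coming from $\Gamma$-invariance of $k$ and $\Delta$, and the surjectivity-plus-rank argument for (b) are all correctly identified, and the reduction of (a) to rank-two braid relations is the right move.

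Two small remarks. First, when you argue that the two alternating products in the braid relation ``differ by an element of $\C(\mf t)$ which one then computes to be $1$,'' you are implicitly using that the centralizer of $\C(\mf t)$ in $\C(\mf t/W)\otimes_{\mc O(\mf t/W)}\mh H(\mf t,W,k)$ is exactly $\C(\mf t)$; this is true (compare coefficients in the PBW basis $\{fs_w\}$, working down from a maximal $w$ and using that $W$ acts faithfully on $\mf t$), but it is worth a line since it is the reason the argument works. Second, for the rank count in (b) the relevant input is the PBW decomposition $\mh H(\mf t,W\Gamma,k,\natural)=\mc O(\mf t)\otimes\C[W]\otimes\C[\Gamma,\natural]$ giving freeness of rank $|W|\cdot|\Gamma|$ over $\mc O(\mf t)$, which localizes to $\C(\mf t)$; the cited \eqref{eq:2.7} is about finite rank over the centre and is not quite the statement you need, though the same basis proves both. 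With those clarifications the proof is complete and correct.
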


We note that 
\[
\C (\mf t) \rtimes \C [W\Gamma,\natural] = \C (\mf t / W\Gamma) 
\underset{\mc O (\mf t / W \Gamma)}{\otimes} \mc O (\mf t) \rtimes \C[W\Gamma,\natural] = 
\C (\mf t / W\Gamma) 
\underset{\mc O (\mf t / W \Gamma)}{\otimes} \mh H (\mf t,W\Gamma,0,\natural) .
\]
Theorem \ref{thm:2.4} shows that, upon including $\C (\mf t / W\Gamma)$, the dependence of
$\mh H (\mf t,W\Gamma,k,\natural)$ on $k$ disappears.

\subsection{Representation theory} \

In this paragraph we survey representations of a (twisted) graded Hecke algebra 
$\mh H = \mh H (\mf t,W,k,\natural)$, in analogy with representations of reductive groups over
local fields. For more background on $\mh H$-representations we refer to \cite{KrRa,SolGHA}.

The starting point is always representations of the maximal commutative subalgebra 
$\mc O (\mf t)$. 

\begin{defn}
Let $(\pi,V)$ be a $\mh H$-representation and let $\lambda \in \mf t$. We write
\[
V_\lambda = \big\{ v \in V : \exists N \in \N \text{ such that } (\pi (x) - 
x (\lambda))^N v = 0 \text{ for all}\: x \in \mf t^\vee \big\} .
\]
We say that $\lambda$ is an $\mc O (\mf t)$-weight (or simply a weight) of $\pi$ if $V_\lambda$
is nonzero. The set of $\mc O (\mf t)$-weights of $\pi$ is denoted Wt$(\pi)$.
\end{defn}

With simple linear algebra one checks that:

\begin{lem}\label{lem:2.5}
\enuma{
\item Every nonzero weight space $V_\lambda$ contains an $\mc O (\mf t)$-eigenvector, that is,  
a $v \neq 0$ such that $\pi (x) v = x(\lambda)v$ for all $x \in \mf t^\vee$.
\item If $\dim V$ is finite, then $V = \bigoplus_{\lambda \in \mr{Wt}(\pi)} V_\lambda$.
}
\end{lem}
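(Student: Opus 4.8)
The plan is to regard both statements as facts about the commuting family of operators $\{\pi(x) : x \in \mf t^\vee\}$ on $V$, which generates the image $\pi(\mc O(\mf t))$ since $\mf t^\vee$ generates $\mc O(\mf t)$ as a $\C$-algebra. The two parts are then pure linear algebra, handled by an iterated-kernel construction and the structure theory of Artinian algebras respectively.

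For part (a), I would first record that $V_\lambda$ is stable under $\pi(\mc O(\mf t))$: if $(\pi(x) - x(\lambda))^N v = 0$ for all $x \in \mf t^\vee$, then for any $y \in \mf t^\vee$ the operator $\pi(y)$ commutes with every $\pi(x)$, so $(\pi(x) - x(\lambda))^N \pi(y) v = \pi(y)(\pi(x) - x(\lambda))^N v = 0$ as well. Now fix a basis $x_1, \dots, x_n$ of $\mf t^\vee$ and set $N_i = \pi(x_i)|_{V_\lambda} - x_i(\lambda)$; these are pairwise commuting, locally nilpotent operators on $V_\lambda$. I would construct a common eigenvector by successively intersecting kernels: put $W_0 = V_\lambda \neq 0$, and given $0 \neq W_{i-1} \subseteq \bigcap_{j<i} \ker N_j$, note that $N_i$ preserves $W_{i-1}$ and is locally nilpotent on it, so $W_i := \ker(N_i|_{W_{i-1}})$ is nonzero. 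After $n$ steps any $0 \neq v \in W_n$ satisfies $\pi(x_i) v = x_i(\lambda) v$ for all $i$, hence $\pi(x) v = x(\lambda) v$ for every $x \in \mf t^\vee$ by linearity. The only point requiring care is that $V_\lambda$ need not be finite dimensional, which is exactly why I use this iterated-kernel argument rather than a finite-dimensional spectral decomposition; this is also the mild obstacle, though it is entirely standard.

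For part (b), with $\dim V < \infty$ the action of $\mc O(\mf t)$ factors through the finite-dimensional commutative $\C$-algebra $A = \mc O(\mf t)/\ker(\pi)$, which is Artinian and hence a finite product $A \cong \prod_i A_i$ of local Artinian $\C$-algebras. The idempotents of this decomposition split $V = \bigoplus_i V_i$ as an $\mc O(\mf t)$-module. Since $\C$ is algebraically closed each $A_i$ has residue field $\C$, i.e. corresponds to a point $\lambda_i \in \mf t$ with distinct $\lambda_i$, and the maximal ideal of $A_i$ is nilpotent, so $\pi(x) - x(\lambda_i)$ is nilpotent on $V_i$ for every $x \in \mf t^\vee$; therefore $V_i \subseteq V_{\lambda_i}$. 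On the other hand the generalized weight spaces $V_\lambda$ for distinct $\lambda \in \mf t$ are linearly independent, by the usual argument for commuting families of operators. Combining, $V = \sum_i V_i \subseteq \sum_i V_{\lambda_i} \subseteq V$ forces $V_i = V_{\lambda_i}$ and $\mr{Wt}(\pi) = \{\lambda_i\}_i$, giving $V = \bigoplus_{\lambda \in \mr{Wt}(\pi)} V_\lambda$. Alternatively one can avoid the structure theory of Artinian algebras and simultaneously block-diagonalize the commuting operators $\pi(x_1), \dots, \pi(x_n)$ into generalized eigenspaces, using a multinomial expansion to check that a common generalized eigenvector for the $x_i$ is one for all of $\mf t^\vee$. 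No part of the argument presents a genuine difficulty.
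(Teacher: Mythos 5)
Your proof is correct. The paper does not actually give an argument for Lemma~\ref{lem:2.5} --- it is stated with only the remark that one checks it ``with simple linear algebra'' --- so there is no proof to compare against. Both of your routes are sound: the iterated-kernel construction in (a) correctly handles the fact that $V_\lambda$ need not be finite dimensional, and in (b) the Artinian decomposition of $\mc O(\mf t)/\ker(\pi)$ (or, equivalently, the simultaneous block-diagonalization into generalized eigenspaces for the commuting operators $\pi(x_1),\dots,\pi(x_n)$ that you sketch as an alternative, which is perhaps closer in spirit to the paper's ``simple linear algebra'') supplies exactly the routine verification the paper has in mind.
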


Let $\C_\lambda$ be the onedimensional $\mc O (\mf t)$-module with weight $\lambda$. With 
induction one constructs the $\mh H$-module
\[
I (\lambda) = \ind_{\mc O (\mf t)}^{\mh H} (\C_\lambda) .
\]
By \cite[Theorem 6.4]{BaMo2}, Wt$(I(\lambda)) = W \Gamma \lambda$.

\begin{lem}\label{lem:2.6}
Let $(\pi,V)$ be an irreducible $\mh H$-representation.
\enuma{
\item $\dim V$ is finite.
\item $\pi$ is a quotient of $I(\lambda)$, for some $\lambda \in \mf t$.
}
\end{lem}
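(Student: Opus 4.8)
The plan is to mimic the argument used in Lemma \ref{lem:2.2} for twisted crossed products, since $\mh H = \mh H (\mf t, W, k, \natural)$ also has finite rank as a module over its centre (this is \eqref{eq:2.7}, with $\Gamma$ possibly trivial). First I would note that $\mh H$ has countable dimension over $\C$: it is $\mc O (\mf t) \otimes \C [\mb r] \otimes \C [W] \otimes \C[\Gamma]$ as a vector space, and $\mc O (\mf t)$ and $\C[\mb r]$ are countably dimensional while $\C[W]$, $\C[\Gamma]$ are finite dimensional. (If one works with the $\mb r$-specialized algebra $\mh H (\mf t, W, k)$ one drops the $\C[\mb r]$ factor; either way the dimension is countable.) Hence for an irreducible module $(\pi, V)$ the space $V$ also has countable dimension, so Schur's lemma applies in the form used already in the proof of Lemma \ref{lem:2.2}.b: the centre $Z(\mh H)$ acts on $V$ by scalars, i.e. through a character $Z(\mh H) \to \C$.

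For part (a), once $Z(\mh H)$ acts by scalars, combine this with \eqref{eq:2.7}: $\mh H$ is a finitely generated module over $Z(\mh H)$, so after quotienting by the kernel of the central character the image of $\mh H$ in $\End_\C (V)$ is finite dimensional. Then for any nonzero $v \in V$ the subspace $\mh H \cdot v$ is finite dimensional, but by irreducibility $\mh H \cdot v = V$, so $\dim V < \infty$. This is verbatim the argument of Lemma \ref{lem:2.2}.b.

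For part (b), invoke part (a): since $\dim V$ is finite, by Lemma \ref{lem:2.5}.b we have $V = \bigoplus_{\lambda \in \mr{Wt}(\pi)} V_\lambda$ with at least one nonzero weight space, and by Lemma \ref{lem:2.5}.a that weight space contains an $\mc O (\mf t)$-eigenvector $v$ with $\pi(x) v = x(\lambda) v$ for all $x \in \mf t^\vee$; that is, $\C v$ is a copy of $\C_\lambda$ as $\mc O (\mf t)$-module. By Frobenius reciprocity (adjunction between restriction to $\mc O (\mf t)$ and $\ind_{\mc O (\mf t)}^{\mh H}$),
\[
\Hom_{\mh H} \big( I(\lambda), V \big) = \Hom_{\mh H} \big( \ind_{\mc O (\mf t)}^{\mh H} (\C_\lambda), V \big) \cong \Hom_{\mc O (\mf t)} (\C_\lambda, V) \neq 0 ,
\]
exactly as in \eqref{eq:2.3}. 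A nonzero $\mh H$-homomorphism $I(\lambda) \to V$ has image a nonzero submodule of the irreducible $V$, hence is surjective, so $\pi$ is a quotient of $I(\lambda)$.

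The only mildly delicate point — the "main obstacle", though it is light — is making sure Schur's lemma is legitimately available, i.e. that $V$ has at most countable dimension; this rests on the explicit vector-space description of $\mh H$ and is the place where one must be a little careful about whether $\mb r$ has been specialized. Everything else is a direct transcription of the arguments already given for twisted crossed products in the previous subsection together with the two linear-algebra facts in Lemma \ref{lem:2.5}.
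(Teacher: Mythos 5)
Your proposal is correct and takes essentially the same route as the paper: part (a) is transcribed from Lemma \ref{lem:2.2}.b (countable dimension $\Rightarrow$ Schur's lemma $\Rightarrow$ finite rank over the centre gives finite dimension), and part (b) is the Frobenius reciprocity argument from \eqref{eq:2.3}. You are slightly more explicit than the paper in noting that (b) relies on (a) before invoking Lemma \ref{lem:2.5}.b, which is a small but genuine clarification of the logical order.
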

\begin{proof}
(a) This is shown in the same way as Lemma \ref{lem:2.2}.\\
(b) Lemma \ref{lem:2.5} ensures that $\pi$ has an $\mc O (\mf t)$-weight, say $\lambda$,
with an eigenvector. Then Frobenius reciprocity shows that
\[
\Hom_{\mh H} (I(\lambda),\pi) \cong \Hom_{\mc O (\mf t)}(\C_\lambda, \pi) \neq 0 .
\]
Hence there exists a nonzero $\mh H$-homomorphism from $I(\lambda)$ to $\pi$, which by
the irreducibility of $\pi$ must be surjective.
\end{proof}

In simplest noncommutative case, one can classify the irreducible $\mh H$-representa\-tions by hand.

\begin{ex}\label{ex:2.A}
We take $\mf t_\R = \mf t_\R^\vee = \R$, $\Phi = \{ \pm 1\}$ and $W = S_2$. 
We write $k(\alpha) = k \in \C$ and we consider $\mh H = \mh H (\mf t,S_2,k)$. 
There are two onedimensional $\mh H$-representations:
\begin{itemize}
\item The trivial representation $\mr{triv}$ defined by $\mr{triv}|_{\mc O (\mf t)} = \C_k$
and $\mr{triv}|_{\C[S_2]} = \mr{triv}$.
\item The Steinberg representation $\mr{St}$ defined by $\mr{St}|_{\mc O (\mf t)} = \C_{-k}$
and $\mr{St}|_{\C[S_2]} = \mr{sign}$.
\end{itemize}
All other irreducible representations are of the form $I(\lambda)$ with $\lambda \in \C$. As
vector spaces $\mh H = \C[S_2] \otimes \mc O (\mf t)$, so $\Res^{\mh H}_{\C[S_2]} I(\lambda)$
is the regular representation $\C[S_2]$. The only proper $S_2$-invariant subspaces are
\begin{equation*}
\C (1 + s_\alpha) \quad \text{and} \quad \C (1 - s_\alpha) ,
\end{equation*}
so if $I(\lambda)$ is reducible, then at least one of these two is an $\mh H$-subrepresentation.
For $x \in \mf t^\vee$ (corresponding to $x(1) \in \C$) one computes in $I(\lambda)$, using 
\eqref{eq:2.6} with $\mb r = 1$ and $\alpha^\vee = 2$:
\begin{align*}
& x \cdot (1 + s_\alpha) = x + s_\alpha \, s_\alpha (x) + k(\alpha) x(\alpha^\vee) =
(\lambda + 2 k - \lambda s_\alpha) x(1) \\
& x \cdot (1 - s_\alpha) = x - s_\alpha \, s_\alpha (x) - k(\alpha) x(\alpha^\vee) =
(\lambda - 2 k + \lambda s_\alpha) x(1) .
\end{align*}
Hence $\C (1 + s_\alpha)$ is an $\mh H$-submodule of $I(\lambda)$ if and only if $\lambda = -k$, 
while $\C (1-s_\alpha)$ is an $\mh H$-submodule of $I(\lambda)$ if and only if $\lambda = k$.
One can check that $I(k)$ and $I(-k)$ both have length two, with irreducible subquotients
triv and St. All the representations $I(\lambda)$ with $\lambda \in \C \setminus \{k,-k\}$ are
irreducible. By Lemma \ref{lem:2.6}.b, this exhausts $\Irr (\mh H)$. 

Frobenius reciprocity tells us that
\[
\Hom_{\mh H} (I(\lambda) ,I(\lambda')) \cong \Hom_{\mc O (\mf t)} (\C_\lambda, I(\lambda')) .
\]
This is nonzero if and only if $\lambda' = \pm \lambda$, because Wt$(I(\lambda')) = S_2 \lambda' =
\{ \lambda', -\lambda'\}$. Therefore the irreducible representations $I(\lambda)$ and $I(\lambda')$
are isomorphic if and only if $\lambda' = \pm \lambda$.  
\end{ex}

Temperedness of $\mh H$-representations is defined via their weights. Consider the positive Weyl
chamber
\[
(\mf t_\R^\vee)^+ = \{ x \in \mf t_\R^\vee : x(\alpha^\vee) \geq 0 \; 
\text{for all}\; \alpha \in \Delta \} 
\]
and the obtuse negative cone
\[
\mf t_\R^- = \{ \lambda \in \mf t_\R : x (\lambda) \leq 0 \; 
\text{for all}\; x \in (\mf t_\R^\vee)^+ \} .
\]
\begin{ex}
Suppose that $\mf t_\R = \mf t_\R^\vee = \R^2$ with the standard inner product. The positive Weyl
chamber and the obtuse negative cone for $R = A_1$ (from $GL_2$) and $R = B_2$ look like 
\hspace{1cm} \includegraphics[width=8cm]{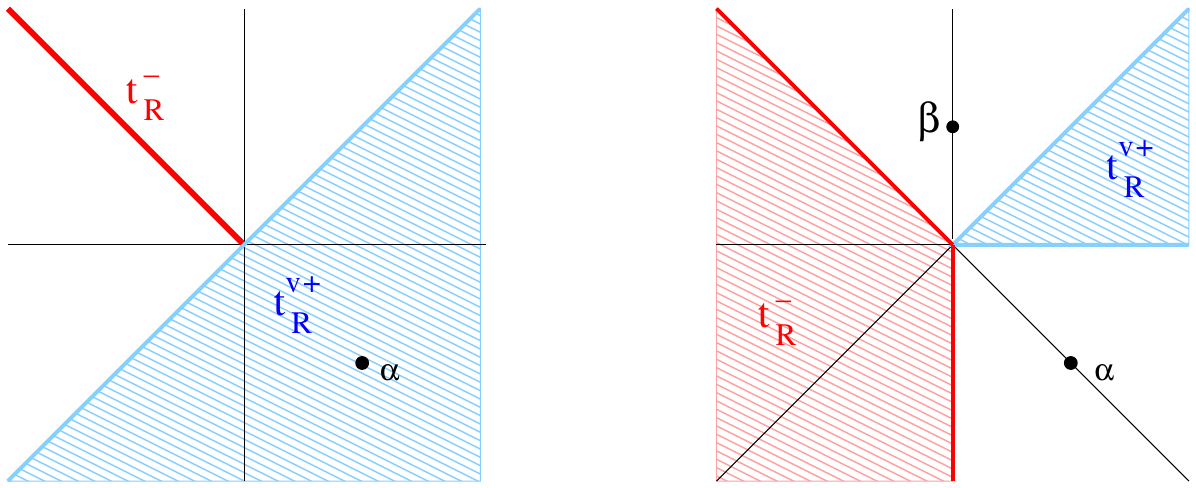}
\end{ex}

\begin{defn}\label{def:2.F}
Let $(\pi,V)$ be a finite dimensional $\mh H$-representation. We call $\pi$ tempered if 
Wt$(\pi) \subset \mf t_\R^- + i \mf t_\R$. More restrictively, we say that $\pi$ is discrete 
series if it is irreducible and Wt$(\pi) \subset \mr{int}(\mf t_\R^-) + i \mf t_\R$. Here
int$(\mf t_\R^-)$ denotes the interior of $\mf t_\R^-$ as a subset of the $\R$-span of 
the root system in $\mf t_\R$ dual to $\Phi$.

In terms of the canonical real part map $\Re : \mf t \to \mf t_\R$, we can reformulate these 
conditions as, respectively, $\Re \mr{Wt} (\pi) \subset \mf t_\R^-$ and
$\Re \mr{Wt} (\pi) \subset \mr{int} (\mf t_\R^-)$.
\end{defn}

It is known from \cite[\S 9]{SolEnd}\footnote{Our discrete series representations are called 
tempered essentially discrete series in \cite{SolEnd}.} that the notions of temperedness and 
discrete series for representations of twisted graded Hecke algebras correspond to the 
synonymous notions for representations of reductive $p$-adic groups.

\begin{ex}
The representation $I(\lambda)$ is tempered if and only if $\lambda \in i \mf t_\R$, because
Wt$(I(\lambda)) = W\lambda$. It is never discrete series, because 
$W \Re (\lambda) \subset \mf t_\R$ cannot be contained in the interior of $\mf t_\R^-$. 

Let $\mh H (\C,S_2,k)$ be as in Example \ref{ex:2.A}. The Steinberg representation is tempered if
and only if $\Re (k(\alpha)) \geq 0$. It is discrete series if and only if $\Re (k(\alpha)) > 0$.
\end{ex}

\begin{defn}
$\Irr_\temp (\mh H)$ is the set of irreducible tempered $\mh H$-representations. For a subset
$E \subset \mf t$, we let $\Irr (\mh H)_E$ be the set of irreducible $\mh H$-representations $\pi$
such that Wt$(\pi) \subset E$. The same condition determines the subset $\Irr_\temp (\mh H)_E$.
We define $\Mod (\mh H)_E$ as the category of finite length $\mh H$-representations all whose
weights lie in $E$. We say that an $\mh H$-representation $(\pi,V)$ has real weights if it 
lies in $\Mod (\mh H)_{\mf t_\R}$.
\end{defn}

\begin{ex}
Assume that $k = 0$. Then $\mh H (\mf t,W,0) = \mc O (\mf t) \rtimes W$, which we discussed in
Paragraph \ref{par:tcp}. From Theorem \ref{thm:2.3} we see that the weights of any irreducible
$\mc O (\mf t) \rtimes W$-representation always form one full $W$-orbit $W \lambda \subset \mf t$.
As $W \Re (\lambda)$ cannot be contained in int$(\mf t_\R^-)$, $\mh H (\mf t,W,0)$ does not have
discrete series representations. 

The conditions Wt$(\pi) \subset i \mf t_\R$ and Wt$(\pi) \subset \mf t_\R$ together imply
Wt$(\pi) = \{0\}$, so $\Irr_\temp (\mc O (\mf t) \rtimes W)_{\mf t_\R} = \Irr (\mc O (\mf t) 
\rtimes W)_{\{0\}}$. This set is naturally identified with $\Irr (W)$.
\end{ex}

In general the representations in $\Irr_\temp (\mh H)_{\mf t_\R}$ can have more weights than
just 0, but not many, the conditions tempered and real weights are very restrictive. For an
algebra or group $A$, let $R(A)$ be the Grothendieck group of the category of finite length 
$A$-representations. When $A$ is a group or a group algebra, $R(A)$ is called the 
representation ring of $A$ (with the tensor product as multiplication).

\begin{thm}\label{thm:2.7} 
\textup{\cite[Theorem 6.5.c]{SolHomGHA} and \cite[Theorem 6.2.a]{SolSurv}} \\
Suppose that $k$ is real-valued. The set $\{ \pi |_{\C[W\Gamma,\natural]} : \pi \in \Irr_\temp
(\mh H)_{\mf t_\R} \}$ is $\Z$-basis of $R(\C[W\Gamma,\natural])$.
\end{thm}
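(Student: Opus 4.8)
The plan is to exploit the reduction to the untwisted, parameter-$1$ case via the Schur multiplier and the scaling isomorphism \eqref{eq:2.19}, and then to build the basis out of known structural facts about tempered modules and their formal degrees. First I would observe that it suffices to treat $\Gamma = \{e\}$ and $\natural = 1$: indeed $\mh H(\mf t, W\Gamma, k, \natural)$ is a direct summand of $\mh H(\mf t, W, k) \rtimes \Gamma^*$ by \eqref{eq:2.2}, so $R(\C[W\Gamma,\natural])$ is a direct summand of $R(\C[W\Gamma^*])$, and Clifford theory matches the tempered real-weight modules on both sides; thus I reduce to proving that $\{\pi|_{\C[W\Gamma^*]} : \pi \in \Irr_\temp(\mh H(\mf t, W, k) \rtimes \Gamma^*)_{\mf t_\R}\}$ is a $\Z$-basis, i.e.\ I may as well allow a possibly larger finite group $\Gamma$ but take $\natural = 1$. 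Next, using the isomorphism \eqref{eq:2.19} with $z \to 0$, which deforms $k$ to $0$ while sending $\mh H(\mf t, W\Gamma, k)$ to $\mc O(\mf t) \rtimes W\Gamma$, I would argue that the number of irreducible tempered modules with real weights is a deformation invariant equal to $|\Irr(\C[W\Gamma])|$. The cleanest way: the Euler–Poincaré pairing on $R(\mh H)$ is nondegenerate and independent of $k$ (this is in the cited homological work on graded Hecke algebras), and the span of the tempered real-weight modules is exactly the radical-complement picked out by that pairing; at $k = 0$ these are precisely the modules supported at the weight $0$, namely $\Irr(W\Gamma)$.

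Concretely, the key steps in order are: (1) reduce to $\natural$ trivial via \eqref{eq:2.2} and Clifford theory, noting $\pi \mapsto \pi|_{\C[W\Gamma,\natural]}$ is compatible with the summand decomposition; (2) recall Langlands-type classification for $\mh H$ (from \cite{SolGHA}, or the cited surveys), which parametrizes $\Irr(\mh H)$ by triples $(L, \sigma, \text{Langlands datum})$ with $\sigma$ tempered on a Levi subalgebra, and note that the real-weight tempered modules are exactly those with Langlands datum trivial and $\sigma$ having real central character in the interior of the relevant cone — so they are controlled by discrete series of Levi subalgebras with a fixed real infinitesimal character; (3) use the fact (already invoked in the excerpt, \cite[\S 9]{SolEnd} and the cited surveys) that the restriction-to-$W\Gamma$ map is injective on $\Irr_\temp(\mh H)_{\mf t_\R}$ and that the resulting classes are linearly independent — this is the analogue, on the Hecke-algebra side, of the Iwahori–Matsumoto or Kazhdan–Lusztig "lowest two-sided cell" phenomenon; (4) count: show the number of such $\pi$ equals $|\Irr(W\Gamma)| = \dim_\C R(\C[W\Gamma]) \otimes \C$, for instance by the elliptic/Euler–Poincaré pairing argument above, or by directly specializing $k \to 0$ and invoking upper-semicontinuity of the number of tempered real-weight modules together with the reverse inequality from step (3). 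Combining injectivity, linear independence, and the dimension count gives that the classes form a $\Z$-basis (not just a $\Q$-basis), using that the transition matrix to the standard basis of $R(\C[W\Gamma,\natural])$ is unitriangular with respect to a suitable partial order on $\Irr(W\Gamma)$ coming from the $\sigma$-filtration.

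I would package the $\Z$-structure claim carefully: it is not enough to have a spanning set of the right cardinality over $\C$; one needs that the change-of-basis matrix between $\{\pi|_{\C[W\Gamma,\natural]}\}$ and the irreducible $\C[W\Gamma,\natural]$-characters is in $GL(\Z)$. The standard route is to exhibit a partial order $\leq$ on $\Irr(\C[W\Gamma,\natural])$ and a bijection $\pi \leftrightarrow \rho_\pi$ such that $\pi|_{\C[W\Gamma,\natural]} = \rho_\pi + \sum_{\rho < \rho_\pi} m_{\pi,\rho}\,\rho$ with $m_{\pi,\rho} \in \Z_{\geq 0}$; then unitriangularity forces the matrix to be invertible over $\Z$. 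Such an order is provided by the "$W\Gamma$-type of lowest degree occurring in $\pi$" (the analogue of Lusztig's $a$-function / the notion used in the geometric theory of graded Hecke algebras), or equivalently via the generalized Springer correspondence when $\mh H$ arises geometrically. I expect this $\Z$-unitriangularity step to be the main obstacle: injectivity and the $\C$-basis count are reasonably standard consequences of the cited results, but getting the integrality requires either a precise lowest-$W$-type statement or a direct analysis of the associated graded of each tempered module with respect to the grading filtration on $\mh H$, and assembling this uniformly across all parameter functions $k$ (real-valued) and all twists $\natural$ needs the reduction in step (1) to be done with some care so that the order is preserved under passing to the Schur cover.
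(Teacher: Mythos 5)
The paper does not prove Theorem~\ref{thm:2.7}; it is quoted verbatim with a citation to the references indicated, so there is no proof here to compare against, only the proof strategy in the literature. Your proposal assembles essentially the right toolkit --- Clifford theory to pass to the Schur cover and kill $\natural$, Langlands/Knapp--Stein classification of tempered real-weight modules via discrete series of parabolic subalgebras, the Euler--Poincar\'e (elliptic) pairing, and degeneration to $k=0$ --- and you are right that the passage from a $\Q$-basis to a $\Z$-basis is the crux. However, two of your steps contain genuine gaps.

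The scaling step (2) does not give what you want. The isomorphism \eqref{eq:2.19} preserves $\C[W\Gamma,\natural]$-restrictions for every $z>0$, but it degenerates at $z=0$ and does not extend to a bijection of irreducibles preserving the restriction. Example~\ref{ex:2.A} is a counterexample to the naive continuity: for $k>0$ the restrictions of $\Irr_\temp(\mh H)_{\mf t_\R}$ to $\C[S_2]$ are $\{\mathrm{sign},\,\C[S_2]\}$, while at $k=0$ they are $\{\mathrm{triv},\,\mathrm{sign}\}$; both sets are $\Z$-bases but they are not the same set. So ``specialize $k\to 0$ and invoke upper-semicontinuity'' can only deliver a cardinality count, not the basis property at $k\ne 0$. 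You do correctly flag that the count alone is not enough, but the fix you propose is not quite available either: the ``lowest $W\Gamma$-type of lowest degree'' partial order and the attendant unitriangularity exist in the equal-parameter geometric setting via the Springer correspondence and the $a$-function, but for arbitrary real-valued $k$ and arbitrary 2-cocycle $\natural$ it is not obvious that such an order exists with the required unitriangularity, and assembling it uniformly across the Schur-cover reduction is exactly the kind of work your sketch defers. What the cited proofs actually exploit is block-triangularity with respect to parabolic rank: every $\pi\in\Irr_\temp(\mh H)_{\mf t_\R}$ arises as a summand of $\ind_{\mh H^P}^{\mh H}(\delta)$ for an essentially unique $(P,\delta)$ with $\delta$ a discrete series of $\mh H_P$ with real weights, and the restriction to $\C[W\Gamma,\natural]$ is upper-triangular with respect to $|P|$. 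The $\Z$-invertibility of the diagonal blocks is then supplied by the elliptic pairing for discrete series together with the compatibility of the analytic R-group decomposition with Clifford theory. Replacing your lowest-$W$-type order by the parabolic-rank filtration, and using the elliptic pairing to control the diagonal blocks instead of to count, would close the gaps.
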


\begin{ex}
Let $\mh H = \mh H (\C,S_2,k)$ be as in Example \ref{ex:2.A}, with $k > 0$. Then
$\Irr_\temp (\mh H) = \{ \mr{St}, I(0) \}$. We recall that $\mr{St}|_{\C[S_2]} = \mr{sign}$ and
$I(0) |_{\C[S_2]} = \C [S_2]$. These form a basis of the representation ring of $S_2$, for
instance $\mr{triv} = \C[S_2] - \mr{sign}$ in $R(S_2)$.
\end{ex}

\subsection{Parametrization of irreducible representations} \label{par:2.4} \

The best method to produce new $\mh H$-representations is parabolic induction. For the moment
we do this without $\Gamma$, so for $\mh H (\mf t,W,k)$.

\begin{defn}
Let $P \subset \Delta$ and let $\Phi_P \subset \Phi$ be the associated parabolic root subsystem.
Then $\mh H^P = \mh H (t,W(\Phi_P),k)$ is called a parabolic subalgebra of $\mh H (\mf t,W,k)$.
Parabolic induction is the functor $\ind_{\mh H^P}^{\mh H (\mf t,W,k)}$.
\end{defn}

\begin{ex}
If $P$ is empty, then $\mh H^P = \mc O(\mf t)$. 
If $P = \Delta$, then $\mh H^P = \mh H (\mf t,W,k)$.
\end{ex}

It is known \cite[Corollary 6.5]{BaMo1} that parabolic induction
preserves temperedness.
The vector space $\mf t$ decomposes as $P^\perp \oplus \C P^\vee$, where 
$P^\vee = \{ \alpha^\vee : \alpha \in P \}$. Accordingly $\mh H^P$ decomposes as
a tensor product of algebras
\begin{equation}\label{eq:2.8}
\mh H^P = \mh H (\C P^\vee,W(\Phi_P),k) \otimes \mc O (P^\perp) = 
\mh H_P \otimes \mc O (P^\perp) .
\end{equation}
When we think of $\mh H^P$ as corresponding to a Levi subgroup $M$ of a reductive group $G$,
$\mh H_P$ corresponds to the derived group of $M$.

For any $\mh H_P$-representation $(\pi_P,V_P)$ and any $\lambda^P \in P^\perp$, we can form 
the $\mh H^P$-representation $(\pi_P \otimes \lambda^P, V_P)$. By varying $\lambda^P$, this
construction yields continuous or algebraic families of $\mh H^P$-representations.

We sketch how the classification of $\Irr (\mh H)$ is obtained. A crucial role is played by 
Theorem \ref{thm:2.7}, and therefore we assume throughout this paragraph that
\begin{equation}\label{eq:2.9}
\text{the parameter function } k \text{ is real-valued.}
\end{equation}
The condition \ref{eq:2.9} implies \cite[Lemma 2.13]{Slo}:
\begin{equation}\label{eq:2.18}
\text{every discrete series representation } \pi \text{ of } \mh H^P \text{ has Wt}
(\pi) \subset \R P^\vee + i (\mf t_\R \cap P^\perp) .  
\end{equation}
By completion with respect to central characters, as in \cite[\S 7--8]{Lus-Gr} and 
\cite[\S 2.1]{SolAHA}, one shows that for any $\lambda, \mu \in \mf t_\R$ 
there is an equivalence of categories
\begin{equation}\label{eq:2.10}
\Mod (\mh H (\mf t, W, k)))_{W (i \lambda + \mu)} \cong 
\Mod (\mh H (\mf t,W_{i\lambda},k))_{W_{i \lambda} \cdot \mu} .
\end{equation}
Notice that $W_{i \lambda} = W_\lambda$ is again a Weyl group, from the root system
$\{ \alpha \in \Phi : \alpha (\lambda) = 0\}$. With \eqref{eq:2.10} one can reduce the issues 
in this paragraph from $\Mod (\mh H)$ to $\mh H$-representations with real weights.
By \cite[Proposition 2.7]{AMS3}, \eqref{eq:2.10} restricts to a bijection
\[
\Irr_\temp (\mh H (\mf t, W, k))_{W (i \lambda + \mu)} \longleftrightarrow 
\Irr_\temp (\mh H (\mf t,W_{i\lambda},k))_{W_{i \lambda} \cdot \mu} .
\]
Taking the union over all $\mu \in \mf t_\R$ yields a bijection
\[
\Irr_\temp (\mh H (\mf t,W,k))_{W i \lambda + \mf t_\R} \longleftrightarrow
\Irr_\temp (\mh H (\mf t,W_{i \lambda},k))_{\mf t_\R} .
\]
By Theorem \ref{thm:2.7}, restriction to $\C[W_{i\lambda}]$ sends the last set to a basis
of $R(\C [W_{i \lambda}])$. Theorem \ref{thm:2.3} provides a bijection
\[
R(\C [W_{i \lambda}]) \longrightarrow R(\mc O (\mf t) \rtimes W)_{W i \lambda} =
R(\mh H (\mf t,W,0))_{W i \lambda} ,
\]
and we note that this consists entirely of tempered representations.
The composition of all these maps is 
\begin{equation}\label{eq:2.11}
\begin{array}{ccc}
\Irr_\temp (\mh H (\mf t,W,k))_{W_{i\lambda} + \mf t_\R} & \to &
R (\mc O (\mf t) \rtimes W )_{W_{i \lambda}} \\
\ind_{\mh H (\mf t,W_{i \lambda}, k)}^{\mh H (\mf t,W,k)} (\pi) & \mapsto &
\ind_{\mc O (\mf t) \rtimes W_{i \lambda}}^{\mc O (\mf t) \rtimes W} (\pi |_{\C [W_{i \lambda}]}
\otimes i \lambda) 
\end{array}.
\end{equation}
Now we let $\lambda$ run over $\mf t_\R$, and we conclude that $\Irr_\temp (\mh H)$ maps
canonically to a $\Z$-basis of $R(\mc O (\mf t) \rtimes W)_{i \mf t_\R} =
R (\mc O (\mf t) \rtimes W)_\temp$.

Next we involve the Langlands classification for graded Hecke algebras, from \cite{Eve}. We write
\[
P^{\perp +} = \big\{ \lambda^P \in P^\perp : \alpha (\Re (\lambda^P)) > 0 \quad 
\text{for all}\; \alpha \in \Delta \setminus P \big\}.
\]
For each $\lambda^P \in P^{\perp +}$ and each $\pi_P \in \Irr_\temp (\mh H_P)$, the 
$\mh H$-representation $\ind_{\mh H^P}^{\mh H} (\pi_P \otimes \lambda_P)$
has a unique irreducible quotient. That sets up a bijection \cite{Eve}
\begin{equation}\label{eq:2.12}
\bigcup\nolimits_{P \subset \Delta} \Irr_\temp (\mh H_P) \times P^{\perp +} 
\longrightarrow \Irr (\mh H (\mf t,W,k)) = \Irr (\mh H).
\end{equation}
This also applies to $\mc O (\mf t) \rtimes W = \mh H (\mf t,W,0)$. The Langlands classification
and \eqref{eq:2.11} lead to a version of \cite[Theorem 2.3.1]{SolAHA} for graded instead of
affine Hecke algebras:

\begin{thm}\label{thm:2.8}
Suppose that $k$ is real-valued. There exists a natural bijection
\[
\zeta_0 : R(\mh H (\mf t,W,k)) \to R(\mh H (\mf t,W,0)) = R(\mc O (\mf t) \rtimes W)
\]
satisfying the following properties.
\begin{enumerate}[(i)]
\item For $\rho \in \Irr_\temp (\mh H (\mf t,W,k))_{\mf t_\R}$ we have
$\zeta_0 (\rho) = \C_0 \otimes \rho|_{\C [W]}$.
\item $\zeta_0$ commutes with parabolic induction and with character twists:
\[
\zeta_0 \big( \ind_{\mh H^P}^{\mh H (\mf t,W,k)} (\pi_P \otimes \lambda^P) \big) =
\ind_{\mc O (\mf t) \rtimes W(\Phi_P)}^{\mc O (\mf t) \rtimes W} (\zeta_0 (\pi_P) \otimes
\lambda^P) \text{ for } \pi_P \in \Irr (\mh H_P), \lambda^P \in P^\perp .
\]
\item $\zeta_0$ preserves the underlying (virtual) $\C[W]$-representations.
\item For any $\lambda \in \mf t_\R$, $\zeta_0$ sends virtual representations with 
$\mc O (\mf t)$-weights in $i \lambda + \mf t_\R$ to virtual representations with 
$\mc O (\mf t)$-weights in $i \lambda + \mf t_\R$.
\item $\zeta_0$ sends tempered representations to tempered representations, and
restricts to a bijection between the tempered parts on both sides.
\end{enumerate}
\end{thm}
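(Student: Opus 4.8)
\emph{Sketch of proof.}
The plan is to construct $\zeta_0$ in two stages: first on the subgroup $R_\temp \subseteq R(\mh H (\mf t,W,k))$ spanned by the classes of tempered representations, and then on all of $R(\mh H (\mf t,W,k))$ by means of the Langlands classification \eqref{eq:2.12}. This parallels the affine Hecke algebra statement \cite[Theorem 2.3.1]{SolAHA}, with the vector space $\mf t$ playing the role of the complex torus of unramified characters there. For the tempered part, decompose $R_\temp$ according to the imaginary part of the (generalized) central character: since an irreducible $\mh H$-representation has all its $\mc O (\mf t)$-weights in a single $W$-orbit (as it is a quotient of some $I(\lambda)$ with $\mr{Wt}(I(\lambda)) = W\lambda$), each tempered irreducible $\pi$ has $\mr{Wt}(\pi) \subseteq W i\lambda + \mf t_\R$ for a well-defined $W i\lambda \in i\mf t_\R / W$, and this partitions $\Irr_\temp (\mh H (\mf t,W,k))$. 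On the piece indexed by $W i\lambda$ I would use the chain of bijections already assembled for \eqref{eq:2.11}: the equivalence \eqref{eq:2.10}, restricted to tempered objects by \cite[Proposition 2.7]{AMS3}, identifies $\Irr_\temp (\mh H (\mf t,W,k))_{W i\lambda + \mf t_\R}$ with $\Irr_\temp (\mh H (\mf t,W_{i\lambda},k))_{\mf t_\R}$; Theorem \ref{thm:2.7} (with trivial $\Gamma$) sends the latter, by restriction to $\C[W_{i\lambda}]$, bijectively onto a $\Z$-basis of $R(\C[W_{i\lambda}])$; and Theorem \ref{thm:2.3}, in the form of the extended-quotient description of $\Irr (\mc O (\mf t)\rtimes W)$, carries that basis bijectively to the basis of $R(\mc O (\mf t)\rtimes W)_{W i\lambda}$ consisting of the (tempered) modules $\ind_{\mc O (\mf t)\rtimes W_{i\lambda}}^{\mc O (\mf t)\rtimes W}(\rho \otimes i\lambda)$. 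Taking the union over $\lambda \in \mf t_\R$ produces a $\Z$-linear bijection $\zeta_0 \colon R_\temp \to R_\temp (\mc O (\mf t)\rtimes W)$, and the case $\lambda = 0$ gives property (i).

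For the general case I would first run this stage-one construction for every parabolic subalgebra $\mh H_P = \mh H (\C P^\vee, W(\Phi_P), k)$, obtaining $\zeta_0^P \colon R_\temp (\mh H_P) \to R_\temp (\mc O (\C P^\vee)\rtimes W(\Phi_P))$, and then twist and induce. Using the tensor decompositions $\mh H^P = \mh H_P \otimes \mc O (P^\perp)$ from \eqref{eq:2.8} and $\mc O (\mf t)\rtimes W(\Phi_P) = (\mc O (\C P^\vee)\rtimes W(\Phi_P)) \otimes \mc O (P^\perp)$, for a Langlands datum $(P,\pi_P,\lambda^P)$ with $\pi_P \in \Irr_\temp (\mh H_P)$ and $\lambda^P \in P^{\perp +}$ I would set
\[
\zeta_0\bigl( \ind_{\mh H^P}^{\mh H (\mf t,W,k)} (\pi_P \otimes \lambda^P) \bigr)
= \ind_{\mc O (\mf t)\rtimes W(\Phi_P)}^{\mc O (\mf t)\rtimes W} \bigl( \zeta_0^P (\pi_P) \otimes \lambda^P \bigr)
\]
and extend $\Z$-linearly. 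By \eqref{eq:2.12}, applied to $\mh H (\mf t,W,k)$ and to $\mh H (\mf t,W,0) = \mc O (\mf t)\rtimes W$ alike, these standard modules form $\Z$-bases on both sides, so $\zeta_0$ is a well-defined $\Z$-linear bijection; it restricts to the stage-one map on $R_\temp$ because the Langlands datum of a tempered representation is $(\Delta,\rho,0)$, with standard module $\rho$ itself. Property (ii) then holds for standard modules by definition, and for arbitrary $\lambda^P \in P^\perp$ after expanding $\ind(\pi_P \otimes \lambda^P)$ in the Langlands basis exactly as in \cite[Theorem 2.3.1]{SolAHA}. Property (iii) follows since $\ind_{\mh H^P}^{\mh H}$ and $\ind_{\mc O (\mf t)\rtimes W(\Phi_P)}^{\mc O (\mf t)\rtimes W}$ both restrict to $\ind_{\C[W(\Phi_P)]}^{\C[W]}$ on Weyl-group modules, twisting by $\lambda^P$ is invisible to $\C[W(\Phi_P)]$, and $\zeta_0^P$ preserves the underlying $\C[W(\Phi_P)]$-module (a stage-one property). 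Property (iv) is likewise built into the construction, and property (v) follows because the standard module of any non-tempered Langlands datum has central character with real part outside $\mf t_\R^-$, hence is non-tempered, and symmetrically on the right, so $\zeta_0$ exchanges the tempered parts and does so via the stage-one bijection.

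The main obstacle, as in the affine case, sits in two places. First, making the stage-one construction genuinely canonical requires tracking the equivalence \eqref{eq:2.10} carefully through completion at central characters, verifying its compatibility with temperedness and with restriction to the Weyl subgroups $W_{i\lambda}$ (this is precisely what makes property (iii) work), and confirming that a tempered irreducible has a well-defined imaginary central character so that the pieces for different $\lambda$ assemble without overlap. Second, property (ii) is asserted for all $\lambda^P \in P^\perp$, not only for $\lambda^P$ in the Langlands cone $P^{\perp +}$; establishing this amounts to understanding how $\ind_{\mh H^P}^{\mh H}(\pi_P \otimes \lambda^P)$ decomposes in the Langlands basis once $\lambda^P$ leaves the cone, which I would import from, or re-derive along the lines of, the corresponding argument for affine Hecke algebras in \cite[Theorem 2.3.1]{SolAHA}. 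Everything else---exactness and additivity of parabolic induction on Grothendieck groups, induction in stages, and the bookkeeping of central characters---is routine.
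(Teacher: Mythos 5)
Your argument reproduces the strategy the paper implicitly follows: the text preceding Theorem~\ref{thm:2.8} constructs exactly the chain \eqref{eq:2.10}~$\to$~Theorem~\ref{thm:2.7}~$\to$~Theorem~\ref{thm:2.3}~$\to$~\eqref{eq:2.11} to handle the tempered part, then invokes the Langlands classification \eqref{eq:2.12} and points to \cite[Theorem 2.3.1]{SolAHA} for the full bookkeeping, which is precisely your two-stage construction (temper first, then twist-and-induce over Langlands data). Your identification of the two delicate points — canonicity/compatibility of the central-character completions in \eqref{eq:2.10}, and extending property (ii) beyond the Langlands cone $P^{\perp+}$ by expanding in the standard-module basis — matches what the cited affine-case proof has to do. So this is essentially the same approach as the paper, fleshed out in more detail.
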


In Theorem \ref{thm:2.8}.(iv) $\zeta_0$ may adjust the weights by elements of $\mf t_\R$,
and these changes are always weights of a discrete series representation. 

Further, with \cite[\S 6]{SolSurv} $\zeta_0$ can be refined to a bijection
\begin{equation}\label{eq:2.13}
\zeta_\Irr : \Irr (\mh H (\mf t,W,k)) \to \Irr (\mc O (\mf t) \rtimes W) .
\end{equation}
Both Theorem \ref{thm:2.8} and \eqref{eq:2.13} confirm what we already saw in Theorem 
\ref{thm:2.4}: that $\mh H (\mf t,W,k)$ and $\mh H (\mf t,W,0)$ are very similar.

\begin{ex} 
We consider $\mh H = \mh H (\C,S_2,k)$ with $k = k(\alpha) > 0$. With the classification of 
$\Irr (\mh H)$ from Example \ref{ex:2.A} at hand, the maps $\zeta_0$ and $\zeta_\Irr$ can be
tabulated.
\[
\begin{array}{cccc}
\Irr (\mh H ) & R(\mc O (\mf t) \rtimes W) & \Irr (\mc O (\mf t) \rtimes W) \\
\hline
I(\lambda) & \pi (\lambda) := \ind_{\mc O (\mf t)}^{\mc O (\mf t) \rtimes W}(\lambda) & 
\pi (\lambda) & \lambda \in \C \setminus \{k,-k\} \\
I(0) & \pi (0) & \C_0 \otimes \mr{triv} \\
\mr{St} & \C_0 \otimes \mr{sign} & \C_0 \otimes \mr{sign} \\
\mr{triv} & \pi (k) - \C_0 \otimes \mr{sign} & \pi (k)
\end{array}
\]
Notice that for St and triv, $\zeta_0$ changes the $\mc O (\mf t)$-weights, while $\zeta_\Irr$ for 
$I(0) = \ind_{\mc O (\mf t)}^{\mh H}(\C_0)$ is not compatible with parabolic induction. Further 
$\zeta_0 (\mr{triv})$ is not an actual representation, and it is certainly not irreducible.
\end{ex}

Now we generalize to the twisted graded Hecke algebras 
\begin{equation}\label{eq:2.21}
\mh H (\mf t,W\Gamma ,k,\natural) = \mh H (\mf t,W,k) \rtimes \C[\Gamma,\natural]. 
\end{equation}
The equivalence of categories \eqref{eq:2.10} and the
arguments leading up to \eqref{eq:2.11} also work for these algebras, so there is a canonical map
\[
\Irr_\temp (\mh H (\mf t,W\Gamma,k,\natural)) \to R(\mc O (\mf t) \rtimes \C[W\Gamma,\natural])_\temp
\]
whose image is a $\Z$-basis of its range. 

A parabolic subalgebra of $\mh H (\mf t,W\Gamma ,k,\natural)$ has the form $\mh H (\mf t,W(\Phi_P)
\Gamma_P, k,\natural)$, where $\Gamma_P \subset \Gamma$ is a subgroup stabilizing $P$. (In general
there are several choices for $\Gamma_P$, and in principle they are all feasible. Sometimes specific
circumstances determine $\Gamma_P$.) The tensor product decomposition \eqref{eq:2.8} generalizes 
only in the weaker form
\[
\mh H (\mf t ,W(\Phi_P) \Gamma_P ,k,\natural) = \big( \mh H (\mf t,W(\Phi_P),k) \otimes 
\mc O (P^\perp) \big) \rtimes \C[\Gamma_P,\natural] .
\]
This creates complications for the Langlands correspondence, the version for\\ 
$\mh H (\mf t,W\Gamma ,k,\natural)$ is more cumbersome, see \cite[Corollary 6.8]{SolSurv}.

\begin{thm}\label{thm:2.9}
Theorem \ref{thm:2.8} and \eqref{eq:2.13} hold also for $\mh H (\mf t,W\Gamma,k,\natural)$. 

To retain property (iii), we can either start with representations of the parabolic subalgebra 
$\mh H^P = \mh H (\mf t,W(\Phi_P),k)$ (so with $\Gamma_P = \{e\}$), or we have to put an extra 
condition on $\lambda^P$.
\end{thm}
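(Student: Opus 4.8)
The plan is to deduce the twisted statement from the untwisted case (Theorem 2.8 and the bijection \eqref{eq:2.13}) by exploiting the decomposition $\mh H (\mf t,W\Gamma,k,\natural) \cong e_\natural \big( \mh H (\mf t,W,k) \rtimes \Gamma^* \big)$ coming from \eqref{eq:2.2}, where $\Gamma^*$ is the Schur cover of $\Gamma$ and $e_\natural$ the central idempotent. First I would observe that $\Gamma^*$ is a finite group acting on $\mh H (\mf t,W,k)$, so $\mh H (\mf t,W,k) \rtimes \Gamma^* = \mh H (\mf t, W\Gamma^*, k, 1)$ is itself a twisted graded Hecke algebra with trivial cocycle and enlarged group. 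Thus it suffices to prove the theorem for $\mh H (\mf t, W\Gamma, k, 1) = \mh H (\mf t,W,k) \rtimes \Gamma$, i.e.\ with trivial $\natural$ but arbitrary finite $\Gamma$, and then cut down by the central idempotent $e_\natural \in \C[Z^*] \subset \C[\Gamma^*]$: since $e_\natural$ is central and fixed by parabolic induction in the obvious way, both $\zeta_0$ and $\zeta_{\Irr}$ for $\mh H(\mf t,W,k)\rtimes\Gamma^*$ restrict to the $e_\natural$-isotypic summands, giving the maps for $\mh H(\mf t,W\Gamma,k,\natural)$ and $\mh H(\mf t,W\Gamma,0,\natural)$.

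So the core case is the crossed product $\mh H (\mf t,W,k) \rtimes \Gamma$. Here I would build $\zeta_0$ by Clifford theory over the untwisted $\zeta_0 : R(\mh H(\mf t,W,k)) \to R(\mc O(\mf t)\rtimes W)$. The key point is that $\zeta_0$ is $\Gamma$-equivariant: $\Gamma$ acts on both sides (by transporting representations through the automorphisms $\gamma$ of $\mh H(\mf t,W,k)$ and of $\mc O(\mf t)\rtimes W$), and because $\zeta_0$ is characterized by properties (i)--(v) of Theorem 2.8 — all of which are visibly $\Gamma$-stable (temperedness, real weights, parabolic induction, underlying $\C[W]$-module, weight strata) — the composite $\gamma \circ \zeta_0 \circ \gamma^{-1}$ satisfies the same characterizing properties, hence equals $\zeta_0$ by uniqueness. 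Given $\Gamma$-equivariance, one induces $\zeta_0$ up along $\mh H(\mf t,W,k) \hookrightarrow \mh H(\mf t,W,k)\rtimes\Gamma$ and $\mc O(\mf t)\rtimes W \hookrightarrow \mc O(\mf t)\rtimes W\rtimes\Gamma = \mc O(\mf t)\rtimes(W\Gamma)$: a virtual $\mh H(\mf t,W,k)\rtimes\Gamma$-module restricts to a $\Gamma$-fixed point of $R(\mh H(\mf t,W,k))\otimes(\text{extension data})$, and standard Clifford theory (a $\Gamma_x$-orbit of $\mh H(\mf t,W,k)$-reps plus an irreducible projective $\Gamma_x$-representation) packages this. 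Since the cocycles arising in Clifford theory are built from the $\natural^g$ of \eqref{eq:2.C}, and $\zeta_0$ preserves stabilizers of weight-data because it preserves weights up to discrete-series shifts, these Clifford-theoretic extension data match on both sides, and one obtains the bijection $\zeta_0$ for the crossed products. The refinement to $\zeta_{\Irr}$ on irreducibles follows the same Clifford-theoretic bookkeeping as in \cite[\S 6]{SolSurv}, now carried out $\Gamma$-equivariantly.

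For properties (i), (ii), (iv), (v) in the twisted setting there is essentially nothing new: each is inherited from the untwisted $\zeta_0$ because induction from $W$ to $W\Gamma$ commutes with parabolic induction (for parabolics of the form $W(\Phi_P)\Gamma_P$), with temperedness, and with the weight filtration. Property (iii), that $\zeta_0$ preserves the underlying $\C[W\Gamma,\natural]$-module, is the delicate one, and this is exactly the point flagged in the statement. The obstruction is that the decomposition $\mh H^P = \mh H_P \otimes \mc O(P^\perp)$ fails in the twisted case: $\Gamma_P$ may mix the $\mh H_P$-factor with the $\mc O(P^\perp)$-factor, so "twist by $\lambda^P$" is not an operation intrinsic to the Levi subalgebra $\mh H(\mf t,W(\Phi_P),k)\rtimes\C[\Gamma_P,\natural]$ unless $\Gamma_P$ fixes $P^\perp$ pointwise. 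Hence I expect the main work — and the reason the theorem is stated with the escape clause — to be in pinning down exactly when the $\C[W\Gamma,\natural]$-module structure is preserved: either one restricts to $\Gamma_P = \{e\}$ (inducing from the honest Levi $\mh H^P$ of the untwisted algebra, where everything is as before), or one imposes that $\lambda^P$ lie in the $\Gamma_P$-fixed part of $P^\perp$ (equivalently, that the character twist commute with the $\Gamma_P$-action), which is precisely the "extra condition on $\lambda^P$" in the statement. Verifying that under either hypothesis the Langlands quotient construction of \cite{Eve}, composed with \eqref{eq:2.11} run $\Gamma$-equivariantly, genuinely leaves the $\C[W\Gamma,\natural]$-module unchanged is the technical heart; the rest is formal transport of Theorem 2.8 along the idempotent $e_\natural$.
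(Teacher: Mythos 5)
The paper's own proof of Theorem \ref{thm:2.9} consists of a citation to \cite[Proposition 6.10.a]{SolSurv} for the real-weight case plus the reduction via \eqref{eq:2.10} to arbitrary weights; that reference runs the construction \eqref{eq:2.10}--\eqref{eq:2.12} directly for the twisted algebra $\mh H(\mf t, W\Gamma, k, \natural)$, accommodating the $\Gamma$- and $\natural$-complications (notably the failure of the tensor decomposition \eqref{eq:2.8}) in situ. Your approach --- reduce to the untwisted crossed product $\mh H(\mf t,W,k)\rtimes \Gamma^*$ via the Schur cover and transfer the untwisted $\zeta_0$ upward by Clifford theory --- is a genuinely different route, and your diagnosis of the obstruction to property (iii), namely that $\Gamma_P$ may fail to act trivially on $P^\perp$ so character twists by $\lambda^P$ do not descend to the twisted parabolic, is correct and in the spirit of the remark preceding the theorem.

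There are, however, two gaps. The main one is the Clifford-theoretic transfer itself: the 2-cocycle on $\Gamma_\pi$ that enters Clifford theory for $\mh H(\mf t,W,k)\rtimes\Gamma^*$ is not ``built from the $\natural^g$ of \eqref{eq:2.C}'' (those concern the abstract twisted group algebra $\C[\Gamma,\natural]$); it records the failure of a chosen family of intertwiners $\gamma\pi \isom \pi$ to compose multiplicatively, and a priori depends on the module $\pi$. To conclude that this class in $H^2(\Gamma_\pi,\C^\times)$ coincides with the one attached to $\zeta_{\Irr}(\pi)$ on the $\mc O(\mf t)\rtimes W$ side requires an argument --- for instance a deformation in the parameter $k$, using the discreteness of $H^2(\Gamma_\pi,\C^\times)$ together with continuity of the intertwining operators, exactly as in Lemma \ref{lem:2.10} --- which you do not supply and which is where the real content lies. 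A second, more minor, gap: you derive $\Gamma$-equivariance of $\zeta_0$ from uniqueness given properties (i)--(v), but Theorem \ref{thm:2.8} asserts only naturality, not that (i)--(v) pin $\zeta_0$ down; the clean fix is to observe instead that the explicit construction \eqref{eq:2.10}--\eqref{eq:2.12} is assembled entirely from $\Gamma$-equivariant maps and is therefore itself $\Gamma$-equivariant.
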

\begin{proof}
For representations with real weights, this is proven in \cite[Proposition 6.10.a]{SolSurv}.
The general case follows from that with \eqref{eq:2.10}.
\end{proof}

The property \eqref{eq:2.18} can be used to obtain some information about families of
discrete series representations of parabolic subalgebras of \eqref{eq:2.21}.

\begin{lem}\label{lem:2.10}
For $P \subset \Delta$ and $\lambda \in i \mf t_\R$, let $\Delta (P,\lambda) \subset
\Irr \big( \mh H (\mf t ,W(\Phi_P) \Gamma_P ,k,\natural) \big)$ be the set of discrete series
representations occurring in $\ind_{\mc O (\mf t)}^{\mh H (\mf t ,W(\Phi_P) \Gamma_P ,k,\natural)}
\C_{\lambda + \nu}$ for some $\nu \in \mf t_\R$.
Let $C$ be a contractible subset of $i \mf t_\R$, such that $(W(\Phi_P) \Gamma_P )_\lambda$ is 
the same for all $\lambda \in C$. Then all the sets $\Delta (P,\lambda)$ with $\lambda \in C$
are canonically in bijection with each other.
\end{lem}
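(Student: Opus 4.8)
The plan is to reduce the set $\Delta(P,\lambda)$, by the completion-at-central-characters technique used repeatedly in this section, to a set of irreducible modules of one twisted graded Hecke algebra whose defining data stay constant while $\lambda$ runs through $C$. Write $L = W(\Phi_P)\Gamma_P$ and $\mh H = \mh H(\mf t, W(\Phi_P)\Gamma_P, k, \natural)$.

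First I would pin down the possible weights of a $\pi\in\Delta(P,\lambda)$. By \eqref{eq:2.7} and Schur's lemma the central subalgebra $\mc O(\mf t)^L$ acts on $\pi$ through a single point of $\mf t/L$, so $\pi$ occurring in $\ind_{\mc O(\mf t)}^{\mh H}\C_{\lambda+\nu}$ forces $\mr{Wt}(\pi)\subseteq L(\lambda+\nu)$. On the other hand $\pi$ is discrete series, so by Definition~\ref{def:2.F} and \eqref{eq:2.18} --- the latter applied to $\mh H$, which is a direct summand of $\mh H(\mf t,W(\Phi_P),k)\rtimes\Gamma_P^*$ --- its weights lie in $\R P^\vee + i(\mf t_\R\cap P^\perp)$ and have real part in the interior of the obtuse negative cone of $\Phi_P$. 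Since $\mr{Wt}(\pi)$ is a nonempty subset of $L(\lambda+\nu)\cap\bigl(\R P^\vee + i(\mf t_\R\cap P^\perp)\bigr)$, and since $W(\Phi_P)$ fixes $P^\perp$ pointwise while $\Gamma_P$ stabilises $P^\perp$ and $\R P^\vee$ separately, comparing real and imaginary parts shows that $\Delta(P,\lambda)=\emptyset$ unless $\lambda\in i(\mf t_\R\cap P^\perp)$, in which case $\nu$ may be taken in $\R P^\vee$; hence $\Delta(P,\lambda)$ is exactly the set of discrete series $\mh H$-modules whose $\mc O(\mf t)^L$-character lies in the image of $\lambda+\R P^\vee$ in $\mf t/L$.

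Next I would feed this into the equivalence of categories \eqref{eq:2.10} in its form for twisted graded Hecke algebras, valid by the reductions used in the proof of Theorem~\ref{thm:2.9}, cf.\ \cite[\S2.1]{SolAHA} and \cite[\S6]{SolSurv}. Taking the union over $\nu\in\R P^\vee$ it provides an equivalence $\Mod(\mh H)_{L(\lambda+\R P^\vee)}\cong\Mod\bigl(\mh H(\mf t,L_\lambda,k,\natural_\lambda)\bigr)_{\R P^\vee}$ which tracks $\mc O(\mf t)$-weights by removing the purely imaginary contribution of $\lambda$; here $L_\lambda\subset L$ is the stabiliser of $\lambda$, again of the form $W(\Phi')\Gamma'$ with $\Phi'=\{\alpha\in\Phi_P:\alpha(\lambda)=0\}$, and $\natural_\lambda$ is the induced $2$-cocycle. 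Combined with the first step, this equivalence carries $\Delta(P,\lambda)$ bijectively onto the set $D$ of those $\pi'\in\Irr\bigl(\mh H(\mf t,L_\lambda,k,\natural_\lambda)\bigr)$ all of whose weights are real with real part in the interior of the obtuse negative cone of $\Phi_P$ --- a condition in which $\lambda$ no longer appears. It remains to note that $\Phi'=\{\alpha\in\Phi_P:s_\alpha\in L_\lambda\}$ is constant on $C$ by hypothesis, that $k$ is fixed, and that $\natural_\lambda$, being a discrete invariant extracted from $\natural$ and the intertwiners $\tau_w$, is locally constant in $\lambda$ and hence constant on the connected set $C$. Thus the algebra $\mh H(\mf t,L_\lambda,k,\natural_\lambda)$ and the condition cutting out $D$ are the same for all $\lambda\in C$, and composing the canonical bijections $\Delta(P,\lambda)\cong D\cong\Delta(P,\lambda')$ yields the claimed canonical bijection $\Delta(P,\lambda)\cong\Delta(P,\lambda')$; contractibility of $C$ guarantees that these bijections are mutually compatible (no monodromy as $\lambda$ moves inside $C$).

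The step I expect to be the main obstacle is this reduction: one has to verify that the weight-tracking equivalence \eqref{eq:2.10} really does match the two explicit weight-plus-temperedness conditions --- in particular that ``real part in the interior of the obtuse $\Phi_P$-cone'' survives the passage even though the ambient root system shrinks from $\Phi_P$ to $\Phi'$ --- and that the reindexing data $(L_\lambda,\Phi',\natural_\lambda)$ depend on $\lambda$ only locally constantly. The rest is bookkeeping inside the structure theory already recalled.
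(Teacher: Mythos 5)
Your proof is essentially correct, but it follows a genuinely different route from the paper's. The paper first uses the tensor-product decomposition $\mh H^P = \mh H_P \otimes \mc O(P^\perp)$ from \eqref{eq:2.8} to write $\Delta(P,\lambda) = \Delta(P,0) \otimes \C_\lambda$ for the \emph{untwisted} parabolic subalgebra, and then passes to the twisted one $\mh H^P_+$ by Clifford theory: $\Delta(P,\lambda)_+$ is exhausted by irreducible subquotients of $\ind_{\mh H^P}^{\mh H^P_+}(\delta_0 \otimes \lambda)$, and the groups $\Gamma_{P,\delta_0\otimes\lambda}$ and cocycles $\natural_{\delta_0\otimes\lambda}$ are shown to be constant on $C$ by choosing the intertwiners $I^\gamma_{\delta_0\otimes\lambda}$ independently of $\lambda$. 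You instead bypass the intermediary $\mh H^P$ and apply the completion-at-central-characters equivalence \eqref{eq:2.10} directly to the twisted algebra, reducing each $\Delta(P,\lambda)$ to a $\lambda$-independent set $D$. Both strategies rely ultimately on the same facts (the obstruction $\lambda \in i(\mf t_\R\cap P^\perp)$ from \eqref{eq:2.18}, and the constancy of the stabilizer and cocycle on $C$), but the bookkeeping is organized differently; the paper's route keeps the Clifford data visible, whereas yours buries it inside the equivalence \eqref{eq:2.10}.

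Two remarks. First, the worry you flag at the end --- that the root system might ``shrink from $\Phi_P$ to $\Phi'$'' --- is not an issue: once you have established $\lambda \in i(\mf t_\R \cap P^\perp)$, every $\alpha \in \Phi_P$ vanishes on $\lambda$, so $W(\Phi_P) \subset (W(\Phi_P)\Gamma_P)_\lambda$ and $\Phi' = \Phi_P$; only the $\Gamma_P$-part of the stabilizer can vary. Second, your assertion that $\natural_\lambda$ is ``locally constant, hence constant on $C$'' is the one place where you wave your hands; the paper makes this rigorous precisely by exploiting the tensor factorization $\delta = \delta_0 \otimes \lambda$ to choose the intertwining operators $I^\gamma_{\delta_0\otimes\lambda}$ uniformly in $\lambda$ (citing \cite[(4)--(5)]{AMS1}), which is what forces the cocycle to be constant. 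To close this gap in your version you would need an analogous argument that the isomorphism underlying \eqref{eq:2.10} (and whatever normalizations it depends on) can be chosen continuously in $\lambda$ over $C$; once that is done, the contractibility of $C$ indeed kills any residual monodromy as you say.
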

\begin{proof}
First we look at $\mh H (\mf t,W,k)$ and its parabolic subalgebra $\mh H^P$. By \eqref{eq:2.18},
$\Delta (P,\lambda)$ is empty unless $\lambda \in i (\mf t_\R \cap P^\vee)$. From \eqref{eq:2.8} 
we see that 
\begin{equation}\label{eq:2.22}
\Delta (P,\lambda) = \Delta (P,0) \otimes \C_\lambda \text{ for } 
\lambda \in i (\mf t_\R \cap P^\vee). 
\end{equation}
The set $i (\mf t_\R \cap P^\vee)$ is precisely the fixed point set of $W(\Phi_P)$ in 
$i \mf t_\R$. Therefore either $C \subset i (\mf t_\R \cap P^\vee)$ or 
$C \cap i (\mf t_\R \cap P^\vee)$ is empty. In the latter case $\Delta (P,\lambda)$
is empty, while in the former case \eqref{eq:2.22} provides a canonical bijection from
$\Delta (P,\lambda)$ to $\Delta (P,\lambda')$ for any $\lambda,\lambda' \in C$.

Now we consider $\mh H_+^P := \mh H (\mf t,W(\Phi_P) \Gamma_P, k,\natural)$, and we denote
$\Delta (P,\lambda)$ for that algebra by $\Delta (P,\lambda)_+$. Notice that the discrete 
series condition from Definition \ref{def:2.F} is stable under the action of $\Gamma_P$ on 
$\mf t$. Therefore the restriction to $\mh H^P$ of any $\delta_+ \in \Delta (P,\lambda)_+$
has all irreducible subquotients in $\Delta (P,\lambda)$. It follows that we can exhaust
$\Delta (P,\lambda)_+$ with the irreducible subquotients of $\ind_{\mh H^P}^{\mh H^P_+}
(\delta)$ for $\delta \in \Delta (P,\lambda)$. 

By Clifford theory, as for instance in
\cite[p. 24]{RaRa} or \cite[\S 1]{AMS1}, $\ind_{\mh H^P}^{\mh H^P_+}(\delta)$ is 
completely reducible and its decomposition into irreducible representations is go\-ver\-ned
by its algebra of self-intertwiners, which is a twisted group algebra $\C [\Gamma_{P,\delta},
\natural_\delta]$. Write $\delta = \delta_0 \otimes \lambda$ as in \eqref{eq:2.22}.
For $\gamma \in \Gamma_P$ we have 
\[
\gamma (\delta_0 \otimes \lambda) = \gamma (\delta_0) \otimes \gamma (\lambda) .
\]
Consequently $\Gamma_{P,\delta_0 \otimes \lambda}$ is the same for all $\lambda \in C$.
The 2-cocycles $\natural_{\delta_0 \otimes \lambda}$ come a choice of intertwining operators
\[
I^\gamma_{\delta_0 \otimes \lambda} \in \Hom_{\mh H^P} (\gamma (\delta_0) \otimes \gamma 
(\lambda), \delta_0 \otimes \lambda ), 
\] 
see \cite[(4)--(5)]{AMS1}. We can choose these independently of $\lambda \in C$, so 
$\natural_{\delta_0 \otimes \lambda}$ does not depend on $\lambda \in C$ either. Hence
the entire decomposition of $\ind_{\mh H^P}^{\mh H^P_+}(\delta_0 \otimes \lambda)$ 
depends continuously on $\lambda \in C$. Together with the contractibility of $C$,
that yields a canonical bijections between the sets of irreducible subquotients of
$\ind_{\mh H^P}^{\mh H^P_+}(\delta_0 \otimes \lambda)$ and of 
$\ind_{\mh H^P}^{\mh H^P_+}(\delta_0 \otimes \lambda')$, for any $\lambda, \lambda' \in C$.
When we carry this out for all $\delta \in \Delta (P,\lambda)$, we obtain the 
desired bijection $\Delta (P,\lambda)_+ \to \Delta (P,\lambda')_+$.  
\end{proof}

\section{Progenerators and their endomorphism algebras} \label{sec:3}

We look in more detail at the structure of the Hecke algebra of a reductive $p$-adic group $G$.
In Paragraph \ref{par:1.8} we surveyed it in terms of harmonic analysis, but that description does
not suffice to say much about the representation theory of $\mc H (G)$. 

The Bernstein decomposition (Theorem \ref{thm:1.18}) reduces the issue to understanding each
Bernstein block $\Rep (G)^{\mf s}$, so we will focus on one such block. We will study it by means
of a finitely generated projective generator, a progenerator for short. This strategy is very 
general, it can be employed whenever one has a ring $A$ and a progenerator $P$ for Mod$(A)$.
The main point is the following result from category theory:

\begin{prop} \label{prop:3.1} 
\textup{\cite[Theorem 1.8.2.1]{Roc}} \\
There are equivalences of categories
\[
\begin{array}{cccc}
\Mod (A) & \longleftrightarrow & \End_A (P) - \Mod & \cong \Mod (\End_A (P)^{op}) \\
M & \mapsto & \Hom_A (P,M) \\
V \otimes_{\End_A (P)} P & \text{\reflectbox{$\mapsto$}} & V
\end{array} .
\]
Here $\End_A (P) - \Mod$ denotes the category of right $ \End_A (P)$-modules. The bimodules 
for the Morita equivalence between $A$ and $\End_A (P)^{op}$ are $P$ and $\Hom_A (P,A)$.
\end{prop}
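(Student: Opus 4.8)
This is the classical Morita theory statement for a progenerator, so the plan is to recall the standard proof. The key point is that a progenerator $P$ for $\Mod(A)$ — meaning $P$ is finitely generated, projective, and a generator, i.e.\ every object of $\Mod(A)$ is a quotient of a (finite) direct sum of copies of $P$ — gives a Morita equivalence, and the functors implementing it are $\Hom_A(P,-)$ and $-\otimes_{\End_A(P)}P$.

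\smallskip
\textbf{Plan of proof.} Write $E = \End_A(P)$, so that $P$ is naturally an $(A,E)$-bimodule (with $E$ acting on the right) and $Q := \Hom_A(P,A)$ is an $(E,A)$-bimodule. First I would record the two evaluation/composition maps
\[
Q \otimes_A P \longrightarrow E, \qquad P \otimes_E Q \longrightarrow A,
\]
the first sending $\varphi \otimes p$ to the endomorphism $p' \mapsto \varphi(p')\cdot$\,(nothing)—more precisely one should be slightly careful and use that the correct bimodule pairing is $P \otimes_E Q \to A$, $p\otimes\varphi\mapsto \varphi(p)$, and $Q\otimes_A P\to E$, $\varphi\otimes p\mapsto (p'\mapsto p\,\varphi(p'))$. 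The heart of the argument is to show these two maps are isomorphisms of bimodules. This is where finiteness, projectivity, and the generator property each get used: for $P = A^n$ free of finite rank both maps are visibly isomorphisms by a direct matrix computation; for $P$ a finitely generated projective module one writes $A^n \cong P \oplus P'$ and transports the isomorphisms across the direct sum decomposition, checking that the relevant maps are compatible with the idempotent in $\End_A(A^n) = M_n(A)$ cutting out $P$; and the generator hypothesis is exactly what guarantees that $P \otimes_E Q \to A$ is surjective (hence, being already injective from the projective case, an isomorphism). Here I would invoke the cited reference \cite[Theorem 1.8.2.1]{Roc} for the bookkeeping, since all of this is entirely standard.

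\smallskip
Once $Q \otimes_A P \cong E$ and $P \otimes_E Q \cong A$ as bimodules, the equivalence follows formally: the functors $F = \Hom_A(P,-) \cong Q \otimes_A - : \Mod(A) \to \Mod(E)$ (right $E$-modules) and $G = - \otimes_E P : \Mod(E) \to \Mod(A)$ satisfy
\[
GF \cong (- \otimes_E P)\circ(Q\otimes_A -) \cong - \otimes_A (P\otimes_E Q) \cong - \otimes_A A \cong \mathrm{id}_{\Mod(A)},
\]
and symmetrically $FG \cong \mathrm{id}$, using associativity of the tensor product over the various rings and the two bimodule isomorphisms. The identification $\Hom_A(P,-) \cong Q\otimes_A -$ itself is the natural map $\Hom_A(P,A)\otimes_A M \to \Hom_A(P,M)$, which is an isomorphism precisely because $P$ is finitely generated projective (again reduce to $P = A^n$). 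Finally, the category of right $E$-modules is by definition $\Mod(E^{op})$, which gives the last identification in the statement; and the bimodules realizing the Morita equivalence between $A$ and $E^{op}$ are $P$ and $Q = \Hom_A(P,A)$, as asserted.

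\smallskip
\textbf{Main obstacle.} There is no deep obstacle here — the result is a textbook fact and the reference \cite{Roc} is cited. The only thing requiring genuine care is the reduction from the free case to the finitely generated projective case: one must track how the two pairing maps behave under a splitting $A^n \cong P \oplus P'$ and under the corresponding block decomposition of $M_n(A) = \End_A(A^n)$, so as to conclude that the restrictions of the (already established) free-case isomorphisms to the summand $P$ are again isomorphisms. Everything else is formal manipulation of tensor products and Hom functors over the bimodules in play.
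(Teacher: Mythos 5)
The paper does not include its own proof of Proposition \ref{prop:3.1}; it simply cites \cite[Theorem 1.8.2.1]{Roc}. Your proposal correctly recalls the standard Morita-theoretic argument behind that reference, so it is in line with the paper's treatment.
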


Progenerators are quite common in the representation theory of reductive $p$-adic groups, 
although sometimes implicitly. Namely, suppose that $(K,\rho)$ is a type for $\Rep (G)^{\mf s}$, 
in the sense of Bushnell and Kutzko \cite{BuKu}. Then $\ind_K^G (\rho)$ is a progenerator of 
$\Rep (G)^{\mf s}$. Moreover the Hecke algera $\mc H (G,K,\rho)$, as defined in \cite[\S 2]{BuKu}, 
is the opposite algebra of $\End_G (\ind_K^G (\rho))$, and the equivalence of categories
$\Rep (G)^{\mf s} \cong \Mod (\mc H (G,K,\rho))$ from \cite[Theorem 4.3]{BuKu} 
is just an instance of Proposition \ref{prop:3.1}. 

In this paper we will not use types, because we want to treat all Bernstein blocks, whereas types are
not always available. Whenever one has a type $(K,\rho)$, the algebra $\mc H (G,K,\rho)$ is Morita
equivalent to the $G$-endomorphism algebra of any other progenerator for $\Rep (G)^{\mf s}$, so in
that sense the choice of a progenerator does not really matter.

\subsection{The cuspidal case} \

Let $L = \mc L (F)$ be a reductive $p$-adic group (which in the next paragraphs will be a Levi
subgroup of $G$). Let $\sigma \in \Irr (L)$ be supercuspidal and consider the Bernstein block
$\Rep (L)^{\mf s_L}$ with $\mf s_L = [L,\sigma ]_L$. Recall from Theorem \ref{thm:1.12} that
$\Res^L_{L^1} \sigma$ is a compact $L^1$-representation. Compact representations are always 
projective \cite[Proposition IV.1.6]{Ren}, because they behave like representations 
of compact groups. 

We note that $\Res^L_{L^1} \sigma$ has finite length because $[L : Z(L) L^1]$ is finite. Let 
\[
\Pi_{\mf s_L} := \ind_{L^1}^L (\Res^L_{L^1} \sigma)
\]
be the smooth compact induction, from $L^1$ to $L$, of $\sigma$. Here compact means that the 
underlying vector space is
\[
\{ f : G \to V_\sigma \mid f (k l) = \sigma (k) f(l) \;\text{for all}\; k \in L^1, l \in L,\;
\mr{supp}(f) \; \text{is compact in} \; L / L^1 \} . 
\]

\begin{prop} \label{prop:3.2}
\textup{(Bernstein, see \cite[Proposition VI.4.1]{Ren})} \\
The $L$-representation $\Pi_{\mf s_L}$ is a progenerator of $\Rep (L)^{\mf s_L}$.
\end{prop}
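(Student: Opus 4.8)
The plan is to verify the three defining properties of a progenerator for $\Pi_{\mf s_L}$ — projectivity, finite generation, and generation of $\Rep(L)^{\mf s_L}$ — after first checking that $\Pi_{\mf s_L}$ genuinely lies in $\Rep(L)^{\mf s_L}$. The backbone is the adjunction: since $L^1$ is an \emph{open} subgroup of $L$, smooth compact induction $\ind_{L^1}^L$ is left adjoint to the exact functor $\Res^L_{L^1}$, and is itself exact.

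\emph{Projectivity and finite generation.} By Theorem \ref{thm:1.12} the representation $\Res^L_{L^1}\sigma$ is compact, hence projective in $\Rep(L^1)$ by \cite[Proposition IV.1.6]{Ren}; a left adjoint of an exact functor preserves projectives, so $\Pi_{\mf s_L}=\ind_{L^1}^L\Res^L_{L^1}\sigma$ is projective in $\Rep(L)$. Since $[L:Z(L)L^1]$ is finite, Clifford theory shows $\Res^L_{L^1}\sigma$ is a finite direct sum of irreducible $L^1$-representations, hence finitely generated over $L^1$; the $L^1$-submodule of $\Pi_{\mf s_L}$ consisting of functions supported on the coset $L^1$ is isomorphic to $\Res^L_{L^1}\sigma$ and its $L$-translates span $\Pi_{\mf s_L}$, so any finite generating set of $\Res^L_{L^1}\sigma$ over $L^1$ yields a finite generating set of $\Pi_{\mf s_L}$ over $L$.

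\emph{Membership in the block and the generator property.} Because $L^1\triangleleft L$, Mackey's formula gives
\[
\Res^L_{L^1}\Pi_{\mf s_L}\;=\;\bigoplus\nolimits_{\ell\in L/L^1}{}^\ell\!\big(\Res^L_{L^1}\sigma\big),
\]
a (possibly infinite) direct sum of irreducible $L^1$-modules, and by Clifford theory the $L/L^1$-conjugates of the constituents of $\Res^L_{L^1}\sigma$ are again constituents of $\Res^L_{L^1}\sigma$; hence every irreducible subquotient $\tau$ of $\Pi_{\mf s_L}$ shares an irreducible $L^1$-constituent with $\sigma$. Such a $\tau$ must be supercuspidal and isomorphic to $\sigma\otimes\chi$ for some $\chi\in X_\nr(L)$: supercuspidality is detected on $L^1$ (a representation $I_P^L(\rho)$ with $P$ proper has no compact $L^1$-constituent), and since unramified characters are trivial on $L^1$ one has $\Res^L_{L^1}(\sigma\otimes\chi)=\Res^L_{L^1}\sigma$, so among cuspidal representations this $L^1$-constituent singles out the inertial class $\mf s_L$. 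Thus $\Pi_{\mf s_L}\in\Rep(L)^{\mf s_L}$. Conversely, by the adjunction, for every $\tau\in\Irr(L)^{\mf s_L}=\{\sigma\otimes\chi:\chi\in X_\nr(L)\}$,
\[
\Hom_L(\Pi_{\mf s_L},\tau)\;\cong\;\Hom_{L^1}\big(\Res^L_{L^1}\sigma,\Res^L_{L^1}\sigma\big)\neq 0 .
\]
Combining this with projectivity gives the generator property by the standard argument: for $0\neq\pi\in\Rep(L)^{\mf s_L}$ let $\pi'\subseteq\pi$ be the sum of the images of all $L$-maps $\Pi_{\mf s_L}\to\pi$; if $\pi'\neq\pi$ then $\pi/\pi'\neq 0$ lies in the block, so it has an irreducible subquotient $\tau\in\Irr(L)^{\mf s_L}$, and projectivity of $\Pi_{\mf s_L}$ together with $\Hom_L(\Pi_{\mf s_L},\tau)\neq 0$ forces $\Hom_L(\Pi_{\mf s_L},\pi/\pi')\neq 0$, contradicting the definition of $\pi'$; hence $\pi'=\pi$ and $\pi$ is a quotient of a direct sum of copies of $\Pi_{\mf s_L}$.

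I expect the membership step to be the main obstacle: the projectivity/finiteness/generation parts are formal consequences of the adjunction and the compactness–projectivity dictionary, but ruling out irreducible subquotients outside $\Irr(L)^{\mf s_L}$ rests on the nontrivial fact that, for cuspidal representations of $L$, the restriction to $L^1$ already determines the inertial equivalence class. I would either extract this from the analysis in \cite[\S VI.4]{Ren} or prove it directly from compactness of matrix coefficients modulo $Z(L)$.
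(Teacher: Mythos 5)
The paper gives no proof of Proposition~\ref{prop:3.2}; it simply cites \cite[Proposition VI.4.1]{Ren}. Your overall route (compact $\Rightarrow$ projective over $L^1$; compact induction from an open subgroup is a left adjoint of an exact functor and hence preserves projectives; finite generation from finiteness of $[L:Z(L)L^1]$; then a Frobenius/Clifford argument for membership and the generator property) matches the standard argument one finds in the cited reference, and your use of projectivity in the generator step is correctly stated (lifting through the surjection from a subobject onto the irreducible subquotient).

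The one place where you are under-justified — and you correctly flag it as such — is the ``membership in the block'' step. Two remarks that would tighten it. First, for the cuspidality of every irreducible subquotient $\tau$ of $\Pi_{\mf s_L}$ it is cleaner to avoid reasoning via ``compact $L^1$-constituents'' and instead use the Jacquet module directly: writing $\Pi_{\mf s_L}\cong\C[L/L^1]\otimes\sigma$ (as in the paper immediately after the proposition) and noting that $U_Q\subset L^1$ for any parabolic $Q=MU_Q\subsetneq L$, one gets $J_Q^L(\Pi_{\mf s_L})\cong\C[L/L^1]\otimes J_Q^L(\sigma)=0$; since $J_Q^L$ is exact, $J_Q^L(\tau)=0$, so $\tau$ is cuspidal and hence supercuspidal by Theorem~\ref{thm:1.12}. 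Second, the Clifford-theoretic claim that two supercuspidals sharing an $L^1$-constituent $\sigma_0$ are unramified twists requires some care because $L^1$ has infinite index in $L$: one needs to observe that $Z(L)$ acts by a character on any irreducible, so restriction from $Z(L)L^1$ to $L^1$ preserves irreducibility, whence $\Res^L_{L^1}$ of an irreducible is semisimple of finite length and Clifford theory applies; then, letting $L_0\supset L^1$ be the stabilizer of $\sigma_0$, all irreducible constituents of $\ind_{L^1}^L\sigma_0$ are obtained from one another by twisting by characters of $L_0/L^1$, and those extend to $X_\nr(L)$ because $\C^\times$ is divisible. With these two points supplied, the proof is complete and is, as far as one can tell, the same proof as in \cite[\S VI.4]{Ren}.
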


We note that $\Pi_{\mf s_L}$ is canonical, in the sense that it depends only on $[L,\sigma]_L$, or
equivalently only on $X_\nr (L) \sigma$. Thus Propositions \ref{prop:3.1} and \ref{prop:3.2} give a
canonical equivalence of categories
\begin{equation}\label{eq:3.4}
\Rep (L)^{[L,\sigma]} \cong \End_L (\Pi_{[L,\sigma]}) - \Mod.
\end{equation}
There are isomorphisms of $L$-representations
\[
\Pi_{\mf s_L} = \ind_{L^1}^L (\Res^L_{L^1} \sigma) \cong \C [L/L^1] \otimes \sigma \cong
\mc O (X_\nr (L)) \otimes \sigma ,
\]
where $L$ acts diagonally on the tensor products. Since $L / L^1$ is commutative, the 
multiplication action of $\C [L/L^1] \cong \mc O (X_\nr (L))$ on $\C [L/L^1]$ is by 
$L$-intertwiners. This gives an embedding
\begin{equation}\label{eq:3.1}
\mc O (X_\nr (L)) \to \End_L \big( \mc O (X_\nr (L)) \otimes \sigma \big) 
\cong \End_L (\Pi_{\mf s_L}) .
\end{equation}
Recall the finite group $X_\nr (L,\sigma) = \{ \chi \in X_\nr (L) : \sigma \otimes \chi 
\cong \sigma \}$. It acts on $X_\nr (L)$ by translations, and there is a homeomorphism
\begin{equation}\label{eq:3.A}
X_\nr (L) / X_\nr (L,\sigma) \longrightarrow \Irr (L)^{\mf s_L} . 
\end{equation}
For $\chi \in X_\nr (L,\sigma)$, the choice of a nonzero element of $\Hom_L (\sigma, \sigma 
\otimes \chi)$ gives rise to an element $T_\chi \in \End_L (\Pi_{\mf s_L})$, which lifts the
multiplication action of $\chi$ on $X_\nr (L)$. We define a 2-cocycle $\natural_{\mf s_L}$ of 
$X_\nr (L,\sigma)$ by
\begin{equation}\label{eq:3.2}
T_{\chi_1} T_{\chi_2} = \natural_{\mf s_L} (\chi_1, \chi_2) T_{\chi_1 \chi_2} .
\end{equation}

\begin{thm} \label{thm:3.3}
\textup{\cite[Proposition 2.2 and (2.25)]{SolEnd}} \\
The elements $T_\chi$ with $\chi \in X_\nr (L,\sigma)$ determine an algebra isomorphism
\[
\End_L (\Pi_{\mf s_L}) \cong \mc O (X_\nr (L)) \rtimes \C [X_\nr (L,\sigma), \natural_{\mf s_L}] .
\]
\end{thm}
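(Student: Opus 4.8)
The plan is to build the isomorphism explicitly out of the structure we already have on $\Pi_{\mf s_L} \cong \mc O(X_\nr(L)) \otimes \sigma$, and to check it is well-defined, an algebra homomorphism, and bijective. First I would record the two families of operators on $\Pi_{\mf s_L}$: the image of $\mc O(X_\nr(L))$ under the embedding \eqref{eq:3.1} coming from multiplication on $\C[L/L^1] \cong \mc O(X_\nr(L))$, and the operators $T_\chi$ for $\chi \in X_\nr(L,\sigma)$ obtained by fixing a nonzero (hence, by Schur's lemma, invertible up to scalar) element $\phi_\chi \in \Hom_L(\sigma, \sigma\otimes\chi)$ and letting it act diagonally. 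The key commutation relation to verify is $T_\chi f T_\chi^{-1} = \chi \cdot f$ for $f \in \mc O(X_\nr(L))$, where $\chi$ acts on $\mc O(X_\nr(L))$ by translation; this follows because $\phi_\chi$ intertwines the $L$-actions, so conjugating the multiplication operator by $T_\chi$ shifts the character by $\chi$. Together with \eqref{eq:3.2} this shows that the assignment $f \mapsto f$, $T_\chi \mapsto T_\chi$ extends to a homomorphism from the twisted crossed product $\mc O(X_\nr(L)) \rtimes \C[X_\nr(L,\sigma),\natural_{\mf s_L}]$ into $\End_L(\Pi_{\mf s_L})$.

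Next I would prove surjectivity and injectivity. For surjectivity, one uses that $\mc O(X_\nr(L)) \otimes \sigma$ decomposes, $L$-equivariantly, into isotypic pieces indexed by the fibers of $X_\nr(L) \to \Irr(L)^{\mf s_L}$, i.e. by $X_\nr(L,\sigma)$-orbits. Concretely, an $L$-endomorphism $\psi$ of $\Pi_{\mf s_L}$ is determined by where it sends the generating copy of $\sigma$, and by Frobenius reciprocity / the structure of $\Pi_{\mf s_L}$ as a compactly induced module, $\Hom_L(\sigma, \Pi_{\mf s_L})$ decomposes according to the characters $\chi$ with $\sigma \otimes \chi \cong \sigma$; hence every endomorphism is a (module-)combination of the $T_\chi$'s with coefficients in $\mc O(X_\nr(L))$. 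A clean way to phrase this is to localize at a point $x \in X_\nr(L)$: the fiber $\Pi_{\mf s_L} \otimes_{\mc O(X_\nr(L))} \C_x$ is $\sigma \otimes x$, and $\End_L$ of the whole module, after tensoring with the function field or with completions at $X_\nr(L,\sigma)$-orbits, becomes $\C(X_\nr(L)) \rtimes \C[X_\nr(L,\sigma),\natural]$ — exactly as in the free-action analysis of Theorem \ref{thm:2.11} and Proposition \ref{prop:2.10}, since $X_\nr(L,\sigma)$ acts freely on $X_\nr(L)$. Because $\Pi_{\mf s_L}$ is finitely generated projective, $\End_L(\Pi_{\mf s_L})$ is a finitely generated module over its center $\mc O(X_\nr(L))^{X_\nr(L,\sigma)} = \mc O(X_\nr(L)/X_\nr(L,\sigma))$, and comparing ranks over this center (both sides are free of rank $|X_\nr(L,\sigma)|^2$, since $X_\nr(L,\sigma)$ acts freely) forces the injective homomorphism constructed above to be an isomorphism.

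The main obstacle is the cocycle bookkeeping: checking that the $T_\chi$ genuinely satisfy \eqref{eq:3.2} with a \emph{well-defined} 2-cocycle $\natural_{\mf s_L}$ (independent, up to coboundary, of the choices of $\phi_\chi$), and that this is the same cocycle that appears on the crossed-product side. This is exactly the kind of Schur-multiplier computation underlying Clifford theory, and one has to be careful that associativity of composition in $\End_L(\Pi_{\mf s_L})$ translates into the cocycle identity \eqref{eq:2.1}; rescaling the $\phi_\chi$ changes $\natural_{\mf s_L}$ by a coboundary, which is harmless for the isomorphism type of the twisted crossed product but must be acknowledged. A secondary technical point is justifying the localization/completion step rigorously — that passing to $\C(X_\nr(L))$ or to formal completions at orbits does not lose information about endomorphisms — which is where one invokes that $\Pi_{\mf s_L}$ is a projective module of finite type over a Noetherian ring, so that $\End_L(\Pi_{\mf s_L})$ embeds in its localizations and the rank count is faithful. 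Modulo these points, the result follows by assembling the explicit homomorphism with the dimension comparison over the center.
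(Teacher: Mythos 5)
Your first paragraph is fine: the commutation relation $T_\chi f T_\chi^{-1} = \chi\cdot f$ (for $\chi \in X_\nr(L,\sigma)$, $f \in \mc O(X_\nr(L))$), together with \eqref{eq:3.2} (which is the definition of $\natural_{\mf s_L}$) and associativity in $\End_L(\Pi_{\mf s_L})$, does give a well-defined algebra map from the twisted crossed product into $\End_L(\Pi_{\mf s_L})$, and the ``cocycle bookkeeping'' you flag is genuine but standard. Injectivity also goes through by evaluating at a generic character $\chi_0$, since the operators $f_\chi T_\chi$ move a fiber $\sigma\otimes\chi_0$ into distinct fibers $\sigma\otimes\chi\chi_0$.

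The gap is in the surjectivity/rank-comparison step, and it is a real one, not just a presentational matter. You write that ``both sides are free of rank $|X_\nr(L,\sigma)|^2$ over the center, since $X_\nr(L,\sigma)$ acts freely.'' Freeness of the translation action on $X_\nr(L)$ gives the rank of the \emph{crossed product} over $\mc O(X_\nr(L)/X_\nr(L,\sigma))$, but it says nothing a priori about the rank of $\End_L(\Pi_{\mf s_L})$; that rank is precisely what the theorem is computing. Your appeal to the localization picture of Proposition~\ref{prop:2.10}/Theorem~\ref{thm:2.11} is circular for the same reason: those results describe $\mc O(X)\rtimes \C[\Gamma,\natural]$, not $\End_L(\Pi_{\mf s_L})$, and asserting that $\End_L(\Pi_{\mf s_L})$ localizes to $\C(X_\nr(L))\rtimes\C[X_\nr(L,\sigma),\natural]$ is exactly what must be established. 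Likewise the phrase ``$\Pi_{\mf s_L}$ is finitely generated projective'' refers to projectivity over $\mc H(L)^{\mf s_L}$, not over anything with finite rank, so it does not on its own give finite generation of $\End_L(\Pi_{\mf s_L})$ over the center.

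Relatedly, the Frobenius-reciprocity step is aimed at the wrong group. The adjunction relevant here is compact induction from the \emph{open subgroup} $L^1$: since $\Pi_{\mf s_L}=\ind_{L^1}^L(\Res_{L^1}\sigma)$ and $\Res_{L^1}\sigma$ is a compact (hence projective, finitely generated) $L^1$-representation, one gets
\[
\End_L(\Pi_{\mf s_L}) \;\cong\; \Hom_{L^1}\big(\Res_{L^1}\sigma,\ \Res_{L^1}\Pi_{\mf s_L}\big)
\;\cong\; \mc O(X_\nr(L)) \otimes_\C \End_{L^1}(\Res_{L^1}\sigma),
\]
a \emph{free} $\mc O(X_\nr(L))$-module of rank $\dim_\C \End_{L^1}(\Res_{L^1}\sigma)$. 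The space $\Hom_L(\sigma,\Pi_{\mf s_L})$ you invoke instead is indeed $|X_\nr(L,\sigma)|$-dimensional, but a copy of $\sigma$ does not generate $\Pi_{\mf s_L}$ as an $L$-representation, so this Hom-space does not control $\End_L(\Pi_{\mf s_L})$. Once the computation is set up over $L^1$, the substance of the theorem is the Clifford-theoretic identification $\dim_\C \End_{L^1}(\Res_{L^1}\sigma) = |X_\nr(L,\sigma)|$, equivalently that the $T_\chi$ span $\End_{L^1}(\Res_{L^1}\sigma)$; this is where the abelian quotient $L/L^1$ and the transitivity of its action on the $L^1$-isotypic pieces of $\sigma$ actually get used, and it is the step your proposal treats as automatic. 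The strategy is correct and fixable, but this spanning/rank computation must be supplied rather than asserted.
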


From Theorem \ref{thm:3.3} one sees that the centre is
\begin{equation}\label{eq:3.9}
Z(\End_L (\Pi_{\mf s_L}) \cong \mc O (X_\nr (L))^{X_\nr (L,\sigma)} =
\mc O (X_\nr (L) / X_\nr (L,\sigma)) \cong \mc O (\Irr (L)^{\mf s_L}) .
\end{equation}
This is also the centre of the category $\Rep (L)^{\mf s_L}$. By Proposition \ref{prop:2.10} 
there are equivalences of categories of finite length representations
\begin{equation}\label{eq:3.3}
\Rep_\fl (L)^{\mf s_L} \cong \End_L (\Pi_{\mf s_L})-\Mod_\fl \cong 
\mc O ( \Irr (L)^{\mf s_L} )-\Mod_\fl .
\end{equation}
By Proposition \ref{prop:2.10}.b the restriction to finite length can be omitted if 
$\natural_{\mf s_L}$ is trivial, but by \eqref{eq:2.15} and Theorem \ref{thm:2.11} it is necessary if 
$\natural_{\mf s_L}$ is nontrivial in $H^2 (X_\nr (L,\sigma), \C^\times)$. 
An example of the latter situation is \cite[Example 2.G]{SolEnd}.  

\subsection{The non-cuspidal case} \label{par:3.2} \

Let $P = L U_P$ be a parabolic subgroup of $G$ with Levi factor $L$. Recall that 
$\mf s = [L,\sigma]_G$ and $\mf s_L = [L,\sigma]_L$. The following result of Bernstein
is quite deep, in particular it uses the second adjointness relation (Theorem \ref{thm:1.8}).

\begin{prop}\label{prop:3.4}
The $G$-representation 
\[
\Pi_{\mf s} := I_P^G (\Pi_{\mf s_L}) = I_P^G ( \ind_{L^1}^L (\Res^L_{L^1} \sigma))
\]
is a progenerator of $\Rep (G)^{\mf s}$. It is canonical, in the sense 
that up to isomorphism it depends only $\mf s$.
\end{prop}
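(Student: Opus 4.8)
The plan is to deduce the three assertions---projectivity, finite generation, and generation---of $\Pi_{\mf s}$ from the corresponding properties of $\Pi_{\mf s_L}$ for the Levi block (Proposition \ref{prop:3.2}), using the good functorial behaviour of normalized parabolic induction together with Bernstein's second adjointness theorem. First I would recall that $I_P^G$ is exact (Theorem \ref{thm:1.10}), so it sends projectives to objects with a nice lifting property against short exact sequences; to actually get projectivity of $\Pi_{\mf s}$ inside $\Rep(G)^{\mf s}$, the key is that $I_P^G$ has an \emph{exact} right adjoint in the relevant sense. This is exactly where Bernstein's second adjointness (Theorem \ref{thm:1.8}) enters: it identifies the right adjoint of $I_P^G$ on $\Rep(G)$ with $J_{\bar P}^G$, which is exact, whereas naive Frobenius reciprocity only gives $J_P^G$ as a left adjoint. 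Since $\Pi_{\mf s_L}$ is projective in $\Rep(L)^{\mf s_L}$ and $J_{\bar P}^G$ restricts to a functor $\Rep(G)^{\mf s}\to\Rep(L)^{\mf s_L}$ that is exact, the functor $\Hom_G(\Pi_{\mf s},-)\cong\Hom_L(\Pi_{\mf s_L},J_{\bar P}^G(-))$ is a composite of two exact functors, hence exact; that is the definition of projectivity of $\Pi_{\mf s}$ in $\Rep(G)^{\mf s}$.

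Next I would treat finite generation. Again by second adjointness, $\Hom_G(\Pi_{\mf s},-)\cong\Hom_L(\Pi_{\mf s_L},\delta_{\bar P}\otimes(-)_{U_P})$ (using the $\ind$-version of Theorem \ref{thm:1.8}, noting $\Pi_{\mf s}=I_P^G(\Pi_{\mf s_L})=\ind_P^G(\Pi_{\mf s_L}\otimes\delta_P^{1/2})$ up to the normalizing twist), and $\Pi_{\mf s_L}$ is a finitely generated projective object, so $\Hom_L(\Pi_{\mf s_L},-)$ commutes with arbitrary direct sums. Since the Jacquet functor also commutes with direct sums (it is a colimit), the functor $\Hom_G(\Pi_{\mf s},-)$ commutes with arbitrary direct sums, which is precisely the categorical characterization of a finitely generated object. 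Then I would prove that $\Pi_{\mf s}$ generates $\Rep(G)^{\mf s}$: for any nonzero $\pi\in\Rep(G)^{\mf s}$, Theorem \ref{thm:1.14} and the Bernstein decomposition show $\pi$ has supercuspidal support in $\mf s$, so by ordinary Frobenius reciprocity \eqref{eq:1.15} one has $\Hom_G(\pi,I_P^G(\tau))\cong\Hom_L(J_P^G(\pi),\tau)$ for suitable $\tau$; dually, using second adjointness $\Hom_G(I_P^G(\Pi_{\mf s_L}),\pi)\cong\Hom_L(\Pi_{\mf s_L},J_{\bar P}^G(\pi))$, and since $J_{\bar P}^G(\pi)$ is a nonzero object of $\Rep(L)^{\mf s_L}$ (here one uses that $\pi$ has a subquotient whose supercuspidal support is $(L,\sigma\otimes\chi)$, which forces the Jacquet module along $\bar P$ to be nonzero by cuspidality arguments) and $\Pi_{\mf s_L}$ is a generator of $\Rep(L)^{\mf s_L}$, this Hom-space is nonzero. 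Hence $\Pi_{\mf s}$ maps onto no quotient trivially, i.e.\ it is a generator.

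For canonicity, I would note that $\Pi_{\mf s_L}$ depends (up to isomorphism) only on $X_\nr(L)\sigma$, hence only on $\mf s_L$, by the remark after Proposition \ref{prop:3.2}; and by Lemma \ref{lem:1.20} the $G$-representation $I_P^G(\Pi_{\mf s_L})$ does not depend, up to having the same object structure, on the choice of parabolic $P$ with Levi factor $L$---more precisely one checks that different choices of $P$, or of a $G$-conjugate representative $(L',\sigma')$ of $\mf s$, yield isomorphic progenerators because all such $I_P^G(\Pi_{\mf s_L})$ have the same class of quotients and, being projective generators of the same block, are isomorphic up to the ambiguity of a progenerator (which is itself canonical once we fix that it is \emph{the} induced object, not merely Morita-equivalent). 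The main obstacle I anticipate is the precise verification that $J_{\bar P}^G$ restricts to a functor $\Rep(G)^{\mf s}\to\Rep(L)^{\mf s_L}$ landing in that single block and that it is faithful enough on $\Rep(G)^{\mf s}$ to detect nonzero objects---i.e.\ the compatibility of Jacquet restriction with the Bernstein decomposition---which is where the content of Bernstein's theory (via Theorem \ref{thm:1.8} and the geometric lemma) is really being used; everything else is formal adjunction bookkeeping.
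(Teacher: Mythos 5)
The paper itself does not give a proof of Proposition \ref{prop:3.4}; it only states the result and remarks that Bernstein's argument ``uses the second adjointness relation (Theorem \ref{thm:1.8})''. Your reconstruction of the progenerator part is the intended argument and is essentially correct. The key categorical fact, which you invoke cleanly, is that a left adjoint with exact right adjoint preserves projectives; here $I_P^G \dashv J_{\bar P}^G$ by Theorem \ref{thm:1.8} and $J_{\bar P}^G$ is exact, so $\Pi_{\mf s} = I_P^G(\Pi_{\mf s_L})$ is projective because $\Hom_G(\Pi_{\mf s}, -) \cong \Hom_L(\Pi_{\mf s_L}, J_{\bar P}^G(-))$ is a composite of exact functors. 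The compactness/finite generation argument via commutation with direct sums is also fine for a projective object, and the generator argument is correct modulo the standard but nontrivial fact you flag: that $J_{\bar P}^G$ applied to an irreducible with supercuspidal support $(L,\sigma\otimes\chi)$ has a nonzero constituent in $\Rep(L)^{\mf s_L}$ (this is part of the Jacquet--Bernstein theory of cuspidal support and is exactly where the geometric lemma and the uniqueness in Theorem \ref{thm:1.14} enter).

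The canonicity argument, however, has a genuine gap. You try to deduce that different choices of parabolic $P$ with Levi $L$ (or different representatives $(L',\sigma')$ of $\mf s$) yield isomorphic $\Pi_{\mf s}$ by appealing to the fact that they are ``projective generators of the same block''; but progenerators of a given abelian category are unique only up to Morita equivalence, not up to isomorphism (e.g.\ $\Pi$ and $\Pi\oplus\Pi$ are both progenerators but not isomorphic). Lemma \ref{lem:1.20} likewise only controls irreducible subquotients, not the isomorphism class. Establishing that $I_P^G(\Pi_{\mf s_L}) \cong I_{P'}^G(\Pi_{\mf s_L})$ for two parabolics $P, P'$ with the same Levi $L$ genuinely requires constructing an isomorphism, for instance by identifying each with the family $\{I_P^G(\sigma\otimes\chi)\}_{\chi}$ and checking that intertwining operators glue to an isomorphism of the full induced representation; this is part of Bernstein's analysis and is not a formal consequence of the progenerator property alone.
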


From Propositions \ref{prop:3.1} and \ref{prop:3.2} we obtain a canonical equivalence of categories
\begin{equation}\label{eq:3.5}
\Rep (G)^{\mf s} \cong \End_G (\Pi_{\mf s}) - \Mod.
\end{equation}

\begin{ex}\label{ex:3.B}
Suppose that $G$ is quasi-split and that $\mf s = [T,1]_G$ for a maximal torus $T$ of $G$. 
Then, by \cite[Th\'eor\`eme 2]{Blo}
\[
\Pi_{\mf s} = I_B^G (\ind_{T^1}^T (\mr{triv})) = I_B^G (\C [T/T^1]) \cong \C [ G/I]
\]
for an Iwahori subgroup $I$ of $G$. In this case
\[
\End_G (\Pi_{\mf s}) \cong \End_G (\C [G/I]) \cong \mc H (G,I) ,
\]
so in words $\End_G (\Pi_{\mf s})$ is isomorphic to the Iwahori--Hecke algebra of $G$.
It is known from \cite{IwMa} that $\mc H (G,I)$ has the structure of an affine Hecke algebra.
The equivalence between the module category of $\mc H (G,I)$ and the category of Iwahori-spherical
$G$-representations from \cite{Bor} is a special case of the equivalence obtained from 
Propositions \ref{prop:3.1} and \ref{prop:3.2}, combined with an isomorphism between $\mc H (G,I)$
and its opposite algebra. 
\end{ex}

However, in general the structure of $\End_G (\Pi_{\mf s})$ is considerably more involved, 
and we will only approach it in several steps. The first observation in this direction is that
\eqref{eq:3.1} and the functor $I_P^G$ provide an embedding
\begin{equation}\label{eq:3.7}
\mc O (X_\nr (L)) \to \End_G (\Pi_{\mf s}) .
\end{equation}
Recall that 
\[
W_{\mf s} = \mr{Stab}_{N_G (L)/L} ([L,\sigma]_L) = 
\{ w \in N_G (L) : w \cdot \sigma \in X_\nr (L) \sigma \} / L .
\]
The action of $N_G (L)$ on $\Irr (L)$ induces an action of $W_{\mf s}$ on $\Irr (L)^{\mf s_L}$.

\begin{ex}\label{ex:3.A}
We consider the special case $W_{\mf s} = \{e\}$, which is very common. Let $\pi,\rho \in 
\Rep (L)^{\mf s_L}$. Recall that Bernstein's geometric lemma \cite[Th\'eor\`eme VI.5.1]{Ren}
provides a filtration of the $L$-representation $J_P^G I_P^G (\pi)$. The condition 
$W_{\mf s} = \{e\}$ implies that from the irreducible subquotients of this filtration only 
$\pi$ itself belongs to $\Rep (L)^{\mf s_L}$. From that and Frobenius reciprocity we obtain
\[
\Hom_G (I_P^G (\pi), I_P^G (\rho)) \cong \Hom_L (J_P^G I_P^G (\pi) ,\rho) \cong \Hom_L (\pi,\rho) .
\]
Therefore the functor $I_P^G : \Rep (L)^{\mf s_L} \to \Rep (G)^{\mf s}$ is an equivalence of
categories. In particular $I_P^G$ induces an algebra isomorphism
\[
\End_L (\Pi_{\mf s}) \to \End_G (I_P^G (\Pi_{\mf s_L})) = \End_G (\Pi_{\mf s}).
\]
\end{ex}

This is a very satisfactory outcome, but of course things are more complicated (and more 
interesting) when $W_{\mf s} \neq \{e\}$. 

Recall that the Bernstein centre of $G$ \cite{BeDe} is the centre of the category $\Rep (G)$. 
It can also be expressed in terms of distributions on $G$ \cite[Proposition 3.2g]{MoTa}.
Some aspects of the Bernstein decomposition involve the Bernstein centre: 

\begin{thm}\label{thm:3.5} \textup{\cite{BeDe}} \\
There are natural isomorphisms 
\[
Z(\Rep (G)^{\mf s}) \cong Z(\End_G (\Pi_{\mf s})) \cong \mc O (\Irr (L)^{\mf s_L})^{W_{\mf s}} =
\mc O (\Irr (L)^{\mf s_L} / W_{\mf s} ) .
\]
\end{thm}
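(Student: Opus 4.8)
The plan is to establish the three isomorphisms in turn, exploiting the progenerator $\Pi_{\mf s}$ and the structure theory from the cuspidal case. The middle isomorphism $Z(\Rep (G)^{\mf s}) \cong Z(\End_G (\Pi_{\mf s}))$ is essentially formal: by Proposition \ref{prop:3.4} the representation $\Pi_{\mf s}$ is a progenerator of $\Rep (G)^{\mf s}$, so Proposition \ref{prop:3.1} gives an equivalence of categories $\Rep (G)^{\mf s} \cong \End_G (\Pi_{\mf s})-\Mod$, and an equivalence of categories induces an isomorphism between the centres of those categories. The centre of a module category $\End_G(\Pi_{\mf s})-\Mod$ is canonically $Z(\End_G (\Pi_{\mf s}))$ (a natural transformation of the identity functor is determined by its components, which must be central elements acting by multiplication), so this identification is immediate. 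The same reasoning, applied to \eqref{eq:3.4} for the Levi $L$, gives $Z(\Rep (L)^{\mf s_L}) \cong Z(\End_L(\Pi_{\mf s_L}))$, and by \eqref{eq:3.9} the latter is $\mc O(\Irr(L)^{\mf s_L})$.

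For the last isomorphism I would first show that $I_P^G$ carries the action of $\mc O(X_\nr(L))$ from $\End_L(\Pi_{\mf s_L})$ into $\End_G(\Pi_{\mf s})$ via \eqref{eq:3.7}, and that the image of the centre $\mc O(X_\nr(L))^{X_\nr(L,\sigma)} = \mc O(\Irr(L)^{\mf s_L})$ lands in $Z(\End_G(\Pi_{\mf s}))$ because these are central distributions that commute with all intertwiners arising from $N_G(L)$. To see that this image is stabilized exactly by $W_{\mf s}$: the group $W_{\mf s} = \mr{Stab}_{N_G(L)/L}([L,\sigma]_L)$ acts on $\Irr(L)^{\mf s_L}$, hence on $\mc O(\Irr(L)^{\mf s_L})$, and via the isomorphisms \eqref{eq:1.26} together with the adjointness relations \eqref{eq:1.15} and the second adjointness Theorem \ref{thm:1.8}, an element $w \in N_G(L)$ with $w\cdot\sigma \cong \sigma\otimes\chi$ produces a self-intertwiner of $I_P^G(\Pi_{\mf s_L})$; conjugation by this intertwiner implements the action of $w$ on $\mc O(\Irr(L)^{\mf s_L})$. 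Thus the central elements coming from $L$ that survive as central elements for $G$ are exactly the $W_{\mf s}$-invariant ones, giving an embedding $\mc O(\Irr(L)^{\mf s_L})^{W_{\mf s}} \hookrightarrow Z(\End_G(\Pi_{\mf s}))$.

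To prove this embedding is surjective — i.e. that $Z(\End_G(\Pi_{\mf s}))$ is no larger than $\mc O(\Irr(L)^{\mf s_L})^{W_{\mf s}}$ — I would pass to generic points. For $\chi$ in a Zariski-dense open subset of $X_\nr(L)$ the representation $I_P^G(\sigma\otimes\chi)$ is irreducible (by \cite[Th\'eor\`eme 3.2]{Sau}), so by Schur's lemma any central element of $\End_G(\Pi_{\mf s})$ acts there by a scalar; by Theorem \ref{thm:1.14} two such points give isomorphic representations iff the parameters are $N_G(L)$-conjugate, so this scalar is a $W_{\mf s}$-invariant function of $\chi$, and density identifies $Z(\End_G(\Pi_{\mf s}))$ with a subalgebra of $\mc O(X_\nr(L))^{W_{\mf s}\ltimes X_\nr(L,\sigma)} = \mc O(\Irr(L)^{\mf s_L})^{W_{\mf s}}$. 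Combined with the embedding above this gives the desired isomorphism; alternatively one cites \cite{BeDe} directly for the identification of the Bernstein centre $Z(\Rep(G)^{\mf s})$ with $\mc O(\Irr(L)^{\mf s_L}/W_{\mf s})$. The main obstacle is the surjectivity/generic-point step: one must control the possible extra central elements supported over the locus where the parabolically induced representations degenerate (where $I_P^G(\sigma\otimes\chi)$ becomes reducible), and here the input from \cite{BeDe} — that the Bernstein centre acts faithfully and that $\mc O(\Irr(L)^{\mf s_L}/W_{\mf s})$ already exhausts it — is what makes the argument go through cleanly rather than by hand.
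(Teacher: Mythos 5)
The paper does not give a proof of this theorem; it is stated as a citation to \cite{BeDe}, which is where Bernstein's description of the centre of the category $\Rep(G)$ (the ``Bernstein centre'') was established. So there is no internal argument to compare against, and your reconstruction is essentially what a proof would look like.

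Your overall plan is correct: the first isomorphism is pure Morita theory (an equivalence of abelian categories $\Rep(G)^{\mf s}\cong \End_G(\Pi_{\mf s})\mi\Mod$ identifies the centre of the category with $Z(\End_G(\Pi_{\mf s}))$), the third is the tautology $\mc O(V)^\Gamma = \mc O(V/\Gamma)$ for a finite group acting on an affine variety, and all of the content lies in the middle isomorphism. You correctly identify the two directions there (an embedding of the $W_{\mf s}$-invariant functions into the centre; surjectivity via generic irreducibility and uniqueness of cuspidal supports) and you are honest that the surjectivity—extending the generic scalar function to a \emph{regular} $W_{\mf s}^e$-invariant function and accounting for the degeneration locus—is precisely what \cite{BeDe} supplies. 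That matches the paper's treatment, which simply defers to \cite{BeDe}.

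One sentence in your second paragraph is wrong as stated: you first claim that ``the image of the centre $\mc O(X_\nr(L))^{X_\nr(L,\sigma)}=\mc O(\Irr(L)^{\mf s_L})$ lands in $Z(\End_G(\Pi_{\mf s}))$ because these are central distributions that commute with all intertwiners arising from $N_G(L)$.'' That is false: elements of $\mc O(\Irr(L)^{\mf s_L})$ that are not $W_{\mf s}$-invariant do \emph{not} commute with the intertwiners $\mc T_w$ for nontrivial $w$—conjugation by $\mc T_w$ acts on $\mc O(X_\nr(L))$ by $w$, as recorded in \eqref{eq:6.21} and in Theorem \ref{thm:3.6}. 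Your subsequent sentences correct this (``the central elements coming from $L$ that survive as central elements for $G$ are exactly the $W_{\mf s}$-invariant ones''), which is the right statement; you should simply delete or rephrase the offending clause. You should also be a little careful that checking centrality against $\mc O(X_\nr(L))$ and the $\mc T_w$ alone only shows centrality after passing to the quotient field $\C(X_\nr(L)/W_{\mf s}^e)$ as in Theorem \ref{thm:3.6}; since $\mc O(X_\nr(L))$ embeds without zero divisors, this suffices, but it is worth flagging.
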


Recall from Definition \ref{def:1.E} that $W_{\mf s}^e$ is an extension of $W_{\mf s}$ by
$X_\nr (L,\sigma)$, which acts on $X_\nr (L)$. By construction, the quotient map
\[
X_\nr (L) \to \Irr (L)^{\mf s_L} : \chi \mapsto \sigma \otimes \chi
\]
induces a homeomorphism
\[
X_\nr (L) / W_{\mf s}^e \to \Irr (L)^{\mf s_L} / W_{\mf s} .
\]
Knowing this, Theorem \ref{thm:3.5} says that 
\begin{equation}\label{eq:3.8}
Z(\End_G (\Pi_{\mf s})) \cong \mc O (X_\nr (L))^{W_{\mf s}^e} = \mc O (X_\nr (L) / W_{\mf s}^e) .
\end{equation}
As we saw in \eqref{eq:1.23} and Theorem \ref{thm:1.19}, the group $W_{\mf s}^e$ acts on
the family of representations $I_P^G (\sigma \otimes \chi)$ with $\chi \in X_\nr (L)$, but by
operators that depend rationally on $\chi$ and may have poles. Moreover, from \eqref{eq:1.24}
and \eqref{eq:3.2} we see that in general this is only a projective action of $W_{\mf s}^e$.
Still, from these one can construct, as done in \cite[\S 4]{SolEnd}, an embedding
\begin{equation}\label{eq:3.6}
\C [W_{\mf s}^e, \natural_{\mf s}] \to 
\End_G (\Pi_{\mf s}) \otimes_{\mc O (X_\nr (L) / W_{\mf s}^e} \C (X_\nr (L) / W_{\mf s}^e) ,
\end{equation}
for a suitable 2-cocycle $\natural_{\mf s}$ generalizing \eqref{eq:1.24}.
This and the next result can be compared with Theorem \ref{thm:2.4}.

\begin{thm}\label{thm:3.6} \textup{\cite[Corollary 5.8]{SolEnd}} \\
The embeddings \eqref{eq:3.7} and \eqref{eq:3.6} combine to an algebra isomorphism
\[
\End_G (\Pi_{\mf s}) \otimes_{\mc O (X_\nr (L) / W_{\mf s}^e)}
\C (X_\nr (L) / W_{\mf s}^e) \cong \C (X_\nr (L)) \rtimes \C [W_{\mf s}^e, \natural_{\mf s}] .
\] 
\end{thm}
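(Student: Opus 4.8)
The plan is to assemble the two embeddings \eqref{eq:3.7} and \eqref{eq:3.6} into a single algebra homomorphism
\[
\Phi : \C (X_\nr (L)) \rtimes \C [W_{\mf s}^e, \natural_{\mf s}] \longrightarrow
\mc E := \End_G (\Pi_{\mf s}) \otimes_{\mc O (X_\nr (L)/W_{\mf s}^e)} \C (X_\nr (L)/W_{\mf s}^e) ,
\]
and then to prove that $\Phi$ is bijective. To build $\Phi$: by \eqref{eq:3.7} and Theorem \ref{thm:3.5} the subalgebra $\mc O (X_\nr (L)) \subset \End_G (\Pi_{\mf s})$ is module-finite over the centre $\mc O (X_\nr (L)/W_{\mf s}^e)$ and has fraction field $\C (X_\nr (L))$, so tensoring \eqref{eq:3.7} up over $\mc O (X_\nr (L)/W_{\mf s}^e)$ yields an embedding $\C (X_\nr (L)) \hookrightarrow \mc E$. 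The elements $T_w$ $(w \in W_{\mf s}^e)$ of \eqref{eq:3.6} already lie in $\mc E$ and satisfy $T_w T_{w'} = \natural_{\mf s}(w, w') T_{ww'}$ by the definition of $\natural_{\mf s}$; since each $T_w$ is a lift to $\End_G (\Pi_{\mf s})$ of the translation action of $w$ on $X_\nr (L)$ (compare \eqref{eq:1.23} and the $T_\chi$ constructed before \eqref{eq:3.2}), the intertwining identity \eqref{eq:1.27} applied to $f \in \mc O (X_\nr (L))$ acting by multiplication gives $T_w f T_w^{-1} = w \cdot f$ in $\mc E$. Hence $f \otimes w \mapsto f T_w$ is a well-defined algebra homomorphism $\Phi$.

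Next I would show $\Phi$ is injective. The finite group $W_{\mf s}^e$ acts faithfully on the irreducible algebraic variety $X_\nr (L)$: by Definition \ref{def:1.E} it is a group of diffeomorphisms of $X_\nr^u (L)$, and each generator (translation by $X_\nr (L, \sigma)$, or $\chi \mapsto \chi_{w, \sigma} \otimes w \cdot \chi$) extends to an algebraic automorphism of the complexification $X_\nr (L)$, faithfully because $X_\nr^u (L)$ is Zariski-dense. Theorem \ref{thm:2.11} therefore says that $\C (X_\nr (L)) \rtimes \C [W_{\mf s}^e, \natural_{\mf s}]$ is a central simple algebra over $\C (X_\nr (L)/W_{\mf s}^e)$. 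A nonzero algebra homomorphism out of a simple algebra is injective, and $\Phi$ is nonzero since it restricts to \eqref{eq:3.7} on $\mc O (X_\nr (L))$.

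Finally $\Phi$ is surjective, which I would obtain by comparing dimensions over $K := \C (X_\nr (L)/W_{\mf s}^e)$. The source has $K$-dimension $[\C (X_\nr (L)) : K] \cdot |W_{\mf s}^e| = |W_{\mf s}^e|^2$. The $K$-dimension of $\mc E$ is the generic rank of $\End_G (\Pi_{\mf s})$ over $\mc O (X_\nr (L)/W_{\mf s}^e)$, which I would compute by specialising at a generic closed point $\bar x = W_{\mf s}^e \chi$ of $X_\nr (L)/W_{\mf s}^e \cong \Irr (L)^{\mf s_L}/W_{\mf s}$. Using $\Pi_{\mf s_L} \cong \mc O (X_\nr (L)) \otimes \sigma$ and the Chinese remainder theorem, $\Pi_{\mf s} \otimes_{\mc O (X_\nr (L)/W_{\mf s}^e)} \C_{\bar x} \cong \bigoplus_{\chi' \in W_{\mf s}^e \chi} I_P^G (\sigma \otimes \chi')$, and for generic $\chi$ every $I_P^G (\sigma \otimes \chi')$ is irreducible by generic irreducibility \cite{Sau}, while all of them are mutually isomorphic because the normalised intertwiners \eqref{eq:1.23} are regular and invertible there. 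Hence this fibre is $\pi^{\oplus |W_{\mf s}^e|}$ with $\pi$ irreducible, so $\End_G (\Pi_{\mf s} \otimes \C_{\bar x}) \cong M_{|W_{\mf s}^e|}(\C)$ has dimension $|W_{\mf s}^e|^2$; since $\Pi_{\mf s}$ is a progenerator, at a generic (smooth) point $\bar x$ the natural map $\End_G (\Pi_{\mf s}) \otimes_{\mc O (X_\nr (L)/W_{\mf s}^e)} \C_{\bar x} \to \End_G (\Pi_{\mf s} \otimes \C_{\bar x})$ is an isomorphism, so the generic rank is $|W_{\mf s}^e|^2$. An injection between $K$-vector spaces of equal finite dimension is bijective, whence $\Phi$ is the asserted isomorphism.

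The main obstacle I anticipate is the last step: knowing $\End_G (\Pi_{\mf s})$ well enough as a module over its centre, namely that it is module-finite and generically free over $\mc O (X_\nr (L)/W_{\mf s}^e)$ and that forming $\End_G$ commutes with specialisation at a generic point. In practice this rests on Bernstein's finiteness results for $\Rep (G)^{\mf s}$ together with the second adjointness theorem (Theorem \ref{thm:1.8}) and Bernstein's geometric lemma, which bound $\End_G (\Pi_{\mf s})$ via $\Hom_L (\Pi_{\mf s_L}, J_{\bar P}^G I_P^G (\Pi_{\mf s_L}))$ and reduce finiteness to the cuspidal case (Theorem \ref{thm:3.3}). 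A secondary technical point, already absorbed into the given embedding \eqref{eq:3.6}, is the precise verification that the normalised intertwining operators realise exactly the cocycle $\natural_{\mf s}$ and conjugate $\mc O (X_\nr (L))$ by the translation action on the nose; granting that, the existence of $\Phi$ is immediate.
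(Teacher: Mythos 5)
The paper does not prove Theorem \ref{thm:3.6} itself; it is cited as \cite[Corollary 5.8]{SolEnd}, so there is no in-paper argument to compare against. On its own terms your proposal is sound and has the right shape: assemble a homomorphism $\Phi$ from the two embeddings, use Theorem \ref{thm:2.11} (faithfulness of $W_{\mf s}^e$ on $X_\nr(L)$ makes the source central simple over $K := \C(X_\nr(L)/W_{\mf s}^e)$) for injectivity, and then compare $K$-dimensions at a generic fibre. The fibre computation is correct: for $\chi$ generic in $X_\nr(L)$ the $W_{\mf s}^e$-orbit has full size, all $I_P^G(\sigma\otimes\chi')$ are irreducible (by \cite{Sau}) and mutually isomorphic via intertwiners regular at generic $\chi$, so by exactness of $I_P^G$ and the Chinese remainder theorem the fibre of $\Pi_{\mf s}$ is $\pi^{\oplus|W_{\mf s}^e|}$ and the endomorphism algebra has dimension $|W_{\mf s}^e|^2$, matching the source. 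This also agrees with the remark after the theorem in the paper that the $K$-dimension of $\mc E$ is $|W_{\mf s}^e|^2$.

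The gap you flag — module-finiteness and base change for $\End_G(\Pi_{\mf s})$ over $Z = \mc O(X_\nr(L)/W_{\mf s}^e)$ — is real, but can be closed more cheaply than your sketch via second adjointness suggests, using material already in the survey. Choose $K\subset K_0$ with $\mc H(G,K)^{\mf s}$ Morita equivalent to $\mc H(G)^{\mf s}$ as in Theorem \ref{thm:1.20}; then $\End_G(\Pi_{\mf s})\cong\End_{\mc H(G,K)^{\mf s}}(\langle K\rangle\Pi_{\mf s})$, and $\langle K\rangle\Pi_{\mf s}$ is a finitely generated projective $\mc H(G,K)^{\mf s}$-module, so it is a direct summand of $(\mc H(G,K)^{\mf s})^n$. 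By Theorem \ref{thm:1.19} the unital algebra $\mc H(G,K)^{\mf s}$ is module-finite over $Z$, which gives (a) finiteness of $\End_G(\Pi_{\mf s})$ over $Z$ (it sits inside $\End_Z(\langle K\rangle\Pi_{\mf s})$), and (b) the base-change isomorphism $\End_G(\Pi_{\mf s})\otimes_Z\C_{\bar x}\xrightarrow{\sim}\End_G(\Pi_{\mf s}/I_{\bar x}\Pi_{\mf s})$ for all $\bar x$, since $\End$ of a f.g.\ projective module commutes with any base change (write it as $e M_n(\mc H(G,K)^{\mf s}) e$). One further point you treat implicitly but should state: for the dimension comparison you need $\Phi$ to be $K$-linear, i.e.\ the restriction of the embedding \eqref{eq:3.7} to $\mc O(X_\nr(L))^{W_{\mf s}^e}$ must recover the identification \eqref{eq:3.8} of $Z(\End_G(\Pi_{\mf s}))$; this is how the paper sets things up, but it is not automatic from the statement of \eqref{eq:3.7} alone. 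With these points supplied, the argument is complete.
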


In Theorem \ref{thm:3.6} it is necessary to include the quotient field 
$\C (X_\nr (L) / W_{\mf s}^e)$, unlike in Theorem \ref{thm:3.3}, without that Theorem 
\ref{thm:3.6} would only hold in special cases. Since $W_{\mf s}^e$ acts faithfully on 
$X_\nr (L)$, $\C (X_\nr (L))$ has dimension $|W_{\mf s}^e|$ over 
$\C ( X_\nr (L) / W_{\mf s}^e)$. Theorem \ref{thm:3.6} shows that 
\[
\dim_{\C ( X_\nr (L) / W_{\mf s}^e)} \End_G (\Pi_{\mf s}) \otimes_{\mc O (X_\nr (L) / W_{\mf s}^e)}
\C (X_\nr (L) / W_{\mf s}^e) = |W_{\mf s}^e |^2 .
\]
This can be stated more precisely in terms of central simple algebras. Namely, it follows from
Theorems \ref{thm:2.11} and \ref{thm:3.6} that:

\begin{cor}\label{cor:3.7}
$\End_G (\Pi_{\mf s}) \otimes_{\mc O (X_\nr (L) / W_{\mf s}^e)} \C (X_\nr (L) / W_{\mf s}^e)$ is
a central simple algebra over $\C ( X_\nr (L) / W_{\mf s}^e)$. It is Morita equivalent to
$\C (X_\nr (L)) \rtimes W_{\mf s}^e$ if and only if $\natural_{\mf s}$ is trivial in 
$H^2 (W_{\mf s}^e, \C^\times)$.
\end{cor}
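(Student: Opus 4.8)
The plan is to deduce the corollary directly from Theorem \ref{thm:3.6} together with Theorem \ref{thm:2.11}, applied to the torus $X = X_\nr (L)$ and the finite group $\Gamma = W^e_{\mf s}$ acting on it. First I would invoke Theorem \ref{thm:3.6} to replace the algebra under consideration by the isomorphic algebra
\[
\C (X_\nr (L)) \rtimes \C [W^e_{\mf s}, \natural_{\mf s}] ,
\]
the isomorphism being one of $\C (X_\nr (L) / W^e_{\mf s})$-algebras. Both properties we want to establish — being a central simple algebra over $\C (X_\nr (L) / W^e_{\mf s})$, and being Morita equivalent to $\C (X_\nr (L)) \rtimes W^e_{\mf s}$ — are transported along this isomorphism, so it suffices to verify them for the right-hand side.

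Next I would check that the hypotheses of Theorem \ref{thm:2.11} are met. The group $X_\nr (L)$ is a complex algebraic torus by \eqref{eq:1.16}, hence an irreducible algebraic variety. The action of $W^e_{\mf s}$ on $X_\nr (L)$ is faithful, as recorded in the discussion following Theorem \ref{thm:3.6}. Theorem \ref{thm:2.11} then applies verbatim: $\C (X_\nr (L)) \rtimes \C [W^e_{\mf s}, \natural_{\mf s}]$ is a central simple algebra over $\C (X_\nr (L))^{W^e_{\mf s}}$, and it is Morita equivalent to $\C (X_\nr (L)) \rtimes W^e_{\mf s}$ precisely when $\natural_{\mf s}$ is trivial in $H^2 (W^e_{\mf s}, \C^\times)$. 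Finally I would observe that $\C (X_\nr (L))^{W^e_{\mf s}} = \C (X_\nr (L) / W^e_{\mf s})$, since the ring of invariants of the function field of a variety under a faithful finite group action is the function field of the quotient; this is the same base field appearing in Theorem \ref{thm:3.6}, so the two descriptions of the centre are compatible.

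The only points requiring any care are bookkeeping rather than substance: that the isomorphism of Theorem \ref{thm:3.6} really is an isomorphism of algebras over the common base $\C (X_\nr (L) / W^e_{\mf s})$ (so that "central simple over that base" is an intrinsic statement, preserved by the isomorphism), and that Morita equivalence is an isomorphism-invariant of algebras. There is no genuine obstacle here — the substance of the argument is already contained in Theorems \ref{thm:3.6} and \ref{thm:2.11}, and this corollary is the straightforward combination of the two.
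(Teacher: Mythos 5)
Your argument is correct and is exactly the route the paper takes: the paper introduces the corollary with the words ``it follows from Theorems \ref{thm:2.11} and \ref{thm:3.6} that,'' which is precisely the two-step reduction you carry out (apply the isomorphism of Theorem \ref{thm:3.6}, then invoke Theorem \ref{thm:2.11} using the irreducibility of the torus $X_\nr(L)$ and the faithfulness of the $W^e_{\mf s}$-action noted after Theorem \ref{thm:3.6}). Nothing to add.
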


\subsection{Localization on the Bernstein centre}\label{par:3.3} \

For any $W_{\mf s}$-stable subset $U \subset \Irr (L)^{\mf s_L}$, one can consider
\[
\Rep (G)^{\mf s}_U = \{ \pi \in \Rep (G)^{\mf s} : \mr{Sc}(\pi') \in (L,U) 
\text{ for all irreducible subquotients } \pi' \text{ of } \pi \} .
\]
In view of Theorem \ref{thm:3.5}, this category can be obtained from $\Rep (G)^{\mf s}$ by
imposing conditions on how the Bernstein centre $Z(\Rep (G)^{\mf s})$ may act on the representations.

Often it is more prudent to restrict to finite length representations. That will be indicated by a subscript fl, so $\Rep_\fl (G)^{\mf s}_U$. When $U_i$, for $i$ in some (possibly
infinite) index set, are disjoint $W_{\mf s}$-invariant subsets of $\Irr (L)^{\mf s_L}$,
there is a decomposition
\begin{equation}\label{eq:3.13}
\Rep_\fl (G)^{\mf s}_{\cup_i U_i} = \bigoplus\nolimits_i \Rep_\fl (G)^{\mf s}_{U_i} ,
\end{equation}
This does not work with representations of arbitrary length, and it is an important 
reason why it is easier to work with representations of finite length.

To proceed, we make the relation between $\End_G (\Pi_{\mf s})$ and supercuspidal supports explicit.
Let $Q = M U_Q$ be a parabolic subgroup of $G$ containing $P = L U_P$, so that 
$\mf s_M = [L,\sigma]_M$ is defined. Then the functor $I_Q^G$ provides an embedding 
\[
\End_M (\Pi_{\mf s_M}) \to \End_G (\Pi_{\mf s}).
\]

\begin{lem}\label{lem:3.8} \textup{\cite[Lemma 5.1]{SolComp}} 
\enuma{
\item The equivalences of categories \eqref{eq:3.5} are compatible with parabolic induction: 
they form a commutative diagram
\[
\begin{array}{ccc}
\Rep (G)^{\mf s} & \cong & \End_G (\Pi_{\mf s}) - \Mod\\
\uparrow I_Q^G & & \uparrow \mr{ind} \\
\Rep (M)^{\mf s_M} & \cong & \End_M (\Pi_{\mf s_M}) - \Mod
\end{array}.
\]
\item The equivalences of categories \eqref{eq:3.5} are compatible with parabolic restriction: 
they form a commutative diagram
\[
\begin{array}{ccc}
\Rep (G)^{\mf s} & \cong & \End_G (\Pi_{\mf s}) - \Mod \\
\downarrow \mr{pr}_{\mf s_M} \circ J_{\overline Q}^G & & \downarrow \mr{Res} \\
\Rep (M)^{\mf s_M} & \cong & \End_M (\Pi_{\mf s_M}) - \Mod
\end{array}.
\]
Here $\mr{pr}_{\mf s_M} : \Rep (M) \to \Rep (M)^{\mf s_M}$ is the projection from the Bernstein
decomposition.
}
\end{lem}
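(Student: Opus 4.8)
The plan is to reduce both statements to the cuspidal case via Bernstein's second adjointness theorem (Theorem \ref{thm:1.8}) and the transitivity of parabolic induction. The key formal input is that under the equivalences \eqref{eq:3.5}, the functor $\Hom_G(\Pi_{\mf s},-)$ on the $\Rep$-side corresponds to the identity on the module side, so a functor $F$ between Bernstein blocks is matched with the functor $V \mapsto \Hom_G(\Pi_{\mf s}, F(\text{-}))\otimes_{\End}P$-side bimodule; concretely, $F$ corresponds to tensoring with the $(\End_G(\Pi_{\mf s}),\End_M(\Pi_{\mf s_M}))$-bimodule $\Hom_G(\Pi_{\mf s}, F(\Pi_{\mf s_M}))$, provided $F$ sends $\Pi_{\mf s_M}$ into $\Rep(G)^{\mf s}$ (or the relevant projection of it does). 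So the whole lemma becomes the computation of two such bimodules, together with the identification of the functors $\mr{ind}$ and $\mr{Res}$ as tensoring with $\Hom_M(\Pi_{\mf s_M},-)$-type bimodules along the ring inclusion $\End_M(\Pi_{\mf s_M}) \hookrightarrow \End_G(\Pi_{\mf s})$ induced by $I_Q^G$.

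For part (a), first observe that since $\Pi_{\mf s}=I_P^G(\Pi_{\mf s_L})$ and $\Pi_{\mf s_M}=I_{P\cap M}^M(\Pi_{\mf s_L})$, transitivity of parabolic induction gives $I_Q^G(\Pi_{\mf s_M}) \cong \Pi_{\mf s}$ (using that $\mf s_M \in \Delta(M,\ldots)$ is the cuspidal one and $I_Q^G$ preserves the block, by Theorem \ref{thm:1.14} and \eqref{eq:1.26}). Thus $I_Q^G$ indeed maps $\Rep(M)^{\mf s_M}$ into $\Rep(G)^{\mf s}$, and on progenerators it sends $\Pi_{\mf s_M}$ to $\Pi_{\mf s}$ itself. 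Applying $\Hom_G(\Pi_{\mf s},-)$ to the isomorphism $I_Q^G(\Pi_{\mf s_M})\cong \Pi_{\mf s}$ shows the bimodule attached to $I_Q^G$ is $\End_G(\Pi_{\mf s})$ regarded as a right $\End_M(\Pi_{\mf s_M})$-module via the ring map $I_Q^G$; tensoring with that bimodule is exactly the functor $\mr{ind}$. This gives the commuting square in (a). A small point to check is that the ring homomorphism $\End_M(\Pi_{\mf s_M}) \to \End_G(\Pi_{\mf s})$ used to define $\mr{ind}$ coincides with the one induced by $I_Q^G$ on endomorphisms — but this is a direct consequence of functoriality of $I_Q^G$ together with transitivity.

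For part (b), the natural candidate for the right adjoint of $I_Q^G$ is $J_{\overline Q}^G$, by the second adjointness relation (Theorem \ref{thm:1.8}): $\Hom_G(I_Q^G(\tau),\rho) \cong \Hom_M(\tau, J_{\overline Q}^G(\rho))$. However $J_{\overline Q}^G$ does not land in a single Bernstein block of $M$, hence the composition $\mr{pr}_{\mf s_M}\circ J_{\overline Q}^G$; one needs that this composite is still right adjoint to $I_Q^G$ when the target is restricted to $\Rep(M)^{\mf s_M}$, which follows since $\tau \in \Rep(M)^{\mf s_M}$ forces $\Hom_M(\tau,-)$ to factor through $\mr{pr}_{\mf s_M}$ by orthogonality of the blocks (Theorem \ref{thm:1.18}). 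Under the equivalences, a right adjoint of the induction functor $\mr{ind}$ along a ring map is the restriction functor $\mr{Res}$ (this is the standard adjunction for bimodule tensoring, using that $\End_G(\Pi_{\mf s})$ is finitely generated as a module over its centre, cf.\ Lemma \ref{lem:2.2}(a), so that $\Hom$ and $\otimes$ behave well). Since adjoints are unique, the equivalences carry $\mr{pr}_{\mf s_M}\circ J_{\overline Q}^G$ to $\mr{Res}$, which is the square in (b).

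The main obstacle I anticipate is the bookkeeping in part (b): verifying that $\mr{pr}_{\mf s_M}\circ J_{\overline Q}^G$ is genuinely right adjoint to $I_Q^G$ as functors between the blocks (and not merely between the ambient categories), and that the resulting bimodule is the one defining $\mr{Res}$ — i.e.\ that the unit/counit of the second adjointness are compatible with the embedding $\End_M(\Pi_{\mf s_M})\hookrightarrow\End_G(\Pi_{\mf s})$. This is where one must be careful with the normalizations $\delta_P^{\pm 1/2}$ and with the projection $\mr{pr}_{\mf s_M}$; everything else is a formal consequence of Proposition \ref{prop:3.1}, transitivity of induction, and the uniqueness of adjoints.
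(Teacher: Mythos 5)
The paper gives no proof of this lemma, only the citation to \cite[Lemma 5.1]{SolComp}, so there is no in-paper argument to compare against; judging your proposal on its own, it is essentially correct. For part (a), the decisive step is exactly what you identify: transitivity of normalized parabolic induction (using $\delta_P|_L = \delta_Q|_L\cdot\delta_{P\cap M}$) gives $I_Q^G(\Pi_{\mf s_M})\cong\Pi_{\mf s}$, and this simultaneously pins down the ring map $\End_M(\Pi_{\mf s_M})\to\End_G(\Pi_{\mf s})$ and the bimodule. For part (b), the uniqueness-of-adjoints argument via second adjointness is a clean and correct reduction to (a).

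A few points worth tightening. First, the Eilenberg--Watts-type statement you invoke --- that $F$ is realised by tensoring with the bimodule $\Hom_G(\Pi_{\mf s},F(\Pi_{\mf s_M}))$ --- requires that (under the equivalences) $F$ be right exact and commute with coproducts; the condition you wrote, that $F$ maps $\Pi_{\mf s_M}$ into $\Rep(G)^{\mf s}$, is necessary but not sufficient. This is harmless here because $I_Q^G$ is exact and commutes with direct sums, but it should be said. Second, in the right-module conventions of Proposition~\ref{prop:3.1}, the bimodule $\End_G(\Pi_{\mf s})$ should carry a \emph{left} $\End_M(\Pi_{\mf s_M})$-action (via the ring map) and a right $\End_G(\Pi_{\mf s})$-action; your ``right $\End_M(\Pi_{\mf s_M})$-module'' is a slip. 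Third, in (b) the parenthetical about $\End_G(\Pi_{\mf s})$ being finitely generated over its centre is not needed: the adjunction $\Hom_S(V\otimes_R S,W)\cong\Hom_R(V,\mathrm{Res}\,W)$ holds unconditionally, and uniqueness of right adjoints does the rest.
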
 

From Lemma \ref{lem:3.8}, \eqref{eq:3.9} and Theorem \ref{thm:3.5}, we see that the equivalence
of categories \eqref{eq:3.5} is compatible with supercuspidal supports, in the following sense.

\begin{cor}\label{cor:3.9}
Suppose that $\pi \in \Irr (G)^{\mf s}$ has supercuspidal support $(L,\sigma \otimes \chi)$. Then
$\Hom_G (\Pi_{\mf s},\pi) \in \Irr (\End_G (\Pi_{\mf s}))$ has central character 
$W_{\mf s} (\sigma \otimes \chi) \in \Irr (L)^{\mf s_L} / W_{\mf s}$, or equivalently
$W_{\mf s}^e \chi \in X_\nr (L) / W_{\mf s}^e$.
\end{cor}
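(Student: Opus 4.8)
The plan is to deduce Corollary \ref{cor:3.9} from Lemma \ref{lem:3.8}(b) together with the identifications of centres in \eqref{eq:3.9} and Theorem \ref{thm:3.5}, essentially by tracking how the Bernstein centre acts through the equivalence \eqref{eq:3.5}. Recall that by Theorem \ref{thm:3.5} the centre $Z(\End_G(\Pi_{\mf s}))$ is identified with $\mc O(\Irr(L)^{\mf s_L}/W_{\mf s})$, equivalently with $\mc O(X_\nr(L)/W_{\mf s}^e)$ via the homeomorphism $X_\nr(L)/W_{\mf s}^e \cong \Irr(L)^{\mf s_L}/W_{\mf s}$. So the statement to prove is that the central character of $\Hom_G(\Pi_{\mf s},\pi)$, viewed as a point of $X_\nr(L)/W_{\mf s}^e$, is the $W_{\mf s}^e$-orbit of $\chi$.

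First I would use Lemma \ref{lem:3.8}(b): parabolic restriction $\mr{pr}_{\mf s_L}\circ J_{\bar P}^G : \Rep(G)^{\mf s} \to \Rep(L)^{\mf s_L}$ corresponds, under the equivalences \eqref{eq:3.5} for $G$ and for $M=L$, to the restriction functor $\End_G(\Pi_{\mf s})\text{-}\Mod \to \End_L(\Pi_{\mf s_L})\text{-}\Mod$ along the embedding $\End_L(\Pi_{\mf s_L}) \hookrightarrow \End_G(\Pi_{\mf s})$ of \eqref{eq:3.7}. The key observation is that this embedding is compatible with the inclusion of centres: the composition $\mc O(X_\nr(L)) = Z(\End_L(\Pi_{\mf s_L})) \cdot(\text{forgetting }X_\nr(L,\sigma)\text{-invariance})$ — more precisely the copy of $\mc O(X_\nr(L))$ sitting inside $\End_G(\Pi_{\mf s})$ via \eqref{eq:3.7} — is exactly the copy coming from \eqref{eq:3.1} pushed forward by $I_P^G$, and it maps onto $Z(\End_G(\Pi_{\mf s}))$ via the quotient $\mc O(X_\nr(L)) \to \mc O(X_\nr(L))^{W_{\mf s}^e}$ dual to $X_\nr(L) \to X_\nr(L)/W_{\mf s}^e$. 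Thus to read off the central character of an $\End_G(\Pi_{\mf s})$-module it suffices to restrict to $\mc O(X_\nr(L)) \subset \End_G(\Pi_{\mf s})$ and determine the $\mc O(X_\nr(L))$-weights, then project to $X_\nr(L)/W_{\mf s}^e$.

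Next I would compute those weights for $\Hom_G(\Pi_{\mf s},\pi)$. By Lemma \ref{lem:3.8}(b) the restriction of $\Hom_G(\Pi_{\mf s},\pi)$ to $\End_L(\Pi_{\mf s_L})$ is $\Hom_L(\Pi_{\mf s_L}, \mr{pr}_{\mf s_L}J_{\bar P}^G(\pi))$. Now $\pi$ has supercuspidal support $(L,\sigma\otimes\chi)$, which by the second adjointness (Theorem \ref{thm:1.8}) and exactness of $J_{\bar P}^G$ forces every irreducible subquotient of $\mr{pr}_{\mf s_L}J_{\bar P}^G(\pi)$ to lie in $\Irr(L)^{\mf s_L}$ with supercuspidal support conjugate to $(L,\sigma\otimes\chi)$ under $N_G(L)$; combined with Theorem \ref{thm:1.14} this pins the support down to the $W_{\mf s}$-orbit of $\sigma\otimes\chi$, i.e.\ the $W_{\mf s}^e$-orbit of $\chi$ in $X_\nr(L)$. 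Under the equivalence $\Rep_\fl(L)^{\mf s_L} \cong \mc O(\Irr(L)^{\mf s_L})\text{-}\Mod_\fl$ of \eqref{eq:3.3}, this says precisely that the $\mc O(X_\nr(L))$-weights of $\Hom_G(\Pi_{\mf s},\pi)$ all lie in the orbit $W_{\mf s}^e\chi$. Projecting to $X_\nr(L)/W_{\mf s}^e = \Irr(L)^{\mf s_L}/W_{\mf s}$ gives the single point $W_{\mf s}^e\chi$, which is the claimed central character.

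The main obstacle I expect is bookkeeping: making the identification of centres in the previous paragraph airtight. One must check that the embedding $\mc O(X_\nr(L)) \hookrightarrow \End_G(\Pi_{\mf s})$ of \eqref{eq:3.7} really does land in the preimage of $Z(\End_G(\Pi_{\mf s}))$ under the correct normalization, and that its composite with the isomorphism $Z(\End_G(\Pi_{\mf s})) \cong \mc O(X_\nr(L))^{W_{\mf s}^e}$ of \eqref{eq:3.8} is the expected restriction-of-functions map — this is where the $W_{\mf s}^e$-action (and the fact that it may only be projective, cf.\ \eqref{eq:3.6}) could cause subtleties, though for the statement about \emph{central} characters only the honest $W_{\mf s}^e$-action on $X_\nr(L)$ matters. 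A clean way to avoid the projectivity issue entirely is to argue purely on the level of central characters: a central character of $\End_G(\Pi_{\mf s})$ is a point of $X_\nr(L)/W_{\mf s}^e$, and by Corollary \ref{cor:3.9}'s own second formulation this is forced once we know the $\mc O(X_\nr(L))$-weights of the module after restriction along \eqref{eq:3.7}, which is exactly what the supercuspidal-support computation supplies.
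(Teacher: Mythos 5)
Your proposal is correct and follows the approach the paper signals (deduce from Lemma~\ref{lem:3.8}, the identification~\eqref{eq:3.9} of the cuspidal centre, and Theorem~\ref{thm:3.5}); the paper leaves the details implicit, and you fill them in via Lemma~\ref{lem:3.8}(b). One small imprecision is worth flagging: you cite ``second adjointness and exactness of $J_{\bar P}^G$'' to force the irreducible subquotients of $\mr{pr}_{\mf s_L}J_{\bar P}^G(\pi)$ into the $W_{\mf s}$-orbit of $\sigma\otimes\chi$, but second adjointness (and Frobenius reciprocity) directly constrain only the socle and cosocle of $J_{\bar P}^G(\pi)$, not arbitrary subquotients; to handle all subquotients you should either invoke Bernstein's geometric lemma (as in Example~\ref{ex:3.A}), or argue through the generalized central character decomposition of $\mr{pr}_{\mf s_L}J_{\bar P}^G(\pi)$ and Theorem~\ref{thm:1.14}. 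An alternative that sidesteps the issue entirely is to use Lemma~\ref{lem:3.8}(a) instead: $\Hom_G(\Pi_{\mf s},\pi)$ is a subquotient of $\ind_{\End_L(\Pi_{\mf s_L})}^{\End_G(\Pi_{\mf s})}\Hom_L(\Pi_{\mf s_L},\sigma\otimes\chi)$, whose $\mc O(X_\nr(L))$-weights are manifestly $W_{\mf s}^e\chi$, giving the central character directly.
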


Corollary \ref{cor:3.9} enables us to analyse the representation theory of $\End_G (\Pi_{\mf s})$ 
by putting conditions on the supercuspidal supports. For a $W_{\mf s}^e$-stable 
subset $U' \subset X_\nr (L)$ we define
\[
\End_G (\Pi_{\mf s}) - \Mod_{U'} = \{ V \in \End_G (\Pi_{\mf s}) - \Mod :
\text{ all } \mc O (X_\nr (L))\text{-weights of } V \text{ lie in } U' \} ,
\]
and its subcategory $\End_G (\Pi_{\mf s})- \Mod_{\fl,U'}$.

If $U$ equals $\{\sigma \otimes \chi : \chi \in U'\}$, then \eqref{eq:3.5} and Corollary 
\ref{cor:3.9} provide equivalences of categories
\begin{equation}\label{eq:3.10}
\Rep (G)^{\mf s}_U \cong \End_G (\Pi_{\mf s})- \Mod_{U'} \quad \text{and} \quad
\Rep_\fl (G)^{\mf s}_U \cong \End_G (\Pi_{\mf s})- \Mod_{\fl,U'} .
\end{equation}
When $U' \subset X_\nr (L)$ is open (with respect to the Zariski topology or with respect to the
analytic topology), we can analyse $\End_G (\Pi_{\mf s})-\Mod_{\fl,U'}$ by localizing 
$\End_G (\Pi_{\mf s})$ with respect to an ideal of 
$Z(\End_G (\Pi_{\mf s})) \cong \mc O (X_\nr (L) / W_{\mf s}^e)$
or by involving complex analytic functions on $U'$. For specific $U'$, this localization may 
be Morita equivalent to a localization of a simpler algebra.

From now we assume
\begin{equation}
\sigma \in \Irr_\cusp (L) \text{ is unitary (or equivalently tempered).}
\end{equation}
By Lemma \ref{lem:1.13}, that is no restriction on $\mf s$ or $\mf s_L$. We are interested
in the category $\Rep (G)^{\mf s}_{X_\nr^+ (L) W_{\mf s} \sigma}$, which will be related
to a twisted graded Hecke algebra.

Let $X^* (Z^\circ (L))$ be the lattice of $F$-rational characters $Z^\circ (\mc L) = Z (\mc L)^\circ \to GL_1$
and let $\Phi (G,Z^\circ (L))$ be the set of $\alpha \in X^* (Z^\circ (L))$ that appear in 
the adjoint action of $Z^\circ (L)$ on the Lie algebra of $G$. This is not necessarily a root 
system, but it is always a generalized root system in the sense of \cite{DiFi}. 

For a reduced root $\alpha \in \Phi (G,Z^\circ (L))$, let $L_\alpha$ be the reductive
group generated by $L$ and the root subgroups of $G$ associated to the multiples of $\alpha$.
Then $L$ is a maximal proper Levi subgroup of $L_\alpha$. We say that $\alpha$ belongs to 
$\Phi (G,Z^\circ (L))_\sigma$ if, for some $\chi_\sigma \in \Hom (L / Z(L_\alpha), 
\R_{>0}) \setminus \{1\}$, the representation $I^{L_\alpha}_{L (P \cap L_\alpha)} (\sigma 
\otimes \chi_\sigma)$ is reducible. By \cite[Corollary 1.3 and (1.8)]{SolParam} and
\cite[Proposition 2.13]{Hei2}, 
\[
\Phi (G,Z^\circ (L))_\sigma \subset X^* (Z^\circ (L))
\text{ is a reduced root system.} 
\]
For $\alpha \in \Phi (G,Z^\circ (L))_\sigma \subset X^* (Z^\circ (L))$, the above 
unramified character $\chi_\sigma$ is unique up
to inversion. It can be captured with one real number $q_{\sigma,\alpha}$, as follows. 
Put $L_\sigma^2 = \bigcap_{\chi \in X_\nr (L,\sigma)} \ker \chi$ and let $h_\alpha^\vee$ 
(a version of the coroot $\alpha^\vee$) be the generator of $(L_\sigma^2 \cap L_\alpha^1) 
/ L^1 \cong \Z$ from \cite[(A.2)]{SolEnd}. Then $q_{\sigma,\alpha} \in \R_{>1}$ is the 
unique number such that $\chi_\sigma (h_\alpha^\vee) \in \{q_{\sigma,\alpha}, 
q_{\sigma,\alpha}^{-1} \}$. For explicit computations of the numbers $q_{\sigma,\alpha}$ 
we refer to \cite{SolParam,Oha}.

We introduce the data for our twisted graded Hecke algebra.
\begin{itemize}
\item $\mf t = \mr{Lie}(X_\nr (L)) = \Hom (L,\C) \cong \Hom (Z(L),\C) 
\cong X^* (Z(L)) \otimes_\Z \C$.
\item $R_\sigma = \{ h_\alpha^\vee : \alpha \in \Phi (G,Z^\circ (L))_\sigma \} \subset
L_\sigma^2 / L^1$ is a root system by \cite[Proposition 3.1]{SolEnd}. The set
$R_\sigma^+$ of $h_\alpha^\vee$ for which $\alpha$ appears in Lie$\,U_P$ is a positive
system in $R_\sigma$.
\item The parameter function $k_\sigma : R_\sigma \to \R_{>0}$ given by
$k_\sigma (h_\alpha^\vee) = \log (q_{\sigma,\alpha})$.
\item The finite group $W_{\mf s, \sigma}$ acts on $R_\sigma$. It can be written as
\begin{equation}\label{eq:3.21}
W_{\mf s,\sigma} = W(R_\sigma) \rtimes \Gamma_\sigma ,\quad \text{where } \Gamma_\sigma \text{ is the stabilizer of } R_\sigma^+ .
\end{equation}
\item The 2-cocycle $\natural_\sigma$ given by the multiplication rules for the 
intertwining ope\-ra\-tors in \eqref{eq:1.24}. It records how much $w \mapsto I(w,L (P \cap
L_\alpha), \sigma, \chi = 1)$ deviates from a group homomorphism. We note that 
$\natural_\sigma$ depends on a normalization of these intertwining ope\-ra\-tors.
\end{itemize}
Let $\mh H (\mf t,W_{\mf s,\sigma},k_\sigma, \natural_\sigma)$ be the algebra
as in Definition \ref{def:2.B}, determined by the above data.

\begin{ex}\label{ex:3.C}
Consider $G = SL_2 (F)$, the diagonal torus $T = L$ and $\sigma = \mr{triv}_T$.
In this setting $\mf t = \Hom (T,\C^\times) \cong \C$, $W_{\mf s} = N_G (T)/T
\cong S_2$ and the group $\Gamma_{\sigma'}$ is trivial for all $\sigma' \in 
X_\nr (T)$. For various $\sigma'$, the algebras $\mh H (\mf t, W_{\mf s,\sigma'},
k_{\sigma'})$ are:
\begin{itemize}
\item $\mh H (\C , S_2, \log (q_F))$ for $\sigma' = \mr{triv}_T$,
\item $\mh H (\C, S_2, 0) = \mc O (\C) \rtimes S_2$ for $\sigma' \in X_\nr (T)$ quadratic,
\item $\mh H (\C, \{e\},0) = \mc O (\C)$ for other $\sigma' \in X_\nr (T)$.
\end{itemize}
\end{ex}

For open $U \subset X_\nr (L)$, let $C^{an}(U)$ be the algebra of complex
analytic functions on $U$. We define the analytic localization of 
$\End_G (\Pi_{\mf s})$ on $U$ as
\begin{equation}\label{eq:3.22}
\End_G (\Pi_{\mf s}) \otimes_{\mc O (X_\nr (L))} C^{an}(U) .
\end{equation}
This is an algebra if $U$ is $W_{\mf s}^e$-stable.

We note that the map
\[
\exp_\sigma : \mf t \to \Irr (L)^{\mf s_L} ,\quad x \mapsto \exp (x) \otimes \sigma
\]
restricts to a diffeomorphism from $\mf t_\R := X^* (Z^\circ (L))$ to $X_\nr^+ (L) \sigma$.
Analytic localization of $\End_G (\Pi_{\mf s})$ on a small tabular open neighborhood of
$X_\nr^+ (L) W_{\mf s}$ in $\Irr (L)^{\mf s_L}$ can be compared with analytic localization
of $\mh H (\mf t, W_{\mf s,\sigma}, k_\sigma, \natural_\sigma)$ on a small tubular open
neighborhood $U_\sigma$ of $\mf t_\R$ in $\mf t$, an algebra of the form
\begin{equation}\label{eq:3.15}
\mh H (\mf t, W_{\mf s,\sigma}, k_\sigma, \natural_\sigma) 
\otimes_{\mc O (\mf t / W_{\mf s,\sigma})} C^{an}(U_\sigma)^{W_{\mf s,\sigma}} .
\end{equation}
Arguments involving these localizations lead to:

\begin{thm}\label{thm:3.10} \textup{\cite[Corollary 8.1]{SolEnd}} \\
There are equivalences of categories
\[
\Rep_\fl (G)^{\mf s}_{X_\nr^+ (L) W_{\mf s} \sigma} \cong 
\End_G (\Pi_{\mf s})-\Mod_{\fl, X_\nr^+ (L) W_{\mf s} \sigma} \cong
\mh H (\mf t, W_{\mf s,\sigma}, k_\sigma, \natural_\sigma) - \Mod_{\fl,\mf t_\R} 
\]
such that
\enuma{
\item Once a 2-cocycle $\natural_\sigma$ has been fixed (by a normalization of the 
involved inter\-twi\-ning operators), the equivalences are canonical.
\item The equivalences preserve temperedness.
\item $\pi \in \Irr (G)^{\mf s}_{X_\nr^+ (L) W_{\mf s} \sigma}$ is discrete series
if and only if the image of $\pi$ in\\
$\Irr \big( \mh H (\mf t, W_{\mf s,\sigma}, k_\sigma, \natural_\sigma) \big)$ 
is discrete series and $\mr{rk}(R_\sigma)$ equals $\dim_F (Z(L) / Z(G))$.
\item The equivalences are compatible with parabolic induction and restriction,
in the same sense as Lemma \ref{lem:3.8}.
\item For $\pi \in \Irr (G)^{\mf s}_{X_\nr^+ (L) W_{\mf s} \sigma}$ with $\mr{Sc}(\pi)$
represented by $\sigma \otimes \chi \in X_\nr^+ (L) \sigma$, the associated
$\mh H (\mf t, W_{\mf s,\sigma}, k_\sigma, \natural_\sigma)$-module has central
character $W_{\mf s,\sigma} \log \chi \in \mf t_\R / W_{\mf s,\sigma}$. Equivalently,
$\exp_\sigma$ translates central characters into supercuspidal supports.
}
\end{thm}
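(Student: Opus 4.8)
The leftmost equivalence is immediate. By Proposition~\ref{prop:3.4} the representation $\Pi_{\mf s}$ is a progenerator of $\Rep (G)^{\mf s}$, so Proposition~\ref{prop:3.1} gives $\Rep (G)^{\mf s} \cong \End_G (\Pi_{\mf s})-\Mod$; Corollary~\ref{cor:3.9} shows that under this equivalence the supercuspidal support of an irreducible object matches, through $\exp_\sigma$, the $\mc O (X_\nr (L))$-weights of the corresponding module, so the equivalence restricts to the full subcategories cut out by the condition on supercuspidal support. This is \eqref{eq:3.10}. For this first equivalence the listed properties hold formally: (d) is Lemma~\ref{lem:3.8}, (e) is Corollary~\ref{cor:3.9}, and (a), (b), (c) pass across a canonical, manifestly weight-preserving equivalence.

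The substance is the second equivalence, and the plan is to compare analytic localizations. I would localize $\End_G (\Pi_{\mf s})$ as in \eqref{eq:3.22} on a small $W_{\mf s}^e$-stable tubular neighborhood $U$ of $X_\nr^+ (L) W_{\mf s}\sigma$ in $X_\nr (L)$, and the twisted graded Hecke algebra $\mh H (\mf t, W_{\mf s,\sigma}, k_\sigma, \natural_\sigma)$ as in \eqref{eq:3.15} on a small $W_{\mf s,\sigma}$-stable tubular neighborhood $U_\sigma$ of $\mf t_\R$ in $\mf t$. After shrinking the tubes, $U$ breaks into sheets permuted by $W_{\mf s}^e$ — each a tube around a translate of $X_\nr^+(L)\sigma$, with sheet-stabilizer a finite group one identifies with $W_{\mf s,\sigma}$, since the elements of $W_{\mf s}^e$ that do not fix $X_\nr^+(L)\sigma$, including the kernel $X_\nr (L,\sigma)$ of $W_{\mf s}^e\to W_{\mf s}$, push $U$ off itself. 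Both localized algebras are finite over their centres (Theorem~\ref{thm:3.5}, \eqref{eq:2.7}), and a standard sheet-restriction (idempotent) argument reduces the second equivalence to producing a centre-linear algebra isomorphism between the localization \eqref{eq:3.15} and the localization of $\End_G (\Pi_{\mf s})$ on a single sheet of $U$, with $\exp_\sigma$ matching $\mf t_\R$ to $X_\nr^+(L)\sigma$ and the two $W_{\mf s,\sigma}$-actions. Theorems~\ref{thm:3.6} and \ref{thm:2.4} already show that both algebras, after further inverting the centre, become central simple crossed products of a torus by $W_{\mf s,\sigma}$ twisted by a 2-cocycle, so the generic structure agrees; the work is to match the integral (analytic) structures.

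That matching is governed entirely by the rank-one data, and this I expect to be the main obstacle. For each reduced $\alpha\in\Phi (G,Z^\circ (L))_\sigma$ the corank-one Levi $L_\alpha$ produces the reducibility point of $I^{L_\alpha}_{L(P\cap L_\alpha)}(\sigma\otimes\chi_\sigma)$ recorded by $q_{\sigma,\alpha}$, and the rational intertwining operator $J(s_\alpha,L(P\cap L_\alpha),\sigma,\chi)$ of \eqref{eq:1.21} has its zeros and poles prescribed accordingly; written in the logarithmic coordinate $x=\log\chi$ near $\chi=1$ this is exactly the factor $\frac{\alpha+k_\sigma (h_\alpha^\vee)}{\alpha}$ occurring in the element $\tau_{s_\alpha}$ of \eqref{eq:2.20}, with $k_\sigma (h_\alpha^\vee)=\log q_{\sigma,\alpha}$. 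Patching these identifications over all $\alpha$, and reading off $\natural_\sigma$ from the composition law \eqref{eq:1.24} for the operators $I(w,\cdot)$, one checks that the embeddings \eqref{eq:3.7} and \eqref{eq:3.6} correspond to their graded-Hecke counterparts, which identifies the localizations and hence the finite-length module categories; this is essentially importing Lusztig's passage from affine to graded Hecke algebras into the $p$-adic setting while keeping control of every pole of the intertwining operators. The remaining properties are then bookkeeping. Property~(a) is the canonicity of each step once the normalization defining $\natural_\sigma$ is fixed. For (b), the growth condition underlying Definition~\ref{def:adm} translates, on representations with real supercuspidal support, into the condition $\Re\,\mr{Wt}\subset\mf t_\R^-$ of Definition~\ref{def:2.F}, which survives the $W_{\mf s,\sigma}$-equivariant $\exp_\sigma$. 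Property~(c) is (b) sharpened: square-integrability modulo $Z(G)$ forces the weights into $\mr{int}(\mf t_\R^-)$ inside the $R_\sigma$-span and simultaneously forces $\mr{rk}(R_\sigma)=\dim_F (Z(L)/Z(G))$, so that no residual unramified-twisting direction remains. Finally, (d) and (e) for the second equivalence follow from Lemma~\ref{lem:3.8} and Corollary~\ref{cor:3.9} transported through the localization.
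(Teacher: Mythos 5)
Your proposal matches the approach sketched in the paper and carried out in \cite[\S 4--8]{SolEnd}: reduction to analytic localizations as in \eqref{eq:3.22}--\eqref{eq:3.15}, sheet decomposition of the tubular neighborhood under the $W^e_{\mf s}$-action with sheet stabilizer $W_{\mf s,\sigma}$, rank-one matching of the reducibility parameters $q_{\sigma,\alpha}$ with the graded Hecke parameters $k_\sigma$ via the pole structure of the intertwining operators \eqref{eq:1.21} and \eqref{eq:2.20}, and bookkeeping for (a)--(e) through Theorem~\ref{thm:3.5}, Theorem~\ref{thm:3.6}, Theorem~\ref{thm:2.4} and Lemma~\ref{lem:3.8}. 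The survey does not reproduce the full argument, deferring to \cite[Corollary 8.1]{SolEnd}, but you have correctly identified its structure and all the key ingredients.
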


Via Lemma \ref{lem:1.5}, $\Rep_\fl (G)^{\mf s}_{X_\nr^+ (L) W_{\mf s} \sigma}$ is
equivalent to the category of finite length $\mc H (G)^{\mf s}$-modules all whose
supercuspidal supports lie in $X_\nr^+ (L) W_{\mf s} \sigma / W_{\mf s}$. Theorem
\ref{thm:3.10} is our final answer to the question about the structure of 
$\mc H (G)^{\mf s}$ and $\End_G (\Pi_{\mf s})$, in terms of their module categories.

\subsection{The ABPS conjecture} \

The big advantage of Theorem \ref{thm:3.10} is that it allows us to study
$G$-representations via graded Hecke algebras. In particular all the results from
Section \ref{sec:2} can now be applied to $\Rep_\fl (G)^{\mf s}_{X_\nr^+ (L) W_{\mf s}
\sigma}$. Let $R (G)^{\mf s}_{X_\nr^+ (L) W_{\mf s} \sigma}$ be the Grothendieck
group of $\Rep_\fl (G)^{\mf s}_{X_\nr^+ (L) W_{\mf s} \sigma}$. From \eqref{eq:2.E} and 
Theorems \ref{thm:2.8}, \ref{thm:2.9} and \ref{thm:3.10} we conclude:

\begin{thm}\label{thm:3.11}
Fix a 2-cocycle $\natural_{\mf \sigma}$ as in Theorem \ref{thm:3.10}. 
\enuma{
\item There exist canonical group isomorphisms
\[
R (G)^{\mf s}_{X_\nr^+ (L) W_{\mf s} \sigma} \longleftrightarrow 
R (\End_G (\Pi_{\mf s})^{op} )_{X_\nr^+ (L) \sigma} \longleftrightarrow
R \big( \mh H (\mf t, W_{\mf s,\sigma}, k_\sigma, \natural_\sigma^{-1} ) \big)_{\mf t_\R} .
\]
These isomorphisms are compatible with parabolic induction and they 
preserve temperedness. 
\item Part (a) can be refined canonically to a bijection
\[
\Irr (G)^{\mf s}_{X_\nr^+ (L) W_{\mf s} \sigma} \longleftrightarrow 
\Irr \big( \mc O (\mf t) \rtimes \C[W_{\mf s,\sigma},\natural_\sigma^{-1}] \big)_{\mf t_\R} .
\]
It preserves temperedness, but it need not respect parabolic induction or
supercuspidal supports/central characters.
}
\end{thm}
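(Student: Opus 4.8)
The plan is to obtain both statements as formal consequences of Theorem~\ref{thm:3.10}, using the opposite-algebra identities \eqref{eq:2.17} and \eqref{eq:2.E} to keep track of left versus right modules, and invoking the refined correspondence of Theorem~\ref{thm:2.9} only for part (b); no technical input beyond Section~\ref{sec:2} and Theorem~\ref{thm:3.10} is needed.

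For part (a): once a cocycle $\natural_\sigma$ has been fixed, Theorem~\ref{thm:3.10} gives equivalences of categories
\[
\Rep_\fl (G)^{\mf s}_{X_\nr^+ (L) W_{\mf s} \sigma} \;\cong\; \End_G (\Pi_{\mf s})-\Mod_{\fl, X_\nr^+ (L) W_{\mf s} \sigma} \;\cong\; \mh H (\mf t, W_{\mf s,\sigma}, k_\sigma, \natural_\sigma)-\Mod_{\fl,\mf t_\R} ,
\]
and passing to Grothendieck groups turns these into group isomorphisms. The middle category consists of right $\End_G (\Pi_{\mf s})$-modules, so its Grothendieck group is $R(\End_G (\Pi_{\mf s})^{op})$; Corollary~\ref{cor:3.9} identifies the supercuspidal support $(L,\sigma \otimes \chi)$ of an irreducible object with the $\mc O (X_\nr (L))$-weight $\chi$ of the matching $\End_G (\Pi_{\mf s})^{op}$-module, so the localization $X_\nr^+ (L) W_{\mf s} \sigma$ on the $G$-side corresponds to $\mc O (X_\nr (L))$-weights in the $W_{\mf s}^e$-orbit of $X_\nr^+ (L)$, which $\exp_\sigma$ matches with the weight set $\mf t_\R$ on the graded side (Theorem~\ref{thm:3.10}(e)). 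Similarly, right $\mh H (\mf t, W_{\mf s,\sigma}, k_\sigma, \natural_\sigma)$-modules are, via \eqref{eq:2.E}, left modules over $\mh H (\mf t, W_{\mf s,\sigma}, k_\sigma, \natural_\sigma^{-1})$, which is where the inverse cocycle enters. Compatibility with parabolic induction is inherited from Theorem~\ref{thm:3.10}(d): the commuting squares of functors there descend to commuting squares of Grothendieck groups; preservation of temperedness is Theorem~\ref{thm:3.10}(b).

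For part (b): run the same two category equivalences at the level of irreducible objects, and post-compose with the refined map $\zeta_\Irr$ of \eqref{eq:2.13}, which Theorem~\ref{thm:2.9} makes available for $\mh H (\mf t, W_{\mf s,\sigma}, k_\sigma, \natural_\sigma^{-1})$. Since $\mh H (\mf t, W_{\mf s,\sigma}, 0, \natural_\sigma^{-1}) = \mc O (\mf t) \rtimes \C [W_{\mf s,\sigma}, \natural_\sigma^{-1}]$ (using the decomposition $W_{\mf s,\sigma} = W(R_\sigma) \rtimes \Gamma_\sigma$ from \eqref{eq:3.21}), $\zeta_\Irr$ lands in $\Irr (\mc O (\mf t) \rtimes \C [W_{\mf s,\sigma}, \natural_\sigma^{-1}])$; by Theorem~\ref{thm:2.8}(iv) it carries representations whose $\mc O (\mf t)$-weights lie in $\mf t_\R$ to representations of the same kind, hence restricts to the claimed bijection, and by Theorem~\ref{thm:2.8}(v) it preserves temperedness. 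The two caveats in the statement are exactly the known defects of $\zeta_\Irr$: it fails to commute with parabolic induction already on $I(0) = \ind_{\mc O (\mf t)}^{\mh H} (\C_0)$ (the example after \eqref{eq:2.13}), and by Theorem~\ref{thm:2.8}(iv) it may move $\mc O (\mf t)$-weights within their $\mf t_\R$-cosets, which by Theorem~\ref{thm:3.10}(e) amounts to a change of supercuspidal support.

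The argument is essentially bookkeeping, and I do not expect a genuine obstacle: the substance is already packaged in Theorem~\ref{thm:3.10} and the results of Section~\ref{sec:2}. The one point that requires real care is the consistent handling of opposite algebras across the three passages --- for $\mc H (G)^{\mf s}$, for $\End_G (\Pi_{\mf s})$, and for $\mh H (\mf t, W_{\mf s,\sigma}, k_\sigma, \natural_\sigma)$ --- together with the matching of the localization and weight conditions through every equivalence; this is precisely what pins down the appearance of $\natural_\sigma^{-1}$ and of the weight set $\mf t_\R$ in the final answer.
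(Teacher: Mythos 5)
Your proposal is correct and follows exactly the route the paper intends: Theorem~\ref{thm:3.10} gives the category equivalences, \eqref{eq:2.E} handles the passage to opposite algebras (which is where $\natural_\sigma^{-1}$ appears), and Theorems~\ref{thm:2.8}/\ref{thm:2.9} supply $\zeta_0$ and $\zeta_\Irr$ with the stated compatibilities and caveats. The paper itself states the theorem as a consequence of \eqref{eq:2.E} and Theorems~\ref{thm:2.8}, \ref{thm:2.9}, \ref{thm:3.10} without a separate proof; your write-up fleshes out precisely that bookkeeping, including the correct identification of right modules with left modules over the inverse-cocycle algebra and the matching of the localization/weight conditions through each equivalence.
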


Next we want to combine instances of Theorem \ref{thm:3.11} for all $\sigma' = \chi_u \otimes 
\sigma \in \Irr_\temp (L)^{\mf s_L}$. Recall from Theorem \ref{thm:3.6} and \eqref{eq:2.E} that
\[
\End_G (\Pi_{\mf s})^{op} \otimes_{\mc O (X_\nr (L) / W_{\mf s}^e)}
\C (X_\nr (L) / W_{\mf s}^e) \cong \C (X_\nr (L)) \rtimes \C [W_{\mf s}^e, \natural_{\mf s}^{-1}] .
\] 
By \cite[Lemma 7.1]{SolEnd} and Clifford theory, there are equivalences of categories
\begin{equation}\label{eq:3.12}
\begin{aligned}
\mc O (\mf t) \rtimes \C [W_{\mf s,\sigma'},\natural_{\sigma'}] - \Mod_{\fl, \mf t_\R} & \cong
\mc O (X_\nr (L) \rtimes \C [W_{\mf s,\sigma'}, \natural_{\sigma'}] - \Mod_{\fl,\chi_u X_\nr^+ (L)} \\
& \cong \mc O (X_\nr (L) \rtimes \C [W_{\mf s}^e, \natural_{\mf s}] 
- \Mod_{\fl,W_{\mf s}^e \chi_u X_\nr^+ (L)} .
\end{aligned}
\end{equation}

\begin{thm}\label{thm:3.13} 
\textup{\cite[Theorem 2.5]{SolHH}} \\
Theorem \ref{thm:3.11}.a and \eqref{eq:3.12} induce a canonical group isomorphism
\[
\zeta^\vee : R(G)^{\mf s} \to 
R \big( \mc O (X_\nr (L)) \rtimes \C [W_{\mf s}^e, \natural_{\mf s}^{-1}] \big)
\]
with the following properties:
\enuma{
\item $\zeta^\vee$ and its inverse preserve temperedness. Moreover $\zeta^\vee$ sends 
tempered representations to tempered representations (so not to virtual representations).
\item If $\chi_u \in X_\nr^u (L)$ and all irreducible subquotients of $\pi \in R(G)^{\mf s}$ have cuspidal
support in $\sigma \otimes W_{\mf s}^e \chi_u X_\nr^+ (L)$, then all $\mc O (X_\nr (L))$-weights of 
$\zeta^\vee (\pi)$ lie in $W_{\mf s}^e \chi_u X_\nr^+ (L)$.
\item In the setting of (b), suppose that $\pi \in \Irr_\temp (G)^{\mf s}$. Then 
\[
\zeta^\vee (\pi) = \ind_{\mc O (X_\nr (L)) \rtimes \C [(W_{\mf s}^e)_{\chi_u}, \natural_{\mf s}^{-1}]}^{
\mc O (X_\nr (L)) \rtimes \C [W_{\mf s}^e, \natural_{\mf s}^{-1}]} (\chi_u \otimes \pi_{\chi_u}), 
\]
where $\pi_{\chi_u} \in \Mod - \C [(W_{\mf s}^e)_{\chi_u}, \natural_{\mf s}^{-1}]$ is obtained 
like in Theorem \ref{thm:2.8}.a. 
\item $\zeta^\vee$ commutes with parabolic induction and with unramified twists, in the sense that for a parabolic subgroup $Q = M U_Q \supset P$ and $\chi \in X_\nr (M)$:
\[
\zeta^\vee (I_Q^G (\tau \otimes \chi)) = \ind_{\mc O (X_\nr (L)) \rtimes \C [W_{\mf s_M}^e, 
\natural_{\mf s}^{-1}]}^{\mc O (X_\nr (L)) \rtimes \C [W_{\mf s}^e, \natural_{\mf s}^{-1}]}
(\zeta^\vee_M (\tau) \otimes \chi |_L ) .
\]
}
\end{thm}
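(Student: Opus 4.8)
The idea is to assemble $\zeta^\vee$ out of the isomorphisms of Theorem~\ref{thm:3.11}(a), one for each unitary twist of $\sigma$, using the polar decomposition of $X_\nr(L)$. First I would decompose both sides by supercuspidal support. Every $\pi \in \Irr(G)^{\mf s}$ has $\mr{Sc}(\pi)$ represented by $(L,\sigma\otimes\chi)$ with $\chi\in X_\nr(L)$, and by \eqref{eq:1.29} we may write $\chi=\chi_u\chi^+$ with $\chi_u\in X_\nr^u(L)$ and $\chi^+\in X_\nr^+(L)$. Since $W_{\mf s}^e$ acts on $X_\nr(L)$ through conjugations (which respect the polar decomposition) and through translations by elements of $X_\nr(L,\sigma)\subset X_\nr^u(L)$ (which only move the unitary factor), the subsets $W_{\mf s}^e\chi_u X_\nr^+(L)$, as $\chi_u$ ranges over representatives for $X_\nr^u(L)/W_{\mf s}^e$, form a $W_{\mf s}^e$-invariant partition of $X_\nr(L)$. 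Applying \eqref{eq:3.13} on the $\Rep(G)^{\mf s}$ side and the analogous weight decomposition (using \eqref{eq:3.8}) on the $\mc O(X_\nr(L))\rtimes\C[W_{\mf s}^e,\natural_{\mf s}^{-1}]$ side, both Grothendieck groups split,
\[
R(G)^{\mf s}=\bigoplus_{[\chi_u]}R(G)^{\mf s}_{\sigma\otimes W_{\mf s}^e\chi_u X_\nr^+(L)},\qquad R\big(\mc O(X_\nr(L))\rtimes\C[W_{\mf s}^e,\natural_{\mf s}^{-1}]\big)=\bigoplus_{[\chi_u]}R\big(\cdot\big)_{W_{\mf s}^e\chi_u X_\nr^+(L)},
\]
so it suffices to match the summands indexed by the same $[\chi_u]$.

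\textbf{The core identification.} Fix $\chi_u$ and put $\sigma'=\sigma\otimes\chi_u$, which is again tempered supercuspidal. I would apply Theorem~\ref{thm:3.11}(a) with $\sigma'$ in place of $\sigma$, giving a canonical isomorphism $R(G)^{\mf s}_{X_\nr^+(L)W_{\mf s}\sigma'}\cong R\big(\mh H(\mf t,W_{\mf s,\sigma'},k_{\sigma'},\natural_{\sigma'}^{-1})\big)_{\mf t_\R}$, compatible with parabolic induction and preserving temperedness. Then \eqref{eq:3.12} (with every 2-cocycle replaced by its inverse, together with \cite[Lemma 7.1]{SolEnd} and Clifford theory) identifies finite-length $\mh H(\mf t,W_{\mf s,\sigma'},k_{\sigma'},\natural_{\sigma'}^{-1})$-modules with real weights with finite-length $\mc O(X_\nr(L))\rtimes\C[W_{\mf s}^e,\natural_{\mf s}^{-1}]$-modules all of whose $\mc O(X_\nr(L))$-weights lie in $W_{\mf s}^e\chi_u X_\nr^+(L)$. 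Composing the two arrows and summing over $[\chi_u]$ produces $\zeta^\vee$; running the chain backwards gives the inverse, so $\zeta^\vee$ is a group isomorphism.

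\textbf{The four properties.} Property (b) is automatic: the $[\chi_u]$-block of $R(G)^{\mf s}$ is sent to the $[\chi_u]$-block on the other side. For (a), every functor in the chain preserves temperedness (Theorem~\ref{thm:3.11}(a), and Clifford induction/restriction of tempered modules is tempered); the delicate point that $\zeta^\vee$ carries a \emph{genuine} tempered representation to a genuine one, not a virtual one, I would deduce from the fact that on the tempered part the map $\zeta_0$ of Theorem~\ref{thm:2.8} agrees with the honest bijection of Theorem~\ref{thm:2.8}(v)/\eqref{eq:2.13}, and that Clifford induction preserves genuineness --- which is exactly what the explicit formula in (c) records. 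To prove (c), trace a tempered irreducible $\pi\in\Irr_\temp(G)^{\mf s}$ with cuspidal support in $\sigma\otimes W_{\mf s}^e\chi_u X_\nr^+(L)$ through Theorem~\ref{thm:3.10}: it becomes a tempered irreducible $\mh H(\mf t,W_{\mf s,\sigma'},k_{\sigma'},\natural_{\sigma'}^{-1})$-module whose restriction to the twisted group algebra of its stabilizer is, after Theorem~\ref{thm:2.8} and the identification $W_{\mf s,\sigma'}\simeq(W_{\mf s}^e)_{\chi_u}$, the module $\pi_{\chi_u}$ of (c); pushing it through \eqref{eq:3.12} and the induction $\mc O(X_\nr(L))\rtimes\C[(W_{\mf s}^e)_{\chi_u},\natural_{\mf s}^{-1}]\to\mc O(X_\nr(L))\rtimes\C[W_{\mf s}^e,\natural_{\mf s}^{-1}]$ yields precisely $\zeta^\vee(\pi)=\ind(\chi_u\otimes\pi_{\chi_u})$. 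Finally, (d) follows by combining compatibility with parabolic induction in Theorem~\ref{thm:3.10}(iv) and Theorem~\ref{thm:3.11}(a) with transitivity of induction for twisted crossed products, tracking subgroups through Lemma~\ref{lem:1.23}; the unramified-twist statement is the special case $Q=MU_Q$ with $\tau$ fixed, where the shift in cuspidal support is recorded by $\chi|_L$.

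\textbf{Main obstacle.} The hard part will be the coherence in the core identification: the data $(W_{\mf s,\sigma'},k_{\sigma'},\natural_{\sigma'})$ vary with the unitary point $\chi_u$, and one must check that the local isomorphisms glue into a single \emph{canonical} map onto the one fixed algebra $\mc O(X_\nr(L))\rtimes\C[W_{\mf s}^e,\natural_{\mf s}^{-1}]$. This rests on \cite[Lemma 7.1]{SolEnd} relating the graded Hecke picture near the base point of $X_\nr^+(L)W_{\mf s}\sigma'$ to the global localized crossed product, on Lemma~\ref{lem:1.23} so that $W_{\mf s,\sigma'}^e$ is consistently realised as a subquotient of $W_{\mf s}^e$, and on the continuity of discrete-series packets along unitary directions in the spirit of Lemma~\ref{lem:2.10}, which is what makes the 2-cocycles match up. Carrying out (d) when induction comes from an intermediate Levi $M$ --- where $W_{\mf s_M}^e$ must be threaded correctly inside $W_{\mf s}^e$ --- is the most intricate piece of bookkeeping.
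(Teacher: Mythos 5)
The paper gives no proof of Theorem~\ref{thm:3.13}; it simply cites \cite[Theorem 2.5]{SolHH}. Your outline is the natural unpacking of the construction advertised in the statement and in the surrounding text: split $R(G)^{\mf s}$ via \eqref{eq:3.13} into blocks indexed by $W_{\mf s}^e$-orbits in $X_\nr^u (L)$, apply Theorem~\ref{thm:3.11}.a blockwise, then pass through \eqref{eq:3.12}, and sum. As far as I can tell, this is what the cited proof does, and the verification of properties (a)--(d) proceeds along the lines you sketch.

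Two points where you should be more careful. First, you occasionally elide the middle leg of the chain: \eqref{eq:3.12} has source $\mc O(\mf t)\rtimes\C[W_{\mf s,\sigma'},\natural_{\sigma'}]$, not the graded Hecke algebra, so after Theorem~\ref{thm:3.11}.a you must first apply $\zeta_0$ from Theorems~\ref{thm:2.8}/\ref{thm:2.9} to eliminate the parameter $k_{\sigma'}$ before invoking \eqref{eq:3.12}. You clearly know this (you use $\zeta_0$ in your discussion of (a) and (c)), but when you write that ``\eqref{eq:3.12} ... identifies finite-length $\mh H(\mf t,W_{\mf s,\sigma'},k_{\sigma'},\natural_{\sigma'}^{-1})$-modules with real weights with finite-length $\mc O(X_\nr(L))\rtimes\C[W_{\mf s}^e,\natural_{\mf s}^{-1}]$-modules'' you compress two different arrows into one. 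The temperedness, parabolic-induction and weight statements (a)--(d) need to be threaded through all three maps, with Theorem~\ref{thm:2.8}(ii),(iv),(v) doing the work for the $\zeta_0$ step; in particular (v) is what rescues the ``genuine, not virtual, tempered'' claim in (a), as you correctly intuit.

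Second, the ``main obstacle'' paragraph slightly mischaracterizes the difficulty. The blocks indexed by $[\chi_u]$ are pairwise orthogonal summands, so there is no gluing; one simply takes a direct sum of independently constructed isomorphisms. The coherence that does need care is that the 2-cocycles $\natural_{\sigma'}$ must all be read off consistently from a single fixed $\natural_{\mf s}$ on $W_{\mf s}^e$, via the identifications $W_{\mf s,\sigma'}\cong (W_{\mf s}^e)_{\chi_u}$ from \cite[Lemma 7.1]{SolEnd}; once $\natural_{\mf s}$ is fixed (and the paper explicitly warns, right after the theorem, that that choice is not canonical) the construction is determined. Lemma~\ref{lem:2.10} plays no role at the level of Grothendieck groups, so invoking continuity of discrete-series packets here is a red herring.
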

We point out that in Theorem \ref{thm:3.13} the canonicity holds after $\natural_{\mf s}$ has 
been fixed, the choice of the 2-cocycle $\natural_{\mf s}$ is (in general) not canonical.

Recall from \eqref{eq:2.16} that the irreducible representations of twisted
crossed product algebras can be parametrized by twisted extended quotients.
Combining Theorems \ref{thm:3.13}, \ref{thm:3.11}.b and Theorem \ref{thm:2.3}.b leads to:

\begin{thm}\label{thm:3.12} \textup{\cite[Theorem 9.9]{SolEnd}} 
\enuma{
\item There exists a family $\natural$ of 2-cocycles $\natural_\sigma^{-1} \;
(\sigma \in \Irr_\temp (L)^{\mf s_L})$ and bijections 
\[
\begin{array}{lll}
\Irr (G)^{\mf s}_{X_\nr^+ (L) W_{\mf s} \sigma} & \longleftrightarrow &
(\mf t_\R /\!/ W_{\mf s,\sigma} )_{\natural_\sigma^{-1}} \\
\Irr (G)^{\mf s} & \longleftrightarrow & ( \Irr (L)^{\mf s_L} /\!/ W_{\mf s})_\natural \\
\Irr_\temp (G)^{\mf s} & \longleftrightarrow & 
( \Irr_\temp (L)^{\mf s_L} /\!/ W_{\mf s})_\natural 
\end{array}
\]
\item For $\sigma' \in X_\nr^+ (L) \sigma$ we write $\natural_{\sigma'} = 
\natural_\sigma$. The bijections from part (a) combine to a bijection from $\Irr (G)$ 
to the set of $G$-orbits in
\[
\big\{ (L,\sigma',\rho) : L \subset G \text{ Levi subgroup, } \sigma' \in \Irr_\cusp (L),
\rho \in \Irr (\C [\mr{Stab}_G (L,\sigma')/L, \natural_{\sigma'}^{-1}]) \big\} .
\]
}
\end{thm}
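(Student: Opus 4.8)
The plan is to assemble the statement from the three main threads already developed in Section \ref{sec:3}: the reduction of $\Rep(G)^{\mf s}$ to $\End_G(\Pi_{\mf s})$-modules (Proposition \ref{prop:3.4} and \eqref{eq:3.5}), the comparison of localizations of $\End_G(\Pi_{\mf s})$ with twisted graded Hecke algebras (Theorem \ref{thm:3.10}), and the classification of irreducible modules of twisted crossed products via twisted extended quotients (Theorem \ref{thm:2.3}.b). For part (a), I would first handle the bijection $\Irr(G)^{\mf s}_{X_\nr^+(L) W_{\mf s}\sigma} \longleftrightarrow (\mf t_\R /\!/ W_{\mf s,\sigma})_{\natural_\sigma^{-1}}$. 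Starting from Theorem \ref{thm:3.11}.b one gets a canonical bijection $\Irr(G)^{\mf s}_{X_\nr^+(L) W_{\mf s}\sigma} \longleftrightarrow \Irr\big(\mc O(\mf t) \rtimes \C[W_{\mf s,\sigma},\natural_\sigma^{-1}]\big)_{\mf t_\R}$; applying Theorem \ref{thm:2.3}.b with $X = \mf t$, $\Gamma = W_{\mf s,\sigma}$ and cocycle $\natural_\sigma^{-1}$ identifies the target with the twisted extended quotient $(\mf t /\!/ W_{\mf s,\sigma})_{\natural_\sigma^{-1}}$, and the real-weight restriction cuts this down to $(\mf t_\R /\!/ W_{\mf s,\sigma})_{\natural_\sigma^{-1}}$ (one must check that a $\mc O(\mf t)^{W_{\mf s,\sigma}}$-weight lies in $\mf t_\R/W_{\mf s,\sigma}$ exactly when the corresponding point of the extended quotient has representative in $\mf t_\R$, which is immediate from the description of weights of $\pi(x,\rho)$ as $W_{\mf s,\sigma}x$). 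The conventional choice $\natural_{\sigma'}=\natural_\sigma$ for $\sigma' \in X_\nr^+(L)\sigma$ is then just the statement that the graded Hecke algebra data depend only on the tempered cuspidal datum up to the positive-real twist, which is built into the setup before Theorem \ref{thm:3.10}.

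Next I would glue these local bijections over all tempered $\sigma \in \Irr_\temp(L)^{\mf s_L}$ to obtain the middle and bottom bijections of part (a). The key tool is Theorem \ref{thm:3.13}: the group isomorphism $\zeta^\vee : R(G)^{\mf s} \to R\big(\mc O(X_\nr(L))\rtimes \C[W_{\mf s}^e,\natural_{\mf s}^{-1}]\big)$ refines to a bijection on irreducibles after combining with Theorem \ref{thm:3.11}.b and \eqref{eq:3.12} (this refinement is \cite[Theorem 9.9]{SolEnd}). On the crossed-product side, Theorem \ref{thm:2.3}.b applied with $X = X_\nr(L)$, $\Gamma = W_{\mf s}^e$, cocycle $\natural_{\mf s}^{-1}$ gives $\Irr\big(\mc O(X_\nr(L))\rtimes \C[W_{\mf s}^e,\natural_{\mf s}^{-1}]\big) \longleftrightarrow (X_\nr(L) /\!/ W_{\mf s}^e)_{\natural_{\mf s}^{-1}}$; using the homeomorphism $X_\nr(L)/W_{\mf s}^e \cong \Irr(L)^{\mf s_L}/W_{\mf s}$ together with the identification $X_\nr(L,\sigma)$-stabilizer data $\leftrightarrow$ $W_{\mf s}$-stabilizer data, and Clifford theory relating $\Irr(\C[(W_{\mf s}^e)_\chi,\natural_{\mf s}^{-1}])$ to $\Irr(\C[W_{\mf s,\sigma'},\natural_{\sigma'}^{-1}])$, one rewrites the extended quotient of $X_\nr(L)$ by $W_{\mf s}^e$ as the extended quotient of $\Irr(L)^{\mf s_L}$ by $W_{\mf s}$ with the family $\natural$. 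Temperedness preservation in Theorem \ref{thm:3.13}.a then yields the restriction to the tempered parts on both sides, giving the third bijection.

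For part (b), I would unwind the definition of $W_{\mf s}$ and the extended quotient to recognize $(\Irr(L)^{\mf s_L} /\!/ W_{\mf s})_\natural$ as a set of $G$-orbits of triples. Concretely, a point of $\Irr(L)^{\mf s_L}/W_{\mf s}$ lifts to a pair $(L,\sigma')$ with $\sigma' \in \Irr_\cusp(L)$ well-defined up to $W_{\mf s}$, i.e.\ up to the $N_G(L)$-action; allowing $L$ to vary over Levi subgroups and quotienting by all of $G$ absorbs $W_{\mf s}$ and the choice of Levi representative, so $\mr{Stab}_G(L,\sigma')/L$ recovers the relevant stabilizer group, and the decoration $\rho$ ranges over $\Irr(\C[\mr{Stab}_G(L,\sigma')/L,\natural_{\sigma'}^{-1}])$. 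One then checks that the equivalence relation on decorated triples induced by $G$-conjugation matches the one defining the twisted extended quotient, so that the bijection from part (a) for every $\mf s$ assembles — using the Bernstein decomposition $\Irr(G) = \bigsqcup_{\mf s} \Irr(G)^{\mf s}$ (Theorem \ref{thm:1.15}) and the fact that each inertial class $\mf s$ contributes exactly the $G$-orbit of one $(L,\sigma)$ — into the claimed global bijection from $\Irr(G)$.

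The main obstacle I expect is bookkeeping the 2-cocycles consistently across the three scales ($W_{\mf s,\sigma}$ acting on $\mf t_\R$, $W_{\mf s}^e$ acting on $X_\nr(L)$, and the stabilizer groups $\mr{Stab}_G(L,\sigma')/L$), and verifying that the Clifford-theoretic identification of $\Irr$ of the various twisted group algebras is compatible with the localization/gluing maps — that is, that the cohomology classes $\natural_\sigma, \natural_{\mf s}$ restrict and transport correctly under \eqref{eq:3.12} and the passage $\End_G(\Pi_{\mf s}) \rightsquigarrow \mh H(\mf t,W_{\mf s,\sigma},k_\sigma,\natural_\sigma)$. The canonicity clause (the bijections are canonical only after a family of cocycles has been fixed, matching the remark after Theorem \ref{thm:3.13}) must be tracked throughout; the geometric content — every irreducible has a supercuspidal support, and the $W_{\mf s}^e$-orbit of an unramified twist together with a twisted-group-algebra representation pins down the representation — is already packaged in Theorems \ref{thm:3.10} and \ref{thm:3.13}, so the remaining work is genuinely the combinatorial reconciliation of the indexing sets.
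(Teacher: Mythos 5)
Your plan matches the paper's argument: the paper derives Theorem~\ref{thm:3.12} by precisely the combination you describe, namely combining Theorem~\ref{thm:3.13} (the bijection $\zeta^\vee$ at the level of $R(G)^{\mf s}$ together with \eqref{eq:3.12}), Theorem~\ref{thm:3.11}.b (the refinement to irreducibles on each slice $X_\nr^+(L)W_{\mf s}\sigma$), and Theorem~\ref{thm:2.3}.b (twisted extended quotients parametrize $\Irr$ of twisted crossed products), and then unwinding definitions and using the Bernstein decomposition to assemble part~(b). Your bookkeeping of the real-weight restriction, the convention $\natural_{\sigma'}=\natural_\sigma$, and the identification of $(\Irr(L)^{\mf s_L}/\!/W_{\mf s})_\natural$ with $G$-orbits of decorated triples is consistent with how the paper packages the result.
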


Theorem \ref{thm:3.12} proves a version of the ABPS conjecture, as formulated in
\cite[\S 2.3]{ABPS2}. This was built upon several earlier versions of the conjecture,
starting with \cite[Conjecture 1]{ABP}.

Notice that the last space in Theorem \ref{thm:3.12}.b projects naturally onto the 
variety of supercuspidal supports for $G$, by forgetting the $\rho$'s. The fibers 
of that map are finite, and parametrized by 
$\Irr (\C [\mr{Stab}_G (L,\sigma')/L, \natural_{\sigma'}^{-1}])$. However, like in 
Theorem \ref{thm:3.11}.b, the bijections in Theorem \ref{thm:3.12} do not always 
translate the supercuspidal support map for $G$-representations into the natural 
projection $(L,\sigma',\rho) \mapsto (L,\sigma')$. In general it has to be corrected 
by some element of $X_\nr^+ (L)$, the absolute value of a $Z^\circ (L)$-weight of some 
discrete series representation of $L$.

We may regard $\mr{Stab}_G (L,\sigma')/L$ as $\pi_0 (\mr{Stab}_G (L,\sigma') )$,
where $\pi_0$ is meant as algebraic varieties over $F$. We can modify the notion
of a twisted extended quotient by replacing stabilizers by component groups of
stabilizers. (That does not make a difference if we divide by finite groups like 
we did so far.) Let us denote such twisted extended quotients by $/\!/_\natural$.
Then Theorem \ref{thm:3.12}.b can be reformulated as a bijection
\begin{equation}\label{eq:3.11}
\Irr (G) \longleftrightarrow \Big( \bigsqcup\nolimits_{L \subset G \text{ Levi subgroup}}
\Irr_\cusp (L) \Big) /\!/_\natural \, G .
\end{equation}
This is a remarkably simple way to parametrize all irreducible smooth 
$G$-repre\-sen\-tations in terms of the supercuspidal representations of its Levi subgroups. 

\newpage

\part{Noncommutative geometry of reductive $p$-adic groups}

\section{Hochschild homology for algebras}\label{sec:4}

Hochschild (co)homology, which appeared first in \cite{Hoc}, is a (co)homology theory
for associative algebras over commutative rings. (Nowadays there are notions of
Hochschild homology in categorical settings, but they are outside the scope of this paper.)
We focus on algebras over fields, to make the definitions easier.

Let $A$ be a unital algebra over a field $k$ and let $A^{op}$ be the opposite algebra.
Let $M$ be an $A$-bimodule, or equivalently a $A \otimes_k A^{op}$-module.

\begin{defn}\label{def:4.1}
Let $n \in \Z_{\geq 0}$. The $n$th homology of $A$ with coefficients in $M$ is
\[
H_n (A,M) = \Tor_n^{A \otimes_k A^{op}} (A,M) .
\]
The $n$-th Hochschild homology of $A$ is
\[
HH_n (A) = H_n (A,A) = \Tor_n^{A \otimes_k A^{op}} (A,A) .
\]
\end{defn}

One can compute $HH_n (A)$ as the homology of an explicit differential complex
$(A^{\otimes m}, d_m )_{m=1}^\infty$ \cite[\S 1.1]{Lod}. This shows that $HH_n$ is a
functor from unital $k$-algebras to $k$-vector spaces. We abbreviate
\[
HH_* (A) = \bigoplus\nolimits_{n=0}^\infty HH_n (A) .
\]
There are two good ways to generalize Definition \ref{def:4.1} to non-unital algebras.
\begin{itemize}
\item Assume that for every finite set $S \subset A$ there exists an idempotent
$e_S \in A$ such that $e_S a = a = s e_S$ for all $a \in S$. Such an algebra is
called locally unital. For locally unital algebras we can still use Definition \ref{def:4.1},
without significant changes. This will be useful for Hecke algebras of reductive
$p$-adic groups.
\item Let $A_+$ be the vector space $A \oplus k$ with multiplication
\begin{equation}\label{eq:4.20}
(a_1,k_1) (a_2,k_2) = (a_1 a_2 + k_1 a_2 + k_2 a_1, k_1 k_2).
\end{equation}
This algebra has unit $(0,1)$, contains $A$ as an ideal and is called the unitization
of $A$. We note that $A \mapsto A_+$ is a functor. We define
\[
HH_n (A) = \mathrm{coker} (HH_n (k) \to HH_n (A_+)) ,
\]
where the map is induced by the inclusion $k \to A_+$.
\end{itemize}

\subsection{Basic properties of $HH_*$} \label{par:4.1} \

In this paragraph $A$ is a locally unital algebra and $M$ is an $A$-bimodule.\\
 
\textbf{1. Degree zero.} 
One can compute the zeroth Hochschild homology groups directly from the definition:
\[
H_0 (A,M) = A \otimes_{A \otimes_k A^{op}} M = M / [M,A] ,
\]
where $[M,A] \subset M$ is the $k$-span of $\{ m a - a m : m \in M, a \in A \}$. In particular
\[
HH_0 (A) = A / [A,A] ,
\]  
a vector space known as the cocenter of $A$.

Let $R_f (A)$ be the Grothendieck group of the category of finite dimensional $A$-representations.
There is a natural pairing
\[
\begin{array}{ccc}
HH_0 (A) \times R_f (A) & \to & k \\
(a, \pi) & \mapsto & \mr{tr} \, \pi (a) 
\end{array}.
\]
This is the main use of Hochschild homology in representation theory.\\

\textbf{2. $HH_*$ as derived functor.} 
Let $A \leftarrow P_*$ be a projective $A \otimes_k A^{op}$-module resolution. By Definition \ref{def:4.1}
\begin{equation}\label{eq:4.1}
H_n (A,M) = H_n (P_* \otimes_{A \otimes_k A^{op}} M) .
\end{equation}
For example, if $A = k$, then we can take the resolution $k \leftarrow P_0 = k \leftarrow P_1 = 0$.
With that we find
\begin{equation}\label{eq:4.2}
HH_n (k) = H_n (P_0 \leftarrow 0) = \begin{cases} k & n=0 \\ 0 & n > 0 \end{cases} .
\end{equation}
By 1. above, the Hochschild homology functors $HH_n$ can be regarded as derived functors of the cocenter
functor. Loosely speaking, that means that $HH_*$ is the universal derived functor with the tracial
property that it kills all commutators. Thus $HH_* (A)$ is a rather subtle invariant of $A$, which
contains a lot of information. On the other hand, it is often quite difficult to determine the
Hochschild homology of an algebra. \\

\textbf{3. Additivity.} 
For another locally unital algebra $B$, there is a natural isomorphism \cite[Theorem 1.2.15]{Lod}
\[
HH_n (A \oplus B) \cong HH_n (A) \oplus HH_n (B) .
\] 

\textbf{4. Continuity.}
Consider a direct limit of locally unital algebras $\lim_i A_i$. The properties of Tor
yield a natural isomorphism
\[
HH_n (\lim\nolimits_i A_i) \cong \lim\nolimits_i HH_n (A_i) .
\]

\textbf{5. Module structure over the centre.}
When $A$ is unital, $H_* (A,M)$ and $HH_n (A)$ are naturally modules over the centre $Z(A)$.
This follows for instance from the $Z(A)$-module structure of $P_* \otimes_{A \otimes_k A^{op}} M$
in \eqref{eq:4.1}. \\

\textbf{6. Morita invariance.}
The Hochschild homology of $A$ depends only on the category of left $A$-modules. More precisely,
suppose we have an equivalence of categories $\Mod (A) \isom \Mod (B)$. Then $A$ and $B$ are 
Morita equivalent and there exist projective bimodules $P$ and $Q$ implementing the equivalence of
categories. These also yield an equivalence of categories
\[
\begin{array}{ccc}
\Mod (A \otimes_k A^{op}) & \isom & \Mod (B \otimes_k B^{op}) \\
M & \mapsto & P \otimes_A M \otimes_A Q
\end{array} ,
\]
which sends $A$ to $P \otimes_A Q \cong B$. This induces an isomorphism 
\begin{equation}\label{eq:4.3}
HH_n (A) \isom HH_n (B) ,
\end{equation}
see \cite[Theorem 1.2.7]{Lod}. If $A$ is unital, then $B$ is naturally a $Z(A)$-algebra
and \eqref{eq:4.3} is $Z(A)$-linear.

\begin{ex}\label{ex:4.A}
Let $\Gamma$ be a finite group and consider $A = \C [\Gamma]$. With additivity, Morita invariance and 
\eqref{eq:4.2} we compute
\begin{align*}
HH_n (\C [\Gamma]) & \cong HH_n \big( \bigoplus\nolimits_{\pi \in \Irr (\Gamma)} \End_\C (V_\pi) \big) \cong
\bigoplus\nolimits_{\pi \in \Irr (\Gamma)} HH_n \big( \End_\C (V_\pi) \big) \\
& \cong \bigoplus\nolimits_{\pi \in \Irr (\Gamma)} HH_n (\C) \cong 
\begin{cases} \bigoplus_{\pi \in \Irr (\Gamma)} \C & n = 0 \\ 0 & n > 0 \end{cases}.
\end{align*}
More concisely: $HH_0 (\C [\Gamma]) \cong Z (\C [\Gamma])$ and $HH_n (\C [\Gamma]) = 0$ for $n > 0$.
The $Z(A)$-module structure is just multiplication.
\end{ex}

\textbf{7. Localization.}
We suppose that $A$ is unital. Let $S \subset Z(A)$ be subset which is closed under multiplication,
and contains 1 but not 0. Recall that the localization of $A$ with respect to $S$ is
\[
S^{-1} A = \{ s^{-1} a : s \in S, a \in A \},
\]
where $s^{-1} a$ means $(s^{-1},a)$ modulo the equivalence relation $(s_1^{-1} s_2 ,b) \sim
(s_1^{-1}, s_2 b)$ for $s_1, s_2 \in S$ and $b \in A$. The operations in $S^{-1} A$ are like in the
quotient field of a domain. Then $S^{-1} A$ is a $k$-algebra and $S^{-1} M$ is an $S^{-1} A$-bimodule.
Since localization is an exact functor, there are natural isomorphisms \cite[Proposition 1.1.17]{Lod}
\begin{equation}\label{eq:4.4}
\begin{aligned}
& S^{-1} H_n (A,M) \cong H_n (S^{-1} A, S^{-1}M),\\
& S^{-1} HH_n (A) \cong HH_n (S^{-1} A) .
\end{aligned}
\end{equation}
This can be used to reduce the determination of $HH_* (A)$ to a local problem on the spectrum
of $Z(A)$.

\subsection{Hochschild homology of some commutative algebras} \

We start with an example that shows very regular behaviour.

\begin{ex}\label{ex:4.B}
Take $A = k [x]$, so that $A \otimes_k A^{op} \cong k[x,y]$. There is a projective 
bimodule resolution
\[
k[x,y] \xleftarrow{\mod (x-y)} P_0 = k[x,y] \xleftarrow{\text{mult } x-y} P_1 = k[x,y] \leftarrow P_2 = 0 .
\]
With \eqref{eq:4.1} we find
\[
HH_n (k[x]) = H_n \big( P_* \otimes_{k[x,y]} k[x] \big) = H_n \big( k[x] \xleftarrow{0} k[x] \big) =
\begin{cases} k[x] & n = 0,1 \\ 0 & n > 1 \end{cases}
\]
\end{ex}

However, in general the Hochschild homology of $A$ may be nonzero in degrees far above the Krull dimension of $A$.

\begin{ex}\label{ex:4.C}
Consider $B = k[x]/(x^2)$ and the $B$-bimodules $P_n = k[x,y]/(x^2,y^2) \cong B \otimes_k B^{op}$ for all $n \geq 0$.
They form a projective resolution
\[
B \xleftarrow{\mod (x-y)} P_0 \xleftarrow{\text{mult } x-y} P_1 \xleftarrow{\text{mult } x+y} P_2
\xleftarrow{\text{mult } x-y} P_3 \xleftarrow{\text{mult } x+y} \cdots
\]
We compute, assuming that the characteristic of $k$ is not 2:
\[
HH_n (B) = H_n \big( B \xleftarrow{0} B \xleftarrow{2x} B \xleftarrow{0} B \xleftarrow{2x} \cdots \big) =
\begin{cases} k[x]/(x^2) & n = 0\\
k[x] / (x) = k & n \text{ odd} \\
x k[x] / (x^2) = k x & n > 0 \text{ even}
\end{cases}.
\]
\end{ex}
The difference between Examples \eqref{ex:4.B} and \eqref{ex:4.C} is that $A = k[x]$ is the
coordinate ring of a non-singular algebraic variety, while $B = k[x]/(x^2)$ is not reduced. 
Example \eqref{ex:4.B} is a
simple case of the Hochschild--Kostant--Rosenberg theorem:

\begin{thm}\label{thm:4.2} \textup{\cite{HKR}} \\
Let $V$ be a nonsingular affine variety over an algebraically closed field $k$.
Let $\mc O (V)$ be the $k$-algebra of regular functions on $V$ and let $\Omega^n (V)$
be the $\mc O (V)$-module of differential $n$-forms on $V$.

There is a natural isomorphism of $\mc O (V)$-modules $HH_n (\mc O (V)) \cong \Omega^n (V)$.
\end{thm}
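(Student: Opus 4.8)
The plan is to prove the Hochschild--Kostant--Rosenberg theorem for a nonsingular affine variety $V$ by reducing to a purely local statement and then invoking the classical commutative-algebra version. Throughout, write $R = \mc O (V)$, so that $R \otimes_k R = \mc O (V \times V)$, and let $I = \ker (R \otimes_k R \to R)$ be the ideal defining the diagonal. The antisymmetrization map $\Omega^n (V) \to HH_n (R)$, sending $f_0\, df_1 \wedge \cdots \wedge df_n$ to the class of $\sum_{\tau \in S_n} \sgn (\tau) f_0 \otimes f_{\tau(1)} \otimes \cdots \otimes f_{\tau(n)}$ in the Hochschild complex, is manifestly natural and $R$-linear; the content is that it is an isomorphism.

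First I would reduce to a local computation using property 7 (localization) from Paragraph \ref{par:4.1}. Both $HH_n (R)$ and $\Omega^n (V)$ are finitely generated $R$-modules (for $\Omega^n$ because $V$ is nonsingular of finite type; for $HH_n$ one uses that $R$ is noetherian and the bar complex can be replaced in each degree by a finitely generated $R \otimes_k R$-module — here one may also invoke that $R \otimes_k R$ is noetherian and $I$ is finitely generated). A morphism of finitely generated $R$-modules over a noetherian ring is an isomorphism iff it is so after localizing at every maximal ideal $\mf m$ of $R$. Since localization commutes with $HH_*$ by \eqref{eq:4.4}, and with $\Omega^*$ by the standard behaviour of Kähler differentials, it suffices to show the antisymmetrization map is an isomorphism after base change to $R_{\mf m}$. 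Because $k$ is algebraically closed and $V$ is nonsingular, $R_{\mf m}$ is a regular local $k$-algebra, and its $\mf m$-adic completion is a formal power series ring $k[[x_1, \ldots, x_d]]$ with $d = \dim V$.

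The heart of the matter is then the local statement: for $R_{\mf m}$ regular, $HH_n (R_{\mf m}) \cong \Omega^n_{R_{\mf m}/k}$, with the iso induced by antisymmetrization. The standard route is to exhibit an explicit finite projective resolution of $R_{\mf m}$ as an $R_{\mf m} \otimes_k R_{\mf m}$-module — the Koszul resolution on a regular sequence generating the diagonal ideal locally — analogously to the resolution $k[x,y] \xleftarrow{} k[x,y] \xleftarrow{x-y} k[x,y]$ used in Example \ref{ex:4.B}. Concretely, after completing (or working étale-locally), the diagonal ideal is generated by the regular sequence $x_i \otimes 1 - 1 \otimes x_i$, $i = 1, \ldots, d$, so the Koszul complex $K_\bullet$ on this sequence is a length-$d$ projective bimodule resolution of $R_{\mf m}$. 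Tensoring $K_\bullet \otimes_{R_{\mf m} \otimes_k R_{\mf m}} R_{\mf m}$ gives a complex with zero differentials whose $n$th term is $\bigwedge^n R_{\mf m}^d \cong \Omega^n_{R_{\mf m}/k}$, and tracing through the chain homotopy comparing the Koszul resolution with the bar resolution identifies the induced map with antisymmetrization. The main obstacle is the passage between $R_{\mf m}$ and its completion: one must argue that Hochschild homology is insensitive to this completion in the relevant sense — either by a flatness/faithfully-flat descent argument ($HH_n$ commutes with the flat base change $R_{\mf m} \to \widehat{R_{\mf m}}$ on the diagonal, cf. the Tor description in Definition \ref{def:4.1}), or by observing that the Koszul resolution already exists over $R_{\mf m}$ after a suitable étale localization where the diagonal ideal is generated by a regular sequence of length $d$ (smoothness gives this by the Jacobian criterion). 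I would take the étale-local route to avoid completion subtleties: smoothness of $V$ over $k$ means $\Omega^1_{R/k}$ is locally free of rank $d$ and, étale-locally, $V \times V \to V$ near the diagonal looks like $\mathbb{A}^d_V \to V$, so the diagonal is cut out by a regular sequence; étale-local computations suffice because $HH_*$ localizes and, more strongly, satisfies étale descent for the purposes of checking an $R$-linear map is an isomorphism on stalks. Then the degree count matches: $HH_n (R_{\mf m}) = 0$ for $n > d$, exactly as $\Omega^n = 0$ there, confirming the contrast with the non-reduced Example \ref{ex:4.C}.
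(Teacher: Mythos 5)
The paper does not prove this statement; it cites \cite{HKR} and treats it as known (Example~\ref{ex:4.B} is given as an illustration, not as a proof). So there is no proof in the paper to compare against, and your sketch has to be judged on its own merits. It is essentially the standard textbook proof of HKR, and the broad architecture is correct: define the antisymmetrization map, reduce to a local statement via the localization property~\eqref{eq:4.4} and finite generation, and compute locally with a Koszul resolution of the diagonal.

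Two comments on the technical handling. First, the \'etale detour you take to avoid completion is heavier than needed and is where your sketch is least precise. Zariski localization already suffices: since $V$ and $V\times V$ are smooth over $k$, the diagonal $\Delta\subset V\times V$ is a regular closed embedding of codimension $d$, and in the regular local ring $(R\otimes_k R)_{\mf q}$ at any prime $\mf q\supset I$ one can choose $d$ elements of $I$ whose images form a basis of $I_{\mf q}/I_{\mf q}^2\cong\Omega^1_{R,\,\mf q/I}$; by Nakayama they generate $I_{\mf q}$, and being $d$ elements cutting out a codimension-$d$ subvariety in a Cohen--Macaulay ring they automatically form a regular sequence. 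So the Koszul resolution exists after a Zariski-open shrinking, with no completion or \'etale base change required. If you did want the \'etale route you would need to cite that Hochschild homology of commutative rings is compatible with \'etale base change (Weibel--Geller); as written, the phrase ``$HH_*$ satisfies \'etale descent for the purposes of checking an $R$-linear map is an isomorphism on stalks'' asserts rather than justifies this, and it is the only genuinely nontrivial input in that branch. Second, the identification of the induced map with antisymmetrization (your ``tracing through the chain homotopy'') is the actual content that makes this the HKR isomorphism rather than just an abstract isomorphism of graded modules; it deserves at least a reference to the explicit comparison map between the Koszul and bar resolutions, since it is not a formal consequence of having two resolutions. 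With those caveats, the argument is sound.
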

 
Theorem \ref{thm:4.2} provides an interpretation of $HH_* (A)$: it is a kind of 
differential forms on $\Irr (A)$. For commutative algebras that can be made quite precise
\cite[\S 1.3]{Lod}, while for non-commutative algebras it means that $HH_* (A)$ can be
regarded as some "non-commutative differential forms". However, this interpretation only 
applies well to nice algebras, for Example \ref{ex:4.B}
shows that $HH_* (A)$ is very sensitive to changes like non-reducedness.

\subsection{Hochschild homology of finite type algebras} \

Besides commutative algebras, there are some classes of algebras which are close to
commutative and whose Hochschild homology can be computed reasonably well. 

Let $V$ be a complex affine variety. Recall that an $\mc O (V)$-algebra is a $\C$-algebra
$A$ with an algebra homomorphism from $\mc O (V)$ to the centre of the multiplier algebra
of $A$. In other words, $\mc O (V)$ acts on $A$ and
\[
f (a_1 a_2) = f(a_1) a_2 = a_1 f(a_2) \quad \text{for all } f \in \mc O (V), a_i \in A.  
\]
For any unital $\mc O (V)$-algebra $A$, the action of $\mc O (V)$ comes from an algebra
homomorphism $\mc O (V) \to Z(A)$. If $B$ is a nonunital $\mc O (V)$-algebra, then $B \oplus 
\mc O (V)$ has the structure of a unital $\mc O (V)$-algebra, like $B_+$ in \eqref{eq:4.20}.

\begin{defn}\label{def:4.F}
We say that an $\mc O (V)$-algebra $A$ has finite type if $A$ has finite rank as 
$\mc O (V)$-module. An arbitrary $\C$-algebra has finite type if it is a finite type
$\mc O (V)$-algebra for some complex affine variety $V$.
\end{defn}

\begin{ex}
$M_n (\mc O (V))$ is a finite type $\mc O (V)$-algebra.

Let $K \in \CO (G)$ and let $\mf s = [L,\sigma]_G$.
By Theorem \ref{thm:1.19}, the unital algebra $\mc H (G,K)^{\mf s}$ from \eqref{eq:1.50}
has finite rank over 
\[
\mc O (X_\nr (L) / W_{\mf s}^e) \cong \mc O (\Irr (L)^{\mf s_L}) \cong Z (\Rep (G)^{\mf s}).
\]
The algebra $\mc H (G,K)$ is a finite type $\mc O (V)$-algebra for
$V = \sqcup_{\mf s} \, \Irr (L)^{\mf s_L} / W_{\mf s}$, where $\mf s$ runs over the finite
subset of $\mf B (G)$ such that $\mc H (G,K)^{\mf s}$ is nonzero.   
\end{ex}

Standard algebraic techniques like those described in Paragraph \ref{par:4.1} can be used
to study Hochschild homology of finite type algebras, and to some extent reduce it to the
case of commutative algebras \cite[\S 2--3]{KNS}. If $A$ is a unital finite type algebra 
and $n \in \Z_{\geq 0}$, then $HH_n (A)$ is an $\mc O (V)$-module of finite rank.

Consider a non-singular affine $\C$-variety $V$, and let $\Gamma$ be a finite group acting 
on $V$ by automorphisms. 
Brylinski \cite{Bry} and Nistor \cite{Nis} generalized the Hochschild--Kostant--Rosenberg
theorem to the crossed product $\mc O (V) \rtimes \Gamma = \mc O (V) \rtimes \C [\Gamma]$
from Paragraph \ref{par:tcp}. 

More generally we may involve a 2-cocycle $\natural : \Gamma \times \Gamma \to \C^\times$. By Lemma \ref{lem:2.2}, the twisted crossed product $\mc O (V) \rtimes \C [\Gamma, \natural]$, 
is a finite type $\mc O (V / \Gamma)$-algebra. We recall the map
$\natural^\gamma : \Gamma \to \C^\times$ from \eqref{eq:2.C}. 

\begin{thm}\label{thm:4.3} \textup{\cite[Theorem 1.2 and (1.17)]{SolTwist}} \\
There exists an isomorphism of $\mc O (V)^\Gamma$-modules
\[
HH_n \big( \mc O (V) \rtimes \C [\Gamma, \natural] \big) \cong  \bigoplus_{\gamma \in \Gamma / 
\mathrm{conjugacy}} (\Omega^n (V^\gamma) \otimes \natural^\gamma)^{Z_\Gamma (\gamma)}
\cong \Big( \bigoplus_{\gamma \in \Gamma} \Omega^n (V^\gamma) \otimes \natural^\gamma \Big)^\Gamma .
\]
\end{thm}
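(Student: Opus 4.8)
The strategy is to reduce the twisted case to the untwisted one via the Schur multiplier, and then invoke the Brylinski--Nistor computation of $HH_n(\mc O(V) \rtimes \Gamma)$. Recall from \eqref{eq:2.2} that $\mc O(V) \rtimes \C[\Gamma,\natural] \cong e_\natural \big( \mc O(V) \rtimes \Gamma^* \big)$, where $\Gamma^*$ is a Schur extension of $\Gamma$ with kernel $Z^*$, and $e_\natural \in \C[Z^*]$ is the central idempotent attached to the character $c_\natural$ of $Z^*$ determined by $\natural$. Since $Z^*$ is central in $\Gamma^*$, the idempotent $e_\natural$ is central in $\mc O(V) \rtimes \Gamma^*$, so $\mc O(V) \rtimes \C[\Gamma,\natural]$ is a direct summand (as an algebra) of $\mc O(V) \rtimes \Gamma^*$. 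By additivity of $HH_*$ (property 3 in Paragraph \ref{par:4.1}), it suffices to identify the $e_\natural$-isotypic part of $HH_n(\mc O(V) \rtimes \Gamma^*)$. Here one uses that $\Gamma^*$ acts on $V$ through $\Gamma$ (the kernel $Z^*$ acts trivially), so that $V^{\gamma^*}$ depends only on the image $\gamma$ of $\gamma^*$ in $\Gamma$.

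First I would record the Brylinski--Nistor isomorphism in the form
\[
HH_n \big( \mc O(V) \rtimes \Gamma^* \big) \cong \Big( \bigoplus_{\gamma^* \in \Gamma^*} \Omega^n(V^{\gamma^*}) \Big)^{\Gamma^*},
\]
where $\Gamma^*$ acts by simultaneously permuting the summands by conjugation and by pullback of forms. Then I would decompose the action of the central subgroup $Z^*$: since $Z^*$ acts trivially on $V$ and fixes each conjugacy class, $Z^*$ acts on the summand indexed by $\gamma^*$ purely by the scalar recorded by the map $\natural^{\gamma^*}\colon \Gamma^* \to \C^\times$ from \eqref{eq:2.C} restricted to $Z^*$. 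A direct check from the definition \eqref{eq:2.C} shows that for $z \in Z^*$ one has $\natural^{\gamma^*}(z) = c_\natural(z)$ (independently of $\gamma^*$, using centrality of $z$), so the $e_\natural$-part of $HH_n$ is exactly the subspace on which $Z^*$ acts by $c_\natural$; passing to $Z^*$-invariants of this twisted action and then to $\Gamma^*/Z^* = \Gamma$-invariants, the index set collapses to $\Gamma$ and the scalar cocycle datum that survives on the summand indexed by $\gamma$ is precisely $\natural^\gamma$. This produces the claimed formula
\[
HH_n \big( \mc O(V) \rtimes \C[\Gamma,\natural] \big) \cong \Big( \bigoplus_{\gamma \in \Gamma} \Omega^n(V^\gamma) \otimes \natural^\gamma \Big)^\Gamma,
\]
and the reduction to a sum over conjugacy classes with $Z_\Gamma(\gamma)$-invariants is the usual repackaging of a $\Gamma$-invariant of an induced-type object.

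The main obstacle, and the step deserving genuine care, is the bookkeeping of the two cocycle-twisted actions: one must verify that the $\Gamma^*$-module structure on $HH_n(\mc O(V)\rtimes\Gamma^*)$ given by Brylinski--Nistor, when restricted to $Z^*$ and then pushed down, yields \emph{exactly} the twist by $\natural^\gamma$ rather than by its inverse or a coboundary-equivalent cocycle. This amounts to matching the conjugation convention in \eqref{eq:2.C} with the convention used in the proof of the Hochschild--Kostant--Rosenberg theorem for crossed products, and to checking that $\natural^{\gamma^*}|_{Z^*}$ is genuinely $c_\natural$ and not $c_\natural^{-1}$; the freedom in choosing representatives $\{\gamma^* : \gamma \in \Gamma\}$ changes $\natural^\gamma$ only by a coboundary, which does not affect the isomorphism class of $\Omega^n(V^\gamma)\otimes\natural^\gamma$ as a $Z_\Gamma(\gamma)$-module, so this ambiguity is harmless once one is careful. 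All of this is carried out in \cite{SolTwist}, and I would simply cite Theorem 1.2 and (1.17) there for the precise normalizations, as the statement already does.
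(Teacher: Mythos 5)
The paper states this theorem purely as a citation to \cite{SolTwist} and gives no proof, so there is no in-paper argument to compare against; but your overall strategy---pass to the Schur cover $\Gamma^*$ where $\natural$ trivializes, apply Brylinski--Nistor to $\mc O(V) \rtimes \Gamma^*$, and extract the $e_\natural$-isotypic summand using additivity---is a sound route to the result, and a dimension count (summing over $\chi \in \Irr(Z^*)$) confirms it in the end. The gap is in the step where you identify how the twist by $\natural^\gamma$ arises. You assert that $Z^*$ acts on the Brylinski--Nistor summand indexed by $\gamma^*$ by the scalar $\natural^{\gamma^*}(z)$, and that this scalar equals $c_\natural(z)$. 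Neither half of this is right. The map $\natural^{\gamma^*}$ of \eqref{eq:2.C} is attached to the \emph{twisted} algebra $\C[\Gamma,\natural]$; for the untwisted $\C[\Gamma^*]$ the analogous map is identically $1$, not $c_\natural$, so the "direct check" cannot yield $c_\natural(z)$. More fundamentally, the $\C[Z^*] \subset Z(\mc O(V)\rtimes\Gamma^*)$-module structure does not act by scalars on each $\gamma^*$-summand: on a chain $f_0 g_0^* \otimes \cdots \otimes f_n g_n^*$ with $g_0^* \cdots g_n^* = \gamma^*$, an element $z \in Z^*$ replaces $g_0^*$ by $z g_0^*$ and hence moves the chain into the $z\gamma^*$-sector. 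Thus $\C[Z^*]$ permutes the $|Z^*|$ sectors lying over a fixed $\gamma \in \Gamma$ as a regular representation, and the $e_\natural$-isotypic projection selects one copy of $\Omega^n(V^\gamma)$ per $\gamma$.

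The twist $\natural^\gamma$ on the residual $Z_\Gamma(\gamma)$-invariants is then produced by the $\Gamma^*$-conjugation action interacting with the $e_\natural$-projection, not by a $Z^*$-action by scalars: an element $h \in Z_\Gamma(\gamma)$, lifted to $h^* \in \Gamma^*$, conjugates $\gamma^*$ to $h^*\gamma^*(h^*)^{-1} = \psi_\gamma(h)\,\gamma^*$ with $\psi_\gamma(h) \in Z^*$, and projecting onto the $e_\natural$-part turns this shift of sector into multiplication by $c_\natural(\psi_\gamma(h))$, which is exactly $\natural^\gamma(h)$ from \eqref{eq:2.C}. Your closing caveat that "the bookkeeping of the two cocycle-twisted actions" is the delicate step is well placed, but the explicit mechanism you wrote before flagging it is incorrect; with the correction above, the Schur-cover reduction does work.
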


This generalizes (and relies on) the aforementioned result of Brylinski--Nistor, which can be
recovered by setting $\natural = 1$. In view of Theorems \ref{thm:2.3}, \ref{thm:4.2} and 
\ref{thm:4.3}, we may regard $\big( \bigoplus_{\gamma \in \Gamma} \Omega^n (V^\gamma) \otimes 
\natural^\gamma \big)^\Gamma$ as a kind of differential forms on the twisted extended quotient
$(V /\!/ \Gamma )_\natural$.

\subsection{Hochschild homology of graded Hecke algebras} \

Let $\mh H (\mf t, W ,k)$ be a graded Hecke algebra, as in \eqref{eq:2.D}, and let $\mh H
(\mf t, W \Gamma, k, \natural) = \mh H (\mf t, W, k) \rtimes \C [\Gamma, \natural]$ be 
a twisted graded Hecke algebra, as in Definition \ref{def:2.B}. Recall from Theorem \ref{thm:2.4}
that the algebras $\mh H (\mf t, W\Gamma, k', \natural)$ with varying parameters $k' : \Phi \to
\C$ are all very similar, and that for $k' = 0$ we recover the simpler algebra $\mc O (\mf t) \rtimes
\C [W\Gamma,\natural]$. That can also be seen in Hochschild homology:

\begin{thm}\label{thm:4.4} \textup{\cite[Theorem 3.4]{SolHomGHA} and \cite[(2.5)--(2.6)]{SolTwist}} \\
There exist isomorphisms of vector spaces
\[
\begin{array}{lclcl}
HH_* (\mh H (\mf t, W, k)) & \cong & HH_* (\mh H (\mf t, W, 0)) &
\cong & \big( \bigoplus_{w \in W} \Omega^* (\mf t^w) \big)^W \\
HH_* (\mh H (\mf t, W\Gamma, k, \natural)) & \cong & HH_* (\mh H (\mf t, W\Gamma, 0, \natural)) & 
\cong & \big( \bigoplus_{w \in W \Gamma} \Omega^* (\mf t^w) \otimes \natural^w \big)^{W\Gamma}
\end{array}
\]
\end{thm}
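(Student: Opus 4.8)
The rightmost isomorphisms in both lines are immediate from Theorem \ref{thm:4.3}. Indeed, setting $k = 0$ in \eqref{eq:2.5} gives $\mh H (\mf t, W\Gamma, 0, \natural) = (\mc O (\mf t) \rtimes W) \rtimes \C [\Gamma, \natural] = \mc O (\mf t) \rtimes \C [W\Gamma, \natural]$, with $\natural$ inflated from $\Gamma$ to $W\Gamma = W \rtimes \Gamma$; since $\mf t$ is a nonsingular affine variety, Theorem \ref{thm:4.3} yields $HH_n (\mh H (\mf t, W\Gamma, 0, \natural)) \cong \big( \bigoplus_{w \in W\Gamma} \Omega^n (\mf t^w) \otimes \natural^w \big)^{W\Gamma}$ as $\mc O (\mf t)^{W\Gamma}$-modules, and the first line is the case $\Gamma = \{e\}$, $\natural = 1$. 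So the substance of the theorem is the leftmost isomorphism in each line, namely that $HH_*$ does not depend on $k$; and since the second line contains the first, it suffices to treat $\mh H (\mf t, W\Gamma, k, \natural)$.

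The plan is first to remove the twist. By \eqref{eq:2.2} and the remark following Definition \ref{def:2.B}, $\mh H (\mf t, W\Gamma, k, \natural)$ is the direct summand $e_\natural \cdot \big( \mh H (\mf t, W, k) \rtimes \Gamma^* \big)$ of $\mh H (\mf t, W, k) \rtimes \Gamma^*$, cut out by the central idempotent $e_\natural \in \C [Z^*]$ attached to the Schur cover $\Gamma^*$; because $Z^*$ acts trivially on $\mh H (\mf t, W, k)$, the idempotents of $\C [Z^*]$ are central in the crossed product and decompose it into summands of this type. By additivity of Hochschild homology (Paragraph \ref{par:4.1}, item 3) and its compatibility with this central $\C [Z^*]$-decomposition (Paragraph \ref{par:4.1}, item 5), the $k$-independence of $HH_* (\mh H (\mf t, W\Gamma, k, \natural))$ follows from that of $HH_* (\mh H (\mf t, W, k) \rtimes \Gamma^*)$. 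Hence it is enough to prove the leftmost isomorphism in the second line when $\natural = 1$.

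For the untwisted case I would use the graded deformation with the central parameter $\mb r$. The algebra $\mh H (\mf t, W\Gamma, k, \mb r)$ is $\Z$-graded, has the PBW basis $\mc O (\mf t) \otimes \C [\mb r] \otimes \C [W\Gamma]$, and is flat over $\C [\mb r]$; it specializes to $\mc O (\mf t) \rtimes W\Gamma$ at $\mb r = 0$ and to $\mh H (\mf t, W\Gamma, k)$ at $\mb r = 1$, and by the identity $\mh H (\mf t, W\Gamma, k, z\mb r) = \mh H (\mf t, W\Gamma, zk, \mb r)$ together with \eqref{eq:2.19} all its fibres at $\mb r = c \neq 0$ are isomorphic as algebras to $\mh H (\mf t, W\Gamma, k)$. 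Resolving $\mh H (\mf t, W\Gamma, k, \mb r)$ by graded free bimodules and tensoring down, $HH_n (\mh H (\mf t, W\Gamma, k, \mb r))$ becomes a graded $\C [\mb r]$-module, finitely generated over the image of $\C [\mb r] \otimes \mc O (\mf t)^{W\Gamma}$ by the finiteness in \eqref{eq:2.7}; the base-change (universal coefficient) spectral sequence identifies its fibres at $\mb r = 0$ and $\mb r = 1$ with $HH_* (\mc O (\mf t) \rtimes W\Gamma)$ and $HH_* (\mh H (\mf t, W\Gamma, k))$. Thus the desired isomorphism amounts to the freeness of this $\C [\mb r]$-module, equivalently to the degeneration of the filtration spectral sequence $E^2 = HH_* (\mc O (\mf t) \rtimes W\Gamma) \Rightarrow HH_* (\mh H (\mf t, W\Gamma, k))$.

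The main obstacle is exactly this degeneration. I expect to establish it by localizing on the centre $Z = \mc O (\mf t / W\Gamma)$ (Paragraph \ref{par:4.1}, item 7): over the generic point Theorem \ref{thm:2.4} gives $\C (\mf t / W\Gamma) \otimes_Z \mh H (\mf t, W\Gamma, k) \cong \C (\mf t) \rtimes \C [W\Gamma]$ independently of $k$, so the generic rank of $HH_*$ is already $k$-independent; near a special point $\lambda \in \mf t$, the central-character completion equivalence \eqref{eq:2.10} reduces the computation to a graded Hecke algebra for the isotropy group $(W\Gamma)_\lambda$, where one compares directly with the $k = 0$ model (or builds an explicit contracting homotopy for the potential higher differentials, as in the theory of PBW deformations of $\mc O (\mf t) \rtimes W\Gamma$). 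Combined with the grading — which forces a finitely generated graded $\C [\mb r]$-module with $k$-independent generic rank and one known fibre to be free — this yields the freeness, hence by base change $HH_* (\mh H (\mf t, W\Gamma, k)) \cong HH_* (\mc O (\mf t) \rtimes W\Gamma)$; feeding in Theorem \ref{thm:4.3} for $\mc O (\mf t) \rtimes W\Gamma$ and re-applying the $e_\natural$-decomposition completes the proof, all maps being $Z$-linear.
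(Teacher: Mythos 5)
Your reduction steps are sound: the rightmost isomorphisms are indeed immediate from Theorem~\ref{thm:4.3}, the reduction to $\natural = 1$ via the Schur cover $\Gamma^*$ works because $e_\natural$ is a central idempotent commuting with the filtration, and the choice to work with the Rees-type family $\mh H(\mf t, W\Gamma, k, \mb r)$ is the same idea as the filtration-by-degrees spectral sequence that the paper attributes to \cite{SolHomGHA}. So the overall plan is aligned with the cited source. However, the step that actually carries the weight --- proving that $HH_*(\mh H(\mf t, W\Gamma, k, \mb r))$ is $\mb r$-torsion-free, i.e.\ that the filtration spectral sequence degenerates --- is not established, and the strategy you sketch for it would fail.

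The problem is your appeal to localization and completion over $Z = \mc O(\mf t/W\Gamma)$, culminating in the claim "all maps being $Z$-linear." The paper says the opposite, immediately after Theorem~\ref{thm:4.4}: the isomorphisms are \emph{usually not} $\mc O(\mf t)^W$- or $\mc O(\mf t)^{W\Gamma}$-linear. Example~\ref{ex:4.D} shows why: for $\mh H(\C, S_2, k)$ the "Steinberg" summand $\Omega^*(\{0\})$ of $HH_*$ is supported at $\mc O(\C)^{S_2}$-character $\pm k$, whereas in $HH_*(\mc O(\C) \rtimes S_2)$ the analogous summand sits at character $0$. So the $Z$-supports of $HH_*$ genuinely move with $k$. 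Consequently, localizing at a fixed maximal ideal of $Z$ does \emph{not} produce matching local ranks for different $k$, and your freeness argument --- which compares $Z$-local fibres for the $\mb r$-family --- does not go through. There is no contradiction at the level of vector spaces (the supports shift but the total dimensions in each filtration degree match), which is exactly why the theorem is true but only as vector spaces; but that subtler statement is what needs a proof, and a $Z$-local comparison cannot see it.

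Two smaller issues in the degeneration paragraph: the sentence "compares directly with the $k = 0$ model" at a special point $\lambda$ is circular, because after reducing via \eqref{eq:2.10} to $\mh H(\mf t, W_\lambda, k)$ you are left with the same type of statement (now for $W_\lambda$) that you are trying to prove, with no base case or induction supplied; and the appeal to "a contracting homotopy as in the theory of PBW deformations" is not a citable theorem --- PBW deformations do not in general preserve Hochschild homology. What \cite{SolHomGHA} actually does is construct explicit chains (using the families $\mc F_{P,\sigma}$ and the classes they detect, as reprised in \eqref{eq:4.5}--\eqref{eq:4.7}) that realize the full $E_1$ page, thereby forcing degeneration; that explicit construction, not a $Z$-local count, is the missing ingredient.
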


Theorem \ref{thm:4.4} comes from the filtration of $\mh H (\mf t, W\Gamma, k, \natural)$ by degrees,
and from an associated spectral sequence that converges to $HH_* (\mh H (\mf t, W\Gamma, k, \natural))$.
We point out that the isomorphisms in Theorem \ref{thm:4.4} are usually not linear over $\mc O (\mf t)^W$
or $\mc O (\mf t)^{W \Gamma}$. 

From now on we assume that $k$ is real-valued, like in Paragraph \ref{par:2.4}. Then we have the natural
bijection
\[
\zeta_0 : R ( \mh H (\mf t, W\Gamma, k, \natural) ) \to R (\mc O (\mf t) \rtimes \C [W\Gamma, \natural])
\]
from Theorems \ref{thm:2.8} and \ref{thm:2.9}.

\begin{thm}\label{thm:4.5} \textup{\cite[Corollary 2.10 and Proposition 2.11]{SolTwist}} \\
$\zeta_0$ induces a natural $\C$-linear bijection
\[
HH_* (\zeta_0) : HH_* (\mc O (\mf t) \rtimes \C [W\Gamma,\natural]) \to 
HH_* (\mh H (\mf t,W\Gamma,k,\natural)) .
\]
\end{thm}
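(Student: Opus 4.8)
The plan is to produce a commutative square relating the two Hochschild homology computations of Theorem~\ref{thm:4.4}. By that theorem we already have canonical isomorphisms
\[
HH_* (\mh H (\mf t,W\Gamma,k,\natural)) \cong \Big( \bigoplus\nolimits_{w \in W\Gamma} \Omega^* (\mf t^w) \otimes \natural^w \Big)^{W\Gamma} \cong HH_* (\mc O (\mf t) \rtimes \C[W\Gamma,\natural]) ,
\]
where the second identification is just Theorem~\ref{thm:4.3} applied with $V = \mf t$. So the composite $HH_*(\zeta_0)$ is certainly \emph{some} $\C$-linear bijection; the content of the statement is that it is the one \emph{induced by} $\zeta_0$, i.e.\ that it is natural with respect to the bijection on Grothendieck groups. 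The first step is therefore to pin down what ``induced by $\zeta_0$'' means: via the trace pairing $HH_0(A) \times R_f(A) \to \C$ from Paragraph~\ref{par:4.1}, any $\C$-linear map on Grothendieck groups that preserves characters gives a \emph{dual} map on $HH_0$, and one wants a compatible statement in all degrees. The cleanest formulation is that $\zeta_0$ arises from a chain of functors/equivalences that each act visibly on Hochschild homology, and one checks the diagram commutes at that level.

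Concretely, recall from Paragraph~\ref{par:2.4} that $\zeta_0$ is built from three ingredients: (i) the completion/localization equivalence \eqref{eq:2.10} reducing to real weights and to the isotropy subgroup $W_{i\lambda}$ at a point $i\lambda \in i\mf t_\R$; (ii) Theorem~\ref{thm:2.7}, which says the tempered real-weight representations of $\mh H(\mf t,W_{i\lambda},k)$ restrict to a $\Z$-basis of $R(\C[W_{i\lambda},\natural])$; and (iii) the identification $R(\C[W_{i\lambda},\natural]) \cong R(\mc O(\mf t)\rtimes\C[W\Gamma,\natural])_{Wi\lambda}$ from Theorem~\ref{thm:2.3}, followed by the Langlands classification gluing everything together. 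The strategy is to run exactly this argument, but one categorical level up: localization on the centre is compatible with $HH_*$ by the localization property \eqref{eq:4.4}; the passage $\mh H(\mf t,W_{i\lambda},k) \rightsquigarrow \mc O(\mf t)\rtimes\C[W_{i\lambda},\natural]$ on $HH_*$ is governed by Theorem~\ref{thm:4.4} and its proof (the degree filtration / spectral sequence); and the induction-from-parabolic-subalgebra steps in the Langlands reconstruction are compatible with $HH_*$ because induction along $\mh H^P \hookrightarrow \mh H$ of a progenerator induces a map on $HH_*$ as in the Morita-type statements. Assembling these, one gets a commuting diagram whose top and bottom rows are the two expressions of Theorem~\ref{thm:4.4}, whose vertical maps are the $\zeta_0$-type reductions, and whose outer square is exactly $HH_*(\zeta_0) = HH_*(\text{induced by }\zeta_0)$.

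I expect the main obstacle to be the glueing over all base points $i\lambda \in i\mf t_\R$: each local piece is clean, but $HH_*$ of the global algebra is not simply the direct sum of the local contributions (unlike $\Irr$ or $R$), and one must control how the local isomorphisms patch. The right tool is Paragraph~\ref{par:4.1}.7 together with a Mayer--Vietoris or localization-sequence argument on the spectrum of the centre $\mc O(\mf t/W\Gamma)$, using that $HH_n$ of a finite-type algebra is a coherent sheaf there (cf.\ \cite{KNS}); the key point to verify is that the $\natural^w$-twisted differential-form sheaves $\big(\bigoplus_{w}\Omega^n(\mf t^w)\otimes\natural^w\big)^{W\Gamma}$ are determined by their behaviour on the open sets where the isotropy is constant, which holds because $\mf t^w$ is a linear subspace and the $W\Gamma$-action is linear, so these sheaves have no higher local cohomology obstructing the patching. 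A secondary technical point is checking that the normalization constants hidden in $\natural_\sigma$ versus $\natural$, and the dualizations $\natural \leftrightarrow \natural^{-1}$ that appear when passing to opposite algebras (compare \eqref{eq:2.E}), are handled consistently; this is bookkeeping rather than a genuine difficulty, but it is where sign/cocycle errors would creep in. Once those two points are settled, the statement follows by transport of structure through the diagram.
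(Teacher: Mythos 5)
Your proposal correctly identifies that the content of Theorem~\ref{thm:4.5} is not the mere existence of an abstract isomorphism (which already follows from Theorem~\ref{thm:4.4}) but rather that there is a \emph{canonical} map compatible with $\zeta_0$, and you correctly identify the local-to-global passage as the crux. However, your route differs substantially from what the paper (following \cite{SolTwist}) does, and it has a gap that I think is fatal in the form sketched. The paper's approach is to \emph{define} $HH_*(\zeta_0)$ globally, by the requirement
\[
HH_*(\mc F_{P,\sigma}) \circ HH_*(\zeta_0) = HH_*(\mc F_{P,\zeta_0(\sigma)}) : HH_*(\mc O(\mf t)\rtimes\C[W\Gamma,\natural]) \to \Omega^*(P^{\vee\perp})
\]
for every algebraic family $\mc F_{P,\sigma}$ of induced representations (see \eqref{eq:4.5}). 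Because these families are parametrized by the varieties $P^{\vee\perp}$, the target $\Omega^*(P^{\vee\perp})$ records differential forms in all degrees, not just traces; this is precisely what replaces your degree-zero trace-pairing heuristic and makes ``induced by $\zeta_0$'' a well-posed condition in every $HH_n$. One then shows the collection of $HH_*(\mc F_{P,\sigma})$ is jointly injective (essentially the Hochschild-homology version of the trace Paley--Wiener idea), which makes the map unique. Your proposal never supplies a definition of $HH_*(\zeta_0)$ beyond degree zero, only an expectation that it exists; the families do that job.

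The more serious problem with your localization strategy is that $\zeta_0$ is \emph{not} $\mc O(\mf t/W\Gamma)$-linear. As recorded in Theorem~\ref{thm:2.8}(iv) and made explicit in Example~\ref{ex:4.D}, $\zeta_0$ shifts central characters of tempered/discrete-series constituents by elements of $\mf t_\R$; the summand $\Omega^*(\{0\})$ contributed by $\mathrm{St}$ sits at the central character $\pm k$ in $HH_*(\mh H(\mf t,W,k))$ but at $0$ in $HH_*(\mc O(\mf t)\rtimes W)$. Consequently, localizing both sides at the \emph{same} ideal of $\mc O(\mf t/W\Gamma)$ does not produce matched-up stalks that you can compare and then glue via a Mayer--Vietoris/coherent-sheaf argument; the ``local pieces'' on the two sides of $\zeta_0$ live over different points of $\mathrm{Spec}\,\mc O(\mf t/W\Gamma)$. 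The families approach sidesteps this entirely because $\mc F_{P,\sigma}$ and $\mc F_{P,\zeta_0(\sigma)}$ both map into the same neutral target $\Omega^*(P^{\vee\perp})$, so no matching of central characters is needed. If you want to salvage the localization route, you would have to re-index the localization on the $\mh H$-side by the $\zeta_0$-shifted points, which in effect means tracking the same family data, and at that point you have reproduced the families argument in less transparent form.

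A lighter remark: your worry about $\natural$ versus $\natural^{-1}$ and opposite algebras is a reasonable instinct but does not arise here. Theorem~\ref{thm:2.8}/\ref{thm:2.9} and Theorem~\ref{thm:4.5} are stated with the same cocycle $\natural$ on both sides; the $\natural^{-1}$ appears only later, when one passes to opposite endomorphism algebras of progenerators for $p$-adic groups (Theorems~\ref{thm:3.11}--\ref{thm:3.13}), and is not part of this statement.
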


The map $HH_* (\zeta_0)$ can be characterized as follows. For any tempered $\sigma \in \Irr (\mh H_P)$
there is an algebraic family of $\mh H$-representations
\[
\mf F_{P,\sigma} = \big\{ \ind_{\mh H^P}^{\mh H} (\sigma \otimes \C_\lambda) : 
\lambda \in P^{\vee \perp} \subset \mf t \big\} .
\]
This family gives rise to an algebra homomorphism
\[
\mc F_{P,\sigma} : \mh H = \mh H (\mf t,W\Gamma,k,\natural) \to
\mc O (P^{\vee \perp}) \otimes \End_\C \big( \ind_{\mh H^P}^{\mh H} (\sigma \otimes \C_0) \big) .
\]
By Morita invariance and Theorem \ref{thm:4.2} we have 
\[
HH_* \big( \mc O (P^{\vee \perp}) \otimes \End_\C \big( \ind_{\mh H^P}^{\mh H} (\sigma \otimes \C_0) \big) \big)
\cong HH_* \big( \mc O (P^{\vee \perp}) \big) \cong \Omega^* (P^{\vee \perp}) .
\]
The same can be done with $\mc F_{P,\zeta_0 (\sigma)}$, and in Theorem \ref{thm:4.5} there is an equality
\begin{equation}\label{eq:4.5}
HH_* (\mc F_{P,\sigma}) \circ HH_* (\zeta_0) = HH_* (\mc F_{P,\zeta_0 (\sigma)}) :
HH_* (\mc O (\mf t) \rtimes \C [W\Gamma,\natural]) \to \Omega^* (P^{\vee \perp}) .
\end{equation}
Conversely, imposing \eqref{eq:4.5} for all families of the form $\mf F_{P,\sigma}$ determines $HH_* (\zeta_0)$.

\begin{ex}\label{ex:4.D}
Consider $\mf t = \C$ and $R = \{ \pm 1\}$. By Theorem \ref{thm:4.4} 
\[
HH_* (\mh H(\mf t,W,k)) \cong HH_* (\mc O (\C) \rtimes S_2)) \cong \Omega^* (\C)^{S_2} \oplus \Omega^* (\{0\})
= \begin{cases} \C [z^2] \oplus \C & n = 0 \\ 
\C [z^2] z \, \textup{d} z & n = 1 \\
0 & n > 1 \end{cases} .
\]
With Theorem \ref{thm:4.5} and \eqref{eq:4.5} we can make this more explicit. The first family of representations
to consider comes from $P = \emptyset$ and the module $\C_0$ of $\mh H_\emptyset = \mc O (\mf t)$. Then 
$\zeta_0 (\C_0) = \C_0$ and 
\[
\mc F_{\emptyset, \C_0} : HH_* (\mc O (\C) \rtimes S_2) \to \Omega^* (\C)
\] 
can be identified with the projection $\Omega^* (\C)^{S_2} \oplus \Omega^* (\{0\}) \to \Omega^* (\C)^{S_2}$.
The Steinberg representation of $\mh H (\mf t,W,k)$ forms another family of representations. It satisfies
$\zeta_0 (\mr{St}) = \mr{sign}_{S_2} \otimes \C_0$ and $HH_* (\zeta_0 (\mr{St}))$ identifies with the projection
$\Omega^* (\C)^{S_2} \oplus \Omega^* (\{0\}) \to \Omega^* (\{0\})$. Hence
\begin{equation}\label{eq:4.6}
HH_* (\mc F_{\emptyset,\C_0}) \oplus HH_* (\zeta_0 (\mr{St})) : HH_n (\mc O (\C) \rtimes S_2) \to
\Omega^* (\C)^{S_2} \oplus \Omega^* (\{0\})
\end{equation}
is a $\C$-linear bijection. With \eqref{eq:4.5} it follows that
\begin{equation}\label{eq:4.7}
HH_* (\mc F_{\emptyset,\C_0}) \oplus HH_* (\mr{St}) : HH_n (\mh H (\mf t,W,k)) \to
\Omega^* (\C)^{S_2} \oplus \Omega^* (\{0\})
\end{equation}
is also a $\C$-linear bijection. Via \eqref{eq:4.6} and \eqref{eq:4.7}, $HH_n (\zeta_0)$ corresponds to 
the identity on $\Omega^* (\C)^{S_2} \oplus \Omega^* (\{0\})$. The summand $\C \cong \Omega^* (\{0\})$ in
$HH_* (\mh H (\mf t,W,k))$ has $\mc O (\C)^{S_2}$-weight $\pm k$ because it comes from St, while 
$\Omega^* (\{0\})$ has $\mc O (\C)^{S_2}$-weight 0 in $HH_* (\mc O (\C) \rtimes S_2)$. Therefore 
$HH_n (\zeta_0)$ is not $\mc O (\C)^{S_2}$-linear.
\end{ex}

\subsection{$HH_* (\mc H (G))$: cuspidal Bernstein blocks} \
\label{par:4.5}

Let $G$ be a reductive $p$-adic group and let $\mc H (G)$ be its Hecke algebra, as in Section \ref{sec:1}.
Our goal is to compute $HH_* (\mc H (G))$ in terms of the representation theory of $G$. By Proposition
\ref{prop:1.1}.b $\mc H (G)$ has local units, so in relation to Hochschild homology we may treat it as 
a unital algebra. Recall that by the Bernstein decomposition (Theorem \ref{thm:1.18}) 
$\mc H (G) = \bigoplus_{\mf s \in \mf B (G)} \mc H (G)^{\mf s}$. By the additivity and the continuity
of $HH_*$:
\begin{align}\nonumber
HH_* ( \mc H (G)) & = HH_* \Big( \lim_{S \subset \mf B (G), S \text{ finite}} \bigoplus_{\mf s \in S} 
\mc H (G)^{\mf s} \Big) \cong \lim_{S \subset \mf B (G), S \text{ finite}} HH_* \Big( \bigoplus_{\mf s \in S} 
\mc H (G)^{\mf s} \Big) \\
\label{eq:4.8} & \cong \lim_{S \subset \mf B (G), S \text{ finite}}  \bigoplus\nolimits_{\mf s \in S} 
HH_* ( \mc H (G)^{\mf s}) = \bigoplus\nolimits_{\mf s \in \mf B (G)}  HH_* ( \mc H (G)^{\mf s}) .
\end{align}
This means that to compute $HH_n (\mc H(G))$, it suffices to classify $\mf B (G)$ and to determine
$HH_* (\mc H (G)^{\mf s})$ for each Bernstein block $\Rep (G)^{\mf s}$ of $\Rep (G)$. We will treat
$\mf B (G)$ as a black box and we will focus on $HH_* (\mc H (G)^{\mf s})$ for one arbitrary $\mf s \in 
\mf B (G)$. We write $\mf s = [L,\sigma ]_G$ and let $\Pi_{\mf s} = I_P^G  (\ind_{L^1}^L (\Res^L_{L^1} \sigma))$
be the projective generator of $\Rep (G)^{\mf s} = \Mod (\mc H (G)^{\mf s})$ from Paragraph \ref{par:3.2}.
By Morita invariance, the equivalence of categories \eqref{eq:3.5} induces a natural isomorphism
\begin{equation}\label{eq:4.9}
HH_* (\mc H(G)^{\mf s}) \cong HH_* ( \End_G (\Pi_{\mf s})^{op}) .
\end{equation}
In the cuspidal cases this quickly leads to a nice description. 

\begin{prop}\label{prop:4.6}
Suppose that $\mf s$ is cuspidal, that is, $L = G$. There is a natural isomorphism of 
$\mc O (\Irr (G)^{\mf s})$-modules
\[
HH_n (\mc H (G)^{\mf s}) \cong \Omega^n (\Irr (G)^{\mf s}) .
\]
\end{prop}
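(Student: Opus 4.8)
The plan is to reduce the statement to Theorem \ref{thm:4.3} applied to the explicit description of $\End_G (\Pi_{\mf s})$ coming from Theorem \ref{thm:3.3}. Since $\mf s = [G,\sigma]_G$ is cuspidal we have $L = G$ and $\Pi_{\mf s} = \Pi_{\mf s_L} = \ind_{G^1}^G (\Res^G_{G^1} \sigma)$, so by Theorem \ref{thm:3.3} there is an algebra isomorphism $\End_G (\Pi_{\mf s}) \cong \mc O (X_\nr (G)) \rtimes \C [X_\nr (G,\sigma), \natural_{\mf s}]$, and by \eqref{eq:2.17} its opposite algebra is $\mc O (X_\nr (G)) \rtimes \C [X_\nr (G,\sigma), \natural_{\mf s}^{-1}]$. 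Since $\mc H (G)^{\mf s}$ has local units, Definition \ref{def:4.1} applies to it, and combining the previous isomorphism with the Morita isomorphism \eqref{eq:4.9} gives
\[
HH_n (\mc H (G)^{\mf s}) \cong HH_n \big( \mc O (X_\nr (G)) \rtimes \C [X_\nr (G,\sigma), \natural_{\mf s}^{-1}] \big) .
\]
By the last sentence of \S\ref{par:4.1} this isomorphism is linear over $Z(\mc H (G)^{\mf s}) \cong \mc O (\Irr (G)^{\mf s})$, which under the above identifications is $\mc O (X_\nr (G))^{X_\nr (G,\sigma)}$ by \eqref{eq:3.9}.

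The key observation is that $X_\nr (G,\sigma)$ acts on the complex torus $X_\nr (G)$ by translations, hence \emph{freely}: for $\chi \neq 1$ the fixed point set $X_\nr (G)^\chi$ is empty, while $X_\nr (G)^1 = X_\nr (G)$. Moreover the character $(\natural_{\mf s}^{-1})^1$ of $X_\nr (G,\sigma)$ attached to the identity element by \eqref{eq:2.C} is trivial (because $T_1$ is the unit). Therefore the decomposition in Theorem \ref{thm:4.3}, indexed by conjugacy classes of the acting group, collapses to the single term coming from $\chi = 1$:
\[
HH_n \big( \mc O (X_\nr (G)) \rtimes \C [X_\nr (G,\sigma), \natural_{\mf s}^{-1}] \big) \cong
\Omega^n (X_\nr (G))^{X_\nr (G,\sigma)} ,
\]
an isomorphism of $\mc O (X_\nr (G))^{X_\nr (G,\sigma)}$-modules.

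It remains to identify the right-hand side geometrically. Since $X_\nr (G,\sigma)$ acts freely on the smooth affine variety $X_\nr (G)$, the quotient $X_\nr (G)/X_\nr (G,\sigma)$ is again a smooth affine variety and the quotient map is finite \'etale, so pullback induces an isomorphism $\Omega^n \big( X_\nr (G)/X_\nr (G,\sigma) \big) \isom \Omega^n (X_\nr (G))^{X_\nr (G,\sigma)}$ of modules over $\mc O (X_\nr (G))^{X_\nr (G,\sigma)} = \mc O \big( X_\nr (G)/X_\nr (G,\sigma) \big)$. Finally \eqref{eq:3.A} (which, via \eqref{eq:3.9}, is an isomorphism of affine varieties) identifies $X_\nr (G)/X_\nr (G,\sigma)$ with $\Irr (G)^{\mf s}$, turning the chain of isomorphisms into $HH_n (\mc H (G)^{\mf s}) \cong \Omega^n (\Irr (G)^{\mf s})$ as $\mc O (\Irr (G)^{\mf s})$-modules; naturality follows from that of Morita invariance and of Theorems \ref{thm:3.3} and \ref{thm:4.3}.

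I expect the only delicate point to be the bookkeeping of the module structures: one must verify that the Morita isomorphism \eqref{eq:4.9}, the isomorphism of Theorem \ref{thm:3.3} composed with \eqref{eq:2.17}, the $\mc O (V)^\Gamma$-linearity in Theorem \ref{thm:4.3}, and the \'etale descent for differential forms all refer to one and the same copy of the Bernstein centre $\mc O (\Irr (G)^{\mf s})$. Everything else is a direct application of results already recorded above, once one notices that the translation action is free.
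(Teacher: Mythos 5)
Your proposal is correct and follows essentially the same route as the paper's proof: identify $\End_G(\Pi_{\mf s})^{op}$ via Theorem \ref{thm:3.3} and the opposite-algebra formula, apply Theorem \ref{thm:4.3}, use freeness of the translation action of $X_\nr(G,\sigma)$ to collapse the sum to the identity component, and then descend differential forms along the finite \'etale quotient map to $\Irr(G)^{\mf s}$. (In passing, your citation of \eqref{eq:2.17} for the opposite of a twisted crossed product is the apt one.)
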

\begin{proof}
From Theorem \ref{thm:3.3} and \eqref{eq:2.E} we know that
\[
\End_G (\Pi_{\mf s})^{op} \cong \mc O (X_\nr (G)) \rtimes \C [X_\nr (G,\sigma),\natural_{\mf s}^{-1}] .
\]
Then Theorem \ref{thm:4.3} says that 
\begin{equation}\label{eq:4.10}
HH_n \big( \End_G (\Pi_{\mf s})^{op} \big) \cong \Big( \bigoplus\nolimits_{\chi \in X_\nr (G,\sigma)}
\Omega^n (X_\nr (G)^\chi) \otimes (\natural_{\mf s}^{-1})^\chi \Big)^{X_\nr (G,\sigma)} .
\end{equation}
Here $X_\nr (G,\sigma)$ acts on $X_\nr (G)$ by multiplication, so $X_\nr (G)^\chi$ is empty unless
$\chi = 1$. As $(\natural_{\mf s}^{-1})^1 = 1$, the right hand side of \eqref{eq:4.10} reduces to 
$\Omega^n (X_\nr (G))^{X_\nr (G,\sigma)}$. Then \eqref{eq:4.10} (from right to left) can be written as
\begin{equation}\label{eq:4.11}
\Omega^n (X_\nr (G))^{X_\nr (G,\sigma)} \to HH_n (X_\nr (G)) \to HH_n (\End_G (\Pi_{\mf s}^{op}) ,
\end{equation}
where the first arrow comes from Theorem \ref{thm:4.2} and is natural, while the second arrow is
induced by the inclusion $\mc O (X_\nr (G)) \to \End_G (\Pi_{\mf s}^{op})$. By \eqref{eq:3.1}
that inclusion is natural once $\sigma \in \Irr (G)^{\mf s}$ has been chosen. 

The action of $X_\nr (G,\sigma)$ on $X_\nr (G)$ is free, and $X_\nr (G) / X_\nr (G,\sigma) \cong
\Irr (G)^{\mf s}$ by \eqref{eq:3.A}, so
\begin{equation}\label{eq:4.12} 
\Omega^n (X_\nr (G))^{X_\nr (G,\sigma)} = \Omega^n ( X_\nr (G) / X_\nr (G,\sigma) ) \cong
\Omega^n (\Irr (G)^{\mf s}) .
\end{equation}
The composition of \eqref{eq:4.9}, \eqref{eq:4.10} and \eqref{eq:4.12} is the required isomorphism
\begin{equation}\label{eq:4.13}
HH_n (\mc H (G)^{\mf s}) \isom \Omega^n (\Irr (G)^{\mf s}) .
\end{equation}
The isomorphism \eqref{eq:4.12} depends only on the choice of $\sigma$ in $\Irr (G)^{\mf s}$.
That cancels out with the same choice in \eqref{eq:4.11}, so the composition of \eqref{eq:4.10} 
and \eqref{eq:4.12} is natural. As \eqref{eq:4.9} is also natural, so is \eqref{eq:4.13}.

Recall from \eqref{eq:3.4} that 
\begin{equation}\label{eq:4.14}
Z \big( \End_G (\Pi_{\mf s})^{op} \big) = Z(\End_G (\Pi_{\mf s})) \cong
\mc O (X_\nr (G))^{X_\nr (G,\sigma)} \cong \mc O (\Irr (G)^{\mf s}) .
\end{equation}
We saw in \eqref{eq:4.3} that \eqref{eq:4.9} intertwines the actions of \eqref{eq:4.14}, and by
Theorem \ref{thm:4.3} the same holds for \eqref{eq:4.10}. Further \eqref{eq:4.12} is by 
definition $\mc O (\Irr (G)^{\mf s})$-linear, and we conclude that \eqref{eq:4.13} is
$\mc O (\Irr (G)^{\mf s})$-linear as well.
\end{proof}

\subsection{$HH_* (\mc H (G))$: non-cuspidal Bernstein blocks} \
\label{par:4.6}

In this paragraph $\mf s = [L,\sigma]_G$ with $L \neq G$. We want to determine
$HH_* (\mc H (G)^{\mf s})$, where $\Mod (\mc H (G)^{\mf s})$ is a non-cuspidal
Bernstein block of $\Rep (G)$. It is isomorphic to $HH_* (\End_G (\Pi_{\mf s})^{op})$,
but we do not understand $\End_G (\Pi_{\mf s})$ well enough to handle this directly.
Instead, we will approach it via localization on the Bernstein centre, as in
Paragraph \ref{par:3.3}. Recall from Theorem \ref{thm:3.5} that the Bernstein centre
for $\mc H (G)^{\mf s}$ is
\[
Z( \Rep (G)^{\mf s}) \cong \mc O (\Irr (L)^{\mf s_L})^{W_{\mf s}} \cong
\mc O (X_\nr (L) / W_{\mf s}^e) .
\]
We may and will assume that $\sigma \in \Irr_\cusp (L)^{\mf s_L}$ is tempered. It is
known from \cite[\S 7]{SolEnd} that the ``analytic localization" of $\End_G (\Pi_{\mf s})$
at $X_\nr^+ (L) W_{\mf s} \sigma$ is isomorphic to the ``analytic localization" at 
$\mf t_\R$ of a twisted graded Hecke algebra, denoted $\mh H (\mf t, W_{\mf s,\sigma}, 
k_\sigma, \natural_\sigma)$ in Theorem \ref{thm:3.10}. However, since $X_\nr^+ (L) 
W_{\mf s} \sigma$ is not Zariski-closed in $\Irr (L)^{\mf s_L} / W_{\mf s}$, we cannot
localize there by means of subsets of $Z(\Rep (G)^{\mf s})$.

The best we can achieve in that way is: for any $\chi \in X_\nr^+ (L)$, take
\[
S_\chi = \{ s \in Z (\Rep (G)^{\mf s}) : s (\sigma \otimes \chi) \neq 0 \}
\]
and consider $S_\chi^{-1} HH_* (\mc H (G)^{\mf s})$. This can be related to the
localization of\\ $\mh H (\mf t,W_{\mf s,\sigma}, k_\sigma, \natural_\sigma)$ with 
respect to a maximal ideal $I_{\log \chi}$ of its centre. By \eqref{eq:4.4} and Theorem 
\ref{thm:3.10}, that relates
\[
S_\chi^{-1} HH_* (\mc H (G)^{\mf s}) \cong HH_* \big( S_\chi^{-1} \End_G (\Pi_{\mf s})^{op} \big)
\]
to a localization of $HH_* \big( \mh H (\mf t,W_{\mf s,\sigma}, k_\sigma, \natural_\sigma) \big)$
at $I_{\log \chi}$. That provides a description of the localization of $HH_* (\mc H (G)^{\mf s})$ 
at one arbitrary point of $\Irr (L)^{\mf s_L} / W_{\mf s}$. 

Thus $HH_* (\mc H (G)^{\mf s})$ gives a sheaf over $\Irr \big( Z( \Rep (G)^{\mf s}) \big)$, whose
stalks we understand. In \cite{SolHH}, this entire sheaf is reconstructed by a
convoluted glueing procedure. The main tools for that are algebraic families of
$G$-representations like in \eqref{eq:4.5}. 

Let $M \subset G$ be a Levi subgroup containing $L$, and let $(\tau ,V_\tau) \in 
\Irr_\temp (M)^{\mf s_M}$. This gives a family of representations
\[
\mf F_{M,\tau} = \big\{ I_{PM}^G (\tau \otimes \chi_M ) : \chi_M \in X_\nr (M) \big\} 
\]
and a homomorphism of $\mc O (\Irr (L)^{\mf s})^{W_{\mf s}}$-algebras
\begin{equation}\label{eq:4.19}
\mc F_{M,\tau} : \mc H (G)^{\mf s} \to 
\mc O (X_\nr (M)) \otimes \End_\C^\infty (I_{PM}^G (V_\tau)) .
\end{equation}
By the Morita equivalence of $\C$ with $\End_\C^\infty (I_{PM}^G (V_\tau))$, the smooth part 
of the $G \times G$-representation $\End_\C (I_{PM}^G (V_\tau))$, $\mc F_{M,\tau}$ induces
a $\mc O (\Irr (L)^{\mf s_L})^{W_{\mf s}}$-linear map 
\begin{equation}\label{eq:4.15}
HH_* (\mc F_{M,\tau}) : HH_* (\mc H (G)^{\mf s}) \to HH_* \big( \mc O (X_\nr (M)) \big) 
\cong \Omega^* (X_\nr (M)) .
\end{equation}
Applying Theorem \ref{thm:3.13} to $\mf F_{M,\tau}$ yields a family of
$\mc O (X_\nr (L)) \rtimes \C [W_{\mf s}^e, \natural_{\mf s}^{-1}]$-modules
\[
\mf F_{M, \zeta^\vee_M (\tau)} = \Big\{ \ind_{\mc O (X_\nr (L)) \rtimes \C [W_{\mf s_M}^e, 
\natural_{\mf s}^{-1}]}^{\mc O (X_\nr (L)) \rtimes \C [W_{\mf s}^e, \natural_{\mf s}^{-1}]} 
(\zeta^\vee_M (\tau) \otimes \chi_M ) : \chi_M \in X_\nr (M)  \Big\} .
\]
As in \eqref{eq:4.15}, this induces a $\mc O (X_\nr (L))^{W_{\mf s}^e}$-linear map
\[
HH_* (\mc F_{M,\zeta^\vee_M (\tau)}) : HH_* \big( \mc O (X_\nr (L)) \rtimes \C [W_{\mf s}^e, 
\natural_{\mf s}^{-1}] \big) \to HH_* \big( \mc O (X_\nr (M)) \big) .
\]

\begin{thm}\label{thm:4.7} \textup{\cite[Theorem 2.14]{SolHH}} \\
There exists a unique $\C$-linear bijection 
\[
HH_n (\zeta^\vee) : HH_* \big( \mc O (X_\nr (L)) \rtimes \C [W_{\mf s}^e, 
\natural_{\mf s}^{-1}] \big) \to HH_n (\mc H (G)^{\mf s})
\]
such that, for all families $\mc F_{M,\tau}$ as above,
\[
HH_n (\mc F_{M,\tau}) \circ HH_n (\zeta^\vee) = HH_n \big( \mc F_{M,\zeta^\vee_M (\tau)} \big) . 
\]
\end{thm}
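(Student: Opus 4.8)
## Proof proposal for Theorem 4.7

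The plan is to combine the Hochschild-homology version of the ABPS comparison on graded Hecke algebras (Theorem \ref{thm:4.5}) with the localization-and-glueing strategy already used in Paragraph \ref{par:4.6} to describe the sheaf $HH_*(\mc H(G)^{\mf s})$ over $\Irr(Z(\Rep(G)^{\mf s}))$. The heart of the matter is that both $HH_*(\mc H(G)^{\mf s}) \cong HH_*(\End_G(\Pi_{\mf s})^{op})$ and $HH_*(\mc O(X_\nr(L)) \rtimes \C[W_{\mf s}^e,\natural_{\mf s}^{-1}])$ carry the structure of finite-rank modules over $\mc O(X_\nr(L)/W_{\mf s}^e)$, and both are determined, as sheaves, by their stalks at points of $X_\nr(L)/W_{\mf s}^e$ together with the way those stalks are tied together by the homomorphisms $HH_*(\mc F_{M,\tau})$ into $\Omega^*(X_\nr(M))$. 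So the proof has three movements: (1) uniqueness; (2) construction of the stalkwise map; (3) verification that the stalkwise maps glue to a global bijection with the stated intertwining property.

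First I would establish \textbf{uniqueness}. The families $\mc F_{M,\tau}$ with $M \supset L$ a Levi subgroup and $\tau \in \Irr_\temp(M)^{\mf s_M}$ are, by Theorem \ref{thm:3.13} applied to all $M$ and the compatibility with parabolic induction in Theorem \ref{thm:3.13}.d, rich enough to separate points of $HH_*(\mc H(G)^{\mf s})$: concretely, the joint map $\prod_{M,\tau} HH_*(\mc F_{M,\tau})$ is injective. This is the analogue of the statement after \eqref{eq:4.5} that imposing \eqref{eq:4.5} for all $\mf F_{P,\sigma}$ determines $HH_*(\zeta_0)$, and it is essentially proved in \cite{SolHH} for exactly this purpose; I would cite \cite[\S 2]{SolHH} for the separation property and deduce that any two $\C$-linear maps satisfying the displayed identity for all $\mc F_{M,\tau}$ coincide.

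Next, \textbf{construction}. I would localize on the Bernstein centre exactly as in Paragraph \ref{par:4.6}: for $\chi \in X_\nr^+(L)$ with $S_\chi = \{s \in Z(\Rep(G)^{\mf s}) : s(\sigma \otimes \chi) \neq 0\}$, Theorem \ref{thm:3.10} plus the localization property \eqref{eq:4.4} identify $S_\chi^{-1} HH_*(\mc H(G)^{\mf s})$ with a localization of $HH_*(\mh H(\mf t, W_{\mf s,\sigma}, k_\sigma, \natural_\sigma))$ at the maximal ideal $I_{\log\chi}$. On the combinatorial side, \eqref{eq:3.12} and Theorem \ref{thm:4.3} identify the corresponding localization of $HH_*(\mc O(X_\nr(L)) \rtimes \C[W_{\mf s}^e,\natural_{\mf s}^{-1}])$ with a localization of $HH_*(\mc O(\mf t) \rtimes \C[W_{\mf s,\sigma},\natural_\sigma^{-1}])$ at the same ideal. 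Now Theorem \ref{thm:4.5} furnishes a canonical $\C$-linear bijection $HH_*(\zeta_0)$ between $HH_*(\mc O(\mf t) \rtimes \C[W_{\mf s,\sigma},\natural_\sigma^{-1}])$ and $HH_*(\mh H(\mf t, W_{\mf s,\sigma}, k_\sigma, \natural_\sigma^{-1}))$, characterized by \eqref{eq:4.5}. Transporting $HH_*(\zeta_0)$ through the two identifications above produces, for each $\chi$, a $\C$-linear isomorphism of stalks. The graded-Hecke-algebra characterization \eqref{eq:4.5} — matching against the families $\mf F_{P',\sigma'}$ — translates, under the equivalence of Theorem \ref{thm:3.10}.d (compatibility with parabolic induction and restriction), into the matching condition against the $\mc F_{M,\tau}$. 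This is where one uses that $\zeta^\vee_M$ in Theorem \ref{thm:3.13} was itself built from the graded-Hecke-algebra $\zeta_0$'s, so the two characterizations are manifestly compatible.

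Finally, \textbf{glueing}. Having a stalkwise bijection at every $\chi \in X_\nr^+(L)$ — hence, by the polar decomposition \eqref{eq:1.29} and $X_\nr(L)$-equivariance, at every point of $X_\nr(L)/W_{\mf s}^e$ — I would argue that the family of stalk maps is the restriction of a single map of coherent sheaves. The compatibilities of $HH_*(\zeta_0)$ with the restriction maps $HH_*(\mc F_{P,\sigma}) \to \Omega^*(P^{\vee\perp})$ in the graded setting, combined with Theorem \ref{thm:3.13}.d, force the stalk maps to agree on overlaps with the images under $HH_*(\mc F_{M,\tau})$; since the $HH_*(\mc F_{M,\tau})$ jointly separate sections (the uniqueness input), a global $\C$-linear map $HH_n(\zeta^\vee)$ exists and satisfies the displayed identity. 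That it is bijective is checked stalkwise, where it holds by Theorem \ref{thm:4.5}. The main obstacle, as in \cite{SolHH}, is precisely this last glueing step: the stalks are understood cleanly, but assembling them into a global isomorphism requires controlling how the analytic localizations of $\End_G(\Pi_{\mf s})$ at different $\sigma' = \chi_u \otimes \sigma$ (for $\chi_u \in X_\nr^u(L)$) are patched — equivalently, controlling the non-$\mc O(X_\nr(L)/W_{\mf s}^e)$-linearity exhibited already in Example \ref{ex:4.D} — and showing the patching data is compatible across the whole of $\Irr_\temp(L)^{\mf s_L}$; I would handle this by the convoluted glueing procedure of \cite{SolHH}, invoking \cite[Theorem 2.14]{SolHH} for the precise bookkeeping.
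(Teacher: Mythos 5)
Your proposal correctly captures the architecture the paper describes for \cite[Theorem 2.14]{SolHH}: localize on the Bernstein centre, identify the stalks via Theorems \ref{thm:3.10} and \ref{thm:4.5}, and glue using the families $\mc F_{M,\tau}$. That is what the paper itself sketches in Paragraph \ref{par:4.6} before citing \cite{SolHH}.

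However, as a proof the proposal has a genuine gap. The final sentence invokes \cite[Theorem 2.14]{SolHH} ``for the precise bookkeeping'' --- but that reference \emph{is} the statement being proved, so the argument is circular exactly at the point where the real work happens. And the glueing step is precisely where the difficulty lies: the stalk maps obtained from Theorem \ref{thm:4.5} at different $\chi_u \in X_\nr^u (L)$ are built from distinct graded Hecke algebras $\mh H (\mf t, W_{\mf s,\sigma'}, k_{\sigma'}, \natural_{\sigma'})$ with $\sigma' = \sigma \otimes \chi_u$, and the resulting local bijections do \emph{not} patch into a morphism of $\mc O (X_\nr (L)/W_{\mf s}^e)$-modules. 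Example \ref{ex:4.D} already shows this failure for $SL_2$: $HH_n (\zeta_0)$ shifts the $\mc O (\mf t)^W$-weight of the Steinberg summand from $0$ to $\pm k$, so the stalkwise maps are \emph{not} the restrictions of a single sheaf morphism over $\Irr (Z(\Rep (G)^{\mf s}))$. Producing the global $\C$-linear $HH_n (\zeta^\vee)$ therefore requires explicitly tracking the deviation from $Z(\Rep (G)^{\mf s})$-linearity (the elements $r_\delta \in X_\nr^+ (M)$ described right after Theorem \ref{thm:4.7}) and verifying the intertwining identity against every $HH_n (\mc F_{M,\tau})$ globally, not merely stalk by stalk; none of this is carried out. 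Likewise the separation claim $\bigcap_{M,\tau} \ker HH_n (\mc F_{M,\tau}) = 0$, on which your uniqueness argument rests, is asserted with a citation to \cite[\S 2]{SolHH} rather than proved, and it is part of the content of the cited theorem. In short, the outline is an accurate description of the strategy, but the substance --- the global construction, the handling of the non-linearity, and the verification --- is deferred to the very result you are asked to establish.
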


In general $HH_n (\zeta^\vee)$ does not respect the actions of $\mc O (X_\nr (L))^{W_{\mf s}^e}
\cong \mc O (\Irr (L)^{\mf s_L})^{W_{\mf s}}$, but it can be described precisely how
it deviates from $\mc O (\Irr (L)^{\mf s_L})^{W_{\mf s}}$-linearity \cite[Theorem B]{SolHH}.
On the part of $HH_n (\mc H (G)^{\mf s})$ that comes from $\Irr_\temp (G)_{[M,\delta]}$ with
$[M,\delta]_G \in \Delta (G,\mf s)$, the deviation is given by an element 
$r_\delta \in X^+_\nr (M)$ such that $\delta$ is a constituent of 
$I_P^G (\sigma' \otimes r_\delta)$ for a unitary $\sigma' \in X_\nr (L) \sigma$.

From Theorems \ref{thm:4.3} and \ref{thm:4.7} we obtain a $\C$-linear bijection
\begin{equation}\label{eq:4.18}
HH_n (\mc H (G)^{\mf s}) \to \Big( \bigoplus\nolimits_{w \in W_{\mf s}^e} 
\Omega^n (X_\nr (L)^w \otimes (\natural_{\mf s}^{-1})^w \Big)^{W_{\mf s}^e} .
\end{equation}
This map can also be constructed via families of (virtual) $G$-representations, see
\cite[Theorem 2.13.b]{SolHH}.

\begin{ex}\label{ex:4.E}
We consider $G = SL_2 (F)$ and $\mf s = [T,\mr{triv}]_G$. Then $W_{\mf s}^e = S_2, 
\natural_{\mf s} = 1$ and Theorem \ref{thm:4.7} says that
\[
HH_n (\mc H (G)^{\mf s}) \cong HH_* ( \mc O (X_\nr (T)) \rtimes S_2 ) .
\]
We use three families of representations to construct this isomorphism:
\[
\mf F_{T,1} = \{ I_B^G (\chi) : \chi \in X_\nr (T) \},
\]
$\mf F_{G,\mr{St}} = \{ \mr{St} \}$ and $\mf F_{G,2} = \{ \pi_+ - \pi_- \}$, where 
$I_B^G (\chi_-) = \pi_+ \oplus \pi_-$ for the unique $\chi_- \in X_\nr (T)$ of order two.
These induce $\mc O (X_\nr (T))^{S_2}$-linear maps
\begin{align*}
& HH_n (\mc F_{T,1}) : HH_n (\mc H (G)^{\mf s}) \to HH_n \big( \mc O (X_\nr (T)) \otimes
\End_\C^\infty (I_B^G (\mr{triv})) \big) \cong \Omega^n (X_\nr (T)) ,\\
& HH_n (\mc F_{G,\mr{St}}) :  HH_n (\mc H (G)^{\mf s}) \to HH_n ( \End_\C^\infty (V_{\mr{St}}) )
\cong \Omega^n (\{ \mr{St} \}) ,\\
& HH_n (\mc F_{G,2}) = HH_n (\pi_+) - HH_n (\pi_-) : HH_n (\mc H (G)^{\mf s}) \to
HH_n (\mc O (\{\chi_-\})) .
\end{align*}
The sum of the three maps is an isomorphism of $\mc O (X_\nr (T))^{S_2}$-modules
\begin{equation}\label{eq:4.16}
HH_n (\mc H (G)^{\mf s}) \to \Omega^n (X_\nr (T))^{S_2} \oplus \Omega^n (\{ \mr{St} \})
\oplus \Omega^n ( \{\chi_-\} ) .
\end{equation}
The right hand side of \eqref{eq:4.16} is isomorphic to
\begin{equation}\label{eq:4.17}
HH_n (\mc O (T) \rtimes S_2) \cong \big( \Omega^n (X_\nr (T)) \oplus \Omega^n (\{\mr{triv}\} ) 
\oplus \Omega^n (\{ \chi_- \} )\big)^{S_2} ,
\end{equation}
and the canonical map from \eqref{eq:4.16} to \eqref{eq:4.17} is almost linear over 
$\mc O (X_\nr (T))^{S_2} \cong Z (\Rep (G)^{\mf s})$. The only deviation from 
$\mc O (X_\nr (T))^{S_2}$-linearity is that the $Z (\Rep (G)^{\mf s})$-character of St 
does not agree with $\mr{triv} \in X_\nr (T) / S_2$.
\end{ex}

\section{Hochschild homology for topological algebras}\label{sec:5}

We would like to compute the Hochschild homology of topological algebras appearing in
the representation theory of $p$-adic groups, like $C^\infty (X) \rtimes \Gamma$ or
$\mc S (G)$. The definition of Hochschild homology in Section \ref{sec:4} can be 
applied to any algebra, so in particular to $C^\infty (X)$ or $C(X)$ for a smooth
manifold $X$. However, that does not give interesting results, because the functor 
$HH_*$ from Definition \ref{def:4.1} does not take the topology of an algebra into
account. The best way to improve that is by using a topological tensor product.

As is common in noncommutative geometry, we will work mostly with Fr\'echet algebras.
For later use we define precisely which algebras we mean by that.

\begin{defn}\label{def:5.10}
A Fr\'echet algebra is a $\C$-algebra $A$ such that:
\begin{itemize}
\item $A$ is a Fr\'echet space,
\item the topology on $A$ can be defined by a countable family of seminorms
$p$ which are submultiplicative: $p(ab) \leq p(a) p(b)$ for all $a,b \in A$.
\end{itemize}
\end{defn}
The submultiplicativity implies that for any Fr\'echet algebra $A$ the 
multiplication map $A \times A \to A$ is continuous. The class of Fr\'echet algebras 
contains all Banach algebras and spaces of smooth functions $C^\infty (X)$.

For two Fr\'echet spaces $V$ and $W$, we denote their completed projective tensor
product by $V \hat \otimes W$. This is a completion of $V \otimes W$ and a Fr\'echet
space, with the following universal property: for any Fr\'echet space $Z$ there is
a natural bijection between
\begin{itemize}
\item the set of continuous $\C$-linear maps from $V \hat \otimes W$ to $Z$,
\item the set of continuous $\C$-bilinear maps from $V \times W$ to $Z$.
\end{itemize}
\begin{ex}
For two smooth manifolds $X$ and $Y$, there is an isomorphism of Fr\'echet spaces
\[
C^\infty (X) \hat \otimes C^\infty (Y) \cong C^\infty (X \times Y).
\]
\end{ex}

For a Fr\'echet algebra $A$, there is also a notion of the completed projective
tensor product of $A$-modules. Let $V$ be a right Fr\'echet $A$-module and let 
$W$ be a left Fr\'echet $A$-module. Then $V \hat \otimes_A W$ is a completion of
$V \otimes_A W$ which, like $V \hat \otimes W$, has a universal property with 
respect to $A$-balanced $\C$-bilinear continuous maps from $V \times W$ to a
Fr\'echet space $Z$. Concretely, this works out to
\[
V \hat \otimes_A W = V \hat \otimes W \, \big/ \, 
\overline{\mr{span} \{ v a \otimes w - v \otimes a w : v \in V, a \in A, w \in W\} } ,
\] 
where the bar means closure.

To define Hochschild homology for Fr\'echet algebras one needs homological algebra
in a topological setting, for which we refer to \cite{Tay}. For Fr\'echet algebras
we will always do that with respect to $\hat \otimes$, and we often suppress that 
from the notations. 

\begin{defn}\label{def:5.1}
Let $A$ be a unital Fr\'echet algebra. For $n \in \Z_{\geq 0}$, $\Tor_n^A$ denotes
the $n$-th derived functor of $\hat \otimes_A$. The $n$-th Hochschild homology of
$A$ is
\[
HH_n (A) = HH_n (A, \hat \otimes) = \Tor_n^{A \hat \otimes A^{op}} (A,A) .
\]
For a possibly non-unital Fr\'echet algebra $B$ we put
\[
HH_n (B) = \mr{coker} \big( HH_n (\C) \to HH_n (B_+) \big) .
\]
\end{defn}

Each $HH_n$ is a functor from Fr\'echet spaces to topological vector spaces. 
These functors share several properties with their purely algebraic counterparts:

\begin{itemize}
\item One can compute $HH_* (A)$ as the homology of an explicit differential 
complex $(A^{\hat \otimes m}, d_m)$.
\item In degree zero the definition shows that $HH_0 (A) = A / \overline{[A,A]}$.
\item Additivity holds for unital Fr\'echet algebras.
\item $HH_n (A)$ has the structure of a $Z(A)$-module.
\item Morita invariance holds for unital Fr\'echet algebras.
\end{itemize}
Continuity of $HH_n (?,\hat \otimes)$ is problematic, because a direct limit
of Fr\'echet algebras is often not a Fr\'echet space.

We point out that Hochschild homology works badly for Banach algebras. Consider
a commutative Banach algebra $B$, for instance $C(Y)$ for a compact Hausdorff space
$Y$. Then $HH_0 (B, \hat \otimes) = B$ and
\begin{equation}\label{eq:5.1}
HH_n (B, \hat \otimes) = 0 \qquad \text{for } n \in \Z_{>0}. 
\end{equation}
In fact \eqref{eq:5.1} also holds for large classes of noncommutative Banach
algebras \cite{Joh}. Roughly speaking Hochschild homology detects some 
differentiable structure, and Banach algebras are too complete for that.

\subsection{Comparison between algebraic and topological settings} \ 

For a smooth manifold $X$, we write $\Omega_{sm}^n (X)$ for the space of 
smooth differential $n$-forms on $X$. The following version of the 
Hochschild--Kostant--Rosenberg theorem (Theorem \ref{thm:4.2}) was discovered 
by Connes, in the case of compact manifolds \cite{Con}.

\begin{thm}\label{thm:5.3} \textup{\cite{Tel}} \\
There is a natural isomorphism of Fr\'echet $C^\infty (X)$-modules
\[
HH_n (C^\infty (X)) \cong \Omega_{sm}^n (X).
\]
\end{thm}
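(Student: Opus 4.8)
The plan is to compute $HH_*(C^\infty(X))$ not from the (topological) bar complex but from a small continuous resolution of $C^\infty(X)$ as a module over $C^\infty(X)\hat\otimes C^\infty(X)$, built from the geometry of the diagonal. First I would record the standard identifications coming from nuclearity of the Fr\'echet space $C^\infty(X)$: one has $C^\infty(X)\hat\otimes C^\infty(X)\cong C^\infty(X\times X)$ (and more generally $C^\infty(X)^{\hat\otimes(n+1)}\cong C^\infty(X^{n+1})$), the algebra is commutative so $C^\infty(X)^{op}=C^\infty(X)$, and therefore $HH_n(C^\infty(X))=\Tor_n^{C^\infty(X\times X)}(C^\infty(X),C^\infty(X))$, the derived functors being taken in the relative (allowable) sense with respect to $\hat\otimes$. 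Nuclearity also guarantees that $\hat\otimes$ is exact on allowable short exact sequences of Fr\'echet spaces and that modules of the form $C^\infty(X,E)$ for a smooth vector bundle $E$ are topologically projective over $C^\infty(X)$, which is what makes relative homological algebra usable here.

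The heart of the argument is a \emph{continuous Koszul resolution} of $C^\infty(X)$ over $A:=C^\infty(X\times X)$. Pick a tubular neighbourhood of the diagonal $\Delta\subset X\times X$ and, via a connection, identify it with an open neighbourhood of the zero section of $TX$; in these coordinates the vanishing ideal $J$ of $\Delta$ is fibrewise generated by linear coordinates, so near $\Delta$ the quotient map $A\to C^\infty(X)$ looks exactly like evaluation at the zero section in a Koszul complex over $C^\infty(\R^m\times\R^m)$. Globalising by a partition of unity, one obtains a complex of topologically projective $A$-modules
\[
\cdots \longrightarrow A\,\hat\otimes_{C^\infty(X)} C^\infty(X,\Lambda^2 T^*X)
\longrightarrow A\,\hat\otimes_{C^\infty(X)} C^\infty(X,T^*X)
\longrightarrow A \longrightarrow C^\infty(X)\longrightarrow 0,
\]
where the conormal bundle of $\Delta$ has been identified canonically with $T^*X$ via $f\otimes 1-1\otimes f\mapsto df$ (equivalently $J/\overline{J^2}\cong\Omega^1_{sm}(X)$). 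One then checks that this complex is allowable, i.e. admits a continuous $\C$-linear contracting homotopy: away from $\Delta$ the augmentation splits off trivially, near $\Delta$ one uses the explicit Koszul homotopy, and the two are glued with the partition of unity, the requisite flatness for the glueing being supplied by nuclearity. This makes the displayed complex an allowable topologically projective resolution of $C^\infty(X)$, so it computes $\Tor_*^A(C^\infty(X),-)$; comparing it with the bar resolution identifies the resulting chain map with the expected antisymmetrization $f_0\otimes\cdots\otimes f_n\mapsto\tfrac1{n!}\,f_0\,df_1\wedge\cdots\wedge df_n$.

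Applying $C^\infty(X)\,\hat\otimes_A(-)$ to the resolution, the degree-$n$ term becomes $C^\infty(X,\Lambda^n T^*X)=\Omega^n_{sm}(X)$, and — exactly as in the algebraic Hochschild--Kostant--Rosenberg theorem — every differential becomes zero, because the Koszul differential is a sum of multiplications by the difference coordinates, which lie in $J$ and hence vanish after the base change. Thus $HH_n(C^\infty(X))\cong\Omega^n_{sm}(X)$ as Fr\'echet spaces; the $C^\infty(X)$-module structures match tautologically, and the isomorphism is natural because the identification $N^*_\Delta\cong T^*X$ is canonical and independent of all choices while $\Tor_*$ is independent of the chosen allowable resolution. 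As a sanity check, in degree $0$ commutativity gives $HH_0=C^\infty(X)/\overline{[C^\infty(X),C^\infty(X)]}=C^\infty(X)=\Omega^0_{sm}(X)$.

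The main obstacle is entirely functional-analytic: proving that the continuous Koszul complex really is an \emph{allowable} resolution by \emph{topologically projective} $A$-modules, and that relative $\Tor$ is then computed by it. This is where one must invoke nuclearity of $C^\infty(X)$, the good behaviour of $\hat\otimes$ on exact sequences of nuclear Fr\'echet spaces, the $C^\infty$-analogue of $J/\overline{J^2}\cong\Omega^1$, and a careful partition-of-unity argument to pass between the local Koszul picture near the diagonal and the trivial picture away from it. An alternative, following Teleman, replaces the global partition-of-unity step by a sheaf/microlocal localisation of Hochschild homology along the diagonal, reducing the computation to the local model $C^\infty(\R^m)$, where the Koszul resolution is explicit.
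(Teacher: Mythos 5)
The paper does not prove Theorem~\ref{thm:5.3}; it simply records the statement with citations to Connes \cite{Con} (compact case) and Teleman \cite{Tel}, so there is no "paper's proof" to compare against. That said, your sketch reproduces the standard Connes--Teleman argument in all essentials and is, in outline, correct: nuclearity gives $C^\infty(X)\hat\otimes C^\infty(X)\cong C^\infty(X\times X)$, one builds a small topologically projective resolution of $C^\infty(X)$ over $C^\infty(X\times X)$ whose degree-$n$ term is $C^\infty(X\times X,\pi_2^*\Lambda^n T^*X)$, and after applying $C^\infty(X)\hat\otimes_A(-)$ the Koszul differential dies because it lies in the diagonal ideal, leaving $\Omega^n_{sm}(X)$. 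The comparison with the bar resolution produces the antisymmetrization map, and naturality comes from the canonical identification of the conormal bundle of the diagonal with $T^*X$.

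The one place I would push back on the level of detail is the "globalise by a partition of unity" step and the remark that the local Koszul homotopy and the off-diagonal splitting are "glued with the partition of unity, the requisite flatness being supplied by nuclearity." Contracting homotopies do not glue under a partition of unity in the naive sense (a convex combination of chain homotopies need not be a chain homotopy for a fixed differential), and nuclearity is not what makes this work. The actual mechanism, as in Connes' Lemma on the resolution (and in Teleman's microlocal variant), is to construct a single global vector field $v$ on $X\times X$, tangent to the fibres of $\pi_2$, which vanishes to first order exactly on the diagonal and is nowhere zero off it; this is done by starting from $v(x,y)=\exp_y^{-1}(x)$ near $\Delta$ via a chosen metric or connection and deforming it off $\Delta$ with a cut-off so it stays non-vanishing. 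The differential is then the single global contraction $\iota_v$ on the Koszul complex of the bundle $\pi_2^*T^*X$; exactness away from the augmentation follows from non-vanishing of $v$ off $\Delta$ together with the local Koszul picture near $\Delta$, and one produces a continuous contracting homotopy from that structure rather than by averaging local ones. So the partition of unity enters in building $v$, not in splicing homotopies. With that correction the argument closes; the remaining functional-analytic points (projectivity of $C^\infty(X\times X,E)$ in Taylor's relative category, exactness of $\hat\otimes$ on allowable sequences of nuclear Fr\'echet spaces, $J/\overline{J^2}\cong\Omega^1_{sm}(X)$) are stated correctly and are exactly what the references supply.
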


Let $\Gamma$ be a finite group acting on $X$ by diffeomorphisms, and let
$\natural : \Gamma \times \Gamma \to \C^\times$ by a 2-cocycle. Then we can
form the crossed product $C^\infty (X) \rtimes \Gamma$ and the twisted crossed
product $C^\infty (X) \rtimes \C [\Gamma,\natural]$. Like in the algebraic
setting, $HH_n (C^\infty (X) \rtimes \Gamma)$ was computed by Brylinski \cite{Bry}.
That can be generalized to a topological version of Theorem \ref{thm:4.3}:

\begin{thm}\label{thm:5.4} \textup{\cite[Proposition 3.12]{SolHH}} \\
There exists an isomorphism of Fr\'echet $C^\infty (X)^\Gamma$-modules
\[
HH_n (C^\infty (X) \rtimes \C [\Gamma, \natural]) \cong 
\big( \bigoplus\nolimits_{\gamma \in \Gamma} \Omega_{sm}^n (X^\gamma) 
\otimes \natural^\gamma \big)^\Gamma .
\]
\end{thm}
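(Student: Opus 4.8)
The plan is to reduce the topological statement, Theorem~\ref{thm:5.4}, to its algebraic counterpart Theorem~\ref{thm:4.3} by way of a suitable topological projective resolution, and then to identify the resulting complex with the de Rham–type complexes appearing on the right hand side. First I would recall that $C^\infty(X) \rtimes \C[\Gamma,\natural]$ is, by \eqref{eq:2.2}, a direct summand of $C^\infty(X) \rtimes \Gamma^*$, so by additivity of $HH_n$ it suffices to treat the untwisted crossed product $C^\infty(X) \rtimes \Gamma$ and then cut out the isotypic component for the character $c_\natural$ of $Z^*$; the twist $\natural^\gamma$ on $\Omega_{sm}^n(X^\gamma)$ emerges exactly as in the algebraic case because the bookkeeping of the $\Gamma^*$-action on the conjugacy-class decomposition is formally identical. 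So the real content is the untwisted, topological Brylinski computation, and the new point relative to \cite{Bry} is only that one must phrase everything with $\hat\otimes$ in place of $\otimes$.

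For the untwisted case I would proceed in two moves. The first is the standard decomposition $HH_*(C^\infty(X)\rtimes\Gamma) \cong \big(\bigoplus_{\gamma\in\Gamma} H_*(C^\infty(X), {}_\gamma C^\infty(X))\big)^\Gamma$, where ${}_\gamma C^\infty(X)$ is $C^\infty(X)$ as a bimodule with the right action twisted by $\gamma$; this comes from writing $A\rtimes\Gamma \hat\otimes (A\rtimes\Gamma)^{op}$-projective resolutions adapted to the group grading, exactly as in \cite[\S1.2]{Lod} but using completed tensor products, and noting that $\hat\otimes$ over $\C$ is exact on Fr\'echet spaces in the relevant nuclear situation so that derived functors behave. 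The second move is to compute each $H_*(C^\infty(X), {}_\gamma C^\infty(X))$. Here $\gamma$ acts on $X$ with fixed-point submanifold $X^\gamma$, and the key local input is the topological analogue of the Koszul resolution: near $X^\gamma$ one chooses $\gamma$-equivariant coordinates splitting the tangent bundle into the $+1$-eigenspace and its complement, and one resolves $C^\infty(X)$ as a $C^\infty(X\times X)$-module by a smooth Koszul complex built on the graph of $\gamma$; taking homology yields $\Omega_{sm}^*(X^\gamma)$ (with a degree shift absorbed into the contraction with the normal eigenvalues being invertible). This is precisely Connes' computation (Theorem~\ref{thm:5.3}) localized along $X^\gamma$, and the smooth HKR isomorphism of \cite{Tel} is the $\gamma=e$ instance.

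The main obstacle I expect is the functional-analytic bookkeeping in the second move: one needs that the smooth Koszul-type complex along the graph of $\gamma$ is a \emph{topologically} projective (or at least $\hat\otimes$-admissible) resolution, which requires knowing that $C^\infty(X)$ is nuclear and that the relevant kernels and images are closed complemented subspaces so that the homology is Hausdorff and the spectral-sequence / mixed-complex arguments go through. Passing from the local statement near $X^\gamma$ to a global one also requires a smooth partition-of-unity / Mayer--Vietoris patching that is compatible with $\hat\otimes$, which is routine but must be stated carefully for $X$ non-compact. Once these points are in place, the identification of $\big(\bigoplus_\gamma \Omega_{sm}^n(X^\gamma)\big)^\Gamma$ with $HH_n$ is as in \cite[Proposition~3.12]{SolHH}, and reinserting the central idempotent $e_\natural$ gives the twisted formula with the factors $\natural^\gamma$. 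Finally I would remark that the isomorphism is $C^\infty(X)^\Gamma$-linear because it is built from the $Z(A)$-module structure on the resolution, exactly as in point~5 of Paragraph~\ref{par:4.1} and its topological counterpart.
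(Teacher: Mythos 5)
Your proposal is a valid approach and essentially reproduces the Brylinski strategy (conjugacy-class decomposition, twisted bimodule Hochschild homology, Koszul resolution along the graph of $\gamma$ near $X^\gamma$), extended to the twisted case by lifting to the Schur extension $\Gamma^*$ and cutting out the $e_\natural$-summand. That is indeed the idea attributed to \cite{Bry} in the text and is the natural way to prove the untwisted statement from scratch. The reduction from $\C[\Gamma,\natural]$ to $e_\natural \C[\Gamma^*]$ is correct and gives the $\natural^\gamma$ factors as in the algebraic Theorem~\ref{thm:4.3}, since $Z^*$ acts trivially on $X$ so that $X^{\gamma^*}$ depends only on the image of $\gamma^*$ in $\Gamma$.

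However, the paper points at a different, cleaner route which you do not mention and which sidesteps most of the functional-analytic bookkeeping you flag as the main obstacle. Immediately after Theorem~\ref{thm:5.2} the text states: ``One can recover Theorems~\ref{thm:5.3} and~\ref{thm:5.4} from Theorems~\ref{thm:4.2} and~\ref{thm:4.3} by applying Theorem~\ref{thm:5.2}.'' In other words, one takes the algebraic result $HH_n(\mc O(V) \rtimes \C[\Gamma,\natural]) \cong \bigl(\bigoplus_\gamma \Omega^n(V^\gamma)\otimes\natural^\gamma\bigr)^\Gamma$ (Theorem~\ref{thm:4.3}) as already established, and then applies the flat base-change Theorem~\ref{thm:5.2} to obtain
\[
HH_n\bigl(C^\infty(X) \rtimes \C[\Gamma,\natural]\bigr) \cong C^\infty(X)^\Gamma \otimes_{\mc O(V)^\Gamma} HH_n\bigl(\mc O(V) \rtimes \C[\Gamma,\natural]\bigr),
\]
after which the right-hand side is identified with $\bigl(\bigoplus_\gamma \Omega_{sm}^n(X^\gamma)\otimes\natural^\gamma\bigr)^\Gamma$. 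This avoids any new topological Koszul resolutions or Mayer--Vietoris arguments: the hard analysis is packaged once and for all in Theorem~\ref{thm:5.2}. The trade-off is that Theorem~\ref{thm:5.2} requires $X$ to be a real-analytic manifold sitting Zariski-densely inside a complex affine variety $V$ with $T_x(X)\otimes_\R\C = T_x(V)$, whereas your direct argument, if carried out in full, applies to an arbitrary smooth manifold. For the applications to $\mc S(G)$ in this paper, $X = X_\nr^u(L) \subset X_\nr(L) = V$ satisfies the hypothesis, so the base-change route is available and is the one the paper leans on. You should at least be aware that this alternative exists, both because it is the path of least resistance in the intended application and because it explains why the paper states Theorem~\ref{thm:5.2} right before this theorem.

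One small imprecision in your write-up: the ``degree shift absorbed into the contraction'' is a little loose. There is in fact no degree shift: $H_n(C^\infty(X),{}_\gamma C^\infty(X)) \cong \Omega_{sm}^n(X^\gamma)$ on the nose, because the Koszul differentials in the normal directions are invertible (the eigenvalues of $d\gamma$ transverse to $X^\gamma$ are $\neq 1$), which makes those directions contractible rather than contributing a shift. It would be worth stating that clearly rather than hiding it in a parenthetical.
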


From on the one hand Theorems \ref{thm:4.2} and \ref{thm:4.3} and on the other
hand Theorems \ref{thm:5.2} and \ref{thm:5.3}, we see that there is a clear
analogy between $HH_n$ in an algebro-geometric setting and $HH_n$ in a 
differential geometric setting. This can be made precise for larger classes of
(noncommutative) algebras. 

Let $V$ be a complex affine variety and let $X$ be a real analytic manifold 
which is contained and Zariski-dense in $V$. Then $\mc O (V)$ embeds in
$C^\infty (X)$ and $C^\infty (X)$ is a $\mc O (V)$-module (usually of 
infinite rank). Assume that a finite group $\Gamma$ acts on $V$ by automorphisms
of algebraic varieties, and that $\Gamma$ stabilizes $X$. Then $X / \Gamma$ is 
an orbifold and $C^\infty (X)^\Gamma$ is (by definition) the ring of smooth 
functions on $X / \Gamma$.

Let $A$ be a finite type $\mc O (V)^\Gamma$-algebra, as in Definition 
\ref{def:4.F}. The algebra 
\[
C^\infty (X)^\Gamma \otimes_{\mc O (V)^\Gamma} A
\]
has finite rank over $C^\infty (X)^\Gamma$, and
by \cite[Lemma 1.3]{KaSo} it is a Fr\'echet algebra.

\begin{ex}
The crossed product $\mc O (V) \rtimes \Gamma$ is a finite type 
$\mc O (V / \Gamma)$-algebra and
\[
C^\infty (X / \Gamma) \otimes_{\mc O (V / \Gamma)} \mc O (V) \rtimes \Gamma
= C^\infty (X) \rtimes \Gamma .
\] 
\end{ex}

\begin{thm}\label{thm:5.2} \textup{\cite[Theorems C and D]{KaSo}} \\
Let $V,X,\Gamma,A$ be as above and assume that $T_x (X) \otimes_\R \C = T_x (V)$ 
for all $x \in X$. 
\enuma{
\item $C^\infty (X)^\Gamma$ is flat over $\mc O (V)^\Gamma$.
\item There is an isomorphism of Fr\'echet $C^\infty (X)^\Gamma$-modules
\[
HH_n \big( C^\infty (X)^\Gamma \otimes_{\mc O (V)^\Gamma} A \big) \cong
C^\infty (X)^\Gamma \otimes_{\mc O (V)^\Gamma} HH_n (A).
\]
}
\end{thm}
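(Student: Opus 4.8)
The plan is to deduce the isomorphism in (b) from the flatness statement (a), which is the analytic core.

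\emph{Part (a).} Since $\mathcal O(V)^\Gamma=\mathcal O(V/\Gamma)$ is a finitely generated, hence Noetherian, $\mathbb C$-algebra, flatness of $C^\infty(X)^\Gamma$ over it can be checked after localizing at each point $\bar v\in V/\Gamma$, and then at the level of germs. The hypothesis $T_x(X)\otimes_{\mathbb R}\mathbb C=T_x(V)$ for all $x\in X$ forces $\dim_{\mathbb C}T_x(V)=\dim X$, so $V$ is smooth along $X$, the submanifold $X$ is totally real with local complexification $V$, and the stabilizer $\Gamma_v$ of $v$ preserves $T_v(X)$ inside $T_v(V)=T_v(X)\otimes_{\mathbb R}\mathbb C$; thus near $\bar v$ the pair $(V/\Gamma,X/\Gamma)$ is modelled on $(\mathbb C^d/\Gamma_v,\mathbb R^d/\Gamma_v)$ for a finite linear action. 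Two reductions then do the work: first, averaging over $\Gamma$ exhibits $C^\infty(X)^\Gamma$ as a direct $\mathcal O(V)^\Gamma$-module summand of $C^\infty(X)$, and a summand of a flat module is flat, so it suffices to show $C^\infty(X)$ is flat over $\mathcal O(V)^\Gamma$; second, this local statement is obtained by combining the classical flatness of $C^\infty$-germs over real-analytic germs (Malgrange) and of analytic local rings over their algebraic analogues with the local criterion of flatness, comparing fibres of $C^\infty(\mathbb R^d)$ and of $\mathbb C[z_1,\dots,z_d]^{\Gamma_v}$ over $\mathcal O(V/\Gamma)_{\bar v}$. The comparison at the ramified points of $V\to V/\Gamma$ is the delicate point.

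\emph{Part (b).} Put $Z=\mathcal O(V)^\Gamma$, $\hat Z=C^\infty(X)^\Gamma$, $B=\hat Z\otimes_Z A$. By \cite[Lemma 1.3]{KaSo}, $B$ is a finite type Fr\'echet algebra and each $HH_n(B)$ is a finite rank $\hat Z$-module; by \cite{KNS} each $HH_n(A)$ is a finite rank $Z$-module, so the right-hand side makes sense. The mechanism is flat base change along $\operatorname{Spec}\hat Z\to\operatorname{Spec}Z$: by (a) the functor $\hat Z\otimes_Z(-)$ is exact, hence commutes with the homology of any complex of $Z$-modules, and on finitely generated $Z$-modules it is compatible with completed projective tensor products (it converts $Z$-linear discrete tensor products into $\hat Z$-linear completed ones). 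One then uses that, by \cite{KNS}, $HH_*(A)$ is, locally on $\operatorname{Spec}Z$, the homology of a complex of finitely generated $Z$-modules; applying $\hat Z\otimes_Z(-)$ to such a complex yields a complex that, by \cite{KaSo}, computes the topological Hochschild homology of $B$, whence $HH_n(B)\cong\hat Z\otimes_Z HH_n(A)$ once the two constructions have been matched. That matching is reduced, by Morita invariance and the localization property of $HH_*$ (the properties recorded in Paragraph~\ref{par:4.1} together with their Fr\'echet analogues) plus a d\'evissage, to the building blocks $A=M_k(\mathcal O(U))$ and $A=\mathcal O(V)\rtimes\Gamma'$. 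On these it is exactly the compatibility of Theorems~\ref{thm:4.2}--\ref{thm:4.3} with Theorems~\ref{thm:5.3}--\ref{thm:5.4} under $\hat Z\otimes_Z(-)$, which in turn rests on the natural isomorphism $\Omega^n_{sm}(X^\gamma)\cong C^\infty(X^\gamma)\otimes_{\mathcal O(V^\gamma)}\Omega^n(V^\gamma)$; the latter holds because the fixed-point manifold $X^\gamma$ is again totally real in $V^\gamma$ with matching complexified tangent spaces (take $\gamma$-invariants, which commute with $-\otimes_{\mathbb R}\mathbb C$) and $\Omega^n(V^\gamma)$ is locally free along $X^\gamma$.

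The main obstacle I expect is this last reduction: organizing the d\'evissage of an arbitrary finite type algebra so that it stays compatible \emph{simultaneously} with $HH_*$, with Morita equivalence, and with the passage from algebraic tensor products over $Z$ to completed tensor products over $\hat Z$ — in short, controlling the interaction between the discrete Hochschild machinery of Section~\ref{sec:4} and its topological counterpart. Once that bookkeeping is in place, the remaining ingredients are the flatness input from part (a) and the Hochschild--Kostant--Rosenberg-type identifications already recorded in Theorems~\ref{thm:4.2}, \ref{thm:4.3}, \ref{thm:5.3} and \ref{thm:5.4}.
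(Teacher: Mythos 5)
This theorem is stated in the paper with a citation to \cite[Theorems C and D]{KaSo} and no internal proof, so the comparison is against the argument in that reference. Your outline of part~(a) is broadly on the right track: averaging over $\Gamma$ to exhibit $C^\infty(X)^\Gamma$ as an $\mc O(V)^\Gamma$-module direct summand of $C^\infty(X)$, deducing smoothness of $V$ along $X$ from $T_x(X)\otimes_\R\C = T_x(V)$, and then combining local flatness (Malgrange/Tougeron for $C^\infty$ over real-analytic germs, plus algebraic-to-analytic flatness) with a local criterion is essentially how the flatness is established. You flag the ramified points as the delicate step, which is fair; a cleaner factorization is $\mc O(V)^\Gamma \to \mc O(V) \to C^\infty(X)$, using miracle flatness for the first arrow (both rings Cohen--Macaulay of the same dimension since $V$ is smooth near $X$) and Malgrange/Tougeron for the second.

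Your proposal for part~(b), however, has two real gaps. First, it is circular: in the middle of the argument you appeal to ``applying $\hat Z\otimes_Z(-)\dots$ yields a complex that, by \cite{KaSo}, computes the topological Hochschild homology of $B$'' --- but that compatibility is precisely what Theorem~D of \cite{KaSo} asserts, so it cannot be invoked inside its own proof. Second, and more substantively, the d\'evissage ``to the building blocks $A = M_k(\mc O(U))$ and $A = \mc O(V)\rtimes\Gamma'$'' does not exist for arbitrary finite type algebras: the class of finite type $\mc O(V)^\Gamma$-algebras is not generated from matrix algebras and crossed products under Morita equivalence, localization and extensions (graded Hecke algebras and the $\End_G(\Pi_{\mf s})$ of Section~3 are already counterexamples to any simple such reduction), and \cite{KNS} does not supply such a d\'evissage --- it stratifies $\operatorname{Prim}(A)$ and produces a spectral sequence, which is a different kind of reduction. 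The actual argument in \cite{KaSo} avoids d\'evissage entirely: because $A$ is finite over the central subring $Z=\mc O(V)^\Gamma$, one works with a finitely generated projective resolution of $A$ as a module over $A\otimes_Z A^{op}$ (which is again finite type over $Z$), uses the flatness from part~(a) to base change this resolution along $Z\to\hat Z$, verifies directly that the base-changed complex is an admissible resolution of $B$ over $B\,\hat\otimes_{\hat Z}\,B^{op}$ in the Fr\'echet category, and then identifies the resulting $\Tor$ groups. The commutation of $\hat Z\otimes_Z(-)$ with finite tensor products over $Z$, which you treat as automatic, is itself one of the technical points that needs to be proved along the way, again using flatness. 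So the flatness input and the HKR-type endpoint checks you list are the right ingredients, but the bridge between them is a direct bimodule-resolution base-change argument, not the d\'evissage you propose.
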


One can recover Theorems \ref{thm:5.3} and \ref{thm:5.4} from Theorems \ref{thm:4.2} 
and \ref{thm:4.3} by applying Theorem \ref{thm:5.2}, see \cite[\S 4]{KaSo}.
In a similar way, Theorem \ref{thm:5.2} will help us to determine 
$HH_n (\mc S(G,K)_{\mf d})$ for some $\mf d \in \Delta (G)$.

\subsection{Hochschild homology of $\mc S (G)$}\label{par:5.2} \

Let $G$ be a reductive $p$-adic group and let $\mc S (G)$ be its 
Harish-Chandra--Schwartz algebra, as in Definition \ref{def:1.SG}. We want to
describe its Hochschild homology in terms of representation theory, like we
did for $HH_* (\mc H (G))$ in Paragraphs \ref{par:4.5} and \ref{par:4.6}. 

However, since $\mc S (G)$ is not a Fr\'echet space, Definition \ref{def:5.1}
will probably produce suboptimal results. The best solution for that technical
problem is to consider $\mc S (G)$ as a bornological algebra \cite{Mey1} and to 
use Hochschild homology based on the completed bornological tensor product 
$\hat \otimes_b$ \cite[Chapter I]{Mey}. This fits with Definition \ref{def:5.1},
because for Fr\'echet spaces $\hat \otimes_b$ and $\hat \otimes$ agree 
\cite[Theorem I.87]{Mey}. The functor $HH_* (?,\hat \otimes_b)$ enjoys 
continuity properties for strict inductive limits of Fr\'echet algebras, 
which we state only in two concrete cases.

\begin{thm}\label{thm:5.5} 
\textup{\cite[Theorem 2]{BrPl} and \cite[Theorem I.93]{Mey}} \\
Let $(K_m )_{m=1}^\infty$ be as in Theorem \ref{thm:1.20} and let $\mf s \in \mf B (G)$.
There are natural isomorphisms of topological vector spaces
\[
\begin{array}{lllll}
HH_n (\mc S (G) ,\hat \otimes_b ) & = & HH_n \big( \lim\limits_{m \to \infty} \mc S(G,K_m), 
\hat \otimes_b \big) & \cong & \lim\limits_{m \to \infty} HH_n (\mc S (G,K_m)) ,\\
HH_n (\mc S (G)^{\mf s} ,\hat \otimes_b ) & = & HH_n \big( \lim\limits_{m \to \infty} 
\mc S(G,K_m )^{\mf s}, \hat \otimes_b \big) & \cong & 
\lim\limits_{m \to \infty} HH_n (\mc S (G,K_m )^{\mf s}) .
\end{array}
\]
\end{thm}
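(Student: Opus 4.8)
The plan is to deduce Theorem~\ref{thm:5.5} from the continuity properties of Hochschild homology with respect to the completed bornological tensor product. The starting point is that $\mc S (G) = \bigcup_{m} \mc S (G,K_m)$ is a strict inductive limit of the unital Fréchet algebras $\mc S (G,K_m)$ (Definition~\ref{def:1.SG}), and similarly $\mc S (G)^{\mf s} = \bigcup_m \mc S (G,K_m)^{\mf s}$ by \eqref{eq:1.50} and the fact that $\mc S (G)^{\mf s}$ is the closure of $\mc H (G)^{\mf s}$ in $\mc S (G)$, so that $\mc S (G,K_m)^{\mf s} = \langle K_m \rangle \mc S (G)^{\mf s} \langle K_m \rangle$ exhausts $\mc S (G)^{\mf s}$ as $m \to \infty$. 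In bornological terms, both $\mc S (G)$ and $\mc S (G)^{\mf s}$ are direct limits (in the appropriate category) of the systems $(\mc S (G,K_m))_m$ and $(\mc S (G,K_m)^{\mf s})_m$; the transition maps are the inclusions, which are continuous bornological algebra homomorphisms.

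First I would invoke the key structural input: $HH_*(?,\hat\otimes_b)$ commutes with direct limits of this type. This is exactly the content of \cite[Theorem I.93]{Mey} (and \cite[Theorem 2]{BrPl} in the setting of $\mc S (G)$), which says that for a strict inductive limit $A = \varinjlim_m A_m$ of (nuclear) Fréchet algebras, the natural map $\varinjlim_m HH_n(A_m) \to HH_n(A)$ is an isomorphism of topological vector spaces. The hypotheses are met here because each $\mc S (G,K_m)$ is a nuclear Fréchet $*$-algebra by Theorem~\ref{thm:1.4}(a), and each $\mc S (G,K_m)^{\mf s}$ is a direct summand (a two-sided ideal which is also a unital subalgebra, being $\langle K_m\rangle \mc S(G,K_m) \langle K_m\rangle$ intersected with $\mc S(G)^{\mf s}$) of $\mc S (G,K_m)$, hence again a nuclear Fréchet algebra. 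One should note that for Fréchet spaces $\hat\otimes_b$ coincides with $\hat\otimes$ by \cite[Theorem I.87]{Mey}, so the Hochschild homology of each $\mc S (G,K_m)$ in the bornological sense agrees with the Fréchet-space version computed via the bar complex $(\mc S(G,K_m)^{\hat\otimes m}, d_m)$.

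The second step is to verify that the direct limit system actually has $\mc S (G)$ (resp. $\mc S (G)^{\mf s}$) as its colimit in the bornological category, which amounts to checking that the bornology on $\mc S (G)$ is the inductive limit bornology of the $\mc S (G,K_m)$. This follows from the definition of $\mc S (G)$ as carrying the inductive limit topology (Definition~\ref{def:1.SG}) together with the compatibility between the inductive limit topology and the associated bornology for a strict inductive limit of Fréchet spaces. With this in place, applying the cited continuity theorem directly yields
\[
HH_n(\mc S (G),\hat\otimes_b) \cong \varinjlim_{m\to\infty} HH_n(\mc S (G,K_m)),
\qquad
HH_n(\mc S (G)^{\mf s},\hat\otimes_b) \cong \varinjlim_{m\to\infty} HH_n(\mc S (G,K_m)^{\mf s}),
\]
and the naturality in $n$ and in $G$ is inherited from the naturality of the isomorphism in \cite[Theorem I.93]{Mey}.

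The main obstacle, I expect, is not any of these formal steps individually but rather making precise that the hypotheses of \cite[Theorem I.93]{Mey} apply verbatim: one must confirm that the inductive system $(\mc S(G,K_m))_m$ is \emph{strict} in the required sense (the transition maps are closed embeddings with the subspace bornology), and that the relevant tensor products and derived functors commute with the colimit. The strictness is where the specific structure of the $K_m$ from Theorem~\ref{thm:1.20} — in particular that they are nested normal subgroups with trivial intersection — is used, since it guarantees that $\mc S(G,K_m) = \langle K_m\rangle \mc S(G) \langle K_m\rangle$ sits inside $\mc S(G,K_{m'})$ for $m' \ge m$ as a complemented closed subspace (the projection being given by convolution with $\langle K_m\rangle$). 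Once strictness is established, the rest is an application of the bornological homological algebra machinery, and the $\mf s$-component case is identical after restricting attention to the summand $\mc S(G)^{\mf s}$, using the Bernstein decomposition \eqref{eq:1.34} and the fact that $HH_n$ respects the resulting direct sum decomposition on each $\mc S(G,K_m)$.
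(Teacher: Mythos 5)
The proposal is correct and takes essentially the same approach as the paper, which itself does not spell out a proof but simply cites \cite[Theorem 2]{BrPl} and \cite[Theorem I.93]{Mey}: you identify $\mc S(G)$ (resp.\ $\mc S(G)^{\mf s}$) as a strict inductive limit of the nuclear Fr\'echet algebras $\mc S(G,K_m)$ (resp.\ $\mc S(G,K_m)^{\mf s}$), note that $\hat\otimes_b$ agrees with $\hat\otimes$ on Fr\'echet spaces, and apply the cited continuity theorems. Your verification of strictness via the complemented embeddings $\mc S(G,K_m) = \lr{K_m} \mc S(G) \lr{K_m} \hookrightarrow \mc S(G,K_{m'})$ for $m' \ge m$ and the observation that nuclearity comes from Theorem~\ref{thm:1.4}.a are exactly the hypotheses the references require.
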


For each $\mf s \in \mf B (G)$ we pick $m(\mf s) \in \Z_{>0}$ such that 
$\mf s \in \mf B (G,K_{m (\mf s)})$, and we write $K_{\mf s} = K_{m (\mf s)}$. 
By Proposition \ref{prop:1.22}.a all the spaces $HH_n (\mc S (G,K_m)^{\mf s})$ with 
$m \geq m(\mf s)$ are isomorphic. From Theorem \ref{thm:5.5}, Corollary \ref{cor:1.21} 
and Proposition \ref{prop:1.22}.b we obtain
\begin{align}
\nonumber HH_n (\mc S (G) ,\hat \otimes_b ) & \cong \lim_{m \to \infty} 
HH_n \big( \bigoplus\nolimits_{\mf s \in \mf B (G,K_m)} \mc S (G,K_m )^{\mf s} \big) \\
\label{eq:5.2} & \cong \lim_{m \to \infty} \bigoplus\nolimits_{\mf s \in \mf B (G,K_m)}  
HH_n \big( \mc S (G,K_m )^{\mf s} \big) \\
\nonumber & \cong \lim_{m \to \infty} \bigoplus_{\mf s \in \mf B (G,K_m)}  
HH_n \big( \mc S (G)^{\mf s}, \hat \otimes_b \big) = \bigoplus_{\mf s \in \mf B (G)}  
HH_n \big( \mc S (G)^{\mf s}, \hat \otimes_b \big) .
\end{align}
From \eqref{eq:5.2} we see that, in order to determine $HH_n (\mc S (G), \hat \otimes_b)$, 
it suffices to compute $HH_n (\mc S (G,K_{\mf s})^{\mf s})$ for each $\mf s \in \mf B (G)$. 
This brings us back to unital Fr\'echet algebras. 

We recall from Theorem \ref{thm:1.16} and \eqref{eq:1.36} that
\begin{align*}
\mc S (G,K_{\mf s})^{\mf s} & = \bigoplus\nolimits_{\mf d \in \Delta (G,\mf s)} 
\mc S (G,K_{\mf s}) \cap \mc S (G)_{\mf d} \\
& \cong \bigoplus\nolimits_{[L,\delta]_G \in \Delta (G,\mf s)} 
\Big( C^\infty (X_\nr^u (L)) \otimes \End_\C \big( I_{K_0 \cap P}^{K_0} 
(V_\delta)^{K_{\mf s}} \big) \Big)^{W^e_{[L,\delta]}} .
\end{align*}
Write $\mf s = [M,\sigma]_G$ with $\sigma$ unitary supercuspidal and recall that
$\mf s \in \Delta (G,\mf s)$. That brings us in a good position to apply the results from
the previous paragraph.

\begin{lem}\label{lem:5.6}
\enuma{
\item There is an isomorphism of Fr\'echet algebras
\[
\mc S (G,K_{\mf s}) \cap \mc S (G)_{[M,\sigma]_G} \cong 
C^\infty (X_\nr^u (M))^{W^e_{\mf s}} \otimes_{\mc O (X_\nr (M))^{W^e_{\mf s}}}
\mc H (G,K_{\mf s})^{\mf s} .
\]
\item There is an isomorphism of Fr\'echet $C^\infty (X_\nr^u (M))^{W^e_{\mf s}}
$-modules 
\[
HH_n \big( \mc S (G,K_{\mf s}) \cap \mc S (G)_{[M,\sigma]_G} \big) \cong  
C^\infty (X_\nr^u (M))^{W^e_{\mf s}} \otimes_{\mc O (X_\nr (M))^{W^e_{\mf s}}}
HH_n \big( \mc H (G,K_{\mf s})^{\mf s} \big) .
\]
}
\end{lem}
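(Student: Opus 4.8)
The plan is to prove both parts by reducing to the already-available structural descriptions and then invoking Theorem \ref{thm:5.2}. For part (a), the starting point is the Harish-Chandra block $\mc S (G)_{[M,\sigma]_G} \cong \big( C^\infty (X_\nr^u (M)) \otimes \End_\C^\infty (I_{K_0 \cap P}^{K_0}(V_\sigma)) \big)^{W^e_{\mf s}}$ from Theorem \ref{thm:1.16}, intersected with $\mc S (G,K_{\mf s})$; by Theorem \ref{thm:1.16}.b this equals $\big( C^\infty (X_\nr^u (M)) \otimes \End_\C (I_{K_0 \cap P}^{K_0}(V_\sigma)^{K_{\mf s}}) \big)^{W^e_{\mf s}}$. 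On the Hecke side, Theorem \ref{thm:1.19} gives $\mc H (G,K_{\mf s})^{\mf s}$ as the same invariant ring with $C^\infty$ replaced by $\mc O$, intersected with the condition that one stays inside $\mc O (X_\nr (M)) \otimes \End_\C(\cdots)$. The key algebraic observation is that, since $\sigma$ is unitary supercuspidal, the normalized intertwining operators $J'(w,P,\sigma,\chi)$ are regular and invertible on all of $X_\nr (M)$ (not merely $X_\nr^u (M)$), so the $W^e_{\mf s}$-action stabilizes the algebraic subalgebra and the ``intersection with $\mc O(X_\nr(M))\otimes\End$'' in Theorem \ref{thm:1.19} is no restriction for the block $\mf d = \mf s$ itself. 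Hence $\mc H (G,K_{\mf s})^{\mf s} \cong \big( \mc O (X_\nr (M)) \otimes \End_\C (I_{K_0 \cap P}^{K_0}(V_\sigma)^{K_{\mf s}}) \big)^{W^e_{\mf s}}$, and applying $C^\infty (X_\nr^u (M))^{W^e_{\mf s}} \otimes_{\mc O (X_\nr (M))^{W^e_{\mf s}}} (-)$ should recover the $\mc S$-block. The nontrivial point here is that tensoring up commutes with taking $W^e_{\mf s}$-invariants; this holds because $C^\infty(X_\nr^u(M))$ is flat over $\mc O(X_\nr(M))$ (Theorem \ref{thm:5.2}.a, with $V = X_\nr(M)$, $X = X_\nr^u(M)$, $\Gamma = W^e_{\mf s}$) and because taking invariants under a finite group over a field of characteristic zero is exact, so it commutes with the flat base change.

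For part (b), the plan is a direct application of Theorem \ref{thm:5.2}.b. Take $V = X_\nr (M)$, the complex torus, $X = X_\nr^u (M)$, its maximal compact real subtorus, which is a real-analytic submanifold, Zariski-dense in $V$, with $T_x(X) \otimes_\R \C = T_x(V)$ at every point (this is the standard fact that the Lie algebra of a compact real torus complexifies to the Lie algebra of its complexification). Let $\Gamma = W^e_{\mf s}$ act on $V$ by the algebraic action through which $W^e_{\mf s}$ acts on $X_\nr(M)$; this stabilizes $X = X_\nr^u(M)$. Let $A = \mc H (G,K_{\mf s})^{\mf s}$, which by Theorem \ref{thm:1.19}.b is a finite-type $\mc O (X_\nr (M) / W^e_{\mf s})$-algebra (it has finite rank over its centre, which by the non-cuspidal analysis in Paragraph \ref{par:3.2}, specialized to the cuspidal block $\mf d = \mf s$, is $\mc O(X_\nr(M))^{W^e_{\mf s}}$). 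Then Theorem \ref{thm:5.2}.b with this data yields exactly
\[
HH_n \big( C^\infty (X_\nr^u (M))^{W^e_{\mf s}} \otimes_{\mc O (X_\nr (M))^{W^e_{\mf s}}} \mc H (G,K_{\mf s})^{\mf s} \big) \cong C^\infty (X_\nr^u (M))^{W^e_{\mf s}} \otimes_{\mc O (X_\nr (M))^{W^e_{\mf s}}} HH_n \big( \mc H (G,K_{\mf s})^{\mf s} \big),
\]
and composing with the algebra isomorphism of part (a) finishes the proof. The $C^\infty(X_\nr^u(M))^{W^e_{\mf s}}$-linearity is built into the statement of Theorem \ref{thm:5.2}.b.

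The main obstacle I expect is part (a): verifying carefully that $\mc S (G,K_{\mf s}) \cap \mc S (G)_{[M,\sigma]_G}$ is obtained from $\mc H (G,K_{\mf s})^{\mf s}$ by the indicated base change as \emph{algebras}, not just as modules. One has to check that the two $W^e_{\mf s}$-actions match up — the action on the $\mc H$-side from Theorem \ref{thm:1.19} and the action on the $\mc S$-side from Theorem \ref{thm:1.16} — and in particular that the 2-cocycle twist (the projectivity of the $W^e_{\mf s}$-action coming from \eqref{eq:1.24}) is the same on both sides. Since $\sigma$ is unitary cuspidal, the intertwining operators defining both actions are literally the same family $\{I(w,P,\sigma,\chi)\}$ restricted to, respectively, $X_\nr^u(M)$ and $X_\nr(M)$, so this should go through, but it requires tracking the normalizations in \eqref{eq:1.22}--\eqref{eq:1.25} against those in Theorem \ref{thm:1.19}. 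Once the algebra identification is in place, the homological content is entirely supplied by Theorem \ref{thm:5.2}, and no further computation is needed.
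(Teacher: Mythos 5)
The plan for part (b) is correct and matches the paper: once part (a) is in hand, the Hochschild computation is a direct instance of Theorem \ref{thm:5.2}.b with $V = X_\nr (M)$, $X = X_\nr^u (M)$, $\Gamma = W^e_{\mf s}$, and $A = \mc H (G,K_{\mf s})^{\mf s}$.

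Your proof of part (a), however, has a real gap. The ``key algebraic observation'' --- that since $\sigma$ is unitary supercuspidal, the normalized intertwining operators $J'(w,P,\sigma,\chi)$ are regular and invertible on all of $X_\nr (M)$ so that the intersection in Theorem~\ref{thm:1.19} imposes no condition --- is false, and the paper says so explicitly in Paragraph~\ref{par:1.8}: even after normalization, $J'(w,P,\sigma,\chi)$ may fail to be injective at nonunitary $\chi \in X_\nr (L)\sigma \setminus X_\nr^u (L)\sigma$, and consequently the $W^e_{\mf s}$-action from \eqref{eq:1.25} does \emph{not} stabilize $\mc O (X_\nr (L)) \otimes \End_\C^\infty(\cdots)$. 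The hypothesis ``$\sigma$ unitary supercuspidal'' is already the standing assumption in both Theorem~\ref{thm:1.16} and Theorem~\ref{thm:1.19}, so it buys you nothing extra here. As a result, the identification $\mc H (G,K_{\mf s})^{\mf s} \cong \big( \mc O (X_\nr (M)) \otimes \End_\C (I_{K_0 \cap P}^{K_0}(V_\sigma)^{K_{\mf s}}) \big)^{W^e_{\mf s}}$ that you deduce from it is unjustified; the Hecke algebra is the genuinely smaller intersection of Theorem~\ref{thm:1.19}.b.

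What saves the statement --- and what the paper actually uses --- is the weaker observation that the intertwining operators have no singularities on the unitary torus $X_\nr^u (M)\sigma$ (they are unitary there). This identifies the \emph{restriction} $\mc H (G,K_{\mf s})^{\mf s}\big|_{X_\nr^u(M)\sigma}$ with $\big( \mc O (X_\nr (M))\big|_{X_\nr^u(M)} \otimes \End_\C(\cdots) \big)^{W^e_{\mf s}}$, and since the base change $C^\infty (X_\nr^u (M))^{W^e_{\mf s}} \otimes_{\mc O(X_\nr(M))^{W^e_{\mf s}}} (-)$ only sees the restriction to the unitary part, the isomorphism of part (a) follows. In short: replace your unconditional regularity claim on all of $X_\nr(M)$ with regularity on $X_\nr^u(M)$ only, and argue that the tensor functor factors through this restriction; then your route closes and coincides with the paper's. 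The flatness and exactness-of-invariants observations you make are fine and correctly localize the homological input in Theorem~\ref{thm:5.2}.
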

\begin{proof}
(a) In Theorem \ref{thm:1.19} the underlying intertwining operators do not have 
singularities on $X_\nr^u (M) \sigma$, so the Fourier transform induces an
algebra isomorphism
\[
\mc H (G,K_{\mf s} )^{\mf s} |_{X_\nr^u (M) \sigma} \cong 
\Big( \mc O (X_\nr (M)) |_{X_\nr^u (M)} \otimes \End_\C \big( I_{K_0 \cap P}^{K_0} 
(V_\sigma)^{K_{\mf s}} \big) \Big)^{W^e_{\mf s}} .
\]
It follows that
\[
C^\infty (X_\nr^u (M))^{W^e_{\mf s}} \otimes_{\mc O (X_\nr (M))^{W^e_{\mf s}}}
\mc H (G,K_{\mf s})^{\mf s} \cong \! \Big( C^\infty (X_\nr^u (M)) \otimes 
\End_\C \big( I_{K_0 \cap P}^{K_0} (V_\sigma)^{K_{\mf s}} \big) \! \Big)^{\! W^e_{\mf s}} 
\hspace{-2mm}.
\]
By Theorem \ref{thm:1.16}.b, the right hand side equals 
$\mc S (G,K_{\mf s}) \cap \mc S (G)_{[M,\sigma]_G}$.\\
(b) In view of part (a), this is an instance of Theorem \ref{thm:5.2}.
\end{proof}

Unfortunately it remains difficult to determine $HH_n \big( \mc S (G,K_{\mf s}) 
\cap \mc S (G)_{[M,\sigma]_G} \big)$ in Lemma \ref{lem:5.6}.b, because the
$\mc O (X_\nr (M))^{W^e_{\mf s}}$-module structure of 
$HH_n (\mc H (G,K_{\mf s})^{\mf s})$ is tricky.

\begin{ex}\label{ex:5.B}
Consider $G = SL_2 (F)$ and $\mf s = [T,\mr{triv}]_G$. By Lemma \ref{lem:5.6}.b 
and \eqref{eq:4.16} we have
\begin{multline*}
HH_n (\mc S (G)_{\mf s}, \hat \otimes_b) \cong C^\infty (X_\nr^u (T))^{S_2}
\otimes_{\mc O (X_\nr (T))^{S_2}} HH_n (\mc H (G)^{\mf s}) \\
\cong C^\infty (X_\nr^u (T))^{S_2} \otimes_{\mc O (X_\nr (T))^{S_2}} 
\big( \Omega^n (X_\nr (T))^{S_2} \oplus \Omega^n (\{\mr{St}\}) \oplus 
\Omega^n (\{\chi_-\}) \big) .
\end{multline*}
Here $\Omega^n (\{\mr{St}\})$ drops out, because its 
$\mc O (X_\nr (T))$-character does not lie in $X_\nr^u (T)$. The other two summands
of $HH_n (\mc H (G)^{\mf s})$ carry the $\mc O (X_\nr (T))$-action expected from the
notations. We find
\begin{equation}\label{eq:5.3}
HH_n (\mc S (G)_{\mf s}, \hat \otimes_b) \cong \Omega^n_{sm} (X_\nr^u (T))^{S_2} 
\oplus \Omega^n_{sm} (\{\chi_-\}) . 
\end{equation}
As $\mc S (G)^{\mf s} = \mc S (G)_{\mf s} \oplus \mc S (G)_{[G,\mr{St}]}$ and 
$\mc S (G)_{[G,\mr{St}]}$ is Morita equivalent to $\C$, \eqref{eq:5.3} yields
\begin{equation}\label{eq:5.4}
HH_n (\mc S (G)^{\mf s}, \hat \otimes_b) \cong \Omega^n_{sm} (X_\nr^u (T))^{S_2} 
\oplus \Omega^n_{sm} (\{\chi_-\}) \oplus \Omega^n (\{ \mr{St} \}) .
\end{equation}
Theorem \ref{thm:5.4} shows that \eqref{eq:5.4} is isomorphic 
to $HH_n \big( C^\infty (X_\nr^u (T)) \rtimes S_2 \big)$.
\end{ex}

There is a version of Lemma \ref{lem:5.6}.a for $\mf d \in \Delta (G,\mf s)$
which are not represented by a supercuspidal representation \cite[Lemma 3.3]{SolHH},
but it is more complicated and does not fit in the framework of Theorem 
\ref{thm:5.2}. To handle $HH_n (\mc S (G,K_{\mf s}) \cap \mc S (G)_{\mf d})$ for
such $\mf d$ we use families of representations, like in Paragraph \ref{par:4.6}.
For a parabolic subgroup $P = L U_P \subset G$ and $\tau \in \Irr_\temp (L)^{\mf s_L}$ 
we have a family of tempered representations 
\[
\mf F_{L,\tau}^u = \{ I_P^G (\tau \otimes \chi_L) : \chi_L \in X_\nr^u (L) \} 
\]
and a homomorphism of $C^\infty (X_\nr^u (M))^{W^e_{\mf s}}$-algebras
\[
\mc F_{L,\tau}^u : \mc S (G,K_{\mf s})^{\mf s} \to C^\infty (X_\nr^u (L)) 
\otimes \End_\C \big( I_{P \cap K_0}^{K_0} (\tau)^{K_{\mf s}} \big) .
\]
Recall $\zeta^\vee$ from Theorem \ref{thm:3.13}. It preserves temperedness and
the tempered part of $R\big( \mc O (X_\nr (M)) \big)$ can be identified with 
$R \big( C^\infty (X_\nr^u (M)) \big)$, so $\zeta^\vee$ restricts to a group isomorphism
\begin{equation}\label{eq:5.5}
\zeta_u^\vee : R(\mc S (G)^{\mf s}) \to
R \big( C^\infty (X_\nr^u (M)) \rtimes \C [W^e_{\mf s}, \natural_{\mf s}^{-1}] \big) .
\end{equation}
In particular $\zeta_u^\vee (\mf F_{L,\tau}^u) = \mf F^u_{L,\zeta^\vee_u (\tau)}$ is
a family of $C^\infty (X_\nr^u (M)) \rtimes \C [W^e_{\mf s}, \natural_{\mf s}^{-1}]
$-modules.

\begin{thm}\label{thm:5.7} \textup{\cite[Theorem 3.13]{SolHH}} \\
There exists a unique isomorphism of Fr\'echet spaces
\[
HH_n (\zeta_u^\vee) : HH_n \big( C^\infty (X_\nr^u (M)) \rtimes \C [W^e_{\mf s}, 
\natural_{\mf s}^{-1}] \big) \to HH_n ( \mc S (G,K_{\mf s})^{\mf s} ) 
\]
such that
\[
HH_n (\mc F_{L,\tau}^u) \circ HH_n (\zeta_u^\vee) = 
HH_n \big( \mc F^u_{L,\zeta^\vee_u (\tau)} \big)
\]
for all families of tempered representations $\mf F_{L,\tau}^u$.
\end{thm}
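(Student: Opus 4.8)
The plan is to transport the algebraic isomorphism $HH_n(\zeta^\vee) : HH_*(\mc O(X_\nr(M)) \rtimes \C[W^e_{\mf s},\natural_{\mf s}^{-1}]) \to HH_n(\mc H(G)^{\mf s})$ of Theorem \ref{thm:4.7} into the Fréchet setting, using Theorem \ref{thm:5.2} to pass from finite type algebras to their base changes along $\mc O(X_\nr(M))^{W^e_{\mf s}} \to C^\infty(X_\nr^u(M))^{W^e_{\mf s}}$, and using the tempered families $\mc F_{L,\tau}^u$ to rigidify the result. The conceptual point is that each $\mc F_{L,\tau}^u$ is the restriction to the unitary locus $X_\nr^u(L) \subset X_\nr(L)$ of the algebraic family $\mc F_{L,\tau}$ figuring in Theorem \ref{thm:4.7}, so the identity characterizing $HH_n(\zeta_u^\vee)$ is the unitary shadow of the one defining $HH_n(\zeta^\vee)$.

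For uniqueness it suffices to show that the maps $HH_n(\mc F_{L,\tau}^u)$, as $L$ runs over the Levi subgroups of $G$ containing $M$ and $\tau$ over $\Irr_\temp(L)^{\mf s_L}$, jointly separate the points of $HH_n(\mc S(G,K_{\mf s})^{\mf s})$. By Theorem \ref{thm:1.16}.b and additivity of $HH_*$ this group is a finite direct sum, over $[L,\delta]_G \in \Delta(G,\mf s)$, of pieces $HH_n\big( ( C^\infty(X_\nr^u(L)) \otimes \End_\C(I_{K_0 \cap P}^{K_0}(V_\delta)^{K_{\mf s}}) )^{W^e_{[L,\delta]}} \big)$, and by Theorem \ref{thm:5.4} each such piece decomposes further, along conjugacy classes of $W^e_{[L,\delta]}$, into spaces of smooth forms on fixed-point subtori of $X_\nr^u(L)$. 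Those subtori are precisely the parameter spaces of the families $\mc F_{L',\tau'}^u$ induced from proper Levi subgroups $L' \supsetneq L$, so an induction on the semisimple rank (the base case being families of full supercuspidal support, cf. Example \ref{ex:5.B}) shows the collection of all $HH_n(\mc F_{L,\tau}^u)$ is injective. Hence any $\C$-linear $\phi$ with $HH_n(\mc F_{L,\tau}^u) \circ \phi = HH_n(\mc F^u_{L,\zeta^\vee_u(\tau)})$ for all tempered families is uniquely determined.

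For existence I would build the map summand by summand over $\Delta(G,\mf s)$. On the cuspidal summand $[M,\sigma]_G$, Lemma \ref{lem:5.6}.b identifies $HH_n(\mc S(G,K_{\mf s}) \cap \mc S(G)_{[M,\sigma]_G})$ with $C^\infty(X_\nr^u(M))^{W^e_{\mf s}} \otimes_{\mc O(X_\nr(M))^{W^e_{\mf s}}} HH_n(\mc H(G,K_{\mf s})^{\mf s})$; since $\mc H(G,K_{\mf s})^{\mf s}$ is Morita equivalent to $\mc H(G)^{\mf s}$, and since Theorems \ref{thm:4.3} and \ref{thm:5.2}.b exhibit $HH_n(C^\infty(X_\nr^u(M)) \rtimes \C[W^e_{\mf s},\natural_{\mf s}^{-1}])$ as the analogous base change of $HH_n(\mc O(X_\nr(M)) \rtimes \C[W^e_{\mf s},\natural_{\mf s}^{-1}])$, applying $C^\infty(X_\nr^u(M))^{W^e_{\mf s}} \otimes_{\mc O(X_\nr(M))^{W^e_{\mf s}}}(-)$ to $HH_n(\zeta^\vee)$ produces the desired isomorphism on this summand; base-changing the characterizing identity of Theorem \ref{thm:4.7} gives its compatibility with $HH_n(\mc F_{M,\sigma}^u)$. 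For the deeper summands $[L,\delta]_G$ with $L \supsetneq M$, Lemma \ref{lem:5.6} no longer applies, and I would instead use the finer description of $\mc S(G,K_{\mf s}) \cap \mc S(G)_{[L,\delta]_G}$ from \cite[Lemma 3.3]{SolHH} together with the families $\mc F_{L',\tau}^u$ to define the map locally, and Theorem \ref{thm:5.4} to verify it is a bijection on each graded piece. Continuity and the Fréchet-module structure are automatic, as every constituent map is a morphism of Fréchet $C^\infty(X_\nr^u(M))^{W^e_{\mf s}}$-modules.

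The main obstacle is the glueing: one must check that the base-change description on the cuspidal summand and the family-based descriptions on the deeper summands assemble into one well-defined isomorphism — equivalently, that the locally constructed pieces agree over the lower-dimensional subtori of $X_\nr^u(M)/W^e_{\mf s}$ along which these summands sit in the spectrum of $Z(\Rep(G)^{\mf s})$. This is exactly the reconstruction carried out in \cite[proof of Theorem 3.13]{SolHH}. It succeeds because $HH_n(\zeta^\vee)$ is compatible with parabolic induction (Theorems \ref{thm:4.7} and \ref{thm:3.13}), which forces the maps $HH_n(\mc F_{L,\tau}^u)$ to respect the restriction relations between $\Omega^*_{sm}(X_\nr^u(L))$ and $\Omega^*_{sm}(X_\nr^u(M))$ induced by inclusions of Levi subgroups; it is precisely to encode this coherence that Theorem \ref{thm:5.7} characterizes $HH_n(\zeta_u^\vee)$ by a condition ranging over all tempered families.
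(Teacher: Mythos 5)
Your proposal matches the route the paper outlines: use Lemma \ref{lem:5.6} together with Theorem \ref{thm:5.2} on the cuspidal Harish--Chandra block, and families of tempered representations together with \cite[Lemma 3.3]{SolHH} on the non-cuspidal blocks, deferring the glueing over the lower-dimensional strata of the Bernstein spectrum to \cite[proof of Theorem 3.13]{SolHH}. Your uniqueness argument via joint injectivity of the family maps $HH_n(\mc F_{L,\tau}^u)$, stratified through Theorems \ref{thm:1.16} and \ref{thm:5.4} with an induction on the dimension of $X_\nr^u(L)$, is a correct reconstruction of the reasoning that the reference relies on.
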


From Theorems \ref{thm:5.4} and \ref{thm:5.7} we obtain an isomorphism of
Fr\'echet spaces 
\begin{equation}\label{eq:5.6}
HH_n (\mc S (G,K_{\mf s})^{\mf s}) \cong \Big( \bigoplus\nolimits_{w \in W^e_{\mf s}}
\Omega^n_{sm} (X_\nr^u (M))^w \otimes (\natural_{\mf s}^{-1})^w \Big)^{W^e_{\mf s}}.
\end{equation}
This isomorphism can also be constructed more directly, with suitable families
of tempered representations.

\begin{ex}\label{ex:5.C}
We given an overview of $HH_n (\mc S(G)^{\mf s},\hat \otimes_b)$ for all Bernstein 
blocks of $G = SL_2 (F)$. For $\mf s = [G,\sigma]_G$ with $\sigma \in \Irr_\cusp (G)$, 
we have
\[
HH_* (\mc S (G,K_{\mf s})^{\mf s}) = HH_0 (\mc S (G,K_{\mf s})^{\mf s}) \cong \C . 
\]
For $\mf s = [T,\chi]_G$ with ord$(\chi |_{\mf o_F^\times}) > 2$ we have
\[
HH_n (\mc S (G,K_{\mf s})^{\mf s}) \cong HH_n \big( C^\infty (X_\nr^u (T)) \big)
\cong \Omega^n_{sm} (X_\nr^u (T)) \cong \Omega^n_{sm}(S^1) .
\]
For $\mf s = [T,\chi_2 ]_G$ with ord$(\chi_2 |_{\mf o_F^\times}) = 2$,
\eqref{eq:5.6} shows that
\[
HH_n (\mc S (G,K_{\mf s})^{\mf s}) \cong HH_n \big( C^\infty (X_\nr^u (T)) \rtimes 
S_2 \big) \cong \Omega^n_{sm} (X_\nr^u (T))^{S_2} \oplus \Omega^n_{sm} (\{1,-1\}) .
\]
Finally, for $\mf s = [T,\mr{triv}]_G$, we already determined
$HH_n (\mc S (G,K_{\mf s})^{\mf s})$ in Example \ref{ex:5.B}.
\end{ex}

\subsection{Trace Paley--Wiener theorems for $\mc H (G)$ and $\mc S (G)$} \

The structure of
\[
HH_0 (\mc H (G)) = \mc H (G) / [\mc H (G) ,\mc H (G)]
\]
can be described with the trace Paley--Wiener for reductive $p$-adic groups,
which we recall now. The main ingredient is the trace pairing
\[
\begin{array}{ccc}
HH_0 (\mc H (G)) \times R(G) & \to & \C \\
(f, \pi) & \mapsto & \mr{tr} \, \pi (f)  
\end{array}.
\]
To see that this pairing is well-defined, pick $K \in \CO (G)$ such that 
$f \in \mc H (G,K)$. For $(\pi,V) \in \Rep_\fl (G)$, $\mr{tr} \, \pi (f)$ equals
$\mr{tr} \, \pi (f)|_{V^K}$. As $\dim_\C V^K$ is finite (Theorem \ref{thm:1.21}),
$\mr{tr} \, \pi (f)|_{V^K} \in \C$ is defined.

We say that a linear form $\lambda \in \Hom_\Z (R(G),\C)$ is regular if
\begin{itemize}
\item $\lambda$ is supported on finitely many Bernstein blocks of $\Rep (G)$,
\item for every parabolic subgroup $P = L U_P$ and every 
$\pi \in \Irr (L)$, the function
\[
X_\nr (L) \to \C : \chi \mapsto \lambda (I_P^G (\pi \otimes \chi))
\qquad \text{is regular.}
\] 
\end{itemize}
We denote the $\C$-vector space of such $\lambda$ by $\Hom_\Z (R(G),\C)_{\mr{reg}}$.

\begin{thm}\label{thm:5.8} \textup{\cite{BDK}} \\
The trace pairing induces an isomorphism of $Z(\Rep (G))$-modules 
\[
HH_0 (\mc H (G)) \longrightarrow \Hom_\Z (R(G),\C )_{\mr{reg}}.
\]
\end{thm}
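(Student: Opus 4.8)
\textbf{Proof plan for Theorem \ref{thm:5.8}.}
The strategy is to reduce the global statement to each Bernstein block separately and then to use the explicit structure of $\mc H (G)^{\mf s}$ from Theorem \ref{thm:1.19}. First I would record that $HH_0 (\mc H (G)) = \bigoplus_{\mf s \in \mf B (G)} HH_0 (\mc H (G)^{\mf s})$ by \eqref{eq:4.8}, and that the right-hand side of the claimed isomorphism decomposes correspondingly: a regular linear form is by definition supported on finitely many blocks, and on each block the regularity condition is exactly that the induced functions on the relevant $X_\nr (L)$ are regular (i.e.\ in $\mc O (X_\nr (L))$ rather than merely holomorphic). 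So it suffices to produce, for each fixed $\mf s = [L,\sigma]_G$, an isomorphism of $Z(\Rep (G)^{\mf s})$-modules
\[
HH_0 (\mc H (G)^{\mf s}) = \mc H (G)^{\mf s} / [\mc H (G)^{\mf s}, \mc H (G)^{\mf s}]
\longrightarrow \Hom_\Z (R(G)^{\mf s},\C)_{\mr{reg}}
\]
induced by $f \mapsto (\pi \mapsto \mr{tr}\,\pi(f))$.

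For one block I would use the Morita invariance \eqref{eq:4.9}, which identifies $HH_0 (\mc H (G)^{\mf s})$ with $HH_0 (\End_G (\Pi_{\mf s})^{op}) = HH_0 (\End_G (\Pi_{\mf s}))$, together with Theorem \ref{thm:1.19}, which realizes $\mc H (G)^{\mf s}$ (or rather $\mc H (G,K)^{\mf s}$ for $K$ small, which is Morita equivalent to $\mc H (G)^{\mf s}$ by Theorem \ref{thm:1.20}) inside the $W^e_{[L,\sigma]}$-invariants of $\mc O (X_\nr (L)) \otimes \End_\C (I_{K_0 \cap P}^{K_0}(V_\sigma)^K)$. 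The cocenter of $\mc O (X_\nr (L)) \otimes M_N (\C)$ is just $\mc O (X_\nr (L))$ (traces of matrices), and one has to compute the cocenter of the subalgebra cut out by the $W^e_{[L,\sigma]}$-action and the condition "regular on $X_\nr (L)$, though only holomorphic globally." Concretely: a trace on $\mc H (G)^{\mf s}$ is determined by its values on the images of the operators $I_P^G (\sigma \otimes \chi)(f)$, and by linear-algebra over the field $\C (X_\nr (L))$ these traces are spanned by the characters of the induced representations $I_P^G (\sigma \otimes \chi)$ together with the characters of the other constituents $\pi \in \Irr (G)^{\mf s}$ at the reducible points — which is exactly the content of the trace Paley--Wiener description: a regular form on $R(G)^{\mf s}$. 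The subtle point of matching the \emph{regular} (as opposed to merely holomorphic) functions is handled by the fact that Theorem \ref{thm:1.19}(a) builds $\mc H (G)^{\mf s}$ out of $\mc O (X_\nr (L))$ and not $C^\infty (X_\nr^u (L))$; this is the Paley--Wiener type constraint that the support of $f \in \mc H (G)$ is compact.

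The $Z(\Rep (G)^{\mf s})$-linearity is then automatic: by property 5 in Paragraph \ref{par:4.1}, $HH_0$ of a unital algebra is a module over its centre, and $Z(\Rep (G)^{\mf s}) \cong \mc O (\Irr (L)^{\mf s_L})^{W_{\mf s}}$ acts on both sides by multiplying functions of $\chi$, compatibly with the trace pairing because $z \cdot \pi$ has character $z(\mr{Sc}(\pi)) \cdot \mr{tr}\,\pi$ for $z$ in the Bernstein centre. Alternatively, one can simply cite \cite{BDK} and present the above as the conceptual reorganization: the original Bernstein--Deligne--Kazhdan argument establishes exactly the surjectivity (every regular form is a trace of some $f$) and injectivity (a function all of whose representation traces vanish is a sum of commutators), and the only new bookkeeping is the identification of $HH_0$ with the cocenter $\mc H (G)/[\mc H (G),\mc H (G)]$, which is immediate from Paragraph \ref{par:4.1}. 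The main obstacle, and the place where the real work of \cite{BDK} sits, is the surjectivity statement: showing that \emph{every} regular linear form on $R(G)$ actually arises as $\pi \mapsto \mr{tr}\,\pi(f)$ for a genuinely compactly supported $f$. This requires the trace Paley--Wiener theorem proper — controlling the poles of intertwining operators and the reducibility loci — and cannot be bypassed by the homological-algebra formalism; the formalism only repackages the answer once \cite{BDK} is granted.
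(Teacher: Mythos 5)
The paper states Theorem~\ref{thm:5.8} with a citation to \cite{BDK} and offers no proof of its own, so your ultimate decision to cite \cite{BDK} and present a conceptual outline is consistent with what the paper does. However, the route you sketch is not the one the paper later gestures at, and it has a structural weakness you do not address. You propose to compute $HH_0(\mc H(G,K)^{\mf s})$ from Theorem~\ref{thm:1.19}, but that theorem describes $\mc H(G,K)^{\mf s}$ as an \emph{intersection} of a $W^e_{[L,\sigma]}$-invariant algebra with $\mc O(X_\nr(L)) \otimes \End_\C(\cdots)$, and cocenters (unlike, say, centres) do not interact well with intersections: knowing the cocenter of each piece tells you essentially nothing about the cocenter of the intersection, so the step ``by linear-algebra over $\C(X_\nr(L))$ these traces are spanned by\ldots'' is a genuine gap rather than routine bookkeeping. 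This is precisely why the paper's alternative route (mentioned just after \eqref{eq:5.9}) goes through Theorems~\ref{thm:3.13} and \ref{thm:4.7} instead: one transports everything via the bijection $\zeta^\vee$ to the twisted crossed product $\mc O(X_\nr(L)) \rtimes \C[W^e_{\mf s},\natural_{\mf s}^{-1}]$, where the cocenter is computed by Theorem~\ref{thm:4.3} and the nondegeneracy of the trace pairing is the elementary statement \eqref{eq:5.9}; Theorem~\ref{thm:4.7} then guarantees $\zeta^\vee$ induces an isomorphism on $HH_0$. In that picture the hard analytic content is absorbed into the construction of $\zeta^\vee$ (Theorem~\ref{thm:3.10} via $\End_G(\Pi_{\mf s})$ and graded Hecke algebras), not into a Paley--Wiener argument per se, so your closing remark that ``the formalism only repackages the answer once \cite{BDK} is granted'' is a fair description of your outline but does not apply to the paper's alternative derivation. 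Your observations about the block decomposition, the Morita invariance step, the identification $HH_0 = \mc H(G)/[\mc H(G),\mc H(G)]$, and the $Z(\Rep(G))$-linearity are all correct.
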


Next we consider a twisted crossed product $A \rtimes \C [\Gamma,\natural]$ as in
\eqref{eq:2.B}, where $\Gamma$ is a finite group and $A$ denotes either $C^\infty (X)$
for a smooth manifold or $\mc O (X)$ for a non-singular affine $\C$-variety.
From Theorems \ref{thm:4.3} and \ref{thm:5.4} we know that there are isomorphisms
of $A^\Gamma$-modules
\begin{equation}\label{eq:5.7}
\begin{aligned}
HH_0 (C^\infty (X) \rtimes \C [\Gamma,\natural]) & \cong \big( \bigoplus\nolimits_{
\gamma \in \Gamma} C^\infty (X^\gamma) \otimes \natural^\gamma \big)^\Gamma \\
& \cong \bigoplus\nolimits_{\gamma \in \Gamma / \text{conjugacy}} \big( 
C^\infty (X^\gamma) \otimes \natural^\gamma \big)^{Z_\Gamma (\gamma)} ,
\end{aligned}
\end{equation}
and similarly with $\mc O$ instead of $C^\infty$. The specialisation of $HH_0 (A)$ at 
an arbitrary character $\Gamma x \in X / \Gamma$ of $A^\Gamma$ can be identified with 
\begin{equation}\label{eq:5.8}
\C \big\{ T_\gamma : \gamma \in \Gamma_x ,\, \natural^\gamma 
|_{Z_{\Gamma_x} (\gamma)} = 1 ,\, \text{modulo } \Gamma_x \text{-conjugation} \big\}  .
\end{equation}
From Theorem \ref{thm:2.3} and Lemma \ref{lem:2.1} one deduces that \eqref{eq:5.8} 
pairs nondegenerately with the Grothendieck group of the category of finite 
dimensional $A \rtimes \C [\Gamma,\natural]$-modules with $A^\Gamma$-character 
$\Gamma x$. This implies that the trace pairing
\begin{equation}\label{eq:5.9}
HH_0 (A \rtimes \C [\Gamma,\natural]) \times R (A \rtimes \C [\Gamma, \natural])
\longrightarrow \C \qquad \text{is nondegenerate.}
\end{equation}
Reasoning in such ways, one can recover Theorem \ref{thm:5.8} from Theorems
\ref{thm:3.13} and \ref{thm:4.7}, see \cite[Proposition 2.9.b]{SolHH}.

As we saw in Paragraph \ref{par:5.2}, there are versions of Theorems
\ref{thm:3.13} and \ref{thm:4.7} for $\mc S (G)$. That makes the existence of a 
trace Paley--Wiener theorem for $\mc S (G)$ plausible, and \eqref{eq:5.7}
suggests that it should involve smooth rather than regular functions. We call
a linear form $\lambda \in \Hom_\Z (R(\mc S (G)),\C)$ smooth if:
\begin{itemize}
\item $\lambda$ is supported on finitely many Bernstein blocks of $\Rep (G)$,
\item for every parabolic subgroup $P = L U_P$ and every 
$\tau \in \Irr_\temp (L)$, the function
\[
X_\nr (L) \to \C : \chi \mapsto \lambda (I_P^G (\tau \otimes \chi))
\qquad \text{is smooth.}
\] 
\end{itemize}
We denote the $\C$-vector space of such linear forms by $\Hom_\Z (R(\mc S(G)),\C)_\infty$.

\begin{thm}\label{thm:5.9} 
\textup{\cite[Theorem D.ii and (3.4)]{SolHH}} \\
The trace pairing induces an isomorphism of $Z(\Rep (G))$-modules
\[
HH_0 (\mc S (G),\hat \otimes_b) = \mc S (G) \big/ \overline{[\mc S (G),\mc S (G)]}
\longrightarrow \Hom_\Z ( R(\mc S (G)), \C )_\infty .
\]
\end{thm}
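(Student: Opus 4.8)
The plan is to reduce the statement, via the Bernstein decomposition and Morita equivalence, to a single unital Fr\'echet algebra of the type studied in Paragraph~\ref{par:5.2}, to transport it along $\zeta_u^\vee$ to a twisted crossed product $B := C^\infty(X_\nr^u(M)) \rtimes \C[W^e_{\mf s},\natural_{\mf s}^{-1}]$ over a compact torus, and there to deduce it from the algebraic trace Paley--Wiener theorem together with the tensoring results of \cite{KaSo}. First I would invoke \eqref{eq:5.2} to split $HH_0(\mc S(G),\hat\otimes_b) \cong \bigoplus_{\mf s} HH_0(\mc S(G)^{\mf s},\hat\otimes_b)$; since a smooth linear form on $R(\mc S(G))$ is supported on finitely many Bernstein blocks, $\Hom_\Z(R(\mc S(G)),\C)_\infty$ decomposes over $\mf B(G)$ the same way, and the trace pairing is block-diagonal. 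So it suffices to fix $\mf s = [M,\sigma]_G$ with $\sigma$ unitary supercuspidal (Lemma~\ref{lem:1.13}) and prove the asserted isomorphism for $\mc S(G)^{\mf s}$. By Proposition~\ref{prop:1.22}, $\mc S(G)^{\mf s}$ is Morita equivalent to the unital Fr\'echet algebra $\mc S(G,K_{\mf s})^{\mf s}$ through the bimodules $\langle K_{\mf s}\rangle\mc S(G)^{\mf s}$ and $\mc S(G)^{\mf s}\langle K_{\mf s}\rangle$, which gives compatible isomorphisms on $HH_0$, on $R$ and on the trace pairing; thus I may work with $\mc S(G,K_{\mf s})^{\mf s}$.

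Next I would transport both sides to $B$, using $HH_0(\zeta_u^\vee)$ of Theorem~\ref{thm:5.7} on Hochschild homology and $\zeta_u^\vee$ of \eqref{eq:5.5} on Grothendieck groups. The delicate point is that these two maps intertwine the trace pairings: as $\zeta_u^\vee$ is only a bijection of Grothendieck groups and is not induced by an equivalence of categories, one cannot appeal to naturality, and instead one uses that both maps are pinned down by compatibility with the families $\mf F^u_{L,\tau}$. Indeed, for $f \in \mc S(G,K_{\mf s})^{\mf s}$ the degree-zero component $HH_0(\mc F^u_{L,\tau})(f) \in \Omega^0_{sm}(X_\nr^u(L)) = C^\infty(X_\nr^u(L))$ is precisely the trace function $\chi_L \mapsto \mr{tr}\, I_P^G(\tau\otimes\chi_L)(f)$, and the defining identity of Theorem~\ref{thm:5.7} equates it with the corresponding trace function on the $B$-side. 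Every irreducible tempered representation in $\Irr(G)^{\mf s}$ occurs among the members of such families (e.g.\ with $L = G$, using stability of $\Irr_\temp(G)^{\mf s}$ under unitary unramified twists), so these classes span $R(\mc S(G)^{\mf s})$ and the pairings, agreeing on family members, agree everywhere; the same characterisation shows $\zeta_u^\vee$ matches the smooth-linear-form conditions. The theorem for $\mf s$ thus reduces to: the trace pairing $HH_0(B) \to \Hom_\Z(R(B),\C)_\infty$ is an isomorphism.

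For the latter, write $X := X_\nr^u(M)$, $\Gamma := W^e_{\mf s}$ and $X_\C := X_\nr(M)$, so that $T_x X \otimes_\R \C = T_x X_\C$ for $x \in X$. Injectivity of the trace map is the nondegeneracy \eqref{eq:5.9}, and its image lies in the smooth forms because each $\mc F^u_{L,\tau}$ is an algebra homomorphism into $C^\infty(X_\nr^u(L)) \otimes \End_\C(V_0)$ with $V_0$ finite-dimensional. For surjectivity onto the smooth forms I would combine Theorem~\ref{thm:5.4}, giving $HH_0(B) \cong \big(\bigoplus_{\gamma\in\Gamma} C^\infty(X^\gamma) \otimes (\natural_{\mf s}^{-1})^\gamma\big)^\Gamma$, with the identification $B \cong C^\infty(X)^\Gamma \otimes_{\mc O(X_\C)^\Gamma} A$, where $A := \mc O(X_\C)\rtimes\C[\Gamma,\natural_{\mf s}^{-1}]$ is of finite type over $\mc O(X_\C/\Gamma)$. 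Theorem~\ref{thm:5.2} then yields $HH_0(B) \cong C^\infty(X)^\Gamma \otimes_{\mc O(X_\C)^\Gamma} HH_0(A)$, and on the algebraic side the trace pairing identifies $HH_0(A)$ with the regular linear forms on $R(A)$ (the algebraic trace Paley--Wiener theorem, proved fibrewise over $X_\C/\Gamma$ from Theorem~\ref{thm:2.3}, Lemma~\ref{lem:2.1} and \eqref{eq:5.9}, cf.\ \cite{SolHH}). Since $R(B)$ is obtained from $R(A)$ by the same base change and regular functions on $X_\C$ restrict to, and generate over $C^\infty(X/\Gamma)$, all smooth functions on $X$, this base change turns ``regular'' into ``smooth'' and gives the desired identification $HH_0(B) \cong \Hom_\Z(R(B),\C)_\infty$.

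The main obstacle is precisely this last upgrade from the fibrewise perfect pairing to an isomorphism of $C^\infty(X/\Gamma)$-modules onto the smooth forms: fibrewise data only yields injectivity and the correct pointwise dimensions, whereas $HH_0(B)$ and $\Hom_\Z(R(B),\C)_\infty$ are not locally free (the number of irreducibles over a point jumps along the stratification of $X/\Gamma$ by stabiliser type), so one must either push through the flatness input of Theorem~\ref{thm:5.2} applied to $A$, or patch local preimages by a partition of unity on the orbifold $X/\Gamma$ compatible with the fixed-point decomposition of Theorem~\ref{thm:5.4}. A secondary point, to be handled carefully but without real difficulty, is the compatibility of the trace pairing with the non-categorical bijection $\zeta_u^\vee$ in the second step.
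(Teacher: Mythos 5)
Your proposal is correct and follows the approach indicated by the surrounding discussion in the paper (and carried out in detail in [SolHH]): decompose over Bernstein blocks, reduce to the unital Fr\'echet algebra $\mc S(G,K_{\mf s})^{\mf s}$ by the Morita equivalence of Proposition~\ref{prop:1.22}, transport both sides to the twisted crossed product over the compact torus via $\zeta_u^\vee$ and $HH_0(\zeta_u^\vee)$ -- the defining compatibility with families in Theorem~\ref{thm:5.7} being exactly what makes the trace pairings commute -- and then identify the crossed-product $HH_0$ with the smooth linear forms. Your observation that $HH_0(\mc F_{L,\tau}^u)(f)$ recovers the trace function $\chi_L\mapsto\mr{tr}\,I_P^G(\tau\otimes\chi_L)(f)$ is the key link between the two sides, and is the same device the paper uses to recover Theorem~\ref{thm:5.8} from Theorems~\ref{thm:3.13} and~\ref{thm:4.7}.

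The obstacle you flag at the end is real but resolvable in either of the ways you suggest. For the record, the shortest route to surjectivity bypasses Theorem~\ref{thm:5.2}: Theorem~\ref{thm:5.4} already gives $HH_0(B)\cong\big(\bigoplus_{\gamma\in\Gamma} C^\infty(X^\gamma)\otimes\natural^\gamma\big)^\Gamma$ explicitly, and a smooth linear form $\lambda\in\Hom_\Z(R(B),\C)_\infty$ is determined, via Lemma~\ref{lem:2.1} and Theorem~\ref{thm:2.3}.b, by functions on the fixed-point sets $X^\gamma$ which are smooth precisely because the defining smoothness condition is imposed on the families $x\mapsto \ind(\C_x\otimes\rho)$. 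This sets up the bijection directly. The jumping of $|\Irr(\C[\Gamma_x,\natural])|$ across the stabiliser stratification of $X/\Gamma$, which you were rightly wary of, is accounted for automatically by the $\gamma$-indexed decomposition: a point $x$ contributes to the $\gamma$-summand exactly when $\gamma\in\Gamma_x$, so the varying fibre dimension is built into the description and no partition-of-unity patching is needed. Your base-change route through Theorem~\ref{thm:5.2} also goes through, but then one must explicitly match the $C^\infty(X/\Gamma)$-module structures on $HH_0(B)$ and on $\Hom_\Z(R(B),\C)_\infty$; this is in any case something you should record, since the statement asserts an isomorphism of $Z(\Rep(G))$-modules and neither $\zeta_u^\vee$ nor the Morita step is trivially $Z(\Rep(G))$-linear without comment.
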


\subsection{Periodic cyclic homology} \

Periodic cyclic homology of algebras consists of two functors $HP_0, HP_1$ from
algebras to vector spaces. They are periodic in the sense that $HP_{n+2m} = HP_n$
for all $n,m \in \Z$. The functor $HP_* = HP_0 \oplus HP_1$ plays an important
role in noncommutative geometry, because it is an analogue of DeRham cohomology
(which can be defined for commutative algebras). Its functorial properties are
analogous to those of topological K-theory (see Paragraph \ref{par:6.1}),
in particular it is Morita invariant \cite{Cun2} and there are six-term exact
sequences associated to short exact sequences of algebras \cite{CuQu}. 
In practice, these make it possible to determine $HP_* (A)$ 
for many algebras $A$.

The periodic cyclic homology of a $\C$-algebra $A$ is defined as the homology
of an explicit infinite double complex $CC_{per}(A)$ with spaces $A^{\otimes n}$
\cite[\S 5.1]{Lod}. A part of $CC_{per}(A)$ computes $HH_* (A)$, and as a result
$HP_* (A)$ and $HH_* (A)$ are closely related. In many cases $HP_* (A)$ can be
computed as the homology of $HH_* (A)$ with respect to a new differential
$B$ discovered by Connes. 

Periodic cyclic homology is also defined for topological algebras, when one 
fixes a topological tensor product. We focus on Fr\'echet algebras and 
$\hat \otimes$. Then $HP_* (A,\hat \otimes)$ is the homology of a double complex 
$CC_{per}(A, \hat \otimes)$ obtained from $CC_{per}(A)$ by replacing all terms 
$A^{\otimes n}$ by $A^{\hat \otimes n}$.

We can now formulate the Hochschild--Kostant--Rosenberg--Connes theorem.
In these cases $HP_* (A)$ is computed from $HH_* (A)$ (which is known from 
Theorems \ref{thm:4.2} and \ref{thm:5.3}) as the homology with respect to usual
$d$ for differential forms.

\begin{thm}\label{thm:5.10}
\textup{\cite[\S 5.1.12]{Lod} and \cite{Con,Tel}}\\
Let $i \in \{0,1\}$ and let $H_{dR}^*$ denote 
de Rham cohomology with complex coefficients.
\enuma{
\item Let $V$ be a nonsingular complex affine variety. There is a natural
isomorphism 
\[
HP_i (\mc O (V)) \cong \bigoplus\nolimits_{m \in \Z_{\geq 0}} H_{dR}^{i + 2m} (V).
\]
\item Let $X$ be a smooth manifold. There is a natural isomorphism
\[
HP_i (C^\infty (X), \hat \otimes) \cong \bigoplus\nolimits_{m \in \Z_{\geq 0}}
H_{dR}^{i + 2m} (X).
\]
}
\end{thm}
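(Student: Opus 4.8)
\textbf{Proof plan for Theorem \ref{thm:5.10}.} The plan is to treat parts (a) and (b) in parallel, since they follow the same mechanism: compute $HP_*$ as the hyperhomology of the Connes mixed complex built from $(HH_*, B)$, and then identify that mixed complex with the de Rham complex. First I would recall the standard fact \cite[\S 5.1.12]{Lod} that for any (topological) algebra $A$ there is a spectral sequence, coming from the filtration of the periodic cyclic bicomplex by columns, with $E^1$-page given by $HH_*(A)$ and $d^1$ given by Connes' operator $B$; this spectral sequence converges to $HP_*(A)$ when things are nice enough (in the affine variety case because $A$ has finite global dimension, in the $C^\infty$ case by the same argument after passing to $\hat\otimes$, where one invokes that $C^\infty(X)$ is "smooth" in the appropriate topological sense and the Hochschild complex is quasi-isomorphic to a bounded one, cf. \cite{Tel}). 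So the first key step is: set up this $(HH_*,B)$-to-$HP_*$ spectral sequence in both the algebraic and the Fréchet settings.

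The second key step is to identify $B$ under the Hochschild--Kostant--Rosenberg isomorphisms $HH_n(\mc O(V)) \cong \Omega^n(V)$ (Theorem \ref{thm:4.2}) and $HH_n(C^\infty(X),\hat\otimes) \cong \Omega^n_{sm}(X)$ (Theorem \ref{thm:5.3}) with the usual de Rham differential $d$ (up to a harmless constant $n!$ or $\tfrac1{n+1}$ depending on normalization). This is a classical computation of Connes: one checks on the explicit generators of $HH_n$, namely classes represented by $a_0 \otimes a_1 \otimes \cdots \otimes a_n \mapsto a_0\, da_1 \wedge \cdots \wedge da_n$, that Connes' $B = (1-t)sN$ (or whatever explicit formula one uses) sends this to (a multiple of) $da_0 \wedge da_1 \wedge \cdots \wedge da_n$. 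I would cite \cite{Con} for the $C^\infty$ case and \cite[\S 5.1.12]{Lod} for the algebraic case rather than reproduce the combinatorial verification. Once $d^1 = d$, the spectral sequence degenerates at $E^2$ (the de Rham complex has no higher differentials to receive, as everything in sight is a genuine cochain complex of the de Rham type and the remaining $d^r$ vanish for degree reasons / by the HKR multiplicativity), and $E^2_{i} = \bigoplus_{m\ge 0} H^{i+2m}_{dR}$ with complex coefficients. The $2$-periodicity of $HP$ explains the direct sum over all $m \ge 0$ rather than a single cohomology group. Naturality in $V$, resp. in $X$, is inherited from naturality of the HKR isomorphism and of the periodic cyclic bicomplex.

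The main obstacle, and the step deserving the most care, is the \emph{convergence} of the spectral sequence in the topological (Fréchet, $\hat\otimes$) setting of part (b): unlike the algebraic case where $\mc O(V)$ has finite Hochschild dimension equal to $\dim V$, one must know that the topological Hochschild complex of $C^\infty(X)$ is homotopy-equivalent (as a complex of Fréchet spaces) to the finite-length de Rham complex $\Omega^*_{sm}(X)$, so that the column filtration is exhaustive and complete and the spectral sequence behaves. This is exactly the content of Teleman's topological HKR theorem \cite{Tel} (and Connes' original argument \cite{Con} for compact $X$), and I would lean on it as a black box: granted Theorem \ref{thm:5.3} together with the fact that the quasi-isomorphism there is realized by an explicit contracting homotopy in the relevant category, the mixed-complex computation is then purely formal. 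For a compact manifold $X$ everything is classical; for general $X$ one cites \cite{Tel} for the non-compact extension. No separate argument is needed for the $2$-periodicity beyond the definition of $HP$.

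Concretely, I would write: "By \cite[\S 5.1.12]{Lod} there is a first-quadrant-type spectral sequence $E^1_{p} = HH_p(A) \Rightarrow HP_{p \bmod 2}(A)$ with $d^1 = B$, and likewise with $\hat\otimes$ in place of $\otimes$ for part (b), the convergence in the latter case following from \cite{Tel}. Under the isomorphisms of Theorems \ref{thm:4.2} and \ref{thm:5.3}, Connes' operator $B$ corresponds to the de Rham differential (\cite{Con}, \cite[\S 5.1.12]{Lod}), so $E^2_p \cong \bigoplus_{m\ge 0} H^{p+2m}_{dR}$ with complex coefficients in both cases. The spectral sequence degenerates at $E^2$ for degree reasons, giving the stated natural isomorphisms."
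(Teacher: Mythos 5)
The paper does not prove Theorem \ref{thm:5.10}: it is quoted as a known result, citing Loday \S 5.1.12 for the algebraic case and Connes/Teleman for the Fr\'echet case, so there is no in-paper proof to compare against. Your plan is a correct account of how the cited sources establish the theorem: compute $HH_*$ via HKR (Theorems \ref{thm:4.2} and \ref{thm:5.3}), identify Connes' $B$-operator with the de Rham differential up to normalization, and use the bounded Hochschild dimension (resp.\ Teleman's quasi-isomorphism of the topological Hochschild complex with a bounded de Rham-type complex) to get convergence.

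The one point I would push back on is ``the spectral sequence degenerates at $E^2$ \ldots for degree reasons / by the HKR multiplicativity.'' On the face of it, the column-filtration spectral sequence of the periodic bicomplex has no formal reason to degenerate at $E^2$: the higher $d^r$ are not excluded by degree counting alone, and HKR multiplicativity is not what makes them vanish. The argument the cited sources actually make is stronger: the HKR map $\Omega^n \to HH_n$ extends to a morphism of \emph{mixed complexes} $(\Omega^*, 0, d) \to (\bar C_*(A), b, B)$ (where $\bar C_*$ is the normalized Hochschild complex), and once one knows this is a quasi-isomorphism on the $b$-columns, the comparison of periodic cyclic homologies follows from the general invariance of $HP$ under quasi-isomorphisms of mixed complexes, with boundedness of $HH$ used to turn the a priori infinite product over $m$ into the finite direct sum appearing in the statement. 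In spectral-sequence language the degeneration is then an \emph{output} of the mixed-complex comparison, not an independent input. With that correction your plan is complete; the remark on normalization constants and the appeal to \cite{Tel} for non-compact $X$ are both right.
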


Suppose that in the setting of Theorem \ref{thm:5.10}, a finite group $\Gamma$ acts
on $V$ and on $X$, by automorphism in the appropriate category. Then we can combine
Theorem \ref{thm:5.10} with Theorems \ref{thm:4.3} and \ref{thm:5.4} to obtain:

\begin{cor}\label{cor:5.15}
There are isomorphisms 
\[
\begin{array}{lll}
HP_* ( \mc O (V) \rtimes \C [\Gamma, \natural]) & \cong & 
\big( \bigoplus\nolimits_{\gamma \in \Gamma} H_{dR}^* (V^\gamma) \otimes 
\natural^\gamma \big)^\Gamma , \\
HP_* ( C^\infty (X) \rtimes \C [\Gamma, \natural], \hat \otimes) & \cong & 
\big( \bigoplus\nolimits_{\gamma \in \Gamma} H_{dR}^* (X^\gamma) \otimes 
\natural^\gamma \big)^\Gamma .
\end{array}
\]
\end{cor}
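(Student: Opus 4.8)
The plan is to obtain the corollary by running the spectral sequence that passes from Hochschild to periodic cyclic homology and feeding into it the Hochschild computations already available. For a (Fr\'echet, in the $C^\infty$ case) algebra $A$ one has Connes' $(b,B)$-mixed complex, whose $b$-homology is $HH_*(A)$ and whose total homology is $HP_*(A)$; the first differential on the page $E^1_* = HH_*(A)$ is the operator $\bar B$ induced by Connes' $B$. So the statement reduces to two points: first, that under the isomorphism of Theorem~\ref{thm:4.3} (resp.\ Theorem~\ref{thm:5.4}) the operator $\bar B$ on $\big(\bigoplus_{\gamma}\Omega^*(V^\gamma)\otimes\natural^\gamma\big)^\Gamma$ is the de Rham differential $d$ acting on each summand $\Omega^*(V^\gamma)$ (the locally constant weight $\natural^\gamma$ being irrelevant for $d$); and second, that the resulting spectral sequence degenerates after this page. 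Granting these and using that over $\C$ taking $\Gamma$-invariants for the finite group $\Gamma$ commutes with homology, one gets
\[
HP_i = \Big(\bigoplus_{\gamma\in\Gamma}\bigoplus_{m\geq 0} H_{dR}^{i+2m}(V^\gamma)\otimes\natural^\gamma\Big)^\Gamma = \Big(\bigoplus_{\gamma\in\Gamma} H_{dR}^*(V^\gamma)\otimes\natural^\gamma\Big)^\Gamma,
\]
where the rightmost $H_{dR}^*$ is the $\Z/2$-graded total de Rham cohomology, which is exactly the asserted formula; the $C^\infty$ case is identical with the fixed-point submanifolds $X^\gamma$ in place of $V^\gamma$ and with $\hat\otimes$ throughout.

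For the identification $\bar B = d$ I would first reduce to an untwisted crossed product. By \eqref{eq:2.2} the Schur extension $\Gamma^*$ and the central idempotent $e_\natural \in \C[Z^*] \subset \C[\Gamma^*]$ give $\mc O(V)\rtimes\C[\Gamma,\natural] \cong e_\natural\,(\mc O(V)\rtimes\Gamma^*)$ as a direct summand cut out by a central idempotent; since $HP_*$ and the entire $(b,B)$-mixed complex are additive and respect central idempotents, $HP_*(\mc O(V)\rtimes\C[\Gamma,\natural])$ is the $e_\natural$-part (the $c_\natural$-isotypic component for the $Z^*$-action) of $HP_*(\mc O(V)\rtimes\Gamma^*)$, compatibly with $\bar B$. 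For the honest crossed product $\mc O(V)\rtimes\Gamma^*$ with $V$ nonsingular, the statement that the $g$-twisted sector of the mixed complex is quasi-isomorphic, as a mixed complex, to the de Rham mixed complex of $V^g$ — so that $\bar B$ is $d$ there — is the classical computation of Brylinski \cite{Bry} and Nistor \cite{Nis} that already underlies Theorem~\ref{thm:4.3}; this also yields the degeneration, since it reduces to the degeneration of the de Rham-to-de Rham spectral sequence for each $V^g$, i.e.\ Theorem~\ref{thm:5.10}(a). Projecting back to the $e_\natural$-part and using, exactly as in the derivation of Theorem~\ref{thm:4.3} from the $\Gamma^*$-picture, that $V^g = V^\gamma$ whenever $g \mapsto \gamma$ and that the $Z^*$-action introduces the twisting character $\natural^\gamma$, gives the claim for $\mc O(V)\rtimes\C[\Gamma,\natural]$. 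The smooth case runs in parallel, with Theorem~\ref{thm:5.4}, Theorem~\ref{thm:5.10}(b), and the $\hat\otimes$-version of the $(b,B)$-formalism replacing their algebraic counterparts.

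The main obstacle is this first point: tracking Connes' $B$ through the twisted Morita/idempotent reductions and through the explicit chain-level isomorphisms behind Theorems~\ref{thm:4.3} and~\ref{thm:5.4}, so as to see that it becomes the de Rham $d$ on each $\Omega^*(V^\gamma)$ with no surviving contribution from $\natural^\gamma$ and no higher differentials in the spectral sequence. In the $C^\infty$ case there is the extra point that the projective (admissible) resolutions used to compute $HH_*$ in Theorem~\ref{thm:5.4} must be good enough to carry the whole mixed complex, hence the Connes $S$-$B$-$I$ exact sequence, with $\hat\otimes$; this is guaranteed by the flatness and projectivity assertions of Theorem~\ref{thm:5.2} and the constructions of \cite{KaSo,SolHH}. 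Once $\bar B = d$ is established, the remainder is the bookkeeping of taking $\Gamma$-invariants and collapsing the $\Z$-grading modulo $2$.
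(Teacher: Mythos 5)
Your proof is correct and takes essentially the same route the paper intends: the paper merely asserts that Theorem~\ref{thm:5.10} ``combines with'' Theorems~\ref{thm:4.3} and~\ref{thm:5.4}, and what you supply — passing to the $(b,B)$-mixed complex, reducing the twist to an untwisted crossed product by the Schur extension and $e_\natural$, invoking Brylinski--Nistor to identify $B$ with the de Rham $d$ on each sector $\Omega^*(V^\gamma)$, and then collapsing to $\Z/2$-graded de Rham cohomology before taking $\Gamma$-invariants — is exactly the mechanism that makes this combination work. The key point you isolate (that under the identifications of Theorems~\ref{thm:4.3}/\ref{thm:5.4} Connes' $B$ becomes $d$) is the same phenomenon the paper records just before Lemma~\ref{lem:5.12}, citing \cite[(4.4)--(4.5)]{SolHH}, so your proof faithfully reconstructs the omitted argument.
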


Continuity for $HP_*$ is more subtle than for $HH_*$, because $CC_{per}(A)$
has infinitely many terms in negative degrees. 

\begin{thm}\label{thm:5.11}
\textup{\cite[Proposition 2.2]{Nis} and \cite[Theorem 3]{BrPl}} 
\enuma{
\item Suppose that $\lim_{i \to \infty} A_i$ is an inductive limit of algebras and 
that there exists $N \in \N$ such that $HH_n (A_i ) = 0$ for all $n \geq N$ and all 
$i$. Then $HP_* (\lim_{i \to \infty} A_i) \cong \lim_{i \to \infty} HP_* (A_i )$.
\item Suppose that $\lim_{i \to \infty} B_i$ is a strict inductive limit of nuclear
Fr\'echet algebras and that there exists $N \in N$ such that 
$HH_n (B_i ,\hat \otimes ) = 0$ for all $n \geq N$ and all $i$. 
Then $HP_* (\lim_{i \to \infty} B_i ,\hat \otimes_b ) \cong 
\lim_{i \to \infty} HP_* (B_i ,\hat \otimes)$.
}
\end{thm}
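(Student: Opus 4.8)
\textbf{Proof proposal for Theorem \ref{thm:5.11}.}

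The plan is to deduce both statements from a single homological fact: if an algebra $A$ (resp. a suitable topological algebra) has $HH_n(A)=0$ for all $n \geq N$, then the total complex $CC_{per}(A)$ computing $HP_*(A)$ can be replaced, up to quasi-isomorphism, by a complex that is \emph{uniformly bounded} — it only involves the finitely many columns of the cyclic bicomplex of height $< N$ after a suitable truncation and a use of Connes' periodicity ($SBI$) exact sequence. Concretely, the key input is the Connes periodicity sequence relating $HH_*$, cyclic homology $HC_*$, and $HP_*$: from $HH_n(A)=0$ for $n\geq N$ one shows by induction on the $SBI$ sequence that the periodicity map $S : HC_{n+2}(A) \to HC_n(A)$ is an isomorphism for $n \geq N-1$, and hence that $HP_i(A) = \varprojlim_S HC_{n}(A)$ stabilizes: $HP_i(A) \cong HC_{N-1+i}(A)$ (for the appropriate parity), with \emph{no} $\varprojlim^1$ obstruction because the tower is eventually constant. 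This is exactly \cite[\S 5.1]{Lod}, and the same computation works verbatim with $\hat\otimes$ in place of $\otimes$ once one has the topological $SBI$ sequence, which holds for nuclear Fréchet algebras.

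First I would carry out part (a). Write $A = \lim_{i} A_i$. Since $HH_n$ commutes with direct limits (property 4 in Paragraph \ref{par:4.1}), we get $HH_n(A) = \lim_i HH_n(A_i) = 0$ for $n \geq N$ as well, so the stabilization above applies to every $A_i$ and to $A$, all with the \emph{same} bound $N$. Cyclic homology $HC_*$, being the homology of a complex built functorially out of the $A_i^{\otimes m}$ with finitely many terms in each degree, also commutes with direct limits. Therefore
\[
HP_i(A) \cong HC_{N-1+i}(A) \cong \lim_i HC_{N-1+i}(A_i) \cong \lim_i HP_i(A_i),
\]
where the first and last isomorphisms are the stabilization (uniform in $i$) and the middle one is continuity of $HC_*$. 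That proves (a).

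For part (b), the strategy is identical but one must supply the topological analogues of each ingredient. Here the hypotheses are tailored to make this work: the $B_i$ are nuclear Fréchet algebras, so $\hat\otimes$ is well-behaved (associative, exact on the relevant short exact sequences), the $SBI$ sequence exists for $HH_*(\cdot,\hat\otimes)$, $HC_*(\cdot,\hat\otimes)$, $HP_*(\cdot,\hat\otimes)$, and the stabilization $HP_i(B_i,\hat\otimes)\cong HC_{N-1+i}(B_i,\hat\otimes)$ holds with the uniform bound $N$. For the strict inductive limit $\lim_i B_i$ one passes to the bornological completed tensor product $\hat\otimes_b$, which agrees with $\hat\otimes$ on Fréchet spaces \cite[Theorem I.87]{Mey}; continuity of $HH_*(\cdot,\hat\otimes_b)$ and of $HC_*(\cdot,\hat\otimes_b)$ for strict inductive limits of nuclear Fréchet algebras is the topological counterpart of property 4, available in Meyer's framework (cf. the continuity statements used in Theorem \ref{thm:5.5}). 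Combining: $HP_i(\lim_i B_i,\hat\otimes_b) \cong HC_{N-1+i}(\lim_i B_i,\hat\otimes_b) \cong \lim_i HC_{N-1+i}(B_i,\hat\otimes_b) \cong \lim_i HC_{N-1+i}(B_i,\hat\otimes) \cong \lim_i HP_i(B_i,\hat\otimes)$.

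The main obstacle is entirely in part (b): verifying that the periodicity/stabilization argument goes through with completed tensor products, i.e. that $CC_{per}(\cdot,\hat\otimes)$ behaves like its algebraic model for nuclear Fréchet algebras — this requires exactness of the relevant sequences of Fréchet spaces (no $\varprojlim^1$ issues when taking homology of the infinite product in negative degrees) and the compatibility of $\hat\otimes_b$ with strict inductive limits. Without the uniform vanishing bound $N$, the infinitely-many-negative-degrees feature of $CC_{per}$ would produce exactly such $\varprojlim^1$ obstructions, which is why the hypothesis is essential; with it, the relevant tower of cyclic homology groups is eventually constant and the obstruction vanishes. I would cite \cite[Proposition 2.2]{Nis} for (a) and \cite[Theorem 3]{BrPl} for (b) for the details of these topological verifications.
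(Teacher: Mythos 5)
Your proposal is correct and is essentially the argument given in the cited references \cite[Proposition 2.2]{Nis} and \cite[Theorem 3]{BrPl}: the Connes $SBI$ sequence forces the periodicity map $S$ to become an isomorphism once $HH_*$ vanishes, so $HP_i$ is computed by a single stable $HC$ group, and one then uses that $HC_*$ (unlike $HP_*$) commutes with the relevant (inductive or strict inductive) limits. The paper itself does not supply a proof of Theorem \ref{thm:5.11} but merely cites these sources, so your sketch accurately reconstructs the intended argument; the only genuine technical content you defer to \cite{BrPl} --- the existence and exactness of the topological $SBI$ sequence and compatibility of $\hat\otimes_b$ with strict inductive limits of nuclear Fr\'echet spaces --- is indeed where the real work in part (b) lies.
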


We return to our main players, $\mc H (G)$ and $\mc S (G)$. Theorem \ref{thm:5.11}
enables us to apply an argument like in \eqref{eq:4.8} and \eqref{eq:5.2}.
Recall that for $\mf s \in \mf B (G)$ we picked $K_{\mf s} \in \CO (G)$ in
Paragraph \ref{par:5.2}.

\begin{lem}\label{lem:5.12}
\textup{\cite[(3.3) and (3.4)]{SolHP}}\\
There are natural isomorphisms
\[
\begin{array}{lllll}
HP_* (\mc H (G)) & \cong & \bigoplus_{\mf s \in \mf B (G)} HP_* (\mc H (G)^{\mf s})
& \cong & \bigoplus_{\mf s \in \mf B (G)} HP_* (\mc H (G, K_{\mf s})^{\mf s}) ,\\
HP_* (\mc S (G), \hat \otimes_b) & \cong & \bigoplus_{\mf s \in \mf B (G)} 
HP_* (\mc S (G)^{\mf s}, \hat \otimes_b) & \cong & 
\bigoplus_{\mf s \in \mf B (G)} HP_* (\mc S (G, K_{\mf s})^{\mf s} ,\hat \otimes) . 
\end{array}
\]
\end{lem}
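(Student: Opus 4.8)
The plan is to reduce $HP_*$ of the big algebras to $HP_*$ of the finitely many "local" pieces $\mc H(G,K_{\mf s})^{\mf s}$ and $\mc S(G,K_{\mf s})^{\mf s}$, using the same two-step strategy that produced \eqref{eq:4.8} and \eqref{eq:5.2} for Hochschild homology, but now invoking the continuity statements of Theorem \ref{thm:5.11} in place of the naive continuity of $HH_*$. First I would recall that by the Bernstein decomposition (Theorem \ref{thm:1.18}) and the density statements in \eqref{eq:1.34} we have
\[
\mc H (G) = \bigoplus_{\mf s \in \mf B (G)} \mc H (G)^{\mf s} = \lim_{S} \bigoplus_{\mf s \in S} \mc H (G)^{\mf s},
\]
an inductive limit over the finite subsets $S \subset \mf B (G)$, and similarly for $\mc S (G)$ with the inductive-limit topology of Definition \ref{def:1.SG} (each summand $\mc S(G)^{\mf s}$ being itself an inductive limit of the nuclear Fréchet algebras $\mc S(G,K_m)^{\mf s}$, by Theorem \ref{thm:1.4} and Proposition \ref{prop:1.22}). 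To apply Theorem \ref{thm:5.11} I would need a uniform bound $N$ with $HH_n(\mc H(G)^{\mf s}) = 0$ for $n \geq N$, uniformly in $\mf s$; this is where the earlier computations pay off. By \eqref{eq:4.18} (equivalently Theorems \ref{thm:4.3} and \ref{thm:4.7}) $HH_n(\mc H(G)^{\mf s})$ is a summand of $\bigl(\bigoplus_{w \in W_{\mf s}^e} \Omega^n(X_\nr(L)^w) \otimes (\natural_{\mf s}^{-1})^w\bigr)^{W_{\mf s}^e}$, which vanishes once $n > \dim X_\nr(L) = \operatorname{rk}(G)$; since $X_\nr(L) \subset X_\nr(T_0)$ for a minimal torus, $\operatorname{rk}(G)$ is an absolute bound independent of $\mf s$, so we may take $N = \operatorname{rk}(G) + 1$. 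The same bound works for $\mc S(G,K_{\mf s})^{\mf s}$ by \eqref{eq:5.6} (Theorems \ref{thm:5.4} and \ref{thm:5.7}), where the relevant manifold $X_\nr^u(M)$ has dimension at most $\operatorname{rk}(G)$ as well.

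With that uniform vanishing in hand, Theorem \ref{thm:5.11}.a gives
\[
HP_* (\mc H (G)) \cong \lim_{S} HP_* \Bigl( \bigoplus_{\mf s \in S} \mc H (G)^{\mf s} \Bigr) \cong \lim_{S} \bigoplus_{\mf s \in S} HP_* (\mc H (G)^{\mf s}) = \bigoplus_{\mf s \in \mf B (G)} HP_* (\mc H (G)^{\mf s}),
\]
where the middle isomorphism uses that $HP_*$ commutes with finite direct sums (this follows from the analogous additivity of $HH_*$, item 3 in Paragraph \ref{par:4.1}, together with the Connes long exact sequence relating $HH_*$ and $HP_*$, or directly from $CC_{per}$ being additive). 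For $\mc S(G)$ one argues in two nested stages: first, within each Bernstein summand, Theorem \ref{thm:5.11}.b applies to the strict inductive limit $\mc S(G)^{\mf s} = \lim_{m} \mc S(G,K_m)^{\mf s}$ of nuclear Fréchet algebras (nuclearity by Theorem \ref{thm:1.4}.a) to give $HP_*(\mc S(G)^{\mf s}, \hat\otimes_b) \cong \lim_m HP_*(\mc S(G,K_m)^{\mf s}, \hat\otimes)$, and since by Proposition \ref{prop:1.22}.a all the $\mc S(G,K_m)^{\mf s}$ with $m \geq m(\mf s)$ are Morita equivalent to $\mc S(G)^{\mf s}$, the Morita invariance of $HP_*$ \cite{Cun2} makes this limit stabilize, yielding $HP_*(\mc S(G)^{\mf s}, \hat\otimes_b) \cong HP_*(\mc S(G,K_{\mf s})^{\mf s}, \hat\otimes)$; second, assemble over $\mf s$ exactly as for $\mc H(G)$, using Theorem \ref{thm:5.11}.b once more for the outer strict inductive limit $\mc S(G) = \lim_m \mc S(G,K_m)$ together with Corollary \ref{cor:1.21} / Proposition \ref{prop:1.22}.b to identify $\mc S(G,K_m) = \bigoplus_{\mf s \in \mf B(G,K_m)} \mc S(G,K_m)^{\mf s}$.

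For the Hecke-algebra side the same Morita step is needed to pass from $HP_*(\mc H(G)^{\mf s})$ to $HP_*(\mc H(G,K_{\mf s})^{\mf s})$: by Theorem \ref{thm:1.20} the bimodules $\langle K_{\mf s}\rangle \mc H(G)^{\mf s}$ and $\mc H(G)^{\mf s}\langle K_{\mf s}\rangle$ implement a Morita equivalence $\mc H(G)^{\mf s} \sim \mc H(G,K_{\mf s})^{\mf s}$, and Morita invariance of $HP_*$ finishes it. I expect the main obstacle to be purely bookkeeping about \emph{topological} subtleties: checking that Theorem \ref{thm:5.11}.b genuinely applies, i.e. that the relevant limits are \emph{strict} inductive limits of \emph{nuclear} Fréchet algebras (the strictness of the $\mc S(G,K_m)^{\mf s} \hookrightarrow \mc S(G,K_{m+1})^{\mf s}$ transition maps, and the passage between $\hat\otimes$ and $\hat\otimes_b$ via \cite[Theorem I.87]{Mey}), and that Morita invariance of $HP_*$ holds in the bornological/topological setting for the non-unital algebras at hand — for which one uses the excision and Morita-invariance results of \cite{Cun2,CuQu} applied to the unitizations, exactly as in \cite[(3.3)--(3.4)]{SolHP}. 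Once those technical points are confirmed, both chains of isomorphisms are forced.
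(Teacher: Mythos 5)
Your proposal is correct and, as far as I can tell, follows essentially the same route as \cite[(3.3)--(3.4)]{SolHP}: write the algebras as inductive limits (over finite subsets of $\mf B(G)$, or equivalently over the $K_m$), get additivity of $HP_*$ for finite direct sums, pass the direct limit inside $HP_*$ via Theorem \ref{thm:5.11} using the uniform vanishing of $HH_n$ above the split rank of $G$, and finally replace $\mc H(G)^{\mf s}$ (resp.\ $\mc S(G)^{\mf s}$) by $\mc H(G,K_{\mf s})^{\mf s}$ (resp.\ $\mc S(G,K_{\mf s})^{\mf s}$) via the Morita equivalence from Theorem \ref{thm:1.20} / Proposition \ref{prop:1.22} together with Morita invariance of $HP_*$. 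The only small imprecision is the line ``$\dim X_\nr(L) = \operatorname{rk}(G)$'': the dimension of $X_\nr(L)$ equals the $F$-split rank of $Z(L)^\circ$, which depends on $L$ (and the literal inclusion $X_\nr(L) \subset X_\nr(T_0)$ of tori is not what is meant); what you actually need and do have is the uniform bound $\dim X_\nr(L) \le$ ($F$-split rank of $G$), attained for a minimal Levi, which suffices for $N$.
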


It turns out that the isomorphisms \eqref{eq:4.19} and \eqref{eq:5.6} (which come
from Theorems \ref{thm:4.7} and \ref{thm:5.7}) are very suitable to determine 
$HP_* (\mc H (G))$ and $HP_* (\mc S (G), \hat \otimes_b)$ in representation
theoretic terms. By \cite[(4.4) and (4.5)]{SolHH}, Connes' differential $B$
on $HH_* (\mc H (G))$ and on $HH_* (\mc S (G), \hat \otimes_b)$ corresponds to
the usual exterior differential $d$ on the differential forms in 
\eqref{eq:4.18} and \eqref{eq:5.6}. Taking homology with respect to $d$ yields:

\begin{thm}\label{thm:5.13}
\textup{\cite[Theorem 4.1 and (4.7)--(4.8)]{SolHH}} \\
Let $i \in \{0,1\}$. There are isomorphisms of vector spaces
\[
\begin{array}{lll}
HP_i (\mc H (G,K_{\mf s})^{\mf s}) & \cong & HP_i \big(
\mc O (X_\nr (L)) \rtimes \C [W_{\mf s}^e ,\natural_{\mf s}^{-1}] \big) \\
& \cong & \bigoplus_{m \in \Z_{\geq 0}} \Big( \bigoplus_{w \in W_{\mf s}^e}
H_{dR}^{i + 2m} (X_\nr (L)^w) \otimes (\natural_{\mf s}^{-1})^w \Big)^{W_{\mf s}^e} ,\\
HP_i (\mc S (G,K_{\mf s})^{\mf s}, \hat \otimes) & \cong & HP_i \big( C^\infty 
(X_\nr^u (L)) \rtimes \C [W_{\mf s}^e ,\natural_{\mf s}^{-1}], \hat \otimes \big) \\
& \cong & \bigoplus_{m \in \Z_{\geq 0}} \Big( \bigoplus_{w \in W_{\mf s}^e}
H_{dR}^{i + 2m} (X_\nr^u (L)^w) \otimes (\natural_{\mf s}^{-1})^w \Big)^{W_{\mf s}^e} .
\end{array}
\]
\end{thm}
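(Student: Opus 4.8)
The plan is to deduce Theorem \ref{thm:5.13} from the Hochschild-homology computations already established (Theorems \ref{thm:4.7} and \ref{thm:5.7}, packaged as \eqref{eq:4.18} and \eqref{eq:5.6}), together with the Hochschild--Kostant--Rosenberg--Connes principle (Theorem \ref{thm:5.10} and Corollary \ref{cor:5.15}) and the compatibility of the isomorphisms $HH_*(\zeta^\vee)$, $HH_*(\zeta_u^\vee)$ with Connes' differential $B$. The point is that once one knows $HH_*$ of an algebra \emph{together with the way Connes' boundary $B$ acts on it}, periodic cyclic homology is computed as the homology of the $(HH_*,B)$-complex (this is the content of \cite[\S 5.1.12]{Lod}, valid in both the algebraic and the nuclear-Fr\'echet settings). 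So the whole proof reduces to identifying $B$ on $HH_*(\mc H(G,K_{\mf s})^{\mf s})$ and on $HH_*(\mc S(G,K_{\mf s})^{\mf s})$ with the de Rham differential $d$ on the spaces of (twisted, $W_{\mf s}^e$-invariant) differential forms appearing in \eqref{eq:4.18} and \eqref{eq:5.6}.

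First I would invoke \cite[(4.4) and (4.5)]{SolHH} (cited in the paragraph just before the theorem), which says exactly that, under the isomorphisms $HH_*(\zeta^\vee)$ and $HH_*(\zeta_u^\vee)$, Connes' differential $B$ corresponds to the exterior differential $d$ on $\bigl(\bigoplus_{w\in W_{\mf s}^e}\Omega^*(X_\nr(L)^w)\otimes(\natural_{\mf s}^{-1})^w\bigr)^{W_{\mf s}^e}$, respectively on its smooth analogue with $X_\nr^u(L)$. Granting this, the periodic cyclic homology of $\mc H(G,K_{\mf s})^{\mf s}$ is the homology of the $\Z/2$-graded complex $\bigl(\prod_m HH_{*+2m},B\bigr)$, which is then identified with $\prod_m$ of the de Rham cohomology of the (fixed-point subvarieties of the) torus $X_\nr(L)$, twisted by $(\natural_{\mf s}^{-1})^w$ and taken $W_{\mf s}^e$-invariants; this is precisely the right-hand side of the displayed formula. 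The same computation, using Theorem \ref{thm:5.7} in place of Theorem \ref{thm:4.7} and $\Omega_{sm}^*$ in place of $\Omega^*$, gives the Schwartz-algebra statement. Simultaneously this shows $HP_i(\mc H(G,K_{\mf s})^{\mf s})\cong HP_i(\mc O(X_\nr(L))\rtimes\C[W_{\mf s}^e,\natural_{\mf s}^{-1}])$ and its Schwartz analogue, because the same $(HH_*,B)$-complex appears on both sides by Theorems \ref{thm:4.4}--\ref{thm:4.5} and \ref{thm:5.4}; alternatively one can cite Corollary \ref{cor:5.15} directly for the crossed-product side and then match.

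There are two technical points to pin down. One is the passage from the double complex $CC_{per}$ to the $(HH_*,B)$-complex: this requires that the spectral sequence of $CC_{per}$ degenerates, which holds here because the relevant algebras are smooth enough (finite type over a smooth variety, respectively $C^\infty$ of a manifold crossed with a finite group) — this is the content of the Hochschild--Kostant--Rosenberg--Connes theorem as quoted, and for the twisted crossed products it follows from Theorems \ref{thm:4.3}, \ref{thm:5.4} and \cite{Nis}. In the Fr\'echet case one must also know that $HH_n$ vanishes for $n$ large (so that the $\prod_m$ over $m$ is really a finite sum locally and the relevant completeness issues for $\hat\otimes$ do not intervene): this holds because $X_\nr^u(L)$ is a compact manifold, hence $\Omega_{sm}^n(X_\nr^u(L)^w)=0$ for $n>\dim X_\nr^u(L)$, so by \eqref{eq:5.6} $HH_n(\mc S(G,K_{\mf s})^{\mf s})=0$ for $n\gg 0$, and the same in the algebraic case by \eqref{eq:4.18}.

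The main obstacle I expect is nothing in this excerpt's proof but rather the verification, done in \cite[(4.4)-(4.5)]{SolHH}, that $B$ really does go over to $d$ under $HH_*(\zeta^\vee)$ and $HH_*(\zeta_u^\vee)$: the isomorphisms of Theorems \ref{thm:4.7} and \ref{thm:5.7} are characterized only through their interaction with families of (virtual) $G$-representations $\mc F_{M,\tau}$, $\mc F_{L,\tau}^u$, and $B$ is not visibly natural with respect to such characterizations. The argument must therefore check that Connes' $B$ is itself compatible with the maps $HH_*(\mc F_{M,\tau})$ — i.e.\ that the naturality of $B$ in ordinary Hochschild homology, combined with Theorem \ref{thm:4.2}/\ref{thm:5.3} identifying $B$ with $d$ on each $\Omega^*(P^{\vee\perp})$ or $\Omega_{sm}^*(X_\nr^u(M))$ on the target side — forces the identification on the source. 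Once that compatibility is in hand (and it is, by \cite{SolHH}), the rest of the proof is the purely formal passage from $(HH_*,d)$ to de Rham cohomology described above, together with Lemma \ref{lem:5.12} to reassemble the Bernstein blocks into $HP_*(\mc H(G))$ and $HP_*(\mc S(G),\hat\otimes_b)$.
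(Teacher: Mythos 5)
Your proposal follows the paper's argument essentially verbatim: both use the identifications \eqref{eq:4.18} and \eqref{eq:5.6} of $HH_*$ with twisted invariant differential forms, cite \cite[(4.4) and (4.5)]{SolHH} to transport Connes' $B$-operator to the exterior differential $d$, and then compute $HP_*$ as the homology of the $(HH_*,d)$-complex, recovering de Rham cohomology. The only slight slip is that the intermediate identification $HP_i(\mc H(G,K_{\mf s})^{\mf s})\cong HP_i(\mc O(X_\nr(L))\rtimes\C[W_{\mf s}^e,\natural_{\mf s}^{-1}])$ rests on Theorems \ref{thm:4.3} and \ref{thm:4.7} (respectively \ref{thm:5.4} and \ref{thm:5.7}), not on Theorems \ref{thm:4.4}--\ref{thm:4.5}, which concern graded Hecke algebras rather than the Hecke block of $G$; this is a citation mix-up, not a logical gap.
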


As $X_\nr^u (L)$ is $W_{\mf s}^e$-equivariant deformation retract of $X_\nr (L)$,
Lemma \ref{lem:5.12} and Theorem \ref{thm:5.13} recover \cite[Theorem 3.3]{SolHP}
and \cite[Conjecture 8.9]{BHP}:

\begin{cor}\label{cor:5.14}
The inclusions $\mc H (G,K_{\mf s})^{\mf s} \to \mc S (G,K_{\mf s})^{\mf s}$
and $\mc H (G) \to \mc S (G)$ induce isomorphisms on periodic cyclic homology.
\end{cor}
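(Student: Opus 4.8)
The plan is to reduce the statement, via the Bernstein decomposition and Morita invariance of periodic cyclic homology, to a single block inclusion $\mc H (G,K_{\mf s})^{\mf s} \hookrightarrow \mc S (G,K_{\mf s})^{\mf s}$, then to transport that block inclusion through the identifications of Theorem \ref{thm:5.13} into a purely geometric comparison of $\mc O (X_\nr (L)) \rtimes \C [W_{\mf s}^e, \natural_{\mf s}^{-1}]$ with $C^\infty (X_\nr^u (L)) \rtimes \C [W_{\mf s}^e, \natural_{\mf s}^{-1}]$, which is settled by homotopy invariance of de Rham cohomology.

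First I would note that the inclusion $\mc H (G) \hookrightarrow \mc S (G)$ respects the Bernstein decompositions \eqref{eq:1.34}, hence restricts to $\mc H (G)^{\mf s} \hookrightarrow \mc S (G)^{\mf s}$ and then, on the $\langle K_{\mf s}\rangle$-corners, to $\mc H (G,K_{\mf s})^{\mf s} \hookrightarrow \mc S (G,K_{\mf s})^{\mf s}$. By Corollary \ref{cor:1.21} and Proposition \ref{prop:1.22} these corner inclusions sit in a square with the Morita equivalences $\mc H (G,K_{\mf s})^{\mf s} \simeq \mc H (G)^{\mf s}$ and $\mc S (G,K_{\mf s})^{\mf s} \simeq \mc S (G)^{\mf s}$, whose implementing bimodules $\langle K_{\mf s}\rangle \mc H (G)^{\mf s} \subset \langle K_{\mf s}\rangle \mc S (G)^{\mf s}$ (etc.) are nested, so the square commutes after applying $HP_*$. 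Since $HP_*$ is Morita invariant in both the algebraic and the Fr\'echet setting and, by Lemma \ref{lem:5.12}, commutes with the block decomposition, it suffices to prove that $\mc H (G,K_{\mf s})^{\mf s} \hookrightarrow \mc S (G,K_{\mf s})^{\mf s}$ induces an isomorphism on $HP_*$ for each fixed $\mf s = [L,\sigma]_G$ with $\sigma$ unitary supercuspidal.

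Next I would use that the isomorphisms of Theorem \ref{thm:5.13} are built, via Theorems \ref{thm:4.7} and \ref{thm:5.7}, from the representation families $\mc F_{L',\tau}$ and $\mc F_{L',\tau}^u$, and that a tempered family for $\mc S (G,K_{\mf s})^{\mf s}$ is exactly the composition of the block inclusion with the restriction of the corresponding $\mc H$-family to the compact real subtorus: for every parabolic $P' = L' U_{P'}$ and $\tau \in \Irr_\temp (L')^{\mf s_{L'}}$ there is a commuting square
\[
\begin{array}{ccc}
\mc H (G,K_{\mf s})^{\mf s} & \xrightarrow{\ \mc F_{L',\tau}\ } & \mc O (X_\nr (L')) \otimes \End_\C (\cdots) \\[1mm]
\downarrow & & \downarrow \mr{res} \\[1mm]
\mc S (G,K_{\mf s})^{\mf s} & \xrightarrow{\ \mc F_{L',\tau}^u\ } & C^\infty (X_\nr^u (L')) \otimes \End_\C (\cdots)
\end{array} .
\]
Applying $HH_*$, passing to homology for Connes' operator $B$ (which by \cite[(4.4)--(4.5)]{SolHH} is the de Rham differential on the forms in \eqref{eq:4.18} and \eqref{eq:5.6}), and invoking the defining properties of $HH_*(\zeta^\vee)$ and $HH_*(\zeta_u^\vee)$, I obtain a commuting square
\[
\begin{array}{ccc}
HP_* \big( \mc O (X_\nr (L)) \rtimes \C [W_{\mf s}^e, \natural_{\mf s}^{-1}] \big) & \xrightarrow{\ \cong\ } & HP_* (\mc H (G,K_{\mf s})^{\mf s}) \\[1mm]
\downarrow \mr{res}_* & & \downarrow \\[1mm]
HP_* \big( C^\infty (X_\nr^u (L)) \rtimes \C [W_{\mf s}^e, \natural_{\mf s}^{-1}] \big) & \xrightarrow{\ \cong\ } & HP_* (\mc S (G,K_{\mf s})^{\mf s}, \hat \otimes)
\end{array} ,
\]
whose right vertical arrow is induced by the block inclusion. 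So it remains to show that the left vertical arrow, induced by restricting regular functions on $X_\nr (L)$ to $X_\nr^u (L)$, is an isomorphism.

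Finally I would invoke Corollary \ref{cor:5.15}: these two $HP_*$-groups are $\big( \bigoplus_{w \in W_{\mf s}^e} H_{dR}^* (X_\nr (L)^w) \otimes (\natural_{\mf s}^{-1})^w \big)^{W_{\mf s}^e}$ and its $X_\nr^u (L)^w$-analogue, with $\mr{res}_*$ induced by the fixed-point inclusions $X_\nr^u (L)^w \hookrightarrow X_\nr (L)^w$. The polar decomposition $X_\nr (L) = X_\nr^u (L) \times X_\nr^+ (L)$ of \eqref{eq:1.29} is $W_{\mf s}^e$-equivariant and $X_\nr^+ (L) \cong (\R_{>0}^\times)^d$ is $W_{\mf s}^e$-equivariantly contractible, so $X_\nr^u (L) \hookrightarrow X_\nr (L)$ is a $W_{\mf s}^e$-equivariant deformation retract; the same therefore holds on each fixed-point set, and de Rham cohomology is a homotopy invariant, so $\mr{res}$ gives $H_{dR}^* (X_\nr^u (L)^w) \cong H_{dR}^* (X_\nr (L)^w)$ for every $w$. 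Taking $W_{\mf s}^e$-invariants of the (finite) direct sum, an exact operation over $\C$, shows $\mr{res}_*$ is an isomorphism, completing the block case and hence, by the first paragraph, the statement for $\mc H (G) \hookrightarrow \mc S (G)$.

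I expect the main obstacle to be the commuting square of the third paragraph: verifying that the block-inclusion-induced map on $HP_*$ is intertwined by the $HH_*(\zeta^\vee)$-identifications with the bare restriction map on the crossed products. This rests on the compatibility of the constructions of $\zeta^\vee$ and $\zeta_u^\vee$ (Theorems \ref{thm:4.7} and \ref{thm:5.7}) under passage from arbitrary to unitary unramified twists, together with the translation of Connes' $B$ into $d$; the ingredients are all in \cite{SolHH}, but marshalling them into one diagram that sees the inclusion is the technical heart of the argument.
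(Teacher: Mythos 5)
Your proposal is correct and follows essentially the same route as the paper: reduce to single Bernstein blocks via Lemma \ref{lem:5.12}, identify both $HP_*$-groups with twisted $W_{\mf s}^e$-equivariant de Rham cohomology on $X_\nr(L)^w$ and $X_\nr^u(L)^w$ via Theorem \ref{thm:5.13} and Corollary \ref{cor:5.15}, and conclude from the $W_{\mf s}^e$-equivariant deformation retraction of $X_\nr(L)$ onto $X_\nr^u(L)$ along $X_\nr^+(L)$. The one thing you spell out that the paper leaves implicit (citing prior work instead) is the commuting-square compatibility of the families $\mc F_{L',\tau}$ and $\mc F_{L',\tau}^u$ under the inclusion, which is indeed what turns the abstract coincidence of the two groups into the statement that the inclusion itself induces the isomorphism.
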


Furthermore, by \cite[Lemma 4.4]{SolHH} $HP_* (\mc H (G,K_{\mf s})^{\mf s})$ can be 
realized as the subset of $HH_* (\mc H (G,K_{\mf s})^{\mf s})$ consisting of differential 
forms that are locally constant on the various varieties $X_\nr (L)^w$, in the picture 
from \eqref{eq:4.18}. 

Similarly $HP_* (\mc S (G,K_{\mf s})^{\mf s}, \hat \otimes)$ has 
a canonical realization as the set of locally constant differential forms in 
$HH_* (\mc S (G,K_{\mf s})^{\mf s}, \hat \otimes)$, in the picture \eqref{eq:5.6}.

These are instances of a more general phenomenon in geometric group theory, already
observed in \cite{HiNi,Schn}. For any totally disconnected locally compact group $G'$
there is a decomposition
\begin{equation}\label{eq:5.10}
HH_* (\mc H (G')) = HH_* (\mc H (G'))_\cpt \oplus HH_* (\mc H (G'))_{\mr{ncpt}} .
\end{equation}
The subscript cpt means "supported on compact elements": this part of 
$HH_* (\mc H (G'))$ only uses chains 
\[
f_0 \otimes f_1 \otimes \cdots \otimes f_n \in \mc H (G')^{\otimes (n+1)}
\]
such that $f_0 (g_0) f_1 (g_1) \cdots f_n (g_n)$ with $g_i \in G'$ is zero unless
$g_0 g_1 \cdots g_n$ lies in a compact subgroup of $G'$. The subscript ncpt stands
for "supported on noncompact elements", which means that it comes from chains such
that $f_0 (g_0) f_1 (g_1) \cdots f_n (g_n)$ is zero whenever $g_0 g_1 \cdots g_n$ lies
in a compact subgroup of $G'$. 

Suppose in addition that $G'$ acts properly on an affine building, like any reductive
$p$-adic group does \cite[\S 2]{Tit}. Then the noncompact part of $HH_* (\mc H (G'))$ disappears
in periodic cyclic homology, by \cite[Theorem 6.2]{HiNi} or \cite[Theorem II]{Schn}.

\begin{thm} \label{thm:5.15} 
\textup{\cite[Theorems 1.1 and 4.2]{HiNi} and \cite[Theorem I]{Schn}} \\
In the above setting there is an isomorphism
\[
HP_i (\mc H (G')) \cong \bigoplus\nolimits_{m \in \Z_{\geq 0}} HH_{i+2m} (\mc H (G'))_\cpt .
\]
\end{thm}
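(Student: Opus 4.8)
The plan is to deduce Theorem \ref{thm:5.15} from the structural results already assembled, following the strategy of \cite{HiNi,Schn}, and to explain how the compact/noncompact decomposition \eqref{eq:5.10} interacts with Connes' differential $B$. First I would recall that periodic cyclic homology is computed from the mixed complex $(HH_*(\mc H(G')), B)$ — more precisely there is a spectral sequence, degenerating here because the Hochschild homology vanishes in high degrees on each $\mc H(G,K_n)^{\mf s}$ (Theorem \ref{thm:4.4} type bounds), whose $E_2$-page is the $B$-homology of $HH_*$. By Theorem \ref{thm:5.11}.a and Lemma \ref{lem:5.12} it suffices to work block by block, i.e.\ to prove the statement for each $\mc H(G,K_{\mf s})^{\mf s}$, where the dimension of $HH_n$ is uniformly bounded. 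So the real content is: on each block, $B$ restricted to the noncompact part $HH_*(\mc H(G'))_{\mr{ncpt}}$ is exact (acyclic), so only the compact part survives into $HP_*$, and on the compact part $B$ acts as the exterior differential $d$.

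Next I would make the decomposition \eqref{eq:5.10} precise and check that it is compatible with $B$: both the compact and noncompact subcomplexes are preserved by the full cyclic bicomplex differentials (this is essentially because the condition "$g_0\cdots g_n$ lies in a compact subgroup" is preserved under the face and cyclic operators — a point already implicit in \cite{HiNi,Schn}), so one gets a direct sum decomposition of the periodic cyclic complex itself. Then the key input is the acyclicity of the noncompact part: this is where properness of the $G'$-action on the affine building \cite[\S 2]{Tit} enters. The argument of \cite[Theorem 6.2]{HiNi} (or \cite[Theorem II]{Schn}) constructs, using the contractibility of the building and a partition-of-unity/homotopy argument on the building, a contracting homotopy for the noncompact part of the periodic cyclic complex; equivalently, the noncompact summand of $HP_*$ vanishes. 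Granting that, $HP_i(\mc H(G'))$ is computed entirely from the compact part, and since on the compact part $B$ corresponds to $d$ on the differential forms appearing in \eqref{eq:4.18} (by \cite[(4.4)]{SolHH} and Corollary \ref{cor:5.14} — more precisely by the identification in \cite[Lemma 4.4]{SolHH}), the periodicity of the bicomplex gives $HP_i(\mc H(G')) \cong \bigoplus_{m\ge 0} H_{i+2m}\big(HH_*(\mc H(G'))_\cpt, B\big)$, but in fact the cleaner statement — that $B$ is \emph{zero} on the compact part, so that $HP_i$ is just the $(i \bmod 2)$-sum of $HH_{i+2m}(\mc H(G'))_\cpt$ — holds because the compact part of $HH_*(\mc H(G'))$ consists, block by block, of the locally constant differential forms in \eqref{eq:4.18}, on which $d$ vanishes. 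That yields the asserted formula.

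Assembling: decompose $HP_*(\mc H(G'))$ as a direct sum over Bernstein blocks using Theorem \ref{thm:5.11}.a and Lemma \ref{lem:5.12}; on each block split off the noncompact summand of the periodic cyclic complex and invoke the building/properness acyclicity argument of \cite{HiNi,Schn} to kill it; identify the compact summand with the locally constant forms in the realization \eqref{eq:4.18} via \cite[(4.4)]{SolHH} and \cite[Lemma 4.4]{SolHH}; observe $B$ acts by $d$ and hence vanishes there; and read off the periodicity to get $HP_i \cong \bigoplus_{m\ge 0} HH_{i+2m}(\mc H(G'))_\cpt$.

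The hard part will be the acyclicity of the noncompact part of the periodic cyclic complex — this is the geometric heart of \cite{HiNi} (respectively \cite{Schn}) and genuinely uses the affine building: one needs a $G'$-equivariant contraction of the relevant chain complex built from the contractibility of the building together with a uniform bound on supports, and making the homotopy converge requires the nuclearity/continuity bookkeeping from Theorem \ref{thm:5.11}. A secondary technical point to be careful about is the degeneration of the Hochschild-to-cyclic spectral sequence: one must genuinely use that $HH_n(\mc H(G,K_{\mf s})^{\mf s})$ vanishes for $n$ large (so that the inverse limit over blocks behaves well, Theorem \ref{thm:5.11}.a), otherwise the passage from the $HH$-level decomposition to the $HP$-level decomposition is not automatic. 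In a survey of this kind I would present the block reduction and the $B\leftrightarrow d$ identification in detail and cite \cite{HiNi,Schn} for the acyclicity statement rather than reproving it.
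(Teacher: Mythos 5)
The paper gives no proof of Theorem \ref{thm:5.15}: it is quoted verbatim from \cite{HiNi} and \cite{Schn}, and the only additional content in the surrounding text is the remark that, \emph{for the reductive group $G$}, Theorem \ref{thm:5.13}, \eqref{eq:4.18} and \cite[Lemma 4.4]{SolHH} allow one to recover the result in a representation-theoretic way. So you are attempting something the paper does not do, and the comparison has to be with that remark rather than with an actual written-out proof.

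Two concrete problems with your sketch. First, a generality mismatch: Theorem \ref{thm:5.15} is stated for an arbitrary totally disconnected locally compact group $G'$ acting properly on an affine building, but every representation-theoretic tool you invoke --- Theorem \ref{thm:5.11}.a / Lemma \ref{lem:5.12} (Bernstein decomposition), the description \eqref{eq:4.18} of $HH_*(\mc H(G)^{\mf s})$ as twisted differential forms on $X_\nr(L)^w$, the identification of Connes' $B$ with $d$ from \cite[(4.4)]{SolHH}, and \cite[Lemma 4.4]{SolHH} --- exists only for a reductive $p$-adic $G$. For general $G'$ there is no Bernstein variety and no such differential-forms picture, so your ``block reduction'' does not start. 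At best you are reconstructing the representation-theoretic recovery alluded to after the theorem, not proving the cited statement.

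Second, even restricting to $G$, the step that makes your argument close is asserted rather than shown: you claim $B$ vanishes on $HH_*(\mc H(G'))_\cpt$ ``because the compact part of $HH_*$ consists, block by block, of the locally constant differential forms in \eqref{eq:4.18}.'' That identification --- compact part of $HH_*$ equals the locally constant forms --- is exactly the representation-theoretic content of Theorem \ref{thm:5.15} for $G$, and it is not in the paper; \cite[Lemma 4.4]{SolHH} identifies $HP_*$ (not $HH_*|_\cpt$) with the locally constant forms, and matching that with the chain-level ``supported on compact elements'' decomposition requires a separate argument comparing \eqref{eq:5.10} with the parametrization of $HH_*$ by families of representations. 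Invoking it as a known fact is circular, or at least a genuine gap. (Your observation that the cpt/ncpt decomposition is preserved by the full $(b,B)$-bicomplex differentials is correct and worth stating; the remarks about nuclearity/continuity and ``making the homotopy converge'' are misplaced here, since $\mc H(G')$ is a purely algebraic Hecke algebra with no Fr\'echet structure involved in Theorem \ref{thm:5.15}.)
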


For our group $G$ acting on its Bruhat--Tits building, 
Theorem \ref{thm:5.13}, \eqref{eq:4.18} and the aforementioned \cite[Lemma 4.4]{SolEnd} 
recover Theorem \ref{thm:5.15} in a representation-theoretic way.

\section{Topological K-theory}\label{sec:6}

K-theory started \cite{Ati} as a way to classify vector bundles on a topological 
space $X$, up to stable isomorphism. That gives a contravariant functor $K^0$ from 
topological spaces to abelian groups, and another functor $K^1$ is obtained by
composing $K^0$ with the suspension functor for topological spaces.
There are also higher functors $K^n$, but by Bott periodicity
these reduce to $K^0$ or $K^1$. The $\Z/ 2\Z$-graded functor $K^* = K^0 \oplus K^1$ 
forms a generalized cohomology theory, so roughly speaking it behaves like
singular or \u{C}ech cohomology. K-theory for topological spaces can be extended 
naturally to pairs of spaces $X_1 \supset X_2$. 

In noncommutative geometry, topological K-theory is usually defined and studied 
for $C^*$-algebras or Banach algebras, see for instance \cite{Bla}. The (covariant)
functor $K_0$ classifies finitely generated projective modules up to stable isomorphism.
The functor $K_1$ can be obtained as $K_0$ composed with a suspension functor
for algebras. Recall that by the Gelfand--Naimark theorem \cite[Theorem 1.4]{FGV} 
every commutative $C^*$-algebra has the form
\[
C_0 (Y) = \{ f \in C (Y \cup \{\infty\}) : f(\infty) = 0\} ,
\]
where $Y$ is locally compact Hausdorff space with one-point compactification
$Y \cup \{\infty\}$. By the Serre--Swan theorem \cite[\S 2.3 and Corollary 3.2.1]{FGV}
there is a natural isomorphism
\begin{equation}\label{eq:6.1}
K_i (C_0 (Y)) \cong K^i (Y \cup \{ \infty\}, \{\infty\}) \qquad i = 0,1 .
\end{equation}
Most references state this only for $i = 0$, but the definitions of $K^1$ in terms
of $K^0$ and $K_1$ in terms of $K_0$ are analogous, and therefore \eqref{eq:6.1} 
also holds for $i = 1$. Largely due to the results of Gelfand--Naimark and 
Serre--Swan, K-theory of $C^*$-algebras is at the heart of noncommutative geometry.

\subsection{Versions and properties of K-theory} \label{par:6.1} \

Topological K-theory can also be defined for Fr\'echet algebras (in the sense of
Definition \ref{def:5.10}), see \cite{Cun,Phi2}. This generalizes K-theory for 
Banach algebras and enjoys the following properties:
\begin{itemize}
\item \textbf{Additivity.} 
For any Fr\'echet algebras $A_n \; (n \in \N)$ and $i \in \{0,1\}$:
\[
K_i \Big( \prod\nolimits_{n=1}^\infty A_n \Big) \cong \prod\nolimits_{n=1}^\infty
K_i (A_n) .
\]
\item \textbf{Stability.} 
$K_i (M_n (A)) \cong K_i (A)$ for any Fr\'echet algebra $A$, and the same
with $M_n (\C)$ replaced by the algebra of compact operators on a separable
Hilbert space.
\item \textbf{Continuity \cite[\S 5.2.4, \S 5.5.1 and \S 8.1.5]{Bla}.}
If $\lim_{n \to \infty} B_n$ is an inductive limit of Banach algebras, then
\[
K_i\big( \lim_{n \to \infty} B_n \big) \cong \lim_{n \to \infty} K_i (B_n) .
\]
\item \textbf{Excision.} Let $0 \to A \to B \to C$ be an exact sequence of
Fr\'echet algebras. Then there exists a natural six-term exact sequence
\begin{equation}\label{eq:6.22}
\begin{array}{ccccc}
K_0 (A) & \to & K_0 (B) & \to & K_0 (C) \\
\uparrow & & & & \downarrow \\
K_1 (C) & \leftarrow & K_1 (B) & \leftarrow & K_1 (A) 
\end{array}.
\end{equation}
\item \textbf{Homotopy invariance.} If $\phi_0, \phi_1 : A \to B$ are
homotopic morphisms of Fr\'echet algebras, then $K_i (\phi_0) = K_i (\phi_1)$.
\end{itemize}
The suspension of a Fr\'echet algebra $A$ is defined as 
\[
\Sigma A = \{ f \in C (S^1;A) : f(1) = 0 \} .
\]
There are natural isomorphisms
\begin{equation}\label{eq:6.23}
K_0 (\Sigma A) \cong K_1 (A) \quad \text{and} \quad K_1 (\Sigma A) \cong K_0 (A) .
\end{equation}
The first is either a definition or \cite[Theorem 3.14]{Phi2} and the second is
a reformulation of Bott periodicity \cite[Theorem 5.5]{Phi2}. These isomorphisms
are compatible with excision, in the sense that the exact hexagon \eqref{eq:6.22}
can be rewritten as 
\begin{equation}\label{eq:6.24}
\begin{array}{ccccc}
K_0 (A) & \to & K_0 (B) & \to & K_0 (C) \\
\uparrow & & & & \downarrow \\
K_0 (\Sigma C) & \leftarrow & K_0 (\Sigma B) & \leftarrow & K_0 (\Sigma A) 
\end{array}.
\end{equation} 
Consider the split exact sequence of Fr\'echet algebras
\begin{equation}\label{eq:6.25}
0 \to \Sigma A \to C (S^1,A) \xrightarrow{\mr{ev}_1} A \to 0 ,
\end{equation}
where the splitting sends $a \in A$ to the constant function on $S^1$ with 
value $a$. Topo\-lo\-gical K-theory respects split exact sequences, so
\eqref{eq:6.25} and \eqref{eq:6.23} induce isomorphisms
\begin{equation}\label{eq:6.26}
K_i (C(S^1,A)) \cong K_i (\Sigma A) \oplus K_i (A) \cong K_0 (A) \oplus 
K_1 (A) \qquad i = 0, 1 .
\end{equation}
The isomorphism $K_* (M_n (\C)) \cong K_* (A)$ can be regarded as an instance
of Morita invariance of topological K-theory. For unital Fr\'echet algebras,
this can be pushed further.

\begin{thm}\label{thm:6.19}
Let $A$ and $B$ be unital Fr\'echet algebras, such that $A^\times \subset A$
and $B^\times \subset B$ are open. Suppose that $A$ and $B$ are Morita equivalent.
Then the Morita bimodules induce an isomorphism $K_* (A) \cong K_* (B)$.
\end{thm}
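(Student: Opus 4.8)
The plan is to reduce Morita invariance to the invariance of topological $K$-theory under passage to a \emph{full corner} --- the Fr\'echet-algebra counterpart of Brown's stabilisation theorem --- and then to feed that into the stability property of $K_*$ recorded in Paragraph~\ref{par:6.1}.

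First I would unwind the Morita equivalence in the usual algebraic way. Standard Morita theory (cf.\ Proposition~\ref{prop:3.1}) provides a finitely generated projective generator $P$ of the module category of $B$ with $A\cong\mathrm{End}_B(P)$, after possibly replacing $A$ by $A^{\mathrm{op}}$, which is harmless for $K_*$; because $A$ and $B$ are unital, $P$ and the bimodules implementing the equivalence are automatically topological finitely generated projective modules, since all the module maps in sight are maps of Fr\'echet modules of finite type. Choosing a splitting $P\oplus P'\cong B^n$ produces an idempotent $e\in M_n(B)=\mathrm{End}_B(B^n)$ with $eB^n\cong P$, hence an algebra isomorphism $A\cong \mathrm{End}_B(P)\cong eM_n(B)e$; and the generator property of $P$ is precisely the statement that $e$ is a full idempotent, $M_n(B)\,e\,M_n(B)=M_n(B)$. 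Here $M_n(B)$ is again a unital Fr\'echet algebra with open unit group, so the hypotheses of the theorem survive the reduction $B\rightsquigarrow M_n(B)\rightsquigarrow eM_n(B)e$.

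The heart of the matter is the claim that for a unital Fr\'echet algebra $R$ with open unit group and a full idempotent $p\in R$, the corner inclusion $\iota\colon pRp\hookrightarrow R$ induces an isomorphism $K_*(pRp)\cong K_*(R)$. For split injectivity I would use the fullness to write $1=\sum_{i=1}^m a_ipb_i$ and form the injective homomorphism $\Phi\colon R\to M_m(pRp)$, $\Phi(r)=(pb_i r a_j p)_{i,j}$; a direct computation shows that $\Phi\circ\iota$ agrees on $K$-theory with the standard corner embedding $pRp\to M_m(pRp)$, $a\mapsto\mathrm{diag}(a,0,\dots,0)$ --- the two homomorphisms differing by a conjugation by the rectangular systems $(a_ip)_i$ and $(pb_i)_i$, which is trivialised on $K$-theory by a Whitehead-type rotation homotopy inside $GL$ (available precisely because $R^\times$ is open) together with homotopy invariance of $K_*$. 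Since the standard corner embedding realises the stability isomorphism, $K_*(\Phi)\circ K_*(\iota)$ is invertible, so $K_*(\iota)$ is split injective. Surjectivity is a module-theoretic argument with continuous splittings: given an idempotent $q\in M_k(R)$, fullness makes $pR$ a generator, so there is a continuous surjection $(pR)^{k'}\twoheadrightarrow qR^k$, which splits continuously because $qR^k$ is topologically projective (a complemented submodule of $R^k$); thus $qR^k$ is a direct summand of $(pR)^{k'}=\mathrm{diag}(p,\dots,p)\,R^{k'}$, i.e.\ $[q]=\iota_*[q']$ in $K_0(R)$ for an idempotent $q'\in M_{k'}(pRp)$. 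The same reasoning applied to the suspension $\Sigma R$, using $K_1(R)\cong K_0(\Sigma R)$, handles $K_1$. Split injectivity together with surjectivity gives the claim.

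Combining the pieces, $K_*(A)=K_*\big(eM_n(B)e\big)\cong K_*\big(M_n(B)\big)\cong K_*(B)$, where the first isomorphism is the full-corner statement and the second is stability. To identify this composite with the map induced by the Morita bimodules $P,Q$, one traces the identifications: on $K_0$, the class of an idempotent $q\in M_k(A)$ is sent to the class of the finitely generated projective $B$-module $qA^k\otimes_A P$, read off as an idempotent over $B$, which is exactly the bimodule-induced map $-\otimes_A P$ on $K_0$, with inverse $-\otimes_B Q$; on $K_1$ one argues the same way with the groups $GL_\infty$, or applies the $K_0$ case to the unital Fr\'echet algebras $C(S^1,A)$ and $C(S^1,B)$ --- Morita equivalent via $C(S^1,P)$ compatibly with the evaluation homomorphisms --- and uses excision \eqref{eq:6.22} together with the splitting \eqref{eq:6.26} of $K_*\big(C(S^1,A)\big)$. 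The step I expect to be the main obstacle is the full-corner lemma: one has to push the classical Morita/Brown argument through in the topological category, and the openness of the unit groups is exactly what makes this possible --- it ensures that $GL_n(R)$ is a well-behaved open subgroup of $M_n(R)$ and supplies the paths of invertibles underlying the rotation homotopies and the homotopy-invariance input.
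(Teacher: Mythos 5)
Your argument is correct in outline but takes a genuinely different route from the paper's. You reduce the Morita equivalence to a full-corner statement $K_*(pRp)\cong K_*(R)$ — essentially a Fr\'echet-algebra version of Brown's stability theorem — and then prove that from scratch via a Whitehead-type rotation homotopy for split injectivity and a continuous-splitting argument for surjectivity, using the openness of unit groups to make the homotopy and module arguments run. The paper bypasses the full-corner lemma entirely: it passes directly to $C(S^1,A)$ and $C(S^1,B)$ (still unital Fr\'echet algebras with open unit group, by compactness of $S^1$), invokes \cite[Theorem 7.7]{Phi2} to identify $K_0$ of each with the K-group of the monoid of finitely generated projective modules, observes that the Morita bimodules $C(S^1,P)$ and $C(S^1,Q)$ give an equivalence of these module categories and hence of the monoid K-groups, and then reads off $K_0$ and $K_1$ of $A$ and $B$ from the natural decomposition $K_0(C(S^1,A))\cong K_0(A)\oplus K_1(A)$ coming from the split exact sequence \eqref{eq:6.25}. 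This is shorter, gives the bimodule-induced map directly rather than via an a posteriori identification, and avoids the technical points in your argument that would need to be filled in (the precise conjugation trivialising $\Phi\circ\iota$ against the corner embedding, and the fact that $\Sigma R$ is non-unital with $p\notin\Sigma R$, so the full-corner reasoning for $K_1$ really has to be carried out on $C(S^1,R)$ rather than on the suspension, as you note as an alternative). Note also that your surjectivity step — identifying $[q]$ with $[q'']$ from an isomorphism $qR^k\cong q''R^{k'}$ of modules — already quietly relies on the same Theorem 7.7 of Phillips, so the two proofs share the key external input; yours just does more work around it.
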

\begin{proof}
Let $P$ and $Q$ be the bimodules implementing the Morita equivalence, so
$P \otimes_B Q \cong A$ and $Q \otimes_A P \cong B$. Since $A$ and $B$ are
unital, $P$ and $Q$ are finitely generated and projective. Then $C(S^1,P)$ and
$C(S^1,B)$ are finitely generated projective bimodules for $C(S^1,A)$ and 
$C(S^1,B)$, such that
\[
C(S^1,P) \otimes_{C(S^1,B)} C(S^1,Q) \cong C(S^1,A) \; \text{and} \; 
C(S^1,Q) \otimes_{C(S^1,A)} C(S^1,P) \cong C(S^1,B) .
\]
This provides a Morita equivalence between $C(S^1,A)$ and $C(S^1,B)$. 
The assumptions on $A$ and $B$ and the compactness of $S^1$ imply that
$C(S^1,A)^\times \subset C(S^1,A)$ and $C(S^1,B)^\times \subset C(S^1,B)$
are open. According to \cite[Theorem 7.7]{Phi2}, $K_0 (C (S^1,A))$ is 
naturally isomorphic to the K-group of the monoid of finitely generated
projective $C(S^1,A)$-modules. The same holds for $C(S^1,B)$. Therefore
the maps
\begin{equation}\label{eq:6.38}
K_0 (C (S^1,A)) \longleftrightarrow K_0 (C(S^1,B))
\end{equation} 
induced by $C(S^1,Q) \otimes_{C(S^1,A)}$ and $C(S^1,P) \otimes_{C(S^1,B)}$
are group isomorphisms. The decomposition
\[
K_0 (C (S^1,A)) \cong K_0 (A) \oplus K_1 (A)
\]
from \eqref{eq:6.25}--\eqref{eq:6.26} comes from $C_0 (S^1,\{1\}) \to
C (S^1) \to C(\{1\})$. It works in the same way for $C(S^1,B)$, so
\eqref{eq:6.38} respects those decompositions and yields isomorphisms
\[
K_0 (A) \cong K_0 (B) \quad \text{and} \quad K_1 (A) \cong K_1 (B) . \qedhere
\]
\end{proof}

The Serre--Swan theorem admits a generalization to Fr\'echet algebras, 
of which we state a simplified version:

\begin{thm}\label{thm:6.1} \textup{\cite[Theorem 7.15]{Phi2}} \\
Let $Y$ be a locally compact Hausdorff space. Let $A$ be a commutative
Fr\'echet algebra such that the  maximal ideal space of the unitization $A_+$ is
$Y \cup \{\infty\}$, where $\infty$ corresponds to the canonical projection 
$A_+ \to \C$. (For example $A$ could be $C^\infty (Y)$
if $Y$ is a compact smooth manifold.) Then there are natural isomorphisms
\[
K_i (A) \cong K_i (C_0 (Y)) \cong K^i (Y \cup \{\infty\} ,\{\infty \} ) 
\qquad i = 0, 1. 
\]
\end{thm}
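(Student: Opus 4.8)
The plan is to reduce everything to one statement: that the Gelfand transform induces an isomorphism on topological $K$-theory. First I would dispose of the second isomorphism, $K_i(C_0(Y)) \cong K^i(Y \cup \{\infty\},\{\infty\})$; this is exactly the topological Serre--Swan/Gelfand--Naimark statement already recalled in \eqref{eq:6.1}, so there is nothing new to prove there. For the first isomorphism I would pass to unitizations. The split exact sequence $0 \to A \to A_+ \xrightarrow{\varepsilon} \C \to 0$, together with excision and homotopy invariance, identifies $K_i(A)$ with a natural direct summand of $K_i(A_+)$ complementary to $K_i(\C)$, and likewise $C_0(Y)_+ = C(Y \cup \{\infty\})$ presents $K_i(C_0(Y))$ as such a summand of $K_i(C(Y \cup \{\infty\}))$. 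Since the Gelfand transform $\gamma_+ \colon A_+ \to C(Y \cup \{\infty\})$ intertwines the two augmentations to $\C$, it suffices to show that $\gamma_+$ induces an isomorphism $K_*(A_+) \cong K_*(C(Y \cup \{\infty\}))$.

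Now both $A_+$ and $C(Y \cup \{\infty\})$ are unital Fr\'echet algebras with open group of invertibles (this is the standing hypothesis that makes Phillips' Serre--Swan machinery apply), so by \cite[Theorem 7.7]{Phi2}, the same input used in the proof of Theorem \ref{thm:6.19}, their $K_0$-groups are the Grothendieck groups of the monoids of finitely generated projective modules, equivalently of idempotents in matrix algebras modulo similarity, and the map between these monoids is $e \mapsto \gamma_+(e)$. To see surjectivity I would start from a vector bundle over the compact space $Y \cup \{\infty\}$, represent it by an idempotent $f \in M_n(C(Y \cup \{\infty\}))$, use density of the Gelfand image (Stone--Weierstrass, legitimate since the characters of $A_+$ are precisely $Y \cup \{\infty\}$) to find $a \in M_n(A_+)$ with $\gamma_+(a)$ close to $f$, and then apply holomorphic functional calculus in the Fr\'echet algebra $M_n(A_+)$ — the spectrum of $a$ avoids the line $\mathrm{Re} = 1/2$ — to round $a$ off to a genuine idempotent $e$ with $\gamma_+(e)$ similar to $f$. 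Injectivity is the same argument one dimension up: a conjugating invertible over $C(Y \cup \{\infty\})$ between $\gamma_+(e_1)$ and $\gamma_+(e_2)$ can be approximated by $\gamma_+$ of an element of $M_n(A_+)$, which is again invertible because the unit group of $M_n(A_+)$ is open, and close idempotents over $A_+$ are similar. For $K_1$ I would run the identical argument with $A_+$ and $C(Y \cup \{\infty\})$ replaced by their suspensions $\Sigma A_+$ and $\Sigma C(Y \cup \{\infty\})$, which retain the required openness as in the proof of Theorem \ref{thm:6.19}, and then invoke \eqref{eq:6.23}. Naturality in $Y$ and in $A$ is automatic, since every step is functorial in the algebra homomorphism and the space-level isomorphism of the first step is natural.

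The main obstacle is precisely this transfer of bundles and bundle isomorphisms across $\gamma_+$: unlike the $C^*$-case, the Gelfand transform of a Fr\'echet algebra is in general neither surjective nor a topological embedding, so one cannot simply invoke spectral invariance of a dense subalgebra. The way through is the interplay between Stone--Weierstrass density of the Gelfand image on one side and holomorphic functional calculus (available in any Fr\'echet algebra, with finitely many generators involved at a time) on the other, with the openness of the groups of invertibles keeping similarity classes under control. Verifying that the hypothesis ``the maximal ideal space of $A_+$ is $Y \cup \{\infty\}$'' really supplies enough for Stone--Weierstrass — in practice one uses the involution present in the motivating examples, such as $C^\infty(Y)$ for $Y$ a compact smooth manifold — is the point that needs the most care.
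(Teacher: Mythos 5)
The paper does not prove this theorem at all: it is quoted directly from Phillips, the paper itself flags it as a ``simplified version'' of his Theorem 7.15, and the very next sentence of the paper remarks that it is subsumed by the density theorem (Theorem~\ref{thm:6.2}, Bost). So there is no in-paper argument to compare against; the intended route is implicitly ``check the hypotheses of Theorem~\ref{thm:6.2} for $\gamma_+ : A_+ \to C(Y\cup\{\infty\})$ and cite Bost.'' What you wrote is essentially a hands-on reproof of Bost's density theorem for this particular pair of algebras, so the shape of the argument is right, and the reduction to unitizations, the use of \cite[Theorem~7.7]{Phi2}, and the suspension trick for $K_1$ are all sound.

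The genuine gap is the one you yourself flag in the last paragraph and then leave unresolved: density of $\gamma_+(A_+)$ in $C(Y\cup\{\infty\})$ is not a consequence of the stated hypothesis. Knowing that the continuous characters of $A_+$ are precisely $Y\cup\{\infty\}$ gives you a unital, point-separating subalgebra, but Stone--Weierstrass also needs closure under complex conjugation, which the theorem's hypothesis does not provide (the disk algebra inside $C(\overline{D})$ is the standard point-separating subalgebra that is not dense). Every approximation step in your surjectivity and injectivity arguments uses this density, so as written the proof only works for algebras, such as $C^\infty(Y)$, that carry a compatible involution; that is surely part of what Phillips' actual hypotheses encode and the paper's paraphrase suppresses. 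By contrast, the spectral permanence you need for the functional-calculus rounding step is fine and worth stating explicitly: for $a\in M_n(A_+)$, invertibility reduces via $\det$ to the scalar case, and for a commutative unital $m$-convex Fr\'echet algebra (which $A_+$ is, by the submultiplicative seminorms in Definition~\ref{def:5.10}) the spectrum of an element equals the range of its Gelfand transform over the continuous characters, so $\mathrm{sp}_{M_n(A_+)}(a) = \mathrm{sp}_{M_n(C(Y\cup\{\infty\}))}(\gamma_+(a))$. The cleaner version of your argument is therefore: prove density separately (supplying the missing involution hypothesis), note the spectral permanence above, and then invoke Theorem~\ref{thm:6.2} rather than re-deriving it idempotent by idempotent.
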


In Theorem \ref{thm:6.1} the natural map $A \to C_0 (Y)$ induces an isomorphism
on $K$-theory. That phenomenon holds in much larger generality, and it is 
called the density theorem in K-theory:

\begin{thm}\label{thm:6.2} 
\textup{\cite[Th\'eor\`eme A.2.1]{Bos}} \\
Let $\phi : A \to B$ be a morphism of Fr\'echet algebras.
Let $A_+$ be the unitization of $A$ from \eqref{eq:4.20}, and extend
$\phi$ to $\phi_+ : A_+ \to B_+$. Assume that:
\begin{itemize}
\item $A_+^\times$ (the set of invertible elements in $A_+$) is open in $A_+$, 
\item $B_+^\times$ is open in $B_+$,
\item $\phi (A)$ is dense in $B$,
\item whenever $a \in A_+$ and $\phi_+ (a)$ is invertible in $B_+$, $a$ is
invertible in $A_+$.
\end{itemize}
Then $K_* (\phi) : K_* (A) \to K_* (B)$ is an isomorphism.
\end{thm}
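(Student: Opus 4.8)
The plan is to reduce everything to the unital algebras $A_+$ and $B_+$ and to the concrete descriptions of K-theory by idempotents and invertibles, and then to feed the three hypotheses into two classical mechanisms: small-perturbation arguments for invertible elements, and holomorphic functional calculus for idempotents. Since $K_0(\C) = \Z$, $K_1(\C) = 0$, and the split exact sequence $0 \to A \to A_+ \to \C \to 0$ identifies $K_i(A_+) = K_i(A) \oplus K_i(\C)$ compatibly with $\phi_+ = \phi \oplus \mathrm{id}_\C$, it is equivalent to show that $K_i(\phi_+) : K_i(A_+) \to K_i(B_+)$ is an isomorphism for $i = 0,1$. First I would record the structural consequence of the hypotheses: because $A_+$ is a Fr\'echet algebra (hence an inverse limit of Banach algebras through its submultiplicative seminorms) and $A_+^\times$ is open, $A_+$ is stable under holomorphic functional calculus inside $B_+$; moreover the spectral-invariance hypothesis passes to the matrix algebras $M_n(A_+) \subset M_n(B_+)$ (so that $\mathrm{sp}_{M_n(A_+)}(a) = \mathrm{sp}_{M_n(B_+)}(\phi_+(a))$), and the unit groups $GL_n(A_+)$, $GL_n(B_+)$ stay open. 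This ``Oka principle'' input, in the Fr\'echet rather than Banach setting, is the step I expect to need the most care: one must check that for $a \in M_n(A_+)$ and $f$ holomorphic near $\mathrm{sp}(a)$, the Cauchy integral $\tfrac{1}{2\pi i}\oint f(z)\,(z - a)^{-1}\,dz$ converges in $M_n(A_+)$, using that $z \mapsto (z-a)^{-1}$ is a continuous $M_n(A_+)$-valued map on the contour and that $M_n(A_+)$ is complete.

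With that in hand, the $K_1$ assertion is handled by perturbation. For surjectivity of $K_1(\phi_+)$: given $g \in GL_n(B_+)$, density of $\phi(A)$ together with openness of $GL_n(B_+)$ yields $a \in M_n(A_+)$ with $\phi_+(a) \in GL_n(B_+)$ so close to $g$ that the segment $t\,g + (1-t)\,\phi_+(a)$ remains invertible; spectral invariance then forces $a \in GL_n(A_+)$, and $[g] = [\phi_+(a)]$ in $K_1(B_+)$. For injectivity: if $a \in GL_n(A_+)$ and $\phi_+(a) \oplus 1_m$ lies in the identity component of $GL_{n+m}(B_+)$, I would approximate a path joining it to the identity by a piecewise-linear path whose vertices are $\phi_+$ of elements of $GL_{n+m}(A_+)$ (possible, interior vertices by density plus openness plus spectral invariance, endpoints already of this form); with a fine enough mesh the linear segments in $M_{n+m}(A_+)$ map into $GL_{n+m}(B_+)$, hence lie in $GL_{n+m}(A_+)$ by spectral invariance, so $a \oplus 1_m$ is already null-homotopic in $GL_{n+m}(A_+)$ and $[a] = 0$ in $K_1(A_+)$. (Openness in a Fr\'echet space gives local path-connectedness, so connected components of the unit groups are path components, which is all that is used.)

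For $K_0$ the same two mechanisms apply to idempotents. Surjectivity: an idempotent $e \in M_n(B_+)$ is approximated by $\phi_+(a)$, $a \in M_n(A_+)$; then $\mathrm{sp}_{M_n(A_+)}(a) = \mathrm{sp}_{M_n(B_+)}(\phi_+(a))$ is concentrated near $\{0,1\}$, and applying holomorphic functional calculus in $M_n(A_+)$ to a function equal to $0$ near $0$ and $1$ near $1$ produces an idempotent $p \in M_n(A_+)$ with $\phi_+(p)$ close to $e$, hence similar to $e$, so $[e] = [\phi_+(p)]$ in $K_0(B_+)$. Injectivity: if $\phi_+(p)$ and $\phi_+(q)$ become conjugate by some $g \in GL_N(B_+)$ after stabilization, approximate $g$ by $\phi_+(h)$ with $h \in GL_N(A_+)$; then $h p h^{-1}$ and $q$ are close idempotents in $M_N(A_+)$, so the standard formula $z = q\,h p h^{-1} + (1-q)(1 - h p h^{-1})$ conjugates one into the other and lies in $M_N(A_+)$, and it is invertible in $A_+$ because $\phi_+(z)$ is invertible in $B_+$; hence $[p] = [q]$ in $K_0(A_+)$. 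Finally I would observe that every identification used above is natural, so that restricting the isomorphisms $K_i(\phi_+)$ to the kernels of the augmentations produces the claimed isomorphism $K_*(\phi) : K_*(A) \to K_*(B)$.
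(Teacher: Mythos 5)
The paper itself supplies no proof of this statement: it is quoted verbatim from Bost's appendix (his Th\'eor\`eme A.2.1). So there is nothing in the paper to compare against line by line, but the argument Bost gives is precisely the classical density-theorem machinery you outline: reduce to the unitizations, compute $K_0$ via idempotents and $K_1$ via invertibles, approximate by elements of the image and correct by holomorphic functional calculus, and use relative spectral invariance to pull invertibility and idempotency back along $\phi_+$. Your reconstruction of the individual $K_0$ and $K_1$ surjectivity/injectivity steps is correct in detail, including the conjugating element $z = q\,(hph^{-1}) + (1-q)(1-hph^{-1})$ and the piecewise-linear approximation of paths in $GL_{n+m}(B_+)$.

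There is, however, one genuine gap, and you flag the wrong step as ``the one needing the most care.'' You assert without argument that the relative spectral invariance hypothesis and the openness of the unit group pass from $A_+ \subset B_+$ to $M_n(A_+) \subset M_n(B_+)$; every one of your subsequent steps (perturbing an invertible $b \in GL_n(B_+)$ to $\phi_+(a)$ and concluding $a \in GL_n(A_+)$, pulling a path in $GL_{n+m}(B_+)$ back into $GL_{n+m}(A_+)$, getting the conjugating element $z$ into $GL_N(A_+)$) leans on it. This is not automatic for a noncommutative Fr\'echet algebra: one cannot use determinants, and the Neumann-series trick only controls sizes in $B_+$, not in $A_+$. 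Matrix stability of spectral invariance is itself a theorem (for Banach algebras it goes back to Swan and Bost; in the Fr\'echet $m$-algebra setting it is due to Schweitzer), and it has a real, if short, proof --- typically by a Gaussian-elimination/Schur-complement induction on $n$, using the $n=1$ case together with openness of the unit groups. By contrast, once matrix spectral invariance and openness of $GL_n(A_+)$ are available, the convergence of the Cauchy integral defining the functional calculus in $M_n(A_+)$ (which is where you predicted the difficulty) is routine: the contour is compact, the resolvent is a continuous $M_n(A_+)$-valued map, and $M_n(A_+)$ is complete. You should also record explicitly that the identification of Phillips's K-theory of a Fr\'echet algebra with the K-theory built from idempotents/invertibles requires the open-unit-group hypothesis (this is \cite[Theorem 7.7]{Phi2}, which the paper also invokes in the proof of Theorem~\ref{thm:6.19}); without that identification the perturbation arguments compute the wrong groups.
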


Many versions of K-theory admit an extension to an equivariant setting.
We focus on equivariance with respect to actions of a finite group $\Gamma$.
Atiyah \cite{Ati} already defined the $\Gamma$-equivariant K-theory of 
a $\Gamma$-space $Y$. By design $K_\Gamma^0 (Y)$ classifies $\Gamma$-equivariant
vector bundles on $Y$, up to stable equivalence, and $K_\Gamma^1 (Y)$ is
$K_\Gamma^0$ of the suspension of $X$.

Recall that the classical Chern character assigns to a vector bundle over $Y$
a class in the \u{C}ech cohomology of $Y$. It gives rise to a natural transformation
\begin{equation}\label{eq:6.2}
\text{Ch} : K^* (Y) \to \breve{H}^* (Y;\Q) .  
\end{equation}
If one replaces $K^* (Y)$ by $K^* (Y) \otimes_\Z \Q$, then \eqref{eq:6.2} is often
an isomorphism. 

Suppose now that $X$ is a compact Hausdorff space with a $\Gamma$-action. In this
setting Baum and Connes \cite{BaCo} constructed a $\Gamma$-equivariant version of
\eqref{eq:6.2}. Write 
\[
\widetilde X = \bigsqcup\nolimits_{\gamma \in \Gamma} \{\gamma\} \times X^\gamma, 
\]
and let $\Gamma$ act on $\widetilde X$ by $\gamma_1 \cdot (\gamma_2,x) = 
(\gamma_1 \gamma_2 \gamma^{-1}, \gamma_1 x)$. 

\begin{thm}\label{thm:6.3}
\textup{\cite[Theorem 1.19]{BaCo}} \\
There are natural isomorphisms 
\[
K_\Gamma^* (X) \otimes_\Z \C \isom \big( K^* (\widetilde X) \otimes_\Z \C 
\big)^\Gamma \isom \breve{H}^* (\widetilde X; \C)^\Gamma \cong
\breve{H}^* (\widetilde X / \Gamma ;\C) .
\]
\end{thm}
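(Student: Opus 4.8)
The plan is to prove Theorem \ref{thm:6.3} in three stages, moving from the equivariant K-theory on the left to the Čech cohomology of the quotient on the right. First I would establish the isomorphism $K_\Gamma^* (X) \otimes_\Z \C \isom \big( K^* (\widetilde X) \otimes_\Z \C \big)^\Gamma$. The right-hand side here should be interpreted correctly: $\widetilde X = \bigsqcup_{\gamma \in \Gamma} \{\gamma\} \times X^\gamma$ carries the twisted $\Gamma$-action described, so $\big( K^* (\widetilde X)\big)^\Gamma = \bigoplus_{[\gamma]} \big( K^* (X^\gamma) \big)^{Z_\Gamma (\gamma)}$, a sum over conjugacy classes. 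The key input is the localization/decomposition of the equivariant K-theory after tensoring with $\C$: for a finite group, the representation ring $R(\Gamma) \otimes_\Z \C$ splits as a product of copies of $\C$ indexed by conjugacy classes, and $K_\Gamma^*(X) \otimes_\Z \C$ is a module over $R(\Gamma) \otimes_\Z \C$, so it decomposes accordingly. Evaluating the $[\gamma]$-component via restriction to the fixed-point set $X^\gamma$ (and using that the relevant completion/localization of $K_{Z_\Gamma(\gamma)}^*(X)$ at $\gamma$ reduces to $K^*(X^\gamma)$ twisted by the centralizer action, by an Atiyah--Segal-type argument) yields the first isomorphism. I would cite \cite{BaCo} and \cite{Ati} for the structural facts and reconstruct the bookkeeping for the twisted $\Gamma$-action on $\widetilde X$.

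Second I would apply the ordinary (non-equivariant, rational) Chern character $\text{Ch} : K^* (Y) \otimes_\Z \C \isom \breve H^* (Y;\C)$ to each space $Y = X^\gamma$. Since the $X^\gamma$ are compact Hausdorff spaces (closed subspaces of the compact Hausdorff $X$), the classical result that the Chern character is a rational isomorphism applies, giving $K^* (\widetilde X) \otimes_\Z \C \isom \breve H^* (\widetilde X; \C)$. The only point requiring care is naturality and $\Gamma$-equivariance: the Chern character is functorial for continuous maps, so it intertwines the $\Gamma$-action on $K^* (\widetilde X)$ with that on $\breve H^* (\widetilde X;\C)$; taking $\Gamma$-invariants then gives $\big( K^* (\widetilde X) \otimes_\Z \C \big)^\Gamma \isom \breve H^* (\widetilde X;\C)^\Gamma$. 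Here one does need that $\widetilde X$, though possibly disconnected, is still compact Hausdorff and that $\breve H^*$ behaves well under finite disjoint unions, which is immediate.

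Third I would identify $\breve H^* (\widetilde X;\C)^\Gamma$ with $\breve H^* (\widetilde X/\Gamma;\C)$. This is the standard fact that for a finite group acting on a reasonable space, rational (or complex) cohomology of the quotient equals the invariants in the cohomology of the total space — a transfer argument: the composite $\breve H^*(\widetilde X/\Gamma) \to \breve H^*(\widetilde X) \to \breve H^*(\widetilde X/\Gamma)$ of pullback followed by transfer is multiplication by $|\Gamma|$, which is invertible over $\C$, and the image of pullback lands in the $\Gamma$-invariants. One should check that Čech cohomology admits such a transfer for a finite group quotient of a compact Hausdorff space, which it does (e.g. via Alexander--Spanier cohomology, which agrees with Čech on such spaces and is manifestly transferable).

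I expect the main obstacle to be the first stage — pinning down precisely why the $[\gamma]$-localized piece of $K_\Gamma^*(X) \otimes_\Z \C$ is $\big( K^* (X^\gamma) \otimes_\Z \C\big)^{Z_\Gamma(\gamma)}$ with the correct twisting by the centralizer action. This is genuinely the heart of the Baum--Connes construction \cite{BaCo} and rests on an Atiyah--Segal completion theorem argument localized at a conjugacy class, rather than on anything formal; the cohomological stages two and three are comparatively routine once one is careful about the disconnectedness of $\widetilde X$ and the fact that all spaces involved are compact Hausdorff. A secondary but minor subtlety is keeping the six natural transformations compatible, so that the composite isomorphism in the statement is the one induced by the equivariant Chern character of \cite{BaCo} and not merely an abstract isomorphism of the two ends.
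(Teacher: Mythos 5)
The paper does not actually prove Theorem \ref{thm:6.3}; it is quoted verbatim from \cite[Theorem 1.19]{BaCo} as a black box, so there is no in-paper argument to compare against. Your reconstruction of the Baum--Connes proof is, however, structurally sound, and the three-stage decomposition you propose does match the strategy of the original source: a conjugacy-class decomposition of $K_\Gamma^*(X) \otimes_\Z \C$, followed by the ordinary rational Chern character applied componentwise, followed by the transfer identification $\breve H^*(\widetilde X;\C)^\Gamma \cong \breve H^*(\widetilde X/\Gamma;\C)$.

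Two points deserve correction or sharpening. First, the decomposition in stage one is not an Atiyah--Segal \emph{completion} phenomenon but a Segal \emph{localization} phenomenon: after tensoring with $\C$, $R(\Gamma) \otimes_\Z \C$ is a product of copies of $\C$ indexed by conjugacy classes, and the $[\gamma]$-factor of $K_\Gamma^*(X) \otimes_\Z \C$ is recovered by restricting to $X^\gamma$ and then passing to $Z_\Gamma(\gamma)$-invariants; this uses the support-subgroup form of Segal's localization theorem, not completion at the augmentation ideal, and it is the genuinely nontrivial input, so you are right to flag it. Second, in stage two, the Chern character $K^*(Y) \otimes_\Z \C \to \breve H^*(Y;\C)$ is an isomorphism for an arbitrary compact Hausdorff $Y$ only because $Y$ can be written as an inverse limit of finite CW-complexes and both $K^*$ and $\breve H^*$ convert such limits into colimits; for the actual application in this paper, $X = X_\nr^u(L)$ is a compact smooth manifold with smooth $W_{\mf s}^e$-action, so all $X^\gamma$ are closed submanifolds and the cleaner finite-CW statement suffices. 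With those adjustments made explicit, the outline would amount to a correct proof.
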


The map $K_\Gamma^* (X) \to \breve{H}^* (\widetilde X; \C)^\Gamma$ in
Theorem \ref{thm:6.3} is called an equivariant Chern character.

For the equivariant K-theory $K_*^\Gamma$ of $\Gamma$-$C^*$-algebras we refer to 
\cite{Phi1}, here we restrict ourselves to a few remarks. The equivariant Serre--Swan
theorem \cite[Theorem 2.3.1]{Phi1} says that for any locally compact Hausdorff 
$\Gamma$-space $Y$ there is a natural isomorphism
\begin{equation}\label{eq:6.3}
K_*^\Gamma (C_0 (Y)) \cong K_\Gamma^* (Y \cup \{\infty\}, \{\infty\}) .
\end{equation}
Besides $K_\Gamma^*$, there is an equivalent way to introduce $\Gamma$-equivariance:

\begin{thm}\label{thm:6.4} \text{\cite{Jul}} \\
For any $\Gamma$-$C^*$-algebra $B$, there is a natural isomorphism
$K_*^\Gamma (B) \cong K_* (B \rtimes \Gamma)$.
\end{thm}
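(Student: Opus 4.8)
The plan is to follow the module-theoretic argument of Julg \cite{Jul}, which for a \emph{finite} group $\Gamma$ is essentially algebraic. First I would reduce to the case that $B$ is unital. The split exact sequence of $\Gamma$-$C^*$-algebras $0 \to B \to B_+ \to \C \to 0$ (with $\Gamma$ acting trivially on $\C$) becomes, after applying the crossed-product functor, the split exact sequence $0 \to B \rtimes \Gamma \to B_+ \rtimes \Gamma \to \C[\Gamma] \to 0$ (exactness and split-exactness of $\,\cdot\, \rtimes \Gamma$). Since both $K_*^\Gamma$ and $K_*(\,\cdot\, \rtimes \Gamma)$ are split-exact (\eqref{eq:6.22}), and since the theorem for $B = \C$ is the classical identification $K_0^\Gamma(\C) = R(\Gamma) = K_0(\C[\Gamma])$, $K_1^\Gamma(\C) = 0 = K_1(\C[\Gamma])$, naturality reduces everything to unital $B$.

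For unital $B$, recall that $K_0^\Gamma(B)$ is the Grothendieck group of the monoid of isomorphism classes of $\Gamma$-equivariant finitely generated projective (right) $B$-modules --- equivalently, of $\Gamma$-invariant projections in matrix algebras $M_n(B)$ carrying a compatible unitary $\Gamma$-action \cite[\S 2]{Phi1}. The covariance relations defining $B \rtimes \Gamma$ say precisely that a right $B \rtimes \Gamma$-module is the same datum as a right $B$-module together with a compatible $\Gamma$-action; reading a vector space in these two ways gives an equivalence between the category of $\Gamma$-equivariant $B$-modules and the category of $B \rtimes \Gamma$-modules. The crux is that this equivalence identifies the two subcategories of finitely generated projective objects. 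In one direction this is clear: $B \rtimes \Gamma$ is free of rank $|\Gamma|$ as a left $B$-module, so a finitely generated projective $B \rtimes \Gamma$-module is finitely generated projective over $B$, with its $\Gamma$-action by (after averaging an invariant inner product, unitary) adjointable operators. In the other direction, given a $\Gamma$-equivariant finitely generated projective $B$-module $E$, the induced module $E \otimes_B (B \rtimes \Gamma)$ is finitely generated projective over $B \rtimes \Gamma$, and the multiplication map $E \otimes_B (B \rtimes \Gamma) \to E$ is a surjection of $B \rtimes \Gamma$-modules split by the $B \rtimes \Gamma$-linear map $e \mapsto |\Gamma|^{-1}\sum_{\gamma \in \Gamma} (e \cdot u_\gamma) \otimes u_{\gamma}^{-1}$ (averaging over $\Gamma$); hence $E$ is a direct summand of a finitely generated projective $B \rtimes \Gamma$-module, so is itself such. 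This yields an equivalence of exact categories and therefore a natural isomorphism $K_0^\Gamma(B) \cong K_0(B \rtimes \Gamma)$, natural in $B$ with respect to $\Gamma$-equivariant $*$-homomorphisms since all the constructions are functorial.

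For the odd part, the suspension $\Sigma B$ carries $\Gamma$ acting only on the $B$-coordinate, and there is a canonical $\Gamma$-equivariant identification $(\Sigma B) \rtimes \Gamma \cong \Sigma(B \rtimes \Gamma)$. Combining the $K_0$-statement applied to $\Sigma B$ with the defining relation $K_1^\Gamma(\,\cdot\,) = K_0^\Gamma(\Sigma\, \cdot\,)$ and its non-equivariant analogue \eqref{eq:6.23} gives $K_1^\Gamma(B) \cong K_1(B \rtimes \Gamma)$, compatibly with the six-term sequences \eqref{eq:6.22}.

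The main obstacle is analytic rather than conceptual: one must check that the algebraic equivalence of categories above is compatible with the $C^*$-topological structure entering the definition of topological $K$-theory --- that the averaging projections are adjointable on Hilbert $B$-modules, that an invariant Hilbert-module inner product can be produced by averaging (harmless since $\Gamma$ is finite), and that passing to a unitization in the non-unital reduction does not disturb the groups. For a general compact group $\Gamma$ these points are genuinely delicate and constitute the real content of the Green--Julg theorem; for finite $\Gamma$ they reduce to routine verifications, which is why the statement is quoted here without proof.
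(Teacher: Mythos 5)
The paper states this theorem with only a citation to Julg \cite{Jul} and supplies no proof of its own, so there is no in-paper argument to compare against. Your proposal is correct and is the standard algebraic incarnation of the Green--Julg theorem in the special case of finite $\Gamma$. The reduction to unital $B$ via the split exact sequence $0 \to B \to B_+ \to \C \to 0$ and the base case $K_0^\Gamma(\C) = R(\Gamma) = K_0(\C[\Gamma])$ is sound; the central step --- that $\Gamma$-equivariant finitely generated projective $B$-modules and finitely generated projective $B\rtimes\Gamma$-modules form equivalent categories --- is the right statement; and your averaging splitting $s(e) = |\Gamma|^{-1}\sum_\gamma (e \cdot u_\gamma)\otimes u_\gamma^{-1}$ of the multiplication map $E \otimes_B (B\rtimes\Gamma) \to E$ is indeed $B\rtimes\Gamma$-linear (one checks $s(eb) = s(e)b$ using $u_\gamma^{-1}b = \gamma^{-1}(b)u_\gamma^{-1}$, and $s(eu_\delta) = s(e)u_\delta$ by reindexing) and is a section, so $E$ is a direct summand of a f.g.p. $B\rtimes\Gamma$-module. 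The $K_1$ case follows from $(\Sigma B)\rtimes\Gamma \cong \Sigma(B\rtimes\Gamma)$ as you say. Worth noting for context: Julg's cited result is for arbitrary \emph{compact} $\Gamma$, where the averaging is integration against Haar measure and the passage between Hilbert $B$-modules and Hilbert $B\rtimes\Gamma$-modules really does require the analytic care you flag at the end; for finite $\Gamma$ (which is all the paper uses), those issues evaporate and your purely module-theoretic proof is the natural one.
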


In view of Theorem \ref{thm:6.4}, it makes sense to define
\[
K_*^\Gamma (A) = K_* (A \rtimes \Gamma) 
\qquad \text{for any Fr\'echet } \Gamma\text{-algebra} .
\]
When $X$ is a compact Hausdorff $\Gamma$-space, Theorems \ref{thm:6.3}, \ref{thm:6.4}
and \eqref{eq:6.3} combine to
\begin{equation}\label{eq:6.4}
K_* (C(X) \rtimes \Gamma) \otimes_\Z \C \cong K_\Gamma^* (X) \otimes_\Z \C 
\cong \breve{H}^* (\widetilde{X};\C)^\Gamma .
\end{equation}
Let $\natural : \Gamma \times \Gamma \to \C^\times$ be a 2-cocycle. Recall from
\eqref{eq:2.2} that $\C [\Gamma,\natural] \cong e_\natural \C [\Gamma^*]$ for a 
central extension $Z^* \to \Gamma^* \to \Gamma$, a character $c_\natural$ of
$Z^*$ and the associated idempotent $e_\natural \in \C [Z^*]$. Since $e_\natural$
is a central idempotent in $C(X) \rtimes \Gamma^*$, we can write
\[
K_* (C(X) \rtimes \C [\Gamma,\natural]) = K_* (e_\natural (C(X) \rtimes \Gamma^*))
\cong e_\natural K_* ( C(X) \rtimes \Gamma^*) \cong e_\natural K_{\Gamma^*}^* (X) .
\]
\begin{defn}\label{def:6.6}
The $\natural$-twisted equivariant $K$-theory of a locally compact Hausdorff 
$\Gamma$-space $Y$ is
\[
K^*_{\Gamma,\natural} (Y) := e_\natural K_{\Gamma^*}^* (Y \cup \{\infty\}, 
\{\infty\}) \cong K_* (C_0 (Y) \rtimes \C [\Gamma,\natural]) .
\]
\end{defn}
This is the K-theory of $\Gamma^*$-equivariant vector bundles over $X$ on which
$Z^*$ acts as the character $c_\natural$.
From \eqref{eq:6.4} and \cite[proof of Theorem 1.2]{SolTwist} we deduce that the
$\Gamma^*$-equivariant Chern character induces an isomorphism
\begin{multline}\label{eq:6.6}
K^*_{\Gamma,\natural} (X) \otimes_\Z \C \cong 
e_\natural K_* ( C(X) \rtimes \Gamma^*) \isom \\ 
e_\natural \breve{H}^* \Big( \bigsqcup\nolimits_{\gamma \in \Gamma^*} \{\gamma\} \times 
X^\gamma \Big)^{\Gamma^*} \cong \Big( \bigoplus\nolimits_{\gamma \in \Gamma} \breve{H}^* 
(X^\gamma) \otimes \natural^\gamma \Big)^\Gamma .
\end{multline}
When $X$ is in addition a smooth manifold, \eqref{eq:6.4} and \eqref{eq:6.6}
can be composed with Theorem \ref{thm:6.2} (for $C^\infty (X) \to C (X)$ and
related inclusions) to obtain
\begin{equation}\label{eq:6.7}
\begin{aligned}
& K_*^\Gamma (C^\infty (X)) \otimes_\Z \C \cong K_* (C^\infty (X) \rtimes \Gamma) 
\otimes_\Z \C \cong H_{dR}^* (\widetilde X )^\Gamma ,\\
& K_* (C^\infty (X) \rtimes \C [\Gamma, \natural]) \otimes_\Z \C \cong
\Big( \bigoplus\nolimits_{\gamma \in \Gamma} H_{dR}^* (X^\gamma) \otimes 
\natural^\gamma \Big)^\Gamma .
\end{aligned}
\end{equation}
There is a natural transformation 
\begin{equation}\label{eq:6.40}
Ch : K_* \to HP_*
\end{equation}
from K-theory for Fr\'echet algebras to
periodic cyclic homology for Fr\'echet algebras, also called the Chern character
\cite{Cun}. It is constructed as the restriction of a bivariant Chern character,
and latter has a universal property which makes it unique \cite[Korollar 6.5]{Cun}.
Therefore, for any smooth manifold $Y$,
\[
\text{Ch} : K_* (C^\infty (Y)) \to HP_* (C^\infty (Y), \hat \otimes) \cong 
H_{dR}^* (Y) 
\]
agrees with the classical Chern character \eqref{eq:6.2}. This is also checked 
in a more concrete, purely algebraic setting in \cite[Proposition 8.3.9]{Lod}. 
That compatibility of Chern characters and Theorems \ref{thm:5.10}.b and 
\ref{thm:6.1} imply:

\begin{thm}\label{thm:6.5}
\textup{\cite{Con,SolCh}} \\
Let $X$ be a compact smooth manifold. 
\enuma{
\item $\mr{Ch} \otimes \mr{id} : K_* (C^\infty (X)) \otimes_\Z \C \to
HP_* (C^\infty (X), \hat \otimes )$ is an isomorphism.
\item Let $\Gamma$ be a finite group acting on $X$ and let $\natural$ 
be a 2-cocycle of $\Gamma$. Then
\[
\mr{Ch} \otimes \mr{id} : K_* (C^\infty (X) \rtimes \C [\Gamma,\natural] ) 
\otimes_\Z \C \to HP_* (C^\infty (X) \rtimes \C [\Gamma,\natural], \hat \otimes )
\] is an isomorphism.
}
\end{thm}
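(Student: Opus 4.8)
The plan is to identify each side of the claimed isomorphism with a standard topological invariant of $X$, and then to reduce the statement to the classical fact that the Chern character is a rational isomorphism, using the compatibility of the noncommutative Chern character \eqref{eq:6.40} with the classical one. Since the $K$-groups of all algebras in sight are finitely generated and their periodic cyclic homology groups are finite dimensional over $\C$, it suffices throughout to produce the isomorphism after $\otimes_\Z \C$, which is why de Rham cohomology and complexified $K$-theory are the natural receptacles.

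For part (a): First I would observe that $C^\infty(X)$ is a unital commutative Fr\'echet algebra whose group of invertibles is open (a nowhere-vanishing smooth function on the compact space $X$ is bounded away from $0$) and whose maximal ideal space is $X$; Theorem \ref{thm:6.1}, or equivalently the density theorem (Theorem \ref{thm:6.2}) applied to $C^\infty(X) \hookrightarrow C(X)$, then yields $K_i(C^\infty(X)) \cong K^i(X)$. On the other side Theorem \ref{thm:5.10}.b gives $HP_i(C^\infty(X), \hat\otimes) \cong \bigoplus_{m \in \Z_{\geq 0}} H^{i+2m}_{dR}(X)$. I would then check that under these two identifications the Chern character $\mr{Ch}: K_*(C^\infty(X)) \to HP_*(C^\infty(X), \hat\otimes)$ of \cite{Cun} agrees with the classical Chern character $K^*(X) \to \breve{H}^*(X;\C) \cong H^*_{dR}(X)$; this follows from the universal property of Cuntz's bivariant Chern character \cite[Korollar 6.5]{Cun}, or directly from the algebraic computation in \cite[Proposition 8.3.9]{Lod}. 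Finally, the classical Chern character becomes an isomorphism after tensoring with $\C$, by the rational degeneration of the Atiyah--Hirzebruch spectral sequence for the compact manifold $X$; combining the three steps proves (a).

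For part (b): I would apply the density theorem (Theorem \ref{thm:6.2}) to the inclusion $C^\infty(X) \rtimes \C[\Gamma,\natural] \hookrightarrow C(X) \rtimes \C[\Gamma,\natural]$ — the target being a unital Banach algebra of finite rank over $C(X)$, with open unit group — to obtain $K_*(C^\infty(X) \rtimes \C[\Gamma,\natural]) \cong K_*(C(X) \rtimes \C[\Gamma,\natural])$; by Definition \ref{def:6.6} and \eqref{eq:6.6} this is, after $\otimes_\Z\C$, isomorphic to $\big(\bigoplus_{\gamma \in \Gamma} \breve{H}^*(X^\gamma) \otimes \natural^\gamma\big)^\Gamma$. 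On the homology side, Corollary \ref{cor:5.15} gives $HP_*(C^\infty(X) \rtimes \C[\Gamma,\natural], \hat\otimes) \cong \big(\bigoplus_{\gamma \in \Gamma} H^*_{dR}(X^\gamma) \otimes \natural^\gamma\big)^\Gamma$. Each fixed-point set $X^\gamma$ is a closed submanifold of $X$, hence a compact smooth manifold, so $\breve{H}^*(X^\gamma;\C) \cong H^*_{dR}(X^\gamma)$ sector by sector; the remaining point is that $\mr{Ch}$ realizes precisely this identification. I would deduce this by passing through $e_\natural\big(C^\infty(X) \rtimes \Gamma^*\big)$ via \eqref{eq:2.2} (both $K_*$ and $HP_*$ split off the $e_\natural$-summand, compatibly with $\mr{Ch}$ by additivity), and then invoking the compatibility established in part (a) for the abelian algebras $C^\infty(X^\gamma)$ occurring in the Baum--Connes decomposition of $K^*_{\Gamma^*}$ (Theorems \ref{thm:6.3}--\ref{thm:6.4}), together with the explicit description of the equivariant Chern character in \cite[proof of Theorem 1.2]{SolTwist} and the naturality of $\mr{Ch}$ under the Morita equivalences (Theorem \ref{thm:6.19}) and the excision sequences \eqref{eq:6.22}.

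I expect the main obstacle to be precisely this last bookkeeping in part (b): verifying that the abstractly defined Chern character $\mr{Ch}: K_* \to HP_*$ matches, twisted sector by twisted sector, with the corresponding de Rham component — with no spurious normalization factors or permutations of the sectors — and that it is compatible with the idempotent cut-down by $e_\natural$ and with the conjugation action of $\Gamma^*$ on $\bigsqcup_{\gamma}\{\gamma\}\times X^\gamma$. The commutative case (part (a)) is classical, and once it is in hand, additivity, Morita invariance, and naturality of $\mr{Ch}$ under the split exact sequences $0 \to \Sigma A \to C(S^1,A) \to A \to 0$ reduce the whole of (b) to it.
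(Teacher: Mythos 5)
For part (a) you have reconstructed the paper's argument exactly: the universal property of Cuntz's bivariant Chern character identifies $\mr{Ch}$ with the classical Chern character, Theorems \ref{thm:6.1} and \ref{thm:5.10}.b translate source and target into $K^*(X)$ and $\bigoplus_m H^{i+2m}_{dR}(X)$, and the rational isomorphism is the classical one.

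For part (b) the paper gives no proof of its own --- it cites \cite{SolCh}, and the remark following the theorem indicates that the cited source establishes the result for $\Gamma$-invariants of matrix algebras over $C^\infty(X)$ by an excision argument over a $\Gamma$-equivariant triangulation, of the same kind the paper later deploys in Lemma \ref{lem:6.13} and Proposition \ref{prop:6.16}: over each open stratum the crossed product is Morita equivalent to $C_0(\sigma,\partial\sigma)\otimes\C[\Gamma_\sigma,\natural]$, and part (a) plus the trivial semisimple case finishes, using only the naturality of $\mr{Ch}$ for the restriction maps and the six-term sequence \eqref{eq:6.22}. That route never leaves the framework $K_*\to HP_*$. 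Your alternative --- computing $K_*\otimes_\Z\C$ via the Baum--Connes twisted equivariant Chern character \eqref{eq:6.6}--\eqref{eq:6.7} and $HP_*$ via Corollary \ref{cor:5.15}, and then matching --- is legitimate in principle, but the matching is precisely the nontrivial point you flag at the end, and it is more than bookkeeping. It is not a matter of ``invoking part (a) for $C^\infty(X^\gamma)$'': there is no algebra homomorphism between $C^\infty(X^\gamma)$ and $C^\infty(X)\rtimes\C[\Gamma,\natural]$ that realizes the $\gamma$-twisted sector, and the Baum--Connes equivariant Chern character is built by localization in equivariant K-theory, not from a cyclic-homology-valued Chern character, so naturality of $\mr{Ch}$ under algebra morphisms does not immediately give you the compatibility you need. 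Establishing that the Cuntz--Connes $\mr{Ch}$ and the Baum--Connes Chern character agree under the identifications of \eqref{eq:6.6} and Corollary \ref{cor:5.15}, sector by sector, is a genuine theorem, comparable in difficulty to the excision argument it would replace, and it is not proved anywhere in these notes. If you want to proceed along your lines, you should supply that comparison; otherwise the excision route is the cleaner one, since it needs only the naturality of $\mr{Ch}$ for Fr\'echet-algebra morphisms and for the suspension exact sequence.
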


In fact Theorem \ref{thm:6.5} applies in much larger generality. Firstly, twisted 
crossed pro\-ducts can be replaced by $\Gamma$-invariants for certain actions of 
$\Gamma$ on matrix algebras over $C^\infty (X)$ \cite[Theorem 6]{SolCh}.
Secondly, Theorem \ref{thm:6.4} also holds for many noncompact manifolds $X$. 
Not for all though, because $\otimes_\Z \C$ does not commute with infinite direct 
products \cite[Appendix]{SolCh}.

\subsection{K-theory of $C_r^* (G)$ and $\mc S (G)$, modulo torsion} \
\label{par:6.2}

We survey the relations between the topological K-theory and the periodic cyclic
homology of $C_r^* (G)$ and $\mc S (G)$. Recall from \eqref{eq:1.9}, Definition
\ref{def:1.SG} and Theorem \ref{thm:1.20} that
\[
\begin{array}{lllll}
C_r^* (G) & = & \lim_{K \in \CO (G)} C_r^* (G,K) & = \lim_{n \to \infty} (G,K_n) ,\\
\mc S (G) & = & \lim_{K \in \CO (G)} \mc S (G,K) & = 
\bigcup_{n=1}^\infty \mc S (G,K_n) .
\end{array}
\]
The algebra $\mc S (G)$ is not Fr\'echet, so its topological K-theory presents new 
challenges. While K-theories have been constructed for wider classes of topological
algebras, it is unclear whether those functors commute with direct limits. To avoid
such problems, we simply define
\[
K_* (\mc S (G)) := \lim_{K \in \CO (G)} K_* (\mc S (G,K)) .
\]
Notice that this makes sense because each $\mc S (G,K)$ is a Fr\'echet algebra,
see Theorem \ref{thm:1.4}.a.

\begin{prop}\label{prop:6.6}
There is a commutative diagram
\[
\begin{array}{ccc}
\lim_{K \in \CO (G)} K_* (C_r^* (G,K)) & \to & K_* (C_r^* (G)) \\
\uparrow &  & \uparrow \\
\lim_{K \in \CO (G)} K_* (\mc S (G,K)) & = & K_* (\mc S (G))
\end{array}
\]
in which all the arrows are natural isomorphisms.
\end{prop}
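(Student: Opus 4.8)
The plan is to verify the claim arrow by arrow, using the continuity property of topological K-theory together with the density theorem (Theorem \ref{thm:6.2}). The bottom equality is just the definition of $K_* (\mc S (G))$, so nothing is to be proven there. The left vertical arrow is induced by the inclusions $\mc S (G,K) \hookrightarrow C_r^* (G,K)$; the right vertical arrow is induced by the inclusion $\mc S (G) \hookrightarrow C_r^* (G)$ from Theorem \ref{thm:1.4}.b and \eqref{eq:1.9}; the top horizontal arrow is the canonical map into the K-theory of the Banach-algebra inductive limit $C_r^* (G) = \lim_{K} C_r^* (G,K)$ from \eqref{eq:1.9}. Commutativity is automatic since every arrow comes from an inclusion of algebras and all these inclusions are compatible.

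First I would treat the top arrow. By the continuity property of K-theory for Banach algebras (\cite[\S 5.2.4, \S 5.5.1, \S 8.1.5]{Bla}, quoted in Paragraph \ref{par:6.1}) applied to the inductive system $(C_r^* (G,K))_{K \in \CO (G)}$ with limit $C_r^* (G)$, the natural map $\lim_{K} K_* (C_r^* (G,K)) \to K_* (C_r^* (G))$ is an isomorphism. This is the easiest step; it only needs that $C_r^* (G)$ is genuinely the $C^*$-inductive limit, which is \eqref{eq:1.9}, and that the system is directed, which holds because $\CO (G)$ is directed by reverse inclusion and the transition maps $C_r^* (G,K) \to C_r^* (G,K')$ for $K' \subset K$ are the obvious unital inclusions.

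Next I would treat the left arrow, fixing $K \in \CO (G)$ and showing $K_* (\mc S (G,K)) \to K_* (C_r^* (G,K))$ is an isomorphism. This is an application of the density theorem: one checks the four hypotheses of Theorem \ref{thm:6.2} for the inclusion $\phi : \mc S (G,K) \hookrightarrow C_r^* (G,K)$. Both algebras are unital (with unit $\lr{K}$), so the unitizations contribute only a harmless $\C$ summand and the conditions on $A_+^\times$ and $B_+^\times$ reduce to: the invertibles of $\mc S (G,K)$ are open in $\mc S (G,K)$, which is part of Theorem \ref{thm:1.4}.c; and the invertibles of the Banach algebra $C_r^* (G,K)$ are open, which is standard for unital Banach algebras. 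Density of $\phi (\mc S (G,K))$ in $C_r^* (G,K)$ is Theorem \ref{thm:1.4}.b. The spectral-permanence condition --- $a \in \mc S (G,K)$ invertible in $C_r^* (G,K)$ implies $a$ invertible in $\mc S (G,K)$ --- is exactly the identity $\mc S (G,K) \cap C_r^* (G,K)^\times = \mc S (G,K)^\times$ from Theorem \ref{thm:1.4}.c. So Theorem \ref{thm:6.2} applies and gives the isomorphism for each $K$; passing to the limit over $\CO (G)$ yields that the left arrow is an isomorphism. The right arrow is then forced: it factors as the composite of the (bottom) equality, the (left) isomorphism, and the (top) isomorphism, so it is an isomorphism too, and the diagram commutes by construction.

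The only genuine subtlety --- the step I expect to be the main obstacle, though it is more a matter of care than of difficulty --- is the interchange of the direct limit with K-theory on the Schwartz side. Since $\mc S (G)$ is a strict inductive limit of Fr\'echet algebras and is not itself Fr\'echet, one cannot invoke a continuity theorem for $K_*$ directly on $\mc S (G)$; this is precisely why $K_* (\mc S (G))$ was \emph{defined} as $\lim_K K_* (\mc S (G,K))$ rather than as the K-theory of the algebra $\mc S (G)$. The plan is therefore to keep all limits explicit: perform the density-theorem comparison at each finite stage $K$, and only afterward pass to the colimit, never needing a continuity statement for the non-Fr\'echet algebra $\mc S (G)$ itself. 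With this bookkeeping the proof is complete.
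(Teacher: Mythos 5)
Your proof is correct and follows essentially the same route as the paper: continuity of K-theory for Banach algebras handles the top arrow, the density theorem (Theorem \ref{thm:6.2}) together with Theorem \ref{thm:1.4} handles the left arrow at each fixed $K$, and the right arrow is defined as the composite. You spell out the verification of the hypotheses of the density theorem in more detail than the paper does, and you correctly flag that the Schwartz-side limit is handled by the \emph{definition} of $K_*(\mc S(G))$ rather than a continuity theorem for the non-Fr\'echet algebra $\mc S(G)$; the only small slip is calling the transition maps $C_r^*(G,K) \to C_r^*(G,K')$ for $K' \subset K$ "unital inclusions" --- they send $\lr{K}$ to itself, which is not the unit $\lr{K'}$ --- but since continuity of K-theory for Banach-algebra inductive limits does not require unitality, this does not affect the argument.
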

\begin{proof}
The continuity of K-theory for Banach algebras tells us that upper line is an 
isomorphism, where the map is induced by the inclusions $C_r^* (G,K) \to C_r^* (G)$.
By Theorems \ref{thm:1.4} and \ref{thm:6.2} 
\begin{equation}\label{eq:6.8}
\text{the inclusion } \mc S(G,K) \to C_r^* (G,K) 
\text{ induces an isomorphism on K-theory.}
\end{equation}
Hence the left column of the diagram is a natural isomorphism. We define the 
map in the right column of the diagram as the composition of the other maps.
Then it is a natural isomorphism, and it is induced by the inclusion 
$\mc S (G) \to C_r^* (G)$.
\end{proof}

Like in \eqref{eq:5.2}, one deduces from Propositions \ref{prop:1.22} and
\ref{prop:6.6} and Theorems \ref{thm:6.1} and \ref{thm:6.2} that there 
are natural isomorphisms
\begin{equation}\label{eq:6.9}
\begin{array}{lllll}
K_* (C_r^* (G)) & \cong & \bigoplus_{\mf s \in \mf B (G)} 
K_* (C_r^* (G,K_{\mf s})^{\mf s}) & \cong & \bigoplus_{\mf s \in \mf B (G)} 
K_* (C_r^* (G)^{\mf s}) \\
K_* (\mc S (G)) & \cong & \bigoplus_{\mf s \in \mf B (G)} 
K_* (\mc S (G,K_{\mf s})^{\mf s})
\end{array}.
\end{equation}

\begin{ex}\label{ex:6.7}
Consider $G = SL_2 (F)$ and $\mf s = [T,\mr{triv}]_G$. From Example \ref{ex:1.B}
and Theorem \ref{thm:1.17} we see that the $C_r* (G)^{\mf s}$ is Morita 
equivalent to 
\[
\big( C(S^1) \otimes M_2 (\C) \big)^{S_2} \oplus \C_{St} := A \oplus \C_{St} .
\]
As $K_* (\C_{St}) = K_0 (\C) = \Z$, we focus on $A$. By a suitable choice of
coordinates, we can achieve that $S_2 = \{1, s_\alpha\}$ acts by
\[
(s_\alpha a)(z) = \matje{1}{0}{0}{z} a (z^{-1}) \matje{1}{0}{0}{z^{-1}} 
\qquad a \in C(S^1) \otimes M_2 (\C), z \in S^1 .
\] 
This shows that every $S_2$-orbit in $S^1 \setminus \{-1\}$ supports a unique
irreducible $A$-re\-pre\-sentation, while there are precisely two inequivalent
irreducible $A$-representations with $C(S^1)^{S_2}$-character -1. 
The upper half circle is a fundamental domain for $S_2$ acting on $S^1$, and it
is homeomorphic to $[-1,1]$ by taking real parts. Evaluation of $A$ at -1
yields a short exact sequence of $C^*$-algebras
\[
0 \to C_0 ( (-1,1]) \otimes M_2 (\C) \to A \to \C^2 \to 0 .
\]
The associated six-term exact sequence is
\[
\begin{array}{ccccc}
K^0 \big( (-1,1] \big) = 0 & \to & K_0 (A) & \to & K_0 (\C^2) = \Z^2\\
\uparrow & & & & \downarrow \\
K_1 (\C^2) = 0 & \leftarrow & K_1 (A) & \leftarrow & K^1 \big( (-1,1] \big) = 0
\end{array}.
\]
Here $K_* ((-1,1]) = 0$ because the algebra $C_0 ((-1,1])$ is homotopy equivalent
to 0. We find that $K_* (A) \cong K_0 (A) \cong \Z^2$.
\end{ex}

With methods like in Example \ref{ex:6.7} one can compute $K_* (C_r^* (G)^{\mf s})$
for many Bernstein blocks $\Rep (G)^{\mf s}$, but the computations quickly become
cumbersome.

To analyse $K_* (\mc S (G,K_{\mf s})^{\mf s})$ and $K_* (C_r^* (G,K_{\mf s})^{\mf s})$
in general, we relate them to the previous section. Theorem \ref{thm:6.5} can be 
applied to the Fr\'echet algebras appearing in the Plancherel isomorphism for
$\mc S (G)$ (Theorem \ref{thm:1.16}).

\begin{thm}\label{thm:6.8}
\textup{\cite[Theorem 10 and Corollary 11]{SolCh}} \\
For any $K \in \CO (G)$, the Chern character induces an isomorphism
\[
K_* (\mc S (G,K)) \otimes_\Z \C \to HP_* (\mc S (G,K), \hat \otimes) .
\]
\end{thm}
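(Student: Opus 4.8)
The plan is to reduce the statement to the already-established Chern character isomorphism for smooth crossed products by finite groups (Theorem \ref{thm:6.5}), using the Plancherel description of $\mc S (G,K)$. First I would invoke Theorem \ref{thm:1.16}.b: for $K \in \CO (G)$ with $K \subset K_0$ (which we may assume after possibly replacing $K_0$ by another good maximal compact subgroup, as remarked after Theorem \ref{thm:1.16}), there is an isomorphism of unital Fr\'echet $*$-algebras
\[
\mc S (G,K) \cong \bigoplus_{[L,\delta]_G \in \Delta (G)} \Big( C^\infty (X_\nr^u (L))
\otimes \End_\C \big( I_{K_0 \cap P}^{K_0} (V_\delta)^K \big) \Big)^{W^e_{[L,\delta]}} ,
\]
where only finitely many summands are nonzero. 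Both $K_*$ and $HP_*(?,\hat\otimes)$ are additive over finite direct sums of unital Fr\'echet algebras, and the Chern character \eqref{eq:6.40} is a natural transformation, so it suffices to prove the isomorphism for a single block $\big( C^\infty (X_\nr^u (L)) \otimes \End_\C (I_{K_0 \cap P}^{K_0} (V_\delta)^K) \big)^{W^e_{[L,\delta]}}$.

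Next I would identify this block with a twisted crossed product, up to the kind of fixed-point-algebra-over-matrix-algebra situation that Theorem \ref{thm:6.5} (in the general form noted right after it, \cite[Theorem 6]{SolCh}) is designed to handle. Concretely, $W^e_{[L,\delta]}$ acts on $C^\infty (X_\nr^u (L)) \otimes \End_\C (I_{K_0 \cap P}^{K_0} (V_\delta)^K)$ through the diffeomorphism action on $X_\nr^u (L)$ twisted by the intertwining operators $I(w,P,\delta,\chi)$, and by \eqref{eq:1.24} this is only a projective action, governed by a $2$-cocycle; the block is the $W^e_{[L,\delta]}$-invariants. This is exactly the input for the generalized Theorem \ref{thm:6.5}: $X = X_\nr^u (L)$ is a compact smooth manifold (a compact real torus), $\Gamma = W^e_{[L,\delta]}$ is finite, and the $2$-cocycle twist is accounted for. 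Hence $\mr{Ch} \otimes \mr{id}$ is an isomorphism on each block, and therefore on $\mc S(G,K)$.

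The main obstacle I expect is bookkeeping rather than anything deep: one must check that the $W^e_{[L,\delta]}$-action arising from the Plancherel isomorphism genuinely has the form covered by \cite[Theorem 6]{SolCh} — i.e.\ that after the standard reduction (passing to the Schur cover $\Gamma^*$ via \eqref{eq:2.2}, so that the projective action becomes an honest action on a matrix algebra over $C^\infty(X_\nr^u(L))$ and one cuts down by the central idempotent $e_\natural$), one lands in the stated generality, and that the finiteness of $\Delta(G,K) := \{\mf d : I_{K_0 \cap P}^{K_0}(V_\delta)^K \ne 0\}$ is what rescues us from the failure of $\otimes_\Z \C$ to commute with infinite products (the caveat in \cite[Appendix]{SolCh}). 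Since Theorem \ref{thm:1.16}.b guarantees only finitely many nonzero summands, this point is automatic here. Assembling these, one obtains the isomorphism $K_* (\mc S (G,K)) \otimes_\Z \C \to HP_* (\mc S (G,K), \hat \otimes)$, and the corollary that $\mc H(G) \hookrightarrow \mc S(G)$ together with $\mc S(G) \hookrightarrow C_r^*(G)$ induce rational K-theory isomorphisms (via Proposition \ref{prop:6.6} and Corollary \ref{cor:5.14}) follows by combining with the earlier results.
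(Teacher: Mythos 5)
Your proposal takes exactly the route the paper indicates: apply the generalized Chern character isomorphism of Theorem \ref{thm:6.5} (in the form of \cite[Theorem 6]{SolCh}) to the finitely many nonzero Fr\'echet-algebra blocks furnished by the Plancherel decomposition (Theorem \ref{thm:1.16}.b), using that finiteness to sidestep the failure of $\otimes_\Z \C$ to commute with infinite products. That is precisely what \cite[Theorem 10 and Corollary 11]{SolCh} does, and the paper's remark preceding Theorem \ref{thm:6.8} says as much.

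One technical point deserves flagging. Your opening reduction, ``for $K \in \CO(G)$ with $K \subset K_0$, which we may assume after possibly replacing $K_0$ by another good maximal compact subgroup,'' over-reads the remark following Theorem \ref{thm:1.16}. That remark actually concedes that Theorem \ref{thm:1.16}.b does \emph{not} apply to all $K \in \CO(G)$; not every compact open subgroup sits inside a good maximal compact subgroup. For such $K$, the projection $p_K(\chi)$ onto $K$-fixed vectors in $I_P^G(\delta \otimes \chi)$, viewed on the $\chi$-independent model space $I^{K_0}_{K_0 \cap P}(V_\delta)$, genuinely depends on $\chi$. The corresponding block of $\mc S(G,K)$ is then not a literal tensor product $C^\infty(X_\nr^u(L)) \otimes \End_\C(\text{fixed space})$ but the $W^e_{[L,\delta]}$-invariant smooth sections of a bundle of matrix algebras over $X_\nr^u(L)$. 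This is still within the hypotheses of \cite[Theorem 6]{SolCh} --- which handles $\Gamma$-invariants of finite-rank Fr\'echet $\Gamma$-algebras over $C^\infty(X)$ and not merely tensor products --- so the conclusion holds for all $K$, but the reduction is not quite as you wrote it. For the downstream uses in this paper (Proposition \ref{prop:6.6}, Corollary \ref{cor:6.9}) one only needs $K$ ranging over a cofinal family inside a fixed $K_0$, so there the subtlety is harmless; but since Theorem \ref{thm:6.8} is stated for all $K$, a complete proof must address it. Also, a small imprecision: the $W^e_{[L,\delta]}$-action on $C^\infty(X_\nr^u(L)) \otimes \End_\C(V)$ via conjugation by the intertwiners in \eqref{eq:1.25} is a genuine group action (the scalar ambiguities of \eqref{eq:1.24} cancel in conjugation), so there is no $2$-cocycle obstruction at the level of the action itself; the cocycle $\natural$ only enters when one tries to replace the fixed-point algebra by a twisted crossed product, which the argument need not do.
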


A combination of the results in this and the previous paragraph yields a 
description of $K_* (C_r^* (G))$ modulo torsion:

\begin{cor}\label{cor:6.9}
\enuma{
\item Fix $\mf s = [L,\sigma]_G \in \mf B (G)$. There are isomorphisms 
\begin{align*}
K_* (C_r^* (G)^{\mf s}) \otimes_\Z \C & \cong K_* (C_r^* (G,K_{\mf s})^{\mf s}) 
\otimes_\Z \C \cong K_* (\mc S (G,K_{\mf s})^{\mf s}) \otimes_\Z \C \\
& \cong HP_* (\mc S (G,K_{\mf s})^{\mf s}, \hat \otimes) \cong HP_* \big( C^\infty 
(X_\nr^u (L)) \rtimes \C [W_{\mf s}^e, \natural_{\mf s}^{-1}], \hat \otimes \big) \\
& \cong K_* \big( C^\infty (X_\nr^u (L)) \rtimes \C [W_{\mf s}^e, \natural_{\mf s}^{-1}] \big) 
\otimes_\Z \C \\
& \cong K_* \big( C (X_\nr^u (L)) \rtimes \C [W_{\mf s}^e, \natural_{\mf s}^{-1}] \big) 
\otimes_\Z \C \cong K_{W_{\mf s}^e,\natural_{\mf s}^{-1}} (X_\nr^u (L)) \otimes_\Z \C .
\end{align*}
\item There are isomorphisms
\[
K_* (C_r^* (G)) \otimes_\Z \C \cong K_* (\mc S (G)) \otimes_\Z \C \cong
\bigoplus\nolimits_{\mf s \in \mf B (G)} K_{W_{\mf s}^e,\natural_{\mf s}^{-1}} 
(X_\nr^u (L)) \otimes_\Z \C .
\]
}
\end{cor}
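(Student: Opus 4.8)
\textbf{Proof strategy for Corollary \ref{cor:6.9}.} The plan is to assemble part (a) as a chain of isomorphisms, each of which is either quoted verbatim from earlier in the paper or is an application of the density theorem / Chern character comparison, and then to obtain part (b) by summing over Bernstein components. I would start from the right-hand side of \eqref{eq:6.9}, which already gives $K_* (C_r^* (G)^{\mf s}) \cong K_* (C_r^* (G,K_{\mf s})^{\mf s})$ canonically (and dually $K_* (\mc S (G)) \cong \bigoplus_{\mf s} K_* (\mc S (G,K_{\mf s})^{\mf s})$); tensoring with $\C$ over $\Z$ is harmless since it is exact. The step $K_* (C_r^* (G,K_{\mf s})^{\mf s}) \cong K_* (\mc S (G,K_{\mf s})^{\mf s})$ is exactly \eqref{eq:6.8}, i.e. Theorems \ref{thm:1.4} and \ref{thm:6.2} applied to the dense subalgebra inclusion $\mc S (G,K_{\mf s})^{\mf s} \hookrightarrow C_r^* (G,K_{\mf s})^{\mf s}$; here one only has to remark that this inclusion satisfies the spectral-invariance hypothesis of Theorem \ref{thm:6.2}, which follows from Theorem \ref{thm:1.4}.c applied within each Harish-Chandra block. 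The next link, $K_* (\mc S (G,K_{\mf s})^{\mf s}) \otimes_\Z \C \cong HP_* (\mc S (G,K_{\mf s})^{\mf s}, \hat\otimes)$, is Theorem \ref{thm:6.8} restricted to the summand indexed by $\mf s$ (using Theorem \ref{thm:1.16}.b to see $\mc S(G,K_{\mf s})^{\mf s}$ is a finite direct sum of Fréchet algebras of the form occurring there, and additivity of $K_*$ and $HP_*$).

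Continuing the chain, $HP_* (\mc S (G,K_{\mf s})^{\mf s}, \hat\otimes) \cong HP_* \big( C^\infty (X_\nr^u (L)) \rtimes \C[W_{\mf s}^e, \natural_{\mf s}^{-1}], \hat\otimes \big)$ is the second isomorphism in Theorem \ref{thm:5.13} (the Fréchet/tempered case), and then $HP_* \big( C^\infty (X_\nr^u (L)) \rtimes \C[W_{\mf s}^e, \natural_{\mf s}^{-1}], \hat\otimes \big) \cong K_* \big( C^\infty (X_\nr^u (L)) \rtimes \C[W_{\mf s}^e, \natural_{\mf s}^{-1}] \big) \otimes_\Z \C$ is Theorem \ref{thm:6.5}.b (with $X = X_\nr^u (L)$, a compact manifold, $\Gamma = W_{\mf s}^e$, and the 2-cocycle $\natural_{\mf s}^{-1}$). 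The passage $K_* \big( C^\infty (X_\nr^u (L)) \rtimes \C[W_{\mf s}^e, \natural_{\mf s}^{-1}] \big) \cong K_* \big( C (X_\nr^u (L)) \rtimes \C[W_{\mf s}^e, \natural_{\mf s}^{-1}] \big)$ is again the density theorem (Theorem \ref{thm:6.2}): the inclusion $C^\infty \hookrightarrow C$ is dense and spectrally invariant, and crossing with the fixed finite-dimensional algebra $\C[W_{\mf s}^e, \natural_{\mf s}^{-1}]$ preserves these properties (one can realize $A \rtimes \C[\Gamma,\natural]$ as a corner of a matrix algebra over $A$ via the Schur-multiplier description \eqref{eq:2.2}, so spectral invariance is inherited). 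Finally $K_* \big( C (X_\nr^u (L)) \rtimes \C[W_{\mf s}^e, \natural_{\mf s}^{-1}] \big) \cong K_{W_{\mf s}^e,\natural_{\mf s}^{-1}}^* (X_\nr^u (L))$ is Definition \ref{def:6.6}, so after $\otimes_\Z \C$ the last isomorphism is tautological. For part (b) one sums part (a) over $\mf s \in \mf B (G)$, invoking \eqref{eq:6.9} on the left and additivity/continuity of $K_*$ on the right.

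I should be careful about two bookkeeping points. First, the algebra $C_r^* (G)^{\mf s}$ and the finite-level truncation $C_r^* (G,K_{\mf s})^{\mf s}$ are Morita equivalent (Proposition \ref{prop:1.22}.c), and I want the $K$-theory isomorphism induced by that Morita equivalence; since these are $C^*$-algebras this is standard, but for the Fréchet version $\mc S (G,K_{\mf s})^{\mf s}$ I may instead want to route directly through \eqref{eq:6.9}, which is already phrased at the truncated level. Second, in Theorem \ref{thm:6.8} and Theorem \ref{thm:6.5} one must check the relevant algebras really are compact-manifold crossed products (possibly twisted), which is exactly the content of Theorem \ref{thm:1.16}.b describing $\mc S(G,K_{\mf s})$ as a finite sum of $\big( C^\infty(X_\nr^u(L)) \otimes \End_\C(\cdots) \big)^{W^e}$; Morita invariance (Theorem \ref{thm:6.19}, whose openness-of-units hypothesis holds for these unital Fréchet algebras) then lets me replace $\big( C^\infty(X_\nr^u(L)) \otimes \End_\C(V) \big)^{W^e}$ by the twisted crossed product $C^\infty(X_\nr^u(L)) \rtimes \C[W^e,\natural^{-1}]$ up to the same $K$-theory, which is the form appearing in the statement.

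\textbf{Main obstacle.} None of the individual links is deep — they are all quotations — so the real work is verifying that the hypotheses of the density theorem (Theorem \ref{thm:6.2}) are met at each of the two places it is used, namely openness of the unit groups and spectral invariance of $\mc S \hookrightarrow C_r^*$ and of $C^\infty \hookrightarrow C$ after twisting by $\C[W_{\mf s}^e,\natural_{\mf s}^{-1}]$. For the first, spectral invariance of $\mc S(G,K)$ in $C_r^*(G,K)$ is Theorem \ref{thm:1.4}.c, and it restricts to the $\mf s$-summand because the Bernstein decomposition \eqref{eq:1.34} is a decomposition into two-sided ideals; for the second, one reduces via \eqref{eq:2.2} to the untwisted smooth-functions case, where it is classical. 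A secondary annoyance is that $\otimes_\Z \C$ does not commute with infinite products, so the direct sum in part (b) — rather than a product — must be used throughout, which is why the finite-level truncations $K_{\mf s}$ and the decompositions \eqref{eq:6.9} (direct sums, by Proposition \ref{prop:1.22} and additivity) are essential; I would flag this explicitly rather than let it pass silently.
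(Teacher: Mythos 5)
Your proof follows the same chain of isomorphisms as the paper, step for step: \eqref{eq:6.9} for the first link, \eqref{eq:6.8} (density theorem plus Theorem \ref{thm:1.4}) for the second, Theorem \ref{thm:6.8} for the Chern character of $\mc S(G,K_{\mf s})^{\mf s}$, Theorem \ref{thm:5.13} for the $HP_*$ comparison, Theorem \ref{thm:6.5} for the Chern character of the twisted crossed product, Theorem \ref{thm:6.2} for $C^\infty \hookrightarrow C$, and Definition \ref{def:6.6} for the last identification; part (b) sums over $\mf s$ exactly as the paper does via Proposition \ref{prop:6.6} and \eqref{eq:6.9}. So the substance is correct and matches.

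One remark in your ``bookkeeping'' paragraph is off, though it does not affect the main chain: you suggest that to apply Theorems \ref{thm:6.8} and \ref{thm:6.5} one needs a Morita equivalence between $\big( C^\infty(X_\nr^u(L)) \otimes \End_\C(V) \big)^{W^e_{\mf s}}$ and $C^\infty(X_\nr^u(L)) \rtimes \C[W_{\mf s}^e,\natural_{\mf s}^{-1}]$. That equivalence does not hold in general (the paper points out after Theorem \ref{thm:1.17} that it holds only ``in many cases''), and it is precisely to avoid needing it that the paper routes the $HP_*$ comparison through Theorem \ref{thm:5.13}, which is proved by matching families of representations rather than by Morita. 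Moreover the Morita replacement is not needed to invoke those theorems: Theorem \ref{thm:6.8} is stated directly for $\mc S(G,K)$, and Theorem \ref{thm:6.5} is applied to $C^\infty(X_\nr^u(L)) \rtimes \C[W_{\mf s}^e,\natural_{\mf s}^{-1}]$, which is already literally a twisted compact-manifold crossed product. Since your main chain already uses Theorem \ref{thm:5.13} and not the Morita equivalence for the $HP_*$ step, nothing is actually broken --- but the aside should be dropped or corrected, as it would mislead a reader into thinking an incorrect Morita equivalence is being used.
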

\begin{proof}
(a) The first isomorphism is \eqref{eq:6.9} and the second comes from \eqref{eq:6.8}.
Then we use Theorems \ref{thm:6.8} and \ref{thm:5.13}. The fifth isomorphism is an
instance of Theorem \ref{thm:6.5}, the sixth comes from Theorem \ref{thm:6.2}
and the last step is Definition \ref{def:6.6}.\\
(b) This follows from Proposition \ref{prop:6.6}, \eqref{eq:6.9} and part (a).
\end{proof}

Corollary \ref{cor:6.9} gives a description of the group $K_* (C_r^* (G))$ modulo 
torsion. In some cases that determines $K_* (C_r^* (G))$ up to isomorphism, because 
it does not have torsion elements:

\begin{thm}\label{thm:6.24} \textup{\cite[Theorem 5.3]{SolComp}} \\
Let $\mc G$ be an inner form of $GL_n$, or a symplectic group, or a special 
orthogonal group (not necessarily $F$-split). Then $K_* (C_r^* (G))$ is
a free abelian group.
\end{thm}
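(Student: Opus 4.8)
The plan is to decompose $C_r^* (G)$ along the Bernstein and Harish-Chandra decompositions, identify the pieces up to Morita equivalence with crossed products $C(T)\rtimes\C[\Gamma,\natural]$ for $T$ a compact torus and $\Gamma$ a finite group, and then exploit the very special shape of the finite groups that occur for the three families in the statement to rule out torsion in $K$-theory.

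First I would make the reductions. By \eqref{eq:6.9}, $K_* (C_r^* (G)) \cong \bigoplus_{\mf s \in \mf B (G)} K_* (C_r^* (G)^{\mf s})$; since $\mf B (G)$ is countable and any direct sum of free abelian groups is free, it suffices to show each $K_* (C_r^* (G)^{\mf s})$ is free. The Harish-Chandra decomposition \eqref{eq:1.33} exhibits $C_r^* (G)^{\mf s}$ as a finite direct sum of blocks $C_r^* (G)_{\mf d}$, $\mf d = [L,\delta]_G \in \Delta (G,\mf s)$, so it is enough to treat one such block. By Theorem \ref{thm:1.17}, $C_r^* (G)_{\mf d} \cong \big( C (X_\nr^u (L)) \otimes \mf K (I_{K_0 \cap P}^{K_0} (V_\delta)) \big)^{W^e_{\mf d}}$; the factor $\mf K$ contributes to $K$-theory only through an action of $W^e_{\mf d}$ on a matrix-type algebra over $C(X_\nr^u(L))$, and by \cite[Theorem 1.4]{AfAu} this algebra is Morita equivalent to the twisted crossed product $C (X_\nr^u (L)) \rtimes \C[W^e_{\mf d}, \natural_{\mf d}]$. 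By Theorem \ref{thm:6.19} we are reduced to proving that $K_* \big( C (X_\nr^u (L)) \rtimes \C[W^e_{\mf d}, \natural_{\mf d}] \big)$ is free; and since this group is finitely generated (Atiyah's finiteness theorem, applied to the Schur cover of $W^e_{\mf d}$ and the corresponding idempotent), "free" here is the same as "torsion-free".

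Next I would bring in the hypothesis on $\mc G$. For an inner form of $GL_n$ the theory of simple types \cite{BuKu} (with its extension to inner forms) shows that $W_{\mf d}$ is a product of symmetric groups and that $\natural_{\mf d}$ is trivial; for a symplectic or special orthogonal group the analogous structure results show $W_{\mf d}$ is a product of symmetric groups and Weyl groups of types $B_m/C_m$ and $D_m$, again with $\natural_{\mf d}$ trivial. Moreover $W^e_{\mf d}$ is an extension of $W_{\mf d}$ by the finite abelian group $X_\nr (L,\delta)$, which acts on $X_\nr^u (L)$ by translations, hence freely; consequently every point-stabilizer in $W^e_{\mf d}$ meets $X_\nr (L,\delta)$ trivially and therefore embeds into $W_{\mf d}$. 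Thus we are left with a finite group $W^e_{\mf d}$ acting on the compact torus $X_\nr^u(L)$ through affine transformations, with trivial cocycle, all of whose isotropy groups are (isomorphic to) subgroups of a product of symmetric groups and classical Weyl groups.

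The remaining task is the purely topological statement, and it is the hard part: \emph{if a finite group $\Gamma$ acts on a compact torus $T$ through a lattice action and every isotropy subgroup of $\Gamma$ on $T$ is a subgroup of a product of symmetric groups and Weyl groups of types $A$, $B/C$, $D$, then $K^*_\Gamma(T) = K_*(C(T)\rtimes\Gamma)$ is torsion-free.} Here I would stratify $T$ by isotropy type, producing a finite filtration of $C(T)\rtimes\Gamma$ whose successive subquotients are, up to Morita equivalence and induction of ideals, of the form $C(U)\rtimes\C[\Gamma'',\natural'']$ for open strata $U$ and proper subquotient groups $\Gamma''$, and then feed this into the six-term sequences \eqref{eq:6.22} together with the equivariant Chern character of Theorem \ref{thm:6.3}. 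The two ingredients that make this work—and that genuinely use the classical types—are (i) that each fixed-point set $T^w$ is a disjoint union of subtori, so $\breve{H}^*(T^w;\Z)$ and the relevant invariant subgroups $\breve{H}^*(T^w;\Z)^{Z_\Gamma(w)}$ are torsion-free, and (ii) that the Atiyah--Hirzebruch/Chern-character spectral sequence for $K^*_\Gamma(T)$ degenerates at $E_2$ \emph{integrally}—not merely rationally as in Theorem \ref{thm:6.3}—and the stratification filtration splits, so that $K^*_\Gamma(T) \cong \bigoplus_m \breve{H}^{*+2m}(\widetilde T/\Gamma;\Z)$ as abelian groups. Controlling the integral torsion and the extension problems in reassembling the strata is the crux of the proof; it fails for exceptional-type Weyl groups, which is exactly why the statement is restricted to inner forms of $GL_n$, symplectic groups and special orthogonal groups. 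Once (i) and (ii) are in place, each $K_*(C_r^*(G)^{\mf s})$ is a finite direct sum of finitely generated torsion-free abelian groups, hence free, and $K_*(C_r^*(G))$ is a countable direct sum of free abelian groups, hence free.
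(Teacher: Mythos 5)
The paper itself gives no proof of this theorem: it is cited verbatim from \cite[Theorem 5.3]{SolComp}, so there is nothing internal to compare against. I will therefore assess the proposal on its own terms.

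Your opening reductions are sound and essentially forced: the Bernstein and Harish--Chandra decompositions plus Theorem \ref{thm:1.17} and \cite[Theorem 1.4]{AfAu} (or, in the logic of this survey, Theorem \ref{thm:6.18} and Corollary \ref{cor:6.20}) reduce the claim to showing that each $K_*\big(C(X_\nr^u(L))\rtimes\C[W^e_{\mf d},\natural_{\mf d}]\big)$, equivalently each $K^*_{W^e_{\mf d},\natural_{\mf d}}(X_\nr^u(L))$, is free abelian. Your observation that $X_\nr(L,\delta)$ acts freely by translation, so that point stabilizers in $W^e_{\mf d}$ inject into $W_{\mf d}$, is correct and useful. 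However, two serious gaps remain.

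First, the assertion that $\natural_{\mf d}$ is always trivial for symplectic and special orthogonal groups is not justified and is in fact delicate; the reference you invoke, \cite{AfAu}, is devoted precisely to analysing when nontrivial cocycles occur for classical groups, and does not let you simply declare them away. A correct argument must either establish triviality case by case (which requires the fine structure of types and intertwining operators for these groups) or handle the twisted equivariant K-theory directly.

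Second, and more fundamentally, the entire content of the theorem is concentrated in what you label as ``the remaining task'' and then do not carry out: the claim that $K^*_\Gamma(T)$ is torsion-free for the particular actions of the particular finite groups that arise. You state that (i) fixed-point sets are unions of subtori and (ii) the Chern-character/Atiyah--Hirzebruch spectral sequence degenerates integrally with split filtrations, but neither assertion is proved, and (ii) in particular is a strong statement that does not hold for arbitrary finite group actions on tori. You also offer the explanation that integral degeneration ``fails for exceptional-type Weyl groups,'' but this is not an established fact and is not why the theorem's hypotheses are what they are: the restriction to inner forms of $GL_n$ and to classical groups reflects that the Bernstein-block structure, types, R-groups, and 2-cocycles are explicitly known in those cases (via Bushnell--Kutzko, S\'echerre--Stevens, Heiermann, M\oe glin, etc.), allowing one to identify the relevant crossed products concretely and compute their K-theory; it is not known that torsion appears for exceptional groups. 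In short, you have reduced the problem to its essential difficulty, stated the difficulty, and stopped; as written the proposal is an outline rather than a proof, and its heuristic for the hypothesis is misleading.
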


\subsection{Relation with the Baum--Connes conjecture} \

K-theory of group-$C^*$-algebras figures prominently in the Baum--Connes
conjecture \cite{BCH,Val}. For any Hausdorff space $Y$ with a proper $G$-action,
Kasparov's equivariant KK-theory provides a notion of the $G$-equivariant
K-homology $K_*^G (Y)$. There is an assembly map
\[
\mu_{G,Y} : K_*^G (Y) \to K_* (C_r^* (G)) ,
\]
which can be defined in several (analytic) ways \cite[\S 6]{Val}. The
Baum--Connes conjecture asserts that $\mu_{G,Y}$ is an isomorphism when
$Y$ is a classifying space for proper $G$-actions, as in \cite[\S 1]{BCH}.

For our reductive $p$-adic group $G$, we can take as $Y$ the (extended)
Bruhat--Tits building $\mc B G$, see \cite[\S 6]{BCH}. In this case the
Baum--Connes conjecture is known to hold, a celebrated result of V. Lafforgue
\cite{Laf}. 

There is a more algebraic version of equivariant K-homology, called (equivariant)
chamber homology, see \cite[\S 3]{HiNi} and \cite[\S 2]{BHP}. It is available
for any totally disconnected locally compact group $G'$ acting on a 
polysimplicial complex $X'$, and can be defined as follows. Let $X'_n$ be
the set of $n$-dimensional polysimplices of $X'$. For every $\sigma \in X'_n$,
the stabilizer group $G'_\sigma$ is compact, and we form its (complex-valued)
Hecke algebra $\mc H (G'_\sigma)$. The usual boundary operator for polysimplices
induces a boundary operator 
\[
\partial_* \quad \text{on} \quad \bigoplus\nolimits_{n \geq 0} 
\bigoplus\nolimits_{\sigma \in X'_n} \mc H (G'_\sigma) ,
\]
which decreases the degrees by one. Taking $G$-coinvariants, we form the
differential complex with terms $C_ n = \big( \bigoplus\nolimits_{\sigma \in X'_n} 
\mc H (G'_\sigma) \big)_{G'}$. The homology of $(C_*,\partial_*)$ is
denoted
\[
CH_*^{G'} (X') = \bigoplus\nolimits_{n \geq 0} CH_n^{G'} (X') .
\]
This is similar to $HH_* (\mc H (G'))$, in the sense that it looks somewhat
like a resolution which could be used in the definition of $HH_n$ as a
derived functor. Higson and Nistor made that precise:

\begin{thm}\label{thm:6.21} 
\textup{\cite[Theorem 4.2]{HiNi}} \\
Assume that, for every $K' \in \CO (G')$, the fixed point set of $K'$ in
$X'$ is nonempty and contractible. Then there are natural isomorphisms
\[
CH_n^{G'}(X') \cong HH_n (\mc H (G'))_\cpt ,
\]
where \textup{cpt} means "supported on compact elements" as in Theorem \ref{thm:5.15}.
\end{thm}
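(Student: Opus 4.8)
The plan is to exhibit an explicit chain-level comparison between the chamber-homology complex $(C_*,\partial_*)$ and the bar-type complex computing $HH_*(\mc H (G'))$, and to show that the cellular resolution coming from the $G'$-action on $X'$ is a projective bimodule resolution of the "compact part" of $\mc H (G')$. Concretely, recall that $HH_*(\mc H(G'))$ can be computed from a projective $\mc H(G')\otimes\mc H(G')^{op}$-resolution of $\mc H(G')$; by Morita-type reductions and the local-unit structure of $\mc H(G')$ (Proposition \ref{prop:1.1}.b) one may work with the standard bar complex $\mc H(G')^{\otimes(*+1)}$, whose homology splits as in \eqref{eq:5.10} into a compact and a noncompact part according to whether the product $g_0g_1\cdots g_n$ lies in a compact subgroup. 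First I would set up, for each $n$, the module $\bigoplus_{\sigma\in X'_n}\mc H(G'_\sigma)$, together with the induction/inflation maps relating $\mc H(G'_\sigma)$ for a face $\sigma$ and its cofaces; the boundary operator $\partial_*$ is assembled from the simplicial boundary maps twisted by these transition maps, exactly as in \cite[\S 3]{HiNi}.

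The key input is the hypothesis that $X'^{K'}$ is nonempty and contractible for every $K'\in\CO(G')$. I would use this to prove that the augmented cellular chain complex
\[
\cdots \to \bigoplus_{\sigma\in X'_1}\mc H(G'/G'_\sigma) \to \bigoplus_{\sigma\in X'_0}\mc H(G'/G'_\sigma) \to \mc H(G')_\cpt \to 0
\]
is exact, i.e.\ gives a resolution of the compact part of the cocenter (or, in the bimodule version, of the compact summand of $\mc H(G')$ as a bimodule). The point is that for fixed $K'$, taking $K'$-fixed vectors turns this into the simplicial chain complex of $X'^{K'}$ with coefficients in a constant system, which is exact by contractibility; since $\mc H(G') = \bigcup_{K'}\mc H(G',K')$ and every piece is built from such fixed-point data, exactness passes to the limit. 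Each term $\bigoplus_{\sigma\in X'_n}\mc H(G'/G'_\sigma)$ is a direct sum of modules of the form $\mc H(G')\otimes_{\mc H(G'_\sigma)}\C$, which are projective because $G'_\sigma$ is compact (so $\mc H(G'_\sigma)$-modules are projective, as used in Paragraph \ref{par:4.5}), hence these terms are projective over $\mc H(G')$ resp.\ over $\mc H(G')\otimes\mc H(G')^{op}$ after the appropriate bimodule bookkeeping.

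Having such a projective resolution, the homology $CH_*^{G'}(X')$ is obtained by tensoring down and taking $G'$-coinvariants; comparing with the bar complex via an explicit chain map (the "assembly" at chain level, built from subdividing a simplex and recording which compact subgroup fixes it) then gives a natural map $CH_n^{G'}(X')\to HH_n(\mc H(G'))$. I would identify its image with the compact summand $HH_n(\mc H(G'))_\cpt$ by tracking the support condition: cellular chains only involve group elements fixing a simplex, hence lying in a compact subgroup, so the comparison map lands in the compact part, and conversely a chain homotopy built from the contractibility of $X'^{K'}$ shows every compact class is hit and that the map is injective. Naturality in $G'$ is then automatic from the functoriality of all the constructions.

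The main obstacle I expect is the homological-algebra bookkeeping needed to promote the one-sided statement (resolution of $\mc H(G')_\cpt$ as a module, which computes the compact part of the cocenter $HH_0$) to the full bimodule statement that computes $HH_n$ in all degrees: one must either realize the cellular complex as a resolution in the category of $\mc H(G')\otimes\mc H(G')^{op}$-modules — which requires carefully building the diagonal bimodule structure on the terms $\bigoplus_\sigma \mc H(G')\otimes_{\mc H(G'_\sigma)}\mc H(G'_\sigma)\otimes_{\mc H(G'_\sigma)}\mc H(G')$ and checking the boundary maps are bimodule maps — or, following Higson–Nistor, compare two spectral sequences. A secondary technical point is justifying the passage to the direct limit over $\CO(G')$ compatibly with the exactness argument, i.e.\ that contractibility of each $X'^{K'}$ really does assemble into exactness of the $\mc H(G')$-module complex rather than merely a pro-statement; this is where the hypothesis is used in its full strength and where one should be most careful.
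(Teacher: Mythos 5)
The paper does not prove this theorem; it is cited from Higson--Nistor \cite[Theorem 4.2]{HiNi}, so your proposal is a from-scratch reconstruction. Your overall strategy --- build a projective resolution modelled on the cellular chains of $X'$, feed the contractibility hypothesis into an exactness argument, and compare with the bar complex --- is the right skeleton and does resemble the Higson--Nistor approach. But the central step contains a concrete error.

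You claim that taking $K'$-fixed vectors of the complex $\bigoplus_{\sigma\in X'_n}\mc H(G'/G'_\sigma)$ turns it into the simplicial chain complex of the fixed-point set $X'^{K'}$. This is not correct. Since $K'$ is compact and the action is smooth, $K'$-invariants and $K'$-coinvariants agree and $\big(\C[X'_n]\big)^{K'} \cong \C[X'_n/K']$, the chains of the \emph{quotient} $X'/K'$, not of the fixed-point set $X'^{K'}$. The set of $K'$-fixed $n$-simplices, i.e.\ $\{\sigma : K'\subseteq G'_\sigma\}$, is in general a much smaller subset of $X'_n/K'$. So the contractibility of $X'^{K'}$ does not directly yield exactness of your complex after applying $(\,\cdot\,)^{K'}$, and your "passes to the limit" step has nothing to pass to. This is exactly the place where the hypothesis is supposed to do its work, and the argument breaks.

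What is actually needed is the decomposition of $\mc H(G')_\cpt$, as a smooth $G'$-module under conjugation, over $G'$-conjugacy classes of compact elements. After the Shapiro-type identification $HH_*(\mc H(G')) \cong H_*^{\mathrm{sm}}(G', \mc H(G')_{\mathrm{ad}})$ (which resolves the bimodule-versus-module bookkeeping you flag, since $\mc H(G')\cong \mathrm{ind}_{\Delta G'}^{G'\times G'}\C$), the compact summand of $HH_*$ becomes $H_*^{\mathrm{sm}}(G',\mc H(G')_\cpt)$, and one computes this one conjugacy class at a time. For a compact element $g$, the relevant geometric input is the contractibility of $X'^{g}$, the fixed-point set of the single element $g$. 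This is where the hypothesis on $X'^{K'}$ enters: $X'^{g}$ is the directed union of the $X'^{K'}$ over compact open $K'$ containing $g$, each contractible by assumption, and a directed colimit of contractible subcomplexes is contractible. The chamber complex is built precisely so that its "$g$-part" is (coinvariants of) the cellular chain complex of $X'^{g}$, and matching these pieces gives the isomorphism. Your proposal never isolates the role of individual compact elements $g$ and their fixed-point sets, and the incorrect identification of $(\,\cdot\,)^{K'}$ with chains of $X'^{K'}$ conceals exactly this missing ingredient.

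A secondary imprecision: you describe your complex as resolving "$\mc H(G')_\cpt$, the compact part of the cocenter." The cocenter is $HH_0$, a quotient of $\mc H(G')$, while $\mc H(G')_\cpt$ (functions supported on compact elements) is a $G'$-stable direct summand of $\mc H(G')$ under conjugation; these are different objects, and the resolution you want is of the latter, as a smooth $G'$-module. That said, this is a matter of wording and the more serious issue is the exactness argument above.
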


The definition of $CH_*^{G'}(X')$ can be reformulated with 
$\bigoplus_{\sigma \in X'_n / G'} R(G'_\sigma) \otimes_\Z \C$ instead of $C_n$,
see \cite[p. 215]{BHP}. In that way one can regard equivariant chamber homology as
an algebraic combination of ordinary homology and virtual representations of 
compact subgroups. This is reminiscent of how the algebraic K-theory of $\mc H (G)$
was computed in \cite{BaLu}.

Similar to \eqref{eq:6.40}, Voigt constructed an equivariant Chern character
\[
\mr{Ch}^{G'} : K_i^{G'} \to \bigoplus\nolimits_{m \geq 0} CH_{i + 2m}^{G'}.
\]

\begin{thm}\label{thm:6.22} 
\textup{\cite[\S 6]{Voi}} \\
Suppose that $X'$ is a finite dimensional locally finite polysimplicial complex,
and that $X' / G'$ is compact. Then 
\[
\mr{Ch}^{G'} \otimes \mr{id} : K_i^{G'}(X') \otimes_\Z \C \to 
\bigoplus\nolimits_{m \geq 0} CH_{i + 2m}^{G'}(X') \quad \text{is an isomorphism.}
\]
\end{thm}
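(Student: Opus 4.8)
The plan is to realize both sides of the asserted isomorphism as $G'$-equivariant homology theories on the category of finite-dimensional, locally finite, cocompact $G'$-polysimplicial complexes, to note that $\mr{Ch}^{G'}\otimes\mr{id}$ is a natural transformation between them, to check directly that it is an isomorphism on a single orbit, and then to conclude for arbitrary $X'$ by an induction on the skeleta. On the source side, $Y\mapsto K_*^{G'}(Y)\otimes_\Z\C$ is, via Kasparov's equivariant KK-theory, a $\Z/2$-graded equivariant homology theory for proper $G'$-actions: it is $G'$-homotopy invariant, satisfies excision, and has Mayer--Vietoris sequences and long exact sequences of pairs; tensoring with $\C$ preserves all of this since $\C$ is flat over $\Z$. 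On the target side, $Y\mapsto\bigoplus_{m\geq 0}CH_{*+2m}^{G'}(Y)$ is by construction the homology of the explicit chain complex with $C_n=\bigl(\bigoplus_{\sigma\in Y_n}\mc H(G'_\sigma)\bigr)_{G'}$ assembled from the polysimplicial structure, so it automatically carries relative groups, connecting maps, and long exact sequences attached to subcomplex inclusions, and it is a cellular (hence $G'$-homotopy invariant) theory. Since $\dim X'<\infty$, one has $CH_n^{G'}=0$ for $n$ large, so the periodicizing direct sum $\bigoplus_m CH_{*+2m}^{G'}$ is a finite sum; it turns the $\Z$-graded theory into a $\Z/2$-graded one with the same formal properties as $K_*^{G'}\otimes_\Z\C$.

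Next I would establish the base case: a single orbit $G'/K'$ with $K'=G'_\sigma$ a compact open subgroup of $G'$. Equivariantly $G'/K'$ is a point over $K'$, so $K_i^{G'}(G'/K')\cong K_i^{K'}(\pt)$, which is the representation ring $R(K')$ of the profinite group $K'$ for $i$ even and $0$ for $i$ odd; $R(K')$ is free abelian on $\Irr(K')$, hence embeds in $R(K')\otimes_\Z\C$. On the other side, the chain complex computing $\bigoplus_m CH_{i+2m}^{G'}(G'/K')$ is concentrated in degree $0$ with the single term $(\mc H(K'))_{G'}=\mc H(K')/[\mc H(K'),\mc H(K')]=HH_0(\mc H(K'))$, and for the profinite group $K'$ one has $\mc H(K')\cong\bigoplus_{\rho\in\Irr(K')}\End_\C(V_\rho)$, so $HH_0(\mc H(K'))\cong\bigoplus_{\rho}\C\cong R(K')\otimes_\Z\C$ (compare Example \ref{ex:4.A}). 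By the construction of the equivariant Chern character, on $G'/K'$ the map $\mr{Ch}^{G'}\otimes\mr{id}$ is precisely the comparison map $R(K')\otimes_\Z\C\to R(K')\otimes_\Z\C$ induced by the identity on characters, hence an isomorphism. (This is also the content of Theorems \ref{thm:6.21} and \ref{thm:5.15} in the trivial case $G'=K'$ compact, where $\mc H(K')$ is Morita equivalent to $\bigoplus_\rho\C$ and $HP_i(\mc H(K'))=\bigoplus_m HH_{i+2m}(\mc H(K'))$ matches $K_i(\mc H(K'))\otimes_\Z\C$.)

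Finally I would run the comparison argument. Filter $X'$ by its skeleta $(X')^{(0)}\subset(X')^{(1)}\subset\cdots\subset(X')^{(d)}=X'$ with $d=\dim X'<\infty$. Each pair $((X')^{(n)},(X')^{(n-1)})$ has the $G'$-homotopy type of a wedge of orbit cells of the form $G'/K'\times(\Delta^n,\partial\Delta^n)$; by suspension/excision together with homotopy invariance, both theories evaluated on such a pair reduce to the orbit computation of the previous step, where $\mr{Ch}^{G'}\otimes\mr{id}$ is already known to be an isomorphism. Feeding this into the long exact sequences of the pairs, using the naturality of $\mr{Ch}^{G'}$ with respect to those sequences and the Five Lemma, an induction on $n$ shows that $\mr{Ch}^{G'}\otimes\mr{id}$ is an isomorphism on each $(X')^{(n)}$, hence on $X'=(X')^{(d)}$. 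I expect the main obstacle to be the verification that chamber homology, together with its $2m$-periodicization and $\C$-coefficients, genuinely satisfies the excision and suspension axioms required of an equivariant homology theory, and that Voigt's Chern character commutes with all the connecting homomorphisms — that is, the full naturality of $\mr{Ch}^{G'}$ along the cellular filtration. One should also keep in mind that the $\C$-coefficients are essential: the integral statement fails in general, as the presence of torsion in $K_*(C_r^*(G))$ outside the cases of Theorem \ref{thm:6.24} shows, so the argument must genuinely use flatness of $\C$ over $\Z$ and the fact that $R(K')$ behaves well after complexification.
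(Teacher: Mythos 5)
The theorem is imported by citation from Voigt [Voi, \S 6]; the paper under review offers no proof of its own, so there is nothing in this paper for your argument to match or diverge from. That said, your sketch is the standard comparison-of-homology-theories argument, and it is a reasonable reconstruction of the outline one expects behind Voigt's result: check both sides are $\Z/2$-graded $G'$-equivariant homology theories on cocompact proper $G'$-polysimplicial complexes, verify the Chern character is an isomorphism on a single orbit $G'/K'$, and propagate along the skeleta filtration by the Five Lemma. Your base-case computation is correct and essentially immediate from the material in the paper: $K_0^{G'}(G'/K') \cong R(K')$ by the induction isomorphism for the compact open subgroup $K'$, while on $G'/K'$ the chamber complex is concentrated in degree zero with $C_0 \cong (\mc H(K'))_{K'} = HH_0(\mc H(K')) \cong R(K') \otimes_\Z \C$ exactly as in Example \ref{ex:4.A}, and the Chern character on a point recovers the classical character map. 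Your treatment of the periodicization is also sound: with $\dim X' < \infty$ the direct sum $\bigoplus_{m\geq 0} CH_{i+2m}^{G'}$ is finite, and because the $\Z_{\geq 0}$-graded long exact sequence of a pair terminates at $0$ on the right, the periodicized six-term sequences remain exact.

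The genuine work, which you correctly flag but do not carry out, sits exactly where you say it does: one must show that chamber homology is in fact a $G'$-homotopy-invariant theory with excision and functorial connecting maps (homotopy invariance is \emph{not} automatic for a cellular chain complex and requires a subdivision argument), and one must verify that Voigt's $\mr{Ch}^{G'}$ is a natural transformation of homology theories, compatible with the boundary maps of the long exact sequences. In Voigt's paper these verifications occupy the bulk of the work and are what make the final statement in \S 6 a clean harvest; the proposal compresses them into an acknowledgement. As an outline it is correct; as a proof it delegates all of the substance to precisely the steps that require proof.
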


For $G$ acting on $\mc B G$, the conditions in Theorem \ref{thm:6.21} are fulfilled
by the CAT(0)-property of $\mc B G$ and the Bruhat--Tits fixed point theorem
\cite[\S 2.3]{Tit}. The conditions on $X'$ in Theorem \ref{thm:6.22} hold for
$\mc B G$ by \cite[\S 2.2 and \S 2.5]{Tit}.

The various homology theories associated to $G$, its group algebras and $\mc B G$
can be combined in the following diagram, which is an extended version of 
\cite[Proposition 9.4]{BHP}:
\begin{equation}\label{eq:6.41}
\xymatrix{
K_i^G (\mc B G) \ar[d]^{\mr{Ch}^G} \ar[r]^{\mu_{G,\mc B G}} & K_i (C_r^* (G)) &
K_i (\mc S (G)) \ar[l] \ar[d]^{\mr{Ch}} \\
\bigoplus\limits_{m \geq 0} CH_{i + 2m}^G (\mc B G) \ar@2{-}[r]^{\hspace{-16mm} \sim} & 
\bigoplus\limits_{m \geq 0} HH_{i + 2m}(\mc H (G))_\cpt \cong HP_i (\mc H (G)) \ar[r] &
HP_i (\mc S (G), \hat \otimes_b) 
}
\end{equation}
Here the Chern character $\mr{Ch} : K_* (\mc S (G)) \to HP_* (\mc S (G), \hat \otimes_b)$ 
is defined as the direct limit over $K \in \CO (G)$ of 
$\mr{Ch} : K_* (\mc S (G,K)) \to HP_* (\mc S (G,K), \hat \otimes)$, which makes sense by
Theorem \ref{thm:5.11} and our definition of $K_* (\mc S (G))$.

\begin{thm}\label{thm:6.23}
\enuma{
\item All the horizontal maps in the diagram \eqref{eq:6.41} are natural isomorphisms.
\item The vertical maps in \eqref{eq:6.41} become isomorphisms if we apply $\otimes_\Z \C$
to the K-groups.
\item If we extend \eqref{eq:6.41} with inverses of all the arrows that are isomorphisms,
then the diagram commutes.
}
\end{thm}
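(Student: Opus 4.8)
The plan is to assemble Theorem \ref{thm:6.23} from the results already established in Sections \ref{sec:4}--\ref{sec:6}, treating the three parts in the natural order (a), then (b), then (c). For part (a) I would go left to right along the two rows of \eqref{eq:6.41}. The isomorphism $CH_*^G(\mc B G) \cong HH_*(\mc H(G))_\cpt$ is exactly Theorem \ref{thm:6.21}, whose hypotheses (the fixed point set of any $K \in \CO(G)$ in $\mc B G$ is nonempty and contractible) hold by the CAT(0)-property of $\mc B G$ and the Bruhat--Tits fixed point theorem \cite[\S 2.3]{Tit}, as remarked after Theorem \ref{thm:6.22}. The isomorphism $\bigoplus_{m\geq 0} HH_{i+2m}(\mc H(G))_\cpt \cong HP_i(\mc H(G))$ is Theorem \ref{thm:5.15} (Higson--Nistor/Schneider), applicable since $G$ acts properly on its affine building. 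The map $\mu_{G,\mc B G} : K_i^G(\mc B G) \to K_i(C_r^*(G))$ is an isomorphism by Lafforgue's theorem \cite{Laf}, i.e.\ the Baum--Connes conjecture for $G$ with $\mc B G$ as classifying space for proper actions. The map $K_i(\mc S(G)) \to K_i(C_r^*(G))$ is an isomorphism by Proposition \ref{prop:6.6}. Finally, the bottom-right horizontal arrow $HP_i(\mc H(G)) \to HP_i(\mc S(G),\hat\otimes_b)$ is an isomorphism by Corollary \ref{cor:5.14}. This accounts for every horizontal map.

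For part (b), the two vertical Chern character maps are handled separately but by parallel reasoning. The right-hand vertical map $\mr{Ch}: K_*(\mc S(G)) \to HP_*(\mc S(G),\hat\otimes_b)$ becomes an isomorphism after $\otimes_\Z \C$: by construction it is the direct limit over $K \in \CO(G)$ of $\mr{Ch}: K_*(\mc S(G,K)) \to HP_*(\mc S(G,K),\hat\otimes)$, each of which is an isomorphism after $\otimes_\Z\C$ by Theorem \ref{thm:6.8}; since $\otimes_\Z\C$ is exact and commutes with the relevant direct limits over the countable directed system $(K_n)$ (here one uses that the system is countable, so the direct products subtlety from \cite[Appendix]{SolCh} does not arise), the limit map is an isomorphism. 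For the left-hand vertical map $\mr{Ch}^G: K_i^G(\mc B G) \to \bigoplus_{m\geq 0} CH_{i+2m}^G(\mc B G)$, one invokes Voigt's Theorem \ref{thm:6.22}: its hypotheses (that $\mc B G$ is a finite dimensional locally finite polysimplicial complex with compact quotient $\mc B G / G$) hold by \cite[\S 2.2, \S 2.5]{Tit}, as already noted in the text.

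Part (c) is the compatibility assertion: once we adjoin inverses of all the arrows known to be isomorphisms, the resulting diagram commutes. I would reduce this to checking that the original square \eqref{eq:6.41} commutes (up to the identifications), which is the naturality of the Chern character with respect to the assembly map and the comparison maps. Concretely: the equivariant Chern character $\mr{Ch}^G$ of Voigt and the Chern character $\mr{Ch}$ for Fr\'echet algebras are both instances of (or compatible with) the bivariant Chern character, whose uniqueness via a universal property \cite[Korollar 6.5]{Cun} forces compatibility with the assembly map $\mu_{G,\mc B G}$ and with the algebra inclusions $\mc H(G) \hookrightarrow \mc S(G)$; this is precisely the content of \cite[Proposition 9.4]{BHP}, of which \eqref{eq:6.41} is stated to be an extended version. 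Since all the horizontal maps are already isomorphisms by part (a), commutativity of the square then propagates to the full diagram with inverses adjoined by a routine diagram chase.

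\textbf{Main obstacle.} The substantive input is entirely imported --- Lafforgue's theorem, Higson--Nistor, Voigt, and the Chern character isomorphism --- so no new hard analysis is needed. The one genuinely delicate point is the bottom row's middle identification and the commutativity in part (c): one must check that the two a priori different identifications $\bigoplus_m CH_{i+2m}^G(\mc B G) \cong \bigoplus_m HH_{i+2m}(\mc H(G))_\cpt \cong HP_i(\mc H(G))$ (via Theorems \ref{thm:6.21} and \ref{thm:5.15}) are compatible with the Chern characters on both sides, so that the square literally commutes rather than merely commutes up to an automorphism. I expect this to be the place where one must either cite \cite[Proposition 9.4]{BHP} directly or reproduce its naturality argument carefully; everything else is bookkeeping over the Bernstein decomposition and limits.
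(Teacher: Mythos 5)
Your proposal for parts (a) and (b) matches the paper's proof exactly: (a) cites Lafforgue \cite{Laf} and Proposition \ref{prop:6.6} for the top row, and Theorems \ref{thm:6.21}, \ref{thm:5.15} and Corollary \ref{cor:5.14} for the bottom row; (b) cites Theorems \ref{thm:6.22} and \ref{thm:6.8}. Your elaboration of the colimit argument for the right-hand Chern character in (b) is correct and more explicit than the paper's terse statement (and you correctly note that the product/colimit subtlety from \cite[Appendix]{SolCh} does not bite here).

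For part (c) the paper does not argue via \cite[Proposition 9.4]{BHP} or the universal property of Cuntz's bivariant Chern character as you suggest; it simply cites \cite[Theorem 3.7 and Lemma 3.9]{SolHP}, where the commutativity of the diagram (in the form needed here) was already established. Your heuristic — invoking \cite[Korollar 6.5]{Cun} to force compatibility between Voigt's equivariant Chern character and the Fr\'echet-algebra Chern character — is a sensible sketch but would need genuine work to turn into a proof, since $\mr{Ch}^G$ is not literally a bivariant Chern character in the sense of Cuntz. You correctly flag this as the one nontrivial step; the resolution is simply to outsource it to \cite{SolHP} rather than to re-derive it.
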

\begin{proof}
(a) For the upper line we refer to \cite{Laf} and Proposition \ref{prop:6.6}.
The first isomorphism on the lower line is Theorem \ref{thm:6.21}, the second is 
Theorem \ref{thm:5.15} and the third is Corollary \ref{cor:5.14}. \\
(b) This follows from Theorems \ref{thm:6.22} and \ref{thm:6.8}.\\
(c) See \cite[Theorem 3.7 and Lemma 3.9]{SolHP}.
\end{proof}

All the terms in \eqref{eq:6.41} that do not involve $\mc B G$ admit a natural
Bernstein decomposition, see Lemma \ref{lem:5.12} and \eqref{eq:6.9}. 
We computed the summands associated to an arbitrary $\mf s \in \mf B (G)$ explicitly
in Theorem \ref{thm:5.13} and Corollary \ref{cor:6.9}. Thus the isomorphisms
$\mu_{G,\mc B G}$ and Theorem \ref{thm:6.21} provide natural Bernstein decompositions
of $K_*^G (\mc B G)$ and $CH_*^G (\mc B G)$. 

Unfortunately it remains inclear how to describe these decompositions in terms of the
action of $G$ on its Bruhat--Tits building $\mc B G$. This reflects the difficulty
of recovering the Bernstein decomposition via restrictions of $G$-representations to
compact open subgroups of $G$.

\subsection{A progenerator of Mod$(C_r^* (G)^{\mf s})$ } \
\label{par:6.3}

These and the next paragraphs contain some new material, which aims to compute
$K_* (C_r^* (G))$ including torsion elements.
Fix $\mf s = [L,\sigma]_G \in \mf B (G)$, with a unitary 
$\sigma \in \Irr_\cusp (L)$. Recall from Proposition \ref{prop:3.4} that 
$\Rep (G)^{\mf s}$ has a canonical progenerator
\[
\Pi_{\mf s} = I_P^G (\ind_{L^1}^L (\sigma)) .
\]
For the remainder of the section, we pick $K \in \CO (G)$ such that
\begin{equation}\label{eq:6.39}
\mc H (G)^{\mf s} \quad \text{and} \quad \mc H(G,K)^{\mf s} 
\text{ are Morita equivalent,}
\end{equation}
for instance $K_{\mf s}$ from \eqref{eq:5.2}. Then $\langle K \rangle \Pi_{\mf s}$ is a 
progenerator of $\Mod (\mc H (G,K)^{\mf s})$. Furthermore Proposition \ref{prop:1.22}
shows that 
\begin{equation}\label{eq:6.42}
C_r^* (G)^{\mf s} \quad \text{and} \quad C_r^* (G,K)^{\mf s} \text{ are Morita equivalent.}
\end{equation}
We define
\[
\begin{array}{lll}
i_{\mf s}^c & := & C_r (G)^{\mf s} \otimes_{\mc H (G)^{\mf s}} \Pi_{\mf s} ,\\
i_{\mf s,K}^c & := & C_r (G,K)^{\mf s} \otimes_{\mc H (G,K)^{\mf s}}
\langle K \rangle \Pi_{\mf s} .
\end{array}
\]

\begin{lem}\label{lem:6.10}
\enuma{
\item \hspace{-2mm} The $C_r^* (G,K)^{\mf s}$-module
$\Pi_{\mf s,K}^c$ is a progenerator of $\Mod (C_r^* (G,K)^{\mf s})$.
\item The $C_r^* (G)^{\mf s}$-module $\Pi_{\mf s}^c$ is a progenerator of 
$\Mod (C_r^* (G)^{\mf s})$ and $\Pi_{\mf s,K}^c = \langle K \rangle \Pi_{\mf s}^c$.
}
\end{lem}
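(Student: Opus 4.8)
The plan is to reduce everything to the already-established equivalence of categories and Morita theory for the Hecke algebra, and then transport it to the $C^*$-completion via the standard tensoring-up procedure. First I would recall why $\Pi_{\mf s}$ is a progenerator of $\Rep(G)^{\mf s}$ (Proposition \ref{prop:3.4}) and why, under the assumption \eqref{eq:6.39}, the module $\langle K\rangle\Pi_{\mf s}$ is a progenerator of $\Mod(\mc H(G,K)^{\mf s})$: this follows because the Morita bimodules in Theorem \ref{thm:1.20} are $\langle K\rangle\mc H(G)^{\mf s}$ and $\mc H(G)^{\mf s}\langle K\rangle$, so the progenerator $\Pi_{\mf s}$ corresponds under the equivalence $\Mod(\mc H(G)^{\mf s})\cong\Mod(\mc H(G,K)^{\mf s})$ precisely to $\langle K\rangle\Pi_{\mf s}$, and equivalences of module categories carry progenerators to progenerators.

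Next, for part (a), I would show that $\Pi_{\mf s,K}^c = C_r^*(G,K)^{\mf s}\otimes_{\mc H(G,K)^{\mf s}}\langle K\rangle\Pi_{\mf s}$ is a progenerator of $\Mod(C_r^*(G,K)^{\mf s})$. Since $\langle K\rangle\Pi_{\mf s}$ is finitely generated projective over $\mc H(G,K)^{\mf s}$ (being a progenerator of that unital algebra), it is a direct summand of a free module $(\mc H(G,K)^{\mf s})^{\oplus n}$, and tensoring the splitting idempotent with $C_r^*(G,K)^{\mf s}$ exhibits $\Pi_{\mf s,K}^c$ as a direct summand of $(C_r^*(G,K)^{\mf s})^{\oplus n}$; hence it is finitely generated projective. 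That it is a generator: write $\langle K\rangle\Pi_{\mf s}$ as a summand of which $\mc H(G,K)^{\mf s}$ is in turn a summand of some power, i.e. use that $\langle K\rangle\Pi_{\mf s}$ is a \emph{pro}generator to get $\mc H(G,K)^{\mf s}\mid (\langle K\rangle\Pi_{\mf s})^{\oplus m}$; tensoring gives $C_r^*(G,K)^{\mf s}\mid (\Pi_{\mf s,K}^c)^{\oplus m}$, so $\Pi_{\mf s,K}^c$ generates $\Mod(C_r^*(G,K)^{\mf s})$. Equivalently, $\End_{C_r^*(G,K)^{\mf s}}(\Pi_{\mf s,K}^c)\cong C_r^*(G,K)^{\mf s}\otimes_{\mc H(G,K)^{\mf s}}\End_{\mc H(G,K)^{\mf s}}(\langle K\rangle\Pi_{\mf s})\otimes_{\mc H(G,K)^{\mf s}}C_r^*(G,K)^{\mf s}$-type identifications identify the relevant endomorphism algebra, and the module implements a Morita equivalence between $C_r^*(G,K)^{\mf s}$ and (a $C^*$-completion of) $\End_{\mc H(G,K)^{\mf s}}(\langle K\rangle\Pi_{\mf s})$.

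For part (b), I would first establish $\Pi_{\mf s,K}^c=\langle K\rangle\Pi_{\mf s}^c$. This is a bookkeeping identity: $\langle K\rangle$ is an idempotent in $C_r^*(G)^{\mf s}$, and $\langle K\rangle C_r^*(G)^{\mf s}=C_r^*(G,K)^{\mf s}\otimes_{\mc H(G,K)^{\mf s}}\langle K\rangle\mc H(G)^{\mf s}$ as right modules (both sides are the natural completion of $\langle K\rangle\mc H(G)^{\mf s}$), combined with $\langle K\rangle\Pi_{\mf s}=\langle K\rangle\mc H(G)^{\mf s}\otimes_{\mc H(G)^{\mf s}}\Pi_{\mf s}$; associativity of tensor products then gives $\langle K\rangle\Pi_{\mf s}^c=\langle K\rangle\bigl(C_r^*(G)^{\mf s}\otimes_{\mc H(G)^{\mf s}}\Pi_{\mf s}\bigr)=C_r^*(G,K)^{\mf s}\otimes_{\mc H(G,K)^{\mf s}}\langle K\rangle\Pi_{\mf s}=\Pi_{\mf s,K}^c$. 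Then, to deduce that $\Pi_{\mf s}^c$ is a progenerator of $\Mod(C_r^*(G)^{\mf s})$ from part (a), I would invoke the Morita equivalence \eqref{eq:6.42} between $C_r^*(G)^{\mf s}$ and $C_r^*(G,K)^{\mf s}$ (Proposition \ref{prop:1.22}.c), whose bimodules are $\langle K\rangle C_r^*(G)^{\mf s}$ and $C_r^*(G)^{\mf s}\langle K\rangle$; tensoring the progenerator $\Pi_{\mf s,K}^c$ with $C_r^*(G)^{\mf s}\langle K\rangle$ over $C_r^*(G,K)^{\mf s}$ recovers $\Pi_{\mf s}^c$ (using the analogue of the identity above on the other side), and Morita equivalences preserve progenerators.

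The main obstacle I anticipate is purely the careful handling of non-unital $C^*$-algebras and completed tensor products: one must check that the algebraic tensor-product constructions $\otimes_{\mc H(G,K)^{\mf s}}$ land in the right Hilbert-module or simply $C^*$-module category (the algebras $C_r^*(G,K)^{\mf s}$ are unital, which helps), that "progenerator" is being used in the purely algebraic sense of finitely generated projective generator of the module category (so that one does not need Hilbert-module completions at all, because $\mc H(G,K)^{\mf s}$ is unital and $\langle K\rangle\Pi_{\mf s}$ is genuinely finitely generated projective), and that the idempotent manipulations with $\langle K\rangle$ really do commute with the completion functor $\mc H(G,K)^{\mf s}\rightsquigarrow C_r^*(G,K)^{\mf s}$. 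Once it is agreed that "module category" here means algebraic modules over the unital algebra $C_r^*(G,K)^{\mf s}$ (respectively the idempotented algebra $C_r^*(G)^{\mf s}$), each of these checks is routine, and the proof is essentially the two-line argument above.
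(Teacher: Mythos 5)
Your part (a) is sound and follows the paper's strategy: pass to the unital algebra $\mc H(G,K)^{\mf s}$, where $\langle K\rangle\Pi_{\mf s}$ is finitely generated projective, tensor up to get projectivity of $\Pi_{\mf s,K}^c$, then verify generation. You use the trace-ideal criterion (exhibit $\mc H(G,K)^{\mf s}$ as a summand of $(\langle K\rangle\Pi_{\mf s})^{\oplus m}$ and tensor), while the paper uses the change-of-rings adjunction $\Hom_{C_r^*(G,K)^{\mf s}}(\Pi_{\mf s,K}^c,M)\cong\Hom_{\mc H(G,K)^{\mf s}}(\langle K\rangle\Pi_{\mf s},M)\neq 0$; both are standard and correct.

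In part (b), however, you reverse the two claims and this creates a genuine gap. Your ``bookkeeping identity'' $\langle K\rangle C_r^*(G)^{\mf s}\cong C_r^*(G,K)^{\mf s}\otimes_{\mc H(G,K)^{\mf s}}\langle K\rangle\mc H(G)^{\mf s}$ is false: the multiplication map from the right side has image $C_r^*(G,K)^{\mf s}\cdot\mc H(G)^{\mf s}=\bigcup_{K'}C_r^*(G,K)^{\mf s}\langle K'\rangle$, which is a proper \emph{dense} subspace of the closed right ideal $\langle K\rangle C_r^*(G)^{\mf s}$, since a general $\langle K\rangle a$ with $a\in C_r^*(G)^{\mf s}$ is not right $K'$-biinvariant for any $K'\in\CO(G)$. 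Your parenthetical justification --- that both sides are ``the natural completion'' of $\langle K\rangle\mc H(G)^{\mf s}$ --- is exactly the confusion: the right side is an algebraic tensor product, not a norm completion. What the paper actually uses (implicitly) is the mirror identity $C_r^*(G)^{\mf s}\langle K\rangle\cong C_r^*(G)^{\mf s}\otimes_{\mc H(G)^{\mf s}}\mc H(G)^{\mf s}\langle K\rangle$, which \emph{is} an isomorphism because $a\langle K\rangle\mapsto a\langle K\rangle\otimes\langle K\rangle$ is a two-sided inverse of the multiplication map; this works because $\langle K\rangle$ lives in the ring $\mc H(G)^{\mf s}$ one tensors over, whereas no analogous section exists for your left-handed version. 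Your desired conclusion $\langle K\rangle\Pi_{\mf s}^c=\Pi_{\mf s,K}^c$ is of course true --- after tensoring against the nondegenerate $\mc H(G)^{\mf s}$-module $\Pi_{\mf s}$ the discrepancy washes out --- but your route only gives a natural surjection $\Pi_{\mf s,K}^c\to\langle K\rangle\Pi_{\mf s}^c$ and you do not address injectivity. The paper's order of operations avoids this entirely: first show $C_r^*(G)^{\mf s}\langle K\rangle\otimes_{C_r^*(G,K)^{\mf s}}\Pi_{\mf s,K}^c$ is a progenerator by Morita transport of part (a), then simplify this tensor product to $\Pi_{\mf s}^c$ using the correct right-sided identity together with Morita equivalence \eqref{eq:6.39} and nondegeneracy of $\Pi_{\mf s}$, and only afterwards read off $\langle K\rangle\Pi_{\mf s}^c=\Pi_{\mf s,K}^c$ by cutting with $\langle K\rangle$.
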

\begin{proof}
(a) The algebra $\mc H (G,K)^{\mf s}$ is unital and its module $\langle K \rangle 
\Pi_{\mf s}$ is finitely generated projective. So it is a direct summand of a free module
of finite rank, say $\mc H (G,K)^n$. Then $\Pi_{\mf s,K}^c$ is a direct summand
of $(C_r^* (G,K)^{\mf s})^n$, so it is finitely generated and projective.
For any nonzero $M \in \Mod (C_r^* (G,K)^{\mf s})$ we have
\[
\Hom_{C_r^* (G,K)^{\mf s}} ( \Pi_{\mf s,K}^c, M) \cong
\Hom_{\mc H (G,K)^{\mf s}} (\langle K \rangle \Pi_{\mf s},M) \neq 0 .
\] 
Therefore $\Pi_{\mf s,K}^c$ generates $\Mod (C_r^* (G,K)^{\mf s})$.\\
(b) By part (a) and \eqref{eq:6.42}, $C_r^* (G)^{\mf s} \langle K \rangle 
\otimes_{C_r^* (G,K)^{\mf s}} \Pi_{\mf s,K}^c$ is a progenerator of\\ 
$\Mod (C_r^* (G)^{\mf s})$. It can be rewritten as 
\begin{multline}\label{eq:6.43}
C_r^* (G)^{\mf s} \langle K \rangle 
\otimes_{C_r^* (G,K)^{\mf s}} C_r^* (G,K)^{\mf s} \otimes_{\mc H (G,K)^{\mf s}}
\langle K \rangle \mc H (G)^{\mf s} \otimes_{\mc H (G)^{\mf s}} \Pi_{\mf s} = \\
C_r^* (G)^{\mf s} \langle K \rangle \otimes_{\mc H (G,K)^{\mf s}} 
\langle K \rangle \mc H (G)^{\mf s} \otimes_{\mc H (G)^{\mf s}} \Pi_{\mf s} .
\end{multline}
By \eqref{eq:6.39} this reduces to $C_r (G)^{\mf s} \otimes_{\mc H (G)^{\mf s}} 
\Pi_{\mf s} = \Pi_{\mf s}^c$, which is therefore also a progenerator. It follows
that
\[
\langle K \rangle \Pi_{\mf s}^c = \langle K \rangle C_r^* (G)^{\mf s} \langle K 
\rangle \otimes_{C_r^* (G,K)^{\mf s}} \Pi_{\mf s,K}^c = \Pi_{\mf s,K}^c . \qedhere
\]
\end{proof}

It is easier to work with $\Pi_{\mf s,K}^c$ than with $\Pi_{\mf s}^c$ because the 
algebra $C_r^* (G,K)^{\mf s}$ is unital and its irreducible modules have finite
dimension. The upcoming results
have versions for $\Pi_{\mf s}^c$, which can be derived with arguments like in
the proof of Lemma \ref{lem:6.10}.b.
By Lemma \ref{lem:6.10}.a there exists $n_{\mf s}^c \in \N$ and an idempotent
$e_{\mf s}^c \in M_{n_{\mf s}^c} (C_r^* (G,K)^{\mf s})$ such that
\[
\Pi_{\mf s,K}^c \cong (C_r^* (G,K)^{\mf s})^{n_{\mf s}^c} e_{\mf s}^c .
\]
Then $\End_{C_r^* (G,K)^{\mf s}} (\Pi_{\mf s,K}^c)^{op}$, which by definition 
acts from the right on $\Pi_{\mf s,K}^c$, is isomorphic to 
$e_{\mf s}^c M_{n_{\mf s}^c} (C_r^* (G,K)^{\mf s}) e_{\mf s}^c$. In particular
$\End_{C_r^* (G,K)^{\mf s}} (\Pi_{\mf s,K}^c)^{op}$ is isomorphic to a corner 
in the Banach algebra $M_{n_{\mf s}^c} (C_r^* (G,K)^{\mf s})$, so 
\begin{equation}\label{eq:6.28}
\End_{C_r^* (G,K)^{\mf s}}(\Pi_{\mf s,K}^c )^{op} \quad \text{and} \quad
\End_{C_r^* (G,K)^{\mf s}} (\Pi_{\mf s,K}^c ) \text{ are Banach algebras.}
\end{equation}
As an instance of Proposition \ref{prop:3.1}, we find that
\begin{equation}\label{eq:6.29} 
C_r^* (G,K)^{\mf s} \quad \text{and} \quad \End_{C_r^* (G,K)^{\mf s}}
(\Pi_{\mf s,K}^c)^{op} \text{ are Morita equivalent.}
\end{equation}
This works both as abstract rings and as Banach algebras, with the Morita
bimodules
\[
\Pi_{\mf s,K}^c \quad \text{and} \quad e_{\mf s}^c (C_r^* (G,K)^{\mf s})^{n_{\mf s}^c} 
\cong \Hom_{C_r^* (G,K)^{\mf s}}(\Pi_{\mf s,K}^c, C_r^* (G,K)^{\mf s}) .
\]
Recall the subset $\Delta (G,\mf s) \subset \Delta (G)$ from \eqref{eq:1.35}.

\begin{lem}\label{lem:6.11}
There exist positive integers $n_{\mf d}$ for $\mf d = [M,\delta] \in 
\Delta (G,\mf s)$, such that 
\[
\Pi_{\mf s,K}^c \cong \bigoplus_{[M,\delta] \in \Delta (G,\mf s)}
\langle K \rangle I_{M U_Q}^Q \big( \delta \otimes C(X_\nr^u (M)) \big)^{n_{[M,\delta]}}
\quad \text{as } C_r^* (G,K)^{\mf s} \text{-modules.}
\]
\end{lem}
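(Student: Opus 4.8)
The plan is to split $C_r^*(G,K)^{\mf s}$ into its Harish--Chandra blocks, observe that $\Pi_{\mf s,K}^c$ splits accordingly, and then identify each block-component with a direct power of the standard ``parabolically induced'' progenerator of that block. By Lemma~\ref{lem:6.10}(a), $\Pi_{\mf s,K}^c$ is a progenerator of $\Mod(C_r^*(G,K)^{\mf s})$; by \eqref{eq:1.35} we have an orthogonal decomposition $C_r^*(G,K)^{\mf s}=\bigoplus_{\mf d\in\Delta(G,\mf s)}C_r^*(G,K)_{\mf d}$ into unital $C^*$-algebras, say with central idempotents $p_{\mf d}$. Hence $\Pi_{\mf s,K}^c=\bigoplus_{\mf d}p_{\mf d}\Pi_{\mf s,K}^c$; a routine argument (a direct summand of a free module stays projective, and block orthogonality together with the generator property forces $\Hom_{C_r^*(G,K)_{\mf d}}(p_{\mf d}\Pi_{\mf s,K}^c,M)\neq0$ for every nonzero $M$) shows each $p_{\mf d}\Pi_{\mf s,K}^c$ is a finitely generated projective generator of $\Mod(C_r^*(G,K)_{\mf d})$, and in particular is nonzero.

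Second, I would introduce, for $\mf d=[M,\delta]$ with the chosen parabolic $Q=MU_Q$, the $C_r^*(G,K)_{\mf d}$-module $\Sigma_{\mf d,K}:=\langle K\rangle I_{MU_Q}^{G}(\delta\otimes C(X_\nr^u(M)))$ (the module on the right-hand side of the statement), i.e.\ the continuous sections of the field $\chi\mapsto I_{MU_Q}^G(\delta\otimes\chi)^K$ over $X_\nr^u(M)$, and check that it too is a finitely generated projective generator of $\Mod(C_r^*(G,K)_{\mf d})$: finite generation and projectivity from Theorem~\ref{thm:1.17} (the field has finite-dimensional fibers over a compact base), and the generator property from Theorem~\ref{thm:1.9} (every element of $\Irr(C_r^*(G,K)_{\mf d})$ occurs in some fiber). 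It then remains to prove $p_{\mf d}\Pi_{\mf s,K}^c\cong\Sigma_{\mf d,K}^{\,n_{\mf d}}$ for a positive integer $n_{\mf d}$. The approach is to view $p_{\mf d}\Pi_{\mf s,K}^c$ as a field of $G$-representations over $X_\nr^u(M)$ and compare it with $\Sigma_{\mf d,K}$ point by point. Over the Zariski-open dense locus of $X_\nr^u(M)$ where parabolic induction is irreducible (generic irreducibility of parabolic induction), the block $C_r^*(G,K)_{\mf d}$ has Hausdorff spectrum with simple fibers by Theorem~\ref{thm:1.17}, and $p_{\mf d}\Pi_{\mf s,K}^c$ restricts to a direct sum of copies of $\Sigma_{\mf d,K}$; using second adjointness (Theorem~\ref{thm:1.8}) and Frobenius reciprocity for the compact induction $\ind_{L^1}^L$ one computes that the number of copies is locally constant, equal to a single integer $n_{\mf d}$.

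Third, I would extend this identification across the non-generic locus, where $I_{MU_Q}^G(\delta\otimes\chi)$ becomes reducible and the local structure of $C_r^*(G,K)_{\mf d}$ degenerates. Near such a point the analytic localization of $\End_G(\Pi_{\mf s})$ is governed by a twisted graded Hecke algebra (Theorem~\ref{thm:3.10}), whose module category and its tempered part are completely understood; combining this with the complete reducibility of tempered unitary representations and with a Clifford-theoretic continuity argument for the decomposition of induced families (in the spirit of Lemma~\ref{lem:2.10}, transported to $G$ via Theorem~\ref{thm:3.10}), one shows that $p_{\mf d}\Pi_{\mf s,K}^c$ and $\Sigma_{\mf d,K}^{\,n_{\mf d}}$ remain isomorphic over a neighbourhood of that point. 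Since a finitely generated projective module over the subhomogeneous (type $\mathrm{I}$) $C^*$-algebra $C_r^*(G,K)_{\mf d}\cong\big(C(X_\nr^u(M))\otimes\End_\C(I_{K_0\cap Q}^{K_0}(V_\delta)^K)\big)^{W^e_{\mf d}}$ is determined by its associated ($W^e_{\mf d}$-twisted equivariant) vector bundle on $X_\nr^u(M)$, these local identifications glue to $p_{\mf d}\Pi_{\mf s,K}^c\cong\Sigma_{\mf d,K}^{\,n_{\mf d}}$; positivity of $n_{\mf d}$ is forced by $p_{\mf d}\Pi_{\mf s,K}^c\neq0$. Summing over $\mf d\in\Delta(G,\mf s)$ then yields the asserted decomposition.

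I expect the main obstacle to be precisely this last gluing step: showing that the generic multiplicity $n_{\mf d}$ is genuinely constant over all of $X_\nr^u(M)$, including across the points of reducibility, and that no hidden ``difference bundle'' appears there — equivalently, that the equality of K-theory data forced by the fiberwise comparison can be promoted to an honest isomorphism of $C_r^*(G,K)_{\mf d}$-modules. The description of the localizations of $\End_G(\Pi_{\mf s})$ by twisted graded Hecke algebras and the $C^*$-algebraic semisimplicity of the tempered dual are the tools I would rely on to close this gap.
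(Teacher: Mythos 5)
Your route is genuinely different from the paper's, and it contains a gap at exactly the point you flag as the ``main obstacle''; that gap is real, and the tools you propose do not close it.

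The paper's proof does not set up a comparison between two abstractly given finitely generated projective generators. It works directly with the explicit tensor-product definition $\Pi_{\mf s,K}^c = C_r^*(G,K)^{\mf s}\otimes_{\mc H(G,K)^{\mf s}}\langle K\rangle\Pi_{\mf s}$, regarding $\langle K\rangle\Pi_{\mf s}$ as an ``algebraic direct integral'' of the family $\{\langle K\rangle I_P^G(\sigma\otimes\chi)\}_{\chi\in X_\nr(L)}$. Tensoring with $C_r^*(G,K)^{\mf s}$ kills the nontempered constituents; Harish-Chandra's Theorem~\ref{thm:1.9} identifies the surviving pieces as direct summands of $I_Q^G(\delta\otimes\chi_M)$; and the multiplicity of $I_Q^G(\delta\otimes\chi_M)$ in the fiber over $\chi_M$ is seen to be constant in $\chi_M$ because it is counted by the finite set of representatives of $\mathrm{Sc}(\delta)$ in $X_\nr(L)\sigma$, which does not move with $\chi_M$. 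With these data the continuous field is \emph{exhibited}, not compared, as $\langle K\rangle I_Q^G(C(X_\nr^u(M))\otimes\delta)^{n_{\mf d}}$; projectivity is invoked at the end to justify the passage from the fiberwise description to this explicit form.

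Your version abstracts $p_{\mf d}\Pi_{\mf s,K}^c$ to an unknown finitely generated projective $C_r^*(G,K)_{\mf d}$-module and proposes to recognise it as $\Sigma_{\mf d,K}^{n_{\mf d}}$ by matching fiberwise multiplicities and gluing. This is where the argument breaks. A finitely generated projective module over $C_r^*(G,K)_{\mf d}\cong\bigl(C(X_\nr^u(M))\otimes\End_\C(I_{K_0\cap Q}^{K_0}(V_\delta)^K)\bigr)^{W_{\mf d}^e}$ is a twisted equivariant vector bundle over $X_\nr^u(M)$, and two such bundles with the same fiber dimension at every point need not be isomorphic: the nontrivial invariant is precisely the global twist. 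The localizations to twisted graded Hecke algebras (Theorem~\ref{thm:3.10}) and the semisimplicity of the tempered fibers control the module \emph{in a neighbourhood of each point}, which is exactly the kind of information that cannot detect the global twist. Your parenthetical ``equivalently, that the equality of K-theory data forced by the fiberwise comparison can be promoted'' is also off: fiberwise comparison gives only the rank, not the K-theory class, so even equality in $K_0$ is not obtained for free, and even granted it, $K_0$-equality does not yield a module isomorphism without a cancellation/stable-range argument on the compact torus $X_\nr^u(M)$ with $W_{\mf d}^e$-equivariance, which you would have to supply separately. The decisive advantage of the paper's route is that the trivialization of the field is built into the construction of $\Pi_{\mf s}$ itself (a free $\mc O(X_\nr(L))$-module), so the global-twist question never arises.

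If you want to keep your two-progenerator strategy, the correct way to close the gap is not K-theoretic gluing but to track, from the outset, that $\langle K\rangle\Pi_{\mf s}$ and hence $\Pi_{\mf s,K}^c$ is a \emph{trivial} continuous field (i.e.\ $C(X_\nr^u(L))$ acts on a fixed finite-dimensional $K$-space, cf.\ \eqref{eq:1.19}); that observation immediately forces the vector bundle over each block to be trivial, and the rest of your argument then goes through.
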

\begin{proof}
The $\mc H (G,K)^{\mf s}$-module $\langle K \rangle \Pi_{\mf s}$ is a direct integral
(in an algebraic sense) of the representations $I_P^G (\sigma \otimes \chi)$
with $\chi \in X_\nr (L)$. Tensoring with $C_r (G,K)^{\mf s}$ kills all
nontempered irreducible subquotients. By Theorem \ref{thm:1.9} every 
irreducible tempered subquotient of $\Pi_{\mf s}$ arises as a direct summand
of $I_Q^G (\delta)$, where $Q \subset G$ is a parabolic subgroup containing
$P$, $M$ is a Levi factor of $Q$ containing $L$ and $\delta \in \Irr (M)$
is square-integrable modulo centre. Then $\delta$ is a subquotient of 
$I_{M \cap P}^M (\sigma \otimes \chi)$ for some $\chi \in X_\nr (L)$.
Sometimes the copy of $I_Q^G (\delta)$ in $I_P^G (\sigma \otimes \chi)$ is
annihilated by applying $C_r (G,K)^{\mf s} \otimes_{\mc H (G,K)^{\mf s}}$,
because it belongs to a subrepresentation of $I_P^G (\sigma \otimes \chi)$
generated by elements from nontempered representations.

For each $\chi_M \in X_\nr^u (M)$ the discrete series representation
\begin{equation}\label{eq:6.10}
\delta \otimes \chi_M \quad \text{is a subquotient of} \quad 
I_{M \cap P}^M (\sigma \otimes \chi_M |_L \chi ).
\end{equation}
Here $(L,\sigma \otimes \chi)$ represents the cuspidal support of $\delta$
and, while $\chi$ is not unique, there are only finitely many possibilities
because Sc$(\delta)$ has only finitely many re\-pre\-sentatives in $\Irr (L)$. 
It follows that $\Pi_{\mf s,K}^c$ is a direct integral of the representations
$I_Q^G (\delta \otimes \chi_M)$ with $[M,\delta] \in \Delta (G,\mf s)$, and
the number of times such a representations appears as a subquotient of 
$\Pi_{\mf s,K}^c$ depends only on $[M,\delta]$. Each $n_{\mf d}$ is
nonzero because $\Pi_{\mf s,K}^c$ is a progenerator (Lemma \ref{lem:6.10}.a).

We know the structure of $C_r^* (G,K)^{\mf s}$ from Theorem \ref{thm:1.17}
and \eqref{eq:1.35}. More concretely, by \eqref{eq:1.33} $C_r^* (G,K)^{\mf s}$
has direct summands 
\[
C_r^* (G,K)_{[M,\delta]} = \Big( C(X_\nr^u ( M)) \otimes \End_\C 
\big( \ind_{K_0 \cap Q}^{K_0} (V_\delta)^K \big) \Big)^{W_{[M,\delta]}} .
\]
The projectivity of $\Pi_{\mf s,K}^c$ as $C_r^* (G,K)^{\mf s}$-module
(Lemma \ref{lem:6.10}.a) implies that the direct integral of the representations
$I_Q^G (\delta \otimes \chi_M)$ from \eqref{eq:6.10} occurs in the form
\[
\langle K \rangle I_Q^G \big( C(X_\nr^u (M)) \otimes \delta \big). \qedhere
\]
\end{proof}

\subsection{Construction of an action on the progenerator} \

We want to define an action of $C(X_\nr^u (L)) \rtimes \C [W_{\mf s}^e,
\natural_{\mf s}]$ on $\Pi_{\mf s,K}^c$ by $C_r^* (G,K)^{\mf s}$-intertwiners, 
which for finite length tempered representations recovers Theorems \ref{thm:3.11} 
and \ref{thm:3.13}. Unfortunately this will be a rather technical affair, because 
we have to involve a lot of arguments from \cite{SolEnd}. The easy part is to 
let $C(X_\nr^u (L))$ act canonically on $\Pi_{\mf s}^c$ and $\Pi_{\mf s,K}^c$, 
as follows. 

The set of tempered representations in $X_\nr (L) \sigma$ is 
$X_\nr^u (L) \sigma$, and by Lemma \ref{lem:1.13} every $\sigma \otimes \chi$
can be written uniquely as $\sigma \otimes \chi \, |\chi|^{-1} \otimes |\chi|$,
where $\sigma \otimes \chi \, |\chi|^{-1}$ is unitary and $\chi \in X_\nr^+ (L)$.
We let $f \in C(X_\nr^u (L))$ act on $I_P^G (\sigma \otimes \chi)$ as
multiplication by $f(\chi \, |\chi|^{-1})$. This can be restricted to
tempered subquotients and in particular to any $I_Q^G (\delta \otimes \chi_M)$
occurring in $I_P^G (\sigma \otimes \chi)$. When we vary $\chi_M$ in
$X_\nr^u (M)$, we see that this integrates to an action of $f$ on
$I_Q^G (C(X_\nr^u (M)) \otimes \delta)$, namely
\begin{equation}\label{eq:6.11}
I_Q^G \text{ applied to multiplication by } [\chi_M \mapsto f(\chi_M |_L 
\chi \, |\chi|^{-1})] \text{ on } C(X_\nr^u (M)) \otimes \delta.
\end{equation}
Via Lemma \ref{lem:6.10} this gives rise to a canonical action of
$C(X_\nr^u (L))$ on $\Pi_{\mf s}^c$ by $G$-intertwiners, which restricts
to an action of $C(X_\nr^u (L))$ on $\Pi_{\mf s,K}^c$
by $C_r^* (G,K)^{\mf s}$-intertwiners.

Recall from Theorem \ref{thm:3.6} or \cite{SolEnd} that 
\begin{equation}\label{eq:6.12}
\End_G (\Pi_{\mf s}) \otimes_{\mc O (X_\nr (L))} \C (X_\nr (L)) \cong
\C (X_\nr (L)) \rtimes \C [W_{\mf s}^e, \natural_{\mf s}] .
\end{equation}
Here $W_{\mf s}^e$ appears as a set of intertwining operators $\mc T_w$
that multiply as in $\C [W_{\mf s}^e,\natural_{\mf s}]$. More precisely,
the specialization 
\[
\mc T_w : I_P^G (\sigma \otimes \chi) \to I_P^G (\sigma \otimes w(\chi))
\]
is rational as a function of $\chi \in X_\nr (L)$. On the other hand, from
Theorem \ref{thm:3.10} or the underlying \cite[Proposition 7.3]{SolEnd} we 
obtain
\[
\C [W_{\mf s}^e,\natural_{\mf s}] \subset \mh H (\mf t, W_{\mf s,\sigma 
\otimes \chi}, k_{\sigma \otimes \chi}, \natural_{\sigma \otimes \chi})
\subset \End_G (\Pi_{\mf s})_{U_{\sigma \otimes \chi}}^{an} .
\]
Here $U_{\sigma \otimes \chi}$ is a neighborhood of $X_\nr^+ (L) \chi$
in $X_\nr (L)$, a superscript an stands for complex analytic functions
and $\End_G (\Pi_{\mf s})_{U_{\sigma \otimes \chi}}^{an}$ consists of
endomorphisms of \\
$\Pi_{\mf s} \otimes_{\mc O (X_\nr (L))} C^{an}(U_{\sigma \otimes \chi})$.
To construct an action of $\C[W_{\mf s}^e,\natural_{\mf s}]$ on 
$\Pi_{\mf s}^c$ and on $\Pi_{\mf s,K}^c$, we will need to combine both 
pictures of $\C [W_{\mf s}^e,\natural_{\mf s}]$.

\begin{ex}\label{ex:6.12}
Consider $G = SL_2 (F)$ and $\mf s = [T,\mr{triv}]$. Lemma \ref{lem:6.11}
works out to
\[
\Pi_{\mf s,K}^c = \langle K \rangle I_B^G \big( C(X_\nr^u (T)) \big) 
\oplus \langle K \rangle \mr{St}_G .
\]
By \cite[(5.20)]{SolEnd} the operator $\mc T_{s_\alpha}$ has only one
singularity, at $\chi (h_\alpha^\vee) = q_F$, or equi\-va\-lently for 
$\chi = \delta_B^{1/2}$. In particular $\mc T_{s_\alpha}$ is regular on
$X_\nr^u (T)$, which means that the action of $\mc T_{s_\alpha}$ on
\[
\Pi_{\mf s} \otimes_{\mc O (X_\nr (T))} \C (X_\nr (T))
\]
from \eqref{eq:6.12} extends naturally to an action on
\[
\Pi_{\mf s} \otimes_{\mc O (X_\nr (T))} C (X_\nr^u (T)) = 
I_B^G (C(X_\nr^u (T))
\]
and on $\langle K \rangle I_B^G (C(X_\nr^u (T))$. Let us check that the specialization 
of this action at any $\chi \in X_\nr^u (T)$ recovers Theorem \ref{thm:3.11}.

When $s_\alpha (\chi) \neq \chi$, that follows from \cite[Lemma 8.2]{SolEnd}.
When $s_\alpha (\chi) = \chi$, we have $\chi = \chi_-$ or $\chi = \mr{triv}$.
For $\chi_-$, the relevant Hecke algebra is $\mc O (\mf t) \rtimes \C 
[W_{\mf s,\chi_-}]$ with $W_{\mf s,\chi_-} = S_2$. Via \cite[Lemma 7.1]{SolEnd},
$\mc T_{s_\alpha}$ specialized at $\chi_-$ is identified with a scalar
multiple of $N_{s_\alpha}$. The same identification is used in the 
constructions for Theorem \ref{thm:3.11}.
For $\mr{triv}_T$ the relevant Hecke algebra is $\mh H (\mf t,S_2,\log (q_F))$,
and \cite[Lemma 7.1]{SolEnd} identifies the specialization of $\mc T_{s_\alpha}$
at $\mr{triv}_T$ with 
\[
\tau_{s_\alpha} = -1 + (N_{s_\alpha} + 1) \frac{h_\alpha^\vee}{\log (q_F) +
h_\alpha^\vee} 
\]
from Theorem \ref{thm:2.4}. Now the group $W_{\mf s}$ has two incarnations
$\{1,N_{s_\alpha}\}$ and $\{ 1, \tau_{s_\alpha} \}$, both of which act on
the specialization of $\Pi_{\mf s}$ at $\chi = \mr{triv}_T$. By direct
calculations one checks that the two actions of $W_{\mf s}$ on 
$I_B^G (\mr{triv}_T)$ are both equivalent to the regular representation.

We move on to the Steinberg representation. There are short exact sequences
\begin{equation}\label{eq:6.13}
\begin{array}{ccccccccc}
0 & \to & \mr{St}_G & \to & I_B^G (\delta_B^{1/2}) & \to & \mr{triv}_G & \to & 0, \\
0 & \to & \mr{triv}_G & \to & I_B^G (\delta_B^{-1/2}) & \to & \mr{St}_G & \to & 0 .
\end{array}
\end{equation}
The standard intertwining operator 
\[
J(s_\alpha,B,\mr{triv}_T,\delta_B^{1/2}) : I_B^G (\delta_B^{1/2}) \to
I_B^G (\delta_B^{-1/2})
\]
annihilates $\mr{St}_G$ and sends $\mr{triv}_G \cong I_B^G (\delta_B^{1/2})
/ \mr{St}_G$ bijectively to $\mr{triv}_G \subset I_B^G (\delta_B^{-1/2})$.
Similarly
\[
J(s_\alpha,B,\mr{triv}_T,\delta_B^{-1/2}) : I_B^G (\delta_B^{-1/2}) \to
I_B^G (\delta_B^{1/2})
\]
induces an isomorphism $I_B^G (\delta_B^{-1/2}) / \mr{triv}_G \to \mr{St}_G$.
The specialization of $\mc T_{s_\alpha}$ at $I_B^G (\delta_B^{1/2})$ equals
$J(s_\alpha,B,\mr{triv}_T,\delta_B^{-1/2})$ times a function of $\chi \in 
X_\nr (L)$ which has a pole at $\delta_B^{1/2}$. The specialization of
$\mc T_{s_\alpha}$ at $I_B^G (\delta_B^{-1/2})$ equals 
$J(s_\alpha,B,\mr{triv}_T,\delta_B^{1/2})$ times a nonzero scalar. 
In both cases $\mc T_{s_\alpha}$ induces a "singular map" from $\mr{St}_G$
to $\mr{St}_G$, 0 in one direction and $\infty$ in the other direction.
When we apply $C_r^* (G,K)^{\mf s} \otimes_{\mc H (G,K)^{\mf s}}$ to the
$K$-invariant vectors in \eqref{eq:6.13}, $\langle K \rangle I_B^G (\delta_B^{-1/2})$
becomes $\langle K \rangle \mr{St}_G$ because $\mr{triv}_G$ is killed. By this tensoring
the entire representation $I_B^G (\delta_B^{1/2})$ is annihilated, because
it is generated by vectors from a copy of the nontempered representation
$\mr{triv}_G$. Therefore \eqref{eq:6.13} gives rise to only one copy of
$\langle K \rangle \mr{St}_G$ in $\Pi_{\mf s,K}^c$.
The $\End_G (\Pi_{\mf s})$-module 
\[
\Hom_G (\Pi_{\mf s},\mr{St}_G) \cong \Hom_G (I_B^G (\delta_B^{-1/2}), 
\mr{St}_G) \cong \C 
\]
admits an action of the graded Hecke algebra $\mh H (\mf t,S_2,\log (q_F))$.
The element $N_{s_\alpha}$ of that algebra acts as -1 on 
$\Hom_G (\Pi_{\mf s},\mr{St}_G)$, because it corresponds to the Steinberg 
representation $\mr{St}_{\mh H}$ of $\mh H (\mf t,S_2,\log (q_F))$.
In \cite[(7.7) and Proposition 7.3]{SolEnd}, 
\begin{equation}\label{eq:6.14}
\mc T_{s_\alpha} \text{ is mapped to } \tau_{s_\alpha} .
\end{equation}
There is still a singularity involved, because $\mr{St}_{\mh H}(h_\alpha^\vee)
= -\log (q_F)$, see Example \ref{ex:2.A}. But in the end we want to relate to
Theorems \ref{thm:3.11} and \ref{thm:3.13}, which run via Theorem \ref{thm:2.8}.
In Theorem \ref{thm:2.8}, $\zeta_0$ sends tempered $\mh H (\mf t,S_2,\log (q_F))
$-modules to tempered $\mh H (\mf t,S_2,0)$-modules. This means that we actually
have to map $\mc T_{s_\alpha}$ to an element of $\mh H (\mf t,S_2,0)$, namely
\begin{equation}\label{eq:6.15}
-1 + (N_{s_\alpha} + 1) \frac{h_\alpha^\vee}{0 + h_\alpha^\vee} = N_{s_\alpha} .
\end{equation}
In this way the singularity disappears. Since $N_{s_\alpha}$ acts as -1 on
$\mr{St}_{\mh H}$ and on $\zeta_0 (\mr{St}_{\mh H}) = \C_0 \otimes \mr{sign}$,
$\mc T_{s_\alpha}$ acts as -1 on $\Hom_G (\Pi_{\mf s}, \mr{St}_G)$ in
Theorem \ref{thm:3.11}.a. This forces us to define that $\mc T_{s_\alpha}$
acts as -1 on $\mr{St}_G$ and on
\[
\langle K \rangle \mr{St}_G \cong C_r^* (G,K)^{\mf s} \otimes_{\mc H (G,K)^{\mf s}} 
\langle K \rangle I_B^G (\delta_B^{-1/2}).
\]
Summarizing: we constructed a group action of $\{1,\mc T_{s_\alpha}\}$ on
$\Pi_{\mf s}^c$ and on $\Pi_{\mf s,K}^c$, such that the induced action on 
\[
\Hom_G (\Pi_{\mf s}^c, I_B^G (\chi)) \cong
\Hom_{C_r^* (G,K)^{\mf s}} (\Pi_{\mf s,K}^c, \langle K \rangle I_B^G (\chi))
\]
is equivalent to the action of $\{1,N_{s_\alpha}\}$ on that
space (as in Theorem \ref{thm:3.11}). 
\end{ex}

Example \ref{ex:6.12} and \cite[\S 8]{Lus-Gr} guide us to the desired action
on $\Pi_{\mf s,K}^c$.

\begin{thm}\label{thm:6.12}
There exists an action of $\C [W_{\mf s}^e, \natural_{\mf s}]$ on 
$\Pi_{\mf s,K}^c$ by $C_r^* (G,K)^{\mf s}$-inter\-twi\-ners, such that:
\enuma{
\item It combines with \eqref{eq:6.11} to an action of 
$C(X_\nr^u (L)) \rtimes \C [W_{\mf s}^e, \natural_{\mf s}]$ on $\Pi_{\mf s,K}^c$.
\item Part (a) provides a homomorphism of Banach algebras
\[
\phi_{\mf s,K} : C(X_\nr^u (L)) \rtimes \C [W_{\mf s}^e, \natural_{\mf s}] \to
\End_{C_r^* (G,K)^{\mf s}} (\Pi_{\mf s,K}^c) .
\]
}
\end{thm}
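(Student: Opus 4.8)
The plan is to construct the action of $\C[W_{\mf s}^e,\natural_{\mf s}]$ one generator at a time and then check the relations, exactly imitating the pattern of Example \ref{ex:6.12} but in the generality of Lemma \ref{lem:6.11}. First I would recall from Theorem \ref{thm:3.6} and \eqref{eq:6.12} that each $w \in W_{\mf s}^e$ gives an intertwining operator $\mc T_w : I_P^G(\sigma\otimes\chi) \to I_P^G(\sigma\otimes w(\chi))$ which, after identifying the underlying vector spaces via \eqref{eq:1.19}, is a rational function of $\chi \in X_\nr(L)$. The key point is that the singularities of these operators live only on the zero sets of certain $c$-functions, and by \cite[Lemma 1.8]{Hei2} (cf.\ \eqref{eq:1.22}) one may normalize $\mc T_w$ so that on $X_\nr^u(L)\sigma$ — the locus of tempered representations — it is regular and unitary. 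Thus for each $w$ the operator $\mc T_w$ restricts to a well-defined $C_r^*(G,K)^{\mf s}$-intertwiner on $\langle K\rangle I_P^G(\sigma\otimes C(X_\nr^u(L)))$, and hence, after tensoring with $C_r^*(G,K)^{\mf s}$ over $\mc H(G,K)^{\mf s}$, on the tempered summand of $\Pi_{\mf s,K}^c$.

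The subtle part is the behaviour on those summands $\langle K\rangle I_Q^G(\delta\otimes C(X_\nr^u(M)))$ of $\Pi_{\mf s,K}^c$ (Lemma \ref{lem:6.11}) which arise from subquotients of $I_P^G(\sigma\otimes\chi)$ at points $\chi$ where $\mc T_w$ has a pole or a zero — precisely the Steinberg-type phenomenon analysed in Example \ref{ex:6.12}. Here $\mc T_w$ cannot be used directly; instead I would invoke the local picture from Theorem \ref{thm:3.10}, which identifies the analytic localization of $\End_G(\Pi_{\mf s})$ near $X_\nr^+(L)\chi_u\sigma$ with that of the twisted graded Hecke algebra $\mh H(\mf t,W_{\mf s,\sigma'},k_{\sigma'},\natural_{\sigma'})$, and the Lusztig-type formula $\tau_{s_\alpha} = (1+s_\alpha)\frac{\alpha}{\alpha+k(\alpha)} - 1$ from \eqref{eq:2.20}, Theorem \ref{thm:2.4}. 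Because the equivalence of Theorem \ref{thm:3.13} factors through $\zeta_0$ of Theorem \ref{thm:2.8}, which sends tempered $\mh H(\mf t,W,k)$-modules to tempered $\mh H(\mf t,W,0)$-modules, the relevant operator to transport is the image of $\mc T_w$ in $\mh H(\mf t,W_{\mf s,\sigma'},0) = \mc O(\mf t)\rtimes\C[W_{\mf s,\sigma'},\natural_{\sigma'}]$ — e.g.\ $N_{s_\alpha}$ as in \eqref{eq:6.15} — which is regular. Setting $k=0$ in $\tau_{s_\alpha}$ removes the apparent singularity, and the resulting operator acts on the $\delta$-summand of $\Pi_{\mf s,K}^c$ by the formulas of Clifford theory (Lemma \ref{lem:2.10}). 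One then glues these local definitions: on the overlap of two such tubular neighborhoods the two recipes agree because both reduce, on the tempered locus, to the canonically normalized $\mc T_w$, and the glueing is governed by Connes' localization argument as in \cite[\S 7--8]{SolEnd}.

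Next I would verify that the operators $\mc T_w$ so defined satisfy the multiplication rule $\mc T_w \mc T_{w'} = \natural_{\mf s}(w,w')\,\mc T_{ww'}$ of $\C[W_{\mf s}^e,\natural_{\mf s}]$. Over the function field $\C(X_\nr(L))$ this is precisely \eqref{eq:6.12}; since $\Pi_{\mf s,K}^c$ is torsion-free over the coordinate ring on each component where it is supported (being a summand of $\langle K\rangle I_Q^G(\delta\otimes C(X_\nr^u(M)))$, which embeds in its generic fibre), an identity of intertwiners that holds generically holds everywhere it makes sense, and the points where a naive $\mc T_w$ is singular are exactly the ones handled by the graded-Hecke-algebra substitution, where the $\C[W_{\mf s,\sigma'},\natural_{\sigma'}]$-relations are built in. Combined with the obviously commuting canonical action of $C(X_\nr^u(L))$ from \eqref{eq:6.11} and the covariance relation $\mc T_w f \mc T_w^{-1} = w(f)$ — which again holds generically and hence identically — this yields part (a). For part (b), the action of (a) is by definition a unital algebra homomorphism from $C(X_\nr^u(L))\rtimes\C[W_{\mf s}^e,\natural_{\mf s}]$ into $\End_{C_r^*(G,K)^{\mf s}}(\Pi_{\mf s,K}^c)$; by \eqref{eq:6.28} the target is a Banach algebra, and continuity of $\phi_{\mf s,K}$ follows because the image of $C(X_\nr^u(L))$ acts by multiplication operators (continuous by construction) and each $\mc T_w$, being unitary on the Hilbert-space completions, has finite operator norm, so $\phi_{\mf s,K}$ is bounded on a dense subalgebra and extends. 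The main obstacle, as in Example \ref{ex:6.12}, is the careful bookkeeping at the non-generic points: showing that the graded-Hecke-algebra recipe with $k$ set to $0$ produces a genuine intertwiner of the $\delta$-summand (rather than a "singular map" $0$ or $\infty$) and that it is compatible with the $\mc T_w$ defined on neighbouring tempered points — this is exactly where one must reuse the structural results of \cite{SolEnd} rather than argue from scratch.
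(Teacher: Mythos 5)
Your overall architecture is in the right spirit — fall back on the twisted graded Hecke algebra near the singular fibres, use a Lusztig-type $\tau$--versus--$N$ mechanism from \cite[\S 8]{Lus-Gr}, glue, and finish part (b) via the unital Banach algebra \eqref{eq:6.28}. But there are two concrete gaps in the construction of the action.

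First, you propose to renormalize $\mc T_w$ via \cite[Lemma 1.8]{Hei2} so that it becomes regular and unitary on $X_\nr^u(L)\sigma$, and to use this normalized operator as the generator for $\C[W_{\mf s}^e,\natural_{\mf s}]$. This destroys the algebra structure you are trying to realize: the unnormalized $\mc T_w$ from Theorem \ref{thm:3.6} satisfy the \emph{constant}-cocycle relation $\mc T_w\mc T_{w'}=\natural_{\mf s}(w,w')\mc T_{ww'}$ exactly, whereas the unitarily normalized operators $J'(w,\ldots)$ only compose up to functions $X_\nr^u(L)\to S^1$ (cf.\ \eqref{eq:1.24}). The paper therefore keeps $\mc T_w$ unchanged, writes its local specialization as $f_{s_\alpha}^{\chi'}\mc T_{s_\alpha}^{\chi'}$ with $f_{s_\alpha}^{\chi'}$ a function having no zeros or poles on $X_\nr^u(L)$, and absorbs $f_{s_\alpha}^{\chi'}$ into the $C(X_\nr^u(L))$ factor of the crossed product — that way the cocycle stays constant.

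Second, and more seriously, your glueing step does not work as stated. You claim that using the $k=0$ limit $N_{s_\alpha}$ on the $\delta$-summands and the normalized $\mc T_w$ on the $L$-summand is consistent because ``both reduce, on the tempered locus, to the canonically normalized $\mc T_w$.'' They do not: for a generic $\chi\in X_\nr^u(L)$ the operator $\tau_{s_\alpha}$ (hence $\mc T_{s_\alpha}$, up to the absorbed $f$-factor) depends nontrivially on $\chi$ and is a genuinely different operator from $N_{s_\alpha}$ on the same representation — the two $W_{\mf s,\sigma'}$-actions are isomorphic as abstract representations but are not equal as families of operators. Using $N_{s_\alpha}$ uniformly over a tubular neighborhood would disagree with the actual intertwiners away from the singular fibre, and chart-to-chart comparison breaks. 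The paper does \emph{not} set $k=0$ uniformly; it applies the Lusztig dichotomy fibrewise: $\mc T_{s_\alpha}^{\chi'}$ acts as $\tau_{s_\alpha}$ wherever $\tau_{s_\alpha}$ is regular, and as $N_{s_\alpha}$ only at the finitely many fibres with $\chi(h_\alpha^\vee)\in\{q_{\sigma',\alpha},q_{\sigma',\alpha}^{-1}\}$. The compatibility between two charts $U_{\chi''}\subset U_{\chi'}$ is then the identity $f_{s_\alpha}^{\chi''}N_{s_\alpha}^{\chi''}=f_{s_\alpha}^{\chi'}N_{s_\alpha}^{\chi'}$ at the singular fibre, deduced from the analogous identity for the $\tau$'s. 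Likewise, ``the multiplication rule holds generically so everywhere'' is not automatic once you modify operators at singular fibres; the paper checks the cocycle relation for the modified operators directly, using the decomposition $W_{\mf s}^e\ni\gamma w$ from \eqref{eq:3.21}, the relation $\mc T_\gamma\mc T_w\mc T_\gamma^{-1}=\mc T_{\gamma w\gamma^{-1}}$, and the arguments of \cite[\S 8]{Lus-Gr}, rather than a density argument.
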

\begin{proof}
We start by defining the action locally.
Consider $\sigma' := \sigma \otimes \chi'$ for some $\chi' \in X_\nr^u (L)$.
Let $U_{\chi'}$ be a small neighborhood  of $X_\nr^+ (L) \chi'$ in $X_\nr (L)$, as in
\cite[\S 7]{SolEnd} and Theorem \ref{thm:3.11}. We use the analytic localization 
$\Pi_{\mf s} \otimes_{\mc O (X_\nr (L))} C^{an}(U_{\chi'})$ of $\Pi_{\mf s}$ at 
$U_{\chi'}$, like in \eqref{eq:3.22}--\eqref{eq:3.15}. This can also be done
with $\Pi_{\mf s,K}^c$ instead of $\Pi_{\mf s}$.

By \eqref{eq:3.21}, any element of
$W_{\mf s}^e$ can be written uniquely as $\gamma w$, where $\gamma$ sends
$\Phi (G,Z^\circ (L))_{\sigma'}$ to $\Phi (G,Z^\circ (L))_{\gamma \sigma'}$ and 
$w \in W(\Phi (G,Z^\circ (L))_{\sigma'})$. Then $\mc T_\gamma$ has no singularities on 
$U_{\chi'}$ \cite[(6.7)]{SolEnd}. We define the action of $\gamma$ to be that of
$\mc T_\gamma$, at least locally over $U_{\chi'}$. That intertwines the $G$-action and is 
consistent with \cite[\S 8]{SolEnd}, although the latter is not explicit in \cite{SolEnd}.  

The Weyl group $W(\Phi (G,Z^\circ (L))_{\sigma'})$ is generated by simple reflections 
$s_\alpha$, so we look at one of those. By \cite[(5.20)]{SolEnd}, the only poles of
$\mc T_{s_\alpha}$ on $U_{\chi'}$ are at $\chi (h_\alpha^\vee) = q_{\sigma',\alpha}$ 
(for $\mc T_{s_\alpha}$ considered from the left) and at $\chi (h_\alpha^\vee) =
q_{\sigma',\alpha}^{-1}$ (considered from the right). By \cite[(6.11)]{SolEnd}, there
is a group isomorphism $W_{\mf s,\sigma'} \cong (W_{\mf s}^e)_{\chi'}$. Next 
\cite[Lemma 7.2]{SolEnd} 
\begin{equation}\label{eq:6.18}
\text{identifies the specialization of } \mc T_{s_\alpha} \text{ at } U_{\chi'} 
\text{ with } f_{s_\alpha}^{\chi'} \mc T_{s_\alpha}^{\chi'},
\end{equation} where $f_{s_\alpha}^{\chi'} \in \C (X_\nr (L))$ without
zeros or poles on $X_\nr^u (L)$, and $\mc T_{s_\alpha}^{\chi'}$ is a version of 
$\mc T_{s_\alpha}$ defined with respect to $\sigma \otimes \chi'$ as basepoint. The
action of $\mc T_{s_\alpha}$ specialized at any point of $\chi_1 X_\nr^+ (L)$ with
$\chi_1 \in X_\nr^u (L) \cap U_{\chi'}$ will be defined as that of $f_{s_\alpha}^{\chi'}
\mc T_{s_\alpha}^{\chi'}$. 

As $f_{s_\alpha}^{\chi'}$ restricts to a continuous function on $X_\nr^u (L)$ and we
already defined how $C(X_\nr^u (L))$ acts on $\Pi_{\mf s,K}^c$, the task at hand is
to define how $\mc T_{s_\alpha}^{\chi'}$ acts. In \cite[Proposition 7.3]{SolEnd},
$\mc T_{s_\alpha}^{\chi'}$ is mapped to 
\begin{equation}\label{eq:6.20}
\tau_{s_\alpha} = -1 + (N_{s_\alpha} + 1) \frac{h_\alpha^\vee}{\log 
(q_{\sigma',\alpha}) + h_\alpha^\vee} \in 
\mh H (\mf t, W_{\mf s,\sigma'}, k_{\sigma'}, \natural_{\sigma'}) 
\otimes_{\mc O (\mf t)^{W_{\mf s,\sigma'}}} \C (\mf t)^{W_{\mf s,\sigma'}}  
\end{equation}
from \eqref{eq:2.20}. Recall from Theorem \ref{thm:2.4}.a that both
\begin{equation}\label{eq:6.16}
\{ \tau_w : w \in W_{\mf s,\sigma'} \} \quad \text{and} \quad 
\{ N_w \tau_r : w \in W(\Phi (G,Z^\circ (L))_{\sigma'}), r \in \Gamma_{\sigma'} \}
\end{equation}
satisfy the multiplication rules for the standard basis elements of 
$\C [W_{\mf s}^e, \natural_{\mf s}]$.

Consider $Q = M U_Q \subset P$ and $\delta \in \Irr_\disc (M)$ which arises as a 
subquotient of  $I_{L (M \cap P)}^M (\sigma \otimes \chi)$ for some $\chi \in U_{\chi'}$,
and such that $\langle K \rangle I_Q^G (\delta)$ is a quotient of $\Pi_{\mf s,K}^c$. 
If $\chi (h_\alpha^\vee) \notin \{ q_{\sigma',\alpha}, q_{\sigma',\alpha}^{-1} \}$, then
we let $\mc T_{s_\alpha}^{\chi'}$ acts on this copy of $\langle K \rangle I_Q^G (\delta)$ as 
$\tau_{s_\alpha}$ (which is regular there). If $\chi (h_\alpha^\vee) \in \{ q_{\sigma',\alpha},
q_{\sigma',\alpha}^{-1} \}$, then $\tau_{s_\alpha}$ has a singularity at 
$\sigma \otimes \chi$, and we let $\mc T_{s_\alpha}^{\chi'}$ act as $N_{s_\alpha}$ instead.
Notice that in both cases we act by a $G$-intertwiner.

The dichotomy $\tau_{s_\alpha} / N_{s_\alpha}$ is analogous to \cite[\S 8.8]{Lus-Gr}. 
The arguments from \cite[\S 8]{Lus-Gr} show that the above leads to an action of the group 
\begin{equation}\label{eq:6.17}
\{ \mc T_w^{\chi'} : w \in W(\Phi (G,Z^\circ (L))_{\sigma'}) \}
\end{equation}
on the sum of the copies
of $I_Q^G (\delta)$ that arise in the above way. The main idea in \cite[\S 8]{Lus-Gr}
is a more precise version of \eqref{eq:6.16}, with $\tau_{s_\alpha}$ or $N_{s_\alpha}$
depending on one $W_{\mf s,\sigma'}$-orbit in $U_{\chi'}$. 
Next we can vary $\delta$, which yields an action of \eqref{eq:6.17} on the localization
of $\Pi_{\mf s,K}^c$ at $U_{\chi'}$.

From Theorem \ref{thm:3.6} and the multiplication rules in 
$\C [W_{\mf s}^e, \natural_{\mf s}]$ we see that $\mc T_\gamma \mc T_{s_\alpha} 
\mc T_\gamma^{-1}$ equals $\mc T_{\gamma s_\alpha \gamma^{-1}}$
for all simple roots $\alpha \in \Phi (G,Z^\circ (L))_{\sigma'}$, and hence
\[
\mc T_\gamma \mc T_w \mc T_\gamma^{-1} = \mc T_{\gamma w \gamma^{-1}} 
\quad \text{for all } w \in W(\Phi (G,Z^\circ (L))_{\sigma'}) .
\]
Then \eqref{eq:6.18} says that 
\begin{equation}\label{eq:6.19} 
\mc T_\gamma f_{s_\alpha}^{\chi'} \mc T_{s_\alpha}^{\chi'} \mc T_\gamma^{-1} =
(f_{s_\alpha}^{\chi'} \circ \gamma^{-1}) \mc T_\gamma \mc T_w^{\chi'} \mc T_\gamma^{-1} =
f_{\gamma s_\alpha \gamma^{-1}}^{\gamma (\chi')} \mc T_{\gamma s_\alpha 
\gamma^{-1}}^{\gamma (\chi')} .
\end{equation}
The relations between $\mc T_{s_\alpha}^{\chi'}, \tau_{s_\alpha}^{\chi'}$ and
$N_{s_\alpha} = N_{s_\alpha}^{\chi'}$ look the same after replacing $\chi'$ by
$\gamma (\chi')$, so \eqref{eq:6.19} implies that 
\[
\mc T_\gamma f_{s_\alpha}^{\chi'} \tau_{s_\alpha}^{\chi'} \mc T_\gamma^{-1} =
f_{\gamma s_\alpha \gamma^{-1}}^{\gamma (\chi')} \tau_{\gamma s_\alpha 
\gamma^{-1}}^{\gamma (\chi')} \quad \text{and} \quad
\mc T_\gamma f_{s_\alpha}^{\chi'} N_{s_\alpha}^{\chi'} \mc T_\gamma^{-1} =
f_{\gamma s_\alpha \gamma^{-1}}^{\gamma (\chi')} N_{\gamma s_\alpha 
\gamma^{-1}}^{\gamma (\chi')} .
\]
It follows that, as operators on a localization of $\Pi_{\mf s,K}^c$ at
$U_{\gamma (\chi')}$:
\[
\mc T_\gamma \circ \mc T_{s_\alpha} \circ \mc T_\gamma^{-1} = 
\mc T_{\gamma s_\alpha \gamma^{-1}} .
\]
Combining instances of this relation leads to
\[
\mc T_\gamma \circ \mc T_w \circ \mc T_\gamma^{-1} = \mc T_{\gamma w 
\gamma^{-1}} \quad \text{and} \quad \mc T_\gamma \circ \mc T_w \circ = 
\mc T_{\gamma w \gamma^{-1}} \circ \mc T_w 
\]
as operators on localized versions of $\Pi_{\mf s,K}^c$. Consider now
$w_2, \gamma_2$ like $w,\gamma$, only with respect to $\gamma (\chi')$
instead of $\chi$. We compute 
\begin{align*}
\mc T_{\gamma_2} \circ \mc T_{w_2} \circ \mc T_\gamma \circ \mc T_w & =
\mc T_{\gamma_2} \circ \mc T_\gamma \circ \mc T_{\gamma^{-1} w_2 \gamma}
\circ \mc T_w = \natural_{\mf s}(\gamma_2, \gamma) \mc T_{\gamma_2 \gamma}
\circ \mc T_{\gamma^{-1} w_2 \gamma w} \\
& = \natural (\gamma_2, \gamma) \mc T_{\gamma_2 w_2 \gamma w} =
\natural (\gamma_2 w_2, \gamma w) \mc T_{\gamma_2 w_2 \gamma w} 
\end{align*}
as operators from 
\[
\Pi_{\mf s,K}^c \otimes_{\mc O (X_\nr (L))} C^{an}(U_{\chi'}) \quad \text{to}
\quad \Pi_{\mf s,K}^c \otimes_{\mc O (X_\nr (L))} C^{an}(\gamma_2 w_2 
\gamma w U_{\chi'}).
\]
This shows that our actions defined locally near $X_\nr^+ (L) \chi'$ extend
to an action of $\C [W_{\mf s}^e,\natural_{\mf s}]$ defined locally near
$W_{\mf s}^e X_\nr^+ (L) \chi'$.

Next we check that the above constructions do not depend on the choice of
the neighborhoods $U_{\chi'}$ (as long as they have the shape from
\cite[\S 7]{SolEnd}). Suppose that we have another set $U_{\chi''}$ like 
$U_{\chi'}$, and that $U_{\chi''} \cap U_{\chi'}$ is nonempty. We need to
verify that our locally defined actions coincide on 
\[
\Pi_{\mf s,K}^c \otimes_{\mc O (X_\nr (L))} C^{an}(U_{\chi'} \cap U_{\chi''}).
\]
We may pick $\chi_2 \in U_{\chi'} \cap U_{\chi''}$ with a neighborhood
$U_{\chi_2}$ of $X_\nr^+ (L) \chi_2$ contained in $U_{\chi'} \cap U_{\chi''}$.
If we can compare the actions relative to $U_{\chi'}$ and $U_{\chi_2}$ and the
actions relative to $U_{\chi''}$ and $U_{\chi_2}$, then a comparison between
$U_{\chi'}$ and $U_{\chi''}$ follows. Therefore it suffices to consider the
case where $U_{\chi''} \subset U_{\chi'}$.

The conditions on $U_{\chi''}$ from \cite[\S 7]{SolEnd} ensure that 
\[
\Phi (G,Z^\circ (L))_{\sigma'} \supset \Phi (G,Z^\circ (L))_{\sigma'} .
\]
We consider $w,\gamma,s_\alpha$ as above. For $\gamma$ the actions with
respect to $U_{\chi'}$ and $U_{\chi''}$ are defined in exactly the same way,
so they agree where they are both defined. 

Suppose that the action of $\mc T_{s_\alpha}$ (from $U_{\chi'}$) is defined
via that of $\tau_{s_\alpha}$ in \eqref{eq:6.20}. Then $\mc T_{s_\alpha}$
acts on $I_Q^G (\delta)$ in the same way as $\mc T_{s_\alpha}$ on 
$\Pi_{\mf s} \otimes_{\mc O (X_\nr (L))} \C (X_\nr (L))$. In that case the
action with respect to $U_{\chi''}$ is defined just like that.

Suppose now that $\mc T_{s_\alpha}$ acts on $I_Q^G (\delta)$ via 
$N_{s_\alpha}$ in \eqref{eq:6.20}. Then $\chi (h_\alpha^\vee) \in 
\{ q_{\sigma',\alpha}, q_{\sigma',\alpha}^{-1} \}$ with $\chi \in 
U_{\chi'} \cap U_{\chi''}$. In that case \eqref{eq:6.20} holds also with
respect to $\sigma \otimes \chi''$ instead of $\sigma'$. Further 
$f_{s_\alpha}^{\chi''} N_{s_\alpha}^{\chi''} = f_{s_\alpha}^{\chi'}
N_{s_\alpha}^{\chi'}$ because 
\[
f_{s_\alpha}^{\chi''} \tau_{s_\alpha}^{\chi''} = f_{s_\alpha}^{\chi'}
\tau_{s_\alpha}^{\chi'} \text{ in } \mh H (\mf t, \langle s_\alpha \rangle,
k_{\sigma'}, \natural_{\sigma'}) \otimes_{\mc O (\mf t)} \C (\mf t) .
\]
The action of $\mc T_{s_\alpha}$ on $I_Q^G (\delta)$ with respect to 
$U_{\chi''}$ is defined to be that of $f_{s_\alpha}^{\chi''}
N_{s_\alpha}^{\chi''} = f_{s_\alpha}^{\chi'} N_{s_\alpha}^{\chi'}$. 
This concludes the definition of an action of 
$\C [W_{\mf s}^e, \natural_{\mf s}]$ on $\Pi_{\mf s,K}^c$.\\
(a) By Theorem \ref{thm:2.4}.b and the multiplication rules in
$\C (X_\nr (L)) \rtimes \C [W_{\mf s}^e, \natural_{\mf s}]$, we have,
as operators on $\Pi_{\mf s,K}^c$:
\begin{equation}\label{eq:6.21}
\mc T_w \circ f = (f \circ w^{-1}) \circ \mc T_w \qquad
\text{for all } w \in W_{\mf s}^e \text{ and } f \in \mc O (X_\nr (L)) .
\end{equation}
The action of $\mc T_w$ is defined via its action on the specializations
of $\Pi_{\mf s,K}^c$ at the various $\chi \in X_\nr (L)$, so \eqref{eq:6.21}
extends to $f \in C(X_\nr (L))$. Then it holds also for $f \in C(X_\nr^u (L))$,
regarded as $X_\nr^+ (L)$-invariant function on $X_\nr (L)$. Therefore the 
actions of $\C [W_{\mf s}^e, \natural_{\mf s}]$ and $C(X_\nr^u (L))$ on
$\Pi_{\mf s,K}^c$ combine in the required way.\\
(b) Part (a) provides an algebra homomorphism $\phi_{\mf s,K}$, and from 
\eqref{eq:6.28} we know that both source and target are Banach algebras. It remains 
to check the continuity of $\phi_{\mf s,K}$.
On the finite dimensional algebra $\C [W_{\mf s}^e, \natural_{\mf s}]$ continuity
is automatic, so we may focus on $C(X_\nr^u (L))$. From \eqref{eq:6.11} we see that
its action on $\Pi_{\mf s,K}^c$ is a direct sum of actions of the following kind.
First a projection $C(X_\nr^u (L)) \to C(X)$ for some closed subspace $X \subset 
X_\nr^u (L)$, then multiplication by $f|_X$ on some vector bundle over $X$. In
terms of the isomorphism 
\[
\End_{C_r^* (G,K)^{\mf s}} (\Pi_{\mf s,K}^c)^{op} \cong
e_{\mf s}^c M_{n_{\mf s}^c} (C_r^* (G,K)^{\mf s}) e_{\mf s}^c
\] 
and the description of $C_r^* (G,K)^{\mf s}$ in Theorem \ref{thm:1.17}, this means
that $\phi_{\mf s,K} |_{C(X_\nr^u (L))}$ is a finite direct sum of homomorphisms of the form 
$C(X_\nr (L)) \to C(X)$. Those are continuous, so $\phi_{\mf s,K}$ is continuous.
\end{proof}

Theorem \ref{thm:6.12} gives rise to a version of Theorem \ref{thm:3.13} 
for $C_r^* (G,K)^{\mf s}$.

\begin{lem}\label{lem:6.14}
Let $\pi \in \Irr_\temp (G) \cap \Rep_\fl (G)^{\mf s}_{X_\nr^+ (L)
W_{\mf s} (\sigma \otimes \chi)}$ with $\chi \in X_\nr^u (L)$, and recall from
Theorem \ref{thm:3.13} that 
\[
\zeta^\vee (\pi) = \ind_{\mc O (X_\nr (L)) \rtimes \C [W_{\mf s,\chi}^e, 
\natural_{\mf s}^{-1}]}^{\mc O (X_\nr (L)) \rtimes \C [W_{\mf s}^e, 
\natural_{\mf s}^{-1}]} (\C_\chi \otimes \pi_\chi) .
\]
Then, as left modules over $\big( C(X_\nr^u (L)) \rtimes \C [W_{\mf s}^e, \natural_{\mf s}]
\big)^{op} \cong C(X_\nr^u (L)) \rtimes \C [W_{\mf s}^e, \natural_{\mf s}^{-1}]$
via Theorem \ref{thm:6.12}:
\[
\Hom_{C_r^* (G,K)^{\mf s}} (\Pi_{\mf s,K}^c, \langle K \rangle \pi) \cong 
\ind_{C (X_\nr^u (L)) \rtimes \C [W_{\mf s,\chi}^e, 
\natural_{\mf s}^{-1}]}^{C (X_\nr^u (L)) \rtimes \C [W_{\mf s}^e, 
\natural_{\mf s}^{-1}]} (\C_\chi \otimes \pi_\chi) .
\]
\end{lem}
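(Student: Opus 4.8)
The strategy is to transfer Theorem \ref{thm:3.13}, which describes $\Hom_G (\Pi_{\mf s}, \pi)$ as an $\End_G (\Pi_{\mf s})^{op}$-module, across the Morita equivalence \eqref{eq:6.29} and through the tensoring operation $C_r^* (G,K)^{\mf s} \otimes_{\mc H (G,K)^{\mf s}} -$, keeping track of how the two incarnations of $W_{\mf s}^e$ (the operators $\mc T_w$ of Theorem \ref{thm:3.6} and the localized operators of Theorem \ref{thm:3.10}) interact. First I would record the identification
\[
\Hom_{C_r^* (G,K)^{\mf s}} (\Pi_{\mf s,K}^c, \langle K \rangle \pi) \cong
\Hom_{\mc H (G,K)^{\mf s}} (\langle K \rangle \Pi_{\mf s}, \langle K \rangle \pi) \cong
\Hom_{\mc H (G)^{\mf s}} (\Pi_{\mf s}, \pi) ,
\]
using \eqref{eq:6.39}, the adjunction between $\langle K \rangle \mc H(G)^{\mf s} \otimes_{\mc H(G)^{\mf s}} -$ and restriction, and the fact that $\langle K \rangle \pi = V_\pi^K$ determines $\pi$ inside $\Rep_\fl(G)^{\mf s}$. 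Since $\pi$ is tempered and finite length, this is a finite-dimensional module over $\End_G (\Pi_{\mf s})^{op}$, hence over its analytic localization at a neighbourhood $U_\chi$ of $X_\nr^+ (L)\chi$; by Theorem \ref{thm:3.10} this is the same as a finite-dimensional module over $\mh H (\mf t, W_{\mf s,\sigma\otimes\chi}, k_{\sigma\otimes\chi}, \natural_{\sigma\otimes\chi})$ with real weights, which Theorem \ref{thm:3.13} identifies with $\ind(\C_\chi \otimes \pi_\chi)$.

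The core of the argument is then to check that the $C(X_\nr^u(L)) \rtimes \C[W_{\mf s}^e,\natural_{\mf s}]$-action on the left-hand $\Hom$-space furnished by $\phi_{\mf s,K}$ of Theorem \ref{thm:6.12} is precisely the one obtained from the $\mc O(X_\nr(L)) \rtimes \C[W_{\mf s}^e,\natural_{\mf s}]$-action in Theorem \ref{thm:3.13} by restricting scalars along $C(X_\nr^u(L)) \to \mc O(X_\nr(L))$ composed with the projection $X_\nr(L) \to X_\nr^u(L)$ (polar decomposition \eqref{eq:1.29}). For the $C(X_\nr^u(L))$-part this is immediate from the definition \eqref{eq:6.11}: $f$ acts on the summand $I_Q^G(C(X_\nr^u(M))\otimes\delta)$ by $f$ evaluated at the unitary part of the cuspidal support, which is exactly how $\mc O(X_\nr(L))$-weights of $\zeta^\vee(\pi)$ are computed in Theorem \ref{thm:3.13}.b. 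For the $W_{\mf s}^e$-part, the point is that the operators $\mc T_w$ used in Theorem \ref{thm:6.12} are, after passing to the localization $\Pi_{\mf s} \otimes_{\mc O(X_\nr(L))} C^{an}(U_\chi)$, built from exactly the same $\tau_{s_\alpha}$ / $N_{s_\alpha}$ dichotomy (via \cite[Proposition 7.3]{SolEnd} and \cite[\S 8]{Lus-Gr}) that underlies the identification in Theorem \ref{thm:3.10} and hence in Theorem \ref{thm:3.13}. So I would argue: the specialization of $\phi_{\mf s,K}(\mc T_w)$ at $\langle K \rangle \pi$ equals the action of the corresponding element of $\mh H(\mf t, W_{\mf s,\sigma\otimes\chi}, k, \natural)$ on $\pi_\chi$ as a $(W_{\mf s}^e)_\chi = W_{\mf s,\sigma\otimes\chi}^e$-module, transported through Clifford-theoretic induction; and induction of modules commutes with this restriction of scalars because the embeddings $\C[(W_{\mf s}^e)_\chi,\natural_{\mf s}] \hookrightarrow C(X_\nr^u(L)) \rtimes \C[(W_{\mf s}^e)_\chi,\natural_{\mf s}]$ and $\C[(W_{\mf s}^e)_\chi,\natural_{\mf s}] \hookrightarrow \mc O(X_\nr(L)) \rtimes \C[(W_{\mf s}^e)_\chi,\natural_{\mf s}]$ are compatible. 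This gives
\[
\ind_{C(X_\nr^u(L)) \rtimes \C[(W_{\mf s}^e)_\chi,\natural_{\mf s}^{-1}]}^{C(X_\nr^u(L)) \rtimes \C[W_{\mf s}^e,\natural_{\mf s}^{-1}]} \big( \C_\chi \otimes \pi_\chi \big) ,
\]
using the opposite-algebra identification $\big(C(X_\nr^u(L)) \rtimes \C[W_{\mf s}^e,\natural_{\mf s}]\big)^{op} \cong C(X_\nr^u(L)) \rtimes \C[W_{\mf s}^e,\natural_{\mf s}^{-1}]$ from \eqref{eq:2.17}, just as in the passage from Theorem \ref{thm:3.11}.a to Theorem \ref{thm:3.13}.

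\textbf{Main obstacle.} The delicate point is the matching of the $W_{\mf s}^e$-actions at the \emph{reducible} tempered points, i.e.\ where $\chi(h_\alpha^\vee) \in \{q_{\sigma',\alpha}, q_{\sigma',\alpha}^{-1}\}$, so that $\tau_{s_\alpha}$ has a pole and one must instead act by $N_{s_\alpha}$. Here $\langle K \rangle \pi$ may be a proper summand of $\langle K \rangle I_Q^G(\delta)$ obtained after killing non-tempered constituents by the functor $C_r^*(G,K)^{\mf s} \otimes_{\mc H(G,K)^{\mf s}} -$ (compare the treatment of $\mr{St}_G$ in Example \ref{ex:6.12}), and one needs that the $N_{s_\alpha}$-action chosen in Theorem \ref{thm:6.12} descends correctly to this summand and agrees with the $N_{s_\alpha}$-action implicit in $\zeta_0$ (Theorem \ref{thm:2.8}) and hence in $\pi_\chi$. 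I expect this to require unwinding \cite[\S 8]{Lus-Gr} and \cite[\S 7--8]{SolEnd} carefully: the resolution is that $\zeta_0$ sends tempered $\mh H(\mf t,W,k)$-modules to tempered $\mh H(\mf t,W,0)$-modules, on which $\tau_{s_\alpha}$ and $N_{s_\alpha}$ agree (as in \eqref{eq:6.15}), so the singularity is harmless on the target, and the definition in Theorem \ref{thm:6.12} was arranged precisely to match this. Once the $W_{\mf s}^e$-equivariance of the comparison map is verified on each summand $\langle K \rangle I_Q^G(C(X_\nr^u(M))\otimes\delta)$ of $\Pi_{\mf s,K}^c$ from Lemma \ref{lem:6.11}, the statement follows by assembling these.
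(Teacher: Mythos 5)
Your proposal has the right overall architecture and correctly identifies the central obstacle, but the resolution you sketch for that obstacle is missing the key mechanism, so there is a genuine gap.

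You correctly reduce to comparing the two module structures specialization-by-specialization, correctly dispatch the $C(X_\nr^u(L))$ part via \eqref{eq:6.11}, and correctly locate the delicate point: at reducible tempered points the action of $\mc T_{s_\alpha}$ specializes to $N_{s_\alpha}$ rather than $\tau_{s_\alpha}$, so the copy of $\C[W_{\mf s}^e,\natural_{\mf s}]$ acting on $\Hom_{C_r^*(G,K)^{\mf s}}(\Pi_{\mf s,K}^c,\langle K\rangle\pi)$ is generated by a \emph{mixture} of $\mc T_\gamma$, $\mc T_{s_\alpha}^\chi$ (at regular roots) and $N_{s_\alpha}^\chi$ (at singular roots), whereas $\pi_\chi$ is built using $\mc T_\gamma$ together with $N_{s_\alpha}^\chi$ for \emph{all} simple roots. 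Your claim that this is harmless because ``$\zeta_0$ sends tempered modules to tempered modules on which $\tau_{s_\alpha}$ and $N_{s_\alpha}$ agree, and the definition in Theorem \ref{thm:6.12} was arranged precisely to match this'' asserts the desired conclusion without supplying the reason. Saying $\tau_{s_\alpha}=N_{s_\alpha}$ at $k=0$ (which is what \eqref{eq:6.15} records) does not by itself tell you anything about the two $\C[W_{\mf s}^e,\natural_{\mf s}]$-module structures at the actual, nonzero parameter $k_{\sigma\otimes\chi}$ where the operators are defined.

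The missing ingredient is a \emph{continuous deformation} argument: one realizes $\Hom_G(\Pi_{\mf s},\pi)$ as a module over the analytically localized graded Hecke algebra as in \eqref{eq:3.15}, then continuously scales the parameters $k_{\sigma\otimes\chi,\alpha}$ to $0$ as in \eqref{eq:2.19}. Because $\C[W_{\mf s}^e,\natural_{\mf s}]$ is a finite-dimensional semisimple algebra, a continuous family of finite-dimensional representations has constant isomorphism class. In the limit $k=0$ the operators $\mc T_{s_\alpha}^\chi$ and $N_{s_\alpha}^\chi$ literally coincide, so the limiting $\C[W_{\mf s}^e,\natural_{\mf s}]$-module from the mixed generating set equals the limiting module from the pure-$N$ generating set; rigidity then propagates the isomorphism back to the original parameter. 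This is what lets one conclude that the mixed representation is isomorphic to the one built entirely from $N_{s_\alpha}^\chi$, which is exactly $\pi_\chi$ as constructed in \cite[\S 7]{SolEnd}. Without this rigidity-under-deformation step, the equivalence of the two $W_{\mf s}^e$-actions is not established, and that is precisely the content of the lemma.
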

\begin{proof}
Both $\zeta^\vee (\pi)$ and $\Hom_{C_r^* (G,K)^{\mf s}} 
(\Pi_{\mf s,K}^c, \langle K \rangle \pi)$ are determined by their specializations at 
$\chi \in X_\nr^u (L)$, and for both the operation to go from there to
the full module is $\ind_{\C[W_{\mf s,\chi}^e,\natural_{\mf s}^{-1}]}^{\C
[W_{\mf s}^e,\natural_{\mf s}^{-1}]}$. Therefore it suffices show that
the specialization of\\ $\Hom_{C_r^* (G,K)^{\mf s}} (\Pi_{\mf s,K}^c, \langle K \rangle \pi)$
at $U_\chi$ is isomorphic to $\C_\chi \otimes \pi_\chi$, as left module of\\
$C(X_\nr^u (L)) \rtimes \C [W_{\mf s}^e, \natural_{\mf s}^{-1}]$.
By definition $C(X_\nr^u (L))$ acts on
\[
\Hom_{C_r^* (G,K)^{\mf s}} (\Pi_{\mf s,K}^c, \langle K \rangle \pi) \otimes_{\mc O 
(X_\nr (L))} C^{an}(U_\chi)
\]
by evaluation at $X_\nr^+ (L) \chi$, or equivalently evaluation at $\chi$.
Thus we only need to show that $\Hom_{C_r^* (G,K)^{\mf s}} (\Pi_{\mf s,K}^c, 
\langle K \rangle \pi)$ is isomorphic to $\pi_\chi$ as right 
$\C [W_{\mf s}^e, \natural_{\mf s}]$-module, where $\Hom_G (\Pi_{\mf s},
\pi)$ is isomorphic to $\pi_\chi$ as right module for
\[
\C [W_{\mf s}^e, \natural_{\mf s}] \subset \mh H (\mf t, W_{\mf s,\sigma 
\otimes \chi},k_{\sigma \otimes \chi}, \natural_{\sigma \otimes \chi}).
\]
In the way we set it up, $\Hom_{C_r^* (G,K)^{\mf s}} (\Pi_{\mf s,K}^c, \langle K \rangle \pi)$
is acted upon by a copy of $\C [W_{\mf s}^e, \natural_{\mf s}]$ which is
generated by three kinds of elements:
\begin{enumerate}[(i)]
\item $\mc T_\gamma$ for $\gamma \in \Gamma_{\sigma \otimes \chi}$,
\item $\mc T_{s_\alpha}^\chi$ for simple roots $\alpha \in 
\Phi (G,Z^\circ (L))_{\sigma \otimes \chi}$ such that 
$\chi (h_\alpha^\vee) \notin \{ q_{\sigma \otimes \chi,\alpha},
q_{\sigma \otimes \chi,\alpha}^{-1} \}$,
\item $N_{s_\alpha}^\chi$ for simple roots $\alpha \in 
\Phi (G,Z^\circ (L))_{\sigma \otimes \chi}$ such that 
$\chi (h_\alpha^\vee) \in \{ q_{\sigma \otimes \chi,\alpha},
q_{\sigma \otimes \chi,\alpha}^{-1} \}$.
\end{enumerate}
We can regard $\Hom_{C_r^* (G,K)^{\mf s}} (\Pi_{\mf s,K}^c, \langle K \rangle \pi) \cong
\Hom_G (\Pi_{\mf s}, \pi)$ as a module over
\[
\mh H (\mf t, W_{\mf s,\sigma \otimes \chi},k_{\sigma \otimes \chi},
\natural_{\sigma \otimes \chi}) \otimes_{\mc O (\mf t / W_{\mf s, \sigma
\otimes \chi})} C^{an}(U_\chi )^{W_{\mf s, \sigma \otimes \chi})},
\]
as in \eqref{eq:3.15}. In that algebra we continuously scale the
parameters $k_{\sigma \otimes \chi,\alpha}$ to 0 (or equivalently deform 
$q_{\sigma \otimes \chi,\alpha}$ to 1), like in \eqref{eq:2.19} and
Paragraph \ref{par:2.4}. Such continuous deformations do not change
representations of the finite dimensional semisimple algebra
$\C [W_{\mf s}^e, \natural_{\mf s}]$. In the limit case 
$k_{\sigma \otimes \chi,\alpha} = 0, q_{\sigma \otimes \chi,\alpha} = 1$
the difference between $\mc T_{s_\alpha}^\chi$ and $N_{s_\alpha}^\chi$
disappears, see \eqref{eq:6.15}. Hence 
$\Hom_{C_r^* (G,K)^{\mf s}} (\Pi_{\mf s,K}^c, \langle K \rangle \pi)$ as a representation
of $\C [W_{\mf s}^e, \natural_{\mf s}]$ as generated by elements of the
above kinds (i), (ii) and (iii) is equivalent to $\Hom_G (\Pi_{\mf s},\pi)$
as representation of $\C [W_{\mf s}^e, \natural_{\mf s}]$ as generated by
the $\mc T_\gamma$ with $\gamma \in \Gamma_{\sigma \otimes \chi}$ and 
the $N_{s_\alpha}^\chi$ for simple roots $\alpha \in 
\Phi (G,Z^\circ (L))_{\sigma \otimes \chi}$. The latter representation
recovers exactly how $\pi_\chi$ is constructed in \cite[\S 7]{SolEnd}.
\end{proof}

\subsection{$K_* (C_r^* (G)^{\mf s})$ via the progenerator} \

The goal of this paragraph is to compute $K_* (C_r^* (G)^{\mf s})$ in
representation theoretic terms. The improvement on Paragraph \ref{par:6.2}
is that we do not need to tensor with $\C$ over $\Z$, so that we can also
detect torsion elements in $K_* (C_r^* (G)^{\mf s})$. Our main tools are
the progenerator from Paragraph \ref{par:6.3} and the below general
technique to compute K-groups.

Let $X$ be a compact Hausdorff space which has the structure of a finite
simplicial complex. Let $A$ and $B$ be $C(X)$-Banach algebras, by which
we mean that $C(X)$ acts on $A$ via a Banach algebra morphism from $C(X)$ 
to the centre of the multiplier algebra of $A$ (and similarly for $B$).
  
\begin{lem} \label{lem:6.13}
\textup{\cite[Lemma 5.1.3]{SolAHA}} \\
Suppose that $\phi : A \to B$ is a homomorphism of $C(X)$-Banach
algebras, such that for each simplex $\sigma$ of $X$ the specialization
\[
\phi_{\sigma,\partial \sigma} : C_0 (X,\partial \sigma) A / C_0 (X,\sigma) A
\to C_0 (X,\partial \sigma) B / C_0 (X,\sigma) B
\]
induces an isomorphism on K-theory. Then $K_* (\phi) : K_* (A) \to
K_* (B)$ is an isomorphism.
\end{lem}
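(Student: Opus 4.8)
\textbf{Proof plan for Lemma \ref{lem:6.13}.}
The plan is to induct on the simplices of $X$, peeling them off one dimension at a time, and at each stage invoke the five lemma (i.e.\ the five-term comparison built from the six-term exact sequences \eqref{eq:6.22}) to upgrade an isomorphism on a subcomplex to an isomorphism on a slightly larger subcomplex. First I would fix a filtration $\emptyset = X_{-1} \subset X_0 \subset X_1 \subset \cdots \subset X_d = X$, where $X_j$ is the $j$-skeleton of $X$, and for a $C(X)$-Banach algebra $A$ write $A_Y = C_0(X,X\setminus Y) A$ for a closed subset $Y$ — the ideal of elements ``supported on $Y$''. The key structural fact is that for $Y' \subset Y$ closed there is a short exact sequence of Banach algebras
\[
0 \to C_0(X, X\setminus (Y\setminus Y'))\,A \to A_Y \to A_{Y'} \to 0 ,
\]
and $\phi$ is compatible with all of these, so excision gives a ladder of six-term exact sequences relating the K-theory of the three algebras for $A$ and for $B$.

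Next I would carry out the induction. Suppose $K_*(\phi_{X_{j-1}}) : K_*(A_{X_{j-1}}) \to K_*(B_{X_{j-1}})$ is an isomorphism (the base case $j=0$ being a finite direct sum of the specializations $\phi_{\sigma,\partial\sigma}$ over the $0$-simplices, hence an isomorphism by hypothesis and additivity). The quotient $A_{X_j}/A_{X_{j-1}}$ is, by the simplicial structure, isomorphic to the direct sum over the $j$-simplices $\sigma$ of $C_0(X,\partial\sigma)A/C_0(X,\sigma)A$; this is exactly where the hypothesis that $\phi_{\sigma,\partial\sigma}$ is a K-theory isomorphism for \emph{every} simplex enters, together with additivity of $K_*$ for the finitely many top-dimensional contributions. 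Applying $K_*$ to the ladder
\[
\begin{array}{ccccc}
K_i(A_{X_{j-1}}) & \to & K_i(A_{X_j}) & \to & K_i(A_{X_j}/A_{X_{j-1}}) \\
\downarrow & & \downarrow & & \downarrow \\
K_i(B_{X_{j-1}}) & \to & K_i(B_{X_j}) & \to & K_i(B_{X_j}/B_{X_{j-1}})
\end{array}
\]
together with the boundary maps into $K_{i-1}$, the outer four vertical arrows are isomorphisms, so the five lemma forces $K_i(\phi_{X_j})$ to be an isomorphism as well. Iterating up to $j = d$ gives $K_*(\phi_{X}) = K_*(\phi)$ an isomorphism, since $A_X = A$ and $B_X = B$.

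The main obstacle, and the only genuinely nonformal point, is the identification $A_{X_j}/A_{X_{j-1}} \cong \bigoplus_{\dim\sigma = j} \big(C_0(X,\partial\sigma)A/C_0(X,\sigma)A\big)$ as $C(X)$-Banach algebras: one must check that the algebra of elements supported on the open $j$-cells splits as the claimed direct sum, which uses that $X$ is a \emph{finite} simplicial complex (so the sum is finite and the topology behaves) and that each open $j$-simplex is clopen in $X_j \setminus X_{j-1}$. Here one should be a little careful about the $C^*$- versus Banach-algebra bookkeeping, but since $C(X)$ is nuclear and the simplicial decomposition is finite, the tensor-product and quotient constructions present no analytic difficulty. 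Everything else — excision, the five lemma, additivity — is standard and cited in Paragraph \ref{par:6.1}. I would also remark that this is precisely the argument already used for affine Hecke algebras in \cite[Lemma 5.1.3]{SolAHA}, so the proof can be kept brief by pointing there for the details of the skeletal induction.
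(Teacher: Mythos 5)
Your overall strategy --- filter $X$ by skeleta, compare the two towers of six-term exact sequences via $\phi$, and run the five lemma inductively --- is exactly the argument of \cite[Lemma 5.1.3]{SolAHA}, so the idea is right. However, the object you define, $A_Y = C_0(X,X\setminus Y)A$ for a closed subset $Y$, does not do what you intend. If $Y$ is a proper closed subset of a connected finite simplicial complex $X$ (for instance $Y=X_j$ with $j<\dim X$), then $X\setminus Y$ is dense in $X$, so any continuous function vanishing on $X\setminus Y$ vanishes identically and $A_Y = 0$. Under this reading your filtration collapses to $0 = \cdots = 0 \subset A$, the displayed short exact sequence degenerates, and the argument proves nothing as written.

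What you want is the quotient, not the ideal: set $A^{(j)} = A/C_0(X,X_j)A$, the restriction of $A$ to the $j$-skeleton, so that $A^{(-1)}=0$ and $A^{(d)}=A$. Restriction gives surjections $A^{(j)}\to A^{(j-1)}$ with kernel $C_0(X,X_{j-1})A/C_0(X,X_j)A$, and it is this kernel --- not $C_0(X,X\setminus(X_j\setminus X_{j-1}))A$ --- that decomposes as $\bigoplus_{\dim\sigma=j} C_0(X,\partial\sigma)A/C_0(X,\sigma)A$, because the open $j$-cells are the finitely many clopen connected components of $X_j\setminus X_{j-1}$ and the complex is finite. With this correction the base case, the commuting ladder of six-term exact sequences, and the five-lemma induction all go through exactly as you outline, and the argument then coincides with the one in the cited reference.
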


Recall that $K \in \CO (G)$ has been chosen in \eqref{eq:6.39}.
In this paragraph we abbreviate
\[
A = C(X_\nr^u (L)) \rtimes \C [W_{\mf s}^e, \natural_{\mf s}^{-1}],\quad
B = \End_{C_r^* (G,K)^{\mf s}}(\Pi_{\mf s,K}^c)^{op}, 
\quad X = X_\nr^u (L) / W_{\mf s}^e .
\]
Clearly $A$ is a $C(X)$-Banach algebra, and via $\phi_{\mf s,K}: A \to B$
we make $B$ into a $C(X)$-Banach algebra. We may regard $C(X)$ as the 
algebra of $X_\nr^+ (L)$-invariant continuous functions
on $X_\nr (L) / W_{\mf s}^e \cong X_\nr (L) \sigma / W_{\mf s}$. The latter
algebra acts naturally on the family of representations $I_P^G (\sigma 
\otimes \chi)$ with $\chi \in X_\nr (L)$, namely $f$ acts as multiplication by 
\begin{equation}\label{eq:6.30}
f (\sigma \otimes \chi) = f (\sigma \otimes \chi \, |\chi|^{-1}) =
f (\chi \, |\chi|^{-1}) . 
\end{equation}
From Theorem \ref{thm:3.5} we see that $C(X)$ almost acts by elements of the
Bernstein centre of $\Rep (G)^{\mf s}$, but not entirely because it consists
of continuous rather than regular (algebraic) functions. Taking one step 
back, we can say that 
\begin{equation}\label{eq:6.31}
C(X) \text{ acts on finite length $G$-representations via }
Z (\Rep_\fl (G)^{\mf s}). 
\end{equation}
From the Fourier transform of $C_r^* (G,K)^{\mf s}$ (Theorem \ref{thm:1.17}
and \eqref{eq:1.35}), we see that the same recipe yields an action of
$C(X)$ on $C_r^* (G,K)^{\mf s}$. Moreover \eqref{eq:6.31} implies that $C(X)$
acts via a map to $Z(C_r^* (G,K)^{\mf s})$, which makes $C_r^* (G,K)^{\mf s}$
into a $C(X)$-Banach algebra. 
Comparing \eqref{eq:6.11} and \eqref{eq:6.30}, we deduce that the Morita
equivalence \eqref{eq:6.29} is $C(X)$-linear.

It is known from \cite{Ill} that the manifold $X_\nr^u (L)$ admits a smooth
$W_{\mf s}^e$-equivariant triangulation. This means that $X_\nr^u (L)$ can
be made into a finite simplicial complex, such that all points in the 
interior of one simplex have the same stabilizer in $W_{\mf s}^e$. Then
we can divide by the action of $W_{\mf s}^e$, which produces a convenient
triangulation of $X = X_\nr (L)^u / W_{\mf s}^e$. 
Let $\tau'$ be a simplex in $X_\nr^u (L)$ (from the chosen triangulation) and 
write $\tau = W_{\mf s}^e \tau' / W_{\mf s}^e$, which is a simplex in $X$.

\begin{lem}\label{lem:6.15}
The $C^*$-algebra $C_0 (X,\partial \tau) A / C_0 (X,\tau) A$ is Morita
equivalent to\\ $C_0 (\tau, \partial \tau) \otimes \C [W_{\mf s,\tau'}^e,
\natural_{\mf s}^{-1}]$, and there is an isomorphism
\[
K_* \big( C_0 (X,\partial \tau) A / C_0 (X,\tau) A \big) \cong
K_* (C_0 (\tau,\partial \tau)) \otimes_\Z 
R \big( \C [W_{\mf s,\tau'}^e, \natural_{\mf s}^{-1}] \big) .
\]
\end{lem}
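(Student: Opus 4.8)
The plan is to compute the two sides directly from the structure of the twisted crossed product $A = C(X_\nr^u(L)) \rtimes \C[W_{\mf s}^e, \natural_{\mf s}^{-1}]$ restricted to the simplex. First I would observe that $C_0(X,\partial\tau)A / C_0(X,\tau)A$ is the subquotient of $A$ supported on $\tau \setminus \partial\tau$, and since the preimage of $\tau \setminus \partial\tau$ in $X_\nr^u(L)$ is $W_{\mf s}^e \cdot (\tau' \setminus \partial\tau')$, i.e.\ a disjoint union of copies of the open simplex indexed by $W_{\mf s}^e / W_{\mf s,\tau'}^e$ (using that the triangulation is chosen so that all interior points of $\tau'$ share the stabilizer $W_{\mf s,\tau'}^e$), this subquotient is
\[
\Big( \bigoplus_{w \in W_{\mf s}^e / W_{\mf s,\tau'}^e} C_0(\tau,\partial\tau) \Big) \rtimes \C[W_{\mf s}^e,\natural_{\mf s}^{-1}] .
\]
The action of $W_{\mf s}^e$ permutes the summands transitively with point stabilizer $W_{\mf s,\tau'}^e$, so this is the twisted crossed product of an induced algebra. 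The key step is then to apply the standard ``imprimitivity''/Morita argument for twisted crossed products, exactly as in the proof of Theorem \ref{thm:2.3}: pick the idempotent $p$ projecting onto the summand indexed by $e \in W_{\mf s}^e / W_{\mf s,\tau'}^e$, check that $p(\cdot)p \cong C_0(\tau,\partial\tau) \otimes \C[W_{\mf s,\tau'}^e, \natural_{\mf s}^{-1}]$ and that $(\cdot)p(\cdot)$ generates the whole algebra (because $\sum_w w(p) = 1$). This gives the asserted Morita equivalence with $C_0(\tau,\partial\tau) \otimes \C[W_{\mf s,\tau'}^e, \natural_{\mf s}^{-1}]$.

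Next I would pass to K-theory. Morita equivalence of $C^*$-algebras induces an isomorphism on K-theory, so
\[
K_*\big( C_0(X,\partial\tau)A/C_0(X,\tau)A \big) \cong K_*\big( C_0(\tau,\partial\tau) \otimes \C[W_{\mf s,\tau'}^e,\natural_{\mf s}^{-1}] \big) .
\]
Since $\C[W_{\mf s,\tau'}^e,\natural_{\mf s}^{-1}]$ is finite-dimensional semisimple (by \eqref{eq:2.2}, it is a direct summand of $\C[\Gamma^*]$ for the Schur cover $\Gamma^*$ of $W_{\mf s,\tau'}^e$), it is isomorphic to a finite product of matrix algebras $\prod_{\rho} M_{d_\rho}(\C)$ indexed by $\Irr(\C[W_{\mf s,\tau'}^e,\natural_{\mf s}^{-1}])$. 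Using the Künneth-type splitting $C_0(\tau,\partial\tau) \otimes M_{d_\rho}(\C)$ together with stability and additivity of K-theory, one gets $K_*(C_0(\tau,\partial\tau) \otimes \C[W_{\mf s,\tau'}^e,\natural_{\mf s}^{-1}]) \cong \bigoplus_\rho K_*(C_0(\tau,\partial\tau)) = K_*(C_0(\tau,\partial\tau)) \otimes_\Z \Z^{|\Irr(\cdots)|}$. Finally, identifying $\Z^{|\Irr(\C[W_{\mf s,\tau'}^e,\natural_{\mf s}^{-1}])|}$ with the representation ring $R(\C[W_{\mf s,\tau'}^e,\natural_{\mf s}^{-1}])$ yields the stated formula.

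I expect the main obstacle to be bookkeeping rather than anything deep: one must make sure the twisted group algebra cocycle survives the decomposition correctly, i.e.\ that the 2-cocycle appearing on the stabilizer $W_{\mf s,\tau'}^e$ is indeed the restriction $\natural_{\mf s}^{-1}|_{W_{\mf s,\tau'}^e}$ (up to the coboundary ambiguity, which does not affect the isomorphism class of the twisted group algebra, cf.\ the discussion after \eqref{eq:2.14}), and that the constant simplicial coefficients are handled uniformly across simplices so that the identification with $R(\C[W_{\mf s,\tau'}^e, \natural_{\mf s}^{-1}])$ is canonical. A secondary point requiring a little care is that $C_0(\tau,\partial\tau)$ is the relevant locally compact space here (the open simplex), so the subquotient of $A$ is genuinely the $C^*$-algebraic direct sum $\bigoplus_{w} C_0(\tau,\partial\tau)$ and the crossed product is formed with the (possibly non-unital) algebra; the imprimitivity argument still applies verbatim since $p$ is a multiplier idempotent and $\sum_w w(p) = 1$ holds in the multiplier algebra. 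None of this is substantively harder than the arguments already carried out in Theorem \ref{thm:2.3} and Proposition \ref{prop:2.10}.
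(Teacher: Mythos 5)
Your proof is correct and takes essentially the same route as the paper: decompose the subquotient as a twisted crossed product of $\bigoplus_{w \in W_{\mf s}^e / W_{\mf s,\tau'}^e} C_0(\tau',\partial\tau')$, apply the imprimitivity/Morita argument from Theorem \ref{thm:2.3} via the idempotent $p = 1_{\tau'}$, use that the interior of $\tau'$ has constant stabilizer $W_{\mf s,\tau'}^e$ to turn the residual crossed product into a tensor product, and then compute K-theory from the Wedderburn decomposition of $\C[W_{\mf s,\tau'}^e,\natural_{\mf s}^{-1}]$. Your remark that the idempotent $p$ lives in the multiplier algebra and that $\sum_w w(p) = 1$ should be read there is a small point of care that the paper leaves implicit, but the substance is identical.
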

\begin{proof}
We can write
\[
\begin{aligned}
C_0 (X,\partial \tau) A / C_0 (X,\tau) A & \cong 
C_0 (W_{\mf s}^e \tau', W_{\mf s}^e \partial \tau') \rtimes 
\C [W_{\mf s}^e, \natural_{\mf s}^{-1}] \\
& = \big( \bigoplus\nolimits_{w \in W_{\mf s}^e / W_{\mf s,\tau'}^e} 
C_0 (w \tau',w \partial \tau') \big) \rtimes 
\C [W_{\mf s}^e, \natural_{\mf s}^{-1}].
\end{aligned}
\]
This is Morita equivalent to $C_0 (\tau',\partial \tau') \rtimes 
\C [W_{\mf s, \tau}^e, \natural_{\mf s}^{-1}]$ via the bimodules
\[
1_{\tau'} C_0 (W_{\mf s}^e \tau', W_{\mf s}^e \partial \tau') \rtimes 
\C [W_{\mf s}^e, \natural_{\mf s}^{-1}] \quad \text{and} \quad
C_0 (W_{\mf s}^e \tau', W_{\mf s}^e \partial \tau') \rtimes 
\C [W_{\mf s}^e, \natural_{\mf s}^{-1}] 1_{\tau'} .
\]
Since all points of $\tau' \setminus \partial \tau'$ have the same
stabilizer in $W_{\mf s}^e$, that algebra equals 
\begin{equation}\label{eq:6.32}
C_0 (\tau',\partial \tau') \otimes \C [W_{\mf s, \tau'}^e, \natural_{\mf s}^{-1}] 
\cong C_0 (\tau,\partial \tau) \otimes \C [W_{\mf s, \tau'}^e, \natural_{\mf s}^{-1}] .
\end{equation} 
The K-theory of \eqref{eq:6.32} is easy to compute, because 
$\C [W_{\mf s, \tau}^e, \natural_{\mf s}^{-1}]$ is a direct sum
of matrix algebras $M_{n_i} (\C)$. Every such summand contributes a copy 
of $K_* (C_0 (\tau,\partial \tau))$ to the K-theory of \eqref{eq:6.32}.
On the other hand $\oplus_i K_0 (M_{n_i}(\C)) = \oplus_i \Z$ is naturally
isomorphic to the representation ring of $\C [W_{\mf s, \tau'}^e, 
\natural_{\mf s}^{-1}]$. We deduce that 
\[
\begin{aligned}
K_* \big( C_0 (X,\partial \tau) A / C_0 (X,\tau) A \big) & \cong
K_* \big( C_0 (\tau,\partial \tau) \otimes 
\C [W_{\mf s,\tau'}^e, \natural_{\mf s}^{-1}] \big) \\
& \cong K_* (C_0 (\tau,\partial \tau)) \otimes_\Z 
R \big( \C [W_{\mf s,\tau'}^e, \natural_{\mf s}^{-1}] \big) . \qedhere
\end{aligned}
\]
\end{proof}

The analogue of Lemma \ref{lem:6.15} for $C_r^* (G,K)^{\mf s}$ is 
more involved.

\begin{prop}\label{prop:6.16}
\enuma{
\item The Banach algebra
\[
C_0 (X,\partial \tau) C_r^* (G,K)^{\mf s} / C_0 (X,\tau) C_r^* (G,K)^{\mf s}
\]
is homotopy equivalent to the tensor product of $C_0 (\tau,\partial \tau)$
and a finite dimensional semisimple algebra.
\item Its K-theory is
\[
K_* (C_0 (\tau,\partial \tau)) \otimes R \big( \Mod_\fl (C_r^* (G,K)^{\mf s}
)_{X_\nr^+ (L) W_{\mf s}(\sigma \otimes \chi)} \big) .
\] 
\item The same holds for the Banach algebra $C_0 (X,\partial \tau) B / C_0 (X,\tau) B$.
}
\end{prop}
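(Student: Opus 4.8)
The plan is to compute the algebra
$C_0 (X,\partial \tau) C_r^* (G,K)^{\mf s} / C_0 (X,\tau) C_r^* (G,K)^{\mf s}$
explicitly using the Fourier transform description of $C_r^* (G,K)^{\mf s}$ from
Theorem \ref{thm:1.17} and \eqref{eq:1.35}, and then deduce parts (b) and (c)
formally. First I would unwind the $C(X)$-module structure described in
\eqref{eq:6.30}--\eqref{eq:6.31}: the action of $C(X) = C(X_\nr^u (L)/W_{\mf s}^e)$
on $C_r^* (G,K)^{\mf s}$ factors through $Z(C_r^* (G,K)^{\mf s})$, which by
Theorem \ref{thm:1.17} is $\bigoplus_{\mf d \in \Delta(G,\mf s)} C(X_\nr^u (M_{\mf d}))^{W_{\mf d}^e}$,
and the map $X_\nr^u (L) \to X_\nr^u (M_{\mf d})$ underlying it is (up to the twist
by $|\chi|$ that drops out on the unitary part) the composite coming from the inclusion
of cuspidal supports described in \eqref{eq:6.10}. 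So first I would reduce, summand by
summand over $\Delta(G,\mf s)$, to understanding how $C_0(X,\partial\tau)/C_0(X,\tau)$
pulls back each Harish-Chandra block $C_r^* (G,K)_{\mf d} = (C(X_\nr^u (M))\otimes \End_\C(\cdots)^K)^{W_{\mf d}^e}$.

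The key geometric input is the smooth $W_{\mf s}^e$-equivariant triangulation of
$X_\nr^u (L)$ from \cite{Ill}, together with the observation (used already in
Lemma \ref{lem:6.15}) that all points in the relative interior of $\tau'$ have the
same stabilizer $W_{\mf s,\tau'}^e$; I would want the triangulation chosen fine
enough that moreover each map $X_\nr^u (L) \to X_\nr^u (M_{\mf d})$ sends simplices
to simplices and has locally constant fibre-stabilizer data over $\tau'\setminus\partial\tau'$
(this is the point where the generalized root system $\Phi(G,Z^\circ(L))_\sigma$ and
the walls $\chi(h_\alpha^\vee)=q_{\sigma,\alpha}^{\pm1}$ must be subordinate to the
triangulation — cf.\ the choice of the neighbourhoods $U_{\chi'}$ in \cite[\S7]{SolEnd}).
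Granting this, over the open simplex $\tau'$ the subquotient
$C_0(X,\partial\tau) C_r^*(G,K)^{\mf s}/C_0(X,\tau) C_r^*(G,K)^{\mf s}$ becomes, by the
Morita-type bimodule argument of Lemma \ref{lem:6.15} (clearing out the $W_{\mf s}^e$-orbit
of $\tau'$ down to a single simplex), isomorphic to $C_0(\tau,\partial\tau)$ tensored with a
fixed $W_{\mf s,\tau'}^e$-invariant finite-dimensional $C^*$-algebra — namely the fibre at a
point $\chi\in\tau'\setminus\partial\tau'$ of the $C^*$-bundle $C_r^*(G,K)^{\mf s}$, which is a
finite direct sum of matrix algebras. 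Such an algebra deformation-retracts onto its semisimple
part, giving the homotopy equivalence of (a); this is where \textbf{the main obstacle} lies — not
the homotopy retraction, which is standard, but verifying that the bundle structure is genuinely
\emph{locally trivial} over each open simplex with \emph{constant} fibre, i.e.\ that the intertwining
operators $I(w,P,\delta,\chi)$ of \eqref{eq:1.23} and the finite groups $W_{\mf d}$ behave
locally constantly there. This is essentially the content of Lemma \ref{lem:2.10} and the
$U_{\chi'}$-independence arguments proved in Theorem \ref{thm:6.12}, which I would cite rather
than reprove.

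For part (b): the finite-dimensional semisimple algebra in (a) is, by construction, the endomorphism
algebra of the family of tempered representations with cuspidal support in
$X_\nr^+(L)W_{\mf s}(\sigma\otimes\chi)$ over the simplex, so its representation ring is canonically
$R\big(\Mod_\fl(C_r^*(G,K)^{\mf s})_{X_\nr^+(L)W_{\mf s}(\sigma\otimes\chi)}\big)$; combined with
$K_0(M_n(\C))\cong\Z$ and the standard K\"unneth formula $K_*\big(C_0(\tau,\partial\tau)\otimes S\big)\cong
K_*(C_0(\tau,\partial\tau))\otimes_\Z R(S)$ for a finite-dimensional semisimple $S$ (as in
Lemma \ref{lem:6.15}), this yields the displayed formula. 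For part (c): by Lemma \ref{lem:6.10}
and \eqref{eq:6.29}, $B=\End_{C_r^*(G,K)^{\mf s}}(\Pi_{\mf s,K}^c)^{op}$ is Morita equivalent to
$C_r^*(G,K)^{\mf s}$ as a $C(X)$-Banach algebra (the Morita equivalence is $C(X)$-linear, as noted
after \eqref{eq:6.31}), and Morita equivalence of $C(X)$-algebras passes to each subquotient
$C_0(X,\partial\tau)(-)/C_0(X,\tau)(-)$; hence (a)--(b) for $B$ follow from (a)--(b) for
$C_r^*(G,K)^{\mf s}$. This last step is purely formal once the $C(X)$-linearity of the Morita
bimodules $\Pi_{\mf s,K}^c$ and $\Hom_{C_r^*(G,K)^{\mf s}}(\Pi_{\mf s,K}^c,C_r^*(G,K)^{\mf s})$
is invoked from Paragraph \ref{par:6.3}.
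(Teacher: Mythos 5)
Your proposal follows essentially the same route as the paper's proof: decompose via the Fourier transform into Harish--Chandra blocks, use the $W_{\mf s}^e$-equivariant triangulation together with Lemma \ref{lem:2.10} to show the fibre is constant over the open simplex, retract onto a fibre to obtain (a), read off (b) from the representation ring of the finite-dimensional semisimple fibre, and transport to $B$ via the $C(X)$-linear Morita equivalence for (c). The only thing to tighten is that the ``standard'' deformation retraction of the invariant algebra $\big(C_0(w_\delta\tau',w_\delta\partial\tau')\otimes \End_\C(\cdot)\big)^{W_{[M,\delta],\tau'}}$ onto $C_0(\tau,\partial\tau)$ times its fibre is not entirely off-the-shelf because of the group invariants; the paper invokes \cite[Lemma 7]{SolCh} for exactly this step, and you should too.
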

\begin{proof}
(a) From Theorem \ref{thm:1.17} we see that 
\[
C_0 (X,\partial \tau) C_r^* (G,K)^{\mf s} / C_0 (X,\tau) C_r^* (G,K)^{\mf s}
\]
is a direct sum of algebras of the form
\begin{equation}\label{eq:6.33}
\big( C_0 (X_\nr^u (M) \text{cc}(\delta) \cap X_\nr^+ (L) \tilde \tau, 
X_\nr^u (M) \text{cc}(\delta) \cap X_\nr^+ (L) \partial \tilde \tau )
\otimes \End_\C \big( I_Q^G (V_\delta)^K \big) \big)^{W_{[M,\delta]}} ,
\end{equation}
where $\tilde \tau = W_{\mf s}^e \tau' \subset X_\nr^u (L)$,
$X_\nr^u (M)$ is identified with its restriction to $L$ and cc$(\delta) \in 
X_\nr (L)$ is chosen such that $\delta$ is a quotient of $I_{L (M \cap P)}^M
(\sigma \otimes \text{cc}(\delta))$. The set of summands \eqref{eq:6.33} whose
specialization at a point $W_{\mf s}^e \chi \in \tau$ is nonzero depends only
on which discrete series representations appear in the parabolic inductions (for
the various Levi subgroups of $G$ containing $L$) of
$\sigma \otimes \chi'$ with $\chi' \in X_\nr^+ (L) \chi$. 

We claim that, when $\chi$ varies continuously over $X_\nr^u (L)$, this set
of discrete series can only jump when $W_{\mf s,\chi}^e$ changes. Clearly this
is an issue that can be studied locally on $X_\nr^u (L)$, and that can be 
done like in Theorem \ref{thm:3.10}. In \cite[\S 7]{SolEnd}, Theorem 
\ref{thm:3.10} is in fact proven in larger generality, not for
$\Rep_\fl (G)^{\mf s}_{X_\nr^+ (L) W_{\mf s} (\sigma \otimes \chi)}$ but for
$\Rep_\fl (G)^{\mf s}_{W_{\mf s} U_{\sigma \otimes \chi}}$ where $U_{\sigma 
\otimes \chi}$ is a small neighborhood of $X_\nr^+ (L) (\sigma \otimes \chi)$.
This reduces our claim to the analogous claim for 
\[
\mh H (\mf t, W_{\mf s,\sigma \otimes \chi}, k_{\sigma \otimes \chi},
\natural_{\sigma \otimes \chi}) - \Mod_{\fl,U_\R} ,
\] 
where $U_\R$ is a small neighborhood of $\mf t_\R$ in $\mf t$, such that
$U_\R / \mf t_\R$ is a $W_{\mf s,\sigma \otimes \chi}$-invariant ball around
0 in $i \mf t_\R$. Now we have to consider discrete series representations 
in the parabolic inductions of $\C_{\lambda + \nu} \in \Irr (\mc O (\mf t))$,
where $\nu \in \mf t_\R$ and $\lambda$ varies in $U_\R \cap i \mf t_\R$.
This version of our claim is proven in Lemma \ref{lem:2.10}.\\
One nice feature of the chosen triangulation of $X_\nr^u (L)$ is that all points
of $\tau' \setminus \partial \tau'$ have the same stabilizer in $W_{\mf s}^e$.
Together with the above claim, that implies that every summand \eqref{eq:6.33}
has a nonzero specialization at every point of $\tau \setminus \partial \tau
\subset X_\nr^u (L) / W_{\mf s^\vee}$. For a given summand \eqref{eq:6.33},
we can choose $w_\delta \in W_{\mf s^e}$ such that $w_\delta \tau'$ is 
contained in $X_\nr^u (M) X_\nr^+ (L) \text{cc}(\delta)$. Then \eqref{eq:6.33}
reduces to 
\begin{equation}\label{eq:6.34}
\big( C_0 (w_\delta \tau', w_\delta \partial \tau' ) \otimes \End_\C \big( 
I_Q^G (V_\delta)^K \big) \big)^{W_{[M,\delta], \tau'}} . 
\end{equation}
The group $W_{[M,\delta]}$ is a subquotient of $W_{\mf s}^e$ (Lemma \ref{lem:1.23}),
so by the equivariance of the triangulation of $X_\nr^u (L)$ every element of
$W_{[M,\delta], \tau'}$ acts trivially on $\tau'$. Furthermore 
$w_\delta \tau', w_\delta \partial \tau'$ is $W_{[M,\delta], \tau'}$-equivarianty 
contractible, so can apply \cite[Lemma 7]{SolCh}. It says that, if we replace 
$C_0 (w_\delta \tau', w_\delta \partial \tau' )$ by $C^\infty (w \tau')$ in 
\eqref{eq:6.34}, then the algebra becomes homotopy equivalent to its specialization 
at any point of $w_\delta \tau' \setminus w_\delta \partial \tau'$. For 
\eqref{eq:6.34} itself the proof of
\cite[Lemma 7]{SolCh} can be followed up to \cite[(20)]{SolCh}, and then it
says that \eqref{eq:6.34} is homotopy equivalent to
\begin{equation}\label{eq:6.35}
C_0 (w_\delta \tau', w_\delta \partial \tau' ) \otimes \End_\C \big( I_Q^G 
(V_\delta)^K \big)^{W_{[M,\delta], \chi}} \qquad \text{for any } 
\chi \in w_\delta \tau' \setminus w_\delta \partial \tau' .
\end{equation}
We note that $\End_\C \big( I_Q^G (V_\delta)^K \big)^{W_{[M,\delta], \chi}}$ is
a finite dimensional semisimple algebra. Now we consider the direct sum over 
all summands \eqref{eq:6.33} of 
\[
C_0 (X,\partial \tau) C_r^* (G,K)^{\mf s} / C_0 (X,\tau) C_r^* (G,K)^{\mf s}.
\]
We obtain a tensor product of 
\[
C_0 (\tau,\partial \tau) \cong C_0 (w_\delta \tau', w_\delta \partial \tau' ) 
\]
with a finite dimensional semisimple algebra.\\
(b) The algebra \eqref{eq:6.35} is of the same kind as \eqref{eq:6.32}. The 
arguments in the proof of Lemma \ref{lem:6.15} show that
\begin{equation}\label{eq:6.36}
K_* \eqref{eq:6.33} \cong K_* \eqref{eq:6.35} \cong K_* (C_0 (w_\delta \tau', 
w_\delta \partial \tau' )) \otimes_\Z R \Big( \End_\C \big( I_Q^G (V_\delta)^K 
\big)^{W_{[M,\delta], \chi}} \Big) . \hspace{-2mm}
\end{equation}
Identifying $\chi$ with $W_{\mf s}^e \chi \in \tau \setminus \partial \tau$, 
we can take the direct sum of the groups \\
$R \big( \End_\C \big( I_Q^G (V_\delta)^K \big)^{W_{[M,\delta], \chi}} \big)$,
over all involved instances of \eqref{eq:6.33}.
By Theorem \ref{thm:1.17} that gives precisely 
\[
R \big( \Mod_\fl (C_r^* (G,K)^{\mf s})_{X_\nr^+ (L) W_{\mf s}(\sigma \otimes 
\chi)} \big).
\]
That and \eqref{eq:6.36} provide the required description of
\[
K_* \big( C_0 (X,\partial \tau) C_r^* (G,K)^{\mf s} / C_0 (X,\tau) 
C_r^* (G,K)^{\mf s} \big).
\]
(c) From part (a) and the $C(X)$-linear Morita equivalence between $C_r^* (G,K)^{\mf s}$ 
and $B \cong e_{\mf s}^c M_{n_{\mf s}^c} (C_r^* (G,K)^{\mf s}) e_{\mf s}^c$, we deduce 
that (a) holds for $C_0 (X,\partial \tau) B / C_0 (X,\tau) B$.
The involved finite dimensional semisimple algebra may not be the same as before, 
but it is Morita equivalent to that from part (a). 

The calculation of $K_* (C_0 (X,\partial \tau) B / C_0 (X,\tau) B)$ is the same as 
in part (b), and also follows from Morita equivalence.
\end{proof}

The following result shows that the K-groups in Lemma \ref{lem:6.15} and Proposition 
\ref{prop:6.16} are isomorphic.

\begin{lem}\label{lem:6.17}
\enuma{
\item The map 
\[
R\big( \C[W_{\mf s,\chi}^e,\natural_{\mf s}^{-1}] \big) \to R \big( \Mod_\fl 
(C_r^* (G,K)^{\mf s})_{X_\nr^+ (L) W_{\mf s}(\sigma \otimes \chi)} \big)
\]
induced by $\otimes_{C(X_\nr^u (L)) \rtimes \C[W_{\mf s,\chi}^e,\natural_{\mf s}]} 
\Pi_{\mf s,K}^c$ is a group isomorphism. 
\item Let $\phi_{\mf s,K,\tau,\partial \tau} : C_0 (X,\partial \tau) A /
C_0 (X,\tau) A \to C_0 (X,\partial \tau) B / C_0 (X,\tau) B$ be the homomorphism
of Banach algebras induced by $\phi_{\mf s,K}$. Then $K_* (\phi_{\mf s,K,\tau,
\partial \tau})$ is an isomorphism. 
}
\end{lem}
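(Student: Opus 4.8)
The plan is to prove part (a) first and then derive part (b) as a purely formal consequence. For part (a), the key observation is that the module $\Pi_{\mf s,K}^c$ is built, via Lemma~\ref{lem:6.11} together with \eqref{eq:6.10}, as a direct integral over $\chi \in X_\nr^u(L)$ of the finite-dimensional tempered representations $I_Q^G(\delta \otimes \chi_M)$ with $[M,\delta]_G \in \Delta(G,\mf s)$. Restricting attention to a fixed cuspidal support $\sigma \otimes \chi$ with $\chi \in X_\nr^u(L)$, the specialization of $\Pi_{\mf s,K}^c$ at $\chi$ (more precisely at $U_\chi$, after the analytic localization used in Paragraph~\ref{par:6.3}) is a progenerator of the category $\Mod_\fl(C_r^*(G,K)^{\mf s})_{X_\nr^+(L)W_{\mf s}(\sigma\otimes\chi)}$; indeed the localization of $C_r^*(G,K)^{\mf s}$ at that point is a finite-dimensional semisimple algebra (Proposition~\ref{prop:6.16}(a)), over which any finitely generated projective generator induces a bijection between its representation ring and $R$ of the algebra. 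Combining this with Lemma~\ref{lem:6.14}, which identifies $\Hom_{C_r^*(G,K)^{\mf s}}(\Pi_{\mf s,K}^c,\langle K\rangle \pi)$ with the module $\ind(\C_\chi \otimes \pi_\chi)$ over $C(X_\nr^u(L))\rtimes\C[W_{\mf s}^e,\natural_{\mf s}^{-1}]$, and recalling from Theorem~\ref{thm:3.13} (and the Clifford-theory bijection of Theorem~\ref{thm:2.3}) that $\pi \mapsto \pi_\chi$ is a bijection $\Irr_\temp(G)^{\mf s}_{X_\nr^+(L)W_{\mf s}(\sigma\otimes\chi)} \to \Irr(\C[W_{\mf s,\chi}^e,\natural_{\mf s}^{-1}])$, one sees that $\otimes \Pi_{\mf s,K}^c$ carries irreducibles to irreducibles and sets up the claimed isomorphism on representation rings. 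So part (a) reduces to Theorem~\ref{thm:3.13}, Lemma~\ref{lem:6.14}, and the Morita-type statement for the semisimple localization.

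For part (b), I would invoke Lemma~\ref{lem:6.13}: since both $C_0(X,\partial\tau)A/C_0(X,\tau)A$ and $C_0(X,\partial\tau)B/C_0(X,\tau)B$ are $C(\tau)$-Banach algebras (they live over the simplex $\tau \subset X$), and the homomorphism $\phi_{\mf s,K,\tau,\partial\tau}$ respects this structure, it suffices to check that $K_*(\phi_{\mf s,K,\tau,\partial\tau})$ is an isomorphism. By Lemma~\ref{lem:6.15} the left-hand K-group is $K_*(C_0(\tau,\partial\tau)) \otimes_\Z R(\C[W_{\mf s,\tau'}^e,\natural_{\mf s}^{-1}])$, and by Proposition~\ref{prop:6.16}(b),(c) the right-hand K-group is $K_*(C_0(\tau,\partial\tau)) \otimes_\Z R(\Mod_\fl(C_r^*(G,K)^{\mf s})_{X_\nr^+(L)W_{\mf s}(\sigma\otimes\chi)})$ for any $\chi$ in the interior of $\tau'$ (lifting $\tau$ to $X_\nr^u(L)$). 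The map $K_*(\phi_{\mf s,K,\tau,\partial\tau})$ is, by naturality of the Morita equivalences and of the Künneth-type splittings in Lemma~\ref{lem:6.15} and Proposition~\ref{prop:6.16}, the tensor product of the identity on $K_*(C_0(\tau,\partial\tau))$ with the map on representation rings induced by $\otimes \Pi_{\mf s,K}^c$, i.e.\ exactly the map of part (a) specialized to $\chi$. By part (a) the latter is an isomorphism, hence so is $K_*(\phi_{\mf s,K,\tau,\partial\tau})$.

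The main obstacle I anticipate is verifying that the map on representation rings coming from $\otimes_{C(X_\nr^u(L))\rtimes\C[W_{\mf s,\chi}^e,\natural_{\mf s}]}\Pi_{\mf s,K}^c$ agrees with the map on K-theory of the simplex-quotient algebras that one reads off from the identifications in Lemma~\ref{lem:6.15} and Proposition~\ref{prop:6.16}. This is a compatibility-of-bimodules bookkeeping: one must track the progenerator $\Pi_{\mf s,K}^c$ through the successive Morita equivalences (the reduction of $C_0(X,\partial\tau)A/C_0(X,\tau)A$ to $C_0(\tau,\partial\tau)\otimes\C[W_{\mf s,\tau'}^e,\natural_{\mf s}^{-1}]$ on one side, and the reduction of $C_0(X,\partial\tau)B/C_0(X,\tau)B$ to $C_0(\tau,\partial\tau)$ tensored with a finite-dimensional semisimple algebra on the other), and check that the bimodule inducing the comparison is $C_0(\tau,\partial\tau)$ tensored with $\Pi_{\mf s,K}^c$ specialized at $\chi$. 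Concretely this amounts to combining Lemma~\ref{lem:6.14} with the fact that, after localizing at the interior of $\tau$, all the relevant algebras become finite-dimensional and $C(X)$ acts through the center, so Morita equivalence is governed by the bimodule's specialization at a single point; once one has pinned down that specialization one is done. A secondary, more routine point is confirming that the triangulation of $X_\nr^u(L)$ from \cite{Ill} is compatible simultaneously with $\phi_{\mf s,K}$ and with the decompositions of $C_r^*(G,K)^{\mf s}$ used in Proposition~\ref{prop:6.16}, but this is already implicit in how the triangulation was chosen (each open simplex has constant stabilizer, and the set of discrete series appearing over a point is locally constant by Lemma~\ref{lem:2.10}).
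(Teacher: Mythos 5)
Your proof is essentially correct and follows the same line as the paper: part (a) rests on Lemma~\ref{lem:6.14} together with the ABPS-type bijection (the paper cites Theorem~\ref{thm:3.11}.a where you cite Theorem~\ref{thm:3.13} plus the Clifford-theory parametrization of Theorem~\ref{thm:2.3} --- equivalent routes), and part (b) identifies $K_*(\phi_{\mf s,K,\tau,\partial\tau})$ with $\mathrm{id}_{K_*(C_0(\tau,\partial\tau))}\otimes(\text{map from (a)})$ via Lemma~\ref{lem:6.15} and Proposition~\ref{prop:6.16}, exactly as the paper does. One small slip: at the start of (b) you ``invoke Lemma~\ref{lem:6.13}'' to reduce to showing $K_*(\phi_{\mf s,K,\tau,\partial\tau})$ is an isomorphism --- but that is precisely what part (b) asserts, so the invocation is circular; Lemma~\ref{lem:6.13} plays no role here and is instead applied \emph{afterwards}, in Theorem~\ref{thm:6.18}, with Lemma~\ref{lem:6.17}.b supplying its hypothesis. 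Your remaining argument does not actually use that invocation, so the proof still goes through once the misplaced reference is deleted.
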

\begin{proof}
(a) For 
\[
\pi \in \Irr_\temp (G)^{\mf s}_{X_\nr^+ (L) W_{\mf s} (\sigma \otimes \chi)}
\cong \Irr (C_r^* (G,K)^{\mf s})_{X_\nr^+ (L) W_{\mf s}(\sigma \otimes \chi)},
\]
Theorem \ref{thm:6.12} shows that $\zeta^\vee (\pi)$ is equivalent to
$\Hom_{C_r^* (G,K)^{\mf s}} (\Pi_{\mf s,K}^c, \langle K \rangle \pi)$, in the sense that both
have the same central character $W_{\mf s}^e \chi$ and the same 
$\C[W_{\mf s,\chi}^e,\natural_{\mf s}^{-1}]$-representation $\pi_\chi$. 
In combination with Theorem \ref{thm:3.11}.a we find that
\begin{multline}\label{eq:6.37}
\phi_{\mf s,K}^* \circ \Hom_{C_r^* (G,K)^{\mf s}} (\Pi_{\mf s,K}^c,?) : \\
R \big( \Mod_\fl (C_r^* (G,K)^{\mf s})_{X_\nr^+ (L) W_{\mf s}(\sigma \otimes \chi)} 
\big)  \to R\big( \C[W_{\mf s,\chi}^e,\natural_{\mf s}^{-1}] \big) \cong
R ( \Mod (A)_{W_{\mf s}^e \chi} )
\end{multline}
is a group isomorphism. The inverse of this map is $\otimes_{C(X_\nr^u (L)) \rtimes 
\C[W_{\mf s,\chi}^e,\natural_{\mf s}]} \Pi_{\mf s,K}^c$.\\
(b) From Lemma \ref{lem:6.15}, Proposition \ref{prop:6.16} and \eqref{eq:6.37} 
we see that $K_* (\phi_{\mf s,K,\tau,\partial \tau})$ is the tensor product of the
identity on $K_* (C_0 (\tau,\partial \tau))$ and the isomorphism from part (a). 
\end{proof}

Our preparations to apply Lemma \ref{lem:6.13} are complete.

\begin{thm}\label{thm:6.18}
Let $\mf s \in \mf B (G)$ and choose $K \in \CO (G)$ as in \eqref{eq:6.39}.
There are isomorphisms of $\Z / 2 \Z$-graded groups
\[
K_* \big( C(X_\nr^u (L)) \rtimes \C [W_{\mf s}^e, \natural_{\mf s}^{-1}] \big)
\xrightarrow{K_* (\phi_{\mf s,K})} K_* \big( \End_{C_r^* (G,K)^{\mf s}}
(\Pi_{\mf s,K}^c)^{op}\big) \cong K_* (C_r^* (G,K)^{\mf s}) .
\]
\end{thm}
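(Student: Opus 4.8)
The plan is to deduce the statement directly from Lemma \ref{lem:6.13}, using the $C(X)$-Banach algebra structures already set up, where $X = X_\nr^u (L) / W_{\mf s}^e$ with the $W_{\mf s}^e$-equivariant smooth triangulation of $X_\nr^u (L)$ from \cite{Ill}. First I would recall that the second isomorphism in the statement, namely $K_* \big( \End_{C_r^* (G,K)^{\mf s}}(\Pi_{\mf s,K}^c)^{op}\big) \cong K_* (C_r^* (G,K)^{\mf s})$, is immediate from the Morita equivalence \eqref{eq:6.29} together with \eqref{eq:6.28}: both algebras are unital Banach algebras and the bimodules $\Pi_{\mf s,K}^c$ and $\Hom_{C_r^* (G,K)^{\mf s}}(\Pi_{\mf s,K}^c, C_r^* (G,K)^{\mf s})$ are finitely generated projective, so Theorem \ref{thm:6.19} applies (one checks openness of the units from the description as a corner in a matrix algebra over the $C^*$-algebra $C_r^* (G,K)^{\mf s}$). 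Actually it is even simpler: $K_*$ of a unital $C^*$-algebra equals $K_*$ of $e M_n(\cdot) e$, so no extra argument is needed.

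The main work is to show that $K_* (\phi_{\mf s,K})$ is an isomorphism, where $\phi_{\mf s,K} : A := C(X_\nr^u (L)) \rtimes \C [W_{\mf s}^e, \natural_{\mf s}^{-1}] \to B := \End_{C_r^* (G,K)^{\mf s}}(\Pi_{\mf s,K}^c)^{op}$ is the map from Theorem \ref{thm:6.12}(b) (passing to opposite algebras as in \eqref{eq:2.17}, which is harmless since K-theory is insensitive to taking opposites of these algebras, or one can set things up on the correct side from the start). Both $A$ and $B$ are $C(X)$-Banach algebras: for $A$ this is clear, and for $B$ one uses the $C(X)$-linearity of the Morita equivalence \eqref{eq:6.29}, which follows from comparing \eqref{eq:6.11} and \eqref{eq:6.30} — the action of $C(X) \subset Z(C_r^*(G,K)^{\mf s})$ matches the action through $\phi_{\mf s,K}|_{C(X_\nr^u(L))}$ modulo $X_\nr^+(L)$-invariance. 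By Lemma \ref{lem:6.13} it suffices to check, for every simplex $\tau$ of $X$ (coming from a simplex $\tau'$ of $X_\nr^u(L)$), that the induced map
\[
\phi_{\mf s,K,\tau,\partial \tau} : C_0 (X,\partial \tau) A / C_0 (X,\tau) A \to C_0 (X,\partial \tau) B / C_0 (X,\tau) B
\]
is a K-isomorphism. This is exactly Lemma \ref{lem:6.17}(b), which in turn rests on Lemma \ref{lem:6.15} (computing the left-hand K-group as $K_*(C_0(\tau,\partial\tau)) \otimes_\Z R(\C[W^e_{\mf s,\tau'},\natural_{\mf s}^{-1}])$), Proposition \ref{prop:6.16} (computing the right-hand K-group via homotopy invariance and the structure of $C_r^*(G,K)^{\mf s}$ from Theorem \ref{thm:1.17}), and Lemma \ref{lem:6.17}(a) (identifying the two representation rings via $\otimes \Pi_{\mf s,K}^c$, using Lemma \ref{lem:6.14} and Theorem \ref{thm:3.11}(a)). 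So the proof is essentially an assembly: $A$ and $B$ are $C(X)$-Banach algebras, $\phi_{\mf s,K}$ is a $C(X)$-algebra morphism, its simplicial localizations are K-isomorphisms by Lemma \ref{lem:6.17}(b), hence $K_*(\phi_{\mf s,K})$ is an isomorphism by Lemma \ref{lem:6.13}.

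The step I expect to be the genuine obstacle — though much of it is already discharged in the lemmas above — is verifying that $\phi_{\mf s,K}$ really is a homomorphism of $C(X)$-Banach algebras with the compatible module structures, i.e. that the action of $C(X_\nr^u(L))$ built into $B$ via \eqref{eq:6.11} and the action of $C(X) \subset C(X_\nr^u(L))^{W_{\mf s}^e}$ coming from the Bernstein-centre-like action on $C_r^*(G,K)^{\mf s}$ are identified under $\phi_{\mf s,K}$ and under \eqref{eq:6.29}. This is where one must carefully track, using Theorem \ref{thm:3.10}, Theorem \ref{thm:3.6} and the intertwining-operator normalizations of \cite{SolEnd}, that the two a priori different ways of letting central parameters act on the family $\{I_P^G(\sigma\otimes\chi)\}$ agree after restricting to continuous functions. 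Once that bookkeeping is in place, the remaining ingredients are formal invocations of Lemma \ref{lem:6.13} and the earlier lemmas, and the conclusion follows. Finally, combining over all $\mf s$ via \eqref{eq:6.9} would give the global statement, but for the stated theorem one $\mf s$ at a time suffices.
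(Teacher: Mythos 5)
Your proposal matches the paper's proof: the second isomorphism comes from the Morita equivalence \eqref{eq:6.29} and Theorem \ref{thm:6.19}, and the first is obtained by applying Lemma \ref{lem:6.13} to the $C(X)$-Banach algebra morphism $\phi_{\mf s,K}$, with the simplicial hypothesis verified by Lemma \ref{lem:6.17}(b) (itself resting on Lemma \ref{lem:6.15}, Proposition \ref{prop:6.16}, and Lemma \ref{lem:6.17}(a)). The bookkeeping you flag — the $C(X)$-linearity of \eqref{eq:6.29} and the opposite-algebra juggling via \eqref{eq:2.17} — is indeed handled in the preparatory paragraphs of the paper, exactly as you describe.
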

\begin{proof}
Lemma \ref{lem:6.17} shows that the conditions of Lemma \ref{lem:6.13} are
fulfilled. Then Lemma \ref{lem:6.13} tells us that $K_* (\phi_{\mf s,K})$ is 
an isomorphism. The second isomorphism follows from the Morita equivalence 
\eqref{eq:6.29} and Theorem \ref{thm:6.19}.
\end{proof}

Recall from Definition \ref{def:6.6} that we interpreted $K_* \big( 
C(X_\nr^u (L)) \rtimes \C [W_{\mf s}^e, \natural_{\mf s}^{-1}] \big)$ as
the K-theory of $\natural_{\mf s}^{-1}$-twisted $W_{\mf s}^e$-equivariant
vector bundles on $X_\nr^u (L)$. From \eqref{eq:6.9}, Theorem \ref{thm:6.18}
and Definition \ref{def:6.6} we conclude:

\begin{cor}\label{cor:6.20}
There are isomorphisms
\[
K_* (C_r^* (G)) \cong \bigoplus\nolimits_{\mf s \in \mf B (G)} 
K_* (C_r^* (G,K_{\mf s})^{\mf s}) \cong 
\bigoplus\nolimits_{\mf s \in \mf B (G)} K^*_{W_{\mf s}^e,\natural_{\mf s}^{-1}}
(X_\nr^u (L)) .
\]
\end{cor}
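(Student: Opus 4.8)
\textbf{Proof plan for Corollary \ref{cor:6.20}.}

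The plan is to assemble the statement from results that are essentially already in hand. The first isomorphism is exactly the first line of \eqref{eq:6.9}, so nothing new is required there beyond recalling that $C_r^* (G) \cong \bigoplus_{\mf s} C_r^* (G)^{\mf s}$ in the sense of Banach algebras (Bernstein decomposition \eqref{eq:1.34}), that topological K-theory is additive over such direct sums and continuous for the inductive limit over finite subsets of $\mf B (G)$, and that $C_r^* (G)^{\mf s}$ is Morita equivalent to $C_r^* (G,K_{\mf s})^{\mf s}$ by \eqref{eq:6.42}, so that $K_* (C_r^* (G)^{\mf s}) \cong K_* (C_r^* (G,K_{\mf s})^{\mf s})$ by the density theorem Theorem \ref{thm:6.2} together with stability of K-theory (or, equivalently, by applying Theorem \ref{thm:6.19}, noting that both algebras have open unit groups since $C_r^* (G,K_{\mf s})^{\mf s}$ is unital and the Morita equivalence is implemented by the finitely generated projective bimodules $\langle K_{\mf s}\rangle C_r^* (G)^{\mf s}$ and $C_r^* (G)^{\mf s}\langle K_{\mf s}\rangle$).

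For the second isomorphism I would simply invoke Theorem \ref{thm:6.18} for each $\mf s \in \mf B (G)$ with $K = K_{\mf s}$ (which satisfies \eqref{eq:6.39} by the choice made in Paragraph \ref{par:5.2} via Theorem \ref{thm:1.20}), giving
\[
K_* (C_r^* (G,K_{\mf s})^{\mf s}) \cong K_* \bigl( C(X_\nr^u (L)) \rtimes \C [W_{\mf s}^e, \natural_{\mf s}^{-1}] \bigr),
\]
where $L$ is the Levi subgroup with $\mf s = [L,\sigma]_G$. Then Definition \ref{def:6.6}, applied to the $W_{\mf s}^e$-space $X_\nr^u (L)$ (which is compact, so $X_\nr^u (L) \cup \{\infty\}$ relative to $\{\infty\}$ just recovers reduced K-theory, and the twisted equivariant K-theory $K^*_{W_{\mf s}^e,\natural_{\mf s}^{-1}}(X_\nr^u (L))$ is by definition $K_*(C(X_\nr^u(L)) \rtimes \C[W_{\mf s}^e,\natural_{\mf s}^{-1}])$), identifies the right-hand side with $K^*_{W_{\mf s}^e,\natural_{\mf s}^{-1}}(X_\nr^u (L))$. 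Summing over $\mf s$ and using additivity and continuity of K-theory once more yields the stated decomposition of $K_* (C_r^* (G))$.

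Almost all of the substance is therefore pushed into Theorem \ref{thm:6.18}, whose proof rests on Lemma \ref{lem:6.13} (the simplicial-complex gluing principle for $C(X)$-Banach algebra homomorphisms), applied to $\phi_{\mf s,K} \colon A \to B$ with $A = C(X_\nr^u (L)) \rtimes \C[W_{\mf s}^e,\natural_{\mf s}^{-1}]$ and $B = \End_{C_r^*(G,K)^{\mf s}}(\Pi_{\mf s,K}^c)^{op}$. The hard part has already been carried out in Lemmas \ref{lem:6.15}--\ref{lem:6.17}: one needs a $W_{\mf s}^e$-equivariant smooth triangulation of $X_\nr^u (L)$ (available by \cite{Ill}), a computation of the K-theory of the subquotient algebras $C_0(X,\partial\tau)A/C_0(X,\tau)A$ and $C_0(X,\partial\tau)B/C_0(X,\tau)B$ over each simplex $\tau$ of $X = X_\nr^u(L)/W_{\mf s}^e$, and the verification that $\phi_{\mf s,K}$ induces isomorphisms between these — which in turn reduces, via the progenerator $\Pi_{\mf s,K}^c$ from Paragraph \ref{par:6.3} and the identification of $\Hom_{C_r^*(G,K)^{\mf s}}(\Pi_{\mf s,K}^c, \langle K\rangle \pi)$ with $\zeta^\vee(\pi)$ (Lemma \ref{lem:6.14}, resting on Theorems \ref{thm:3.11} and \ref{thm:3.13}), to the bijection on Grothendieck groups of finite-length tempered modules over a single simplex. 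Thus, at the level of this corollary, I expect no obstacle at all: the only genuine content is bookkeeping — correctly matching the Levi subgroup $L$ attached to each $\mf s$, checking that the Morita equivalences and density-theorem hypotheses (open unit groups, dense images, invertibility reflection) are satisfied in the Banach-algebra setting, and confirming that additivity and continuity of $K_*$ commute with the Bernstein decomposition as in \eqref{eq:6.9}.
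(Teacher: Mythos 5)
Your proposal is correct and takes exactly the paper's route: the corollary is stated in the paper immediately after the sentence ``From \eqref{eq:6.9}, Theorem \ref{thm:6.18} and Definition \ref{def:6.6} we conclude:'', and your plan is precisely that assembly. The one small quibble is that Theorem \ref{thm:6.19} assumes both algebras are unital, so it does not apply directly to the nonunital $C_r^*(G)^{\mf s}$ --- but you rest primarily on \eqref{eq:6.9}, whose proof in the paper goes through the increasing system of unital corners $C_r^*(G,K_n)^{\mf s}$ via Propositions \ref{prop:1.22} and \ref{prop:6.6}, so this aside does not affect the argument.
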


We note that Theorem \ref{thm:6.18} and Corollary \ref{cor:6.20} prove
\cite[Conjecture 5]{ABPS2}, a version of the ABPS conjecture in topological
K-theory.

\end{document}